%% file: main.tex
\documentclass[11pt,reqno]{amsart}
\input{library}

\begin{document}

\title[\PaperShortTitle]{\PaperTitle}
\author{\PaperAuthor}
\address{\PaperAddress}
\email{\PaperEmail}
\input{sections/00-abstract}
\maketitle

\setcounter{tocdepth}{1}
\tableofcontents

\input{sections/01-introduction}

\part{Existence of solutions with a prescribed number of zeros}
\input{sections/02-setting}
\input{sections/03-well-posedness}
\input{sections/04-intermediate-properties}
\input{sections/05-scaled-limit}
\input{sections/06-no-gaps}
\input{sections/07-proof-existence}
\input{sections/08-manifolds}
\input{sections/09-singular-tips}

\part{Uniqueness for the model nonlinearity}
\input{sections/part2-overview}
\input{sections/10-uniqueness-setting}
\input{sections/11-multiplier-field}
\input{sections/12-basic-properties}
\input{sections/13-pohozaev}
\input{sections/14-variation-function}
\input{sections/15-later-phases}
\input{sections/16-critical-point-transition}
\input{sections/17-renewal}
\input{sections/18-tail}
\input{sections/19-assembly}
\input{sections/20-restriction-on-f}
\input{sections/21-singular-tips-uniqueness}

\FloatBarrier
\appendix
\input{sections/A-singular-tips}
\input{sections/B-identity-table}

\clearpage
\input{sections/C-notation}

\bibliographystyle{amsalpha}
\bibliography{references}
\end{document}

%% file: library.tex
\usepackage[T1]{fontenc}
\usepackage[utf8]{inputenc}
\IfFileExists{lmodern.sty}{\usepackage{lmodern}}{}
\usepackage[margin=2.6cm]{geometry}
\usepackage{amsmath,amssymb,amsthm,mathtools}
\usepackage{enumitem}
\usepackage{booktabs}
\usepackage{placeins}
\usepackage{array}
\usepackage{xcolor}
\usepackage{tikz}
\usetikzlibrary{arrows.meta,positioning,calc,fit,backgrounds,decorations.pathreplacing}
\usepackage[colorlinks=true,linkcolor=blue!60!black,citecolor=green!40!black,urlcolor=blue!60!black]{hyperref}

\numberwithin{equation}{section}

\setlist[enumerate]{font=\upshape}
\theoremstyle{plain}
\newtheorem{theorem}{Theorem}[section]
\newtheorem{proposition}[theorem]{Proposition}
\newtheorem{lemma}[theorem]{Lemma}
\newtheorem{corollary}[theorem]{Corollary}
\newtheorem*{introtheorem}{Theorem}
\theoremstyle{definition}
\newtheorem{definition}[theorem]{Definition}
\newtheorem{assumption}[theorem]{Assumption}
\newtheorem{example}[theorem]{Example}
\theoremstyle{remark}
\newtheorem{remark}[theorem]{Remark}

\newcommand{\PaperTitle}{Existence and uniqueness of radial bound states with prescribed
    number of nodes for nonlinear dispersive equations on spherically
    symmetric manifolds}
\newcommand{\PaperShortTitle}{Nodal bound states for dispersive equations on
    spherically symmetric manifolds}
\newcommand{\PaperAuthor}{Valentino Badalucco}
\newcommand{\PaperAddress}{D\'epartement de Math\'ematiques et Applications,
    \'Ecole Normale Sup\'erieure, Universit\'e PSL, 45, rue d'Ulm 75005 Paris, France}
\newcommand{\PaperEmail}{valentino.badalucco@ens.fr}

\newcommand{\R}{\mathbb{R}}

\newcommand{\defeq}{\vcentcolon =}
\newcommand{\eqdef}{{= \vcentcolon}}
\newcommand{\abs}[1]{\lvert #1 \rvert}
\newcommand{\norm}[1]{\lVert #1 \rVert}

\DeclareMathOperator{\sgn}{sgn}
\DeclareMathOperator{\sech}{sech}
\DeclareMathOperator{\arcsinh}{arcsinh}
\DeclareMathOperator{\vol}{vol}
\DeclareMathOperator{\area}{area}
\let\div\relax
\DeclareMathOperator{\div}{div}

\newcommand{\Hyp}{\mathbb{H}}

\newcommand{\sB}{\mathcal{B}}
\newcommand{\cK}{\mathcal{K}}
\newcommand{\cJ}{\mathcal{J}}
\newcommand{\cN}{\mathcal{N}}
\newcommand{\fc}{\mathfrak{c}}
\newcommand{\lmu}{\underline{\mu}}
\newcommand{\hrho}{\hat\rho}

%% file: sections/00-abstract.tex
\begin{abstract}
    Standing waves $e^{i\omega t}u(x)$ of nonlinear dispersive equations, such as the nonlinear Schr\"odinger and Klein--Gordon equations, have a profile $u$ that solves the semilinear elliptic equation $\Delta u+f(u)=0$. We prove the existence of radial bound states with a prescribed number of nodal surfaces for this equation on a large class of spherically symmetric manifolds and for a broad class of nonlinearities $f$. Under further assumptions on the metric, we prove uniqueness of such bound states for the model nonlinearity $f(u)=|u|^{p-1}u-u$. Both existence and uniqueness results apply to the hyperbolic spaces $\mathbb{H}^n_\lambda$ of any curvature $-\lambda^2$ and dimension $n\geq 3$, and to some manifolds with a singularity at the origin.
\end{abstract}

%% file: sections/01-introduction.tex
\section{Introduction}\label{sec:intro}

Nonlinear dispersive equations, such as the nonlinear Schr\"odinger and Klein--Gordon equations
\[
  i\partial_t\Psi+\Delta\Psi+\tilde f(|\Psi|)\Psi=0,\qquad
  \partial_{tt}\Psi-\Delta\Psi-\tilde f(|\Psi|)\Psi=0,
\]
are fundamental models for the propagation of nonlinear waves. Their standing waves $\Psi(t,x)=e^{i\omega t}u(x)$ have a profile $u$ that solves the elliptic equation
\[
  \Delta u+f(u)=0,
\]
where $f$ incorporates the frequency $\omega$. Bound states of this equation have classically been studied by variational and by ODE shooting methods. The existence of ground states (solutions with a strictly positive profile, which are radial~\cite{GidasNiNirenberg1981}) is well established, by variational methods~\cite{Strauss1977,BerestyckiLions1983I,BerestyckiGallouetKavian1983} and by shooting~\cite{BerestyckiLionsPeletier1981}, while excited states are much harder to treat.

In this work we study radial solutions on spherically symmetric manifolds. Let $M$ be a manifold of dimension $n$ with the warped product metric
\[
  g_M=dr^2+\psi(r)^2g_{\mathbb{S}^{n-1}},
\]
where $r$ is the distance from a fixed point and $\psi(r)>0$ for $r>0$. For a function $u=u(r)$ that depends only on $r$, the Laplace--Beltrami operator is $\Delta_{g_M}u=u''+(n-1)(\psi'/\psi)u'$, so the equation $\Delta_{g_M}u+f(u)=0$ becomes
\begin{equation}\label{eq:radial}
  u''+g(r)u'+f(u)=0
\end{equation}
with $g=g_\psi:=(n-1)\psi'/\psi$. For $\psi(r)=r$ this is $\R^n$, with $g(r)=\frac{n-1}{r}$, and for $\psi(r)=\lambda^{-1}\sinh(\lambda r)$ it is the hyperbolic space $\Hyp^n_\lambda$ of curvature $-\lambda^2$, with $g(r)=(n-1)\lambda\coth(\lambda r)$. Our results are proved for \eqref{eq:radial} with a general function $g$, and then applied to manifolds.

For the model nonlinearity $f(u)=|u|^{p-1}u-u$, our results take the following form on a spherically symmetric manifold. A \emph{$k$-node bound state} is a radial solution that tends to $0$ at infinity and vanishes on exactly $k$ spheres centered at the origin.

\begin{introtheorem}
  Let $M$ be a $C^2$ manifold of dimension $n\ge3$ with metric $dr^2+\psi(r)^2g_{\mathbb{S}^{n-1}}$, where $\psi,\psi'>0$ for $r>0$, and let $1<p<\frac{n+2}{n-2}$. Consider the problem
  \[
    \Delta_{g_M}u-u+|u|^{p-1}u=0,\qquad u\to0\ \text{ as }r\to\infty.
  \]
  \begin{enumerate}[label=(\roman*)]
    \item \textup{(Existence.)} If $r^2\psi'(r)/\psi(r)\to\infty$ as $r\to\infty$, then for every $k\ge0$ there is a $k$-node bound state.
    \item \textup{(Uniqueness.)} If moreover $\psi$ satisfies the regularity, integrability and structural conditions of Section~\ref{sec:uniqueness}, then for every $k\ge0$ the $k$-node bound state is unique up to the sign $u\mapsto-u$.
  \end{enumerate}
  Both parts apply to $\R^n$ and to the hyperbolic space $\Hyp^n_\lambda$ of any curvature $-\lambda^2$.
\end{introtheorem}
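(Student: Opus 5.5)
The plan is to reduce everything to the radial ODE \eqref{eq:radial} with $g=(n-1)\psi'/\psi$ and $f(u)=|u|^{p-1}u-u$, and to run a shooting argument in the initial value $u(0)=\alpha$, $u'(0)=0$. For existence, I will first establish well-posedness and continuous dependence on $\alpha$ near the (possibly singular) point $r=0$; there $g(r)\sim (n-1)/r$, so a Volterra fixed point for $u(r)=\alpha-\int_0^r \psi^{1-n}(s)\int_0^s \psi^{n-1}f(u)$ should work. Next I will use the energy $E=\tfrac12 u'^2+F(u)$, which satisfies $E'=-g u'^2\le 0$ because $\psi'>0$. This shows that every solution either has infinitely many zeros, tends to $0$, or tends to a nonzero equilibrium ($\pm1$ or a decay-to-zero alternative). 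In particular the number of zeros $N(\alpha)$ is well defined on the "good" set.

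The core step is to show $N(\alpha)\to\infty$ as $\alpha\to\infty$. I will rescale $v(s)=\alpha^{-1}u(\alpha^{-(p-1)/2}s)$. As $\alpha\to\infty$ the rescaled equation converges to the Lane--Emden equation $v''+\frac{n-1}{s}v'+|v|^{p-1}v=0$, because $g(\varepsilon s)\varepsilon\to (n-1)/s$ when $\psi(r)\sim r$ at $0$. Since $p<\frac{n+2}{n-2}$, Lane--Emden solutions oscillate infinitely often, so for large $\alpha$ we get arbitrarily many zeros. Then I will define $A_k=\{\alpha: N(\alpha)\le k\}$ and $\alpha_k=\sup A_k$ (starting above the ground-state threshold). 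The aim is to show that $u_{\alpha_k}$ is a $k$-node bound state. This needs two facts. First, zeros are transversal and so persist under perturbation, which makes $N$ lower semicontinuous. Second, and this is the "no gaps" step, solutions cannot gain two zeros at once or lose one at infinity; equivalently, if $u_{\alpha_k}$ did not decay to $0$ it would either cross $0$ robustly or settle near $\pm1$, contradicting the sup.

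The hypothesis $r^2\psi'/\psi\to\infty$, i.e.\ $r\,g(r)\to\infty$, is what I expect to be the main obstacle. It must rule out solutions that oscillate forever with energy decaying too slowly, and it must rule out new zeros being created far out near $u\approx 0$. Near $0$ the linearization is $u''+gu'-u=0$, which is non-oscillatory anyway. The delicate issue is that $g$ may tend to $0$, in which case the damping $\int g\,u'^2$ must still force $E$ below $F(\pm1)$-type thresholds. I would try a modified energy $E+\frac{c}{r}uu'$ and a comparison with Sturm/Kneser-type criteria, using $rg\to\infty$, to show that a solution trapped in the well cannot oscillate infinitely with energy level above $0$.

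For uniqueness I would invoke the multiplier/Pohozaev machinery announced in Part~2. Suppose two $k$-node bound states have initial values $\alpha<\beta$. Consider the variation $w=\partial_\alpha u$, which solves the linearized equation $w''+gw'+f'(u)w=0$. Uniqueness follows if, at every bound-state value $\alpha$, $w$ has exactly $k+1$ zeros and $w$ does not tend to $0$ at infinity (nondegeneracy with a definite sign), since $N(\alpha)$ then jumps by exactly one across $\alpha_k$. To control the zeros of $w$ phase by phase between consecutive zeros of $u$, I will use a Pohozaev-type identity with a multiplier field $\Phi(r)\,(ru'+\gamma u)$ adapted to $\psi$. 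This is where the structural conditions of Section~\ref{sec:uniqueness} enter: they ensure that the Pohozaev function keeps a sign, in the spirit of Kwong, McLeod and Troy. The final step is to check that $\R^n$ and $\Hyp^n_\lambda$ satisfy both the existence and uniqueness hypotheses. For $\Hyp^n_\lambda$, $r^2\lambda\coth(\lambda r)\to\infty$, and the structural conditions can be verified directly from $\coth$.
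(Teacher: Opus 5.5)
Your existence outline has the same skeleton as the paper: a Volterra fixed point at the pole, the energy, the blow-up to Lane--Emden, and $\alpha_k=\sup A_k$. But the step you flag as the main obstacle is left without a mechanism, and it rests on a misreading. With $g=(n-1)\psi'/\psi$, the hypothesis $r^2\psi'/\psi\to\infty$ is $r^2g\to\infty$, not $rg\to\infty$. The latter fails on $\R^n$, where $rg\equiv n-1$.

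The hypothesis is needed only for the no-gaps lemma; finiteness of $N(\alpha_k)$ comes for free from lower semicontinuity. Neither $E+\frac cr uu'$ nor a Kneser comparison is what proves that lemma. The tool is the weighted energy identity $(e^{2G}E)'=2g\,e^{2G}F(u)$. Suppose that, for $\alpha$ arbitrarily close to a $k$-node bound state $\tilde\alpha$, $u(\cdot,\alpha)$ had two further zeros $r_{k+1}<r_{k+2}$, so that $r_{k+1}\to\infty$ and $E>0$ on $[0,r_{k+2}]$. Integrate the identity over the stretch $[r',r'']$ after $r_{k+1}$ on which $u$ goes from $-\alpha_*/4$ to $-\alpha_*/2$. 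This stretch has length $\ge d_0>0$ by the uniform bound on $|u'|$, and $-F(u)\ge m_0>0$ on it. The result is $E(r')\gtrsim\Gamma(r')/r'^2$, with $\Gamma(r)=\inf_{s\ge r}s^2g(s)\to\infty$. Now let $\sigma$ be the fixed point on the tail of $\tilde u$ with $\tilde u(\sigma)=\alpha_*/2$. On $(\sigma,r')$ one has $F(u)\le0$, hence $|u'|>\sqrt{2E(r')}$. So $r'-\sigma\lesssim r'/\sqrt{\Gamma(r')}=o(r')$, which contradicts $r'\to\infty$.

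Your alternative for $u(\cdot,\alpha_k)$ (``cross $0$ robustly or settle near $\pm1$'') also omits a case the statement allows. On over-damped metrics such as $\psi=r\exp\big(r^{b+1}/((b+1)(n-1))\big)$ with $b>1$, which are $C^2$ at the pole and satisfy the hypothesis, a solution can creep monotonically to a level $\bar u$ with $f(\bar u)\ne0$. The paper excludes this at $\alpha_k$ by stability, distinguishing two cases with $V(t)=e^{-G(t)}\int_1^te^G$. If $\int^\infty V<\infty$, the tail variation $\int_R^\infty|u'(\cdot,\alpha)|\,dr$ is small for suitable large $R$, uniformly for $\alpha$ near $\alpha_k$, so $N$ would be constant near $\alpha_k$. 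If $\int^\infty V=\infty$, a positive limit must be a zero of $f$, so $\bar u=1$, $E\to F(1)<0$, and the trapping persists under perturbation. Either way the definition of $\alpha_k$ is contradicted.

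For uniqueness the proposal does not contain an argument. Your reduction is right in substance: if at every $k$-node bound state $v=\partial_\alpha u$ is dominant at infinity with exactly $k+1$ zeros, then $v$ ends with the sign opposite to $u$, that is $M(\infty)=\lim_{r\to\infty}e^G(u'v-uv')>0$, and the staircase follows. Even this deduction needs two dissipation results you do not supply.
\begin{enumerate}
\item Every solution has finitely many zeros and is either a bound state or trapped, so that $N$ is locally constant away from bound states. The paper combines $g>(\gamma-1)/r$, a consequence of $(\Phi)$, with passage-time bounds and a Pohozaev lower bound on the energy at zeros.
\item $N$ increases by exactly one across each bound state. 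The extra zero carries exponentially small energy, while a further oscillation dissipates $\gtrsim1/r$.
\end{enumerate}

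The real content, that $v$ has the right zeros, is missing. Your multiplier $\Phi(r)(ru'+\gamma u)$ is unspecified and still built on the Euclidean scaling $ru'$. On a warped product the first-order term of $L(\varphi u')$ is $(\varphi''-(g\varphi)')u'$, which vanishes only if $\varphi'=g\varphi+\mathrm{const}$. The paper's choice is $\varphi=(\gamma-1)e^G\int_r^\infty e^{-G}$. It gives $L(\varphi u')=-2\varphi'f(u)$ and the Wronskian $Q=e^G[\varphi u'v'+\varphi f(u)v+(\gamma-1)u'v]$ with $Q'=2\varphi'e^Gf(u)v$.

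The zeros of $v$ are then controlled by an induction over the critical points $c_i$ of $u$ on the slope condition $Q>0$, $M>0$, $T_2>0$ at $c_i$. Bridge functions carry these signs across the river $|u|<\alpha_*$. Renewing $T_2$ at $c_i$ needs, in the hard case, a level-by-level pairing of descent and ascent. This is where $(\Phi)$ is used, to fix the signs of all correction terms carrying $\varphi''$ or $\varphi'-1$, and where (H) is used. (R) is automatic here, since $\gamma=n-1\ge2$. The sign of a Pohozaev function, which you single out, is only one input: it enters through $Q=(v/u)P_1$ at zeros of $M$.
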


Part (i) is a special case of Theorem~\ref{thm:manifold-existence}, which holds for a large class of nonlinearities, and also in dimension two with $1<p<\infty$. Part (ii) is Corollary~\ref{cor:warped}. The main structural condition it needs, $(\Phi)$, holds for instance when $\psi/\psi'$ is concave and not eventually constant (Lemma~\ref{lem:concave-reciprocal}), and for the ground state ($k=0$) the second one, (H), is not needed (Corollary~\ref{cor:ground-state}). By scaling, both parts remain true when the linear term $-u$ is replaced by $-\omega u$ for any $\omega>0$, so they apply to radial standing waves at every frequency. Both parts also extend to some manifolds with a singular tip at the origin, where uniqueness holds in dimension two as well (Sections~\ref{sec:tips} and~\ref{sec:tip-uniqueness}).

\subsection*{Part I: existence for a class of nonlinearities}
The first part of the paper solves the existence problem under general assumptions on the nonlinearity $f$: sign conditions near $0$, which match the linear term coming from the frequency $\omega$, and growth $|u|^p$ at infinity. The relevant subcritical range for the exponent depends only on the parameter
\[
  \gamma:=\lim_{s\to0^+}s\,g(s),
\]
which depends only on the metric near the origin: $\gamma=n-1$ on a $C^2$ warped product of dimension $n$. We require
\[
  1<p<\frac{\gamma+3}{\gamma-1}\quad\text{when }\gamma>1,\qquad\text{and any }1<p<\infty\text{ when }\gamma=1.
\]
This includes the Euclidean case: for $g(r)=\frac{n-1}{r}$, so that $\gamma=n-1$, we get for $n\ge3$ the classical subcritical bound $1<p<\frac{n+2}{n-2}$, while for $n=2$ it yields the unrestricted range $1<p<\infty$.

Radial solutions with a prescribed number of zeros were first obtained in dimension three by Nehari~\cite{Nehari1963} and Ryder~\cite{Ryder1967}. For general nonlinearities, Berestycki and Lions~\cite{BerestyckiLions1983I,BerestyckiLions1983II} proved the existence of infinitely many radial solutions and conjectured that, at least for some classes of nonlinearities, exactly one exists for each number of nodes. Radial nodal solutions were later also obtained by variational methods, by Bartsch and Willem~\cite{BartschWillem1993} and by Conti, Merizzi and Terracini~\cite{ContiMerizziTerracini2000}. On hyperbolic space, infinitely many radial sign-changing solutions in $H^1(\Hyp^n)$ were obtained variationally by Bhakta and Sandeep~\cite[Theorem~5.1]{BhaktaSandeep2012}; Theorem~\ref{thm:manifold-existence} gives, for each $k$, one with exactly $k$ nodes. We use instead the shooting method for the radial ODE: by treating the initial amplitude $u(0)=\alpha$ as a shooting parameter, and tracking the evolution of the nodes of the solution as $\alpha$ varies, we inductively prove the existence of radial excited states with exactly $k$ zeros, for any integer $k\ge0$. We follow the proof structure of~\cite{McLeodTroyWeissler1990} for the Euclidean case, which avoids the dynamical-systems methods used by Jones and K\"upper~\cite{JonesKupper1986} for general subcritical nonlinearities. The two analytic components are an oscillation lemma for the scaled limit equation, which we prove through a Pohozaev identity~\cite{Pohozaev1965} (Lemma~\ref{lem:limit-zeros}), and a ``no gaps'' lemma showing that a small perturbation of the shooting parameter around a bound state creates at most one new zero (Lemma~\ref{lem:no-gaps}). What is new in Part~I is the class of damping terms. Besides the behavior of $g$ at the origin, it is described by a single condition at infinity, $r^2g(r)\to\infty$, which is (G3) of Section~\ref{sec:setting}: neither monotonicity of $g$ nor a bound on the growth of the volume is needed. In particular, over-damped equations (considered in~\cite{McLeodTroyWeissler1990}), in which solutions may creep toward a level that is not a zero of $f$, are included (Remark~\ref{rem:overdamped}). In Section~\ref{sec:manifolds} we apply this to radial bound states on a class of $C^2$ spherically symmetric manifolds, and in Section~\ref{sec:tips} to manifolds with singular tips, including tips without a unique tangent cone.

\subsection*{Part II: uniqueness for the model nonlinearity}
The conjecture of Berestycki and Lions~\cite{BerestyckiLions1983I,BerestyckiLions1983II}, that, at least for some classes of nonlinearities, the radial $k$-node bound state of $\Delta u+f(u)=0$ in $H^1(\R^n)$ is unique for each $k$, has a long history.
The case $k=0$ was treated by Coffman~\cite{Coffman1972}, McLeod and Serrin~\cite{McLeodSerrin1987}, and Kwong~\cite{Kwong1989}, who proved uniqueness for the model nonlinearity in every dimension (see also~\cite{PeletierSerrin1983,McLeod1993}, and \cite{ChenLin1991,PucciSerrin1998,SerrinTang2000} for general nonlinearities). For $k=1$, uniqueness was first proved by Troy~\cite{Troy2005} for a piecewise linear nonlinearity in dimension three, and by Cort\'azar, Garc\'ia-Huidobro and Yarur~\cite{CortazarGarciaHuidobroYarur2009} for a class of superlinear nonlinearities. A computer-assisted proof of uniqueness of the first twenty excited states for the cubic nonlinearity in dimension three was given in~\cite{CohenLiSchlag2024}, and a complete proof in the Euclidean setting, for the model nonlinearity $f(u)=|u|^{p-1}u-u$ and $n\ge3$, was obtained recently by Tang~\cite{Tang2026}, also by a shooting method. In dimension two, uniqueness of the $k$-node bound state for every $k$ follows from an earlier theorem of Cort\'azar, Garc\'ia-Huidobro and Yarur~\cite[Theorem~1.1]{CortazarGarciaHuidobroYarur2011}, whose hypothesis $(F/f)'\ge\frac{n-2}2$ on $(\alpha_*,\infty)$ holds for the model nonlinearity when $n=2$ and fails when $n=3,4$ (Remark~\ref{rem:literature}(b)). Uniqueness of the $k$-node bound state on $\R^n$ has since been proved for a sublinear model nonlinearity~\cite{ZhangZhang2026} and, with Tang's auxiliary functions, for the logarithmic Schr\"odinger equation~\cite{LiuSunZou2026} and for a double-power nonlinearity with a small defocusing term~\cite{GuoJiLuo2026}.

Uniqueness for equations with a radial weight, a class that contains \eqref{eq:radial}, has also been studied. For ground states, generalized Pohozaev identities (see~\cite{KawanoNiYotsutani1990} for an early form) give uniqueness for $\Delta u+a(r)u+b(r)u^p=0$~\cite{Yanagida1991,ShiojiWatanabe2013} and for $\div(\rho\nabla u)+\rho(-au+bu^p)=0$~\cite{ShiojiWatanabe2016}. Cort\'azar, Garc\'ia-Huidobro and Herreros~\cite{CortazarGarciaHuidobroHerreros2019} treat the divergence form $(e^Gu')'+e^Gf(u)=0$ of \eqref{eq:radial}, where $G'=g$, and prove uniqueness of the ground state, of the bound state with one sign change and, under stronger conditions, of higher bound states, when $g$ is strictly decreasing and $f$ satisfies convexity and growth conditions. See~\cite{CortazarGarciaHuidobroYarur2011} for the Euclidean case. On hyperbolic space, the positive solution is unique up to isometries by a theorem of Mancini and Sandeep~\cite{ManciniSandeep2008}. Remark~\ref{rem:literature} compares these results with ours.

The second part of this paper takes up the uniqueness question on the class of manifolds treated in Part~I, by an argument close to the one of~\cite{Tang2026} in the Euclidean case. It assumes regularity and integrability conditions and one structural hypothesis $(\Phi)$ on $g$, described below. Some restriction on the geometry is to be expected: Christianson, Marzuola, Metcalfe and Taylor~\cite{ChristiansonMarzuolaMetcalfeTaylor2014}, who construct ground states on a class of Riemannian manifolds, also identify manifolds on which positive solutions of the associated elliptic equation are not essentially unique. Under these assumptions, Part~II proves uniqueness of the ground state (on hyperbolic spaces this is the radial case of~\cite{ManciniSandeep2008}; ODE proofs of the radial statement were given in~\cite{Wang2014,Selvitella2020}). It also proves uniqueness of the $k$-node bound state for every $k$ under two further assumptions: a second structural hypothesis (H) on $g$, which holds on Euclidean and hyperbolic spaces, and the elementary restriction (R): $\gamma\ge\frac85$ or $p\ge\frac2{\gamma-1}$.\footnote{The value $\frac85$ is not sharp. It is chosen so that one numerical inequality in the proof can be checked by hand, and the same argument works for every $\gamma\ge\gamma_*\approx1.560$ (see Remark~\ref{rem:river-integral}(b)).} This restriction is superfluous when $\gamma=n-1$ with $n\ge3$: on every $C^2$ warped product of dimension $n\ge3$ that satisfies the hypotheses of Part~II, hyperbolic spaces of all curvatures included, the $k$-node bound state is unique for every $k$ and every $1<p<\frac{n+2}{n-2}$. The same holds on the manifolds with singular tips of Section~\ref{sec:tips}, over arbitrary links and also in dimension two, whenever the tip index gives $\gamma>1$ (Section~\ref{sec:tip-uniqueness}). On these manifolds $\gamma$ need not be an integer, and (R) becomes a condition on the tip. For $k\ge1$ and the model nonlinearity we are not aware of earlier uniqueness results on hyperbolic spaces. The comparison with the results above is in Remark~\ref{rem:literature}. We comment on three points.

First, uniqueness is proved for the model nonlinearity $f(u)=|u|^{p-1}u-u$, not for the whole class (f1)--(f3) under which existence holds. This restriction is not a limitation of the method: in Section~\ref{sec:restriction-f} we observe that the class (f1)--(f3) contains nonlinearities for which the ground state is already not unique in $\R^3$, by the construction of D\'avila, del Pino and Guerra~\cite{DavilaDelPinoGuerra2013}; for nodal bound states, non-uniqueness is known for nonlinearities outside this class~\cite{CortazarGarciaHuidobroHerreros2015}. Some structural restriction on $f$ is therefore necessary, and the model nonlinearity is the natural first case.

Second, every Euclidean uniqueness proof uses the scaling vector field $r\partial_r$, which is not available on a general warped product. The natural substitute is a multiplier field $\varphi(r)\partial_r$, where $\varphi$ is the solution of
\[
  \varphi'=g\varphi-(\gamma-1),\qquad
  \varphi(r)=(\gamma-1)e^{G(r)}\int_r^\infty e^{-G(s)}\,ds,\qquad
  G(r)=\int_1^r g.
\]
On Euclidean space $\varphi(r)\equiv r$, and in general $\varphi$ keeps the two properties that the Euclidean argument uses: it kills the first-order term of the linearized operator applied to $\varphi u'$, and it produces a variable quantity which we name the \emph{effective dimension}
\[
  \nu(r):=2\varphi'(r)+\gamma-1
\]
in the Pohozaev identity, equal to $n$ in the Euclidean case. The auxiliary functions and differential identities on which the argument is based (Pohozaev functions, two Wronskians $M$ and $Q$ of the variation $v=\partial_\alpha u$ against $u$ and against $\varphi u'$, the comparison functions $T_1,T_2,W_a$, and the bridge functions $B_a$) then carry over almost verbatim, up to correction terms with a factor $\varphi''$ or $\varphi'-1$. The new structural hypothesis, $(\Phi)$, is that
\[
  D(r):=g(r)e^{G(r)}\int_r^\infty e^{-G}\ \text{ is nonincreasing and }D>1,\qquad
  \text{equivalently}\qquad\varphi'>0\ge\varphi'',
\]
which fixes the sign of every correction term. Geometrically, $(\Phi)$ says that $g$ is strictly decreasing and $\varphi$ is concave (Lemma~\ref{lem:phi-consequences}(iv) and its converse). For a warped product, the first condition is the strict log-concavity of $\psi$. We verify $(\Phi)$ on Euclidean space and on hyperbolic spaces of all curvatures, and give a differential criterion for it, the concavity of $1/g$ (the reciprocal of the mean curvature of the geodesic spheres), which also covers warped products of variable negative curvature such as $\psi=r\cosh(\lambda r)$ (Lemmas~\ref{lem:concave-reciprocal} and~\ref{lem:variable-curvature}).

Third, the counting of zeros relies on two dissipation estimates: every solution has finitely many zeros, and just above a bound state the node count jumps by exactly one and the solution is trapped. In the Euclidean case the finiteness was proved in~\cite[Proposition~2.4]{Tang2026} by a dissipation estimate with the explicit weight $r^{2(n-1)}$, while the jump was obtained there by a comparison argument (proof of Lemma~3.1), the count of exactly one new zero following from the staircase itself. Here both are proved for a general damping $g$ that is bounded at infinity, hypothesis (G5), by a single argument (Proposition~\ref{prop:classification}, Lemma~\ref{lem:jump}). The dissipation across one oscillation is at least of order $1/r$, because $(\Phi)$ forces $g>(\gamma-1)/r$. Moreover, the zeros cannot spread out faster than $j\log j$, because a Pohozaev lower bound on the energy at the zeros (Lemma~\ref{lem:pohozaev-lower}) bounds the time the solution can spend near the equilibrium. The two facts together make the total dissipation diverge.
The same argument shows that the energy left at the extra zero just above a bound state is exponentially small while the energy needed for one more oscillation is of order $1/r$.

The proof scheme is described informally in the overview at the start of Part~II. Under $(\Phi)$ and the regularity and integrability conditions (G1)--(G6), without (H) or (R), the ground state is unique (Theorem~\ref{thm:unconditional}(ii), Corollary~\ref{cor:ground-state}), hyperbolic spaces of all curvatures included. For nodal bound states, uniqueness reduces to a single sign per bound state: the variation $v=\partial_\alpha u$ grows at infinity with the sign opposite to the solution (Theorem~\ref{thm:tail-criterion}). This is a parity statement on the number of zeros of $v$, whose lower bound $k+1$ is a Sturm comparison (Lemma~\ref{lem:v-zeros}). On Euclidean space all hypotheses hold and our results recover the theorem of~\cite{Tang2026} (Theorem~\ref{thm:euclidean}).

The sign is obtained phase by phase in Sections~\ref{sec:transition} and~\ref{sec:renewal-below}, which carry over to warped products the organization of~\cite{Tang2026} around the critical points of $u$. This needs one extra hypothesis, (H): $e^G\varphi'$ nondecreasing, which holds trivially on $\R^n$ and on all hyperbolic spaces. The argument is an induction over the critical points of $u$, and its most delicate step, as in~\cite{Tang2026}, is the renewal at each critical point of a bound on $v'/v$, the slope bound.
When the zero of the variation between a critical point and the next zero of $u$ (the descent) lies above the level of the next extremum of $u$, a bridge identity gives the renewal (Theorem~\ref{thm:renewal-above}). Otherwise we pair the descent with the following ascent at equal levels, as in~\cite{Tang2026}, but integrate $T_2'$ level by level (Section~\ref{sec:renewal-below}). Three Euclidean facts are used in this pairing. The first, the comparison of $Q/(e^G|u'|)$ across the zero of $u$, is proved by combining a weighted energy inequality with the monotonicity of $e^G\varphi'v/u'$, which is where (H) is used. The second, the monotonicity of $P/r^n$, becomes the positivity of a second Pohozaev function $P_2$. The third, $n\ge3$, is not needed.
In the residual case of~\cite{Tang2026}, in which the zero of the variation lies below the level $\alpha_*$, the Euclidean proof shows that the relevant Wronskian has no negative part when $n\ge4$ or $p\ge2$, and treats $n=3$, $1<p<2$ by a separate integral estimate~\cite[Lemma~5.4]{Tang2026}. We prove a warped analogue of that estimate for every $\gamma\ge\frac85$ and every $p$ in the range \eqref{eq:p-range} by a short argument: the integral is affine in a parameter, one endpoint is trivially positive, and the other is controlled by a single monotone quantity (Lemmas~\ref{lem:X-growth}--\ref{lem:river-integral}). The region around the zero of $u$ where $|u|<\alpha_*$ (the river) is then crossed with a bridge function whose weight is a constant. For $1<\gamma<\frac85$ we use instead the warped form of the first Euclidean argument, which requires $p\ge\frac2{\gamma-1}$. This is the origin of the restriction (R), and it gives the main theorem in critical-point form (Theorem~\ref{thm:critical-form}), from which the theorems of Section~\ref{sec:uniqueness} follow through the criterion on the tail. On $\R^n$ the argument reorganizes the renewal step of~\cite{Tang2026}: what is new there is the level-by-level integration of $T_2'$ and the weighted energy inequality at paired levels (Lemma~\ref{lem:paired-energy}). Like~\cite{Tang2026}, which crosses the river with a bridge function, it does not use the sign of $Q$ inside the river, where $Q$ can change sign (Remark~\ref{rem:R-complement}).

\subsection*{Organization}
Part~I is self-contained. Section~\ref{sec:setting} sets up the problem and states the hypotheses and the existence theorem. Section~\ref{sec:well-posedness} treats well-posedness, and Section~\ref{sec:intermediate} the energy and the elementary properties of solutions. Section~\ref{sec:scaled-limit} studies the scaled limit and shows that solutions with arbitrarily many zeros exist. Section~\ref{sec:no-gaps} shows that a small perturbation of the shooting parameter creates at most one new zero. Section~\ref{sec:existence-proof} contains the proof of the existence theorem, and Sections~\ref{sec:manifolds} and~\ref{sec:tips} its consequences on manifolds, for smooth and for singular tips.

Part~II opens with an overview of the proof and a map of its dependencies (Figure~\ref{fig:dependency}). Section~\ref{sec:uniqueness} fixes the model nonlinearity, adds the hypotheses on $g$ needed for uniqueness and states the results. Section~\ref{sec:multiplier} studies the multiplier field, and Sections~\ref{sec:basic-properties} and~\ref{sec:pohozaev} develop the elementary and Pohozaev properties used. Section~\ref{sec:variation} contains the identities and the analysis of the first phase, and Section~\ref{sec:later-phases} the conventions for the later phases and the scaling field. Section~\ref{sec:transition} contains the critical-point form of the transition and the renewal of the slope bound above the level $|u(c_i)|$, and Section~\ref{sec:renewal-below} the renewal in the remaining case and the main theorem in critical-point form. Section~\ref{sec:tail} contains the tail analysis and nondegeneracy, and Section~\ref{sec:staircase} the tail criterion, the staircase and the proofs of the main theorems. Section~\ref{sec:restriction-f} shows the failure of uniqueness for general nonlinearities, and Section~\ref{sec:tip-uniqueness} treats manifolds with singular tips. Appendix~\ref{sec:tip-appendix} contains the examples of Sections~\ref{sec:tips} and~\ref{sec:tip-uniqueness} and the proof of Theorem~\ref{thm:blowups}. Appendix~\ref{sec:identity-table} lists the differential identities used in Part~II, and Appendix~\ref{sec:notation} collects the notation of Part~II.

\subsection*{Statement of AI use}
LLMs were used to assist with the computations in the table of identities of Part~II and, during the research phase, to check the plausibility of partial results numerically on explicit examples. The paper was written by the author. LLMs were used for grammar and formatting suggestions and to generate the figures. The author takes full responsibility for the content of this paper.
\subsection*{Acknowledgments}
The author would like to thank Thomas Duyckaerts for many helpful discussions on the topic of existence of bound states, and for his comments and overall guidance on an earlier draft of this paper. The author would also like to thank Lorenzo Catani for spotting a gap in the proof of an earlier version of Part~I.

%% file: sections/02-setting.tex
\section{Setting, hypotheses and the existence theorem}\label{sec:setting}

The existence proof for excited states consists of two main parts. First, we show that the equation admits solutions with arbitrarily many zeros by a limiting argument. Second, we prove that a small perturbation of the initial value around a bound state generates a solution with at most one additional zero. Together, these two facts imply the existence of solutions with any prescribed finite number of zeros. Before turning to these two steps, we establish global existence and uniqueness for the initial value problem and prove several preliminary results.

\subsection{The boundary value problem}\label{sec:bvp}
We study the boundary value problem
\begin{alignat}{2}
    u'' + g(r)u' + f(u) & = 0  & \qquad & \text{for } r > 0, \label{eq:ode}                    \\
    u'(0)               & = 0, &        & u(r) \to 0 \ \text{ as } r \to \infty. \label{eq:bc}
\end{alignat}
We assume the following on $f$, where $F(u) = \int_0^u f(s)\,ds$:
\begin{enumerate}[label=(f\arabic*), leftmargin=4em]
    \item $f : \R \to \R$ is locally Lipschitz continuous.
    \item $uf(u) < 0$ for $|u|$ small, $u \neq 0$.
    \item there are values $\beta > 0$ and $\beta' < 0$ such that
          \[
              F(u) < 0 \ \text{ on }\ (0,\beta), \quad f(u) > 0 \ \text{ on }\ [\beta,\infty), \quad F(u) < 0 \ \text{ on }\ (\beta',0), \quad f(u) < 0 \ \text{ on }\ (-\infty,\beta'].
          \]
    \item $f(u) = k_\pm |u|^{p-1}u + o(|u|^p)$ as $u \to \pm\infty$, with $k_\pm > 0$.
\end{enumerate}
By (f3), $F$ is strictly increasing on $[\beta,\infty)$ and strictly decreasing on $(-\infty,\beta']$, and by (f4), $F(u) \to +\infty$ as $|u| \to \infty$. We denote by $\beta_F \ge \beta$ and $\beta_F' \le \beta'$ the positive and the negative zero of $F$, so that $F \le 0$ exactly on $[\beta_F',\beta_F]$, with $F < 0$ on $(\beta_F',\beta_F) \setminus \{0\}$ (on $[\beta,\beta_F)$ this follows from the strict monotonicity of $F$). Note that by (f3) every zero of $f$ in $(0,\infty)$ lies in $(0,\beta)$, where $F < 0$.

We also assume the following hypotheses on the function $g$, where $G(r) = \int_1^r g(s)\,ds$ denotes the primitive vanishing at $r = 1$:
\begin{enumerate}[label=(G\arabic*), wide=0pt]
    \item $g : (0,\infty) \to (0,\infty)$ is locally Lipschitz continuous.
    \item $\lim_{s \to 0^+} s\,g(s) \eqdef \gamma \ge 1$ exists.
    \item $\lim_{r \to \infty} r^2 g(r) = \infty$.
\end{enumerate}
The allowed range of $p$ is then determined by the value of $\gamma$:
\begin{equation}\label{eq:p-range}
    1 < p < \frac{\gamma+3}{\gamma-1} \quad \text{if } \gamma > 1, \qquad \text{and} \qquad 1 < p < \infty \quad \text{if } \gamma = 1.
\end{equation}

\begin{remark}[On the hypotheses on $g$]\label{rem:hyp-g}
    (G2) fixes the behavior at the origin. Since $sg(s) \to \gamma \ge 1$ we have $g(s) \ge 1/(2s)$ near $0$, so $G(r) \to -\infty$ and $e^{G(r)} \to 0$ as $r \downarrow 0$. (G3) is the only condition at infinity, and it is used only in Lemma~\ref{lem:no-gaps}: it asks the damping to decay strictly more slowly than $r^{-2}$. Some condition of this kind is needed (Remark~\ref{rem:end}). It holds for $g=(n-1)/r$, for every $g$ bounded below by a positive constant at infinity, and for $g=(n-1)/r+r^b$ with $b\ge0$. No upper bound and no monotonicity is required, and in particular no restriction is placed on the growth of the volume of geodesic balls (Section~\ref{sec:manifolds}).

    The proof of Theorem~\ref{thm:existence} distinguishes two regimes at infinity through the function
    \[
        V(t) := e^{-G(t)} \int_1^t e^{G(s)}\,ds, \qquad V' = 1 - gV.
    \]
    For a warped product with $g = g_\psi$ one has $e^G = (\psi/\psi(1))^{n-1}$, so $V(t)$ is the ratio $(\vol B_t - \vol B_1)/\area \partial B_t$ of geodesic balls. If $g$ is bounded at infinity then $\int_1^\infty V=\infty$, since $V(t) \ge \int_{t-1}^t e^{-\int_s^t g}\,ds \ge e^{-\sup_{[t-1,t]}g}$ for $t\ge2$. The two cases $\int_1^\infty V=\infty$ and $\int_1^\infty V<\infty$ are compared in Remark~\ref{rem:overdamped}.
\end{remark}

The main theorem of Part~I is the following.

\begin{theorem}[Existence]\label{thm:existence}
    Let $k \ge 0$ be an integer, let $f$ satisfy \textup{(f1)--(f4)} with exponent $p$ as in \eqref{eq:p-range}, and let $g$ satisfy \textup{(G1)--(G3)}. Then there exists a solution of \eqref{eq:ode}--\eqref{eq:bc} with exactly $k$ zeros.
\end{theorem}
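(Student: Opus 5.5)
We shoot in the initial amplitude, following the scheme of McLeod--Troy--Weissler. For $\alpha\in\R$ let $u(\cdot,\alpha)$ be the solution of \eqref{eq:ode} with $u(0)=\alpha$, $u'(0)=0$. Its global existence, uniqueness and continuous dependence on $\alpha$ (uniformly on compact $r$-intervals) come from the well-posedness section; (G2) with $\gamma\ge1$ handles the singular point $r=0$. We then use the energy $E=\tfrac12u'^2+F(u)$, which is nonincreasing because $E'=-g\,u'^2\le0$. Every solution falls into one of three classes: (a) it has infinitely many zeros; (b) it has finitely many zeros and $u\to0$, i.e.\ it is a bound state; (c) it has finitely many zeros and eventually stays away from $0$, in the sense that after its last zero it either reaches the region $E<0$ with $|u|$ bounded away from $0$ or converges to a nonzero level. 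In the over-damped case $\int_1^\infty V<\infty$ of Remark~\ref{rem:hyp-g} that level need not be a zero of $f$. For each $k$ define
\[
A_k=\{\alpha>0:\ u(\cdot,\alpha)\text{ has exactly }k\text{ zeros and is of type (c)}\}.
\]
The first step shows that each $A_k$ is open. If $E(r_0)<0$ after the last zero, nearby $\alpha$ also reach $E<0$ at $r_0$ without new zeros, and monotonicity of $E$ traps them. The non-strictly-negative over-damped subcase needs a separate argument using that the limit level lies strictly inside $(0,\beta)$ where $F<0$, so it reduces to the previous case.

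The second step is the base of the induction. Small $\alpha\in(0,\beta_F)$ lie in $A_0$, since $E(0)=F(\alpha)<0$ forces $u$ to stay in $(0,\beta_F)$. Large $\alpha$ have many zeros. For this we rescale $w(s)=\lambda^{-1}u(\lambda^{-(p-1)/2}s)$ with $\lambda=\alpha$. By (f4) and (G2) the rescaled equation converges, uniformly on compact sets, to $w''+\frac{\gamma}{s}w'+k_+|w|^{p-1}w=0$ with $w(0)=1$. The oscillation lemma for this limit equation (Lemma~\ref{lem:limit-zeros}) says that in the range \eqref{eq:p-range} its solution has infinitely many simple zeros; the proof is via a Pohozaev identity, and it is exactly here that $p<(\gamma+3)/(\gamma-1)$ is needed. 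Simple zeros persist under the convergence, so the number of zeros of $u(\cdot,\alpha)$ tends to $\infty$ as $\alpha\to\infty$. Hence $A_0$ is bounded. Put $\alpha_0=\sup A_0$. Then $\alpha_0\notin A_0$ by openness. Also, $u(\cdot,\alpha_0)$ can have no zero: a simple zero would persist to nearby $\alpha$ in $A_0$. It is not of type (c) either. So $u(\cdot,\alpha_0)$ is a positive bound state.

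The inductive step produces $\alpha_k=\sup A_k$ and proves it is a $k$-node bound state. Lemma~\ref{lem:no-gaps} says that for $\alpha$ slightly above a $(k-1)$-node bound state $\alpha_{k-1}$, $u(\cdot,\alpha)$ has at most one extra zero and in fact lies in $A_k$. This gives $A_k\cap(\alpha_{k-1},\infty)\neq\emptyset$. Set $\alpha_k=\sup\{\alpha>\alpha_{k-1}:\ (\alpha_{k-1},\alpha)\subset A_k\}$, i.e.\ the supremum of the component of $A_k$ containing $(\alpha_{k-1},\alpha_{k-1}+\varepsilon)$. This is finite by the large-$\alpha$ oscillation. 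Continuity shows that $u(\cdot,\alpha_k)$ has at most $k$ zeros and is not in $A_k$. It cannot have fewer than $k$ zeros, since then a zero would have been lost through a double zero, which is impossible because $u=u'=0$ forces $u\equiv0$. It cannot have $k$ zeros and be of type (a) or (c) either: type (a) would put an extra zero in nearby approximants, and type (c) is excluded by openness. Hence $u(\cdot,\alpha_k)$ is a bound state with exactly $k$ zeros.

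The main obstacle is Lemma~\ref{lem:no-gaps} together with its use here. Near a bound state the perturbed solution spends a long time near $0$, and we must show it produces exactly one new zero and then gets trapped ($E<0$ or creeping), rather than oscillating again. This is the only place (G3) enters. The condition $r^2g\to\infty$ makes the dissipation $\int g\,u'^2$ beat the Sturm-type oscillation allowed by $f'(0)<0$ near $0$, so a second zero cannot appear. Since the lemma is stated earlier, we invoke it directly. The remaining care goes into the type-(c) openness argument in the over-damped regime $\int_1^\infty V<\infty$ (Remark~\ref{rem:overdamped}).
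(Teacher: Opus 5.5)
Your overall strategy (shooting in $\alpha$, the scaled limit with Lemma~\ref{lem:limit-zeros} to bound the parameter sets, and Lemma~\ref{lem:no-gaps} near a bound state) is the right one. The inductive step, however, does not work as stated, for two reasons.

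First, Lemma~\ref{lem:no-gaps} (with Remark~\ref{rem:no-gaps-trapped}) gives only an \emph{upper} bound: near a $(k-1)$-node bound state every solution has at most $k$ zeros, and if it has $k$ it is trapped. It does not say that solutions just above $\alpha_{k-1}$ actually acquire a $k$-th zero, and nothing in Part~I does. For a general $f$ in (f1)--(f4), bound states need not be unique or nondegenerate (Section~\ref{sec:nonuniqueness}), and Part~I gives no way to exclude $N\equiv k-1$ on a right neighbourhood of $\alpha_{k-1}$. The jump is proved only in Part~II (Lemmas~\ref{lem:jump} and~\ref{lem:tail-jump}), from the sign of the variation at infinity. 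So the inclusion $(\alpha_{k-1},\alpha_{k-1}+\varepsilon)\subset A_k$ is unjustified, and your definition of $\alpha_k$ has nothing to stand on.

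Second, even granting a component $(\alpha_{k-1},\alpha_k)\subset A_k$, your argument that $u(\cdot,\alpha_k)$ cannot have fewer than $k$ zeros overlooks how zeros are actually lost. They do not merge into a double zero; they escape to $r=\infty$, which is exactly what happens as $\alpha$ approaches a bound state. Your reasoning only shows that $u(\cdot,\alpha_k)$ is a bound state with some number $j$ of zeros, and Lemma~\ref{lem:no-gaps} then gives $j\in\{k-1,k\}$. The case $j=k-1$, where the right endpoint is another $(k-1)$-node bound state, is not excluded.

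The repair is to drop the induction and take, for each $k$, $\alpha_k\defeq\sup\{\alpha>0:\ N(\alpha)\le k\}$, as in Section~\ref{sec:existence-proof}. Then:
\begin{itemize}
\item $N(\alpha)\ge k+1$ for \emph{every} $\alpha>\alpha_k$, by definition;
\item lower semicontinuity of $N$ gives $N(\alpha_k)=j\le k$;
\item $u(\cdot,\alpha_k)\to0$, since otherwise $N$ would be locally constant near $\alpha_k$;
\item Lemma~\ref{lem:no-gaps} at $\alpha_k$ gives $k+1\le j+1$, hence $j=k$.
\end{itemize}

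The third step needs a correct stability argument for non-decaying solutions, and yours fails in the over-damped regime. The limit level of a creeping solution need not lie in $(0,\beta)$ (as you observe yourself, it need not be a zero of $f$), so it does not reduce to the case $E<0$. What makes creeping stable there is the uniform tail bound \eqref{eq:total-variation}, $\int_R^\infty|u'(r,\alpha)|\,dr\le Ae^{G(R)}\cJ(R)+B\int_R^\infty V$, which comes from the duality \eqref{eq:tonelli}. When $\int_1^\infty V=\infty$ you still have to prove that a positive limit level is a zero of $f$ (Step~2(c) of Section~\ref{sec:existence-proof}) before the reduction to $E<0$ applies.
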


We now define $u(\cdot,\alpha)$, for $\alpha > 0$, as the solution of \eqref{eq:ode} with
\begin{equation}\label{eq:ivp}
    u(0) = \alpha > 0, \qquad u'(0) = 0.
\end{equation}
We will show that, with an appropriate scaling, $u(r,\alpha)$ converges to the solution of
\begin{align}
    w'' + \frac{\gamma}{r} w' + k_\pm |w|^{p-1} w & = 0, \label{eq:limit-ode} \\
    w(0) = 1, \qquad w'(0)                        & = 0, \label{eq:limit-ivp}
\end{align}
which we prove has infinitely many zeros when $p$ is in the range \eqref{eq:p-range}. Here and below $k_\pm$ stands for $k_{\sgn w}$, i.e.\ the nonlinearity in \eqref{eq:limit-ode} is $k_+ w^p$ where $w > 0$ and $-k_- |w|^p$ where $w < 0$.

\begin{remark}\label{rem:model-f}
    The model nonlinearity of Part II,
    \[
        f(u) = |u|^{p-1}u - u, \qquad F(u) = \frac{|u|^{p+1}}{p+1} - \frac{u^2}{2},
    \]
    belongs to the class (f1)--(f4): (f1) is clear, and (f2) holds since $uf(u) = |u|^{p+1} - u^2 < 0$ for $0 < |u| < 1$. Moreover, (f3) holds with $\beta = -\beta' = \alpha_* := \bigl(\frac{p+1}{2}\bigr)^{1/(p-1)}$, the positive zero of $F$, since $f > 0$ on $(1,\infty) \supset [\alpha_*,\infty)$, and (f4) holds with $k_\pm = 1$. Consequently Theorem~\ref{thm:existence} applies to it, and Part II shows that for this nonlinearity, under additional hypotheses on $g$, the solutions it produces are unique.
\end{remark}

%% file: sections/03-well-posedness.tex
\section{Local and global well-posedness}\label{sec:well-posedness}

For the sake of completeness, we make a small note regarding local and global well-posedness of the Cauchy problem defined by \eqref{eq:ode} and \eqref{eq:ivp}, and regarding the dependence of its solution on $\alpha$. On a manifold with an isometric group action of cohomogeneity one, existence and uniqueness for the analogous Cauchy problem, with the data at a point that is not fixed by the action, are proved in~\cite{MartinezAlbaRiano2025}. In \eqref{eq:ivp} the data are given at $r=0$, where $g$ is singular.

\subsection{Local uniqueness and existence near the origin}\label{sec:local}
The condition $g > 0$ is sufficient for the equation to have local well-posedness at $0$. Indeed, rewriting the equation as $\bigl(e^{G(r)}u'\bigr)' = -e^{G(r)} f(u)$ and integrating twice (the boundary term at $0$ vanishes since $e^{G(r)} \to 0$ as $r \downarrow 0$, see Remark~\ref{rem:hyp-g}), we define the operator
\[
    T(u)(r) \defeq \alpha - \int_0^r \cK(r,\tau) f(u(\tau))\,d\tau, \qquad
    \cK(r,\tau) \defeq \int_\tau^r e^{G(\tau)-G(s)}\,ds \le r - \tau \le r,
\]
the inequality holding since $G$ is increasing. Let $\delta>0$, let $M := \sup_{\abs{t-\alpha}\le\delta} \abs{f(t)}$, and let $L$ be the Lipschitz constant of $f$ on $[\alpha-\delta,\alpha+\delta]$. A standard calculation yields, for $r\in[0,R]$,
\[
    \abs{T(u)(r) - \alpha} \le \int_0^r \cK(r,\tau) \abs{f(u(\tau))}\,d\tau \le M \int_0^R (R-\tau)\,d\tau = M\frac{R^2}{2},
\]
so that $T$ maps the closed ball $\bar B_\delta^\alpha := \{u \in C([0,R]) : \ \norm{u - \alpha}_\infty \le \delta\}$, a complete metric space, into itself, provided $MR^2/2 \le \delta$. Similarly,
\[
    \abs{T(u)(r) - T(v)(r)} \le \int_0^r \cK(r,\tau) \abs{f(u(\tau)) - f(v(\tau))}\,d\tau \le L\frac{R^2}{2} \norm{u - v}_\infty .
\]
Taking the supremum with respect to $r \in [0,R]$ and choosing $R$ with $LR^2\le1$ shows that $T$ is a contraction with constant $\frac12 < 1$. Banach's fixed point theorem then ensures the existence and uniqueness of a local solution at the origin. This solution belongs to $C^1([0,R]) \cap C^2((0,R])$ and satisfies $u'(0) = 0$, since
\[
    u'(r)=-\int_0^r e^{G(\tau)-G(r)}f(u(\tau))\,d\tau.
\]

Conversely, let $u\in C^1([0,R_1])\cap C^2((0,R_1])$ solve \eqref{eq:ode} with $u(0)=\alpha$ and $u'(0)=0$. Integrating $(e^Gu')'=-e^Gf(u)$ from $\varepsilon$ to $r$ and letting $\varepsilon\downarrow0$ kills the boundary term $e^{G(\varepsilon)}u'(\varepsilon)$, because $e^G$ is bounded near $0$ ($G$ is increasing) and $u'(\varepsilon)\to0$. Integrating once more gives $u=T(u)$ on $[0,R_1]$. By continuity $\|u-\alpha\|_{C([0,R'])}\le\delta$ for some $R'\in(0,\min\{R,R_1\}]$. The estimates above hold with $R'$ in place of $R$, so $T$ has a unique fixed point in the ball of $C([0,R'])$, and $u$ coincides on $[0,R']$ with the solution constructed above. Beyond $R'$ the two agree by the Cauchy--Lipschitz theorem.

Away from the origin, local uniqueness is guaranteed by the Cauchy--Lipschitz theorem, which cannot be used at the origin as $g$ is not Lipschitz continuous near $0$. The same argument applies to \eqref{eq:limit-ode}--\eqref{eq:limit-ivp}, with $g = \gamma/r$.

\begin{remark}[Dependence on $\alpha$]
    The constants can be chosen uniformly for $\alpha$ in some compact interval $I$: take $\delta=1$ and let $M$, $L$ be the bound and the Lipschitz constant of $f$ on the $1$-neighborhood of $I$. For $\alpha,\tilde\alpha\in I$ the fixed points then satisfy
    \[
        \|u_\alpha-u_{\tilde\alpha}\|_\infty\le|\alpha-\tilde\alpha|+\tfrac12\|u_\alpha-u_{\tilde\alpha}\|_\infty.
    \]
    Rearranging gives the first of the following two bounds, and the formula for $u'$ gives the second:
    \[
        \|u_\alpha-u_{\tilde\alpha}\|_{C([0,R])}\le2|\alpha-\tilde\alpha|, \qquad
        \|u_\alpha'-u_{\tilde\alpha}'\|_{C([0,R])}\le2LR|\alpha-\tilde\alpha|.
    \]
    Away from the origin the equation is regular, and the solutions are bounded uniformly for $\alpha\in I$ (Section~\ref{sec:global}). Gronwall's lemma then shows that $\alpha\mapsto(u,u')(\cdot,\alpha)$ is continuous, and indeed locally Lipschitz, into $C([0,T])^2$ for every $T>0$. This continuous dependence in $C^1$ on compact sets is used repeatedly and sometimes tacitly in later sections (Proposition~\ref{prop:trapping}, Lemma~\ref{lem:no-gaps}, Section~\ref{sec:existence-proof}).

    If moreover $f\in C^1$, as in Part~II, the same fixed point argument (the uniform contraction principle) and $C^1$ dependence on initial data beyond $R$ show that $\alpha\mapsto u(\cdot,\alpha)$ is $C^1$ into $C^1([0,T])$. The derivative $v=\partial_\alpha u$ then solves
    \[
        v''+gv'+f'(u)v=0, \qquad v(0)=1, \qquad v'(0)=0.
    \]
\end{remark}

\subsection{Global existence}\label{sec:global}
Let $E(r) := \tfrac{1}{2} u'(r)^2 + F(u(r))$ be the energy of Definition~\ref{def:energy} below. By \eqref{eq:energy-decay}, $E$ is nonincreasing, hence $F(u(r)) \le E(r) \le E(0) = F(\alpha)$ on the maximal interval of existence. Since $F(s) \to +\infty$ as $|s| \to \infty$ by (f4), $u$ is bounded, and thus so is $u'$. Therefore the solution is global. Hence for each $\alpha$ there is a unique global solution satisfying \eqref{eq:ode} and \eqref{eq:ivp}. The bounds depend on $\alpha$ only through $F(\alpha)$, and they are uniform for $\alpha$ in a compact set.

%% file: sections/04-intermediate-properties.tex
\section{Intermediate properties}\label{sec:intermediate}

We begin by defining an energy-like quantity that will be very useful throughout this work.

\begin{definition}\label{def:energy}
  For a solution $u(\cdot,\alpha)$ of \eqref{eq:ode}, \eqref{eq:ivp} we define
  \[
    E(u(r,\alpha)) := \tfrac{1}{2}\bigl(u'(r,\alpha)\bigr)^2 + F(u(r,\alpha)).
  \]
\end{definition}

By \eqref{eq:ode},
\begin{equation}\label{eq:energy-decay}
  \frac{d}{dr}\bigl[E(u(r,\alpha))\bigr] = -g(r)\bigl(u'(r,\alpha)\bigr)^2 \le 0,
\end{equation}
which, since the critical points of a nonconstant solution are isolated, implies that $E$ is in fact strictly decreasing along such nonconstant solutions. As the only constant solutions are those of the form $u \equiv c$ with $f(c) = 0$, we shall only consider nonconstant solutions. We will often abbreviate $E(u(r,\alpha))$ as $E(r,\alpha)$ or $E(r)$ if $\alpha$ is clear from context, and $u(r,\alpha)$ as $u(r)$. We also see the following:

\begin{proposition}\label{prop:strict-extrema}
  Let $r_0 \ge 0$ be a critical point of a solution $u(\cdot,\alpha)$. If $f(u(r_0,\alpha)) > 0$ (resp.\ $f(u(r_0,\alpha)) < 0$) then $u(r_0,\alpha)$ is a strict local maximum (resp.\ minimum), and $u(r,\alpha) < u(r_0,\alpha)$ (resp.\ $u(r,\alpha) > u(r_0,\alpha)$) for all $r > r_0$.
\end{proposition}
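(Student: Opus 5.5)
We treat the case $f(u(r_0))>0$; the other case is symmetric. The plan has two steps: first show that $r_0$ is a strict local maximum, then upgrade this to the global bound using the monotonicity of the energy \eqref{eq:energy-decay}.

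\emph{Local step.} If $r_0>0$, the equation at $r_0$ gives $u''(r_0)=-g(r_0)u'(r_0)-f(u(r_0))=-f(u(r_0))<0$, since $u'(r_0)=0$. Hence $r_0$ is a strict local maximum. If $r_0=0$ we cannot evaluate $g$ there, so we use the integral formula of Section~\ref{sec:local},
\[
u'(r)=-\int_0^r e^{G(\tau)-G(r)}f(u(\tau))\,d\tau .
\]
By continuity $f(u(\tau))>0$ for small $\tau$, so $u'(r)<0$ for small $r>0$. The same formula, written from a base point $r_0>0$ as $u'(r)=-\int_{r_0}^r e^{G(\tau)-G(r)}f(u(\tau))\,d\tau$, also covers $r_0>0$ uniformly. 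In either case $u'<0$ on $(r_0,r_0+\delta)$, so $u<u(r_0)$ there.

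\emph{Global step.} Suppose, for contradiction, that $u(r_1)\ge u(r_0)$ for some $r_1>r_0$, and take the first such $r_1$. Then $u(r_1)=u(r_0)$ and $u<u(r_0)$ on $(r_0,r_1)$. Compare energies: $E(r_1)=\tfrac12u'(r_1)^2+F(u(r_0))\ge F(u(r_0))=E(r_0)$. On the other hand, $E$ is strictly decreasing along a nonconstant solution, because $E'=-gu'^2$, $g>0$, and $u'\not\equiv 0$ on $(r_0,r_1)$. So $E(r_1)<E(r_0)$, a contradiction.

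This proves $u(r)<u(r_0)$ for all $r>r_0$. Note that the solution is nonconstant, since $f(u(r_0))\ne0$ rules out a constant solution.

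The only delicate point is the case $r_0=0$, where $g$ is singular; the integral representation of $u'$ from Section~\ref{sec:local} handles it. Strictness of the energy decrease also needs care, but it follows because $u'$ vanishes only at isolated points for a nonconstant solution, or simply because $u'<0$ near $r_0^+$.
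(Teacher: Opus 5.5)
Your proof is correct and follows the paper's argument: you get the strict local extremum from the equation at $r_0$, then derive a contradiction between $E(r_1)\ge F(u(r_0))=E(r_0)$ and the strict decrease of $E$. Choosing the first such $r_1$ just makes explicit the intermediate-value step that the paper leaves implicit.

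The one difference is at $r_0=0$. There you read off the sign of $u'$ directly from the integral formula, whereas the paper computes $u''(0)=-f(\alpha)/(\gamma+1)$ via (G2) and dominated convergence. Your version needs less, and it is exactly what the paper itself does in the proof of Theorem~\ref{thm:blowups}, where $u''(0)$ may not exist. The paper's route has the advantage of giving the value of $u''(0)$, which it reuses later.
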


\begin{proof}
  For $r_0 > 0$, by \eqref{eq:ode} we have $u''(r_0) = -f(u(r_0)) < 0$, hence the strict local maximum. For $r_0 = 0$ one has $u''(0) = -f(\alpha)/(\gamma+1)$, with the same conclusion. Indeed $u'(r)=-e^{-G(r)}\int_0^re^Gf(u)$ (Section~\ref{sec:local}), and
  \[
    e^{-G(r)}\int_0^re^G=r\int_0^1e^{G(rt)-G(r)}\,dt=\frac{r}{\gamma+1}(1+o(1)) \qquad \text{as } r\downarrow0.
  \]
  The last equality holds by dominated convergence, since $e^{G(rt)-G(r)}=\exp\big(-\int_{rt}^rg\big)\in(0,1]$ tends to $t^\gamma$ by (G2). Hence $u'(r)/r\to-f(\alpha)/(\gamma+1)$.

  Now suppose $u(r_1) = u(r_0)$ for some $r_1 > r_0$. Then
  \[
    E(r_1) = \tfrac{1}{2} u'(r_1)^2 + F(u(r_1)) \ge F(u(r_1)) = F(u(r_0)) = E(r_0),
  \]
  which contradicts the strictly decreasing property of $E$. The case of a minimum is analogous.
\end{proof}

A simple but important consequence is the following.

\begin{proposition}[Trapping]\label{prop:trapping}
  If $E(u(r_0,\alpha)) < 0$ for some $r_0 \ge 0$, then $u(r_0,\alpha) \neq 0$, $u(r,\alpha)$ has the sign of $u(r_0,\alpha)$ for all $r \ge r_0$, and $u(r,\alpha) \not\to 0$ as $r \to \infty$. Moreover, this is stable with respect to the shooting parameter: for $\tilde\alpha$ in a small enough neighborhood of $\alpha$ one has $E(u(r_0,\tilde\alpha)) < 0$ as well.
\end{proposition}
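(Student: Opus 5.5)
The plan is to use that $E$ is nonincreasing (by \eqref{eq:energy-decay}) together with the sign structure of $F$ from (f3). First, $u(r_0)\neq0$: if $u(r_0)=0$ then $E(r_0)=\tfrac12u'(r_0)^2+F(0)=\tfrac12u'(r_0)^2\ge0$, a contradiction. Next, for $r\ge r_0$ we have $E(r)\le E(r_0)<0$. If $u$ had a zero at some $r_1>r_0$, then $E(r_1)=\tfrac12u'(r_1)^2\ge0$, which is impossible. So $u(r)$ never vanishes on $[r_0,\infty)$ and by continuity keeps the sign of $u(r_0)$.

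For the non-decay claim, I argue by contradiction. Suppose $u(r)\to0$. Then $F(u(r))\to F(0)=0$, hence $\liminf_{r\to\infty}E(r)\ge\liminf F(u(r))=0$. On the other hand $E(r)\le E(r_0)<0$ for all $r\ge r_0$, so this is absurd. Note that this step only uses continuity of $F$ at $0$ and $\tfrac12u'^2\ge0$; no control of $u'$ at infinity is needed.

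For stability, I invoke the continuous dependence of $(u,u')(r_0,\cdot)$ on $\alpha$ from the Remark on dependence on $\alpha$ in Section~\ref{sec:well-posedness}, which includes the case $r_0=0$ via the fixed-point estimates. Since $F$ is continuous, $\tilde\alpha\mapsto E(u(r_0,\tilde\alpha))=\tfrac12u'(r_0,\tilde\alpha)^2+F(u(r_0,\tilde\alpha))$ is continuous. A strict inequality $E<0$ therefore persists on a neighborhood of $\alpha$. When $r_0=0$ this is simply $F(\tilde\alpha)<0$, which holds for $\tilde\alpha$ near $\alpha$.

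There is no real obstacle here. The only point needing care is that continuity in $\alpha$ is used at a fixed finite $r_0$, which is exactly what the local Lipschitz dependence into $C([0,T])^2$ provides.
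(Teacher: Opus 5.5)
Your proof is correct and essentially the same as the paper's. The monotonicity of $E$ rules out any zero on $[r_0,\infty)$ and any decay $u\to0$, by comparing with $E\ge \frac12u'^2$ at a zero and $E\ge F(u)\to0$. Stability follows from the continuity of $(u,u')(r_0,\cdot)$ in $\alpha$.
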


\begin{proof}
  If $u(r) = 0$ for some $r \ge r_0$, then $0 > E(r_0) \ge E(r) = \tfrac{1}{2}(u'(r))^2 \ge 0$, which is a contradiction. Similarly, were $u(r)$ to tend to $0$, then $F(u(r)) \to 0$, so that $E(r) \ge F(u(r)) > E(r_0) \ge E(r)$ for $r$ large, again a contradiction. Stability is a consequence of the continuity of $(u,u')(r_0,\cdot)$ in $\alpha$.
\end{proof}

We use this property to prove the following lemma:

\begin{lemma}\label{lem:small-alpha}
  For $0 < \alpha \le \beta$, $u(r,\alpha) > 0$ for all $r \ge 0$ and $u(r,\alpha) \not\to 0$ as $r \to \infty$.
\end{lemma}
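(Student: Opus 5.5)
The plan is to split according to the sign of $F(\alpha)$ and, in the harder case, exploit the fact that the solution cannot sit at a nonzero equilibrium in $(0,\beta)$ forever without losing energy.

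\emph{Case $F(\alpha)<0$.} This is the generic case, since $F<0$ on $(0,\beta]$ except possibly at $\alpha=\beta$ when $\beta=\beta_F$. Here $E(0)=F(\alpha)<0$, and Proposition~\ref{prop:trapping} with $r_0=0$ gives both conclusions at once: $u(r,\alpha)>0$ for all $r\ge0$, and $u(r,\alpha)\not\to0$.

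\emph{Case $F(\alpha)=0$.} By (f3), $F<0$ on $(0,\beta)$, so this forces $\alpha=\beta=\beta_F$. We may assume $u$ is nonconstant, since if $f(\beta)=0$ the constant solution already works; in fact (f3) gives $f(\beta)>0$. Then $E$ is strictly decreasing, so $E(r)<0$ for every $r>0$. Applying Proposition~\ref{prop:trapping} at any small $r_0>0$ gives $u>0$ on $[r_0,\infty)$ and $u\not\to0$. On $[0,r_0]$ positivity follows by continuity from $u(0)=\beta>0$ once $r_0$ is chosen small.

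\emph{Remaining subtlety.} One might worry about $\alpha$ being a zero of $f$ in $(0,\beta)$. In that case $u\equiv\alpha$ is constant and positive, and it does not tend to $0$, so the statement holds trivially. By uniqueness (Section~\ref{sec:local}) this constant is the solution. Apart from this, the argument is immediate from the energy monotonicity \eqref{eq:energy-decay} and Proposition~\ref{prop:trapping}.

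The only step requiring care is the boundary case $\alpha=\beta$ with $F(\beta)=0$. The key points there are that $E$ strictly decreases and that $u$ is nonconstant because $f(\beta)>0$.
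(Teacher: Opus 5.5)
Your proof is correct and is essentially the paper's argument: energy monotonicity plus Proposition~\ref{prop:trapping}, with the boundary case $F(\alpha)=0$ handled exactly as in the paper, via strict decrease of $E$ and trapping at a small $r_0>0$. The only difference is the case split. You split on the sign of $F(\alpha)$ and apply trapping directly at $r_0=0$ when $F(\alpha)<0$, which already covers the constant solutions and makes your ``remaining subtlety'' redundant, whereas the paper splits on whether $f(\alpha)=0$.
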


\begin{proof}
  If $f(\alpha) = 0$ then $u \equiv \alpha > 0$ and there is nothing to prove. Otherwise $u$ is nonconstant, so $E$ is strictly decreasing, and since $E(0) = F(\alpha) \le 0$ by (f2) and the continuity of $F$ (recall $F(\beta) \le 0$), $E(r_0) < 0$ for any $r_0 > 0$, and since $u > 0$ on $[0,r_0]$ for $r_0$ small, Proposition~\ref{prop:trapping} gives the claim.
\end{proof}

In the notation of Section~\ref{sec:existence-proof}, Lemma~\ref{lem:small-alpha} says that $(0,\beta]\subset A_0$: the set of shooting parameters producing no zero is nonempty, which is the base case of the inductive proof of Theorem~\ref{thm:existence} (for every $k$, since $A_0\subset A_k$). The following lemma on critical points of positive solutions is used repeatedly.

\begin{lemma}\label{lem:positive-critical}
  Let $u$ be a solution, and let $c \geq 0$ be a critical point with $u(c) > 0$ and $E(c) \ge 0$. Then $u(c) \ge \beta_F \ge \beta$, $f(u(c)) > 0$, and $c$ is a strict local maximum of $u$. In particular, on an interval on which $u > 0$ and $E \ge 0$, $u$ has no local minimum, hence at most one critical point, and it is monotone on each side of it. The same holds with the reversed signs, that is with $\beta_F',\beta'$ in place of $\beta_F,\beta$.
\end{lemma}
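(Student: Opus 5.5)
Since $u'(c)=0$, we have $E(c)=F(u(c))\ge0$. By the discussion after (f4), $F<0$ on $(0,\beta_F)$, so the positive value $u(c)$ cannot lie in $(0,\beta_F)$. Hence $u(c)\ge\beta_F\ge\beta$, and by (f3) $f(u(c))>0$.

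Proposition~\ref{prop:strict-extrema} then gives that $c$ is a strict local maximum of $u$. The case $c=0$ is included there, via $u''(0)=-f(\alpha)/(\gamma+1)$.

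For the second statement, let $I$ be an interval on which $u>0$ and $E\ge0$. By monotonicity of $E$, every critical point $c\in I$ has $E(c)\ge0$. So, by the first part, every critical point in $I$ is a strict local maximum, and $u$ has no local minimum in $I$.

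Now suppose $c_1<c_2$ are two critical points in $I$. Then $u$ attains its minimum over $[c_1,c_2]$ somewhere in that interval. It is not attained at either endpoint, since both are strict local maxima and $u$ is smaller just inside. So the minimum lies in the interior, where it is an interior local minimum, hence a critical point that is not a strict maximum. This is a contradiction.

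Alternatively, Proposition~\ref{prop:strict-extrema} gives $u(r)<u(c_1)$ for all $r>c_1$. Combined with strictness at $c_2$, this yields the same contradiction.

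Therefore $u$ has at most one critical point in $I$. Since $u'$ does not vanish elsewhere in $I$, $u'$ has constant sign on each side of that point, so $u$ is monotone on each side.

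The case $u(c)<0$ is symmetric, with $F<0$ on $(\beta_F',0)$ and $f<0$ on $(-\infty,\beta']$, and gives a strict local minimum. No real obstacle arises. The only point to watch is that $E(c)=F(u(c))$ uses $u'(c)=0$, and that the case $c=0$ is covered by the earlier proposition.
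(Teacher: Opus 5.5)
Your proof is correct and follows the paper's argument: $F(u(c))=E(c)\ge0$ with $u(c)>0$ forces $u(c)\ge\beta_F\ge\beta$, Proposition~\ref{prop:strict-extrema} makes $c$ a strict local maximum (including $c=0$), and two critical points in the interval would force an interior local minimum on $[c_1,c_2]$. The only superfluous step is the appeal to the monotonicity of $E$ in the second part, since $E\ge0$ on the interval is already a hypothesis.
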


\begin{proof}
  $F(u(c)) = E(c) \ge 0$ and $u(c) > 0$ force $u(c) \ge \beta_F \ge \beta$, hence $f(u(c)) > 0$ by (f3), and Proposition~\ref{prop:strict-extrema} applies. If $u$ had two critical points $c < \tilde c$ on such an interval, both would be strict local maxima, and the minimum of $u$ on $[c,\tilde c]$ would be attained at an interior critical point that is a local minimum, contradicting what we just proved.
\end{proof}

%% file: sections/05-scaled-limit.tex
\section{The scaled limit}\label{sec:scaled-limit}

We show that, after rescaling, $u(\cdot,\alpha)$ converges as $\alpha\to\infty$ to the solution of \eqref{eq:limit-ode}--\eqref{eq:limit-ivp}, which has infinitely many zeros. Hence the number of zeros of $u(\cdot,\alpha)$ is unbounded as $\alpha\to\infty$.

\begin{lemma}\label{lem:scaled-convergence}
    For $\lambda > 0$, let
    \[
        v(r,\lambda) = \lambda^{-2/(p-1)}\, u\Bigl(\frac{r}{\lambda},\ \lambda^{2/(p-1)}\Bigr).
    \]
    Then as $\lambda \to \infty$, $v(r,\lambda) \to w(r)$ uniformly on compact sets, where $w$ is the solution to \eqref{eq:limit-ode} and \eqref{eq:limit-ivp}.
\end{lemma}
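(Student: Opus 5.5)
First I would write the equation satisfied by $v$. Set $\mu=\lambda^{2/(p-1)}$ and $v(r)=\mu^{-1}u(r/\lambda,\mu)$. A direct computation gives
\[
  v''+\lambda^{-1}g(r/\lambda)\,v'+\lambda^{-2}\mu^{-1}f(\mu v)=0,\qquad v(0)=1,\ v'(0)=0.
\]
Call $g_\lambda(r):=\lambda^{-1}g(r/\lambda)$ and $f_\lambda(s):=\mu^{-p}f(\mu s)$, so that the equation reads $v''+g_\lambda v'+f_\lambda(v)=0$, because $\lambda^{2}\mu=\mu^{p}$. Using (G2), $r g_\lambda(r)=(r/\lambda)g(r/\lambda)\to\gamma$ as $\lambda\to\infty$, locally uniformly in $r\in(0,\infty)$. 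In fact this holds uniformly on $(0,R]$, since $sg(s)\to\gamma$ as $s\downarrow0$. Using (f4), $f_\lambda(s)\to k_\pm|s|^{p-1}s$ locally uniformly in $s$: for $|\mu s|\ge K$ the error is $o(1)|s|^p$, and for $|\mu s|\le K$ both sides are $O(\mu^{-p})$.

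Next come uniform bounds. The rescaled energy $E_\lambda=\tfrac12 v'^2+F_\lambda(v)$, with $F_\lambda(s)=\mu^{-p-1}F(\mu s)$, is nonincreasing by the analogue of \eqref{eq:energy-decay}, and $F_\lambda(1)\to k_+/(p+1)$. Since $F_\lambda(s)\ge c|s|^{p+1}-C\mu^{-p-1}$ uniformly in $\lambda$ large by (f4) and (f1), both $v$ and $v'$ are bounded uniformly in $\lambda$ on $[0,\infty)$.

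Then I would pass to the limit through the integral formulation of Section~\ref{sec:local}:
\[
  v(r)=1-\int_0^r\cK_\lambda(r,\tau)f_\lambda(v(\tau))\,d\tau,\qquad \cK_\lambda(r,\tau)=\int_\tau^r e^{G_\lambda(\tau)-G_\lambda(s)}\,ds,
\]
where $G_\lambda$ is a primitive of $g_\lambda$. The kernel satisfies $0\le\cK_\lambda\le r-\tau$. Moreover $e^{G_\lambda(\tau)-G_\lambda(s)}=\exp(-\int_\tau^s g_\lambda)$, and this converges to $(\tau/s)^\gamma$ pointwise for $0<\tau<s$, by the uniform convergence of $rg_\lambda(r)$ on $(0,R]$. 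The uniform bounds on $v,v'$ give equicontinuity, so Arzel\`a--Ascoli applies. From any sequence $\lambda_j\to\infty$ we extract a subsequence along which $v\to\tilde w$ uniformly on compact sets. Passing to the limit in the integral equation by dominated convergence, with dominating function $(r-\tau)\sup|f_\lambda(v)|$, shows that $\tilde w$ solves the integral equation of \eqref{eq:limit-ode}--\eqref{eq:limit-ivp} with $g=\gamma/r$. By the uniqueness shown in Section~\ref{sec:local}, which is stated there also for the limit problem, $\tilde w=w$. Since every subsequential limit is $w$, the whole family converges.

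The main obstacle is the behaviour of $g_\lambda$ near $r=0$. Here $g_\lambda$ is singular and the convergence $g_\lambda\to\gamma/r$ is not uniform in sup norm. The point to handle carefully is therefore the convergence of the kernel $\cK_\lambda$. This requires (G2) in the form $|sg(s)-\gamma|\le\varepsilon$ for $s\le\delta$, which gives $\int_\tau^s g_\lambda=\gamma\log(s/\tau)+O(\varepsilon\log(s/\tau))$ whenever $s/\lambda\le\delta$. For fixed $R$ this holds on $[0,R]$ once $\lambda\ge R/\delta$, so the exponential factors converge pointwise, and they stay bounded by $1$.
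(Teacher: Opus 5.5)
Your proposal is correct and follows essentially the paper's route: the rescaled energy gives bounds on $v,v'$ uniform in $\lambda$, Arzel\`a--Ascoli gives subsequential limits, (G2) identifies the limit through the integral formulation, and uniqueness for \eqref{eq:limit-ode}--\eqref{eq:limit-ivp} upgrades this to convergence of the whole family. The only difference is cosmetic. You pass to the limit in the double-integral form, whose weight $e^{G_\lambda(\tau)-G_\lambda(s)}$ is bounded by $1$, instead of the first-order form for $v'$ normalized by $e^{G(1/\lambda)}$; this spares you the auxiliary bound $Cs^{\gamma/2}$, and it is the variant the paper itself uses in the proof of Lemma~\ref{lem:origin}.
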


\begin{proof}
    A direct substitution gives
    \begin{align}
        v'' + \frac{1}{\lambda} g\Bigl(\frac{r}{\lambda}\Bigr) v' + \lambda^{-2p/(p-1)} f\bigl(\lambda^{2/(p-1)} v\bigr) & = 0, \label{eq:scaled-ode} \\
        v(0) = 1, \qquad v'(0)                                                                                           & = 0. \label{eq:scaled-ivp}
    \end{align}
    We define an energy-like quantity for $v$, similarly to $E$, as
    \begin{equation}\label{eq:scaled-energy}
        \mathcal{E}(r,\lambda) \defeq \tfrac{1}{2} v'(r,\lambda)^2 + \lambda^{-\frac{2p}{p-1}} \int_0^{v(r,\lambda)} f\bigl(\lambda^{\frac{2}{p-1}} s\bigr)\,ds = \tfrac{1}{2} v'(r,\lambda)^2 + \lambda^{-\frac{2p+2}{p-1}} F\bigl(\lambda^{\frac{2}{p-1}} v(r,\lambda)\bigr).
    \end{equation}
    Since $\mathcal{E}$ is also nonincreasing, thanks to (f4) we have, for $\lambda \ge 1$,
    \[
        \mathcal{E}(r,\lambda) \le \mathcal{E}(0,\lambda) = \lambda^{-(2p+2)/(p-1)} F\bigl(\lambda^{2/(p-1)}\bigr) \lesssim \lambda^{-(2p+2)/(p-1)} \lambda^{\frac{2(p+1)}{p-1}} = 1.
    \]
    By (f4) there are $c, C, T > 0$ with $F(t) \ge c|t|^{p+1}$ for $|t| \ge T$ and $F \ge -C$ on $[-T,T]$. Hence the potential term in \eqref{eq:scaled-energy} is bounded below by $c|v|^{p+1} - C\lambda^{-(2p+2)/(p-1)} \ge c|v|^{p+1} - C$ for $\lambda \ge 1$, and we conclude that $|v(r,\lambda)| \le M$ for all $r \ge 0$, $\lambda \ge 1$ and some $M > 0$. Taking \eqref{eq:scaled-ode} and \eqref{eq:scaled-ivp}, multiplying by the integrating factor $e^{G(\cdot/\lambda)}$ and integrating, we see they are equivalent to
    \begin{equation}\label{eq:scaled-integral}
        e^{G(r/\lambda)} v'(r,\lambda) = -\lambda^{-2p/(p-1)} \int_0^r e^{G(s/\lambda)} f\bigl(\lambda^{2/(p-1)} v(s,\lambda)\bigr)\,ds.
    \end{equation}
    Since $|f(t)| \le C(1 + |t|^p)$ and $|v| \le M$, we have $\lambda^{-2p/(p-1)} |f(\lambda^{2/(p-1)} v)| \le C$ for $\lambda \ge 1$, so
    \[
        |v'(r,\lambda)| \le C\, e^{-G(r/\lambda)} \int_0^r e^{G(s/\lambda)}\,ds \le Cr,
    \]
    using again that $G$ is increasing. Thus $v'(r,\lambda)$ is bounded on compact subsets of $\R_{\ge 0}$, independently of $\lambda$. By the Ascoli--Arzel\`a theorem we can extract a subsequence $\lambda_j \to \infty$ and a continuous function $w : \R_{\ge 0} \to \R$ such that $v(r,\lambda_j) \to w(r)$ uniformly on compact sets.

    To identify the limit equation we normalize \eqref{eq:scaled-integral} by the constant $e^{G(1/\lambda)}$:
    \begin{equation}\label{eq:scaled-normalized}
        e^{G(r/\lambda)-G(1/\lambda)} v'(r,\lambda) = -\lambda^{-2p/(p-1)} \int_0^r e^{G(s/\lambda)-G(1/\lambda)} f\bigl(\lambda^{2/(p-1)} v(s,\lambda)\bigr)\,ds.
    \end{equation}
    By (G2), writing $tg(t) = \gamma + h(t)$ with $h(t) \to 0$ as $t \downarrow 0$, for $0 < s \le T$ we have
    \[
        G(s/\lambda) - G(1/\lambda) = \int_{1/\lambda}^{s/\lambda} \frac{\gamma + h(t)}{t}\,dt = \gamma \log s + \rho_\lambda(s), \qquad |\rho_\lambda(s)| \le \sup_{(0,\max(1,T)/\lambda]} |h| \cdot |\log s|,
    \]
    so $e^{G(s/\lambda)-G(1/\lambda)} \to s^\gamma$ uniformly on compact subsets of $(0,T]$, with $e^{G(s/\lambda)-G(1/\lambda)} \le Cs^{\gamma/2}$ for $s \le 1$ and $\lambda$ large. Moreover, by (f4), $\lambda^{-2p/(p-1)} f(\lambda^{2/(p-1)} v(s,\lambda_j)) \to k_{\sgn w(s)} |w(s)|^{p-1} w(s)$ wherever $w(s) \neq 0$, and both sides are bounded by $C$. Wherever $w(s) = 0$ the left-hand side tends to $0$ as well. By dominated convergence the right-hand side of \eqref{eq:scaled-normalized} converges to
    \[
        -\int_0^r s^\gamma\, k_{\sgn w(s)} |w(s)|^{p-1} w(s)\,ds
    \]
    for every $r > 0$. Hence $v'(r,\lambda_j)$ converges pointwise on $(0,\infty)$ to the continuous function
    \[
        \tilde w(r) \defeq -r^{-\gamma} \int_0^r s^\gamma k_{\sgn w(s)} |w(s)|^{p-1} w(s)\,ds,
    \]
    and applying dominated convergence to $v(r,\lambda_j) = 1 + \int_0^r v'(s,\lambda_j)\,ds$ (which we can do since $v'(\cdot,\lambda)$ is bounded on compact sets) we obtain $w(r) = 1 + \int_0^r \tilde w$. Thus $w \in C^1$, $w' = \tilde w$, $w(0) = 1$, $w'(0) = 0$, and $r^\gamma w' = -\int_0^r s^\gamma k_\pm |w|^{p-1} w$, which is the integrated form of \eqref{eq:limit-ode}--\eqref{eq:limit-ivp}. Since this problem has a unique solution (Section~\ref{sec:well-posedness}), the limit does not depend on the subsequence, and the whole family $v(\cdot,\lambda)$ converges to $w$ as $\lambda \to \infty$.
\end{proof}

The main result of this section is the following lemma. Its proof uses a Pohozaev identity, the special case $g = \gamma/r$, $\varphi(r) = r$ of the identity \eqref{eq:pohozaev-identity}, on which Part II is built.

\begin{lemma}[Zeros of the limit equation]\label{lem:limit-zeros}
    Let $\gamma \ge 1$, $k_\pm > 0$ and let $p$ be in the range \eqref{eq:p-range}. The solution $w$ of \eqref{eq:limit-ode}--\eqref{eq:limit-ivp} has infinitely many zeros, all of them simple.
\end{lemma}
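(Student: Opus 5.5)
We would prove the statement in three steps: simplicity of the zeros, convergence of the energy to zero, and then a contradiction with a Pohozaev identity if there were only finitely many zeros.

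\emph{Simplicity.} For the limit equation \eqref{eq:limit-ode} set
\[
  \mathcal{E}_w(r) \defeq \tfrac12 w'^2 + \tfrac{k_\pm}{p+1}|w|^{p+1}.
\]
It satisfies $\mathcal{E}_w' = -\frac{\gamma}{r}w'^2 \le 0$. The potential term is nonnegative and vanishes only at $w=0$. If $w(r_0)=w'(r_0)=0$ at some $r_0>0$, then uniqueness for the Cauchy problem away from the origin forces $w\equiv0$, which contradicts $w(0)=1$. So every zero is simple.

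\emph{Decay of the energy.} We argue by contradiction and assume $w$ has only finitely many zeros. Then $w$ has a fixed sign on some $(R,\infty)$, say $w>0$. There $w''+\frac{\gamma}{r}w' = -k_+w^p<0$, so $(r^\gamma w')'<0$. A standard argument then shows that $w$ is eventually monotone and converges to a limit $\ell\ge0$. Since $w$ is bounded and $r^\gamma w'$ is decreasing, if $w'$ became negative it would stay negative. If instead $w'\ge0$ eventually, then $r^\gamma w'$ decreases to a limit $L \ge 0$. Integrating $(r^\gamma w')' \le -k_+ w(R)^p r^\gamma$ would make $r^\gamma w'$ tend to $-\infty$, a contradiction. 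Hence $w'<0$ eventually and $w\downarrow\ell\ge0$. If $\ell>0$, the same integration gives $r^\gamma w'\le -c r^{\gamma+1}$, so $w'\le -cr$, which contradicts boundedness. Therefore $\ell=0$. In the same way $w'\to0$: $\mathcal{E}_w$ has a limit, and $w\to0$ forces $w'^2$ to have a limit, which must be $0$. Moreover $r^\gamma w'\to$ a negative constant or $-\infty$, so $w$ decays at least like $r^{1-\gamma}$ (like $1/\log$ behaviour when $\gamma=1$). A comparison with the Emden--Fowler asymptotics gives $w\sim c\,r^{-2/(p-1)}$ or $w\sim c\,r^{1-\gamma}$.

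\emph{Pohozaev contradiction.} Multiply \eqref{eq:limit-ode} by $r^\gamma\cdot r w'$ and by $r^\gamma w$ and combine. This gives the identity
\[
  \frac{d}{dr}\Bigl[r^{\gamma+1}\Bigl(\tfrac12 w'^2+\tfrac{k_\pm}{p+1}|w|^{p+1}\Bigr)+\tfrac{\gamma-1}{2}r^\gamma ww'\Bigr]
  = r^\gamma\Bigl(\tfrac{\gamma+1}{p+1}-\tfrac{\gamma-1}{2}\Bigr)k_\pm|w|^{p+1}.
\]
The coefficient on the right is strictly positive exactly when $p<\frac{\gamma+3}{\gamma-1}$, and for every $p$ when $\gamma=1$. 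The bracket, call it $P(r)$, vanishes at $r=0$, so $P(r)>0$ for all $r>0$, and $P$ is increasing.

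The main obstacle is the next step: showing that $P(r)\to0$, or at least that $P(r) \le 0$ along a sequence, using the decay of $w$ on the final sign interval. With $w\sim c\,r^{1-\gamma}$ and $w'\sim c' r^{-\gamma}$, the terms $r^{\gamma+1}w'^2$ and $r^\gamma ww'$ are both of order $r^{1-\gamma}$. In fact $\frac12 r^{\gamma+1}w'^2 + \frac{\gamma-1}{2}r^\gamma ww'$ tends to $\frac{(\gamma-1)}2 c^2[(\gamma-1)-(\gamma-1)]$ plus lower order terms, and the leading terms cancel. The Emden--Fowler decay $r^{-2/(p-1)}$ has to be handled as well, and this is delicate. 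A cleaner route, which we would try first, avoids asymptotics: use $P$ along a sequence where $w'=0$, or integrate the identity from $R$ to $\infty$ and use the sign of $ww'<0$ on the last interval. On $(R,\infty)$ we have $ww'<0$, so the term $\frac{\gamma-1}{2}r^\gamma ww'$ is negative. Meanwhile the Emden--Fowler bound $|w|\lesssim r^{-2/(p-1)}$ (obtained by integrating $(r^\gamma w')'\le -k_+w^pr^\gamma$ using monotonicity of $w$) gives $r^{\gamma+1}|w|^{p+1}\to0$ whenever $(\gamma+1)<\frac{2(p+1)}{p-1}$, which is precisely the subcritical condition. It then remains to show that $r^{\gamma+1}w'^2$ is dominated by $|r^\gamma ww'|$, that is $r|w'|\lesssim w$, which follows from $w$ being decreasing with $r^\gamma w'$ decreasing. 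This yields $\liminf P\le0$ and contradicts $P>0$ increasing. Hence $w$ has infinitely many zeros.
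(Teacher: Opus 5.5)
For $\gamma>1$ your argument is correct, and it shortens the paper's last step, provided you state the key inequality with its sharp constant. On the final interval $r^\gamma w'$ is strictly decreasing and negative, and $w\to0$. So for $s>r$ you have $|w'(s)|>r^\gamma|w'(r)|\,s^{-\gamma}$, and integrating gives $w(r)=\int_r^\infty|w'|>r|w'(r)|/(\gamma-1)$. Hence
\[
\tfrac12 r^{\gamma+1}w'^2+\tfrac{\gamma-1}{2}r^\gamma ww'=\tfrac12 r^\gamma|w'|\bigl(r|w'|-(\gamma-1)w\bigr)<0,
\]
and your $P$ satisfies $0<P(r)<\tfrac{k_+}{p+1}r^{\gamma+1}w^{p+1}\le Cr^{\gamma+1-2(p+1)/(p-1)}\to0$, which is a contradiction.

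If you only had $r|w'|\le Cw$ with some other constant, the decay bound would still close the argument. In that case $r^{\gamma+1}w'^2$ and $r^\gamma w|w'|$ are both $\lesssim r^{\gamma-1}w^2\lesssim r^{\gamma-1-4/(p-1)}$, and this exponent is negative exactly when $p<\frac{\gamma+3}{\gamma-1}$.

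The paper does this differently. It discards the cross term using $ww'<0$ and proves $r^{\gamma+1}w'^2\to0$ separately: integrating the decay bound gives $r^\gamma|w'|\lesssim 1+\log r+r^{\gamma+1-2p/(p-1)}$, and a second exponent has to be checked. Your route needs only the single condition $r^{\gamma+1}w^{p+1}\to0$.

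The case $\gamma=1$, which the lemma includes for every $p>1$, is not covered as written. There the cross term has coefficient $\gamma-1=0$, so it cannot absorb $r^2w'^2$. Worse, $rw'$ is decreasing and negative, so $r^2w'^2$ is bounded below by a positive constant, and no bound of the form $\liminf P\le0$ is available. The contradiction has to come from the monotonicity directly. For $r\ge r_1>R'$ you have $rw'(r)\le r_1w'(r_1)=-c<0$, which gives $w(r)\le w(r_1)-c\log(r/r_1)\to-\infty$. Equivalently, $\int_r^\infty s^{-\gamma}\,ds$ diverges in the derivation above. This is the paper's Step~5. In particular, your remark about ``$1/\log$ behaviour'' when $\gamma=1$ is wrong: no positive decaying tail exists.

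Three smaller corrections:
\begin{enumerate}
\item The bound $r^\gamma w'\le-c$ gives the lower bound $w\ge\frac{c}{\gamma-1}r^{1-\gamma}$. It does not say that $w$ decays ``at least like'' $r^{1-\gamma}$.
\item The Emden--Fowler asymptotics you invoke are unproved and unnecessary; only the upper bound $w\le Cr^{-2/(p-1)}$ is used.
\item When you exclude $w'\ge0$ eventually, note that $w(R)=0$ at the last zero, so use $w(R_1)$ for some $R_1>R$ instead.
\end{enumerate}
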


\begin{proof}
    Zeros are simple by uniqueness ($w \equiv 0$ is the only solution with $(w,w') = (0,0)$ somewhere), and they cannot accumulate at a finite point.

    \emph{Step 1: the Pohozaev function.} Set $f_k(w) := k_{\sgn w} |w|^{p-1} w$, $F_k(w) \defeq k_{\sgn w} |w|^{p+1}/(p+1)$, so that $F_k' = f_k$, and
    \[
        P(r) \defeq r^{\gamma+1}\bigl(w'^2 + 2F_k(w)\bigr) + (\gamma-1) r^\gamma w w'.
    \]
    Using \eqref{eq:limit-ode}, a direct computation gives
    \[
        P'(r) = r^\gamma \bigl[2(\gamma+1) F_k(w) - (\gamma-1) w f_k(w)\bigr] = c_p\, r^\gamma k_{\sgn w} |w|^{p+1}, \qquad
        c_p \defeq \frac{(\gamma+3) - p(\gamma-1)}{p+1} > 0,
    \]
    the sign of $c_p$ being exactly the condition \eqref{eq:p-range}. Indeed the $w'^2$, $f_k w'$ and $ww'$ terms cancel: their coefficients are $(\gamma+1) - 2\gamma + (\gamma-1)$, $2 - 2$ and $\gamma(\gamma-1) - \gamma(\gamma-1)$. Since $P(0^+) = 0$ and $P' > 0$ wherever $w \neq 0$, $P$ is strictly increasing and $P(r) > 0$ for every $r > 0$.

    \emph{Step 2: the last zero.} Suppose, for contradiction, that $w$ has only finitely many zeros, and let $R \ge 0$ be the last one ($R = 0$ if there is none). Changing $w$ to $-w$ and exchanging $k_+$ and $k_-$ if necessary we may assume $w > 0$ on $(R,\infty)$ and write $k = k_+$. On $(R,\infty)$ the energy $E_w := \tfrac{1}{2} w'^2 + k w^{p+1}/(p+1)$ satisfies $E_w' = -(\gamma/r) w'^2 \le 0$, so $w$ and $w'$ are bounded.

    \emph{Step 3: eventually monotone.} We claim that $w$ has a critical point $R' \ge R$ with $w(R') > 0$. If $R = 0$ take $R' = 0$. If $R > 0$ then $w'(R) > 0$. If $w' > 0$ on all of $(R,\infty)$, then $w$ increases to a limit $0<L<+\infty$, and for $r > R_1 > R$ we would have $w'' = -(\gamma/r) w' - k w^p \le -k w(R_1)^p < 0$, so $w' \to -\infty$, a contradiction. Hence a first critical point $R' > R$ exists. Now $(r^\gamma w')' = -k r^\gamma w^p < 0$ on $(R',\infty)$, so $r^\gamma w'(r) < R'^\gamma w'(R') = 0$ in that range: $w' < 0$ and $w$ decreases on $(R',\infty)$ to a limit $w_\infty \ge 0$.

    \emph{Step 4: decay.} If $w_\infty > 0$ then $r^\gamma w'(r) \le -k w_\infty^p (r^{\gamma+1} - R'^{\gamma+1})/(\gamma+1)$, so $w' \le -cr$ for large $r$ and $w \to -\infty$, again a contradiction. Hence $w_\infty = 0$. Moreover, since $w$ is decreasing, for $r \ge R'' := \max(1,2R')$,
    \[
        r^\gamma w'(r) = -k \int_{R'}^r s^\gamma w(s)^p\,ds \le -k w(r)^p \frac{r^{\gamma+1} - R'^{\gamma+1}}{\gamma+1} \le -\frac{k}{2(\gamma+1)}\,r^{\gamma+1} w(r)^p,
    \]
    that is $w'/w^p \le -c\,r$ with $c = k/(2(\gamma+1))$. Integrating from $R''$ we get $w(r)^{1-p} \ge w(R'')^{1-p} + (p-1)c(r^2 - R''^2)/2$, hence
    \begin{equation}\label{eq:limit-decay}
        w(r) \le Cr^{-\frac{2}{p-1}}, \qquad \text{for }r \ge 2R''.
    \end{equation}

    \emph{Step 5: the case $\gamma = 1$.} Here $rw'(r) = -k \int_{R'}^r s w^p\,ds$ is decreasing, and by \eqref{eq:limit-decay} the integral converges as $r \to \infty$ (the integrand is $O(s^{1-2p/(p-1)})$ with $1 - \frac{2p}{p-1} = -\frac{p+1}{p-1} < -1$) to a limit $I_\infty > 0$. Thus $rw'(r) \to -kI_\infty$, and $rw'(r) \le -\tfrac12 kI_\infty$ for $r \ge R'''$, so
    \[
        w(r) \le w(R''') - \tfrac12 kI_\infty \log(r/R''') \to -\infty,
    \]
    contradicting $w > 0$.

    \emph{Step 6: the case $\gamma > 1$.} On $(R',\infty)$ we have $ww' < 0$, so $0 < P(r) \le r^{\gamma+1}\bigl(w'^2 + 2F_k(w)\bigr)$, and it's enough to show that $r^{\gamma+1} w^{p+1} \to 0$ and $r^{\gamma+1} w'^2 \to 0$: this yields $P(r) \to 0$, contradicting $P(r) \ge P(R'') > 0$. The first limit follows from \eqref{eq:limit-decay}, since $\gamma + 1 - \frac{2(p+1)}{p-1} < 0$ is equivalent to $(\gamma+1)(p-1) < 2(p+1)$, i.e.\ to $p < \frac{\gamma+3}{\gamma-1}$. For the second, \eqref{eq:limit-decay} gives, with $\kappa \defeq \gamma - \frac{2p}{p-1}$,
    \[
        r^\gamma |w'(r)| = k \int_{R'}^r s^\gamma w^p\,ds \le C\Bigl(1 + \int_{2R''}^r s^\kappa\,ds\Bigr) \le C\bigl(1 + \log r + r^{\kappa+1}\bigr),
    \]
    so that
    \[
        r^{\gamma+1} w'^2 \le C\, r^{1-\gamma} (1 + \log r)^2 + C\, r^{\gamma+3-\frac{4p}{p-1}}.
    \]
    The first term tends to $0$ because $\gamma > 1$, and the exponent of the second is $\gamma + 3 - \frac{4p}{p-1}$, which is negative if and only if $(\gamma+3)(p-1) < 4p$, i.e.\ $p(\gamma-1) < \gamma + 3$. Hence $P(r) \to 0$, the desired contradiction.
\end{proof}

\begin{remark}\label{rem:lane-emden}
    For $\gamma = n - 1$ with $n \ge 3$ an integer, Lemma~\ref{lem:limit-zeros} is the classical statement that every radial solution of the Lane--Emden equation $\Delta w + |w|^{p-1} w = 0$ oscillates in the subcritical range~\cite{JonesKupper1986,McLeodTroyWeissler1990}. The argument above is the Pohozaev proof used there, written for real $\gamma \ge 1$ and for a nonlinearity whose coefficient may depend on the sign of $w$. The decay rate \eqref{eq:limit-decay} is exactly the scaling rate of the equation, and the two exponents that must be negative in Step~6 are both equivalent to \eqref{eq:p-range}. A version for a general damping $h(r)$ in place of $\gamma/r$ is Lemma~\ref{lem:oscillation}.
\end{remark}

%% file: sections/06-no-gaps.tex
\section{No gaps}\label{sec:no-gaps}

In this section we prove that a small perturbation of a shooting parameter that produces a bound state with $k$ zeros gives a solution with at most $k+1$ zeros.

We write $N(\alpha)\in\{0,1,2,\dots\}\cup\{\infty\}$ for the number of zeros of $u(\cdot,\alpha)$ in $(0,\infty)$.

\begin{lemma}[No gaps]\label{lem:no-gaps}
    Let $\tilde\alpha>0$ be a value for which $u(\cdot,\tilde\alpha)$ has exactly $k$ zeros ($k\ge 0$) and $u(r,\tilde\alpha)\to 0$ as $r\to\infty$ (so that $\tilde\alpha>\beta$ by Lemma~\ref{lem:small-alpha}). If $|\alpha-\tilde\alpha|$ is sufficiently small, then $u(\cdot,\alpha)$ has at most $k+1$ zeros on $[0,\infty)$.
\end{lemma}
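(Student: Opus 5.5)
The plan is the continuous-dependence argument of \cite{McLeodTroyWeissler1990}: reduce to a far region where the bound state is small, then use dissipation to forbid two extra sign changes there. Write $\tilde u=u(\cdot,\tilde\alpha)$ and let $z_k$ be its last zero ($z_k=0$ if $k=0$). Since $\tilde u\to0$, and since by Lemma~\ref{lem:positive-critical} $\tilde u$ cannot have a positive local minimum while $E\ge0$ (and $E(\tilde u)\ge0$ always, by Proposition~\ref{prop:trapping}), $\tilde u$ is eventually strictly monotone. Moreover $E(r,\tilde\alpha)\downarrow0$ with $E(r,\tilde\alpha)=\int_r^\infty g\tilde u'^2$. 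Fix $\varepsilon>0$ and choose $R>z_k$ with $|\tilde u|+|\tilde u'|<\varepsilon$ on $[R,\infty)$. All zeros of $\tilde u$ are simple, so by continuous dependence in $C^1([0,R])$ (the Remark in Section~\ref{sec:local}), for $|\alpha-\tilde\alpha|$ small $u(\cdot,\alpha)$ has exactly $k$ zeros on $[0,R]$ and $|u|+|u'|<2\varepsilon$ at $R$. In particular $E(R,\alpha)\le C\varepsilon$, which can be made as small as we like. It then suffices to show that $u(\cdot,\alpha)$ has at most one zero on $(R,\infty)$.

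Suppose instead that $u$ has two consecutive zeros $R<z_1<z_2$, with $u>0$ on $(z_1,z_2)$ say. By Proposition~\ref{prop:trapping}, $E\ge0$ on $[R,z_2]$. Let $c\in(z_1,z_2)$ be the maximum. Lemma~\ref{lem:positive-critical} gives $u(c)\ge\beta_F$, so the trajectory climbs from $0$ to $\beta_F$ on $[z_1,c]$. On the middle band $u\in[\beta_F/3,2\beta_F/3]$ we have $-F\ge m>0$, hence $u'^2\ge 2(E-F)\ge 2m$ there, and $|u'|\le C$ everywhere since $E\le C\varepsilon$. So the band is crossed on an interval $I\subset(z_1,c)$ of length between $\tau_0$ and $\tau_1$, with $\tau_0,\tau_1$ independent of $\alpha,\varepsilon$. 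This gives
\[
E(z_1)-E(c)\ \ge\ \int_I g u'^2\ \ge\ 2m\,\tau_0 \inf_{[z_1,z_1+T]} g .
\]
Here $T$ bounds $c-z_1$; this bound also follows from the band estimates together with $u'>0$ on $(z_1,c)$, using that $u$ is monotone there. Since $E(c)\ge0$, we get $E(z_1)\gtrsim \inf_{[z_1,z_1+T]}g$.

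The main obstacle is that $z_1$ may lie far beyond $R$, where $g$ is small, so that $C\varepsilon$ need not dominate $g(z_1)$. I must therefore bound the energy at $z_1$ by something decaying faster than $g$, and this is exactly where (G3) enters. My first attempt is a weighted energy estimate on $[R,z_1]$: on that interval $u$ stays in the small region and $u f(u)<0$ by (f2). I would like to show that $E(z_1)\lesssim \varepsilon\, R^2/z_1^2$, or more generally $E(z_1)=o(1/z_1^2)$ uniformly as $\varepsilon\to0$. The idea is to differentiate a quantity such as $r^2E+\kappa\, r\,uu'$, using $(uu')'=u'^2-guu'-uf(u)$ with $-uf(u)\ge0$, to absorb the cross terms. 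Combined with (G3), $\inf_{[z_1,z_1+T]}g\gg z_1^{-2}$, this yields the contradiction. If the weighted estimate fails in the over-damped regime $\int^\infty V<\infty$, I would split cases according to Remark~\ref{rem:hyp-g}. When $g$ is bounded below at infinity the inequality above already closes with $E(z_1)\le C\varepsilon$. Otherwise I would use the $V$-function comparison $V'=1-gV$ to bound how long $u$ can creep near $0$ before reaching $z_1$.
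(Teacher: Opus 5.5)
There is a genuine gap, exactly where you locate the main obstacle: the upper bound on the energy at the extra zero is never proved. The functional $r^2E+\kappa\,r\,uu'$ is only proposed. Absorbing its cross terms would need something like $-F(u)\gtrsim u^2$ near $0$, which (f2) does not give. Your fallback closes only when $g$ is bounded below at infinity, and that already excludes $g=(n-1)/r$.

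A second step is also unjustified: the time from $z_1$ to the band is not bounded by a constant $T$ independent of $\alpha$ and $\varepsilon$. Near $u=0$ you only have $u'^2\ge 2E+2|F(u)|$, and $E$ may be tiny. The waiting time is then of order $\log(1/E)$ when $f'(0)<0$, and it can be polynomially large in $1/E$ under (f2) alone. So the band can lie far to the right of $z_1$, where $g$ is much smaller than on $[z_1,z_1+T]$, and $E(z_1)\gtrsim\inf_{[z_1,z_1+T]}g$ does not follow.

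The missing idea is elementary, and it is the one the paper uses. On a stretch $[a,b]$ where $F(u)\le0$, you have $u'^2\ge 2E\ge 2E(b)$. So if $u$ is monotone there, the stretch has length at most $|u(a)-u(b)|/\sqrt{2E(b)}$. In your notation this gives at once $E(z_1)\le u(R)^2/\bigl(2(z_1-R)^2\bigr)$, which is better than the weighted bound you were after.

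The paper applies this bound on one long stretch. It starts at a point $\sigma$ with $\tilde u(\sigma)=\beta/2$, fixed independently of $\alpha$, passes through the extra zero, and ends at the entry point $r'$ of the band $[\beta'/2,\beta'/4]$. This single stretch also absorbs the waiting time near $u=0$. Dissipation across the band, through $(e^{2G}E)'=2ge^{2G}F(u)$ and $E(r'')\ge0$, gives $E(r')\ge 2m_0d_0\Gamma(r')/(r'+d_0)^2$, where $\Gamma(r)=\inf_{s\ge r}s^2g(s)$. The mean value theorem on $(\sigma,r')$ then yields $r'-\sigma\lesssim(r'+d_0)/\sqrt{\Gamma(r')}$. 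As $\alpha\to\tilde\alpha$, the extra zero, and with it $r'$, escapes to infinity while $\sigma$ stays fixed, so (G3) gives the contradiction. This route needs no smallness of $\tilde u$ at the starting point, no split into damped and over-damped regimes, and no use of $V$.
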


\begin{proof}
    By \eqref{eq:energy-decay}, $E(r)=\frac12u'(r)^2+F(u(r))$ satisfies
    \begin{equation}\label{eq:weighted-energy}
        E'+2gE=2gF(u),\qquad\text{i.e.}\qquad \bigl(e^{2G}E\bigr)'=2g\,e^{2G}F(u).
    \end{equation}

    \emph{Preliminaries on $\tilde u:=u(\cdot,\tilde\alpha)$.} Since $\tilde u\to 0$, we have $E(\cdot,\tilde\alpha)\to 0$, as $\tilde u'^2=2(E-F(\tilde u))$ has a limit which must vanish. Hence $E(\cdot,\tilde\alpha)>0$ everywhere. Let $z_1<\cdots<z_k$ be the zeros of $\tilde u$ ($k$ may be $0$). To fix ideas we assume $\tilde u>0$ after its last zero (the other case is symmetric, with $\beta'$ in place of $\beta$ and vice versa). By Lemma~\ref{lem:positive-critical}, $\tilde u$ has exactly one critical point $c_k$ after $z_k$ (with $c_0=0$ if $k=0$), $\tilde u$ is decreasing on $(c_k,\infty)$, and $\tilde u(c_k)\ge\beta_F>\beta/2$. (For $k=0$ the lemma is applied at $c=0$, where $E(0)=F(\tilde\alpha)>0$: it gives $\tilde\alpha\ge\beta_F$ and makes $0$ a strict local maximum, so a further critical point in $(0,\infty)$ would produce an interior local minimum, which the lemma excludes.) Let $\sigma\in(c_k,\infty)$ be the unique point such that $\tilde u(\sigma)=\beta/2$. It is important to know that $\sigma$ is fixed throughout the argument, and does not depend on $\alpha$.

    \emph{Setup.} Let $\alpha$ be close to $\tilde\alpha$. Since the zeros of $\tilde u$ are simple and $\tilde u>0$ on $[z_k+\delta,T]$ for every $T$, uniform convergence on compact sets (in $C^1$, Section~\ref{sec:local}) shows that for $\alpha$ close to $\tilde\alpha$ the solution $u=u(\cdot,\alpha)$ has exactly $k$ zeros $r_1,\dots,r_k$ in $[0,T]$, close to those of $\tilde u$, and $u>0$ on $(r_k,T]$. Moreover, $u'(\sigma,\alpha)<0$ and $|u(\sigma,\alpha)-\beta/2|<\beta/4$.
    Suppose, for contradiction, that for $\alpha$ arbitrarily close to $\tilde\alpha$ the solution $u$ has at least two further zeros $r_{k+1}<r_{k+2}$. As $\alpha\to\tilde\alpha$ we have $r_{k+1}\to\infty$. Since $E(r_{k+2})=\frac12u'(r_{k+2})^2\ge 0$, we have $E>0$ on $[0,r_{k+2})$.

    \emph{Shape of $u$ between $\sigma$ and $r_{k+1}$.} On the interval $(\sigma,r_{k+1})$ the solution is positive with $E>0$. By Lemma~\ref{lem:positive-critical} it has no local minimum in $(\sigma,r_{k+1})$, and since $u'(\sigma)<0$ it is strictly decreasing on $[\sigma,r_{k+1}]$, from $u(\sigma)<3\beta/4$ to $0$. Let $c_{k+1}\in(r_{k+1},r_{k+2})$ be the first critical point after $r_{k+1}$. On $(r_{k+1},r_{k+2})$ we have $u<0$, and $u$ is decreasing on $(r_{k+1},c_{k+1})$. Since $E(c_{k+1})>0$ we have $F(u(c_{k+1}))>0$, so $u(c_{k+1})<\beta'_F\le\beta'$ by (f3). By the intermediate value theorem there exist unique $r',r''\in(r_{k+1},c_{k+1})$ with $u(r')=\beta'/4$ and $u(r'')=\beta'/2$, and $r''>r'$. Summing up, on $(\sigma,r')$ we have $u\in(\beta'/4,3\beta/4)$, hence $F(u)\le 0$. On $(r',r'')$ we have instead $u\in(\beta'/2,\beta'/4)$.

    \emph{Step 1: a lower bound on $E(r')$.} Since $E(r'')\ge E(r_{k+2})\ge 0$, integrating \eqref{eq:weighted-energy} from $r'$ to $r''$ and using that $G$ is increasing yields
    \begin{equation}\label{eq:no-gaps-integral}
        e^{2G(r')}E(r')\ge-2\int_{r'}^{r''}g(s)e^{2G(s)}F(u(s))\,ds
        \ge 2m_0\,e^{2G(r')}\int_{r'}^{r''}g(s)\,ds,
    \end{equation}
    where $m_0\defeq\inf_{w\in[\beta'/2,\beta'/4]}\big[-F(w)\big]>0$, since $F<0$ on $[\beta'/2,\beta'/4]$. As $E$ is decreasing,
    \[
        \tfrac12u'^2\le E(0)-F(u)\le F(\alpha)-\inf F\eqdef F_\alpha<\infty,
    \]
    so $|u'|\le\sqrt{2F_\alpha}$. Hence, for $\alpha$ close enough to $\tilde\alpha$ such that $F_\alpha\le2F_{\tilde\alpha}$,
    \[
        r''-r'\ge\frac{|\beta'|}{4\sqrt{2F_\alpha}}\ge \frac{|\beta'|}{8\sqrt{F_{\tilde\alpha}}} \eqdef d_0>0 .
    \]
    Write $\Gamma(r)\defeq\inf_{s\ge r}s^2g(s)$, which is positive, nondecreasing, and tends to $\infty$ by (G3). Since $g>0$ and $s^2g(s)\ge\Gamma(r')$ for $s\ge r'$, \eqref{eq:no-gaps-integral} gives
    \begin{equation}\label{eq:no-gaps-energy}
        E(r')\ge 2m_0\int_{r'}^{r'+d_0}\frac{\Gamma(r')}{s^2}\,ds
        =\frac{2m_0\,d_0\,\Gamma(r')}{r'(r'+d_0)}\ge\frac{2m_0\,d_0\,\Gamma(r')}{(r'+d_0)^2},
    \end{equation}
    with $m_0,d_0$ independent of $\alpha$.

    \emph{Step 2: a lower bound on the slope.} On $(\sigma,r')$ we have $F(u)\le 0$ and $E>E(r')$, hence $\frac12u'^2=E-F(u)\ge E>E(r')$, that is, by \eqref{eq:no-gaps-energy}, $|u'|>2\sqrt{m_0d_0\Gamma(r')}/(r'+d_0)$ on $(\sigma,r')$. By the mean value theorem,
    \[
        \frac{u(r')-u(\sigma)}{r'-\sigma}=u'(\zeta)\quad\text{for some }\zeta\in(\sigma,r'),\qquad
        |u(r')-u(\sigma)|\le\frac{|\beta'|}{4}+\frac{3\beta}{4},
    \]
    hence
    \begin{equation}\label{eq:no-gaps-distance}
        r'-\sigma<C'\,\frac{r'+d_0}{\sqrt{\Gamma(r')}},\qquad
        \text{where }C'\defeq\frac{|\beta'|/4+3\beta/4}{2\sqrt{m_0d_0}}.
    \end{equation}

    \emph{Step 3: conclusion.} By (G3) there is $R_1\ge d_0$ such that $\Gamma(r)\ge16C'^2$ for all $r\ge R_1$. Then the right-hand side of \eqref{eq:no-gaps-distance} is at most $(r'+d_0)/4\le r'/2$ whenever $r'\ge R_1$. Since $r'>r_{k+1}\to\infty$ as $\alpha\to\tilde\alpha$, for $\alpha$ close enough to $\tilde\alpha$ we have $r'\ge\max(R_1,4\sigma)$, and then \eqref{eq:no-gaps-distance} gives $r'-\sigma<r'/2$, i.e.\ $r'<2\sigma$, contradicting $r'\ge 4\sigma$. Hence for $|\alpha-\tilde\alpha|$ small $u(\cdot,\alpha)$ has at most $k+1$ zeros.
\end{proof}

\begin{remark}\label{rem:no-gaps-trapped}
    The proof actually gives slightly more: for $|\alpha-\tilde\alpha|$ small, if $u(\cdot,\alpha)$ has a $(k+1)$-th zero $r_{k+1}$, then $E(r_0,\alpha)<0$ for some $r_0>r_{k+1}$, so the solution is trapped in the sense of Proposition~\ref{prop:trapping} beyond $r_0$, and in particular it is not a bound state. Indeed, suppose $E\ge0$ on $[r_{k+1},\infty)$. Just after $r_{k+1}$, $u$ is negative and decreasing. If $u$ had a critical point $c>r_{k+1}$ with $u<0$ on $(r_{k+1},c]$, then $u(c)\le\beta'_F\le\beta'$ by Lemma~\ref{lem:positive-critical}, so $u$ reaches $\beta'/2$ before $c$. If it had none, $u$ would decrease to a limit $\bar u<0$ with $u'\to0$, and $E\to F(\bar u)<0$ unless $\bar u<\beta'/2$. In either case the points $r'<r''$ of the proof exist, and Steps~1--3, which use only $E(r'')\ge0$, the positivity of $E$ on $[0,r'']$ and the shape of the function on $(\sigma,r'')$, give the same contradiction.
\end{remark}

%% file: sections/07-proof-existence.tex
\section{Proof of the existence theorem}\label{sec:existence-proof}

First, we introduce the sets
\[
    A_k=\{\alpha>0:\ u(\cdot,\alpha)\text{ has at most $k$ zeros}\},\qquad \alpha_k:=\sup A_k.
\]
By Lemma~\ref{lem:small-alpha}, $(0,\beta]\subset A_0\subset A_k$, so every $A_k$ is nonempty. Every $A_k$ is bounded above: by Lemma~\ref{lem:limit-zeros} the function $w$ has infinitely many simple zeros, so given $K$ it has at least $K$ zeros in some $[0,T]$. By Lemma~\ref{lem:scaled-convergence}, $v(\cdot,\lambda)$ has at least $K$ zeros in $[0,T]$ for $\lambda\ge\Lambda_K$, and since the zeros of $v(\cdot,\lambda)$ correspond to those of $u(\cdot,\lambda^{2/(p-1)})$, we get $N(\alpha)\ge K$ for $\alpha\ge\Lambda_K^{2/(p-1)}$. Hence $\alpha_k\in[\beta,\infty)$ is well defined. We shall prove that $u(\cdot,\alpha_k)$ is a bound state with exactly $k$ zeros, which is the statement of Theorem~\ref{thm:existence}.

We first record two facts. Zeros of a nonconstant solution are simple ($(u,u')\ne(0,0)$ by uniqueness), so they persist under small perturbations of $\alpha$ (Section~\ref{sec:local}). In particular, $N$ is lower semicontinuous, $N(\alpha)\le\liminf_{\alpha'\to\alpha}N(\alpha')$, and if $u(\cdot,\alpha)$ does not vanish at $r^*$ then the number of zeros of $u(\cdot,\alpha')$ in $[0,r^*]$ equals that of $u(\cdot,\alpha)$ for $\alpha'$ near $\alpha$. Second, $u(\cdot,\alpha_k)$ is not a constant solution: a constant solution $u\equiv c$ has $f(c)=0$, hence $c<\beta$ by (f3), while $\alpha_k\ge\beta$.

\emph{Step 1: $N(\alpha_k)\le k$.} By definition of supremum there are $\alpha\in A_k$ arbitrarily close to $\alpha_k$ (possibly $\alpha_k$ itself). If $N(\alpha_k)\ge k+1$, lower semicontinuity gives $N(\alpha)\ge k+1$ for all $\alpha$ near $\alpha_k$, a contradiction. Let $j:=N(\alpha_k)\le k$ and let $z_j$ be the last zero of $u(\cdot,\alpha_k)$ ($z_0:=0$).

\emph{Step 2: $u(r,\alpha_k)\to 0$ as $r\to\infty$.} Write $u=u(\cdot,\alpha_k)$ and assume, without loss of generality, that $u>0$ on $(z_j,\infty)$ (the other sign is identical, with $\beta'$ in place of $\beta$). Recall the function $V$ of Remark~\ref{rem:hyp-g}.

(a) \emph{$u$ has no strict local minimum on $(z_j,\infty)$.} Suppose $r_0>z_j$ is a strict local minimum, so $u(r_0)>0$ and, by \eqref{eq:ode}, $f(u(r_0))\le 0$. As $u$ is not constant, $f(u(r_0))<0$ (if $f(u(r_0))=0$ then $u\equiv u(r_0)$). By Proposition~\ref{prop:strict-extrema}, $u(r)>u(r_0)>0$ for all $r>r_0$. By continuous dependence, for $\alpha$ close to $\alpha_k$ the solution $u(\cdot,\alpha)$ has a strict local minimum $\tilde r_0$ near $r_0$ with $u(\tilde r_0,\alpha)>0$ and $f(u(\tilde r_0,\alpha))<0$. Then $u(\cdot,\alpha)>0$ on $[\tilde r_0,\infty)$ by Proposition~\ref{prop:strict-extrema}. Hence all the zeros of $u(\cdot,\alpha)$ lie in $[0,\tilde r_0]$, where there are exactly $j$ of them. Thus $N(\alpha)=j\le k$ for all $\alpha$ in a neighborhood of $\alpha_k$, contradicting the definition of $\alpha_k$ as a supremum.

(b) \emph{$u$ is eventually monotone.} Every critical point $c>z_j$ of $u$ is a strict local maximum. Indeed, $f(u(c))\ne0$, since otherwise $u$ would be constant, and $f(u(c))<0$ would make $c$ a strict local minimum (Proposition~\ref{prop:strict-extrema}), which is excluded by (a). Between two strict local maxima the minimum of $u$ is attained at an interior critical point, that is a local minimum, which is impossible. Hence $u$ has at most one critical point in $(z_j,\infty)$, and there is $r_1$ such that $u'$ has a constant sign on $(r_1,\infty)$. Since $u$ is bounded (Section~\ref{sec:well-posedness}), $u(r)\to\bar u\ge 0$. Moreover, $E$ has a limit, so $u'^2=2(E-F(u))$ has a limit, which must be $0$ because $u$ is bounded and monotone.

(c) \emph{If $\int_1^\infty V=\infty$ and $\bar u>0$, then $f(\bar u)=0$.}\\ Suppose $f(\bar u)>0$.
Then $f(u)\ge f(\bar u)/2$ for $r\ge r_2$, and integrating $\bigl(e^Gu'\bigr)'=-e^Gf(u)$,
\[
    e^{G(r)}u'(r)\le e^{G(r_2)}u'(r_2)-\frac{f(\bar u)}{2}\int_{r_2}^re^{G(s)}\,ds
    \le-\frac{f(\bar u)}{4}\int_{r_2}^re^{G(s)}\,ds\qquad(r\ge r_3),
\]
for some $r_3>r_2$, because $\int_{r_2}^re^G\ge e^{G(r_2)}(r-r_2)\to\infty$. Hence, with $V_{r_2}(t):=e^{-G(t)}\int_{r_2}^te^G$,
\[
    u(r)\le u(r_3)-\frac{f(\bar u)}{4}\int_{r_3}^rV_{r_2}(t)\,dt\longrightarrow-\infty
    \qquad(r\to\infty),
\]
since $V_{r_2}\ge V/2$ for large $t$, contradicting $u>0$. If $f(\bar u)<0$ the same computation gives $e^Gu'\ge\frac{|f(\bar u)|}{4}\int_{r_2}^re^G$ for large $r$, hence $u\to+\infty$, contradicting that $u$ is bounded. Thus $f(\bar u)=0$.

(c$'$) \emph{If $\int_1^\infty V<\infty$, then $\bar u=0$.}\\ Put $\cJ(s):=\int_s^\infty e^{-G}$. By Tonelli's theorem,
\begin{equation}\label{eq:tonelli}
    \int_1^\infty V(t)\,dt=\int_1^\infty e^{G(s)}\cJ(s)\,ds ,
\end{equation}
so $\cJ$ is finite and $e^{G}\cJ$ is integrable on $(1,\infty)$. In particular, $e^{G(R_m)}\cJ(R_m)\to0$ along some sequence $R_m\to\infty$. Let $I$ be a compact neighborhood of $\alpha_k$. By Section~\ref{sec:global} there are constants $A,B$ with $|u'(r,\alpha)|\le A$ and $|f(u(r,\alpha))|\le B$ for all $r\ge0$ and $\alpha\in I$. For $R\ge1$, $r\ge R$ and $\alpha\in I$, integrating $(e^Gu')'=-e^Gf(u)$ from $R$ to $r$ gives $|u'(r,\alpha)|\le Ae^{G(R)-G(r)}+B\,V_R(r)$, with $V_R\le V$, and hence
\begin{equation}\label{eq:total-variation}
    \int_R^\infty|u'(r,\alpha)|\,dr\le A\,e^{G(R)}\cJ(R)+B\int_R^\infty V(t)\,dt .
\end{equation}
Suppose $\bar u>0$, and fix $m$ with $R_m>\max(z_j,1)$, $u(R_m)>\bar u/2$ and $Ae^{G(R_m)}\cJ(R_m)+B\int_{R_m}^\infty V<\bar u/4$. By continuous dependence, for $\alpha$ close to $\alpha_k$ the solution $u(\cdot,\alpha)$ has exactly $j$ zeros in $[0,R_m]$ and $u(R_m,\alpha)>\bar u/2$, and then, by \eqref{eq:total-variation}, $u(r,\alpha)>\bar u/2-\bar u/4>0$ for all $r\ge R_m$. Thus $N(\alpha)=j\le k$ on a neighborhood of $\alpha_k$, contradicting the definition of $\alpha_k$. Hence $\bar u=0$.

(d) \emph{Conclusion.} Suppose $\bar u>0$. By (c$'$), $\int_1^\infty V=\infty$, so by (c) and (f3), $\bar u$ is a positive zero of $f$, hence $\bar u<\beta$ and $F(\bar u)<0$. Since $u'\to 0$, $E(r)\to F(\bar u)<0$, and there is $r^*$ with $E(r^*,\alpha_k)<0$. Note that $u(r^*,\alpha_k)>0$. By continuity, for $\alpha$ near $\alpha_k$ we have $E(r^*,\alpha)<0$ and $u(r^*,\alpha)>0$, so by Proposition~\ref{prop:trapping} $u(\cdot,\alpha)$ has no zeros in $[r^*,\infty)$, while it has exactly $j$ zeros in $[0,r^*]$. Hence $N(\alpha)=j\le k$ on a neighborhood of $\alpha_k$, contradicting the definition of $\alpha_k$. Therefore $\bar u=0$: $u(\cdot,\alpha_k)$ is a bound state with $j\le k$ zeros.

\emph{Step 3: $j=k$.} By Lemma~\ref{lem:no-gaps} applied at $\tilde\alpha=\alpha_k$ (note $\alpha_k\ge\beta$, and $\alpha_k>\beta$ since $\alpha_k=\beta$ would give the trapped or constant solution of Lemma~\ref{lem:small-alpha}, not a bound state), $N(\alpha)\le j+1$ for all $\alpha$ near $\alpha_k$. On the other hand $N(\alpha)\ge k+1$ for every $\alpha>\alpha_k$, since $\alpha\notin A_k$. Hence $k+1\le j+1$, i.e.\ $j=k$. This completes the proof of Theorem~\ref{thm:existence}.\qed

\begin{remark}\label{rem:overdamped}
    The two cases of Step~2 show that there are two distinct regimes for solutions as $r\to\infty$. If $\int^\infty V<\infty$ (for instance $g=(n-1)/r+r^b$ with $b>1$, for which $V(t)\sim t^{-b}$), the equation is over-damped: by \eqref{eq:total-variation} every solution has $\int^\infty|u'|<\infty$, and a solution can ``creep'' monotonically to a level $\bar u$ with $f(\bar u)\ne0$. Step~2(c$'$) shows that this cannot happen at the endpoint $\alpha_k$ of $A_k$: creeping to a positive level is stable under perturbations of $\alpha$, so it would make $N$ constant near $\alpha_k$.

    The function $e^G\cJ$ in \eqref{eq:tonelli} is dual to $V$. Both solve first-order linear equations,
    \[
        V'+gV=1,\qquad\big(e^G\cJ\big)'-g\,e^G\cJ=-1,
    \]
    whose operators $\frac{d}{dr}+g$ and $-\frac{d}{dr}+g$ are formal adjoints of each other. The solution $V$ is fixed by $V(1)=0$, while, when $\cJ$ is finite, $e^G\cJ$ is the only solution $h$ with $e^{-G}h\to0$ at infinity. Identity \eqref{eq:tonelli} is the corresponding duality, $\int_1^\infty V\cdot1=\int_1^\infty1\cdot e^G\cJ$. When $\gamma>1$, $e^G\cJ=\varphi/(\gamma-1)$, where $\varphi$ is the multiplier field of Part~II (Definition~\ref{def:multiplier}), so $V$ and $\varphi$ are two substitutes for $r$: for $g=(n-1)/r$ one has $V(t)=(t-t^{1-n})/n$ and $\varphi(r)=r$. On a warped product, $V$ is a volume/area ratio, while $\cJ$ is, up to a constant factor, the radial Green's function of the Laplacian, and $e^G\cJ=\cJ/|\cJ'|$. By \eqref{eq:tonelli}, the over-damped regime $\int^\infty V<\infty$ is the regime $\int^\infty\varphi<\infty$. Part~II never meets it: there $\varphi$ is increasing (Lemma~\ref{lem:phi-basic}), so $\int^\infty\varphi=\infty$. Accordingly, $(\Phi)$ fails for the over-damped examples above (Remark~\ref{rem:phi-fails}).
\end{remark}

%% file: sections/08-manifolds.tex
\section{Bound states on spherically symmetric manifolds}\label{sec:manifolds}

We now apply Theorem~\ref{thm:existence} to a spherically symmetric manifold $M$ with warped product metric $g_M=dr^2+\psi(r)^2g_{\mathbb{S}^{n-1}}$. For a radial function $u=u(r)$,
\[
    \Delta_{g_M}u=\frac{1}{\psi^{n-1}}\bigl(\psi^{n-1}u'\bigr)'
    =u''+(n-1)\frac{\psi'}{\psi}u',
\]
so, as in the introduction, the profiles of radial standing waves on $M$ solve \eqref{eq:ode} with $g=g_\psi:=(n-1)\psi'/\psi$. Conversely, a damping $g$ with primitive $G$ comes from the warping function $\psi=\exp\big(G/(n-1)\big)$, up to a constant factor. Hence the question of which manifolds can be treated reduces to translating the assumptions on $g$ into assumptions on $\psi$.

Warped products are not the only manifolds on which \eqref{eq:ode} arises. If a compact Lie group acts isometrically on $M$ with cohomogeneity one, the functions that are constant on its orbits satisfy $(Au')'+Af(u)=0$, where $r$ is the arclength along a geodesic orthogonal to the orbits and $A(r)$ is the volume of the orbit through the point at arclength $r$~\cite{BecerraGalvisMartinez2019,MartinezAlbaRiano2025}. This is \eqref{eq:ode} with $g=A'/A$. For a warped product, with the orthogonal group acting on the spheres $\{r\}\times\mathbb{S}^{n-1}$, $A$ is a constant multiple of $\psi^{n-1}$.

\subsection{An existence theorem on manifolds}\label{sec:manifold-existence}
Consider the following boundary value problem:
\begin{align}
    \Delta_{g_M}u+f(u)      & =0,\label{eq:manifold-eq} \\
    \lim_{|x|\to\infty}u(x) & =0,\label{eq:manifold-decay}
\end{align}
where $g_M$ has warping function $\psi$ and $r(x)$ is the distance from the pole. Theorem~\ref{thm:existence} gives the following.

\begin{theorem}\label{thm:manifold-existence}
    Let $M$ be a $C^2$ spherically symmetric manifold of dimension $n\ge 2$ with warping function $\psi$ such that $\psi,\psi'>0$ for $r>0$. Suppose that
    \[
        \lim_{r\to\infty}r^2g_\psi(r)=\infty .
    \]
    Then for each $k\ge 0$ there exists a radial solution of \eqref{eq:manifold-eq} and \eqref{eq:manifold-decay} with exactly $k$ nodes, provided $f$ satisfies \textup{(f1)--(f4)} with $1<p<\frac{n+2}{n-2}$, or $1<p<\infty$ if $n=2$.
\end{theorem}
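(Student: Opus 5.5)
The proof is a direct reduction to Theorem~\ref{thm:existence} with $g=g_\psi=(n-1)\psi'/\psi$. For a radial $u=u(r)$, $\Delta_{g_M}u=u''+g_\psi u'$, so a solution of \eqref{eq:ode}--\eqref{eq:bc} with exactly $k$ zeros gives a radial function on $M$ solving \eqref{eq:manifold-eq}--\eqref{eq:manifold-decay} with exactly $k$ nodal spheres. It therefore remains to verify (G1)--(G3) for $g_\psi$, to identify $\gamma=n-1$ so that \eqref{eq:p-range} becomes the stated range of $p$, and to check that the radial ODE solution defines a genuine solution on $M$, including at the pole.

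\emph{Hypotheses on $g_\psi$.} (G1): $\psi\in C^2$ with $\psi,\psi'>0$ on $(0,\infty)$, so $g_\psi>0$ there, and $g_\psi$ is $C^1$, hence locally Lipschitz. (G3) is exactly the assumption $r^2g_\psi\to\infty$. (G2) is the only step that needs the geometry. Smoothness of a warped product metric at the pole forces $\psi(0)=0$ and $\psi'(0)=1$, as is standard for a $C^2$ metric to close up without a conical singularity. Then $\psi(s)=s+o(s)$ and $\psi'(s)\to1$, so $s\,g_\psi(s)=(n-1)s\psi'(s)/\psi(s)\to n-1$, giving $\gamma=n-1\ge1$. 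For $n\ge3$ this yields $\frac{\gamma+3}{\gamma-1}=\frac{n+2}{n-2}$, and for $n=2$ we have $\gamma=1$, which allows $1<p<\infty$. This matches \eqref{eq:p-range}.

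\emph{Regularity at the pole.} Theorem~\ref{thm:existence} gives $u\in C^1([0,\infty))\cap C^2((0,\infty))$ with $u'(0)=0$. On $M\setminus\{o\}$ the function $x\mapsto u(r(x))$ is $C^2$ and satisfies \eqref{eq:manifold-eq} pointwise. At the pole we use Proposition~\ref{prop:strict-extrema}: $u'(r)/r\to -f(\alpha)/(\gamma+1)=-f(\alpha)/n$, and the equation gives $u''(r)\to -f(\alpha)+(n-1)f(\alpha)/n=-f(\alpha)/n$. Hence $u(r(x))$ is $C^1$ near $o$ with a continuous Hessian limit, and $\Delta u\to -f(\alpha)$ continuously. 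This makes $u$ a classical (or at least weak, then by elliptic regularity classical) solution across $o$; the removable-point argument in dimension $n\ge2$ for a bounded function covers it. Finally, $u(r)\to0$ gives \eqref{eq:manifold-decay}, since $r(x)\to\infty$ as $|x|\to\infty$.

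The main obstacle is the bookkeeping in (G2), namely justifying $\psi(0)=0$ and $\psi'(0)=1$ from the $C^2$-manifold hypothesis. Everything else is a direct translation of hypotheses.
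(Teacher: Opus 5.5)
Your proposal is correct and matches the paper's proof. You verify (G1)--(G3) for $g_\psi$, obtain $\gamma=n-1$ from $\psi(0)=0$ and $\psi'(0)=1$ so that \eqref{eq:p-range} becomes the stated range of $p$, and apply Theorem~\ref{thm:existence} to the radial ODE; your extra check at the pole via $u'(r)/r\to-f(\alpha)/n$ and $u''(r)\to-f(\alpha)/n$ goes beyond the paper, which simply identifies radial solutions with solutions of the ODE, and it does no harm.
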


\begin{proof}
    Since $\psi\in C^2$ and $\psi,\psi'>0$, the function $g_\psi$ is positive and locally Lipschitz on $(0,\infty)$, which is (G1). The regularity of the manifold at the origin ($\psi(0)=0$, $\psi'(0)=1$, $\psi''(0)=0$) ensures $s\,g_\psi(s)\to n-1$, which is (G2) with $\gamma=n-1$, and the assumption is (G3). Then Theorem~\ref{thm:existence} can be directly applied to the ODE obtained by writing out the equation in $r$. The range of $p$ is obtained from $\gamma=n-1$.
\end{proof}

This result extends to manifolds with a controlled singularity at the origin (Section~\ref{sec:tips}). We now give some examples of manifolds to which it applies:
\begin{itemize}
    \item $\psi(r)=r$ (flat Euclidean space), which recovers the standard radial Laplacian and the classical subcritical bounds. Here $g_\psi=(n-1)/r$, $r^2g_\psi=(n-1)r$, and the volume/area ratio is $r/n$.
    \item $\psi(r)=\lambda^{-1}\sinh(\lambda r)$ (hyperbolic space of curvature $-\lambda^2$), for which $g_\psi=(n-1)\lambda\coth(\lambda r)\to(n-1)\lambda$, and the volume/area ratio tends to $1/((n-1)\lambda)$.
    \item $\psi(r)=e^r-1$ (exponentially expanding space), for which $g_\psi=(n-1)e^r/(e^r-1)\to n-1$. Here $\psi''(0)=1$, so the metric is only Lipschitz at the pole, and this example is covered by Theorem~\ref{thm:tip-existence} (tip index $1$) rather than by Theorem~\ref{thm:manifold-existence}.
    \item $\psi(r)=r\exp\Big(\dfrac{r^{b+1}}{(b+1)(n-1)}\Big)$ with $b\ge0$, which corresponds to the generalized damping term $g_\psi(r)=\dfrac{n-1}{r}+r^b$, so that $r^2g_\psi\to\infty$ for every $b\ge0$. This case was foreseen in~\cite{McLeodTroyWeissler1990}. The metric is $C^2$ at the pole for $b\ge1$, only $C^{1,b}$ for $0<b<1$ and only Lipschitz for $b=0$. In the last two cases Theorem~\ref{thm:tip-existence} applies (tip index $1$). The volume/area ratio behaves like $r^{-b}$, so it is integrable at infinity for $b>1$: these over-damped manifolds (Remark~\ref{rem:overdamped}) are covered as well.
    \item $\psi(r)=r+\dfrac{r^3\sin r}{1+r^4}$ (a mildly oscillating space), which behaves like flat space at large $r$: $\psi'\ge 0.6$, $r^2g_\psi\sim(n-1)r$, and the volume/area ratio behaves like $r/n$.
\end{itemize}
Since only (G3) is required at infinity, Theorem~\ref{thm:manifold-existence} has no condition on the oscillations of $\psi'$ beyond $\psi'>0$, and none on the growth of the volume. For instance, a warping function that agrees near infinity with $\psi(r)=r+\frac12\sin r$ is admissible, although $(g_\psi e^{2G})'=(g'_\psi+2g_\psi^2)e^{2G}$ with $g'_\psi+2g_\psi^2=-(n-1)\frac{\sin r}{2r}+O(r^{-2})$, so that $g_\psi e^{2G}$ is not eventually monotone. So is one that agrees near infinity with the super-exponentially growing $\psi(r)=e^{r^2}$. Strongly oscillating metrics such as $\psi(r)=r+\sin(e^r)$ are excluded only because $\psi'$ changes sign.

%% file: sections/09-singular-tips.tex
\section{Singular tips}\label{sec:tips}

In Theorem~\ref{thm:manifold-existence} the manifold is assumed to be $C^2$ up to the origin, so that $\psi(r)=r+o(r^2)$ and $s\,g_\psi(s)\to n-1$. At a second glance, one can notice that the proof of Theorem~\ref{thm:existence} uses considerably less than this at the origin: hypothesis (G2) is a statement about one limit, and neither $\psi(0)$, nor $\psi'(0)$, nor the curvature near the tip are ever used. In this section we determine what is used at the origin, translate it into a condition on the warping function that admits a large variety of singular tips, and give some notable examples in Appendix~\ref{sec:tip-examples}.

Throughout, $\cN$ is a closed Riemannian manifold of dimension $n-1\ge1$, not necessarily a sphere, and
\begin{equation}\label{eq:tip-metric}
  M=(0,\infty)\times \cN,\qquad g_M=dr^2+\psi(r)^2g_{\cN}.
\end{equation}
For a function $u=u(r)$ one has $\Delta_{g_M}u=\psi^{1-n}\big(\psi^{n-1}u'\big)'=u''+g_\psi u'$ with $g_\psi:=(n-1)\psi'/\psi$, so the link $\cN$ enters only through its dimension. We write
\begin{equation}\label{eq:pc}
  p_c(\gamma):=
  \begin{cases}
    \dfrac{\gamma+3}{\gamma-1}, & \gamma>1,       \\
    +\infty,                    & 0\le\gamma\le1,
  \end{cases}
\end{equation}
so that the admissible range in Theorem~\ref{thm:existence} is $1<p<p_c(\gamma)$.

\subsection{What the proof uses at the origin}\label{sec:origin}

\begin{lemma}[Weaker condition at the origin]\label{lem:origin}
  Theorem~\ref{thm:existence} remains true if hypothesis \textup{(G2)} is replaced by
  \begin{equation}\label{eq:origin-hyp}
    \textup{(G2$'$)}\qquad \gamma:=\lim_{s\to0^+}s\,g(s)\quad\text{exists in }[0,\infty),
  \end{equation}
  the admissible range being $1<p<p_c(\gamma)$. In particular the requirement $\gamma\ge1$ in \textup{(G2)} is superfluous, and for $\gamma\le1$ every $p>1$ is admissible.
\end{lemma}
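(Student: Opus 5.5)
The plan is to re-read the proof of Theorem~\ref{thm:existence} and find each place where $\gamma\ge1$ (as opposed to the mere existence of $\gamma=\lim s\,g(s)$) is used. Then either remove that use or replace it by an argument valid for $\gamma\in[0,1)$. I expect three places: the behaviour of $G$ at the origin (Remark~\ref{rem:hyp-g}, Section~\ref{sec:local}), the scaled limit (Lemma~\ref{lem:scaled-convergence}), and the oscillation of the limit equation (Lemma~\ref{lem:limit-zeros}). Everything at infinity (Lemma~\ref{lem:no-gaps}, Step~2 of Section~\ref{sec:existence-proof}) uses only (G1) and (G3), so it is unchanged.

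\emph{Local theory.} Remark~\ref{rem:hyp-g} used $\gamma\ge1$ to get $e^{G}\to0$ at $0$. For $0\le\gamma<1$ the function $G$ may have a finite limit at $0$ (e.g.\ $g$ bounded), or $g$ may even be integrable at $0$. The fixed point argument of Section~\ref{sec:local}, however, only needs the following:
\begin{itemize}
  \item $G$ is increasing, so that $\cK(r,\tau)\le r-\tau$ and the weight $e^{G(\tau)-G(s)}$ lies in $(0,1]$;
  \item $e^{G(\varepsilon)}u'(\varepsilon)\to0$, which holds because $e^G$ is bounded near $0$ and $u'(\varepsilon)\to0$.
\end{itemize}
I would write $e^{G(\tau)-G(s)}$ with $G(r)=\int_1^r g$, which is meaningful as long as $\tau,s>0$, and run the contraction verbatim. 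The converse uniqueness argument, continuous dependence and global existence go through unchanged. Proposition~\ref{prop:strict-extrema} at $r_0=0$ also survives: $e^{G(rt)-G(r)}\to t^\gamma$ holds for any $\gamma\ge0$, giving $u''(0)=-f(\alpha)/(\gamma+1)$.

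\emph{Scaled limit.} In the proof of Lemma~\ref{lem:scaled-convergence} the normalized weight $e^{G(s/\lambda)-G(1/\lambda)}\to s^\gamma$ uniformly on compacts of $(0,T]$. On $s\le1$ it is bounded by $1$, since $G$ is increasing. Dominated convergence therefore still applies, and the identification $r^\gamma w'=-\int_0^r s^\gamma f_k(w)$ is unchanged. The limit problem \eqref{eq:limit-ode}--\eqref{eq:limit-ivp} with $g=\gamma/r$ is then well posed for $\gamma\ge0$ by the same local theory. For $\gamma=0$ it is simply $w''+f_k(w)=0$.

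\emph{Oscillation for $\gamma<1$ (main step).} Lemma~\ref{lem:limit-zeros} must be proved for $0\le\gamma<1$ and every $p>1$. I would use the argument of Step~5 rather than the Pohozaev function, whose coefficient $(\gamma-1)$ changes sign here. Suppose $w>0$ on $(R,\infty)$. Steps~2--4 hold verbatim for $\gamma\ge0$ and give a critical point $R'$, monotone decrease to $0$, and the decay bound \eqref{eq:limit-decay} $w\le Cr^{-2/(p-1)}$. Then $r^\gamma w'(r)=-k\int_{R'}^r s^\gamma w^p$ is decreasing and negative, so $r^\gamma w'\le-c_0<0$ for $r\ge R'+1$. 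Hence $w'\le -c_0r^{-\gamma}$. Since $\gamma\le1$, the integral $\int^\infty r^{-\gamma}\,dr$ diverges, so $w\to-\infty$, a contradiction. This argument needs no bound on $p$ and does not even use \eqref{eq:limit-decay}. The case $\gamma=0$ can alternatively be handled by conservation of energy for $w''+f_k(w)=0$. The remaining point to check is Step~3, which assumes $w'>0$ after $R$ and derives a contradiction from $w''\le -kw(R_1)^p$. That step holds for $\gamma\ge0$ as well.

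With these three ingredients, Section~\ref{sec:existence-proof} applies unchanged, with the admissible range $1<p<p_c(\gamma)$.
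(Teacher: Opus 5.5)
Your proposal is correct and follows the paper's proof essentially step by step. The local theory needs only that $G$ is nondecreasing with $e^G$ bounded near $0$, and the scaled limit is identified by dominated convergence with a bounded weight; the paper normalizes by $e^{G(r/\lambda)}$ instead of $e^{G(1/\lambda)}$, which is an immaterial difference. For $0\le\gamma<1$ the oscillation follows, exactly as in the paper, from $r^\gamma w'\le -c<0$ beyond the critical point $R'$ together with the divergence of $\int^\infty s^{-\gamma}\,ds$.
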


\begin{proof}
  We go through the proof of Theorem~\ref{thm:existence} and record where the origin enters.

  \emph{Local well-posedness.} The contraction argument of \S\ref{sec:local}, including the converse statement and the continuous dependence on $\alpha$, uses only two facts. The first is that $G$ is nondecreasing on $(0,R)$, which enters through $\cK(r,\tau)=\int_\tau^r e^{G(\tau)-G(s)}\,ds\le r-\tau$ and through the boundedness of $e^G$ near $0$. The second is the local Lipschitz continuity of $g$ on $(0,R)$, used for the Cauchy--Lipschitz step away from~$0$. Neither the integrability of $g$ at $0$ nor a rate is needed. Indeed, the fixed point satisfies
  \[
    u'(r)=-e^{-G(r)}\int_0^r e^{G(\tau)}f(u(\tau))\,d\tau,
  \]
  hence $|u'(r)|\le Mr$. Therefore $u\in C^1([0,R))$ with $u'(0)=0$, and $u\in C^2((0,R))$ since $g$ is continuous on $(0,R)$. So existence, uniqueness and continuous dependence of $u(\cdot,\alpha)$ hold under (G1) alone.

  \emph{Energy, a priori bounds, no gaps, induction.} The monotonicity of the energy $E$, Propositions~\ref{prop:strict-extrema}--\ref{prop:trapping}, Lemma~\ref{lem:small-alpha}, Lemma~\ref{lem:no-gaps} and the argument of \S\ref{sec:existence-proof} use $g>0$ and the behavior of $g$ at infinity only. (In Proposition~\ref{prop:strict-extrema}, $u''(0)=-f(\alpha)/(\gamma+1)$ for every $\gamma\ge0$, since $e^{-G(r)}\int_0^re^G=r\int_0^1e^{G(rt)-G(r)}\,dt=\frac{r}{\gamma+1}(1+o(1))$ as $r\downarrow0$ by (G2$'$) and dominated convergence.)

  \emph{Scaled limit.} In Lemma~\ref{lem:scaled-convergence} the bounds on $v$ and $v'$ use only that $G$ is nondecreasing, and the boundary term at $0$ in \eqref{eq:scaled-integral} vanishes because $v'(0)=0$ and $e^G$ is bounded near $0$. Identifying the limit in \eqref{eq:scaled-integral} requires
  \begin{equation}\label{eq:blowup-weight}
    e^{G(s/\lambda)-G(r/\lambda)}=\exp\Big(-\int_s^r g_\lambda(t)\,dt\Big)\longrightarrow
    \Big(\frac{s}{r}\Big)^{\gamma},\qquad 0<s\le r,
  \end{equation}
  where $g_\lambda(t):=\lambda^{-1}g(t/\lambda)$. By (G2$'$), $g_\lambda(t)=t^{-1}\,(t/\lambda)g(t/\lambda)\to\gamma/t$ pointwise on $(0,\infty)$, and $g_\lambda(t)\le C/t$ for $t\le\lambda s_0$, so \eqref{eq:blowup-weight} follows by dominated convergence. The left side of \eqref{eq:blowup-weight} is bounded by $1$, so the passage to the limit in \eqref{eq:scaled-integral}, multiplied by $e^{-G(r/\lambda)}$, is as before, and the limit $w$ satisfies $r^\gamma w'(r)=-k_\pm\int_0^r s^\gamma|w|^{p-1}w\,ds$, i.e.\ the integral form of \eqref{eq:limit-ode}. This holds for every $\gamma\ge0$. For $\gamma=0$ the limit problem is the autonomous equation $w''+k_\pm|w|^{p-1}w=0$.

  \emph{Infinitely many zeros of the limit.} For $\gamma\ge1$ this is Lemma~\ref{lem:limit-zeros}. Let $0\le\gamma<1$. Steps~2 and~3 of the proof of Lemma~\ref{lem:limit-zeros} hold verbatim for every $\gamma\ge0$: if $w$ had a last zero $R$, then, normalizing $w>0$ on $(R,\infty)$, there would be a critical point $R'\ge R$ with $(r^\gamma w')'=-kr^\gamma w^p<0$ on $(R',\infty)$. Fix $r_1>R'$. Then $r^\gamma w'(r)\le r_1^\gamma w'(r_1)\eqdef-c<0$ for $r\ge r_1$, and
  \[
    w(r)\le w(r_1)-c\int_{r_1}^r s^{-\gamma}\,ds\longrightarrow-\infty\qquad(r\to\infty),
  \]
  because $\gamma<1$, contradicting $w>0$. Hence $w$ has infinitely many zeros, all simple by uniqueness, and the boundedness of the sets $A_k$ follows as before. (This is also the case $h=\gamma/r$ of Lemma~\ref{lem:oscillation}, for which $\hrho_h<0$.)
\end{proof}

\subsection{The tip condition}\label{sec:tip-condition}
For $\psi$ as in \eqref{eq:tip-metric} set
\begin{equation}\label{eq:phi-psi}
  \varphi_\psi(r):=\frac{r\psi'(r)}{\psi(r)},\qquad
  \psi_\lambda(r):=\frac{\psi(r/\lambda)}{\psi(1/\lambda)}\quad(\lambda>0),
\end{equation}
so that $g_\psi(r)=(n-1)\varphi_\psi(r)/r$ and $\psi_\lambda'/\psi_\lambda=g_\lambda/(n-1)$ with $g_\lambda$ as in the proof of Lemma~\ref{lem:origin}. We consider the following hypotheses on the warping function.
\begin{enumerate}
  \item[($\Psi$1)] $\psi\in C^{1,1}_{\mathrm{loc}}((0,\infty))$ and $\psi>0$, $\psi'>0$ on $(0,\infty)$.
  \item[($\Psi$2)] The \emph{tip index} $a:=\lim_{r\to0^+}\varphi_\psi(r)$ exists in $[0,\infty)$.
  \item[($\Psi$3)] $r^2g_\psi(r)\to\infty$ as $r\to\infty$.
\end{enumerate}
Nothing is assumed about $\psi(0)$, $\psi'(0)$ or $\psi''$ near $0$: $\psi$ may vanish to any order or not vanish at all, and $\psi'$ may tend to $0$ or to $+\infty$.

\begin{proposition}[Equivalent forms of the tip condition]\label{prop:tip-condition}
  Assume \textup{($\Psi$1)}. The following are equivalent.
  \begin{enumerate}[label=(\roman*)]
    \item \textup{($\Psi$2)} holds with tip index $a$.
    \item $\psi_\lambda\to r^a$ in $C^1_{\mathrm{loc}}((0,\infty))$ as $\lambda\to\infty$.
    \item $\psi(r)=r^aL(r)$ with $rL'(r)/L(r)\to0$ as $r\to0^+$. In particular $L$ is slowly varying at $0$ and $\psi$ is regularly varying at $0$ of index $a$ in the sense of Karamata~\cite{BinghamGoldieTeugels1987}.
  \end{enumerate}
  Moreover, if $\psi_\lambda\to\Psi$ in $C^1_{\mathrm{loc}}((0,\infty))$ as $\lambda\to\infty$ for some positive $\Psi$, then $\Psi(r)=r^a$ for some $a\ge0$, so \textup{($\Psi$2)} holds.
\end{proposition}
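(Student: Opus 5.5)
The plan is to reduce everything to the logarithmic derivative $\psi'/\psi$, since $\psi_\lambda'(r)/\psi_\lambda(r)=\lambda^{-1}\psi'(r/\lambda)/\psi(r/\lambda)=\varphi_\psi(r/\lambda)/r$ and $\psi_\lambda(1)=1$. Thus $\psi_\lambda(r)=\exp\bigl(\int_1^r \varphi_\psi(t/\lambda)\,dt/t\bigr)$, and $C^1_{\mathrm{loc}}$ convergence of $\psi_\lambda$ to a positive limit is equivalent to local uniform convergence of $\log\psi_\lambda$ together with its derivative $\varphi_\psi(r/\lambda)/r$. Every implication is then a statement about the single function $\varphi_\psi$.

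\emph{(i)$\Rightarrow$(ii).} If $\varphi_\psi(s)\to a$ as $s\to0^+$, then $\varphi_\psi(r/\lambda)\to a$ uniformly for $r$ in a compact subset $K$ of $(0,\infty)$, since $r/\lambda\le\max K/\lambda\to0$. Hence $\psi_\lambda'/\psi_\lambda\to a/r$ uniformly on $K$. Integrating from $1$ gives $\log\psi_\lambda\to a\log r$ uniformly on $K$, so $\psi_\lambda\to r^a$ uniformly, and $\psi_\lambda'=\psi_\lambda\cdot(\psi_\lambda'/\psi_\lambda)\to ar^{a-1}$ uniformly.

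\emph{(ii)$\Rightarrow$(i).} Evaluating at $r=1$, $C^1$ convergence gives $\psi_\lambda'(1)/\psi_\lambda(1)=\varphi_\psi(1/\lambda)\to a$. This holds along every real $\lambda\to\infty$, hence $\varphi_\psi(s)\to a$. The same evaluation proves the final claim: if $\psi_\lambda\to\Psi>0$ in $C^1_{\mathrm{loc}}$, then $\varphi_\psi(1/\lambda)\to\Psi'(1)/\Psi(1)=:a$, and $a\ge0$ because $\psi'>0$. Then (i) holds, so (ii) forces $\Psi=r^a$. Alternatively, the scaling relation $\psi_{\lambda\mu}(r)=\psi_\lambda(r/\mu)/\psi_\lambda(1/\mu)$ yields the multiplicative identity $\Psi(rs)=\Psi(r)\Psi(s)$, whose $C^1$ solutions are powers.

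\emph{(i)$\Leftrightarrow$(iii).} Define $L:=r^{-a}\psi$, so $rL'/L=\varphi_\psi-a$. This tends to $0$ exactly when (i) holds with that value of $a$; conversely, (iii) gives $\varphi_\psi=a+rL'/L\to a$. Slow variation of $L$ at $0$ follows from $\log L(tx)-\log L(x)=\int_x^{tx}(rL'/L)\,dr/r$. This integral is bounded by $\sup_{(0,\max(1,t)x]}|rL'/L|\cdot|\log t|\to0$ as $x\to0^+$, which is the representation-theorem direction of Karamata's theory.

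None of the steps is hard. The only points needing care are the uniformity in (i)$\Rightarrow$(ii), which comes from $r/\lambda\to0$ uniformly on compacts, and the continuity of $\varphi_\psi$ guaranteed by ($\Psi$1), which justifies the integration.
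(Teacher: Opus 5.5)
Your proof is correct, and for the three equivalences it is the paper's argument. Everything is read off from $\psi_\lambda'/\psi_\lambda=\varphi_\psi(r/\lambda)/r$: (ii)$\Rightarrow$(i) by evaluation at $r=1$, and (i)$\Leftrightarrow$(iii) via $rL'/L=\varphi_\psi-a$. Your check that $L$ is slowly varying is a small addition that the paper leaves implicit.

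For the final claim you take a different, shorter route. Evaluating at $r=1$ already gives $\varphi_\psi(1/\lambda)\to\Psi'(1)/\Psi(1)=\Psi'(1)$, which is (i). Then $\Psi=r^a$ follows from (i)$\Rightarrow$(ii) and uniqueness of limits.

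The paper instead passes to the limit in $\psi_{\lambda\mu}(r)=\psi_\lambda(r/\mu)/\psi_\lambda(1/\mu)$ and solves Cauchy's multiplicative equation $\Psi(r/\mu)=\Psi(r)\Psi(1/\mu)$. That argument identifies $\Psi$ as a power using only local uniform convergence and the monotonicity of $\psi_\lambda$. The $C^1$ convergence is used only afterwards, to obtain ($\Psi$2) through (ii)$\Rightarrow$(i). It is also the analytic form of the self-similarity of blow-up limits behind Remark~\ref{rem:tangent-cone}.

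Under the stated $C^1$ hypothesis your route makes the functional equation redundant, but it gives up that extra information.
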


\begin{proof}
  Since $\log\psi_\lambda(r)=\int_1^r\varphi_\psi(t/\lambda)\,t^{-1}\,dt$ and $\psi_\lambda'(r)/\psi_\lambda(r)=\varphi_\psi(r/\lambda)/r$, (i)$\Rightarrow$(ii) follows because $\varphi_\psi(r/\lambda)\to a$ uniformly for $r$ in compact subsets of $(0,\infty)$.

  (ii)$\Rightarrow$(i) follows by evaluating $\psi_\lambda'(1)/\psi_\lambda(1)=\varphi_\psi(1/\lambda)$.

  (i)$\Leftrightarrow$(iii) holds because $rL'/L=\varphi_\psi-a$ for $L=\psi/r^a$.

  For the last claim, the identity $\psi_{\lambda\mu}(r)=\psi_\lambda(r/\mu)/\psi_\lambda(1/\mu)$ gives in the limit $\lambda\to\infty$ that $\Psi(r/\mu)=\Psi(r)\Psi(1/\mu)$ for all $r,\mu>0$. With $\Psi(1)=1$ this is Cauchy's multiplicative equation, whose positive continuous solutions are the powers $r^a$, and $a\ge0$ since $\Psi'\ge0$.
\end{proof}

\begin{remark}[A geometric interpretation]\label{rem:tangent-cone}
  That a limit in Proposition~\ref{prop:tip-condition}(ii) must be a power is the analytic counterpart of the fact that a unique tangent cone is a metric cone. If $a=1$ and moreover $\psi(r)/r\to c\in(0,\infty)$, the pointed rescalings $(M,\lambda^2g_M,o)$ converge, in $C^1$ on compact sets away from the tip, to the metric cone $C(\cN_c)$ over $\cN_c:=(\cN,c^2g_{\cN})$, so ($\Psi$2) then says that the tip has a unique tangent cone in the usual sense of pointed limits of rescalings~\cite{Gromov1999}. In general ($\Psi$2) is a statement about the blow-ups after the link metric has been renormalized by the factor $(\lambda\psi(1/\lambda))^{-2}$: for $a>1$ the unnormalized blow-ups collapse the link and converge to a half-line, for $a<1$ they expand it and do not converge at all, and only the normalized limit $r^a$ contains the information that determines the critical exponent. The model spaces $C_a(\cN):=(\R_+\times \cN,\ dr^2+r^{2a}g_{\cN})$ are metric cones for $a=1$ (see~\cite{Cheeger1983,BruningSeeley1987,Lesch1997} for analysis on cones) and metric horns in the sense of Cheeger~\cite{Cheeger1980} for $a>1$.
\end{remark}

\begin{theorem}[Existence with a singular tip]\label{thm:tip-existence}
    Let $n\ge2$, let $\cN$ be a closed Riemannian manifold of dimension $n-1$, and let $\psi$ satisfy \textup{($\Psi$1)--($\Psi$3)} with tip index $a$. Put $\gamma:=(n-1)a$ and let $f$ satisfy \textup{(f1)--(f4)} with $1<p<p_c(\gamma)$. Then for every integer $k\ge0$ there exists $u\in C^1([0,\infty))\cap C^2((0,\infty))$ with $u'(0)=0$, $u(r)\to0$ as $r\to\infty$, exactly $k$ zeros in $(0,\infty)$, solving
  \begin{equation}\label{eq:tip-ode}
    u''+(n-1)\frac{\psi'}{\psi}u'+f(u)=0\qquad\text{on }(0,\infty).
  \end{equation}
  The function $u(r,y):=u(r)$ is a classical solution of $\Delta_{g_M}u+f(u)=0$ on $M$, it satisfies $\psi(r)^{n-1}u'(r)\to0$ as $r\to0^+$, and it is a weak solution across the tip in the sense that
  \begin{equation}\label{eq:weak-tip}
    \int_M\big(\langle\nabla u,\nabla\phi\rangle-f(u)\phi\big)\,d\vol_{g_M}=0
  \end{equation}
  for every $\phi\in C^1_c(\overline{M})$, where $\overline{M}$ is the metric completion of $M$ and $C^1_c(\overline{M})$ denotes functions with compact support in $\overline{M}$ that are $C^1$ in $(r,y)$ up to $r=0$.
\end{theorem}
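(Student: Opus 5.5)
The plan is to reduce the statement to Theorem~\ref{thm:existence} in the form of Lemma~\ref{lem:origin}, applied to $g:=g_\psi=(n-1)\psi'/\psi$, and then to check the geometric conclusions directly.

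First, the hypotheses. By ($\Psi$1), $\psi\in C^{1,1}_{\mathrm{loc}}$ with $\psi,\psi'>0$. Hence $g_\psi$ is positive and locally Lipschitz on $(0,\infty)$, being a quotient of locally Lipschitz functions with a positive denominator; this is (G1). Since $s\,g_\psi(s)=(n-1)\varphi_\psi(s)\to(n-1)a=\gamma$ by ($\Psi$2), (G2$'$) holds. Finally, ($\Psi$3) is (G3). Lemma~\ref{lem:origin} then gives, for each $k$ and each $1<p<p_c(\gamma)$, a solution $u\in C^1([0,\infty))\cap C^2((0,\infty))$ of \eqref{eq:tip-ode} with $u'(0)=0$, exactly $k$ zeros, and $u\to0$. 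Since $\Delta_{g_M}u=u''+g_\psi u'$ for functions of $r$ alone, the lift $u(r,y)$ is a classical solution on $M=(0,\infty)\times\cN$, the link entering only through its dimension.

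Next, the boundary behaviour at the tip. The local construction gives
\[
  \psi^{n-1}(r)\,u'(r)=-\int_0^r\psi^{n-1}f(u)\,d\tau,
\]
because $e^{G}$ is a constant multiple of $\psi^{n-1}$ and the boundary term at $0$ vanishes. Since $\psi$ is increasing, $\psi$ is bounded near $0$, and the right-hand side is $O(r)$. Therefore $\psi(r)^{n-1}u'(r)\to0$.

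For the weak formulation \eqref{eq:weak-tip}, I would cut off near the tip. Take $\phi\in C^1_c(\overline M)$ and $\varepsilon>0$, and integrate by parts on $\{r>\varepsilon\}$, where everything is classical. Using $d\vol_{g_M}=\psi^{n-1}\,dr\,d\vol_{\cN}$ and the fact that $\nabla u=u'\partial_r$, the only boundary contribution is
\[
  -\psi(\varepsilon)^{n-1}u'(\varepsilon)\int_{\cN}\phi(\varepsilon,y)\,d\vol_{\cN}(y),
\]
and it tends to $0$ by the previous step together with the boundedness of $\phi$. The interior integrals over $\{r>\varepsilon\}$ converge to the full integrals over $M$ by dominated convergence. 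This requires $|\langle\nabla u,\nabla\phi\rangle|\le|u'|\,|\partial_r\phi|$, which is bounded, and $f(u)\phi$ bounded, together with the finiteness of the volume near the tip, $\int_0^1\psi^{n-1}\,dr<\infty$, which holds because $\psi$ is bounded there. Only the radial derivative of $\phi$ appears, so the possibly degenerate link directions cause no difficulty.

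The main obstacle is interpretive rather than technical: one has to make sure that the finiteness of $\int|\nabla\phi|\,|\nabla u|$ near the tip uses only $\partial_r\phi$, which holds because $u$ is radial. Everything else is a direct transcription of Lemma~\ref{lem:origin}.
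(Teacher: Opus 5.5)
Your proposal is correct and follows the paper's proof essentially step by step. You verify (G1), (G2$'$) and (G3) for $g_\psi$, invoke Lemma~\ref{lem:origin}, get $\psi^{n-1}u'\to0$ from the boundedness of $\psi$ near the tip, and integrate by parts on $\{r>\varepsilon\}$, where only the vanishing boundary term survives. The only differences are minor: you derive the vanishing flux from the integrated equation rather than directly from $u'(0)=0$, and you spell out the dominated-convergence passage $\varepsilon\to0$, which the paper leaves implicit.
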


\begin{proof}
  By ($\Psi$1), $g_\psi=(n-1)\psi'/\psi$ is positive and locally Lipschitz on $(0,\infty)$, i.e.\ (G1) holds. Hypothesis ($\Psi$2) is (G2$'$) with $\gamma=(n-1)a$, and ($\Psi$3) is (G3). Lemma~\ref{lem:origin} gives $u$. Since $u'(0)=0$ and $\psi$ is bounded near $0$ (it is increasing), $\psi^{n-1}u'\to0$.

  For \eqref{eq:weak-tip}, let $\bar\phi(r)$ denote the average of $\phi(r,\cdot)$ over $\cN$. As $u$ is radial, $\langle\nabla u,\nabla\phi\rangle=u'\,\partial_r\phi$. Integrating first over $\cN$ turns the left side of \eqref{eq:weak-tip} into
  \[
    \vol(\cN)\int_0^\infty\psi^{n-1}\big(u'\bar\phi'-f(u)\bar\phi\big)\,dr.
  \]
  Integrating by parts on $[\varepsilon,\infty)$ and using \eqref{eq:tip-ode} leaves the boundary term $-\psi(\varepsilon)^{n-1}u'(\varepsilon)\bar\phi(\varepsilon)$, which tends to $0$.
\end{proof}

\begin{remark}[What happens at the tip]\label{rem:tip-capacity}
  The two solutions of $(\psi^{n-1}u')'=0$ near $0$ are $u_1\equiv1$ and $u_2(r)=\int_r^{r_0}\psi^{1-n}$.

  Suppose first that $\int_0\psi^{1-n}\,dr=\infty$. This is the case whenever $\gamma>1$, and for $\gamma=1$ it depends on the slowly varying factor $L$ of Proposition~\ref{prop:tip-condition}(iii). Then $u_2$ is unbounded, bounded solutions near the tip are exactly those with $\psi^{n-1}u'\to0$, and the tip has zero capacity. No boundary condition is needed, and the solutions of Theorem~\ref{thm:tip-existence} are the natural ones.

  Suppose now that $\int_0\psi^{1-n}\,dr<\infty$. This is the case whenever $\gamma<1$, and also for slowly varying $\psi$ ($a=0$). Then $u_2$ is bounded and the tip has positive capacity: the cut-off $u_2/u_2(0)$, equal to $1$ at the tip and to $0$ at $r_0$, has energy $1/\int_0^{r_0}\psi^{1-n}>0$, and no cut-off supported in $\{r\le r_0\}$ does better. In this case the Laplacian defined by the Dirichlet form on $H^1(M)$ (the Neumann realization, whose domain is characterized by $\psi^{n-1}u'\to0$) differs from the Friedrichs realization obtained from $C_c^\infty(M\setminus\{o\})$, which imposes $u(0)=0$. In the terminology of Sturm--Liouville theory~\cite{Zettl2005} this is the distinction between a singular and a regular endpoint of $-(\psi^{n-1}u')'$ at $0$ (the weight $\psi^{n-1}$ being always integrable), since $\int_0\psi^{1-n}<\infty$ is exactly the condition that $1/\psi^{n-1}$ be integrable. It is not Weyl's limit-point/limit-circle alternative, which asks whether $u_2$ lies in $L^2(\psi^{n-1}dr)$ near $0$ and has, for $\psi\sim r^a$, the threshold $\gamma=3$ instead of $\gamma=1$.

  Theorem~\ref{thm:tip-existence} concerns the Neumann realization, which is the natural one when $\overline{M}$ is regarded as a metric measure space with the full Sobolev space $H^1$. The shooting parameter $u(0)=\alpha$ is then a boundary value in the usual sense. The Dirichlet problem at a positive-capacity tip is not covered. It would require shooting in the flux $\lim_{r\to0}\psi^{n-1}u'$ with $u(0)=0$, and we do not know how to adapt the scaled limit of Lemma~\ref{lem:scaled-convergence} or the oscillation Lemma~\ref{lem:oscillation} to this setting.
\end{remark}

\begin{remark}[Effective dimension]\label{rem:effective-dim}
  In the model $C_a(\cN)$ one has
  \[
    \vol B_r(o)=\vol(\cN)\,\frac{r^{1+(n-1)a}}{1+(n-1)a}.
  \]
  Writing $N_{\mathrm{eff}}:=\gamma+1=1+(n-1)a$ for this volume-growth exponent,
  \begin{equation}\label{eq:pc-neff}
    p_c(\gamma)=\frac{N_{\mathrm{eff}}+2}{N_{\mathrm{eff}}-2}\qquad(N_{\mathrm{eff}}>2),
  \end{equation}
  and $p_c=\infty$ when $N_{\mathrm{eff}}\le2$. So the critical exponent of Theorem~\ref{thm:tip-existence} is the Sobolev exponent of the volume-growth dimension of the tip, which need not be an integer and need not coincide with the topological dimension $n$. The tip and the end play separate roles: the tip index alone determines the range of $p$, while ($\Psi$3) only requires the end to open up.
\end{remark}

\subsection{Tips without a unique tangent cone}\label{sec:no-tangent-cone}
The existence of the limit in ($\Psi$2) can be dispensed with, at the price of a smaller range of $p$. We replace ($\Psi$2) by
\begin{enumerate}
  \item[($\Psi$2$'$)] $\psi\in C^2((0,r_0])$, and $\varphi_\psi$ and $r^2\psi''/\psi$ are bounded on $(0,r_0]$,
\end{enumerate}
i.e.\ $r\psi'/\psi$ is bounded and the radial curvature is $O(r^{-2})$, the conical rate. Put $a_+:=\limsup_{r\to0}\varphi_\psi$ and $\gamma_+:=(n-1)a_+$. With $G$ a primitive of $g_\psi$, set
\begin{equation}\label{eq:rho-star}
  V_{g_\psi}(r):=e^{-G(r)}\int_0^re^{G(s)}\,ds,\qquad
  \hrho_*:=\limsup_{s\to0^+}\Big(g_\psi(s)V_{g_\psi}(s)\Big)-\frac12 .
\end{equation}
Under ($\Psi$2) one has $\gamma_+=\gamma$ and $\hrho_*=(\gamma-1)/(2(\gamma+1))$, so that the range \eqref{eq:blowup-p-range} below is the range $1<p<p_c(\gamma)$ of Theorem~\ref{thm:tip-existence}. In general $\hrho_*\ge(\gamma_+-1)/(2(\gamma_++1))$ (Appendix~\ref{sec:blowups}), so the first condition in \eqref{eq:blowup-p-range} is implied by the second when $\gamma_+>1$ and is void when $\gamma_+\le1$; it is kept for readability.

\begin{theorem}[Non-unique blow-ups]\label{thm:blowups}
    Let $\psi$ satisfy \textup{($\Psi$1)}, \textup{($\Psi$2$'$)} and \textup{($\Psi$3)}, and let $f$ satisfy \textup{(f1)--(f4)} with
  \begin{equation}\label{eq:blowup-p-range}
    1<p<p_c(\gamma_+)\qquad\text{and, if }\hrho_*>0,\qquad p<\frac{1-\hrho_*}{\hrho_*}.
  \end{equation}
  Then the conclusions of Theorem~\ref{thm:tip-existence} hold.
\end{theorem}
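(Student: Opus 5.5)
The plan is to rerun the proof of Lemma~\ref{lem:origin} and change only the two places where the limit in ($\Psi$2) was used: the identification of the scaled limit, and the oscillation of that limit. By ($\Psi$1), $g_\psi>0$ is locally Lipschitz on $(0,\infty)$, so (G1) holds, and ($\Psi$3) is (G3). As recorded in the proof of Lemma~\ref{lem:origin}, the following parts use only (G1), $g>0$ and the behaviour at infinity: local well-posedness, continuous dependence on $\alpha$, the energy and trapping arguments, the no-gaps Lemma~\ref{lem:no-gaps}, and Steps~1--3 of Section~\ref{sec:existence-proof}. The weak formulation across the tip is also unchanged, since $\psi$ is increasing and hence bounded near $0$. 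What remains is to show that every $A_k$ is bounded above, i.e.\ that $N(\alpha)\to\infty$ as $\alpha\to\infty$.

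\emph{Step 1: compactness of the blow-ups.} Set $g_\lambda(t)=\lambda^{-1}g_\psi(t/\lambda)=(n-1)\varphi_\psi(t/\lambda)/t$. A direct computation gives
\[
r\varphi_\psi'=\varphi_\psi+r^2\psi''/\psi-\varphi_\psi^2 .
\]
By ($\Psi$2$'$) this is bounded on $(0,r_0]$, so the functions $t\mapsto\varphi_\psi(t/\lambda)$ are uniformly bounded and equi-Lipschitz in $\log t$ on compact subsets of $(0,\infty)$. By Ascoli--Arzel\`a, every sequence $\lambda_j\to\infty$ has a subsequence along which $\varphi_\psi(t/\lambda_j)\to\varphi_\infty(t)$ locally uniformly. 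The limit satisfies $0\le\varphi_\infty\le a_+$, and we set $h:=(n-1)\varphi_\infty/t$. Along this subsequence $g_\lambda\to h$ locally uniformly, with $0\le g_\lambda\le C/t$, so \eqref{eq:blowup-weight} holds with $(s/r)^\gamma$ replaced by $e^{-\int_s^rh}$.

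\emph{Step 2: the limit equation.} I repeat the argument of Lemma~\ref{lem:scaled-convergence} along the subsequence. The rescaled solutions $v(\cdot,\lambda_j)$ converge to the solution $w_h$ of $w''+hw'+k_\pm|w|^{p-1}w=0$, $w(0)=1$, $w'(0)=0$. This problem is uniquely solvable by the contraction argument of Section~\ref{sec:local}, because $h\ge0$ (so $H=\int h$ is nondecreasing) and $h$ is locally Lipschitz away from $0$. The limit may depend on the subsequence, but that causes no difficulty.

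\emph{Step 3: oscillation of every limit.} Here I would invoke the oscillation Lemma~\ref{lem:oscillation} for the general damping $h$. Its hypothesis is stated through $\hrho_h$, the analogue of \eqref{eq:rho-star} built from $V_h(t)=e^{-H(t)}\int_0^te^{H}$. The key inequality to prove is $\hrho_h\le\hrho_*$ for every subsequential limit $h$. To get it, note that $g_\lambda(t)V_{g_\lambda}(t)=g_\psi(t/\lambda)V_{g_\psi}(t/\lambda)$ exactly, by scaling. Passing to the limit is then justified by dominated convergence in $V_{g_\lambda}$, using $e^{-\int_s^tg_\lambda}\le1$ and the locally uniform convergence of $g_\lambda$ from Step~1. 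This gives $\sup_t h(t)V_h(t)-\tfrac12\le\hrho_*$. With this bound, the conditions \eqref{eq:blowup-p-range} (namely $p<p_c(\gamma_+)$, and $p<(1-\hrho_*)/\hrho_*$ if $\hrho_*>0$) are exactly what Lemma~\ref{lem:oscillation} needs to conclude that $w_h$ has infinitely many simple zeros. The relevant $\gamma$ there is controlled by $h\le(n-1)a_+/t=\gamma_+/t$.

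\emph{Step 4: conclusion.} Suppose $N(\alpha_m)\le K$ along some sequence $\alpha_m\to\infty$. Put $\lambda_m=\alpha_m^{(p-1)/2}$ and extract the subsequence of Step~1. Then $w_h$ has more than $K$ simple zeros in some interval $[0,T]$, and the uniform convergence of Step~2 forces $N(\alpha_m)>K$ for large $m$, a contradiction. Hence every $A_k$ is bounded, and the rest of the argument is as in Theorem~\ref{thm:tip-existence}.

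I expect Step~3 to be the main obstacle. Lemma~\ref{lem:oscillation} must be available in a form that needs only a one-sided bound $\sup(hV_h)-\tfrac12\le\hrho_*$ rather than the value $\hrho_h$ attained by a specific $h$. The bound must also hold uniformly in $t$, not just near $0$, which is where the exact scale invariance of $g_\lambda V_{g_\lambda}$ has to be exploited.
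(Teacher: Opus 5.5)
Your route is the paper's: subsequential blow-up limits $h$ of $g_\lambda$ with $0\le h\le\gamma_+/r$, and the scaled limit identified along subsequences as the solution $w_h$ of the damped problem. Then the exact identity $(g_\lambda V_{g_\lambda})(t)=(g_\psi V_{g_\psi})(t/\lambda)$ together with dominated convergence gives $\hrho_h\le\hrho_*$, and Lemma~\ref{lem:oscillation} gives boundedness of the sets $A_k$. The obstacle you anticipate in Step~3 is not one. Lemma~\ref{lem:oscillation} is stated with $\hrho_h$ a supremum over all $r>0$, which is exactly what the scale identity controls. Since $\rho\mapsto(1-\rho)/\rho$ is decreasing, $0<\hrho_h\le\hrho_*$ gives $p<(1-\hrho_*)/\hrho_*\le(1-\hrho_h)/\hrho_h$.

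What you have overlooked is the origin. You say that the proof of Lemma~\ref{lem:origin} records the energy, trapping and no-gaps arguments as using only (G1), $g>0$ and the behaviour at infinity. This is not entirely so. The parenthetical there proves Proposition~\ref{prop:strict-extrema} at $r_0=0$ through $u''(0)=-f(\alpha)/(\gamma+1)$, and that computation uses the existence of $\gamma=\lim_{s\to0}sg(s)$, i.e.\ (G2$'$). Under ($\Psi$2$'$) this limit need not exist, and then neither need $u''(0)$. Indeed $u'(r)/r=-(f(\alpha)+o(1))\int_0^1e^{G(rt)-G(r)}\,dt$, and the last factor need not have a limit when $rg(r)$ oscillates, as for the tips of Example~\ref{ex:log-periodic}. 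The strict extremum at $0$ is genuinely used, for instance in Lemma~\ref{lem:positive-critical} with $c=0$, hence in Lemma~\ref{lem:no-gaps} for $k=0$.

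The fix is short, and it is what the paper does. From $u'(r)=-e^{-G(r)}\int_0^re^{G}f(u)$ and $f(u(r))\to f(\alpha)\neq0$, $u'$ has the sign of $-f(\alpha)$ on some interval $(0,\delta)$. So $0$ is a strict local maximum if $f(\alpha)>0$ and a strict local minimum if $f(\alpha)<0$, which is all the proof of Proposition~\ref{prop:strict-extrema} uses. With this added, your argument is complete.
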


The proof is given in Appendix~\ref{sec:blowups}, together with an example of a tip whose tangent cone is not unique (Example~\ref{ex:log-periodic}).

\begin{remark}[Uniqueness]\label{rem:tip-uniqueness}
  For the model nonlinearity $f(u)=|u|^{p-1}u-u$, the solutions of Theorem~\ref{thm:tip-existence} are unique when $\gamma>1$ and $g_\psi$ satisfies the hypotheses of Part~II (Theorem~\ref{thm:tip-uniqueness}). Theorem~\ref{thm:blowups} has no such counterpart: the structural hypothesis $(\Phi)$ of Part~II forces the limit $\gamma$ to exist whenever $\limsup_{r\to0}rg_\psi(r)>1$ (Proposition~\ref{prop:phi-tip}(i)), which is the only case in which the hypothesis $\gamma>1$ of Part~II could hold.
\end{remark}

%% file: sections/part2-overview.tex
\section*{Overview of Part~II}

Part~II proves that the bound states found in Part~I are unique when $f$ is the model nonlinearity $|u|^{p-1}u-u$. The proof is long and dense, but its main idea is simple. We describe it here informally, at the cost of some precision, and with some harmless white lies. The exact statements start in Section~\ref{sec:uniqueness}.

A large part of Part~II builds on the impressive work of Tang~\cite{Tang2026}, which proves uniqueness of bound states for the Euclidean case $g=(n-1)/r$. To make the paper self-contained, we include the main arguments of \cite{Tang2026} in the present setting, with some simplifications and improvements. The main new ideas are in Section~\ref{sec:multiplier}, which introduces the multiplier field, in Section~\ref{sec:pohozaev}, which introduces the Pohozaev functions and the moving Pohozaev level, and in Sections~\ref{sec:transition}--\ref{sec:renewal-below}, which carry out the renewal described below on warped products. To make the paper easier to read and reference, we try to keep the notation and terminology of \cite{Tang2026} as much as possible, and we refer to it for some technical details.

\emph{The goal: a staircase.} As in Part~I, we follow the solution $u(\cdot,\alpha)$ that starts at height $\alpha$, and we count its zeros $N(\alpha)$. Uniqueness follows if the graph of $N$ is a staircase: $N$ is constant except at the values of $\alpha$ that give bound states, and at each of these it increases by exactly one. Then the bound state with $k$ zeros is the unique value of $\alpha$ at which $N$ jumps from $k$ to $k+1$.

\emph{Away from bound states nothing happens.} Sections~\ref{sec:basic-properties} and~\ref{sec:pohozaev} show that every solution is of one of two kinds. Either it is a bound state, or it is trapped: its energy eventually becomes negative, it stays forever in one of the two wells around $\pm1$, and it has no further zeros. Being trapped is not affected by small changes of $\alpha$, so $N$ is constant near every trapped solution and can change only at bound states. The proof uses two quantities. The energy decreases because of the damping. A Pohozaev function bounds the energy from below at the zeros. Together they show that a solution cannot have infinitely many zeros.

\emph{At a bound state, everything depends on one sign.} To see what happens at a bound state we differentiate in $\alpha$: the variation $v=\partial_\alpha u$ describes how the solution moves when $\alpha$ increases. Two facts about $v$ explain the staircase. First, if $v$ and $u'$ have the same sign at every zero of $u$, the zeros move toward the origin as $\alpha$ grows, so no zero is lost and $N$ cannot decrease. Second, if at a bound state $v$ grows at infinity with the sign opposite to $u$, then for $\alpha$ slightly larger the solution overshoots, gains exactly one new zero, and is then trapped. Section~\ref{sec:staircase} shows that the second fact alone, checked at every bound state, already gives the staircase. The whole proof therefore reduces to one sign per bound state: the behavior of $v$ at infinity. Section~\ref{sec:tail} studies this behavior: $v$ either decays or grows, and a simple condition on the last part of the solution rules out decay.

\emph{Following $v$ phase by phase.} To find the sign of $v$ at infinity, we follow $v$ along the oscillations of $u$. A phase is the stretch between two consecutive critical points of $u$: on the descent the solution falls from an extremum to a zero, and on the ascent it climbs to the next extremum, of opposite sign. Around the zero lies the river, where $|u|$ is below the level $\alpha_*$ at which the potential $F$ vanishes. The river is the hardest stretch to control, because the nonlinearity changes sign inside it. We do not follow $v$ directly. Instead we use various auxiliary functions built from $u$, $v$, their derivatives and the multiplier $\varphi$: two Wronskians, Pohozaev functions, comparison functions and bridge functions. Each has an explicit formula for its derivative, and on each stretch of a phase the signs of $u$, $u'$, $v$ and $f(u)$ fix the sign of that derivative. Most arguments consist of reading off such signs.

\emph{What is calculation.} Several parts are pure computation and can be skipped on a first reading: the properties of the multiplier $\varphi$ and the checks of the hypotheses on Euclidean and hyperbolic spaces (Section~\ref{sec:multiplier} and Lemma~\ref{lem:H-criterion}), the formulas for the derivatives of the auxiliary functions (Section~\ref{sec:identities}, Lemmas~\ref{lem:weighted-identities} and~\ref{lem:renewal-identities}, all that is collected in Appendix~\ref{sec:identity-table}), and the numerical estimates of Lemmas~\ref{lem:X-growth} and~\ref{lem:river-integral}. The multiplier $\varphi$ plays the role of $r$ in the Euclidean proof. Every formula is the Euclidean one plus correction terms that involve the derivatives of $\varphi$, and hypothesis $(\Phi)$ is what we use to make sure that these corrections have the right sign for the argument to go through.

\emph{The first phase.} In the first phase (Section~\ref{sec:variation}), $v$ starts at $1$, changes sign once before $u$ comes down to the level $1$, and then keeps that sign at least up to the first zero of $u$. For a ground state there is only one phase: $v$ never changes sign again, which is the required sign at infinity. Hence ground states are unique without the hypotheses (H) and (R) introduced below.

\emph{The slope condition.} The later phases are handled by induction over the critical points of $u$. The induction hypothesis, the slope condition, consists of three inequalities at a critical point. The first two say that $v$ has the same sign as $u$ at the critical point while $v'$ has the opposite sign, so that $v$ is already moving toward zero. The third, the slope bound, says that $v'$ is not too large compared with $v$. If the slope condition holds at the critical point where a phase starts, then under hypothesis (H) the whole phase is under control (Section~\ref{sec:transition}): $v$ has exactly one zero, early in the descent, it has the right sign at the zero of $u$, it crosses the river with the help of a bridge function, and at the next critical point the first two inequalities hold again.

\emph{The renewal.} The slope bound does not come back as easily, and proving it again at the next critical point, which completes the induction step, is the renewal. As in the Euclidean case, this is by far the hardest step. When the starting height is large, $v$ is close to the scaling field of the equation, for which the third quantity vanishes identically (Section~\ref{sec:later-phases}), so its sign depends on lower-order effects. The renewal splits into cases according to where the zero of $v$ lies on the descent. If it lies higher than the next extremum of $u$, a single bridge function gives the renewal (Section~\ref{sec:renewal-above}). Otherwise, we compare the descent and the ascent at equal heights, level by level (Section~\ref{sec:renewal-below}). In the hardest case, where the zero of $v$ lies inside the river, we need an integral estimate, and this is the only place where the restriction (R) is used.

\emph{Conclusion.} By induction, the slope condition holds at every critical point. For a bound state the last phase never ends: after the last zero of $u$, $v$ has one more zero and then grows at infinity with the sign opposite to $u$. This is the required sign at every bound state, and Section~\ref{sec:staircase} turns it into the staircase.

\emph{Dependencies.} Figure~\ref{fig:dependency} shows the logical dependencies. The main line consists of the following steps, in this order, with the corresponding results:
\begin{enumerate}
  \item Pohozaev estimates: Proposition~\ref{prop:pohozaev-positive} and Lemma~\ref{lem:pohozaev-lower},
  \item classification: Proposition~\ref{prop:classification},
  \item first phase: Theorem~\ref{thm:first-phase}(i)--(iii),
  \item later phases and renewal: Theorems~\ref{thm:phase}, \ref{thm:renewal-above} and~\ref{thm:renewal-below},
  \item tail sign at every bound state: Theorem~\ref{thm:critical-form}(i) with Lemma~\ref{lem:tail-sign},
  \item tail criterion: Theorem~\ref{thm:tail-criterion},
  \item staircase: Theorem~\ref{thm:staircase}.
\end{enumerate}
Only Sections~\ref{sec:transition}--\ref{sec:renewal-below} use (H) and (R). The ground state (Theorem~\ref{thm:unconditional}) branches off after the first phase. There are a few forward references:
\begin{itemize}
  \item Proposition~\ref{prop:pohozaev-positive} uses the decay Lemma~\ref{lem:decay},
  \item Theorem~\ref{thm:phase}(v) uses Lemmas~\ref{lem:decay} and~\ref{lem:dichotomy}, Proposition~\ref{prop:nondegeneracy}, Corollary~\ref{cor:divergence} and Lemma~\ref{lem:tail-sign},
  \item Theorem~\ref{thm:critical-form} uses Lemmas~\ref{lem:zero-motion} and~\ref{lem:tail-sign}.
\end{itemize}
These forward references create no cycle. Indeed, Section~\ref{sec:tail}, Lemmas~\ref{lem:zero-motion}--\ref{lem:tail-jump} and Theorem~\ref{thm:tail-criterion} use only Part~I, Sections~\ref{sec:multiplier}--\ref{sec:variation} and one another, and Lemmas~\ref{lem:decay} and~\ref{lem:dichotomy} use only Sections~\ref{sec:multiplier}--\ref{sec:basic-properties} and the definition \eqref{eq:pohozaev} of $P$. Section~\ref{sec:tip-uniqueness} applies the theorems of Section~\ref{sec:uniqueness}, together with Part~I, to manifolds with singular tips.

\begin{figure}[t]
  \centering
  \input{pictures/dependency-map}
  \caption{Dependencies in Part~II. An arrow $A\to B$ means that $B$ uses $A$, possibly through the results on a path from $A$ to $B$. Dashed arrows are forward references, from a later section to an earlier one. Shaded boxes use (H), and Section~\ref{sec:renewal-below} also uses (R). In the bottom box, Theorems~\ref{thm:staircase}, \ref{thm:euclidean} and Corollary~\ref{cor:warped} use (H) and (R) through them. Apart from the clause on (H) and (R) in its part (iv), Theorem~\ref{thm:unconditional} uses only the unshaded boxes.}
  \label{fig:dependency}
\end{figure}

%% file: pictures/dependency-map.tex
\begin{tikzpicture}[
    x=1cm, y=1cm,
    box/.style={draw, rounded corners=2pt, align=center, font=\scriptsize,
                text width=4.75cm, inner sep=3pt, minimum height=0.9cm},
    hyp/.style={box, fill=black!8},
    main/.style={box, text width=10.6cm, line width=0.6pt},
    use/.style={-{Stealth[length=1.6mm]}, line width=0.45pt},
    fwd/.style={-{Stealth[length=1.6mm]}, line width=0.45pt, dashed},
  ]
  \node[box] (A) at (5.5,0)
    {\S\S\ref{sec:multiplier}--\ref{sec:basic-properties}: multiplier field $\varphi$ and $(\Phi)$;
     basic properties (Prop.~\ref{prop:critical-points}, Lem.~\ref{lem:passage-times}--\ref{lem:band-dissipation})};
  \node[box] (B) at (5.5,-1.6)
    {\S\ref{sec:pohozaev}: Pohozaev estimates (Prop.~\ref{prop:pohozaev-positive}, Lem.~\ref{lem:pohozaev-lower});
     classification and pausing (Prop.~\ref{prop:classification})};
  \node[box] (C) at (5.5,-3.25)
    {\S\ref{sec:variation}: identity table (Prop.~\ref{prop:identity-table});\\
     first phase (Thm.~\ref{thm:first-phase})};
  \node[hyp] (D) at (5.5,-5.05)
    {\S\ref{sec:transition}, under (H): phase from a critical point
     (Thm.~\ref{thm:phase}); renewal above $\beta_i$ (Thm.~\ref{thm:renewal-above})};
  \node[hyp] (E) at (5.5,-6.65)
    {\S\ref{sec:renewal-below}, under (H) and (R): renewal below $\beta_i$ (Thm.~\ref{thm:renewal-below});
     critical-point form (Thm.~\ref{thm:critical-form})};
  \node[box] (P) at (0,0)
    {Part~I: well-posedness, energy; scaled limit and its oscillation
     (Lem.~\ref{lem:scaled-convergence}--\ref{lem:limit-zeros}); manifolds (Thm.~\ref{thm:manifold-existence})};
  \node[box] (T1) at (0,-1.6)
    {\S\ref{sec:tail}: decay (Lem.~\ref{lem:decay});\\ dichotomy for $v$ (Lem.~\ref{lem:dichotomy})};
  \node[box] (T2) at (0,-3.25)
    {\S\ref{sec:tail}: nondegeneracy (Prop.~\ref{prop:nondegeneracy}), divergence (Cor.~\ref{cor:divergence});\\
     \S\ref{sec:staircase}: monotone zeros (Lem.~\ref{lem:zero-motion}), tail sign $M(\infty)$
     (Lem.~\ref{lem:tail-sign}), Sturm bound (Lem.~\ref{lem:v-zeros})};
  \node[box] (F) at (0,-5.05)
    {\S\ref{sec:staircase}: jumping at bound states (Lem.~\ref{lem:jump}, \ref{lem:tail-jump});
     tail criterion (Thm.~\ref{thm:tail-criterion})};
  \node[main] (G) at (5.5,-8.3)
    {Theorem~\ref{thm:unconditional} (ground state, Pohozaev estimates, classification), without (H) and (R);\\
     Theorems~\ref{thm:staircase} (staircase) and~\ref{thm:euclidean} (Euclidean case),
     Corollaries~\ref{cor:warped}--\ref{cor:ground-state}};
  \draw[use] (P) -- (A);
  \draw[use] (A) -- (B);
  \draw[use] (B) -- (C);
  \draw[use] (D) -- (E);
  \draw[use] (E) -- (E.south |- G.north);
  \draw[use] (A.south west) -- (T1.north east);
  \draw[fwd] (T1) -- (B);
  \draw[use] (T1) -- (T2);
  \draw[use] (C) -- (T2);
  \draw[use] (T2) -- (F);
  \draw[use] (P.west) -- ++(-0.3,0) |- (F.west);
  \draw[use] (F.south) |- (G.west);
  \draw[fwd] (T2.south east) -- (D.north west);
  \draw[fwd] (T2.south east) .. controls (2.8,-5.05) .. (E.north west);
  \draw[use] (C) -- (D);
\end{tikzpicture}

%% file: sections/10-uniqueness-setting.tex
\section{The uniqueness problem: setting and statements}\label{sec:uniqueness}

From now on we fix the model nonlinearity
\begin{equation}\label{eq:model-f}
    f(u)=|u|^{p-1}u-u,\qquad F(u)=\frac{|u|^{p+1}}{p+1}-\frac{u^2}{2},
\end{equation}
which belongs to the class (f1)--(f4) by Remark~\ref{rem:model-f}, so that Theorem~\ref{thm:existence} provides, for every $k\ge 0$, a bound state with exactly $k$ nodes. The purpose of Part II is to prove that this solution is \emph{unique}. As Section~\ref{sec:restriction-f} shows, uniqueness fails for some members of the class (f1)--(f4), so a restriction of this kind is unavoidable, although the largest class of nonlinearities for which uniqueness holds is not known.

Throughout Part II we use the two structural constants of \eqref{eq:model-f},
\begin{equation}\label{eq:levels}
    \alpha_*=\Big(\frac{p+1}{2}\Big)^{\frac{1}{p-1}}\quad\text{(the positive zero of $F$)},\qquad
    \alpha^*(0)=\Big(\frac{2(p+1)}{(\gamma+3)-p(\gamma-1)}\Big)^{\frac{1}{p-1}},
\end{equation}
the second one being the Pohozaev level introduced in Section~\ref{sec:pohozaev}. Note that \eqref{eq:p-range} makes $\alpha^*(0)$ finite and, as checked in Lemma~\ref{lem:level-structure}, $\alpha^*(0)>\alpha_*>1$. (The reader should distinguish the three symbols $\alpha_*$, $\alpha^*(r)$ and $\alpha_k$: the first is the zero of $F$, the second the moving Pohozaev level of Definition~\ref{def:pohozaev-level}, the third the shooting values of the staircase.)

\subsection{Terminology}\label{sec:terminology}
A solution of \eqref{eq:ode}, \eqref{eq:ivp} is called
\begin{itemize}
    \item a \emph{bound state with $k$ nodes} ($k\ge 0$) if it has exactly $k$ zeros in $(0,\infty)$ and $u(r)\to 0$ as $r\to\infty$ (a $0$-node bound state is a \emph{ground state}).
    \item \emph{nodal} if it has at least one zero.
    \item \emph{trapped} if beyond its last zero (or from $r=0$ if there is none) there is $r_0$ with $E(r_0)<0$. By Proposition~\ref{prop:trapping}, a trapped solution has no further zeros and does not tend to $0$.
\end{itemize}
Recall that $N(\alpha)$ is the number of zeros of $u(\cdot,\alpha)$ in $(0,\infty)$ (Section~\ref{sec:no-gaps}). We write
\begin{equation}\label{eq:variation}
    v(r,\alpha):=\partial_\alpha u(r,\alpha),\qquad v''+g\,v'+f'(u)v=0,\qquad v(0)=1,\ v'(0)=0
\end{equation}
for the variation, which solves the linearization of \eqref{eq:ode} along $u$.

\begin{remark}\label{rem:trapped}
    Trapped solutions stay in the potential well $\{F<0\}$ around $\pm1$. If in addition $\liminf_{r\to\infty}g>0$ and $g$ is bounded at infinity (the latter as under (G5) below), they converge to $\pm1$: indeed $E\downarrow E_\infty\in[F(1),0)$ and $\int^\infty gu'^2<\infty$ give $u'\in L^2$, $u''=-gu'-f(u)$ is bounded so $u'\to0$, then $F(u)\to E_\infty$ and $|u|\to s$ with $F(s)=E_\infty$. If $s\ne1$, then $u''=-gu'-f(u)\to-f(\pm s)\ne0$, which is incompatible with $u'\to0$. Whether the approach to $\pm1$ is oscillatory or eventually monotone depends on the linearization $z''+gz'+(p-1)z=0$ at the equilibrium: for asymptotically constant $g\to g_\infty$ it is oscillatory when $p-1>g_\infty^2/4$ (for instance $p>2$ on $\Hyp^3$) and eventually monotone when $p-1<g_\infty^2/4$. For this reason we use the neutral word ``trapped'' rather than ``oscillatory''.
\end{remark}

We use a few names for the parts of each phase of a solution. The points of a phase and the river are defined in Section~\ref{sec:later-phases}, the other objects where they are first used, and all of them are shown in Figure~\ref{fig:phase-scheme}, which is only a sketch: the actual solution may have a different shape, but its parts and their names are the same.

\begin{figure}[t]
    \centering
    \input{pictures/phase-scheme}
    \caption{Sketch of $|u|$ along phase $i$, with $u$ normalized to be positive on the descent. Solid horizontal segments join the two points at which a bridge function is compared, and the dashed one joins two paired levels. The bars below the axis show where the first zero $\tau_i$ of $v$ after $c_{i-1}$ can lie, and where each case is treated.}
    \label{fig:phase-scheme}
\end{figure}

\subsection{Additional hypotheses on \texorpdfstring{$g$}{g}}\label{sec:extra-hyp}

\begin{assumption}\label{ass:extra-hyp}
    In addition to (G1)--(G3) of Section~\ref{sec:setting} we assume:
    \begin{enumerate}[leftmargin=*]
        \item[(G4)] $g\in C^2((0,\infty))$ and $\gamma>1$.
        \item[(G5)] $\displaystyle\int_1^\infty e^{-G(s)}\,ds<\infty$ and $\displaystyle\limsup_{r\to\infty}g(r)<\infty$ (integrable and bounded damping).
        \item[(G6)] $g\in C^3([R,\infty))$ for some $R>0$, and, with $q:=1+\frac{g^2}{4}+\frac{g'}{2}$: $g'(r)\to0$ as $r\to\infty$ and $\displaystyle\int_R^\infty\big(|q''|+q'^2\big)\,dr<\infty$.
        \item[$(\Phi)$] $D(r)>1$ for all $r>0$ and $D$ is nonincreasing. Equivalently, by \eqref{eq:phi-ode},
              \[
                  \varphi'(r)>0\qquad\text{and}\qquad\varphi''(r)\le0\qquad(r>0).
              \]
    \end{enumerate}
    Here $D$, $\varphi$, $\nu$ are the functions of Definition~\ref{def:multiplier}.
\end{assumption}

\begin{remark}\label{rem:extra-hyp}
    (G4) says that the manifold has ``dimension $\gamma+1>2$'' at the origin: for a $C^2$ warped product of dimension $n$ one has $g_\psi(s)=\frac{n-1}{s}+o(1)$, so $\gamma=n-1$, and the requirement $\gamma>1$ corresponds to $n\ge3$. At a singular tip of index $a$ one has $\gamma=(n-1)a$ (Section~\ref{sec:tip-uniqueness}). The regularity $g_\psi\in C^2$ requires $\psi\in C^3$ on $(0,\infty)$; it is used only in the sufficient criteria for $(\Phi)$ and (H) (Lemmas~\ref{lem:phi-criterion}, \ref{lem:concave-reciprocal} and~\ref{lem:H-criterion}), while the proofs of the main theorems differentiate $g$ at most once on $(0,\infty)$. As in Part~I (Lemma~\ref{lem:origin}), no rate is needed in the limit $sg(s)\to\gamma$.

    The first half of (G5) is the integral formulation of ``$n\ge3$ at infinity'': for $g=(n-1)/r$ it reads $\int^\infty s^{1-n}\,ds<\infty$. It makes the multiplier field of Section~\ref{sec:multiplier} well defined. The second half, boundedness of $g$, holds for $g=(n-1)/r$, for $g=(n-1)\lambda\coth(\lambda r)$ and for $g_\psi$ with $\psi=e^r-1$, but not for the expanding examples $g=(n-1)/r+r^b$ with $b>0$. Its role is to bound the time a solution spends near its turning points (Lemma~\ref{lem:passage-times}) and, in Lemma~\ref{lem:dichotomy}, to make the dominant solution of the linearized equation at $u=0$ grow. No lower bound on $g$ at infinity is assumed: the lower bound $g>(\gamma-1)/r$ that the dissipation arguments of Proposition~\ref{prop:classification} and Lemma~\ref{lem:jump} need is a consequence of $(\Phi)$ (Lemma~\ref{lem:phi-consequences}).

    (G6) is Liouville--Green admissibility \cite[Ch.~6]{Olver1974} of the linearization at $u=0$ after the substitution $u=e^{-G/2}z$. It is immediate for $g=(n-1)/r$ and for $g=(n-1)\lambda\coth(\lambda r)$.

    Finally, $(\Phi)$ is the one new structural hypothesis: it is studied in Section~\ref{sec:multiplier}, and it fails for the expanding examples $g=(n-1)/r+r^b$ with $b>0$ (Remark~\ref{rem:phi-fails}), for which uniqueness remains open. For $b=0$ and $n\ge3$, instead, $g=(n-1)/r+1$ satisfies (G4)--(G6), $(\Phi)$, (H) and (R). With $m=n-1$ one has $\ell=1/g=r/(r+m)$ and $\ell''<0$, so $(\Phi)$ follows from Lemma~\ref{lem:concave-reciprocal}(ii). Hypothesis (H) follows from Lemma~\ref{lem:H-criterion}(i), since
    \begin{gather*}
        \ell'=m/(r+m)^2<2,\\
        \big(2\ell\ell''-\ell'(\ell'-2)\big)(r+m)^4=m\big(2r^2+(4m-4)r+2m^2-m\big)>0.
    \end{gather*}
    Thus Theorem~\ref{thm:staircase} applies to this damping. Corollary~\ref{cor:warped} does not, the metric being only Lipschitz at the pole, but Theorem~\ref{thm:tip-uniqueness} does (Example~\ref{ex:lipschitz-poles}).

    Not all of these hypotheses are independent: (G3) is a consequence of $(\Phi)$, since Lemma~\ref{lem:phi-consequences}(i) gives $g>(\gamma-1)/r$, and so is the boundedness of $g$ in (G5), since Lemma~\ref{lem:phi-consequences}(iv) gives $g'<0$, hence $g\le g(1)$ on $[1,\infty)$. We keep it in (G5) because Lemmas~\ref{lem:passage-times}, \ref{lem:decay} and~\ref{lem:dichotomy} are stated without $(\Phi)$. By Lemma~\ref{lem:phi-consequences}, $(\Phi)$ means that $g$ is strictly decreasing and $\varphi$ is concave. For instance, $(\Phi)$ fails for the mildly oscillating metric of Section~\ref{sec:manifolds}, for which $g_\psi$ is not monotone ($g_\psi'>0$ on $(2.85,4.37)$).

    Thus Part~II assumes (G1), (G2), (G4)--(G6) and $(\Phi)$, together with (H) and (R) for Theorem~\ref{thm:staircase}. The hypotheses $(\Phi)$, (H) and (R) are limitations of our method, and we do not expect any of them to be necessary for uniqueness. A further condition, $\nu\ge1$ on an interval, appears only in Theorem~\ref{thm:first-phase}(iv) and the second half of Theorem~\ref{thm:unconditional}(i). This condition holds when $\gamma\ge2$ and plays no role in the other results (Remark~\ref{rem:nu-condition}).
\end{remark}

\subsection{The restriction (R) and the main results}\label{sec:main-results}
The argument of Part~II is an induction over the critical points of $u$, from one to the next (Sections~\ref{sec:transition} and~\ref{sec:renewal-below}): its induction hypothesis is a condition on the variation $v$, the slope condition of Section~\ref{sec:phase}. It comes with one more structural hypothesis on $g$, (H) (Assumption~\ref{ass:H}: $e^G\varphi'$ is nondecreasing), which holds on Euclidean and hyperbolic spaces (Lemma~\ref{lem:H-criterion}). One step of this argument, the renewal of the slope bound at the next critical point, in the case in which the zero of $v$ on the descent lies below the level $\alpha_*$, uses in addition the following elementary restriction on the pair $(\gamma,p)$.

\begin{assumption}[R]\label{ass:R}
    $\gamma\ge\frac85$, or $p\ge\frac{2}{\gamma-1}$.
\end{assumption}

(R) involves neither the solution nor the variation. It holds for every $\gamma\ge\frac85$, in particular on every $C^2$ warped product of dimension $n\ge3$, where $\gamma=n-1\ge2$ (Remark~\ref{rem:extra-hyp}), and hence on $\R^n$ and on all hyperbolic spaces. For $1<\gamma<\frac85$ it restricts $p$ to $\big[\frac2{\gamma-1},\frac{\gamma+3}{\gamma-1}\big)$, a nonempty subinterval of the range \eqref{eq:p-range}. Such values of $\gamma$ occur at singular tips, for instance at the horns of Example~\ref{ex:real-horns}. It enters only through hypotheses (a) and (c) of Theorem~\ref{thm:renewal-below}, which are used only at phases in which the zero of $v$ on the descent lies below the level $\alpha_*$ (Remark~\ref{rem:R-complement}(b)). Nondegeneracy of bound states, that is, $|v|\to\infty$, is not an assumption either: it holds at every ground state (Proposition~\ref{prop:nondegeneracy}) and, under (H) and (R), at every bound state (Theorems~\ref{thm:phase}(v) and~\ref{thm:critical-form}).

\begin{theorem}[Uniqueness staircase]\label{thm:staircase}
    Let $f$ be as in \eqref{eq:model-f} with $p$ as in \eqref{eq:p-range}, let $g$ satisfy (G1)--(G6), $(\Phi)$ and (H), and assume (R). Then there is a strictly increasing sequence $\alpha_0<\alpha_1<\alpha_2<\cdots\to\infty$, with $\alpha_0>\alpha^*(0)$, such that:
    \begin{enumerate}[label=(\roman*)]
        \item for each $k\ge0$, $u(\cdot,\alpha_k)$ is the unique bound state with exactly $k$ nodes.
        \item $N(\alpha)=k$ for $\alpha\in(\alpha_{k-1},\alpha_k]$ (with $\alpha_{-1}:=0$), and for $\alpha\in(\alpha_{k-1},\alpha_k)$ the solution is trapped.
        \item $N$ is nondecreasing, constant on each $(\alpha_{k-1},\alpha_k]$, and jumps by exactly one at each $\alpha_k$.
        \item at each $\alpha_k$ the variation $v$ has exactly $k+1$ zeros $\tau_1<\cdots<\tau_{k+1}$, interlaced with the zeros $z_1<\cdots<z_k$ of $u$ as $\tau_1<z_1<\tau_2<\cdots<z_k<\tau_{k+1}$, it satisfies $u'v'>0$ at every zero of $v$, and $|v(r)|\to\infty$ as $r\to\infty$.
    \end{enumerate}
\end{theorem}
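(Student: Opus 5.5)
The proof is an assembly of the ingredients listed in the overview of Part~II, organized around the function $N(\alpha)$. The plan proceeds in three steps. First, the classification: every solution is either a bound state or trapped (Proposition~\ref{prop:classification}). Trapping is open in $\alpha$ by Proposition~\ref{prop:trapping}, and zeros are simple and persist. Hence $N$ is finite everywhere and locally constant on the open set of trapped parameters. It can therefore change only at bound-state parameters.

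Second, at each bound state $\tilde\alpha$ with $k$ nodes, I would establish the sign of the variation at infinity. This is the content of the induction over critical points: the first phase (Theorem~\ref{thm:first-phase}), then the later phases and the renewal of the slope condition (Theorems~\ref{thm:phase}, \ref{thm:renewal-above}, \ref{thm:renewal-below}). These use (H) and (R) and are packaged as Theorem~\ref{thm:critical-form}. The output is (iv): $v$ has exactly $k+1$ zeros, interlaced with those of $u$, with $u'v'>0$ at the zeros of $v$, and $|v|\to\infty$ with sign opposite to $u$ in the tail (via Lemma~\ref{lem:tail-sign}). The lower bound $k+1$ comes from Sturm comparison (Lemma~\ref{lem:v-zeros}), and the critical-point induction supplies the matching upper bound and the interlacing.

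Third, I would feed this sign into the tail criterion, Theorem~\ref{thm:tail-criterion}, together with Lemmas~\ref{lem:zero-motion}, \ref{lem:jump} and~\ref{lem:tail-jump}. Since $v$ and $u'$ have the same sign at every zero of $u$, the zeros of $u$ move toward the origin as $\alpha$ grows. Consequently, for $\alpha<\tilde\alpha$ close to $\tilde\alpha$, no new zero appears and the solution is trapped with $N=k$. For $\alpha>\tilde\alpha$ close, the solution overshoots, acquires exactly one extra zero, and is then trapped, with the energy argument of Lemma~\ref{lem:jump} ensuring it gains no more. So $N$ jumps by exactly one at every bound state and is constant elsewhere.

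The staircase then follows. $N$ is nondecreasing, since it is locally constant off bound states and increases by one at each, and $N\to\infty$ by Lemmas~\ref{lem:scaled-convergence} and~\ref{lem:limit-zeros}. Moreover $N=0$ for small $\alpha$ by Lemma~\ref{lem:small-alpha}. Define $\alpha_k$ as the parameter where $N$ jumps from $k$ to $k+1$; this matches $\sup A_k$ from Section~\ref{sec:existence-proof}. Because jumps are exactly one, every $k$-node bound state must occur at a jump from $k$ to $k+1$, and there is only one such jump. This gives uniqueness in (i), and also (ii) and (iii). Strict monotonicity of $\alpha_k$ and $\alpha_k\to\infty$ are immediate. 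The bound $\alpha_0>\alpha^*(0)$ comes from the Pohozaev positivity (Proposition~\ref{prop:pohozaev-positive}): a bound state with $\alpha\le\alpha^*(0)$ would have a Pohozaev function of fixed sign, incompatible with decay.

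The real obstacle is the second step, specifically the renewal of the slope bound when the zero of $v$ on the descent lies below the next extremum level, and in particular inside the river. This is where the level-by-level integration of $T_2'$ and (R) are needed. In the assembly itself that work is already done, so the only delicate point left is checking that the hypotheses of Theorem~\ref{thm:critical-form} hold at every critical point of every bound state, including the last, infinite phase. That last phase has to be handled via the nondegeneracy of Theorem~\ref{thm:phase}(v).
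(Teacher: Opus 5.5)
Your proposal is correct and follows the paper's proof: verify the tail sign $M(\infty)>0$ at every bound state, then let Theorem~\ref{thm:tail-criterion} produce (i)--(iii), and read off (iv) from Theorem~\ref{thm:critical-form}(i). For $k\ge1$ the sign comes from Theorem~\ref{thm:critical-form}(i); for the ground state it comes from Theorem~\ref{thm:first-phase}(iii), Proposition~\ref{prop:nondegeneracy} and Corollary~\ref{cor:divergence}, which you should cite explicitly, since Theorem~\ref{thm:phase}(v) does not cover $k=0$. The only real difference is that the paper's tail criterion handles the side $\alpha<\tilde\alpha$ with Lemma~\ref{lem:tail-jump} alone. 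Your appeal to the motion of zeros (Lemma~\ref{lem:zero-motion}, which needs the sign $v(z_j)u'(z_j)>0$ at the nearby nodal solutions, i.e.\ Theorem~\ref{thm:critical-form}(ii)) is therefore valid but not needed.
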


\begin{theorem}[Unconditional part]\label{thm:unconditional}
    Let $f$ and $p$ be as in Theorem~\ref{thm:staircase} and let $g$ satisfy (G1)--(G6) and $(\Phi)$. Neither (H) nor (R) is assumed.
    \begin{enumerate}[label=(\roman*)]
        \item \textup{(Phase one.)} Let $u$ be a nodal solution or a bound state with first zero $z_1$, and let $r_1\in(0,z_1)$ be the point with $u(r_1)=1$. Then the variation $v$ has exactly one zero $\tau_1$ in $(0,z_1]$, located in $(0,r_1)$. Moreover, $u'v'(\tau_1)>0$, $v<0$ on $(\tau_1,z_1]$, and, with the functions of Section~\ref{sec:variation}, $M,Q>0$ on $(0,z_1]$, $T_1>0$ on $(0,r_1]$ with $Q(r_1)=T_1(r_1)$, and $T_2>0$ on $(0,z_1)$. If, in addition, there is a first point $\bar r_1>z_1$ with $|u(\bar r_1)|=1$, $E>0$ on $(z_1,\bar r_1]$ (both hold when $u$ has a second zero or is a bound state), and $\nu\ge1$ on $[z_1,\bar r_1]$ (which holds when $\gamma\ge2$), then $v<0$ on $(\tau_1,\bar r_1]$ and $M,Q_1>0$ on $[z_1,\bar r_1]$. For a ground state, $v$ has exactly one zero $\tau_1\in(0,r_1)$, and $v<0$, $M>0$, $Q>0$ on $(\tau_1,\infty)$.
        \item \textup{(Ground state uniqueness.)} The ground state is unique. At every ground state the variation is dominant at infinity, with the sign opposite to that of $u$.
        \item \textup{(Pohozaev estimates.)} Every ground state, nodal solution or bound state has $\alpha>\alpha^*(0)$, and the Pohozaev functions $P,P_1$ of Section~\ref{sec:pohozaev} are positive on $(0,z_k]$, $z_k$ being the last zero (on $(0,\infty)$ for bound states), with $\omega=-\varphi u'/u$ strictly increasing between zeros of $u$.
        \item \textup{(Classification.)} Every solution has finitely many zeros, and is either a bound state or trapped. The node count $N$ is locally constant on the set of trapped solutions, and jumps by exactly one at every bound state at which the variation is dominant at infinity with the sign opposite to $u$, which is the case at every ground state and, under (H) and (R), at every bound state (Theorem~\ref{thm:critical-form}).
    \end{enumerate}
\end{theorem}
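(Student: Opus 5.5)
The proof is an assembly of results developed in Sections~\ref{sec:multiplier}--\ref{sec:staircase}, ordered along the unshaded boxes of Figure~\ref{fig:dependency}. I would prove (iii) first, since every later step uses it.

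\emph{Step 1: the Pohozaev estimates (iii).}
\begin{itemize}
\item I would build the Pohozaev function $P$ with the multiplier $\varphi$ in place of $r$, generalizing Step~1 of Lemma~\ref{lem:limit-zeros}. Its derivative should be a positive multiple of $e^G$ times $\nu F(u)-\frac{\gamma-1}{2}uf(u)$, plus correction terms carrying a factor $\varphi''$ or $\varphi'-1$. Hypothesis $(\Phi)$, i.e.\ $\varphi'>0\ge\varphi''$, should fix the sign of these corrections.
\item Near $r=0$, $\nu\approx\gamma+1$, so the bracket is negative only when $|u|$ lies below the moving Pohozaev level $\alpha^*(r)$. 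From $P(0^+)=0$ it follows that a solution starting at $\alpha\le\alpha^*(0)$ keeps $E$ or $P$ of a definite sign and can neither vanish nor decay. This yields $\alpha>\alpha^*(0)$ (Proposition~\ref{prop:pohozaev-positive}, together with the decay Lemma~\ref{lem:decay} for bound states).
\item Positivity of $P$ and $P_1$ up to the last zero would then follow by following the sign of their derivatives between zeros. Monotonicity of $\omega=-\varphi u'/u$ comes from a Riccati equation for $\omega$ whose right-hand side is positive when $P>0$.
\end{itemize}

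\emph{Step 2: the classification (iv).}
\begin{itemize}
\item I would combine Lemma~\ref{lem:pohozaev-lower}, which gives a lower bound on $E$ at the zeros from $P>0$, with the dissipation $E'=-gu'^2$.
\item $(\Phi)$ gives $g>(\gamma-1)/r$ (Lemma~\ref{lem:phi-consequences}), and (G5) bounds the passage times near turning points. Together these should show that each oscillation dissipates at least $c/r_j$, while the zeros $r_j$ grow at most like $j\log j$. Then $\sum 1/r_j=\infty$ contradicts $E\ge0$, so every solution has finitely many zeros.
\item After the last zero, Lemma~\ref{lem:positive-critical} and the argument of Step~2 of Section~\ref{sec:existence-proof} give either decay to $0$ (a bound state) or $E<0$ somewhere (trapped).
\item Trapping is open in $\alpha$ by Proposition~\ref{prop:trapping}, so $N$ is locally constant there.
\item The jump by exactly one at a bound state with dominant $v$ of sign opposite to $u$ is Lemma~\ref{lem:jump} (sharpening Remark~\ref{rem:no-gaps-trapped}), combined with Lemma~\ref{lem:tail-sign} and Theorem~\ref{thm:tail-criterion}.
\end{itemize}

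\emph{Step 3: the first phase (i).}
\begin{itemize}
\item I would use the identity table (Proposition~\ref{prop:identity-table}) for the Wronskians $M$ of $v$ against $u$ and $Q$ of $v$ against $\varphi u'$, and for the comparison functions $T_1,T_2$. All of these vanish at $0$ with positive initial slope.
\item On $(0,r_1)$, where $u>1$, the signs of $f$ and $f'$ make $M'$ and $T_1'$ positive. A Sturm comparison with $\varphi u'$, which vanishes only at $0$, forces $v$ to change sign before $r_1$.
\item On $[r_1,z_1]$ I would use $T_2$ and $Q$ to exclude a second zero of $v$. This part of Theorem~\ref{thm:first-phase} is the main technical obstacle: the corrections with $\varphi'-1$ must be controlled across the level $u=1$, where $f'(u)$ changes sign. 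I would first try to absorb them using $Q(r_1)=T_1(r_1)$ and $\varphi''\le0$.
\item The extension to $\bar r_1$ uses $\nu\ge1$ in the same way.
\item For a ground state, $M>0$ and $Q>0$ persist to infinity because $u$ is monotone after its maximum.
\end{itemize}

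\emph{Step 4: ground state uniqueness (ii).}
\begin{itemize}
\item By (i), at a ground state $v<0$ on $(\tau_1,\infty)$ while $u>0$ there.
\item Nondegeneracy (Proposition~\ref{prop:nondegeneracy}) then gives $|v|\to\infty$, so $v$ is dominant at infinity with sign opposite to $u$.
\item By (iv), $N$ jumps from $0$ to $1$ at every ground state and is locally constant between them. By Lemma~\ref{lem:zero-motion}, zeros are never lost as $\alpha$ increases, so $N$ is nondecreasing.
\item Consequently $N$ can pass from $0$ to $1$ only once, and the ground state is unique.
\end{itemize}
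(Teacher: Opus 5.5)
Your assembly of (ii), (iii) and of the finiteness, dichotomy and pausing in (iv) matches the paper's. The jump claim in (iv), however, does not follow from what you cite. Lemma~\ref{lem:jump} obtains $N(\alpha)=k$ for $\alpha<\hat\alpha$ from Lemma~\ref{lem:zero-motion}(ii), which requires $v(z_j)u'(z_j)>0$ at the zeros $z_1,\dots,z_{k+1}$ of the nearby solutions. Without (H) and (R) only the first zero is controlled (Theorem~\ref{thm:first-phase}(ii)). So at a $k$-node bound state with $k\ge1$ that lemma gives only the side $\alpha>\hat\alpha$. Theorem~\ref{thm:tail-criterion} is not available either, since it assumes the tail sign at \emph{every} bound state.

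The result you need is Lemma~\ref{lem:tail-jump}, which treats $\alpha<\hat\alpha$ by a separate energy argument. Beyond a fixed point $\bar r$ of the tail, $w=u(\cdot,\alpha)-\hat u$ is positive and increasing as long as $u(\cdot,\alpha)<p^{-1/(p-1)}$, so no zero can occur there. Once $w$ reaches $\hat u$ the energy is exponentially small. A further zero, or decay to $0$, would then require a climb above $\alpha_*$ whose band crossing dissipates an amount of order $1/r$, which is impossible. For the same reason, in (ii) you may not assert that $N$ is nondecreasing. You only need, as the paper does, Lemma~\ref{lem:zero-motion}(ii) with $k=1$, which uses the sign at $z_1$ alone.

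Part (i) is exactly Theorem~\ref{thm:first-phase}, and citing it suffices, but the re-derivation you sketch would not work. First, a comparison function that vanishes only at $0$ cannot force a zero of $v$. If $v>0$ on $(0,r_1]$, the Wronskian $Q$ of $v$ against $\varphi u'$ is merely positive at $r_1$, which contradicts nothing. The comparison that works is with $f(u)$, which vanishes at $r_1$ and satisfies $Lf(u)=f''(u)u'^2>0$ on $(0,r_1)$. This is the function $\Sigma=e^G(u''v'-u'v'')$, with $\Sigma(0^+)=0$, $\Sigma'=e^Gf''(u)u'^2v$ and $\Sigma(r_1)=(p-1)e^Gu'v(r_1)\le0$.

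Second, in the resistance step there are no $\varphi'-1$ corrections. The only correction, $-\varphi''g_1(u)M$ in $T_1'$, has the favorable sign for $u>1$. The actual difficulty is that $T_1'>0$ and $Q\ge T_1$ hold only while $M>0$. What prevents $M$ from vanishing is the identity $Q=(v/u)P_1$ at every zero of $M$ (Remark~\ref{rem:Q-at-M-zero}). With $v<0$ and $P_1>0$ from (iii), it gives $Q<0$ there. This contradicts $Q\ge T_1>0$ before $r_1$, and after $r_1$ it contradicts $Q$ increasing from $Q(r_1)>0$, since there $Q'=2\varphi'e^Gf(u)v>0$. The same identity, not the monotonicity of $u$, carries $M,Q>0$ to infinity for a ground state. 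Finally, the extension to $\bar r_1$ uses the anchored bridge $Q-2\varphi'(z_1)F(u)e^Gv/u'$, in which $\nu\ge1$ enters through $Q_\nu\ge Q_1$.
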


\begin{theorem}[The Euclidean case]\label{thm:euclidean}
    For $g=(n-1)/r$, $n\ge3$, the hypotheses (G1)--(G6) and $(\Phi)$ all hold, with $q=1+\frac{(n-1)(n-3)}{4r^2}$, $D\equiv\frac{n-1}{n-2}$ and $\nu\equiv n$, and \eqref{eq:p-range} reads $1<p<\frac{n+2}{n-2}$. Consequently all the conclusions of Theorem~\ref{thm:unconditional} hold on $\R^n$, and the conclusions of Theorem~\ref{thm:staircase} hold on $\R^n$, since its hypotheses (H) and (R) hold (Lemma~\ref{lem:H-criterion}, and $\gamma=n-1\ge2$): this recovers the uniqueness of the $k$-node bound state of $\Delta u-u+|u|^{p-1}u=0$ in $\R^n$ for every $k\ge0$ and $1<p<\frac{n+2}{n-2}$ \cite{Tang2026}.
\end{theorem}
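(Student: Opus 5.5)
The statement has two parts. The first is a verification of hypotheses for the explicit damping $g=(n-1)/r$. The second is a deduction from Theorems~\ref{thm:unconditional} and~\ref{thm:staircase}.

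For the verification I would go through (G1)--(G6) and $(\Phi)$ in order.

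(G1) and (G2) are immediate, with $\gamma=n-1\ge2>1$. Since $g\in C^\infty(0,\infty)$, this also gives (G4).

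(G3) holds because $r^2g=(n-1)r\to\infty$.

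For (G5), take $G(r)=(n-1)\log r$, so that $e^{-G}=r^{1-n}$. This is integrable at infinity because $n\ge3$, and $g$ is bounded on $[1,\infty)$.

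For (G6), compute
\[
q=1+\frac{(n-1)^2}{4r^2}-\frac{n-1}{2r^2}=1+\frac{(n-1)(n-3)}{4r^2}.
\]
Then $g'\to0$, and both $q''=O(r^{-4})$ and $q'^2=O(r^{-6})$ are integrable on $[R,\infty)$.

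For $(\Phi)$, I would compute directly from Definition~\ref{def:multiplier}. First,
\[
e^{G(r)}\int_r^\infty e^{-G}=r^{n-1}\,\frac{r^{2-n}}{n-2}=\frac{r}{n-2},
\]
so that
\[
D=g\,e^G\!\int_r^\infty e^{-G}=\frac{n-1}{n-2}>1,
\]
which is constant and hence nonincreasing. Likewise $\varphi=(\gamma-1)r/(n-2)=r$. This gives $\varphi'=1>0$ and $\varphi''=0\le0$, and $\nu=2\varphi'+\gamma-1=n$. The range \eqref{eq:p-range} becomes $p<\frac{(n-1)+3}{(n-1)-1}=\frac{n+2}{n-2}$.

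With these facts, Theorem~\ref{thm:unconditional} applies verbatim.

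For Theorem~\ref{thm:staircase} I still need (H) and (R).

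(R) holds since $\gamma=n-1\ge2\ge\frac85$.

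For (H), I would either cite Lemma~\ref{lem:H-criterion} or check it directly: $e^G\varphi'=r^{n-1}$ is nondecreasing. (Up to the normalizing constant of $G$, this is the function of Assumption~\ref{ass:H}.) Theorem~\ref{thm:staircase} then gives uniqueness of the $k$-node bound state for each $k$.

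There is no real obstacle here. The only point needing care is matching the normalizations of $D$, $\varphi$ and (H) with their definitions in Section~\ref{sec:multiplier}, in particular that the constant factor $e^{-G(1)}$ does not affect monotonicity or the identity $\varphi\equiv r$.
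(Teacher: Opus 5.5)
Your proposal is correct and follows the paper's proof essentially step by step. The paper cites Lemma~\ref{lem:space-forms} for $(\Phi)$ and Lemma~\ref{lem:H-criterion}(ii) for (H), and you verify both by the same direct computations ($D\equiv\frac{n-1}{n-2}$, $\varphi\equiv r$, and $e^G\varphi'=r^{n-1}$ nondecreasing). One thing is worth stating explicitly, as the paper does: $\nu\equiv n\ge1$ is what activates the conditional second half of Theorem~\ref{thm:unconditional}(i), so that \emph{all the conclusions} of that theorem do hold on $\R^n$.
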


Combining Theorem~\ref{thm:manifold-existence}, the geometric form of Theorem~\ref{thm:existence}, with Theorem~\ref{thm:staircase} we obtain the following.

\begin{corollary}\label{cor:warped}
    Let $M$ be a $C^2$ spherically symmetric manifold of dimension $n\ge3$ with warping function $\psi$ such that $\psi,\psi'>0$ for $r>0$, and suppose that $g_\psi$ satisfies (G4)--(G6), $(\Phi)$ and (H). Then for every $k\ge0$ the problem
    \[
        \Delta_{g_M}u-u+|u|^{p-1}u=0,\qquad u\to0\ (r\to\infty),\qquad 1<p<\frac{n+2}{n-2},
    \]
    has exactly one nontrivial radial solution with $k$ nodes, up to the reflection $u\mapsto-u$. In particular this holds on the hyperbolic space $\Hyp^n_\lambda$ of any curvature $-\lambda^2$, $n\ge3$.
\end{corollary}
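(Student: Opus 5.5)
The corollary follows by combining the existence result of Part~I with the staircase of Theorem~\ref{thm:staircase}. The plan has three checks: the hypotheses of both theorems hold for $g=g_\psi$, the bound states of the PDE problem coincide with the ODE bound states, and the specific hyperbolic metric satisfies everything assumed.

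\emph{Step 1: hypotheses.} As in the proof of Theorem~\ref{thm:manifold-existence}, $\psi\in C^2$ with $\psi,\psi'>0$ gives (G1), and the regularity of $M$ at the pole gives (G2) with $\gamma=n-1$. Then (G4) supplies $\gamma=n-1>1$, since $n\ge3$. (G3) follows from $(\Phi)$, because Lemma~\ref{lem:phi-consequences}(i) gives $g>(\gamma-1)/r$ (Remark~\ref{rem:extra-hyp}). (G5) and (G6) are assumed. (R) holds because $\gamma=n-1\ge2\ge\frac85$. Finally, \eqref{eq:p-range} with $\gamma=n-1$ is exactly $1<p<\frac{n+2}{n-2}$. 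Hence Theorem~\ref{thm:staircase} applies.

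\emph{Step 2: from PDE to ODE.} A nontrivial radial solution $u(x)=u(r)$ of the equation on $M$ that is $C^2$ across the pole satisfies $u'(0)=0$ and solves \eqref{eq:ode} with $g=g_\psi$. It decays at infinity and has $k$ nodal spheres, which means exactly $k$ zeros in $(0,\infty)$. Its value $u(0)$ is nonzero: if $u(0)=0$, then by the uniqueness of the Cauchy problem at the origin (Section~\ref{sec:local}, which applies for any initial value) $u\equiv0$. Replacing $u$ by $-u$, which preserves the equation since $f$ is odd, we may assume $u(0)=\alpha>0$. So $u=u(\cdot,\alpha)$ is a $k$-node bound state in the sense of Section~\ref{sec:terminology}. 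Theorem~\ref{thm:staircase}(i) then forces $\alpha=\alpha_k$, which gives uniqueness up to sign.

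For existence, take $u(\cdot,\alpha_k)$, or use Theorem~\ref{thm:manifold-existence} directly. Radial $C^2$ regularity at the pole follows from $u'(r)/r\to -f(\alpha)/n$ (Proposition~\ref{prop:strict-extrema}) together with the smoothness of the metric.

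\emph{Step 3: hyperbolic space.} Take $\psi=\lambda^{-1}\sinh(\lambda r)$, so that $g=(n-1)\lambda\coth(\lambda r)$.
\begin{itemize}
\item (G5): $e^{-G}\sim Ce^{-(n-1)\lambda r}$ is integrable, and $g$ is bounded at infinity.
\item (G6): $g'\to0$ exponentially, and $q$ converges exponentially fast to a constant, so $\int_R^\infty\big(|q''|+q'^2\big)<\infty$.
\item $(\Phi)$: since $1/g=\tanh(\lambda r)/((n-1)\lambda)$ is concave and not eventually constant, Lemma~\ref{lem:concave-reciprocal} applies.
\item (H): this is Lemma~\ref{lem:H-criterion}.
\end{itemize}

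The only point that needs care is the regularity correspondence in Step~2, namely that every radial solution in the PDE sense arises from the shooting problem \eqref{eq:ivp}. This reduces to the converse uniqueness statement in Section~\ref{sec:local}, so I expect no real obstacle.
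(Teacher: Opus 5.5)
Your proposal is correct and follows the paper's own argument. The corollary is Theorem~\ref{thm:manifold-existence} combined with Theorem~\ref{thm:staircase}, with (G1)--(G2) coming from the $C^2$ pole, (G3) from $(\Phi)$ via Lemma~\ref{lem:phi-consequences}(i), (R) from $\gamma=n-1\ge2$, and the hyperbolic hypotheses from Lemma~\ref{lem:space-forms}(ii) (equivalently Lemma~\ref{lem:concave-reciprocal}) and Lemma~\ref{lem:H-criterion}(ii). Your Step~2 reduces every classical radial solution to the shooting problem through the converse uniqueness of Section~\ref{sec:local}, which spells out a point the paper leaves implicit in the smooth case and treats explicitly only for tips, in Lemma~\ref{lem:bounded-tip}.
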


The case $k=0$ needs neither (H) nor (R), by Theorem~\ref{thm:unconditional}(ii).

\begin{corollary}[Ground states on warped products]\label{cor:ground-state}
    Let $M$ be as in Corollary~\ref{cor:warped} and suppose that $g_\psi$ satisfies (G4)--(G6) and $(\Phi)$. Then the problem above has exactly one positive radial solution. In particular, on the hyperbolic space $\Hyp^n_\lambda$ of any curvature $-\lambda^2$, $n\ge3$, the positive radial solution of $\Delta u-u+|u|^{p-1}u=0$ decaying at infinity is unique for every $1<p<\frac{n+2}{n-2}$. On $\Hyp^n_\lambda$ this particular case is known \cite{ManciniSandeep2008}, and the kernel of the linearization at this solution consists of the derivatives along hyperbolic translations \cite{GangulySandeep2015}.
\end{corollary}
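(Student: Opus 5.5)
The corollary should follow by translating the hypotheses on $\psi$ into (G1)--(G6) and $(\Phi)$ for $g_\psi$, and then invoking Theorem~\ref{thm:unconditional}(ii) together with the existence result. The first step is the dictionary, exactly as in the proof of Theorem~\ref{thm:manifold-existence}. Since $\psi\in C^2$ with $\psi,\psi'>0$ on $(0,\infty)$, $g_\psi=(n-1)\psi'/\psi$ is positive and locally Lipschitz, which is (G1). Regularity at the pole gives $sg_\psi(s)\to n-1$, which is (G2) with $\gamma=n-1\ge2>1$; this is consistent with (G4). Finally (G3) is a consequence of $(\Phi)$ by Remark~\ref{rem:extra-hyp} (via $g>(\gamma-1)/r$ from Lemma~\ref{lem:phi-consequences}(i)), so it need not be assumed separately. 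The range \eqref{eq:p-range} with $\gamma=n-1$ is $1<p<\frac{n+2}{n-2}$.

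Next, existence. Theorem~\ref{thm:manifold-existence} with $k=0$, or Theorem~\ref{thm:existence} directly, gives a radial bound state with no zeros, and after $u\mapsto-u$ we may take it positive. Uniqueness then comes from Theorem~\ref{thm:unconditional}(ii), which needs only (G1)--(G6) and $(\Phi)$ and says the ground state of \eqref{eq:ode}--\eqref{eq:bc} is unique. To pass from ``ground state of the ODE'' to ``positive radial solution on $M$'', I will argue as follows. A positive radial $C^2$ solution on $M$ decaying at infinity restricts to a solution of \eqref{eq:ode} with $u'(0)=0$ (smoothness at the pole) and $u(0)=\alpha>0$, and has no zeros. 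Hence it is $u(\cdot,\alpha)$ for a ground-state value $\alpha$, by the local uniqueness at the origin in Section~\ref{sec:local}. Conversely, each ground state $u(\cdot,\alpha)$ defines a classical solution on $M$. A $0$-node bound state with $u(0)>0$ is positive, because it is a bound state with no zeros.

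Then hyperbolic space. Here $g=(n-1)\lambda\coth(\lambda r)$, and I need to verify (G4)--(G6) and $(\Phi)$. (G4) is clear. (G5) holds because $e^{-G}\sim C e^{-(n-1)\lambda r}$ is integrable and $g\to(n-1)\lambda$. (G6) holds because $g'$ decays exponentially, so $q$ converges exponentially fast and $|q''|+q'^2$ is integrable. For $(\Phi)$ I would cite Lemma~\ref{lem:concave-reciprocal}: $\ell=1/g=\lambda^{-1}\tanh(\lambda r)/(n-1)$ is concave and not eventually constant, so $(\Phi)$ follows, as the introduction asserts. By scaling $r\mapsto\lambda r$ one can even reduce to $\lambda=1$, but this is not needed.

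The main obstacle is the verification of $(\Phi)$ on $\Hyp^n_\lambda$, which I defer entirely to Lemma~\ref{lem:concave-reciprocal}. If that lemma requires extra conditions such as $\ell(0)=0$ or $\ell'(0)=1/\gamma$, these hold here, since $\ell(r)\sim r/(n-1)$ near $0$. The identification of the kernel of the linearization is quoted from \cite{GangulySandeep2015} and is not part of what I would prove.
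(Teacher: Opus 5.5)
Your proposal is correct and follows the paper's proof. You combine Theorem~\ref{thm:manifold-existence} with $k=0$ for existence and Theorem~\ref{thm:unconditional}(ii) for uniqueness, obtain (G3) from $(\Phi)$ via Lemma~\ref{lem:phi-consequences}(i), and check (G4)--(G6) and $(\Phi)$ directly on $\Hyp^n_\lambda$. The only cosmetic difference is that the paper gets $(\Phi)$ on $\Hyp^n_\lambda$ from Lemma~\ref{lem:space-forms}(ii), which uses the criterion of Lemma~\ref{lem:phi-criterion}, while you use the concavity of $\ell=1/g$ and Lemma~\ref{lem:concave-reciprocal}(ii); the proof of Lemma~\ref{lem:space-forms}(ii) itself notes that this route is equivalent.
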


All the hypotheses on $g$ are invariant under the scaling $g_\lambda(r)\defeq\lambda^{-1}g(r/\lambda)$, $\lambda>0$ ($\gamma$ is unchanged, $D_\lambda(r)=D(r/\lambda)$, and (G5), (G6) and (H) are preserved), and $\omega^{-1/(p-1)}u(r/\sqrt\omega)$ takes the solutions for $g$ and $f(u)=|u|^{p-1}u-\omega u$ to those for $g_{\sqrt\omega}$ and \eqref{eq:model-f}. Hence the uniqueness results above hold for every $\omega>0$, and so apply to the standing waves $e^{i\omega t}u$ of the introduction with a power nonlinearity, at every frequency $\omega>0$ for the nonlinear Schr\"odinger equation, and at every frequency $|\omega|<m$ for the nonlinear Klein--Gordon equation with mass $m$, $\partial_{tt}\Psi-\Delta\Psi+m^2\Psi-|\Psi|^{p-1}\Psi=0$.

Section~\ref{sec:tip-uniqueness} extends Corollaries~\ref{cor:warped} and~\ref{cor:ground-state} to manifolds with singular tips over arbitrary closed links, in every dimension $n\ge2$ (Theorem~\ref{thm:tip-uniqueness}).

\begin{remark}[Relation to previous results]\label{rem:literature}
    (a) \emph{Hyperbolic space.} For $n\ge3$ and $1<p<\frac{n+2}{n-2}$, Mancini and Sandeep~\cite{ManciniSandeep2008} proved that the equation $\Delta_{\Hyp^n}u+cu+|u|^{p-1}u=0$ has a positive solution in $H^1(\Hyp^n)$ if and only if $c<(n-1)^2/4$, and that this solution is unique up to hyperbolic isometries. On $\Hyp^n_\lambda$, the metric is $\lambda^{-2}$ times that of $\Hyp^n=\Hyp^n_1$, and $u=\lambda^{2/(p-1)}w$ turns $\Delta_{\Hyp^n_\lambda}u-u+|u|^{p-1}u=0$ into $\Delta_{\Hyp^n}w-\lambda^{-2}w+|w|^{p-1}w=0$, which is their equation with $c=-\lambda^{-2}<(n-1)^2/4$. Radial solutions decaying at infinity decay exponentially (Lemma~\ref{lem:decay}) and lie in $H^1(\Hyp^n)$. Hence the hyperbolic case of Corollary~\ref{cor:ground-state} is also contained in \cite{ManciniSandeep2008}, which moreover does not assume radial symmetry; ODE proofs of the radial statement are given in~\cite{Wang2014} and, by a shooting argument in the polar model, in~\cite{Selvitella2020}. Infinitely many radial sign-changing solutions on $\Hyp^n$ are obtained variationally in~\cite[Theorem~5.1]{BhaktaSandeep2012}, without a count of the nodes. Ganguly and Sandeep~\cite[Theorem~1.1]{GangulySandeep2015} proved that the kernel of the linearization at this solution in $H^1(\Hyp^n)$ is spanned by $n$ functions obtained by differentiating along hyperbolic translations, none of which is radial. If the variation $v$ decayed, it would be a multiple of $V_-$ by Lemma~\ref{lem:dichotomy} and would lie in this kernel. So this result also gives the hyperbolic case of the nondegeneracy $|v|\to\infty$ in Theorem~\ref{thm:unconditional}(ii). On other warped products, the uniqueness in Corollary~\ref{cor:ground-state} also follows from \cite[Theorem~2]{CortazarGarciaHuidobroHerreros2019} when $\lim_{r\to\infty}\varphi'(r)>0$, by (b). Compared with \cite{CortazarGarciaHuidobroHerreros2019}, the new content of Corollary~\ref{cor:ground-state} is therefore the case $\lim_{r\to\infty}\varphi'(r)=0$, which holds whenever $g_\psi$ has a positive limit at infinity, for instance for $\psi=r\cosh(\lambda r)$ (Lemma~\ref{lem:variable-curvature}). We are not aware of previous uniqueness results for nodal bound states on $\Hyp^n$, that is, for the case $k\ge1$ of Corollary~\ref{cor:warped}. Radial solutions of the Lane--Emden--Fowler equation $-\Delta u=|u|^{p-1}u$, without the linear term, are classified on $\Hyp^n$ in~\cite{BonforteGazzolaGrilloVazquez2013}, and their existence, uniqueness and stability on Riemannian models $dr^2+\psi(r)^2g_{\mathbb S^{n-1}}$ are studied in~\cite{BerchioFerreroGrillo2014}.

    (b) \emph{Weighted equations.} Radial solutions of \eqref{eq:ode} solve the divergence-form equation $(e^Gu')'+e^Gf(u)=0$. This is the equation with weight $e^G$ studied by Cort\'azar, Garc\'ia-Huidobro and Herreros~\cite{CortazarGarciaHuidobroHerreros2019}, who prove uniqueness of the ground state, of the bound state with one sign change, and of higher bound states under stronger conditions. Their assumption that the logarithmic derivative $g$ of the weight is strictly decreasing is the first half of the equivalent form of $(\Phi)$ in Lemma~\ref{lem:phi-consequences}(iv) and its converse. Their function $e^G\int_r^\infty e^{-G}$ is $\varphi/(\gamma-1)$, and their condition that its derivative be positive and nonincreasing with a positive limit at infinity is $(\Phi)$ together with $\lim_{r\to\infty}\varphi'(r)>0$. Their alternative condition requires the derivative of the volume/area ratio $e^{-G}\int_0^re^G$ to have a positive limit at infinity. If $g$ has a positive limit $g_\infty$ at infinity, as on $\Hyp^n$ and for $\psi=r\cosh(\lambda r)$, then $\varphi'=(\gamma-1)(D-1)\to0$, since $g$ is decreasing, so that $\int_r^\infty e^{-G}\le e^{-G(r)}/g_\infty$ and $1<D\le g/g_\infty\to1$. The volume/area ratio then converges, so neither condition holds. Their Theorem~2, which uses the condition on $\varphi$, also assumes that $sf'(s)/f(s)$ is nonincreasing for $s>s_0$ (the condition of Serrin and Tang~\cite{SerrinTang2000}), where $s_0$ is the positive zero of $f$. The model nonlinearity satisfies this, since $sf'(s)/f(s)=p+\frac{p-1}{s^{p-1}-1}$ for $s>1$. Hence \cite[Theorem~2]{CortazarGarciaHuidobroHerreros2019} gives the uniqueness of the ground state for the model nonlinearity whenever $(\Phi)$ holds and $\lim_{r\to\infty}\varphi'(r)>0$, while Theorem~\ref{thm:unconditional}(ii) also covers the case $\lim_{r\to\infty}\varphi'(r)=0$. For solutions with one sign change, \cite[Theorem~3]{CortazarGarciaHuidobroHerreros2019} assumes one of two conditions. The first is $f(s)\ge f'(s)(s-s_0)$ for $s\ge s_0$, and it fails for the model nonlinearity ($s_0=1$). The second is
    \[
        s_1f'(s_1)/f(s_1)\le1+2\lim_{r\to\infty}\varphi'(r)/(\gamma-1),
    \]
    where $s_1$ is the positive zero of $F$. For the model nonlinearity, $s_1=\alpha_*$ and $\alpha_*f'(\alpha_*)/f(\alpha_*)=p+2$, while $\lim\varphi'\le1$ by \eqref{eq:phi-bounds}, so the second condition requires $p\le\frac{3-\gamma}{\gamma-1}$ and fails whenever $\gamma\ge2$. For solutions with any number of sign changes, \cite[Theorem~4]{CortazarGarciaHuidobroHerreros2019} requires $(F/f)'\ge C$ on $(s_1,\infty)$ for a constant $C\le1$ with $C\ge\sup_{r>0}\big(\frac{g(r)}{\varphi(r)}\int_0^r\varphi-\frac12\big)$, together with further conditions on $g$. For the model nonlinearity, with $t=s^{p-1}$, a direct computation gives
    \begin{equation}\label{eq:F-over-f}
        \Big(\frac Ff\Big)'(s)=\frac1{p+1}+\frac{(p-1)\big((p-2)t+1\big)}{2(p+1)(t-1)^2}\qquad(s>\alpha_*).
    \end{equation}
    Hence $(F/f)'(s)\to\frac1{p+1}$ as $s\to\infty$. The infimum of $(F/f)'$ on $(\alpha_*,\infty)$ is $\frac1{p+1}$ if $p\ge2$, and if $1<p<2$ it is attained at $t=\frac p{2-p}$ and equals $\frac{8-(2-p)^2}{8(p+1)}$. In particular $(F/f)'>0$ on $(\alpha_*,\infty)$. The expression in the supremum tends to $\frac{\gamma-1}2$ as $r\to0$, so the condition of Theorem~4 requires $\frac{\gamma-1}2\le\inf_{(\alpha_*,\infty)}(F/f)'\le\frac1{p+1}$, in particular again $p\le\frac{3-\gamma}{\gamma-1}$ (for $1<p<2$ the exact condition is the stricter $8-(2-p)^2\ge4(\gamma-1)(p+1)$, as in Example~\ref{ex:real-horns}), and it fails whenever $\gamma\ge2$. On $\Hyp^n$ the supremum is moreover infinite. Hence \cite{CortazarGarciaHuidobroHerreros2019} does not give uniqueness of nodal bound states for the model nonlinearity on any $C^2$ warped product of dimension $n\ge3$. For $1<\gamma<2$, which occurs at singular tips, the two theorems can apply only for $p\le\frac{3-\gamma}{\gamma-1}=\frac2{\gamma-1}-1$, below the range $p\ge\frac2{\gamma-1}$ of (R), and on the horns of Example~\ref{ex:real-horns} they do apply in part of this range. The Euclidean predecessor of \cite{CortazarGarciaHuidobroHerreros2019} for sign-changing solutions is \cite{CortazarGarciaHuidobroYarur2011}.

    (c) \emph{Generalized Pohozaev identities.} Ground states of $\Delta u+a(r)u+b(r)u^p=0$ and of $\div(\rho\nabla u)+\rho(-au+bu^p)=0$ were shown to be unique by Yanagida~\cite{Yanagida1991} and by Shioji and Watanabe~\cite{ShiojiWatanabe2013,ShiojiWatanabe2016}, by means of Pohozaev-type identities (an early generalized Pohozaev identity is in~\cite{KawanoNiYotsutani1990}) whose coefficients solve an auxiliary linear system and whose derivative is a multiple of $u^2$. Radial solutions of $\Delta_{g_M}u-u+|u|^{p-1}u=0$ on a warped product are radial solutions of the second equation in $\R^n$ with $\rho=(\psi/r)^{n-1}$ and $a=b\equiv1$, so these results apply to the ground states of Corollary~\ref{cor:ground-state} whenever their conditions hold. The identity \eqref{eq:pohozaev-identity} is of a different kind: it keeps both a $|u|^{p+1}$-term and a $u^2$-term, whose ratio defines the moving Pohozaev level of Definition~\ref{def:pohozaev-level}, and the same multiplier field controls the Wronskian identities used for nodal bound states.

    (d) \emph{Other nonlinearities on $\R^n$.} Uniqueness of the $k$-node bound state on $\R^n$ has recently been proved for a sublinear model nonlinearity~\cite{ZhangZhang2026}, by a shooting argument with a detailed analysis of the variation, and for the logarithmic Schr\"odinger equation~\cite{LiuSunZou2026}, with the auxiliary functions of~\cite{Tang2026}, and for a double-power nonlinearity with a small defocusing term~\cite{GuoJiLuo2026}. Part~II extends the method of~\cite{Tang2026} to warped products.
\end{remark}

%% file: pictures/phase-scheme.tex
\begin{tikzpicture}[
    x=1cm, y=1.4cm,
    declare function={prof(\t)=abs((2.8-0.2*(1-cos(15*\t)))*cos(15*\t));},
    lab/.style={font=\scriptsize},
    guide/.style={densely dotted, black!45, line width=0.3pt},
    deck/.style={line width=1.2pt},
    bandline/.style={orange!80!black, densely dashed, line width=0.35pt},
    crossing/.style={orange!90!black, line width=2.4pt, line cap=round},
    pt/.style={circle, fill, inner sep=0pt, minimum size=2.8pt},
    interval/.style={{Bar[width=4pt]}-{Bar[width=4pt]}, line width=0.4pt},
    ]
    \def\bA{2.8}       
    \def\bB{2.4}       
    \def\mU{1.98125}   
    \def\aS{1.5625}    
    \def\bH{1.171875}  
    \def\bL{0.78125}   
    \def\xt{2.004}  \def\xrm{2.919} \def\xb{3.658}  \def\xr{4.536} \def\xz{6}
    \def\xrb{7.558} \def\xbb{8.611} \def\xrmb{9.625}  \def\xc{12}
    \def\xHd{4.276} \def\xLd{4.861} \def\xLa{7.195}   \def\xHa{7.858} 
    \fill[blue!11] (\xb,\aS) -- plot[domain=\xb:\xz, samples=60] (\x,{prof(\x)})
    -- plot[domain=\xz:\xbb, samples=60] (\x,{prof(\x)}) -- cycle;
    \fill[blue!24] (\xr,1) -- plot[domain=\xr:\xz, samples=40] (\x,{prof(\x)})
    -- plot[domain=\xz:\xrb, samples=40] (\x,{prof(\x)}) -- cycle;
    \node[lab, align=center] at (\xz,1.37) {river: $|u|<\alpha_*$, $F(u)<0$};
    \node[lab] at (\xz,0.6) {$f(u)u<0$};
    \draw[bandline] (0,\bH) -- (\xc,\bH);
    \draw[bandline] (0,\bL) -- (\xc,\bL);
    \draw[crossing] plot[domain=\xHd:\xLd, samples=20] (\x,{prof(\x)});
    \draw[crossing] plot[domain=\xLa:\xHa, samples=20] (\x,{prof(\x)});
    \node[lab, orange!70!black, anchor=east] at (\xHd-0.04,0.98) {$J$};
    \node[lab, orange!70!black, anchor=west] at (\xHa+0.04,0.98) {$J'$};
    \foreach \yy/\xx in {\bB/\xt, \mU/\xrm, \aS/\xb, 1/\xr} \draw[guide] (0,\yy) -- (\xx,\yy);
    \foreach \xx/\yy in {\xt/\bB, \xrm/\mU, \xb/\aS, \xr/1, \xrb/1, \xbb/\aS, \xrmb/\mU, \xc/\bB}
    \draw[guide] (\xx,\yy) -- (\xx,0);
    \draw[-{Stealth[length=1.6mm]}, line width=0.4pt] (-0.15,0) -- (12.55,0) node[right, lab] {$r$};
    \draw[-{Stealth[length=1.6mm]}, line width=0.4pt] (0,0) -- (0,3.25) node[above, lab] {$|u|$};
    \draw[line width=0.8pt] plot[domain=0:\xz, samples=121] (\x,{prof(\x)})
    -- plot[domain=\xz:\xc, samples=121] (\x,{prof(\x)});
    \draw[deck] (\xt,\bB) -- (\xc,\bB);
    \draw[deck, densely dashed] (\xrm,\mU) -- (\xrmb,\mU);
    \draw[deck] (\xb,\aS) -- (\xbb,\aS);
    \node[lab, above, inner sep=1.5pt] at (7.1,\bB) {$\sB_{a_i}$, $a_i=g_2(\beta_i)$ \ (Thm.~\ref{thm:renewal-above})};
    \node[lab, above, inner sep=1.5pt] at (6.3,\mU) {levels paired at $r_\mu$, $\bar r_\mu$ (\S\ref{sec:renewal-below})};
    \node[lab, above, inner sep=1.5pt] at (6.13,\aS) {$\sB_0$ (Thm.~\ref{thm:phase}), $\mathfrak V_\Lambda$ (Lem.~\ref{lem:frozen-bridge})};
    \foreach \xx/\yy in {0/\bA, \xt/\bB, \xrm/\mU, \xb/\aS, \xr/1, \xz/0, \xrb/1, \xbb/\aS, \xrmb/\mU, \xc/\bB}
    \node[pt] at (\xx,\yy) {};
    \foreach \xx/\ll in {0/c_{i-1}, \xt/t_i, \xrm/r_\mu, \xb/b_i, \xr/r_i, \xz/z_i, \xrb/\bar r_i,
    \xbb/\bar b_i, \xrmb/\bar r_\mu, \xc/c_i}
    \node[lab, anchor=base] at (\xx,-0.27) {$\ll$};
    \foreach \yy/\ll in {\bA/\beta_{i-1}, \bB/\beta_i, \mU/\mu, \aS/\alpha_*, 1/1}
    \node[lab, left, inner sep=2pt] at (-0.02,\yy) {$\ll$};
    \node[lab, right, inner sep=2pt] at (\xc,\bH) {$\tfrac34\alpha_*$};
    \node[lab, right, inner sep=2pt] at (\xc,\bL) {$\tfrac12\alpha_*$};
    \draw[decorate, decoration={brace, amplitude=3pt}, line width=0.4pt]
    (12.78,\bH) -- (12.78,\bL) node[midway, right=3pt, lab, align=left] {band\\(Lem.~\ref{lem:band-dissipation})};
    \draw[decorate, decoration={brace, amplitude=5pt}, line width=0.4pt]
    (0.03,2.98) -- (5.97,2.98) node[midway, above=4pt, lab] {descent $(c_{i-1},z_i)$: $u>0$, $u'<0$};
    \draw[decorate, decoration={brace, amplitude=5pt}, line width=0.4pt]
    (6.03,2.98) -- (11.97,2.98) node[midway, above=4pt, lab] {ascent $(z_i,c_i)$: $u<0$, $u'<0$};
    \node[lab, left, inner sep=2pt] at (-0.02,-0.62) {$\tau_i$:};
    \draw[interval] (0,-0.62) -- (\xt,-0.62) node[midway, below, lab, inner sep=2pt, xshift=-3pt] {$u_\tau\ge\beta_i$ (\S\ref{sec:renewal-above})};
    \draw[interval] (\xt,-0.62) -- (\xr,-0.62) node[midway, below, lab, inner sep=2pt] {$u_\tau<\beta_i$ (\S\ref{sec:renewal-below})};
    \draw[interval] (\xt,-1.05) -- (\xb,-1.05) node[midway, below, lab, inner sep=2pt] {$\tau_i\le b_i$};
    \draw[interval] (\xb,-1.05) -- (\xr,-1.05) node[midway, below, lab, inner sep=2pt] {$\tau_i>b_i$};
\end{tikzpicture}

%% file: sections/11-multiplier-field.tex
\section{The multiplier field}\label{sec:multiplier}

Every uniqueness proof in the Euclidean case uses the scaling vector field $r\partial_r$. A valid and intuitive substitute in the present setting is the field $\varphi(r)\partial_r$ defined below.

\begin{definition}\label{def:multiplier}
    Assume (G1), (G4) and the first half of (G5), $\int^\infty e^{-G}<\infty$. Define
    \begin{equation}\label{eq:multiplier}
        \cJ(r)\defeq\int_r^\infty e^{-G(s)}\,ds,\qquad
        D(r)\defeq g(r)\,e^{G(r)}\cJ(r),\qquad
        \varphi(r)\defeq(\gamma-1)\,e^{G(r)}\cJ(r),
    \end{equation}
    so that $D=\frac{g\varphi}{\gamma-1}$ and
    \begin{equation}\label{eq:phi-ode}
        \varphi'=g\varphi-(\gamma-1),\qquad\varphi''=(g\varphi)'=(\gamma-1)D'.
    \end{equation}
    The \emph{effective dimension} is
    \begin{equation}\label{eq:nu}
        \nu(r)\defeq2\varphi'(r)+\gamma-1.
    \end{equation}
\end{definition}

\begin{lemma}[Basic properties]\label{lem:phi-basic}
    Assume (G1), (G2), (G4) and $\int^\infty e^{-G}<\infty$ (the first half of (G5); the boundedness of $g$ is not used in this section). Then $\varphi$ is well defined, positive and $C^3$ on $(0,\infty)$, and satisfies \eqref{eq:phi-ode}. Moreover
    \[
        \lim_{r\downarrow0}\varphi(r)=0,\qquad
        \lim_{r\downarrow0}\frac{\varphi(r)}{r}=\lim_{r\downarrow0}\varphi'(r)=1,\qquad
        \lim_{r\downarrow0}D(r)=\frac{\gamma}{\gamma-1},\qquad
        \lim_{r\downarrow0}\nu(r)=\gamma+1.
    \]
    If in addition $(\Phi)$ holds, then on $(0,\infty)$
    \begin{equation}\label{eq:phi-bounds}
        0<\varphi'\le1,\qquad\gamma-1<g\varphi\le\gamma,\qquad\gamma-1<\nu\le\gamma+1,
    \end{equation}
    $\varphi$ is strictly increasing, concave and sublinear ($\varphi(r)\le r$), and $\varphi'$, $g\varphi$, $\nu$, $D$ are nonincreasing.
\end{lemma}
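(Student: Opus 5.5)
We first establish the basic facts about $\varphi$ and its limits at the origin, and then the consequences of $(\Phi)$.

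\emph{Well-posedness and the ODE.} By the first half of (G5), $\cJ$ is finite, positive and $C^1$, with $\cJ'=-e^{-G}$. Hence $\varphi=(\gamma-1)e^G\cJ>0$. Differentiating gives
\[
\varphi'=(\gamma-1)\big(ge^G\cJ-1\big)=g\varphi-(\gamma-1),
\]
which is \eqref{eq:phi-ode}. Since $g\in C^2$ by (G4), the bootstrap $\varphi'=g\varphi-(\gamma-1)$ shows $\varphi\in C^3$. The formula $\varphi''=(g\varphi)'=(\gamma-1)D'$ follows from $D=g\varphi/(\gamma-1)$.

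\emph{Limits at the origin.} This is the main step. Write $e^G\cJ(r)=\int_r^\infty e^{-(G(s)-G(r))}\,ds$.
\begin{itemize}
\item For $\varphi\to0$: split the integral at a fixed $r_0$. The tail $e^{G(r)}\cJ(r_0)\to0$, because $e^G\to0$ (Remark~\ref{rem:hyp-g}). On $[r,r_0]$, (G2) gives $G(s)-G(r)\ge(\gamma-\varepsilon)\log(s/r)$ for $r_0$ small. Hence that piece is at most $\int_r^{r_0}(r/s)^{\gamma-\varepsilon}\,ds\le r/(\gamma-\varepsilon-1)$, using $\gamma>1$ from (G4).
\item For $\varphi/r$: substitute $s=rt$, so that $e^G\cJ(r)/r=\int_1^{r_0/r}e^{-(G(rt)-G(r))}\,dt$ plus the negligible tail. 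The integrand tends to $t^{-\gamma}$ and is dominated by $t^{-(\gamma-\varepsilon)}$, which is integrable. Dominated convergence then gives $e^G\cJ(r)/r\to1/(\gamma-1)$, i.e.\ $\varphi/r\to1$.
\item The remaining limits follow: $g\varphi=(rg)(\varphi/r)\to\gamma$, so $\varphi'=g\varphi-(\gamma-1)\to1$. Also $D=g\varphi/(\gamma-1)\to\gamma/(\gamma-1)$ and $\nu=2\varphi'+\gamma-1\to\gamma+1$.
\end{itemize}
The only delicate point is the uniform domination near $0$, which needs no rate in (G2), only the $\varepsilon$-margin.

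\emph{Consequences of $(\Phi)$.} Here $\varphi'>0$ and $\varphi''\le0$.
\begin{itemize}
\item Concavity makes $\varphi'$ nonincreasing. Its limit at $0$ is $1$, so $0<\varphi'\le1$.
\item Since $g\varphi=\varphi'+\gamma-1$ and $\nu=2\varphi'+\gamma-1$, both inherit monotonicity and the bounds $\gamma-1<g\varphi\le\gamma$ and $\gamma-1<\nu\le\gamma+1$.
\item $D=g\varphi/(\gamma-1)$ is nonincreasing for the same reason.
\item $\varphi$ is strictly increasing because $\varphi'>0$.
\item Sublinearity follows from $\varphi(r)=\lim_{\epsilon\downarrow0}\big(\varphi(\epsilon)+\int_\epsilon^r\varphi'\big)\le r$, using $\varphi(0^+)=0$ and $\varphi'\le1$.
\end{itemize}
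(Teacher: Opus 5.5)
Your proof is correct. It differs from the paper's only in how the limits at the origin are obtained.

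The paper first shows that $\cJ(r)$ and $re^{-G(r)}$ both tend to $+\infty$ as $r\downarrow0$, from the bound $e^{-G(s)}\ge e^{-G(s_0)}(s_0/s)^{\gamma'}$ with $\gamma'\in(1,\gamma)$. It then applies l'H\^opital's rule to $(\gamma-1)\cJ/(re^{-G})$, using $(re^{-G})'=(1-rg)e^{-G}$. This gives $\varphi/r\to1$ in one line, and $\varphi\to0$ follows at once, so your separate first bullet is redundant.

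You instead write $e^G\cJ(r)/r=\int_1^{\infty}e^{-(G(rt)-G(r))}\,dt$ (plus a tail) and apply dominated convergence with the majorant $t^{-(\gamma-\varepsilon)}$. This is slightly longer but more hands-on: it avoids l'H\^opital and makes the domination explicit. Both arguments rest on the same $\varepsilon$-margin in (G2) together with $\gamma>1$.

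One step should be spelled out. The tail $e^{G(r)}\cJ(r_0)/r$ is negligible because $e^{G(r)}\le e^{G(r_0)}(r/r_0)^{\gamma-\varepsilon}=o(r)$. This needs $\gamma-\varepsilon>1$, not merely $e^G\to0$ from Remark~\ref{rem:hyp-g}. It is the same fact the paper uses in the form $re^{-G(r)}\to\infty$.

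Your treatment of the consequences of $(\Phi)$ coincides with the paper's.
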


\begin{proof}
    Finiteness of the integral requires $\int^\infty e^{-G}<\infty$, which is the first half of (G5). Differentiating \eqref{eq:multiplier} gives $\varphi'=g\varphi-(\gamma-1)$ and then $\varphi''=(g\varphi)'$, so $\varphi\in C^3$ by (G4). For the limit at $0$, fix $\gamma'\in(1,\gamma)$. By (G2), $sg(s)\ge\gamma'$ on some interval $(0,s_0]$, so $e^{-G(s)}\ge e^{-G(s_0)}(s_0/s)^{\gamma'}$ for $s\in(0,s_0]$. Hence $\cJ(r)$ and $re^{-G(r)}$ tend to $+\infty$ as $r\downarrow0$. Since $\big(re^{-G}\big)'=(1-rg)e^{-G}$, l'H\^opital's rule gives
    \[
        \lim_{r\downarrow0}\frac{\varphi(r)}{r}
        =\lim_{r\downarrow0}\frac{(\gamma-1)\cJ(r)}{re^{-G(r)}}
        =\lim_{r\downarrow0}\frac{\gamma-1}{rg(r)-1}=1,
    \]
    and in particular $\varphi(r)\to0$. Consequently $sg(s)\cdot\frac{\varphi(s)}{s}\to\gamma$, so $\varphi'(s)=g\varphi-(\gamma-1)\to1$ and $D=\frac{g\varphi}{\gamma-1}\to\frac{\gamma}{\gamma-1}$, $\nu=2\varphi'+\gamma-1\to\gamma+1$. Under $(\Phi)$, $\varphi''\le0$ makes $\varphi'$ nonincreasing from its limit $1$ at $0$, whence $0<\varphi'\le1$ and $\varphi(r)=\int_0^r\varphi'\le r$. The remaining claims follow from $g\varphi=\varphi'+\gamma-1$ and $\nu=2\varphi'+\gamma-1$.
\end{proof}

\begin{lemma}[Consequences of (G5) and $(\Phi)$]\label{lem:phi-consequences}
    Assume (G1), (G2), (G4), (G5), $(\Phi)$, and let $g\le M_0$ on $[R_1,\infty)$. Then:
    \begin{enumerate}[label=(\roman*)]
        \item $g(r)>\dfrac{\gamma-1}{r}$ for every $r>0$, hence $G(r)\ge(\gamma-1)\log r$ for $r\ge1$ and $e^{G(r)}\to\infty$.
        \item $\displaystyle\int_{R_1}^r e^G\ge\frac{e^{G(r)}-e^{G(R_1)}}{M_0}$ for $r\ge R_1$.
        \item $\displaystyle e^{-G(t)}\int_a^t e^{G(s)}\,ds\le t-a$ for $0<a\le t$.
        \item $g'<0$ on $(0,\infty)$.
    \end{enumerate}
    Conversely, if (G1), (G4), (G5) hold and $g'<0$ on $(0,\infty)$, then $D>1$ on $(0,\infty)$; if moreover $\varphi''\le0$ on $(0,\infty)$, then $(\Phi)$ holds. Thus, under (G1), (G4), (G5), $(\Phi)$ is equivalent to: \emph{$g$ is strictly decreasing ($g'<0$) and $\varphi$ is concave.} For $g=g_\psi$, $g'<0$ is the strict log-concavity of the warping function, $\psi\psi''<\psi'^2$.
\end{lemma}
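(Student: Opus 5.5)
Everything should follow from the identities \eqref{eq:phi-ode}, $\varphi'=g\varphi-(\gamma-1)$ and $\varphi''=(\gamma-1)D'$, together with the small-$r$ limits of Lemma~\ref{lem:phi-basic}. For (i), under $(\Phi)$ we have $\varphi'\le1$ and $\varphi(0^+)=0$, so $\varphi(r)\le r$; in fact $\varphi(r)<r$ unless $\varphi'\equiv1$ on $(0,r)$. Then $g\varphi=\varphi'+\gamma-1>\gamma-1$ gives $g>(\gamma-1)/\varphi\ge(\gamma-1)/r$. No strictness issue arises, because $\varphi'>0$ already gives $g\varphi>\gamma-1$, and $\varphi\le r$ gives the second inequality. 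Integrating from $1$ yields $G(r)\ge(\gamma-1)\log r$, and $e^G\to\infty$ since $\gamma>1$.

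For (ii), on $[R_1,r]$ we have $g\le M_0$, so $e^G\ge g e^G/M_0=(e^G)'/M_0$, and we integrate. For (iii), since $G$ is increasing ($g>0$), $e^{G(s)-G(t)}\le1$ for $s\le t$, and the bound follows.

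For (iv), we differentiate $D=ge^G\cJ$ to get $D'=g'e^G\cJ+g^2e^G\cJ-g=g'e^G\cJ+g(D-1)$. Hence $g'e^G\cJ=D'-g(D-1)$. Under $(\Phi)$ we have $D'\le0$ and $g(D-1)>0$, so $g'<0$.

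The converse, that $g'<0$ implies $D>1$, is the main step. The plan is to write $D-1=e^G\int_r^\infty(g(r)-g(s))e^{-G(s)}\,ds$, using $\int_r^\infty g e^{-G}=e^{-G(r)}-\lim e^{-G}$. This needs $e^{-G}\to0$, which holds because $\cJ<\infty$ and $g$ is decreasing and positive, so $G$ grows at least linearly or $e^{-G}$ is monotone with finite integral. For $s>r$ the integrand $g(r)-g(s)>0$, so $D>1$. Then $\varphi''\le0$ is exactly $D'\le0$, which gives $(\Phi)$. For the warped case, $g_\psi'=(n-1)(\psi''\psi-\psi'^2)/\psi^2$. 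I expect the only delicate point to be justifying $e^{-G(s)}\to0$ in the integration by parts, and it follows since $e^{-G}$ is decreasing and integrable.
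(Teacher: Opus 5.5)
Your proof is correct. Parts (i)--(iii) and the forward direction of (iv) follow the paper's argument. Your identity $g'e^G\cJ=D'-g(D-1)$ is the paper's $g'\varphi=\varphi''-g\varphi'$ divided by $\gamma-1$.

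The converse is done differently. The paper uses the strict monotonicity of $g$ in the form $G(t)-G(s)<g(s)(t-s)$ for $t>s$. This gives $\cJ(s)>e^{-G(s)}\int_s^\infty e^{-g(s)(t-s)}\,dt=e^{-G(s)}/g(s)$, which is exactly $D(s)>1$, with no boundary term to control. You instead write the exact identity $D(r)-1=e^{G(r)}\int_r^\infty\bigl(g(r)-g(s)\bigr)e^{-G(s)}\,ds$. This costs you the justification $e^{-G}\to0$, but it gives a formula for $D-1$ with a visibly positive integrand. It is in the same spirit as the integration by parts in Lemma~\ref{lem:concave-reciprocal}(ii) and Remark~\ref{rem:phi-fails}.

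Your final justification of $e^{-G}\to0$ is the right one: $e^{-G}$ is positive, decreasing and integrable, so it tends to $0$. Drop the aside that $G$ grows at least linearly. It is false in general, for instance for $g=(n-1)/r$, where $G=(n-1)\log r$, and you do not need it.
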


\begin{proof}
    (i) By $(\Phi)$, $D>1$, i.e.\ $g\varphi>\gamma-1$, and $\varphi\le r$ by Lemma~\ref{lem:phi-basic}.

    (ii) $\int_{R_1}^r e^G=\int_{R_1}^r\frac{(e^G)'}{g}\ge\frac{1}{M_0}\big(e^{G(r)}-e^{G(R_1)}\big)$.

    (iii) $G$ is increasing.

    (iv) By \eqref{eq:phi-ode}, $\varphi''=(g\varphi)'=g'\varphi+g\varphi'$, so $g'\varphi=\varphi''-g\varphi'<0$ under $(\Phi)$. Conversely, let $g'<0$ and $\varphi''\le0$. For $t>s$ we have $G(t)-G(s)<g(s)(t-s)$, hence
    \[
        \cJ(s)>e^{-G(s)}\int_s^\infty e^{-g(s)(t-s)}\,dt=\frac{e^{-G(s)}}{g(s)},
    \]
    that is $D(s)=g(s)e^{G(s)}\cJ(s)>1$; this uses only $g'<0$. If moreover $\varphi''\le0$, then $D'\le0$ by \eqref{eq:phi-ode}, and $(\Phi)$ holds. For $g=g_\psi$, $g_\psi'=(n-1)(\psi\psi''-\psi'^2)/\psi^2$.
\end{proof}

\begin{remark}\label{rem:phi-euclidean}
    In the Euclidean case $g=\frac{n-1}{r}$, so $\gamma=n-1$: one has $\varphi(r)=r$, $\varphi'\equiv1$, $\varphi''\equiv0$, $D\equiv\frac{n-1}{n-2}$ and $\nu\equiv n$. All the correction terms appearing below, which carry a factor $\varphi''$ or $\varphi'-1$, vanish identically, so the Euclidean case is a special case of the present one. (Here $\int^\infty e^{-G}=\int^\infty s^{1-n}\,ds<\infty$ for $n\ge3$ and $g$ is bounded, so (G5) holds.)
\end{remark}

\subsection{A differential criterion for \texorpdfstring{$(\Phi)$}{(Phi)}}\label{sec:phi-criterion}

\begin{lemma}[Criterion]\label{lem:phi-criterion}
    Assume (G1), (G4), $\int^\infty e^{-G}<\infty$, let $g'+g^2>0$ on $(0,\infty)$, set $\mathcal{H}\defeq\dfrac{g\,e^{-G}}{g'+g^2}$, and suppose
    \begin{equation}\label{eq:phi-criterion}
        \Big(\frac{g}{g'+g^2}\Big)'\le-\frac{g'}{g'+g^2}\quad\text{on }(0,\infty),\qquad\text{and}\qquad
        \mathcal{H}(r)\to0\ \ (r\to\infty).
    \end{equation}
    Then $D'\le0$ on $(0,\infty)$. If moreover the inequality in \eqref{eq:phi-criterion} is strict and $D(r)\to1$ as $r\to\infty$, then $D'<0$ and $D>1$ on $(0,\infty)$, so $(\Phi)$ holds.
\end{lemma}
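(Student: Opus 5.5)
The plan is to turn $D'\le0$ into a comparison between $\cJ$ and $\mathcal H$, and then run a monotonicity argument from infinity.

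\emph{Step 1: rewrite $D'$.} Differentiate $D=g\,e^{G}\cJ$ using $G'=g$ and $\cJ'=-e^{-G}$:
\[
D'=g'e^G\cJ+g^2e^G\cJ-g=(g'+g^2)\,e^G\cJ-g .
\]
Since $g'+g^2>0$, we get
\[
D'\le0\iff \cJ\le\frac{g\,e^{-G}}{g'+g^2}=\mathcal H .
\]
Likewise $D'<0$ if and only if $\cJ<\mathcal H$. So the claim reduces to showing $\mathcal H-\cJ\ge0$ on $(0,\infty)$, with strict inequality in the strict case.

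\emph{Step 2: compare derivatives.} Write $\mathcal H=\frac{g}{g'+g^2}\,e^{-G}$. Then
\[
\mathcal H'=\Big(\frac{g}{g'+g^2}\Big)'e^{-G}-\frac{g^2}{g'+g^2}\,e^{-G}.
\]
Inserting the hypothesis \eqref{eq:phi-criterion} gives
\[
\mathcal H'\le-\frac{g'+g^2}{g'+g^2}\,e^{-G}=-e^{-G}=\cJ'.
\]
Hence $(\mathcal H-\cJ)'\le0$, so $\mathcal H-\cJ$ is nonincreasing on $(0,\infty)$.

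\emph{Step 3: use the limit at infinity.} We have $\cJ(r)\to0$ because $\int^\infty e^{-G}<\infty$, and $\mathcal H(r)\to0$ by assumption. So the nonincreasing function $\mathcal H-\cJ$ tends to $0$ at infinity, which forces $\mathcal H-\cJ\ge0$ everywhere. By Step 1 this gives $D'\le0$.

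\emph{Step 4: the strict case.} If the inequality in \eqref{eq:phi-criterion} is strict, then $(\mathcal H-\cJ)'<0$. A strictly decreasing function with limit $0$ is positive, so $\mathcal H>\cJ$ and hence $D'<0$ on $(0,\infty)$. Then $D$ is strictly decreasing with limit $1$, so $D>1$. Finally, \eqref{eq:phi-ode} gives $\varphi'=(\gamma-1)(D-1)>0$ and $\varphi''=(\gamma-1)D'\le0$, which is $(\Phi)$.

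There is no real obstacle here. The only point needing care is Step 1: recognising that $D'\le0$ is exactly the bound $\cJ\le\mathcal H$, which makes $\mathcal H$ a supersolution of $y'=-e^{-G}$ with zero data at infinity.
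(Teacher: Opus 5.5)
Your proposal is correct and follows the paper's proof step for step: you rewrite $D'=(g'+g^2)e^G(\cJ-\mathcal H)$, use the criterion to get $(\mathcal H-\cJ)'\le0$, and conclude from $\mathcal H-\cJ\to0$ at infinity. The strict case then gives $D'<0$ and $D>1$ exactly as in the paper.
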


\begin{proof}
    A direct computation from \eqref{eq:multiplier} gives
    \[
        D'=(g'+g^2)e^G\cJ-g=(g'+g^2)e^G\big(\cJ-\mathcal{H}\big),
    \]
    so $D'\le0$ if and only if $\cJ\le\mathcal{H}$. Now
    \[
        (\mathcal{H}-\cJ)'=e^{-G}\Big[\Big(\frac{g}{g'+g^2}\Big)'-\frac{g^2}{g'+g^2}\Big]+e^{-G}
        =e^{-G}\Big[\Big(\frac{g}{g'+g^2}\Big)'+\frac{g'}{g'+g^2}\Big]\le0
    \]
    by \eqref{eq:phi-criterion}. Since $\mathcal{H}-\cJ\to0$ at infinity, $\mathcal{H}-\cJ\ge0$ everywhere, proving $D'\le0$. If \eqref{eq:phi-criterion} is strict, then $(\mathcal{H}-\cJ)'<0$, so $\mathcal{H}-\cJ$ is strictly decreasing to $0$, hence $\mathcal{H}-\cJ>0$ and $D'<0$ everywhere. Since $D$ is strictly decreasing with limit $1$, we get $D>1$.
\end{proof}

The criterion can be written in a much simpler form.

\begin{lemma}[The criterion is the concavity of $1/g$]\label{lem:concave-reciprocal}
    Let $g\in C^2((0,\infty))$, $g>0$, and put $\ell\defeq1/g$. Wherever $\ell'\ne1$ (that is, $g'+g^2\ne0$),
    \begin{equation}\label{eq:concave-reciprocal}
        \Big(\frac{g}{g'+g^2}\Big)'+\frac{g'}{g'+g^2}=\frac{\ell\,\ell''}{(1-\ell')^2}.
    \end{equation}
    Hence the inequality in \eqref{eq:phi-criterion} holds if and only if $\ell''\le0$, i.e.\ $g\,g''\ge2g'^2$: \emph{the reciprocal damping $1/g$ is concave}. For $g=g_\psi=(n-1)\psi'/\psi$ one has $\ell=\psi/((n-1)\psi')$ and $(n-1)\ell'=1-\psi\psi''/\psi'^2$, so the condition is that $\psi\psi''/\psi'^2=-K_{\mathrm{rad}}\,(\psi/\psi')^2$ be nondecreasing, $K_{\mathrm{rad}}=-\psi''/\psi$ being the radial curvature.

    Assume moreover (G1), (G2), (G4), (G5), and that $\ell$ is concave on $(0,\infty)$. Then:
    \begin{enumerate}[label=(\roman*)]
        \item $\ell'$ is nonincreasing with $\ell'(0^+)=\frac{1}{\gamma}$. Hence $0\le\ell'\le\frac{1}{\gamma}$, $g'\le0$, $g\ge\frac{\gamma}{r}$, and, for $g=g_\psi$ with $\gamma\ge n-1$, $\psi''\ge0$.
        \item $D'\le0$ and $D\ge1$ on $(0,\infty)$, and $D(r)>1$ unless $g$ is constant on $(r,\infty)$. In particular, if $g$ is not eventually constant, then $(\Phi)$ holds.
    \end{enumerate}
\end{lemma}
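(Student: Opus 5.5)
The plan is to prove the algebraic identity \eqref{eq:concave-reciprocal} first, then deduce (i) from elementary properties of concave functions, and finally obtain (ii) by checking the hypotheses of Lemma~\ref{lem:phi-criterion} together with the comparison argument from the converse part of Lemma~\ref{lem:phi-consequences}(iv).

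\emph{The identity.} I substitute $g=1/\ell$, so that $g'=-\ell'/\ell^2$ and
\[
g'+g^2=\frac{1-\ell'}{\ell^2},\qquad \frac{g}{g'+g^2}=\frac{\ell}{1-\ell'},\qquad \frac{g'}{g'+g^2}=-\frac{\ell'}{1-\ell'}.
\]
Differentiating the middle quotient gives $\bigl(\ell'(1-\ell')+\ell\ell''\bigr)/(1-\ell')^2$. Adding the last expression, the $\ell'(1-\ell')$ terms cancel and leave $\ell\ell''/(1-\ell')^2$. Since $\ell>0$, the sign of this expression is the sign of $\ell''$. Moreover $\ell''=(2g'^2-gg'')/g^3$, so $\ell''\le0$ is the same as $gg''\ge2g'^2$. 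For $g=g_\psi$, a direct differentiation of $\ell=\psi/((n-1)\psi')$ gives $(n-1)\ell'=1-\psi\psi''/\psi'^2$. Using $\psi''=-K_{\mathrm{rad}}\psi$, the concavity of $\ell$ is then exactly the monotonicity of $\psi\psi''/\psi'^2$ stated in the lemma.

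\emph{Part (i).} By (G2), $r/\ell(r)=rg(r)\to\gamma$, so $\ell(0^+)=0$ and $\ell(r)/r\to1/\gamma$. Concavity makes $\ell'$ nonincreasing, so $\ell'(0^+)$ exists in $(-\infty,+\infty]$. Since $\ell(r)/r=r^{-1}\int_0^r\ell'$ is an average of a monotone function, $\ell'(0^+)=\lim_{r\downarrow0}\ell(r)/r=1/\gamma$, and hence $\ell'\le1/\gamma$ everywhere. If $\ell'(r_0)<0$ for some $r_0$, then $\ell'\le\ell'(r_0)$ on $(r_0,\infty)$, which forces $\ell\to-\infty$ and contradicts $\ell>0$; so $\ell'\ge0$. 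It follows that $g'=-\ell'/\ell^2\le0$, and $\ell(r)\le r/\gamma$ gives $g\ge\gamma/r$. For $g_\psi$ with $\gamma\ge n-1$, the bound $(n-1)\ell'\le(n-1)/\gamma\le1$ gives $\psi\psi''\ge0$, hence $\psi''\ge0$.

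\emph{Part (ii).} I will apply Lemma~\ref{lem:phi-criterion}. First, $g'+g^2=(1-\ell')/\ell^2>0$, because $\ell'\le1/\gamma<1$ by (G4). The differential inequality holds by the identity. The remaining hypothesis, $\mathcal H\to0$, is the only step needing an estimate. I write
\[
\mathcal H=\frac{\ell e^{-G}}{1-\ell'}\le\frac{\gamma}{\gamma-1}\,\ell e^{-G}\le\frac{1}{\gamma-1}\,re^{-G(r)}.
\]
Since $e^{-G}$ is decreasing and integrable by (G5), $re^{-G(r)}\le2\int_{r/2}^re^{-G}\to0$. Lemma~\ref{lem:phi-criterion} then gives $D'\le0$.

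For $D\ge1$ I reuse the comparison from the converse part of Lemma~\ref{lem:phi-consequences}(iv), with nonstrict inequalities. Since $g'\le0$, we have $G(t)-G(s)\le g(s)(t-s)$ for $t>s$, hence $\cJ(s)\ge e^{-G(s)}/g(s)$, i.e.\ $D(s)\ge1$. If $g$ is not constant on $(s,\infty)$, then $g(\tau)<g(s)$ for some $\tau>s$, and by monotonicity for all larger $\tau$. The inequality $G(t)-G(s)<g(s)(t-s)$ is then strict for large $t$, so the integral comparison is strict and $D(s)>1$. If $g$ is not eventually constant, this holds for every $s$. We then have $D>1$ with $D$ nonincreasing, which is $(\Phi)$; by \eqref{eq:phi-ode} it is equivalent to $\varphi'>0\ge\varphi''$.
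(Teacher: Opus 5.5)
Your proof is correct, and most of it follows the paper. The identity is the same substitution. In part (i), your averaging argument for $\ell'(0^+)=1/\gamma$ is the paper's mean value theorem step in another form. For $D'\le0$ you apply Lemma~\ref{lem:phi-criterion} as a black box, checking $g'+g^2>0$ explicitly, where the paper reruns that lemma's proof.

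The one genuinely different step is $D\ge1$ together with its strict version. The paper integrates by parts, using $e^{-G}=-\ell\,(e^{-G})'$ and the vanishing of the boundary term $\ell e^{-G}\le re^{-G}/\gamma$ at infinity. This gives the exact representation $D(r)=1+g(r)e^{G(r)}\int_r^\infty\ell'e^{-G}$, from which $D\ge1$ and the equality case ($\ell'=0$ a.e.\ on $(r,\infty)$) can be read off at once. You instead reuse the pointwise comparison $G(t)-G(s)\le g(s)(t-s)$ from the converse part of Lemma~\ref{lem:phi-consequences}(iv). That needs only $g'\le0$ from (i) and no boundary term at infinity, and you get strictness because the comparison is strict on a set of positive measure whenever $g$ is nonconstant on $(s,\infty)$.

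Your route is slightly more elementary and stays inside an argument already in the paper. The paper's route buys an explicit formula for $D-1$, which it reuses later: with reversed sign in Remark~\ref{rem:phi-fails}, and for the asymptotics of $e^G\varphi'$ in Example~\ref{ex:hyperbolic-horns}. Your comparison would give the sign in the former by reversing the inequalities, but not the asymptotics in the latter.
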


\begin{proof}
    With $g=1/\ell$: $g'+g^2=(1-\ell')/\ell^2$, $\frac{g}{g'+g^2}=\frac{\ell}{1-\ell'}$, $\frac{g'}{g'+g^2}=-\frac{\ell'}{1-\ell'}$, and $\big(\frac{\ell}{1-\ell'}\big)'=\frac{\ell'(1-\ell')+\ell\ell''}{(1-\ell')^2}$, which gives \eqref{eq:concave-reciprocal}.

    (i) A concave function has a nonincreasing derivative. By the mean value theorem, $\ell(r)/r=\ell'(\xi_r)$ for some $\xi_r\in(0,r)$, and $\ell(r)/r=1/(rg(r))\to\frac{1}{\gamma}$ by (G2), so $\ell'(0^+)=\frac{1}{\gamma}$ and $\ell'\le\frac{1}{\gamma}$. If $\ell'(r_1)<0$ for some $r_1$ then $\ell'\le\ell'(r_1)<0$ on $(r_1,\infty)$ and $\ell\to-\infty$, contradicting $\ell>0$. Thus $\ell'\ge0$, i.e.\ $g'=-\ell'/\ell^2\le0$, and $\ell(r)=\int_0^r\ell'\le\frac{r}{\gamma}$, i.e.\ $g\ge\frac{\gamma}{r}$. Finally, if $\gamma\ge n-1$, then $(n-1)\ell'=1-\psi\psi''/\psi'^2\le(n-1)/\gamma\le1$ gives $\psi''\ge0$.

    (ii) Since $e^{-G}$ is decreasing and integrable by (G5), $r\,e^{-G(r)}\le2\int_{r/2}^r e^{-G}\to0$. Hence $\mathcal{H}=\frac{ge^{-G}}{g'+g^2}=\frac{e^{-G}\ell}{1-\ell'}\le\frac{r\,e^{-G}}{\gamma-1}\to0$, and by \eqref{eq:concave-reciprocal} and the proof of Lemma~\ref{lem:phi-criterion}, $(\mathcal{H}-\cJ)'=e^{-G}\frac{\ell\ell''}{(1-\ell')^2}\le0$. Therefore $\mathcal{H}\ge\cJ$, that is $D'\le0$. Next, $e^{-G}=-\ell\,(e^{-G})'$, and an integration by parts, in which the boundary term $\ell e^{-G}\le re^{-G}/\gamma$ vanishes at infinity, gives
    \[
        \cJ(r)=\int_r^\infty e^{-G}=\ell(r)e^{-G(r)}+\int_r^\infty\ell'e^{-G}\,ds,\qquad\text{i.e.}\qquad
        D(r)=1+g(r)e^{G(r)}\int_r^\infty\ell'e^{-G}\,ds\ \ge1,
    \]
    with equality iff $\ell'=0$ a.e.\ on $(r,\infty)$, i.e.\ (as $\ell'$ is nonincreasing and nonnegative) iff $g$ is constant on $(r,\infty)$. Since $(\Phi)$ is exactly $D>1$ and $D'\le0$, this proves the last statement.
\end{proof}

\begin{lemma}[Euclidean and hyperbolic spaces satisfy $(\Phi)$]\label{lem:space-forms}
    Let $n\ge3$.
    \begin{enumerate}[label=(\roman*)]
        \item \textup{(Euclidean.)} $g=\frac{n-1}{r}$ satisfies \eqref{eq:phi-criterion} with equality and $D\equiv\frac{n-1}{n-2}>1$, so $(\Phi)$ holds.
        \item \textup{(Hyperbolic, any curvature.)} For $\lambda>0$, the damping $g=(n-1)\lambda\coth(\lambda r)$, corresponding to the warping $\psi(r)=\lambda^{-1}\sinh(\lambda r)$, satisfies $(\Phi)$: $D>1$ and $D'<0$ on $(0,\infty)$, with $D(0^+)=\frac{n-1}{n-2}$ and $D(\infty)=1$. For $n=3$, $\lambda=1$ one has explicitly $\varphi(r)=\frac{1}{2}(1-e^{-2r})$ and $D(r)=1+e^{-2r}$.
    \end{enumerate}
\end{lemma}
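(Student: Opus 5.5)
For (i), the plan is a direct computation. With $g=(n-1)/r$ we have $g'=-(n-1)/r^2$ and $g^2=(n-1)^2/r^2$. Hence $g'+g^2=(n-1)(n-2)/r^2>0$, and $g/(g'+g^2)=r/(n-2)$, whose derivative is $1/(n-2)$. On the other side, $-g'/(g'+g^2)=1/(n-2)$, so \eqref{eq:phi-criterion} holds with equality. Equivalently, $\ell=r/(n-1)$ is linear, and \eqref{eq:concave-reciprocal} vanishes. Directly, $e^{G}=r^{n-1}$ and $\cJ(r)=r^{2-n}/(n-2)$. This gives $\varphi=\frac{n-2}{n-2}\,r=r$ and $D=g\varphi/(\gamma-1)=(n-1)/(n-2)>1$. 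A constant $D$ is nonincreasing, so $(\Phi)$ holds.

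For (ii), I would use Lemma~\ref{lem:concave-reciprocal}. By the scaling invariance $g_\lambda(r)=\lambda^{-1}g(r/\lambda)$, with $D_\lambda(r)=D(r/\lambda)$, it suffices to take $\lambda=1$. Then
\[
  \ell=\frac{\tanh r}{n-1},\qquad \ell''=-\frac{2\sech^2 r\,\tanh r}{n-1}<0 \quad\text{on }(0,\infty).
\]
So $\ell$ is strictly concave, and $g$ is not eventually constant. We check (G1), (G2) with $\gamma=n-1>1$, (G4) and (G5): $e^{G}\asymp \sinh^{n-1}r$ is integrable in reciprocal at infinity, and $g$ is bounded by $(n-1)\coth 1$ on $[1,\infty)$. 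Part (ii) of that lemma then gives $D'\le0$ and $D>1$, hence $(\Phi)$. Strictness $D'<0$ follows because $\ell\ell''<0$ makes the inequality in \eqref{eq:phi-criterion} strict. We also have $\mathcal H\to0$, as shown in the proof of Lemma~\ref{lem:concave-reciprocal}(ii). Moreover $D(\infty)=1$, since $1<D\le g(r)e^{G(r)}\int_r^\infty e^{-G}$ and $g$ is decreasing. Thus
\[
  \int_r^\infty e^{-G}\le \frac{e^{-G(r)}}{g(\infty)}\qquad\text{with } g(\infty)=n-1,
\]
so $D\le g(r)/(n-1)\to1$. Lemma~\ref{lem:phi-criterion}'s strict clause therefore gives $D'<0$. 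The value $D(0^+)=\gamma/(\gamma-1)=(n-1)/(n-2)$ is Lemma~\ref{lem:phi-basic}.

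For the explicit case $n=3$, $\lambda=1$, we have $e^{G}=\sinh^2 r/\sinh^2 1$. Then $\int_r^\infty \sinh^{-2}s\,ds=\coth r-1$, so
\[
  \varphi=2\sinh^2 r\,(\coth r-1)=2\sinh r\,(\cosh r-\sinh r)=2\sinh r\,e^{-r}=1-e^{-2r}.
\]
This differs from the stated $\frac12(1-e^{-2r})$ by a factor of $4$. I would recheck the normalization, since $\gamma-1=1$ and $\varphi(r)/r\to1$ force $\varphi=\frac12(1-e^{-2r})$ at this limit. Recomputing gives $\varphi'(0)=2$ with my formula, which signals an arithmetic slip. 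Indeed $\sinh r\,e^{-r}=\frac12(1-e^{-2r})$, so $\varphi=\frac12\cdot$ wait, $2\cdot\frac12(1-e^{-2r})=(1-e^{-2r})$; the factor in $\varphi=(\gamma-1)e^G\cJ$ with $\gamma-1=1$ gives $\sinh^2 r(\coth r-1)=\frac12(1-e^{-2r})$.

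With $\varphi=\frac12(1-e^{-2r})$, we get $D=g\varphi/2=\coth r\,(1-e^{-2r})/2$. Since $\coth r\,(1-e^{-2r})=(1+e^{-2r})$, this gives $D=\frac12(1+e^{-2r})\cdot 2/\ldots$. The exact value $1+e^{-2r}$ follows once the factor $(n-1)=2$ in $g$ is included: $D=2\coth r\cdot\frac12(1-e^{-2r})=1+e^{-2r}$.

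The main obstacle is only bookkeeping of constants in this explicit case; the structural part is immediate from Lemma~\ref{lem:concave-reciprocal}.
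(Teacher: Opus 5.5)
Your proof is correct and is essentially the paper's. Part (i) is the same computation. In (ii) you also reduce to $\lambda=1$, get the strict form of \eqref{eq:phi-criterion} from $\ell''<0$ via \eqref{eq:concave-reciprocal}, and conclude with the strict clause of Lemma~\ref{lem:phi-criterion}. The only real difference is that you obtain $D(\infty)=1$ from $g\ge g(\infty)=n-1$ (the comparison of Remark~\ref{rem:literature}(b)) rather than by l'H\^opital. You should also record the hypothesis $g'+g^2=(1-\ell')/\ell^2>0$ of that lemma, which holds since $\ell'\le\frac1{n-1}<1$.

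In the $n=3$ computation, delete the false starts. The constant is $\gamma-1=1$ both in $\varphi=(\gamma-1)e^G\cJ$ and in $D=g\varphi/(\gamma-1)$, so $\varphi=\sinh^2r\,(\coth r-1)=\sinh r\,e^{-r}=\tfrac12(1-e^{-2r})$ and $D=2\coth r\cdot\tfrac12(1-e^{-2r})=1+e^{-2r}$.
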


\begin{proof}
    (i) With $g=\frac{n-1}{r}$ we have $g'+g^2=\frac{(n-1)(n-2)}{r^2}>0$ and $\frac{g}{g'+g^2}=\frac{r}{n-2}$, so the left-hand side of \eqref{eq:phi-criterion} is $\frac{1}{n-2}$, while $-\frac{g'}{g'+g^2}=\frac{1}{n-2}$: equality. Directly, $D\equiv\frac{n-1}{n-2}$.

    (ii) By the scaling $r\mapsto\lambda r$ it suffices to take $\lambda=1$ (one checks that $D_\lambda(r)=D_1(\lambda r)$ from \eqref{eq:multiplier}). Put $x=\coth r\in(1,\infty)$, so that $\frac{dx}{dr}=1-x^2<0$. Then $g=(n-1)x$, $g'=(n-1)(1-x^2)$, and
    \[
        g'+g^2=(n-1)\big[1+(n-2)x^2\big]>0,\qquad\frac{g}{g'+g^2}=\frac{x}{1+(n-2)x^2}.
    \]
    Hence
    \[
        \Big(\frac{g}{g'+g^2}\Big)'=(1-x^2)\frac{1-(n-2)x^2}{\big[1+(n-2)x^2\big]^2},\qquad
        -\frac{g'}{g'+g^2}=-\frac{1-x^2}{1+(n-2)x^2},
    \]
    and since $1-x^2<0$, the inequality \eqref{eq:phi-criterion} is equivalent, after dividing by $1-x^2$ and reversing the sign, to
    \[
        \frac{1-(n-2)x^2}{1+(n-2)x^2}\ge-1\qquad\Longleftrightarrow\qquad1\ge-1,
    \]
    which holds strictly (equivalently, $\ell=\frac{\tanh r}{n-1}$ has $\ell''=-\frac{2\tanh r}{(n-1)\cosh^2r}<0$, Lemma~\ref{lem:concave-reciprocal}). Moreover $e^{-G}\sim c\,e^{-(n-1)r}$, so $\mathcal{H}=\frac{ge^{-G}}{g'+g^2}\to0$, and $\cJ(r)=\int_r^\infty e^{-G}=\frac{e^{-G(r)}}{g(r)}(1+o(1))$ (by l'H\^opital, since $g'/g^2\to0$), whence $D\to1$. Lemma~\ref{lem:phi-criterion} applies and gives $(\Phi)$, with strict inequality $D'<0$ because \eqref{eq:phi-criterion} is strict. The limit at $0$ is Lemma~\ref{lem:phi-basic}. For $n=3$, $\int_r^\infty\sinh^{-2}=\coth r-1$ gives $\varphi=\sinh^2r\,(\coth r-1)=\frac{1}{2}(1-e^{-2r})$ and $D=\coth r\,(1-e^{-2r})=1+e^{-2r}$.
\end{proof}

\begin{lemma}[A family of variable negative curvature]\label{lem:variable-curvature}
    Let $n\ge3$, $\lambda>0$ and $\psi(r)\defeq r\cosh(\lambda r)$. This is a $C^\infty$ warping function ($\psi(0)=0$, $\psi'(0)=1$, $\psi''(0)=0$), with radial curvature $-\psi''/\psi=-\lambda^2-\frac{2\lambda\tanh(\lambda r)}{r} \in(-3\lambda^2,-\lambda^2)$, asymptotic to the hyperbolic space of curvature $-\lambda^2$ but not of constant curvature. The damping $g=(n-1)\big(\frac{1}{r}+\lambda\tanh(\lambda r)\big)$ satisfies (G1)--(G6), $1/g$ is strictly concave on $(0,\infty)$, and $(\Phi)$ holds.
\end{lemma}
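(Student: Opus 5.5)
The proof splits into elementary verifications, and I will do them in the order of the statement.

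\emph{Regularity and curvature.} The warping $\psi(r)=r\cosh(\lambda r)$ is odd and real-analytic, so $\psi(0)=0$, $\psi'(0)=1$ and $\psi''(0)=0$. A direct differentiation gives $\psi''=2\lambda\sinh(\lambda r)+\lambda^2r\cosh(\lambda r)$, hence
\[
  -\psi''/\psi=-\lambda^2-2\lambda\tanh(\lambda r)/r .
\]
The function $\tanh(x)/x$ decreases from $1$ to $0$, which gives the range $(-3\lambda^2,-\lambda^2)$ and shows that the curvature is not constant.

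\emph{Hypotheses (G1)--(G6).} The damping is $g=(n-1)\big(\frac1r+\lambda\tanh(\lambda r)\big)$. It is smooth and positive, and $rg\to n-1=\gamma\ge2$, which gives (G1), (G2) and (G4). Condition (G3) holds because $r^2g\ge(n-1)r$. For (G5), $e^{G}\asymp r^{n-1}\cosh(\lambda r)^{n-1}$, so $e^{-G}$ is integrable at infinity, and $g\le(n-1)(1+\lambda)$ on $[1,\infty)$. For (G6), $g'=(n-1)\big(-r^{-2}+\lambda^2\operatorname{sech}^2(\lambda r)\big)\to0$. The function $q=1+g^2/4+g'/2$ equals a constant plus terms of order $O(r^{-1})$ and $O(e^{-2\lambda r})$, so $q'=O(r^{-2})$ and $q''=O(r^{-3})$, both of which are integrable.

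\emph{Strict concavity of $\ell=1/g$.} This is the main computation. I will write
\[
  \ell=\frac{r}{(n-1)\big(1+\lambda r\tanh(\lambda r)\big)}
\]
and, by scaling ($D_\lambda(r)=D_1(\lambda r)$), reduce to $\lambda=1$. With $h(r)=1+r\tanh r$ we have $\ell=r/((n-1)h)$. The sign of $\ell''$ is the sign of
\[
  \big(r/h\big)''=\frac{-r h''h-2h'h+2rh'^2}{h^3},
\]
so it suffices to show that the numerator
\[
  N(r)=2r h'^2-2h h'-r h h''
\]
is negative for $r>0$. Here $h'=\tanh r+r\operatorname{sech}^2 r$ and $h''=2\operatorname{sech}^2 r\,(1-r\tanh r)$. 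I expect the sign check of $N$ to be the main obstacle, because the term $1-r\tanh r$ changes sign. My plan is to expand $N$ in terms of $t=\tanh r$ and $s=\operatorname{sech}^2 r=1-t^2$. Then I will group terms to see that $2rh'^2-2hh'=2h'(rh'-h)$ with
\[
  rh'-h=r^2s-1 .
\]
This gives
\[
  N=2h'(r^2s-1)-rh h''=2h'(r^2s-1)-2rsh(1-rt).
\]
For large $r$ the dominant term is $-2h'<0$ (using $h'\to1$). Near $0$, expansion gives $N\approx-2r+O(r^3)$. For intermediate $r$ I would bound $r^2s=r^2\operatorname{sech}^2r\le 0.44<1$ and handle the second term using $rsh(1-rt)\le rs(1+rt)$. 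Failing a clean closed form, I would split into the two regions $r\tanh r\ge1$ and $r\tanh r<1$. On the first region both terms are negative. On the second region I would prove $h'(1-r^2s)>rsh(1-rt)$ using $h<2$, $t\ge r-r^3/3$ and elementary bounds.

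\emph{Conclusion via Lemma~\ref{lem:concave-reciprocal}.} Once $\ell''<0$, Lemma~\ref{lem:concave-reciprocal}(ii) applies, since (G1), (G2), (G4) and (G5) are already verified. The damping $g$ is not eventually constant because $g'\ne0$ for large $r$, and therefore $(\Phi)$ holds.
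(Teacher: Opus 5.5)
Your setup is correct up to the formula $N=2h'(r^2s-1)-2rsh(1-rt)$, and in fact $N=-2\Theta$ with the paper's $\Theta=t+rs(2-rt-r^2)$. Your other verifications (curvature range, (G1)--(G6), and the final appeal to Lemma~\ref{lem:concave-reciprocal}(ii)) also agree with the paper. The gap is in the sign analysis of $N$: the two regions are interchanged.

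\begin{itemize}
\item The first term $2h'(r^2s-1)$ is always negative, since $r^2s\le0.44$ and $h'>0$.
\item The second term $-2rsh(1-rt)$ is negative exactly on $\{r\tanh r<1\}$. So on that region $N<0$ for free, and the estimate you plan there (using $h<2$ and $\tanh r\ge r-r^3/3$) is unnecessary.
\item On $\{r\tanh r\ge1\}$ the second term equals $2rsh(rt-1)\ge0$, so your claim that ``both terms are negative'' there is false.
\end{itemize}

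This region, roughly $r\ge1.2$, is exactly where the two terms compete, and your proposal gives no argument for it. The asymptotics $N\to-2$ only handle large $r$. At intermediate $r$ the ratio $rsh(rt-1)/\bigl(h'(1-r^2s)\bigr)$ reaches about $1/2$ near $r\approx2$, so a genuine quantitative inequality is needed. Incidentally, $N\approx-6r$ near $0$, not $-2r$, though only the sign matters.

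You can close the gap as the paper does, using the grouping $\Theta=t-rs(r^2+rt-2)$.
\begin{itemize}
\item If $r^2+rt\le2$, then $\Theta\ge t>0$.
\item If $r^2+rt>2$, then $t\le1$ gives $rs(r^2+rt-2)\le r(r^2+r-2)\sech^2 r$. So $\Theta>0$ follows from $r^3+r^2-2r<\tanh r\cosh^2 r=\tfrac12\sinh 2r$.
\item That inequality follows from $\tfrac12\sinh 2r\ge r+\tfrac23r^3+\tfrac2{15}r^5$ together with the positivity of $3-r-\tfrac{r^2}{3}+\tfrac2{15}r^4$.
\end{itemize}
Since $r\tanh r$ is increasing and equals $1$ near $r\approx1.1997$, the set $\{r\tanh r\ge1\}$ lies inside $\{r^2+rt>2\}$. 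So this estimate covers exactly the case your plan misses.
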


\begin{proof}
    (G1)--(G4) and the boundedness in (G5) are immediate ($g\to(n-1)\lambda$, and $g-\frac{n-1}{r}=(n-1)\lambda\tanh(\lambda r)$ is bounded). The function
    \[
        e^{-G}=(\psi(1)/\psi)^{n-1}\le\psi(1)^{n-1}(2/(re^{\lambda r}))^{n-1}
    \]
    is integrable, which is the first half of (G5). Finally, (G6) holds since $g$ is smooth and
    \[
        g'=(n-1)\big(\lambda^2\cosh^{-2}(\lambda r)-r^{-2}\big)\to0,\qquad q'=\tfrac{1}{2}(gg'+g'')=O(r^{-2}),\qquad q''=O(r^{-3}).
    \]

    For the concavity put $x=\lambda r$, $t=\tanh x$: then $\ell=\frac{1}{g}=\frac{1}{(n-1)\lambda}\,\frac{x}{1+xt}$, and a direct computation (with $\frac{dt}{dx}=1-t^2$) gives
    \[
        \ell''(r)=-\frac{2\lambda}{n-1}\,\frac{\Theta(x)}{(1+xt)^3},\qquad
        \Theta(x)\defeq t+x(1-t^2)\big(2-xt-x^2\big).
    \]
    If $x^2+xt\le2$ then $\Theta\ge t>0$. If $x^2+xt>2$, then, since $t\le1$ and $1-t^2=\cosh^{-2}x$, we have $x(1-t^2)(x^2+xt-2)\le x(x^2+x-2)\cosh^{-2}x$. In this case $\Theta>0$ follows from $x^3+x^2-2x<\tanh x\cosh^2x=\frac{1}{2}\sinh2x$. Indeed, $\frac{1}{2}\sinh2x\ge x+\frac{2}{3}x^3+\frac{2}{15}x^5$, and
    \begin{align*}
        x+\frac{2}{3}x^3+\frac{2}{15}x^5-(x^3+x^2-2x)
        &=x\Big(3-x-\frac{x^2}{3}+\frac{2}{15}x^4\Big)\\
        &\ge\frac{2x}{15}\Big(x^4-\frac{25}{4}x^2+\frac{75}{4}\Big)>0,
    \end{align*}
    using $x\le\frac{x^2+1}{2}$ and the negativity of the discriminant $\frac{625}{16}-75$. Hence $\ell''<0$ on $(0,\infty)$, and $(\Phi)$ follows from Lemma~\ref{lem:concave-reciprocal}(ii), as $g$ is not eventually constant.
\end{proof}

\begin{remark}[When $(\Phi)$ fails]\label{rem:phi-fails}
    Suppose that $g\in C^1$ is nondecreasing on $(r_0,\infty)$ and not constant on any interval $(r,\infty)$ with $r\ge r_0$. Then the integration by parts in the proof of Lemma~\ref{lem:concave-reciprocal}(ii) gives
    \[
        D(r)-1=g(r)\,e^{G(r)}\int_r^\infty\Big(\frac1g\Big)'e^{-G}\,ds<0\qquad(r\ge r_0),
    \]
    where the boundary term vanishes because $1/g$ is bounded on $(r_0,\infty)$ and $e^{-G}\to0$. Hence $(\Phi)$ fails. In particular, the expanding examples $g=\frac{n-1}{r}+r^b$ ($b>0$) of Section~\ref{sec:manifolds} do not satisfy $(\Phi)$ (nor (G5)). For $n=3$ and $b=1$ the exact primitive is $G(s)=2\log s+\frac{s^2-1}{2}$, and one computes that $D$ decreases from $D(0^+)=2$ and crosses $1$ at $r\approx1.16$. Then $D$ attains its minimum $D\approx0.9423$ at $r\approx2.24$ and returns to $1$ from below. Correspondingly, $\varphi'$ changes sign. Uniqueness for such $g$ is not covered by the present method.
\end{remark}

\begin{remark}[Geometric interpretation]\label{rem:phi-geometry}
    Let $g=g_\psi$. By the converse part of Lemma~\ref{lem:phi-consequences}, strict log-concavity of $\psi$ already gives $D>1$, the first half of $(\Phi)$. Both space forms have $g'<0$. The second half of $(\Phi)$, $D$ nonincreasing, holds under \eqref{eq:phi-criterion} (Lemma~\ref{lem:phi-criterion}), which by Lemma~\ref{lem:concave-reciprocal} is the concavity of $1/g_\psi=\psi/((n-1)\psi')$, i.e.\ the monotonicity of $\psi\psi''/\psi'^2=-K_{\mathrm{rad}}(\psi/\psi')^2$. This is not a monotonicity condition on $K_{\mathrm{rad}}$ itself: in Lemma~\ref{lem:variable-curvature} the radial curvature increases from $-3\lambda^2$ to $-\lambda^2$, and $1/g_\psi$ is nevertheless concave. For the models, $1/g_\psi$ is $\frac{r}{n-1}$ and $\frac{\tanh(\lambda r)}{(n-1)\lambda}$.
\end{remark}

%% file: sections/12-basic-properties.tex
\section{Basic properties of solutions for the model nonlinearity}\label{sec:basic-properties}

We collect the elementary properties of solutions, now for the model nonlinearity \eqref{eq:model-f}. Throughout this section we use only (G1), (G2), (G4), (G5) and, in Lemma~\ref{lem:band-dissipation}, $(\Phi)$, together with the following properties of $f$:
\begin{itemize}
    \item $f$ is odd and locally Lipschitz,
    \item $f<0$ on $(0,1)$ and $f>0$ on $(1,\infty)$, with $f$ increasing on $[1,\infty)$,
    \item $F<0$ on $(0,\alpha_*)$ and $F>0$ on $(\alpha_*,\infty)$,
    \item $f'(0)=-1<0$,
    \item $F(u)\ge F(1)>-\infty$.
\end{itemize}
Recall the energy of Definition~\ref{def:energy},
\begin{equation}\label{eq:energy-model}
    E(r)=\tfrac12 u'(r)^2+F(u(r)), \qquad E'=-gu'^2\le 0,
\end{equation}
which is strictly decreasing along nonconstant solutions, as shown after \eqref{eq:energy-decay}. We also recall that zeros and critical points of a nonconstant solution are simple, in the sense that $(u,u')\neq(0,0)$ everywhere (by uniqueness for the ODE away from $r=0$), and that $u\equiv 1$ if and only if $\alpha=1$. The trapping property of Proposition~\ref{prop:trapping} also holds when $E(r_0)=0$ and $u(r_0)\ne0$: then $E(r_1)<0$ for every $r_1>r_0$, and $u$ cannot vanish on $(r_0,r_1]$, since $E\ge0$ at a zero. Hence a solution with $E(r_0)\le0$ at some $r_0$ is not a bound state and has no zeros after $r_0$.

\begin{proposition}[Critical points]\label{prop:critical-points}
    Let $u$ be nonconstant with $u(0)=\alpha>0$.
    \begin{enumerate}[label=(\roman*)]
        \item At every critical point $c$ with $E(c)>0$ one has $|u(c)|>\alpha_*$, $f(u(c))u(c)>0$, and $c$ is a strict local extremum of $|u|$ (a local maximum of $u$ if $u(c)>0$, a local minimum if $u(c)<0$).
        \item If $c<\tilde c$ are critical points with $E(\tilde c)>0$, then $|u(\tilde c)|<|u(c)|$. Moreover, if $\alpha\ge 1$ then $\|u\|_\infty=\alpha$.
        \item Let $z_k$ be the $k$-th zero (so that $E>0$ on $(0,z_k]$, since $E(z_k)=\frac12u'(z_k)^2>0$ and $E$ is nonincreasing). Then between consecutive zeros there is exactly one critical point, $u$ has exactly the critical points $0=c_0<c_1<\cdots<c_{k-1}$ below $z_k$ with $c_i\in(z_i,z_{i+1})$ for $1\le i\le k-1$, and $\alpha=|u(c_0)|>|u(c_1)|>\cdots>|u(c_{k-1})|>\alpha_*$.
        \item If $u$ is a bound state, then $\alpha>\alpha_*$ and $E>0$ on $[0,\infty)$ with $E(\infty)=0$. A ground state has $u'<0$ on $(0,\infty)$. A bound state with $k\ge 1$ zeros has exactly one critical point $c_k>z_k$ beyond its last zero, with $|u(c_k)|>\alpha_*$, and $|u|$ is strictly decreasing on $(c_k,\infty)$.
    \end{enumerate}
\end{proposition}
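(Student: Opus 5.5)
The whole proof will rest on two facts recalled just before the statement. The energy $E$ is strictly decreasing along nonconstant solutions. And at a critical point $c>0$ one has $u''(c)=-f(u(c))$, while at $c=0$ one has $u''(0)=-f(\alpha)/(\gamma+1)$ by Proposition~\ref{prop:strict-extrema}. The sign properties of the model $f$ listed at the start of the section then do the rest.

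\emph{Part (i).} At a critical point $c$ we have $E(c)=F(u(c))$. So $E(c)>0$ forces $F(u(c))>0$, which by the sign of $F$ means $|u(c)|>\alpha_*>1$. Since $f$ is odd and positive on $(1,\infty)$, this gives $f(u(c))u(c)>0$. Proposition~\ref{prop:strict-extrema} then shows that $c$ is a strict local maximum of $u$ if $u(c)>0$ and a strict local minimum if $u(c)<0$. In both cases $c$ is a strict local maximum of $|u|$.

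\emph{Part (ii).} Monotonicity of $E$ gives $F(u(\tilde c))=E(\tilde c)<E(c)=F(u(c))$, with equality excluded by strictness. Both values exceed $\alpha_*$ in absolute value, and $F$ is even and strictly increasing on $[\alpha_*,\infty)$, because $f>0$ there. Hence $|u(\tilde c)|<|u(c)|$.

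For the statement $\|u\|_\infty=\alpha$ when $\alpha\ge1$, the case $\alpha=1$ is the constant solution. For $\alpha>1$ we have $f(\alpha)>0$, so $0$ is a strict maximum, and Proposition~\ref{prop:strict-extrema} gives $u<\alpha$ for $r>0$. It remains to exclude $u(r)\le-\alpha$. Suppose $r_1$ is the first point with $u(r_1)=-\alpha$. Then $F(u(r_1))=F(\alpha)=E(0)>E(r_1)\ge F(u(r_1))$, which is a contradiction.

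\emph{Part (iii).} Since $E>0$ on $(0,z_k]$, part (i) applies to every critical point below $z_k$. On each interval $(z_i,z_{i+1})$, $u$ has one sign and $E>0$. By Lemma~\ref{lem:positive-critical} and its reversed-sign version, $u$ then has at most one critical point there, and Rolle's theorem gives at least one. On $(0,z_1)$ the same lemma, applied at $c=0$ (a maximum with $E(0)>0$), excludes any further critical point. The strictly decreasing chain of $|u(c_i)|$ follows from (ii), and the lower bound $\alpha_*$ from (i).

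\emph{Part (iv).} Take a bound state. Since $u\to0$, the argument of Lemma~\ref{lem:no-gaps} shows that $u'^2$ has a limit, which must be $0$, so $E(\infty)=0$. Strict decrease of $E$ then gives $E>0$ everywhere, and $\alpha>\alpha_*$ follows from $F(\alpha)=E(0)>0$. After the last zero $z_k$ (or after $0$ if $k=0$), Lemma~\ref{lem:positive-critical} allows at most one critical point, and $|u|$ is monotone on each side of it. At least one exists: just after $z_k$, $|u|$ increases, yet it must return to $0$. For a ground state, $0$ is already a maximum, so there is no further critical point and $u'<0$. The bound $|u(c_k)|>\alpha_*$ comes from (i).

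The only mildly delicate points are the $c=0$ cases and the existence of $c_k$. Neither is a real obstacle.
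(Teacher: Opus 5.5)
Your proof is correct and follows the paper's argument essentially step by step: in (i) you use $E(c)=F(u(c))>0$ together with Proposition~\ref{prop:strict-extrema}, in (ii) the strict decrease of $E$ and the monotonicity of $F$ beyond $\alpha_*$, in (iii) Rolle's theorem and Lemma~\ref{lem:positive-critical} (including its application at $c=0$), and in (iv) the forced critical point after $z_k$. The differences are minor. For $\|u\|_\infty=\alpha$ you bound $u$ above by Proposition~\ref{prop:strict-extrema} and rule out $u=-\alpha$ by the energy, where the paper splits into $|u|\ge1$ and $|u|<1$. In (iv) you first get $E(\infty)=0$ and then deduce $E>0$ from strict monotonicity, as in the proof of Lemma~\ref{lem:no-gaps}, whereas the paper first excludes $E\le0$ by trapping.
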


\begin{proof}
    (i) For $c>0$ this is Lemma~\ref{lem:positive-critical}, with $\beta_F=\alpha_*$: the inequality is strict since $F(u(c))=E(c)>0$. For $c=0$ use $u''(0)=-f(\alpha)/(\gamma+1)$.

    (ii) $F(u(\tilde c))=E(\tilde c)<E(c)=F(u(c))$, and $E(\tilde c)>0$ forces $E(c)>0$, so both critical values exceed $\alpha_*$ in absolute value by (i). The strict monotonicity of $F$ on $(\alpha_*,\infty)$ and the evenness of $F$ then give $|u(\tilde c)|<|u(c)|$. For the supremum: for $r>0$ one has $F(u(r))\le E(r)<E(0)=F(\alpha)$. If $|u(r)|\ge1$, then, $F$ being even and strictly increasing on $[1,\infty)$, this forces $|u(r)|<\alpha$, while if $|u(r)|<1$ then $|u(r)|<1\le\alpha$. Hence $\|u\|_\infty=\alpha$.

    (iii) Between consecutive zeros $z_i<z_{i+1}$ there is at least one critical point by Rolle's theorem, and at most one by Lemma~\ref{lem:positive-critical}, applied to $\pm u$ on $(z_i,z_{i+1})$, where $E>0$. There is no critical point in $(0,z_1)$: since $E(0)=F(\alpha)>0$, $0$ is a strict local maximum of $u$ by (i), and a critical point $c\in(0,z_1)$ would be one as well, so $u$ would have a local minimum in $(0,c)$, which is a critical point and contradicts (i). Then $|u(c_i)|>\alpha_*$ by (i), and the chain of inequalities is (ii).

    (iv) If $E(r_0)\le 0$ at some $r_0$ then $u(r_0)\neq 0$ (otherwise $E(r_0)=\frac12 u'(r_0)^2$ forces $(u,u')(r_0)=0$, so $u\equiv 0$), and trapping (Proposition~\ref{prop:trapping} and the remark after \eqref{eq:energy-model}) contradicts $u\to 0$. Hence $E>0$ everywhere. In particular $E(0)=F(\alpha)>0$, that is $\alpha>\alpha_*$. Now $E$ decreases to a limit $E(\infty)\ge 0$. Along the tail $F(u)\to 0$, so $u'^2\to 2E(\infty)$, and if $E(\infty)>0$ then $|u'|\ge c>0$ eventually, which is incompatible with $u\to 0$, thus $E(\infty)=0$. For a ground state, $u'<0$ just after $0$ since $f(\alpha)>0$. An interior critical point would then, by (i), be a strict local maximum of $u$, so before it there would be a local minimum, also a critical point, contradicting (i). Hence $u'<0$ throughout. For $k\ge 1$: $u'(z_k)\neq 0$ drives $|u|$ up after $z_k$ while $u\to 0$, so there is a critical point beyond $z_k$, and (i), applied as in the proof of (iii) on the sign-definite interval $(z_k,\infty)$, shows it is unique. Hence $|u|$ decreases strictly on $(c_k,\infty)$.
\end{proof}

\subsection{Passage times and dissipation across the band}\label{sec:passage-times}
The boundedness of $g$ in (G5) is used in the dissipation arguments through Lemma~\ref{lem:passage-times}, through Lemma~\ref{lem:phi-consequences}(ii) in Proposition~\ref{prop:classification}, and through the bound $\sup m<\infty$ (Section~\ref{sec:tail}) in Lemma~\ref{lem:jump}. Lemma~\ref{lem:band-dissipation} uses $(\Phi)$ instead. The two lemmas of this subsection replace, for a general $g$, the explicit bookkeeping of the power of $r$ in the Euclidean case. Define:
\begin{gather*}
    u_1\defeq \frac{1+\alpha_*}{2}\in(1,\alpha_*), \qquad f_{\min}\defeq f(u_1)>0,\\
    \delta\in(0,1)\text{ with }\delta^{p-1}\le\frac{p+1}{4}, \qquad c_\delta\defeq \min_{\delta\le|u|\le u_1}\big(-F(u)\big)>0,
\end{gather*}
so that $-F(u)\ge u^2/4$ for $|u|\le\delta$, and for a solution $u(\cdot,\alpha)$ let $S\defeq \sqrt{2(E(0)-F(1))}$, so that $|u'|\le S$ everywhere.

\begin{lemma}[Passage times]\label{lem:passage-times}
    Assume \textup{(G1)}, \textup{(G2)}, and let $g\le M_0$ on $[R_1,\infty)$ (the boundedness half of \textup{(G5)}; neither \textup{(G4)} nor the integrability of $e^{-G}$ is used). Let $u$ be a nonconstant solution and $[a,b]\subset[R_1,\infty)$ an interval on which $E\ge 0$ and $E(b)>0$. Then:
                \begin{enumerate}[label=(\roman*)]
                    \item on any subinterval $I\subset[a,b]$ on which $|u|\le\delta$, one has $u'^2\ge 2E(b)+u^2/2$, hence $u$ is monotone on $I$ and $|I|\le C_0\big(1+\log^+(1/E(b))\big)$, with $C_0$ depending only on $p$.
                    \item on any subinterval on which $\delta\le|u|\le u_1$, $u$ is monotone and the subinterval has length at most $(u_1-\delta)/\sqrt{2c_\delta}$.
                    \item on any subinterval on which $|u|\ge u_1$ and which contains at most one critical point, the length is at most $S/f_{\min}+1/M_0+2M_0\alpha/f_{\min}$.
                \end{enumerate}
                Consequently, if $z<z'$ are consecutive zeros of $u$ with $z\ge R_1$, then
                \begin{equation}\label{eq:zero-spacing}
                    z'-z\;\le\;T_0\big(1+\log^+(1/E(z'))\big),
                \end{equation}
                and if $\zeta\ge R_1$ is a zero followed by a critical point $c'$ with no zero in $(\zeta,c']$ and $E\ge 0$ on $[\zeta,c']$, then
    \begin{equation}\label{eq:ascent-time}
        c'-\zeta\;\le\;T_0\big(1+\log^+(1/E(c'))\big),
    \end{equation}
    where $T_0$ depends only on $p$, $M_0$ and $\alpha$.
\end{lemma}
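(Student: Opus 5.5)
The three parts use three different mechanisms, all relying only on the energy identity \eqref{eq:energy-model}, the monotonicity $E'\le0$, and the bound $g\le M_0$ on $[R_1,\infty)$. Throughout, $E\ge E(b)>0$ on $[a,b]$. We prove (i)--(iii) separately and then glue them into \eqref{eq:zero-spacing} and \eqref{eq:ascent-time} by decomposing the relevant interval according to the value of $|u|$.

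For (i), on $I$ we have $\frac12u'^2=E-F(u)\ge E(b)+u^2/4$ by the choice of $\delta$. This gives $u'^2\ge2E(b)+u^2/2>0$, so $u'$ does not vanish and $u$ is monotone on $I$. Parametrizing by $u$, the length is
\[
|I|\le\int_{-\delta}^{\delta}\frac{du}{\sqrt{2E(b)+u^2/2}},
\]
which is $O(1+\log^+(1/E(b)))$ by the explicit $\arcsinh$ primitive, with constant depending on $\delta$, hence only on $p$.

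For (ii), the same identity gives $u'^2\ge-2F(u)\ge2c_\delta$. So $u'$ again has a fixed sign, and the length is at most $(u_1-\delta)/\sqrt{2c_\delta}$.

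For (iii), the difficulty is that $u'$ may vanish at the critical point. Without loss of generality take $u\ge u_1$, so $f(u)\ge f_{\min}$ since $f$ is increasing on $[1,\infty)$. On the subinterval, $u''=-gu'-f(u)$. Split at the critical point $c$, if there is one, into a part where $u'>0$ (before $c$) and a part where $u'<0$ (after $c$).

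On the part with $u'<0$, set $y=-u'\ge0$. Then $y'=-gy+f(u)\ge f_{\min}-M_0y$. If $y\le f_{\min}/(2M_0)$, then $y'\ge f_{\min}/2$, so within time $1/M_0$ the quantity $y$ either exceeds $f_{\min}/(2M_0)$ or the interval ends; the time spent in this regime is therefore at most of order $1/M_0$. Once $y\ge f_{\min}/(2M_0)$, $y$ stays above this level, and $u$ drops at that rate. Since $u$ moves within $[u_1,\alpha]$ and $\|u\|_\infty\le\alpha$, this part lasts at most $2M_0\alpha/f_{\min}$.

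On the part with $u'>0$, we have $u''\le-f_{\min}$ while $u'>0$, because $gu'>0$. Since $|u'|\le S$, this part lasts at most $S/f_{\min}$. Summing the pieces gives the stated bound. The exact constants may need slight adjustment; if so, I would absorb them into $T_0$.

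Consequences. Between consecutive zeros $z<z'$, $E\ge E(z')>0$. The set $\{|u|\le\delta\}$ within $[z,z']$ consists of at most two intervals: near each endpoint $u$ is monotone and $|u|$ is increasing or decreasing there. Similarly, by monotonicity away from the unique critical point (Proposition~\ref{prop:critical-points}(iii)), $\{\delta\le|u|\le u_1\}$ has at most two components, and $\{|u|\ge u_1\}$ is one interval containing at most one critical point. Applying (i)--(iii) with $b=z'$ gives \eqref{eq:zero-spacing}. For \eqref{eq:ascent-time} the argument is the same with $b=c'$ and at most one band of each type. One caveat: if $E(c')>0$ fails, the bound is trivial with $\log^+(1/0)=\infty$.

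The main obstacle is (iii), namely controlling the time near a turning point using only the boundedness of $g$; the $y$-argument above handles it.
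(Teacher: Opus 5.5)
Your proof is correct and follows the paper's argument essentially step by step: the same bounds in (i) and (ii), the same ascent/descent split in (iii) with the threshold $f_{\min}/(2M_0)$, and the same decomposition of $(z,z')$ into two intervals of type (i), two of type (ii) and one of type (iii). The one justification to add is that $\|u\|_\infty\le\alpha$ comes from Proposition~\ref{prop:critical-points}(ii), which applies because $F(\alpha)=E(0)>E(b)>0$ forces $\alpha>\alpha_*>1$.
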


\begin{proof}
    (i) On $I$, $u'^2=2(E-F(u))\ge 2E(b)+u^2/2$, so $u'\neq 0$ and $u$ is monotone. Hence
    \begin{align*}
        |I|=\int_{u(I)}\frac{du}{|u'|}
        \le\int_{-\delta}^{\delta}\frac{du}{\sqrt{2E(b)+u^2/2}}
         & =2\sqrt{2}\arcsinh\frac{\delta}{2\sqrt{E(b)}} \\
         & \le C_0\big(1+\log^+(1/E(b))\big),
    \end{align*}
    using $\arcsinh x\le\log(1+2x)$ and $\delta\le 1$.

    (ii) On a subinterval on which $\delta\le|u|\le u_1$ we have $u'^2\ge-2F(u)\ge 2c_\delta$, so $u$ is monotone and the length is at most $(u_1-\delta)/\sqrt{2c_\delta}$.

    (iii) Without loss of generality, we may assume $u\ge u_1>1$, so $f(u)\ge f_{\min}$. Split the interval at the critical point (if any) into an ascent ($u'>0$) and a descent ($u'<0$). On the ascent $u''=-gu'-f(u)\le-f_{\min}$, so $u'$ decreases at rate at least $f_{\min}$ from a value $\le S$: this makes the length of the ascent at most $S/f_{\min}$. On the descent $u''=g|u'|-f(u)\le M_0|u'|-f_{\min}$. While $|u'|\le f_{\min}/(2M_0)$ we have $u''\le-f_{\min}/2$, so $|u'|$ reaches $f_{\min}/(2M_0)$ within time $1/M_0$, and it cannot fall below this threshold afterward (at the threshold $u''\le-f_{\min}/2<0$). Since $F(\alpha)=E(0)>E(b)>0$ gives $\alpha>\alpha_*>1$, Proposition~\ref{prop:critical-points}(ii) yields $u\le\alpha$, and the descent lasts at most $1/M_0+2M_0\alpha/f_{\min}$.

    For \eqref{eq:zero-spacing}: on $(z,z')$ the solution has one sign, $E>0$ (as $E(z')=\frac12 u'(z')^2>0$), and, by Proposition~\ref{prop:critical-points}(iii), exactly one critical point $c$, with $|u(c)|>\alpha_*>u_1$, so that $|u|$ increases on $(z,c)$ and decreases on $(c,z')$. Hence $(z,z')$ is covered by two intervals of type (i), two of type (ii) and one of type (iii), and $E\ge E(z')$ on all of them. Summing the three bounds gives \eqref{eq:zero-spacing}. For \eqref{eq:ascent-time} the same decomposition of the ascent $(\zeta,c']$ has one interval of each type, and $E\ge E(c')$ on $(\zeta,c']$.
\end{proof}

\begin{lemma}[Dissipation across the band]\label{lem:band-dissipation}
    Assume \textup{(G1)}, \textup{(G2)}, \textup{(G4)}, $\int^\infty e^{-G}<\infty$ and $(\Phi)$. Set $\fc_1\defeq \min\{-F(u): \frac{\alpha_*}{2}\le|u|\le\frac{3\alpha_*}{4}\}>0$ and $\fc_2\defeq \alpha_*/(4S)$. Let $J\subset(0,\infty)$ be a bounded interval on which $E\ge 0$ and on which $|u|$ runs monotonically from $\frac{\alpha_*}{2}$ to $\frac{3\alpha_*}{4}$ or back \emph{(a band crossing)}. Then $|J|\ge \fc_2$ and
    \begin{equation}\label{eq:band-dissipation}
        E(\inf J)-E(\sup J)\;\ge\;\int_J g\,u'^2\;\ge\;2\fc_1\fc_2\,\frac{\gamma-1}{\sup J}.
    \end{equation}
\end{lemma}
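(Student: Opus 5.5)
The plan is to combine three ingredients: a lower bound on $u'^2$ on the band, the a priori bound $|u'|\le S$, and the lower bound $g>(\gamma-1)/r$ of Lemma~\ref{lem:phi-consequences}(i). The first inequality in \eqref{eq:band-dissipation} is immediate. Integrating $E'=-gu'^2$ over $J$ gives $E(\inf J)-E(\sup J)=\int_J gu'^2$, so the content is the second inequality, together with the length bound $|J|\ge\fc_2$.

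\emph{Length.} Since $|u|$ runs monotonically between $\frac{\alpha_*}{2}$ and $\frac{3\alpha_*}{4}$ on $J$, the total variation of $u$ over $J$ is exactly $\frac{\alpha_*}{4}$. The energy is nonincreasing, so
\[
\tfrac12u'^2=E-F(u)\le E(0)-F(1),
\]
that is $|u'|\le S$. Hence $\frac{\alpha_*}{4}\le S|J|$, which gives $|J|\ge\alpha_*/(4S)=\fc_2$.

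\emph{Lower bound on $u'^2$.} On $J$ we have $\frac{\alpha_*}{2}\le|u|\le\frac{3\alpha_*}{4}$, so $-F(u)\ge\fc_1$. Combined with $E\ge0$ on $J$, this gives
\[
u'^2=2(E-F(u))\ge2\fc_1 \quad\text{on } J.
\]
(The constant $\fc_1$ is positive because $F<0$ on $(0,\alpha_*)$ and the band is compact inside it.)

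\emph{Damping.} Under $(\Phi)$, Lemma~\ref{lem:phi-consequences}(i) gives $g(r)>(\gamma-1)/r$. Its proof uses only $g\varphi>\gamma-1$ and $\varphi\le r$ from Lemma~\ref{lem:phi-basic}, so the boundedness of $g$ is not needed, consistent with the stated hypotheses. Since $J$ is bounded and contained in $(0,\infty)$, for $r\in J$ we get $g(r)>(\gamma-1)/r\ge(\gamma-1)/\sup J$.

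Multiplying the two pointwise bounds and integrating over $J$ yields
\[
\int_J gu'^2\ge2\fc_1\,\frac{\gamma-1}{\sup J}\,|J|\ge2\fc_1\fc_2\,\frac{\gamma-1}{\sup J}.
\]

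I expect no real obstacle. The only points to check are that $J$ is an honest band crossing, so that the total variation of $u$ on $J$ is exactly $\alpha_*/4$ (the monotonicity of $|u|$ on $J$ guarantees this), and that $S$ is defined from the same solution, as in the definition before Lemma~\ref{lem:passage-times}.
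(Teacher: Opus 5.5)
Your proof is correct and is the paper's argument: the length bound $|J|\ge\fc_2$ from $|u'|\le S$, the pointwise bound $u'^2\ge2\fc_1$ from $E\ge0$, and $g>(\gamma-1)/r$ from Lemma~\ref{lem:phi-consequences}(i), integrated over $J$. You also point out that this last bound needs only $g\varphi>\gamma-1$ and $\varphi\le r$, not the boundedness of $g$ assumed in that lemma's statement; this is a fair point, and the paper leaves it implicit.
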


\begin{proof}
    $|u|$ changes by $\alpha_*/4$ on $J$ and $|u'|\le S$, so $|J|\ge \fc_2$. On $J$, $u'^2=2(E-F(u))\ge-2F(u)\ge 2\fc_1$, and $g>(\gamma-1)/r\ge(\gamma-1)/\sup J$ by Lemma~\ref{lem:phi-consequences}(i). Integrating $E'=-gu'^2$ over $J$ gives \eqref{eq:band-dissipation}.
\end{proof}

Every solution that leaves a zero $z$ and reaches a critical point $c$ with $|u(c)|>\alpha_*$ performs a band crossing in $(z,c)$, and every solution that leaves such a critical point and reaches a zero (or tends to $0$) performs one on the way down: the two lemmas will be applied repeatedly to these crossings.

%% file: sections/13-pohozaev.tex
\section{Pohozaev identities with variable effective dimension}\label{sec:pohozaev}

Throughout Sections~\ref{sec:pohozaev} to~\ref{sec:staircase} we assume (G1)--(G6), $(\Phi)$ and \eqref{eq:p-range}, and only additional hypotheses, or weaker ones where this matters, are stated in the results. Here $u$ is a nonconstant solution of \eqref{eq:ode}. Recall $\nu=2\varphi'+\gamma-1$ and set
\begin{gather}
    P:=e^G\Big[\varphi\big(u'^2+2F(u)\big)+(\gamma-1)uu'\Big], \qquad P_1:=P+\varphi\,e^Gh_1(u), \qquad h_1(u):=uf(u)-2F(u),
    \label{eq:pohozaev}\\
    \omega:=-\frac{\varphi\,u'}{u} \qquad (\text{where } u\neq 0). \label{eq:omega}
\end{gather}

\begin{lemma}[Identities]\label{lem:pohozaev-identities}
    With the notation above, under \textup{(G1)}, \textup{(G4)} and $\int^\infty e^{-G}<\infty$ alone,
    \begin{gather}
        P'=e^Gh_r(u), \qquad h_r(u):=2\nu(r)F(u)-(\gamma-1)uf(u)=\frac{4\varphi'-(p-1)(\gamma-1)}{p+1}|u|^{p+1}-2\varphi'u^2,
        \label{eq:pohozaev-identity}\\
        h_1(u)=\frac{p-1}{p+1}|u|^{p+1}\;\ge\;0, \qquad \omega'=\frac{P_1}{e^Gu^2} \quad (u\neq 0). \label{eq:omega-prime}
    \end{gather}
\end{lemma}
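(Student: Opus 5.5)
The plan is a direct differentiation of $P$, followed by the elementary computation for $h_1$ and the quotient rule for $\omega$. The only inputs are the equation \eqref{eq:ode} in the form $u''=-gu'-f(u)$, the multiplier identity $\varphi'=g\varphi-(\gamma-1)$ of \eqref{eq:phi-ode}, and $(e^G)'=ge^G$. No hypothesis beyond those making $\varphi$ well defined and $C^1$ enters, which is why (G1), (G4) and $\int^\infty e^{-G}<\infty$ suffice (Lemma~\ref{lem:phi-basic}).

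For \eqref{eq:pohozaev-identity}, write $P=e^GX$ with $X=\varphi(u'^2+2F(u))+(\gamma-1)uu'$, so that $P'=e^G(gX+X')$. Next we compute
\[
X'=\varphi'(u'^2+2F)+2\varphi u'(u''+f(u))+(\gamma-1)(u'^2+uu''),
\]
and substitute $u''+f=-gu'$ and $uu''=-guu'-uf$. The terms are then sorted by monomial.
\begin{itemize}
\item The $u'^2$ coefficient is $g\varphi+\varphi'-2g\varphi+(\gamma-1)=\varphi'-g\varphi+\gamma-1=0$ by \eqref{eq:phi-ode}.
\item The $uu'$ terms cancel exactly: $g(\gamma-1)-(\gamma-1)g=0$.
\item The $F$ terms give $2(g\varphi+\varphi')F=2(2\varphi'+\gamma-1)F=2\nu F$, again by \eqref{eq:phi-ode}.
\item What remains is $-(\gamma-1)uf$.
\end{itemize}
This yields $P'=e^Gh_r(u)$.

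The explicit form of $h_r$ follows by inserting $F=|u|^{p+1}/(p+1)-u^2/2$ and $uf=|u|^{p+1}-u^2$. The $u^2$ coefficient is $-\nu+(\gamma-1)=-2\varphi'$. The $|u|^{p+1}$ coefficient is
\[
\frac{2\nu-(\gamma-1)(p+1)}{p+1}=\frac{4\varphi'-(p-1)(\gamma-1)}{p+1}.
\]
The identity for $h_1$ is the same substitution: $uf-2F=|u|^{p+1}\bigl(1-\tfrac{2}{p+1}\bigr)$, with the $u^2$ terms cancelling.

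For $\omega'$, wherever $u\ne0$ we use the quotient rule:
\[
\omega'=-\frac{(\varphi u')'u-\varphi u'^2}{u^2}.
\]
Here $(\varphi u')'=\varphi'u'-g\varphi u'-\varphi f=-(\gamma-1)u'-\varphi f$. Hence
\[
e^Gu^2\omega'=e^G\bigl[(\gamma-1)uu'+\varphi uf+\varphi u'^2\bigr].
\]
Writing $\varphi uf=2\varphi F+\varphi h_1$, this becomes $P+\varphi e^Gh_1=P_1$.

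There is no real obstacle; the only care needed is the bookkeeping in the $u'^2$ coefficient. That coefficient is the single place where the defining ODE of $\varphi$ is essential, and it is what replaces the Euclidean cancellation $(\gamma+1)-2\gamma+(\gamma-1)=0$ from Lemma~\ref{lem:limit-zeros}.
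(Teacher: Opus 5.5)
Your computation is correct and is essentially the paper's proof. You use $u''=-gu'-f(u)$, $(e^G)'=ge^G$ and $\varphi'=g\varphi-(\gamma-1)$ to cancel the $u'^2$ and $uu'$ terms and turn the $F$ terms into $2\nu F$, and you handle $\omega'$ by the same quotient rule and the splitting $\varphi uf=2\varphi F+\varphi h_1$. The paper only arranges the first step differently, writing $P=Au'^2+2AF+Buu'$ with $A=\varphi e^G$, $B=(\gamma-1)e^G$ and checking $B'=gB$ and $A'-2Ag+B=0$, which is the same bookkeeping.
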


\begin{proof}
    Direct computation using \eqref{eq:ode} and $\varphi'=g\varphi-(\gamma-1)$ (see Appendix~\ref{sec:identity-table}).

    For \eqref{eq:pohozaev-identity}, write $P=Au'^2+2AF(u)+Buu'$ with $A=\varphi e^G$ and $B=(\gamma-1)e^G$. The choice of $B$ kills the $uu'$ term because $B'=gB$. The choice of $A$ kills the $u'^2$ term because $A'-2Ag+B=e^G(\varphi'-g\varphi+\gamma-1)=0$. What remains is
    \[
        2A'F-Buf=e^G\big[2(2g\varphi+1-\gamma)F-(\gamma-1)uf\big],
    \]
    and $2g\varphi+1-\gamma=\nu$.

    For \eqref{eq:omega-prime},
    \[
        \omega'=(g\varphi-\varphi')\frac{u'}{u}+\frac{\varphi f(u)}{u}+\frac{\varphi u'^2}{u^2}
        =(\gamma-1)\frac{u'}{u}+\frac{\varphi f(u)}{u}+\frac{\varphi u'^2}{u^2},
    \]
    which is $P_1/(e^Gu^2)$.
\end{proof}

\begin{definition}[Moving Pohozaev level]\label{def:pohozaev-level}
    For $r>0$, define:
    \[
        \alpha^*(r):=
        \begin{cases}
            \Big[\dfrac{2(p+1)\varphi'(r)}{4\varphi'(r)-(p-1)(\gamma-1)}\Big]^{\frac{1}{p-1}},
                     & \text{if } 4\varphi'(r)>(p-1)(\gamma-1), \\[2ex]
            +\infty, & \text{otherwise.}
        \end{cases}
    \]
\end{definition}

\begin{lemma}[Level structure]\label{lem:level-structure}
    Under $(\Phi)$ and \eqref{eq:p-range}:
    \begin{enumerate}[label=(\roman*)]
        \item for each $r$, $h_r(u)>0$ if and only if $|u|>\alpha^*(r)$, and $h_r(u)<0$ for $0<|u|<\alpha^*(r)$.
        \item $r\mapsto\alpha^*(r)$ is nondecreasing, with $\alpha^*(0^+)=\big[\frac{2(p+1)}{(\gamma+3)-p(\gamma-1)}\big]^{\frac{1}{p-1}}=\alpha^*(0)\in(\alpha_*,\infty)$.
        \item along any interval on which $|u|$ is strictly decreasing, $r\mapsto h_r(u(r))$ changes sign at most once, and only from $+$ to $-$.
    \end{enumerate}
\end{lemma}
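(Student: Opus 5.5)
The three claims are elementary consequences of the explicit form of $h_r$ in \eqref{eq:pohozaev-identity} together with the monotonicity of $\varphi'$ from Lemma~\ref{lem:phi-basic}. The plan is to factor $h_r$ and read off its sign, then to view $\alpha^*$ as a monotone function of $\varphi'(r)$, and finally to compare two monotone functions of $r$.

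\emph{Step (i).} For $u\ne0$ I will write
\[
  h_r(u)=u^2\big(A(r)\,|u|^{p-1}-2\varphi'(r)\big),\qquad A(r):=\frac{4\varphi'(r)-(p-1)(\gamma-1)}{p+1}.
\]
Under $(\Phi)$ we have $\varphi'>0$ (Lemma~\ref{lem:phi-basic}), so $-2\varphi'(r)<0$.
\begin{itemize}
  \item If $A(r)\le0$, the bracket is negative for every $u\ne0$. Hence $h_r(u)<0$ for all $u\neq0$, which matches $\alpha^*(r)=+\infty$.
  \item If $A(r)>0$, the bracket is strictly increasing in $|u|$. It vanishes exactly when $|u|^{p-1}=2\varphi'/A$, and this value equals $\alpha^*(r)^{p-1}$ by Definition~\ref{def:pohozaev-level}.
\end{itemize}
Both sign statements in (i) follow; the value $u=0$ is excluded in the "if and only if" since $h_r(0)=0$.

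\emph{Step (ii).} Where $\alpha^*(r)$ is finite, I will rewrite
\[
  \alpha^*(r)^{p-1}=\frac{2(p+1)}{4-(p-1)(\gamma-1)/\varphi'(r)} .
\]
The denominator is increasing in $\varphi'$, so $\alpha^*$ is a decreasing function of $\varphi'$ wherever it is finite. The threshold condition $4\varphi'>(p-1)(\gamma-1)$ defines an upper set in $\varphi'$. Since $\varphi'$ is nonincreasing in $r$ by Lemma~\ref{lem:phi-basic}, the set of $r$ where $\alpha^*$ is finite is an initial interval. On that interval $\alpha^*$ is nondecreasing, and after it $\alpha^*=+\infty$; so $\alpha^*$ is nondecreasing on all of $(0,\infty)$ in the extended sense.

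For the limit at $0^+$, I use $\varphi'(0^+)=1$ from Lemma~\ref{lem:phi-basic}. It gives $4-(p-1)(\gamma-1)=(\gamma+3)-p(\gamma-1)$, which is positive by \eqref{eq:p-range}. Hence the limit is finite and equals the stated $\alpha^*(0)$. For the comparison with $\alpha_*$, recall $\alpha_*^{p-1}=\frac{p+1}2$. Then $\alpha^*(0)>\alpha_*$ is equivalent to $4>(\gamma+3)-p(\gamma-1)$, that is, to $(p-1)(\gamma-1)>0$. This holds since $p>1$ and $\gamma>1$ by (G4).

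\emph{Step (iii).} Let $I$ be an interval on which $|u|$ is strictly decreasing. Then $u$ vanishes at most at the right endpoint of $I$, so by (i) the sign of $h_r(u(r))$ on $I$ is the sign of $|u(r)|-\alpha^*(r)$, with the convention that this is negative when $\alpha^*(r)=\infty$. This difference is strictly decreasing in $r$ by (ii). Therefore $\{r\in I:h_r(u(r))>0\}$ is an initial subinterval of $I$, and the set where $h_r(u(r))<0$ is the complementary final piece. The only points excluded from both pieces are at most one point where $|u|=\alpha^*$ and a possible zero of $u$ at the right endpoint. So $h_r(u(r))$ changes sign at most once, and only from $+$ to $-$.

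There is no real obstacle in this lemma. The only points needing care are the bookkeeping of the case $\alpha^*=+\infty$ in (ii) and (iii), and the use of $\varphi'>0$, which is where $(\Phi)$ enters; I expect the whole proof to be routine.
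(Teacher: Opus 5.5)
Your proposal is correct and follows the paper's proof essentially step for step: the same factorization $h_r(u)=u^2\big(A(r)|u|^{p-1}-2\varphi'(r)\big)$ for (i), the monotonicity of $\alpha^*$ as a decreasing function of $\varphi'$ combined with $\varphi''\le0$ for (ii), and the observation that a strictly decreasing $|u|$ and a nondecreasing level cross at most once for (iii). One point is cosmetic only: $|u|-\alpha^*$ is not literally strictly decreasing where $\alpha^*=+\infty$, but your conclusion needs only that $\{r:|u(r)|>\alpha^*(r)\}$ is an initial subinterval, and that follows directly from the monotonicity of the two functions.
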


\begin{proof}
    (i) By \eqref{eq:pohozaev-identity}, $h_r(u)=u^2\big[\frac{4\varphi'-(p-1)(\gamma-1)}{p+1}|u|^{p-1}-2\varphi'\big]$. If $4\varphi'\le(p-1)(\gamma-1)$ both terms are nonpositive and the second is negative (as $\varphi'>0$), so $h_r<0$ for $u\neq 0$. Otherwise, the bracket is increasing in $|u|$ and vanishes exactly at $\alpha^*(r)$.

    (ii) Write $x=\varphi'(r)\in(0,1]$ and $c=(p-1)(\gamma-1)>0$. Then $\alpha^*=\big[2(p+1)x/(4x-c)\big]^{1/(p-1)}$ on $\{4x>c\}$, and $\frac{d}{dx}\frac{x}{4x-c}=\frac{-c}{(4x-c)^2}<0$, so $\alpha^*$ is decreasing in $x$. Since $x=\varphi'(r)$ is nonincreasing in $r$ by $(\Phi)$, $\alpha^*(r)$ is nondecreasing, with the convention $+\infty$ on $\{4\varphi'\le c\}$, which is a half-line by monotonicity of $\varphi'$. At $r=0^+$ we have $\varphi'\to 1$ and $4-(p-1)(\gamma-1)=(\gamma+3)-p(\gamma-1)>0$ by \eqref{eq:p-range}. The inequality $\alpha^*(0)>\alpha_*$ reduces to $\frac{2}{4-(p-1)(\gamma-1)}>\frac12$, that is $(p-1)(\gamma-1)>0$.

    (iii) $\{r:h_r(u(r))>0\}=\{r:|u(r)|>\alpha^*(r)\}$: a decreasing $|u(r)|$ and a nondecreasing level can cross at most once.
\end{proof}

\begin{proposition}[Positivity of $P$ and $P_1$, monotonicity of $\omega$]\label{prop:pohozaev-positive}
    Let $u$ be a ground state, a bound state, or a nodal solution with zeros $z_1<\cdots<z_k$ and $E>0$ on $(0,z_k]$. Then:
    \begin{enumerate}[label=(\roman*)]
        \item $\alpha>\alpha^*(0)$.
        \item $P>0$ on $(0,z_k]$, and for ground and bound states $P>0$ on $(0,\infty)$.
        \item $P_1\ge P>0$ on the same intervals, and consequently $\omega$ is strictly increasing on every subinterval of these intervals on which $u\neq 0$.
    \end{enumerate}
\end{proposition}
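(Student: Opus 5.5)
The plan is to study the Pohozaev function $P$ through the identity \eqref{eq:pohozaev-identity}, using the level structure of Lemma~\ref{lem:level-structure}. The argument for (ii) and (iii) goes through without (i), and (i) then follows from (ii).

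\emph{Behaviour at the origin.} First, $P(0^+)=0$: near $0$ we have $\varphi\sim r$ and $e^G\to0$, while $u,u'$ are bounded. The sign of $P'=e^Gh_r(u)$ is the sign of $|u|-\alpha^*(r)$ by Lemma~\ref{lem:level-structure}(i).

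\emph{The set where $P'<0$.} On the descent $(0,z_1)$, $|u|$ decreases; by Proposition~\ref{prop:critical-points}(iii) the first critical point after $0$ lies beyond $z_1$. On each subsequent phase $|u|$ first increases after a zero and then decreases. Since $\alpha^*$ is nondecreasing and the critical values satisfy $|u(c_i)|<\alpha$, $h_r(u(r))$ changes sign from $+$ to $-$ at most once on each decreasing stretch, by Lemma~\ref{lem:level-structure}(iii). If $\alpha\le\alpha^*(0)$, then $|u(r)|<\alpha\le\alpha^*(0)\le\alpha^*(r)$ for $r>0$ (Proposition~\ref{prop:critical-points}(ii)), so $P'<0$ on $(0,\infty)$ and $P<0$ everywhere. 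Hence (i) follows once we show $P(z_1)>0$, or $P>0$ somewhere for a ground state.

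\emph{Main step: $P$ at the zeros and at infinity.} Two evaluations are available.
\begin{itemize}
\item At a zero $z$ of $u$, $P(z)=e^G\varphi u'(z)^2>0$. So $P$ is positive at every zero directly.
\item For a ground or bound state, $P(r)\to0$ as $r\to\infty$. Here I would invoke the exponential decay of Lemma~\ref{lem:decay} (the forward reference noted in the overview): $e^G\varphi(u'^2+u^2)\to0$ because $e^{G}$ grows at most exponentially with rate $\sup g$, while $u$ decays with the Liouville--Green rate from (G6). This is the delicate point. I expect $u^2e^G\to0$ at the right rate, since $e^{-G/2}$ governs the decay after the substitution $u=e^{-G/2}z$.
\end{itemize}

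\emph{Positivity between consecutive zeros.} On an interval $(z_i,z_{i+1})$, $P'$ is negative exactly where $|u|<\alpha^*(r)$. On the increasing stretch of $|u|$, $P'$ can pass from $-$ to $+$ at most once, since $\alpha^*$ is monotone and $|u|$ increases. On the decreasing stretch it passes from $+$ to $-$ at most once. So $P$ on $[z_i,z_{i+1}]$ is: decreasing, then increasing, then decreasing. Its minimum over the interval is therefore attained at $z_i$ or at $z_{i+1}$, both of which are positive. Hence $P>0$ on $[z_i,z_{i+1}]$.

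\emph{The first descent and the tail.} On $(0,z_1)$, $P$ starts at $0$ and is first increasing, then decreasing, since $|u|$ is monotone there. With $P(z_1)>0$, this gives $P>0$ on $(0,z_1]$. Applying the argument above to the increasing stretch $(0,z_1)$ of $\alpha$ also shows the initial segment has $P'>0$. This requires $\alpha>\alpha^*(0)$, which is exactly (i): otherwise $P<0$ on $(0,\infty)$, contradicting $P(z_1)>0$.

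For ground states and the tail after $z_k$, $P$ on $(z_k,\infty)$ has the shape (possibly decreasing), increasing, decreasing to $0$. It is positive at $z_k$ and tends to $0$, so its infimum is $\min(P(z_k),0^+)$, and it stays positive. The decreasing final piece stays above its limit $0$, and the piece before it stays above its endpoint values. For the ground state, $P$ rises and then falls to $0$, so $P>0$. The same argument gives (i) for the ground state, since $P<0$ throughout would contradict $P\to0$ from below only if $P'<0$ forces a negative limit: $P<P(r_0)<0$.

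\emph{Part (iii).} This is immediate: $h_1\ge0$ and $\varphi e^G>0$ give $P_1\ge P$, and $\omega'=P_1/(e^Gu^2)>0$ by \eqref{eq:omega-prime}.
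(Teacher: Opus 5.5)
Your arguments for part (i), for the first descent $(0,z_1]$, for the ground state (where $|u|$ is decreasing on all of $(0,\infty)$) and for part (iii) are correct. The step between consecutive zeros, however, fails.

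First, on an ascent $(z_i,c_i)$ both $|u|$ and $\alpha^*(r)$ are nondecreasing. Lemma~\ref{lem:level-structure}(iii) therefore says nothing there, and your claim that $P'$ changes sign at most once is unjustified: two increasing functions can cross repeatedly. Only in the Euclidean case, where $\alpha^*$ is constant, does the claim follow.

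Second, and more seriously, even granting the shape ``decreasing, then increasing, then decreasing'' on $[z_i,z_{i+1}]$, the minimum of such a function is \emph{not} at the endpoints. The first turning point is an interior local minimum. And $P$ really does decrease just after every zero: there $0<|u|<\alpha^*(r)$, so $P'=e^Gh_r(u)<0$, and $P$ dips below $P(z_i)$. Values at $z_i$ and $z_{i+1}$ therefore cannot control it. The same error recurs in your treatment of the tail $(z_k,\infty)$ of a bound state with $k\ge1$, where the infimum is not $\min\{P(z_k),0\}$ for the same reason.

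The missing idea is a pointwise lower bound on the ascents, with no monotonicity needed. Wherever $uu'\ge0$,
\[
P=e^G\big[\varphi\big(u'^2+2F(u)\big)+(\gamma-1)uu'\big]\ \ge\ 2e^G\varphi E,
\]
and this is positive because $E>0$ on $(0,z_k]$ (everywhere for bound states). This gives $P>0$ on every ascent $[z_i,c_i]$, in particular $P(c_i)>0$. On each descent $(c_i,z_{i+1})$, $|u|$ is strictly decreasing, so the single-crossing structure makes $P$ increase and then decrease. Hence $P\ge\min\{P(c_i),P(z_{i+1})\}>0$ there. On the final descent $(c_k,\infty)$ of a bound state, $P$ increases from $P(c_k)>0$ and then strictly decreases to its limit $0$, so it stays positive. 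This is the route the paper takes. With it, your shape analysis is needed only on descents, where Lemma~\ref{lem:level-structure}(iii) does apply.
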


\begin{proof}
    Note first that $P(0^+)=0$: the bracket in \eqref{eq:pohozaev} is bounded as $r\downarrow 0$ while $e^G\to 0$ (Remark~\ref{rem:hyp-g}) and $e^G\varphi\to 0$ (Lemma~\ref{lem:phi-basic}). Also, at any zero $z$ of $u$,
    \begin{equation}\label{eq:pohozaev-zero}
        P(z)=e^{G(z)}\varphi(z)u'(z)^2>0,
    \end{equation}
    and wherever $uu'\ge 0$,
    \begin{equation}\label{eq:pohozaev-energy}
        P\;\ge\;2e^G\varphi E\;>\;0, \qquad \text{provided } E>0.
    \end{equation}
    For bound states $P(r)\to 0$ as $r\to\infty$, by Lemma~\ref{lem:decay}.

    (i) Suppose $\alpha\le\alpha^*(0)$. Proposition~\ref{prop:critical-points}(ii) applies, since $\alpha>\alpha_*\ge 1$ in all three solution classes (for nodal solutions, $E(0)>E(z_1)>0$ gives $F(\alpha)>0$). It gives $|u(r)|\le\alpha\le\alpha^*(0)\le\alpha^*(r)$ for all $r$, so $h_r(u(r))\le 0$ for all $r$. The inequality is strict wherever $0<|u(r)|<\alpha^*(r)$, in particular on a right neighborhood of $0$ if $\alpha<\alpha^*(0)$, and on any interval where $0<|u|<\alpha$ in the boundary case $\alpha=\alpha^*(0)$. Such intervals exist since $u$ is nonconstant and $\alpha^*$ is nondecreasing.
    Hence $P$ is nonincreasing, with $P(0^+)=0$, so $P\le 0$ everywhere. This contradicts \eqref{eq:pohozaev-zero} (with $z=z_1$) for nodal solutions. For ground and bound states, $P\le 0$, $P\to 0$ and $P$ nonincreasing force $P\equiv 0$, hence $P'\equiv 0$, contradicting the strict inequality $h_r(u(r))<0$ on some interval.

    (ii) We first prove $P>0$ on $(0,z_1]$. On $(0,z_1)$, $u>0$ is strictly decreasing (Proposition~\ref{prop:critical-points}) from $\alpha>\alpha^*(0)$ to $0$. By Lemma~\ref{lem:level-structure}(iii) there is a single crossing point $s_1\in(0,z_1)$ with $h_r(u(r))>0$ on $(0,s_1)$ and $<0$ on $(s_1,z_1)$, the first interval being nonempty by (i). Hence $P$ increases from $0$ on $(0,s_1]$, so $P>0$ on $(0,s_1]$, and decreases on $[s_1,z_1]$ toward $P(z_1)>0$ by \eqref{eq:pohozaev-zero}. Since $P$ is decreasing with a positive terminal value, $P>0$ on $[s_1,z_1]$ as well.

    Now we iterate over the phases. On the ascent $(z_i,c_i)$ we have $uu'\ge 0$ and $E>0$, so \eqref{eq:pohozaev-energy} gives $P>0$. On the descent $(c_i,z_{i+1})$, $|u|$ decreases strictly, so by Lemma~\ref{lem:level-structure}(iii) $h_r(u(r))$ is first $\ge 0$ and then $<0$, with at most one crossing: $P$ first increases from $P(c_i)>0$ and then decreases toward $P(z_{i+1})>0$, so in either regime $P$ stays above $\min\{P(c_i),P(z_{i+1})\}>0$. This proves $P>0$ on $(0,z_k]$. For a bound state the same argument applies on the final descent $(c_k,\infty)$, with $P(z_{i+1})>0$ replaced by $P(\infty)=0$ approached from above, and for a ground state it applies on all of $(0,\infty)$ by the single-crossing argument with $P\to 0$.

    (iii) $P_1-P=\varphi e^Gh_1(u)\ge 0$ by \eqref{eq:omega-prime}, and $\omega'=P_1/(e^Gu^2)>0$.
\end{proof}

\begin{remark}[The function $P_2$]\label{rem:P-two}
    With $A:=\varphi e^G$ one also has
    \[
        P_2:=A\Big[u'^2+\frac{\gamma-1}{\nu}uf(u)\Big]+(\gamma-1)e^Guu', \qquad \Big(\frac{P}{A}\Big)'=-\frac{\nu}{\varphi}\cdot\frac{P_2}{A},
    \]
    (identity \eqref{eq:id-P-over-A}), which reduces to $-[P/r^n]'=nP_2/r^{n+1}$ in the Euclidean case. Note that the natural generalization acquires the variable coefficient $\frac{\gamma-1}{\nu(r)}$. Its positivity on $(0,c_K]$ ($K$ as in Section~\ref{sec:later-phases}), the warped form of the monotonicity of $P/r^n$, is proved in Lemma~\ref{lem:P-two-positive}. This is only used in the pairing of Proposition~\ref{prop:pairing}.
\end{remark}

\subsection{The Pohozaev lower bound on the energy, and the classification of solutions}\label{sec:classification}
The function $P$ also bounds the energy from below at zeros and critical points (Lemma~\ref{lem:pohozaev-lower}). This is what makes the passage-time estimate \eqref{eq:zero-spacing} useful for the counting of zeros.

\begin{lemma}[Pohozaev lower bound]\label{lem:pohozaev-lower}
    Assume \textup{(G1)}, \textup{(G2)}, \textup{(G4)}, \textup{(G5)}, $(\Phi)$ and \eqref{eq:p-range}. Let $\fc_2$ be as in Lemma~\ref{lem:band-dissipation} and set
    \[
        \fc_3:=\frac{(p-1)(\gamma-1)}{p+1}\Big(\frac{\alpha_*}{2}\Big)^{p+1}>0.
    \]
    Let $[a,b]$ be an interval on which $|u|<\alpha^*(0)$, with $P(b)\ge 0$, and let $J\subset[a,b]$ be a band crossing. Then
    \[
        P(a)\;\ge\;\fc_3\fc_2\,e^{G(\inf J)}.
    \]
    In particular, if $a$ is a zero or a critical point of $u$, then $P(a)=2e^{G(a)}\varphi(a)E(a)$ and
    \begin{equation}\label{eq:energy-lower}
        E(a)\;\ge\;\frac{\fc_3\fc_2}{2\varphi(a)}\;\ge\;\frac{\fc_3\fc_2}{2a}.
    \end{equation}
\end{lemma}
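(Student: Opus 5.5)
The plan is to integrate the Pohozaev identity \eqref{eq:pohozaev-identity} backward from $b$ to $a$. First I will use only the sign of $P'$ on all of $[a,b]$. Then I will extract a quantitative negative contribution from the band crossing $J$.

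\emph{Step 1: $P$ is nonincreasing on $[a,b]$.} On $[a,b]$ we have $|u|<\alpha^*(0)\le\alpha^*(r)$ by Lemma~\ref{lem:level-structure}(ii). Lemma~\ref{lem:level-structure}(i) then gives $h_r(u(r))\le0$, so $P'=e^Gh_r(u)\le0$ there. Consequently
\[
P(a)=P(b)-\int_a^b e^Gh_r(u)\,dr\;\ge\;\int_J e^G\big(-h_r(u)\big)\,dr,
\]
using $P(b)\ge0$ and dropping the nonnegative contribution of $[a,b]\setminus J$.

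\emph{Step 2: a uniform bound for $-h_r$ on the band.} On $J$ we have $\frac{\alpha_*}2\le|u|\le\frac{3\alpha_*}4$, so $F(u)<0$. By \eqref{eq:phi-bounds}, $\nu>\gamma-1$, hence $2\nu F\le2(\gamma-1)F$. This gives
\[
h_r(u)\le(\gamma-1)\big(2F(u)-uf(u)\big)=-(\gamma-1)h_1(u)=-\frac{(\gamma-1)(p-1)}{p+1}|u|^{p+1}\le-\fc_3,
\]
by \eqref{eq:omega-prime} and $|u|\ge\alpha_*/2$. This estimate is the only real point of the proof: the crude bound $h_r<0$ from Step~1 is not quantitative, and the comparison $\nu>\gamma-1$ together with $F<0$ is what turns it into the multiple $-(\gamma-1)h_1$.

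\emph{Step 3: the length of $J$.} As in the proof of Lemma~\ref{lem:band-dissipation}, $|u|$ changes by $\alpha_*/4$ on $J$ while $|u'|\le S$, so $|J|\ge\fc_2$. This step only needs the bound $|u'|\le S$, not $E\ge0$. Since $G$ is increasing, $\int_Je^G\ge\fc_2e^{G(\inf J)}$. Combining the three steps gives $P(a)\ge\fc_3\fc_2e^{G(\inf J)}$.

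\emph{Step 4: the energy bound \eqref{eq:energy-lower}.} If $a$ is a zero or a critical point, then $uu'(a)=0$, and \eqref{eq:pohozaev} reduces to $P(a)=e^{G(a)}\varphi(a)(u'^2+2F(u))(a)=2e^{G(a)}\varphi(a)E(a)$. Hence
\[
E(a)\ge\frac{\fc_3\fc_2}{2\varphi(a)}\,e^{G(\inf J)-G(a)}\ge\frac{\fc_3\fc_2}{2\varphi(a)},
\]
because $\inf J\ge a$ and $G$ is increasing. Finally, $\varphi(a)\le a$ by Lemma~\ref{lem:phi-basic}, which gives the last inequality in \eqref{eq:energy-lower}.
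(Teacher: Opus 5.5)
Your proof is correct and follows the paper's argument. It uses the monotonicity of $P$ on $[a,b]$ from $|u|<\alpha^*(0)\le\alpha^*(r)$, the uniform bound $-h_r\ge(\gamma-1)h_1\ge\fc_3$ on the band, and $|J|\ge\fc_2$ with $G$ increasing. Your comparison $2\nu F\le 2(\gamma-1)F$ is the paper's decomposition $h_r=4\varphi'F-(\gamma-1)h_1$ written in terms of $\nu$, and integrating $P'$ over $[a,b]$ at once instead of chaining $P(a)\ge P(\inf J)\ge P(\sup J)+\fc_3\fc_2e^{G(\inf J)}\ge P(b)+\fc_3\fc_2e^{G(\inf J)}$ is only a cosmetic difference.
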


\begin{proof}
    For $|u|\le\alpha_*$ the function $h_r$ of \eqref{eq:pohozaev-identity} satisfies
    \[
        -h_r(u)=2\varphi'u^2\Big[1-\Big(\frac{|u|}{\alpha_*}\Big)^{p-1}\Big]+\frac{(p-1)(\gamma-1)}{p+1}|u|^{p+1}
        \;\ge\;\frac{(p-1)(\gamma-1)}{p+1}|u|^{p+1},
    \]
    because $\frac{2}{p+1}|u|^{p-1}\le 1$ exactly when $|u|\le\alpha_*$. Note that this bound is uniform in $r$ and remains valid as $\varphi'\to 0$. On $J$ we have $|u|\ge\alpha_*/2$, so $P'=e^Gh_r(u)\le-\fc_3e^G\le-\fc_3e^{G(\inf J)}$ and $P(\sup J)\le P(\inf J)-\fc_3\fc_2e^{G(\inf J)}$. On $[a,b]$, $|u|<\alpha^*(0)\le\alpha^*(r)$, so $h_r(u(r))\le 0$ by Lemma~\ref{lem:level-structure}(i) and $P$ is nonincreasing, hence
    \[
        P(a)\ge P(\inf J)\ge
        P(\sup J)+\fc_3\fc_2e^{G(\inf J)}\ge
        P(b)+\fc_3\fc_2e^{G(\inf J)}\ge
        \fc_3\fc_2e^{G(\inf J)}.
    \]
    If $u(a)=0$ or $u'(a)=0$ then $uu'(a)=0$ and
    \[
        P(a)=
        e^{G(a)}\varphi(a)\big(u'^2+2F(u)\big)(a)=
        2e^{G(a)}\varphi(a)E(a),
    \]
    since $\inf J\ge a$, $e^{G(\inf J)}\ge e^{G(a)}$, and $\varphi(a)\le a$ by Lemma~\ref{lem:phi-basic}.
\end{proof}

\begin{proposition}[Classification and pausing]\label{prop:classification}
    Assume \textup{(G1)}, \textup{(G2)}, \textup{(G4)}, \textup{(G5)}, $(\Phi)$ and \eqref{eq:p-range}. Every solution with $\alpha>0$, $\alpha\neq 1$, has finitely many zeros, that is $N(\alpha)<\infty$, and exactly one of the following holds:
    \begin{enumerate}[label=(\alph*)]
        \item $u$ is a ground state or a bound state with $N(\alpha)$ nodes, and then $E>0$ everywhere.
        \item $u$ is trapped: beyond its last zero (or from $r=0$ if there is none), there is $r_0$ with $E(r_0)<0$. Then $u$ has no further zeros, $|u|\not\to 0$, and $N$ is constant in a neighborhood of $\alpha$ (pausing).
    \end{enumerate}
\end{proposition}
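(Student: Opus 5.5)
The proof splits into finiteness of zeros, which is the real content, and the dichotomy with pausing, which then follows from Part~I arguments.

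\emph{Step 1: finiteness of zeros.} Suppose $u$ has infinitely many zeros $z_1<z_2<\cdots$. Then $E>0$ everywhere, since $E\ge\frac12u'^2$ at each zero and $E$ is nonincreasing. By Proposition~\ref{prop:critical-points}(iii), each interval $(z_j,z_{j+1})$ contains a single critical point $c_j$ with $|u(c_j)|>\alpha_*$. Every ascent and every descent therefore contains a band crossing $J$.

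Lemma~\ref{lem:band-dissipation} gives dissipation at least $2\fc_1\fc_2(\gamma-1)/\sup J\ge c/z_{j+1}$ on each phase $(z_j,z_{j+1})$. Since $E$ is bounded below by $0$ and decreasing, summing over phases yields
\[
\sum_j \frac{1}{z_j}<\infty.
\]
For the contradiction I need an upper bound on $z_j$. Eventually $|u(c_j)|<\alpha^*(0)$, because the critical values decrease and $E(c_j)\to E_\infty$. I would check that $E_\infty=0$: otherwise $|u(c_j)|$ stays bounded away from $\alpha_*$ while the phases still dissipate, so I will need care here. It suffices to use $|u|<\alpha^*(0)$ on $[z_j,\infty)$ for $j\ge j_0$, which holds once $F(u(c_j))<F(\alpha^*(0))$.

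Then apply Lemma~\ref{lem:pohozaev-lower} with $a=z_j$ and $b=z_{j+1}$, using $P(z_{j+1})>0$ from \eqref{eq:pohozaev-zero}. This gives
\[
E(z_j)\ge \frac{c}{z_j}.
\]
Plugging this into \eqref{eq:zero-spacing} gives $z_{j+1}-z_j\le T_0(1+\log z_{j+1}+C)$. Hence $z_j\lesssim j\log j$ and $\sum 1/z_j\gtrsim\sum 1/(j\log j)=\infty$, which is a contradiction. Applying Lemma~\ref{lem:passage-times} requires $z_j\ge R_1$; this is automatic for large $j$, since zeros cannot accumulate.

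The main obstacle is matching the constants in this step. I also need to handle the possibility that $E(z_{j+1})$ is tiny, so that the logarithm involves $E(z_{j+1})$ rather than $E(z_j)$. The Pohozaev bound is applied at $z_{j+1}$ itself, with $b=z_{j+2}$, which resolves this.

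\emph{Step 2: dichotomy.} Let $z_k$ be the last zero, and take $u>0$ after it. If $E(r_0)<0$ for some $r_0$, we are in case (b). Otherwise $E\ge0$ on $(z_k,\infty)$. By Lemma~\ref{lem:positive-critical}, $u$ is eventually monotone with a limit $\bar u$, and $u'\to0$ by the argument of Section~\ref{sec:existence-proof}, Step~2(b). Boundedness of $g$ gives $\int^\infty V=\infty$, so Step~2(c) yields $f(\bar u)=0$, i.e.\ $\bar u\in\{0,1\}$. The case $\bar u=1$ gives $E\to F(1)<0$ and hence trapping. So $\bar u=0$, $u$ is a bound state, and $E>0$ by Proposition~\ref{prop:critical-points}(iv). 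The two cases exclude each other by trapping.

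\emph{Step 3: pausing.} If $E(r_0)<0$ with $r_0>z_k$, Proposition~\ref{prop:trapping} gives $E(r_0,\tilde\alpha)<0$ for nearby $\tilde\alpha$, so these solutions have no zeros after $r_0$. By simplicity of zeros and $C^1$ continuity on $[0,r_0]$, they have exactly $k$ zeros in $[0,r_0]$. Hence $N$ is constant near $\alpha$.
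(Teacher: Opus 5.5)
There is a gap in Step~1. Your outline otherwise matches the paper's: dissipation of order $1/z_{j+1}$ per phase from Lemma~\ref{lem:band-dissipation}, the Pohozaev lower bound on $E$ at the zeros, the spacing estimate \eqref{eq:zero-spacing}, and growth $z_j\lesssim j\log j$.

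The unjustified claim is that eventually $|u(c_j)|<\alpha^*(0)$. Since $F(|u(c_j)|)=E(c_j)\downarrow E_\infty$, the critical values only converge to the level $s\ge\alpha_*$ with $F(s)=E_\infty$. If $E_\infty\ge F(\alpha^*(0))$, then $|u(c_j)|>\alpha^*(0)$ in every phase. In that case Lemma~\ref{lem:pohozaev-lower}, which needs $|u|<\alpha^*(0)$ on $[a,b]$, cannot be applied at all. You flag this (``I would check that $E_\infty=0$''), but the suggested reason does not give a contradiction. The dissipation per phase is only $\gtrsim 1/z_{j+1}$, so making its sum diverge requires exactly the spacing bound you are trying to obtain.

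The missing step is a case split on $E_\infty$, and it is easy. If $E_\infty>0$, then $E(z_{j+1})\ge E_\infty$, and \eqref{eq:zero-spacing} gives the uniform bound $z_{j+1}-z_j\le T_0\big(1+\log^+(1/E_\infty)\big)$. Hence $z_j\le Cj$ and $\sum_j 1/z_j=\infty$, with no Pohozaev argument needed. If $E_\infty=0$, then $|u(c_j)|\to\alpha_*<\alpha^*(0)$, so $|u|<\alpha^*(0)$ on $[z_{j_1},\infty)$ for some $j_1$, and your argument goes through as written. One small step is needed there: the implicit inequality $z_{j+1}-z_j\le T_0(1+C+\log z_{j+1})$ forces $z_{j+1}\le 2z_j$ for large $j$, and only then does $z_j\lesssim j\log j$ follow.

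With this repair Step~1 is the paper's proof. Your Step~2 takes a different but valid route to the same conclusion. You use Lemma~\ref{lem:positive-critical} and the Part~I argument of Section~\ref{sec:existence-proof}, Step~2(c), with $\int^\infty V=\infty$ coming from the boundedness of $g$. The paper instead uses Lemma~\ref{lem:phi-consequences}(i),(ii) to get $|u'|\ge c>0$ eventually when $f(u_\infty)\ne0$. Step~3 is as in the paper.
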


\begin{proof}
    Let $g\le M_0$ on $[R_1,\infty)$ (with $R_1\ge 1$), and let $T_0,\fc_1,\fc_2,\fc_3$ be the constants of Lemmas~\ref{lem:passage-times}, \ref{lem:band-dissipation}, \ref{lem:pohozaev-lower}.

    \emph{Finiteness.} Suppose $u$ has zeros $z_1<z_2<\cdots\to\infty$ (they cannot accumulate at a finite point, otherwise $(u,u')$ would vanish at that point). Then $E>0$ at every $z_j$, hence everywhere, and $E\downarrow E_\infty\ge 0$. Let $c_j\in(z_j,z_{j+1})$ be the critical points (Proposition~\ref{prop:critical-points}(iii)), with $|u(c_j)|>\alpha_*$, and fix $j_0$ with $z_{j_0}\ge R_1$. On the ascent $(z_j,c_j)$, $|u|$ increases from $0$ to $|u(c_j)|>\alpha_*$, so it performs a band crossing $J_j\subset(z_j,c_j)$. By Lemma~\ref{lem:band-dissipation},
    \begin{equation}\label{eq:zero-dissipation}
        E(z_j)-E(z_{j+1})\;\ge\;2\fc_1\fc_2\frac{\gamma-1}{z_{j+1}} \qquad (j\ge j_0).
    \end{equation}
    Summing, $E(z_{j_0})-E_\infty\ge 2\fc_1\fc_2(\gamma-1)\sum_{j\ge j_0}1/z_{j+1}$, so it suffices to show $\sum_j 1/z_j=\infty$, i.e.\ that the zeros do not spread out too fast.

    \emph{Case $E_\infty>0$.} By \eqref{eq:zero-spacing} with $E(z_{j+1})\ge E_\infty$, $z_{j+1}-z_j\le T_0(1+\log^+(1/E_\infty))$, so $z_j\le Cj$ and $\sum 1/z_j=\infty$: contradiction.

    \emph{Case $E_\infty=0$.} Then $F(u(c_j))=E(c_j)\to 0$ with $|u(c_j)|>\alpha_*$, so $|u(c_j)|\to\alpha_*<\alpha^*(0)$, and there is $j_1\ge j_0$ with $|u|<\alpha^*(0)$ on $[z_{j_1},\infty)$. On $[z_{j_1},\infty)$ the function $P$ is nonincreasing and $P(z_j)>0$ for all $j$ (by \eqref{eq:pohozaev-zero}), so $P>0$ on $[z_{j_1},\infty)$. Lemma~\ref{lem:pohozaev-lower}, applied with $a=z_{j+1}$, $J=J_{j+1}$ and $b=\sup J_{j+1}$, gives $E(z_{j+1})\ge \fc_3\fc_2/(2z_{j+1})$ for $j\ge j_1$, and \eqref{eq:zero-spacing} then yields
    \[
        z_{j+1}-z_j\;\le\;T_0\Big(1+\log^+\frac{2z_{j+1}}{\fc_3\fc_2}\Big) \qquad (j\ge j_1).
    \]
    For large $j$ the right-hand side is at most $z_{j+1}/2$, so $z_{j+1}\le 2z_j$, and hence $z_{j+1}-z_j\le T_0'(1+\log z_j)$ with $T_0'$ depending on $T_0,\fc_2,\fc_3$. With $\mathcal Z(x)\defeq\int_1^x\frac{dt}{1+\log t}$, this gives $\mathcal Z(z_{j+1})-\mathcal Z(z_j)\le\frac{z_{j+1}-z_j}{1+\log z_j}\le T_0'$, so $\mathcal Z(z_j)\le T_0'j+C$. Since $\mathcal Z(x)\ge\frac{x-1}{1+\log x}$, first $z_j\le j^2$ and then $z_j\le C\,j\log j$ for large $j$, so $\sum 1/z_j=\infty$ and \eqref{eq:zero-dissipation} again gives a contradiction. We proved $N(\alpha)<\infty$.

    \emph{Dichotomy.} Suppose $u$ has finitely many zeros, the last being $z_k$ (or none), and $E>0$ everywhere.
    By Proposition~\ref{prop:critical-points}(i), on $(z_k,\infty)$ every critical point is a strict local maximum of $|u|$, so (as in the proof of Proposition~\ref{prop:critical-points}(iii)) there is at most one critical point beyond $z_k$, and $|u|$ is eventually monotone, with a limit $u_\infty\ge 0$. Moreover, $u'\to 0$, since $u'^2=2(E-F(u))$ has a limit, which must vanish, for otherwise $|u|\to\infty$ and $F(u)\to\infty>E(0)$. If $u_\infty>0$ and $f(u_\infty)\neq 0$ then, from $(e^Gu')'=-e^Gf(u)$, for large $r$ and $r_1\ge R_1$,
    \[
        |e^Gu'(r)|\;\ge\;\tfrac12|f(u_\infty)|\int_{r_1}^{r}e^G-|e^{G(r_1)}u'(r_1)|\;\ge\;\tfrac14|f(u_\infty)|\,\frac{e^{G(r)}}{M_0},
    \]
    by Lemma~\ref{lem:phi-consequences}(ii) and since $e^{G(r)}\to\infty$ (Lemma~\ref{lem:phi-consequences}(i)). Hence $|u'|\ge c>0$ eventually and $|u|\to\infty$, contradicting boundedness. If $u_\infty=1$ then $E\to F(1)<0$, contradicting $E>0$ (and $u_\infty>0$ with $f(u_\infty)=0$ forces $u_\infty=1$). Hence $u_\infty=0$ and $u$ is a ground or bound state, which is alternative (a). Otherwise, $E(r_0)<0$ at some $r_0$ beyond the last zero, and Proposition~\ref{prop:trapping} with its stability clause gives (b): for nearby $\tilde\alpha$, $u(\cdot,\tilde\alpha)$ still has $E<0$ at $r_0$ and, by simplicity and continuity, exactly the same number of zeros in $[0,r_0]$.
\end{proof}

\begin{remark}\label{rem:pausing}
    The pausing statement in (b) is one half of the staircase argument of Section~\ref{sec:staircase}: on the set of $\alpha$ whose solutions are not bound states, $N$ is locally constant. The other half, the \emph{jump} at bound states, requires working with the variation. The two energy cases are handled by the same argument: the dissipation in each oscillation is only $\gtrsim 1/z_j$ (the lower bound $g>(\gamma-1)/r$ of $(\Phi)$ has the sharp order $1/r$ when $g=(n-1)/r$), and the divergence of $\sum 1/z_j$ is obtained from the period bound \eqref{eq:zero-spacing}, which in the delicate case $E_\infty=0$ uses the Pohozaev lower bound \eqref{eq:energy-lower}. For $g=(n-1)/r$ this yields the same finiteness statement of \cite{McLeodTroyWeissler1990,Tang2026} without using the explicit form of $g$.
\end{remark}

%% file: sections/14-variation-function.tex
\section{The variation equation: identities and the first phase}\label{sec:variation}

So far we know that the number of zeros $N(\alpha)$ can change only at bound states (Sections~\ref{sec:basic-properties} and~\ref{sec:pohozaev}). To see what happens at a bound state we follow the variation $v=\partial_\alpha u$, and from now on we mostly study $v$. This section prepares the tools, a family of auxiliary functions built from $u$ and $v$ whose derivatives have explicit formulas, and then uses them on the first phase, from the origin down to the first zero of $u$ and a little beyond. For a ground state the first phase is the whole half-line, so this already does most of the work for ground states. Nodal solutions have more phases, and these are the subject of Sections~\ref{sec:later-phases}--\ref{sec:renewal-below}.

\subsection{The identities}\label{sec:identities}
Let $u$ solve \eqref{eq:ode} and let $v=\partial_\alpha u$ solve the variation equation (\ref{eq:variation}). Define:
\begin{align*}
    M      & \defeq  e^G\big(u'v-uv'\big),                                                  &
    Q      & \defeq  e^G\Big[\varphi\,u'v'+\varphi f(u)v+(\gamma-1)u'v\Big],                  \\
    Q_a    & \defeq  Q+a\,e^Gu'v \quad (a\in\R),                                            &
    Q_\nu  & \defeq  Q+\nu(r)\,e^Gu'v,                                                        \\
    W_a    & \defeq  Q-a\,\varphi'M,                                                        &
    \Sigma & \defeq  e^G\big(u''v'-u'v''\big),                                                \\
    T_1    & \defeq  Q-\varphi'g_1(u)M,                                                     &
    g_1(u) & \defeq  \frac{2f(u)}{uf'(u)-f(u)}=\frac{2}{p-1}\big(1-|u|^{1-p}\big),            \\
    T_2    & \defeq  Q-\varphi'g_2(u)M,                                                     &
    g_2(u) & \defeq  \frac{2F(u)}{h_1(u)}=\frac{2}{p-1}\Big(1-\frac{p+1}{2}|u|^{1-p}\Big),    \\
    \theta & \defeq  e^G\frac{v\,h_1(u)}{u'} \quad (u'\neq0),                               &
    B_a    & \defeq  Q-a\varphi'M-2F_a(u)\frac{e^Gv}{u'},\qquad F_a\defeq F-\frac{a}{2}h_1.
\end{align*}
Here $Q=e^G\big[wv'-w'v\big]$, with $w=\varphi u'$, is the weighted Wronskian of $v$ against the multiplier field applied to $u$, while $M=e^G(u'v-uv')$ is, with the same convention, the weighted Wronskian of $u$ against $v$. Moreover, $g_1$ vanishes exactly at $|u|=1$ and $g_2$ exactly at $|u|=\alpha_*$, with
\begin{equation}\label{eq:g-relations}
    \frac{d}{du}g_1=2|u|^{-p}\sgn u,\qquad
    \frac{d}{du}g_2=(p+1)|u|^{-p}\sgn u,\qquad
    g_2-g_1=-|u|^{1-p},\qquad
    g_2(1)=-1.
\end{equation}

\begin{proposition}[Identity table]\label{prop:identity-table}
    With $\kappa_a(u)\defeq \big(1-\tfrac{p-1}{2}a\big)|u|^{p-1}-1$ and ${}'=\tfrac{d}{dr}$:
    \begin{align}
        M'             & = e^G\big(uf'(u)-f(u)\big)v=(p-1)e^G|u|^{p-1}uv, \label{eq:M-prime}                                                    \\
        Q'             & = 2\varphi'e^Gf(u)v, \label{eq:Q-prime}                                                                                \\
        \Sigma'        & = e^Gf''(u)u'^2v, \label{eq:Sigma-prime}                                                                                   \\
        Q-P\frac{v}{u} & = \omega\big(M-\theta\big)\qquad(u,u'\neq0), \label{eq:connection}                                                           \\
        T_1'           & = -M\big(\varphi'g_1(u)\big)'=-M\Big[\varphi''g_1(u)+2\varphi'|u|^{-p}(\sgn u)u'\Big], \label{eq:T-one-prime} \\
        T_2'           & = -\varphi'(p+1)\frac{uu'}{|u|^{p+1}}\big(M-\theta\big)-\varphi''g_2(u)M\qquad(u'\neq0), \label{eq:T-two-prime}                           \\
        W_a'           & = 2\varphi'e^Guv\,\kappa_a(u)-a\varphi''M, \label{eq:W-prime}                                                          \\
        Q_a'           & = e^G\Big[(2\varphi'-a)f(u)v+a\,u'v'\Big], \label{eq:Qa-prime}                                                          \\
        Q_\nu'         & = e^G\Big[(1-\gamma)f(u)v+\nu\,u'v'+2\varphi''u'v\Big], \label{eq:Qnu-prime}                                            \\
        B_a'           & = 2(\varphi'-1)e^Guv\,\kappa_a(u)-a\varphi''M-2F_a(u)\frac{Q_\nu}{\varphi\,u'^2}, \label{eq:bridge-prime}                  \\
        T_2            & = T_1+\varphi'|u|^{1-p}M. \label{eq:T-two-T-one}
    \end{align}
    Moreover, for $w_\lambda\defeq \varphi u'+\lambda(r)u$ and $L\defeq \partial_r^2+g\partial_r+f'(u)$,
    \begin{equation}\label{eq:L-w-lambda}
        Lw_\lambda=-(2\varphi'+\lambda)f(u)+\lambda f'(u)u+2\lambda'u'+(\lambda''+g\lambda')u,
    \end{equation}
    so that $\big[e^G(w_\lambda v'-w_\lambda'v)\big]'=-e^Gv\,Lw_\lambda$.
\end{proposition}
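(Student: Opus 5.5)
This is a table of differential identities, so the plan is direct computation. There are only four inputs: the equations $u''=-gu'-f(u)$ and $v''=-gv'-f'(u)v$, the relations $(e^G)'=ge^G$ and $\varphi'=g\varphi-(\gamma-1)$ from \eqref{eq:phi-ode}, and the explicit forms of $f,F,h_1,g_1,g_2$ for the model nonlinearity. I would order the computation so that each identity reuses the previous ones.

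\textbf{Step 1: the basic Wronskians.} For $M$ and $\Sigma$ I would differentiate $e^G$ times a Wronskian. The $g$-terms cancel against $(e^G)'$, leaving
\[
M'=e^G(u''v-uv'')=e^G(uf'(u)-f(u))v,
\]
and $uf'-f=(p-1)|u|^{p-1}u$. For $\Sigma'$ I would use $u'''=-g'u'-gu''-f'(u)u'$ and the derivative of the $v$-equation; the $g'$ terms cancel and $f''u'^2v$ survives.

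For $Q'$ I would write $Q=e^G(wv'-w'v)$ with $w=\varphi u'$. Then $Q'=-e^Gv\,Lw$, and this reduces $Q'$ to the general formula \eqref{eq:L-w-lambda} at $\lambda=0$. So I would first prove \eqref{eq:L-w-lambda}. Compute $w_\lambda'$ and $w_\lambda''$, substitute $u''$ and $u'''$, and use $\varphi''+g\varphi'=(g\varphi)'+g\varphi'$ and $\varphi g'+\varphi'g-\varphi''=0$ to eliminate the $u'$ terms. At $\lambda=0$ this gives $Lw=-2\varphi'f(u)$, hence $Q'=2\varphi'e^Gf(u)v$.

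\textbf{Step 2: identities assembled by the product rule.} The next group follows from Step 1 without new cancellations.
\begin{itemize}
\item $Q_a'$ is $Q'+a(e^Gu'v)'$, with $(e^Gu'v)'=e^G(-f v+u'v')$.
\item $Q_\nu'$ uses the same computation plus $\nu'=2\varphi''$, together with $2\varphi'-\nu=1-\gamma$.
\item $W_a'=Q'-a\varphi''M-a\varphi'M'$. Substituting $f(u)=u(|u|^{p-1}-1)$ gives $\kappa_a$.
\item For $T_1'$, the term $Q'-\varphi'g_1M'$ vanishes identically, because $g_1=2f/(uf'-f)$. What remains is $-M(\varphi'g_1(u))'$, expanded with \eqref{eq:g-relations}.
\item \eqref{eq:T-two-T-one} follows directly from $g_2-g_1=-|u|^{1-p}$.
\end{itemize}

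\textbf{Step 3: the connection identity and $T_2'$.} For \eqref{eq:connection} I would expand $Pv/u$ and $\omega M=-\varphi u'e^G(u'v-uv')/u$ and compare them term by term with $Q$. The residual is $\varphi e^G v(uf-2F)/u\cdot$(sign), which matches $\omega\theta$ because $\theta=e^Gvh_1/u'$. For $T_2'$ I would proceed as for $T_1'$:
\[
Q'-\varphi'g_2M'=e^Gv\varphi'(2f-g_2(uf'-f)).
\]
Using $g_2=2F/h_1$ and $uf'-f=(p-1)|u|^{p+1}/u$, this must be rewritten in terms of $M-\theta$ via the connection identity, together with the term $-\varphi'g_2'u'M$. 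This rewriting is the only place where the terms do not cancel transparently, and I expect it to be the main bookkeeping risk. I would check it first in the Euclidean case, $\varphi=r$, against \cite{Tang2026}.

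\textbf{Step 4: the bridge function.} For $B_a'$ I would write $B_a=W_a-2F_a(u)e^Gv/u'$ and differentiate the last term, using $(e^Gv/u')'=e^G(v'u'-vu'')/u'^2$. The numerator here is expressible through $Q_\nu$, since
\[
\varphi(u'v'-vu'')=\varphi u'v'+\varphi gu'v+\varphi f v,
\]
and $\varphi g=\varphi'+\gamma-1$ together with $\nu=2\varphi'+\gamma-1$ give $Q_\nu-\varphi' e^Gu'v$. I would also expect the $2F_a'u'$ contribution to combine with the $-2\varphi'$ part of $\kappa_a$ so as to replace $\varphi'$ by $\varphi'-1$, yielding \eqref{eq:bridge-prime}. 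This second regrouping is the hardest step, and I would verify it symbolically.
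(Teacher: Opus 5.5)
Your overall plan is the paper's. You reduce everything with $u''=-gu'-f(u)$, $v''=-gv'-f'(u)v$ and $\varphi'=g\varphi-(\gamma-1)$. You obtain $Q'$ from $[e^G(wv'-w'v)]'=-e^Gv\,Lw$ together with \eqref{eq:L-w-lambda} at $\lambda=0$, and $T_1'$ from the cancellation $2f=g_1(uf'-f)$. Steps~1--2 and the connection identity are fine. The one slip there is the intermediate expression $M'=e^G(u''v-uv'')$, which should read $e^G[(u''+gu')v-u(v''+gv')]$. Two of your steps, however, fail as written.

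\emph{The identity for $T_2'$.} It does not go through the connection identity. That identity expresses $\omega(M-\theta)$ through $Q$ and $P$. In \eqref{eq:T-two-prime}, by contrast, $M-\theta$ is multiplied by $-(p+1)\varphi'uu'/|u|^{p+1}$ rather than by $\omega$, and no $P$ appears. Three facts give the identity at once:
\begin{itemize}
\item the algebraic identity $2f-g_2(uf'-f)=(p-1)u$, which gives $Q'-\varphi'g_2M'=(p-1)\varphi'e^Guv$;
\item the definition of $\theta$ with $h_1=\frac{p-1}{p+1}|u|^{p+1}$, which gives $(p-1)e^Guv=(p+1)\frac{uu'}{|u|^{p+1}}\theta$ (this is exactly where $u'\neq0$ is needed);
\item $(\varphi'g_2(u))'=\varphi''g_2+(p+1)\varphi'\frac{uu'}{|u|^{p+1}}$.
\end{itemize}
Subtracting $(\varphi'g_2)'M$ then yields \eqref{eq:T-two-prime}. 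This is how the paper argues.

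\emph{The bridge identity, Step~4.} Your formula $(e^Gv/u')'=e^G(v'u'-vu'')/u'^2$ drops the term $ge^Gv/u'$ coming from $(e^G)'$. The numerator is also misidentified: $\varphi e^G(u'v'-vu'')=Q+\varphi'e^Gu'v=Q_{\varphi'}$, not $Q_\nu-\varphi'e^Gu'v$. Followed literally, your route leaves a spurious term $2\varphi'F_a(u)e^Gv/(\varphi u')$ in $B_a'$.

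The correct computation is $(e^Gv/u')'=(Q_{\varphi'}+g\varphi e^Gu'v)/(\varphi u'^2)=Q_\nu/(\varphi u'^2)$, since $\varphi'+g\varphi=\nu$. The regrouping you call the hardest step is then immediate. By the same algebra as for $W_a'$, $F_a'(u)=f-\frac a2(uf'-f)=u\kappa_a(u)$. Hence the chain-rule term $-2F_a'(u)u'\cdot e^Gv/u'$ equals $-2e^Guv\kappa_a$. Adding it to the term $2\varphi'e^Guv\kappa_a$ of $W_a'$ gives $2(\varphi'-1)e^Guv\kappa_a$, which is \eqref{eq:bridge-prime}.
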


Not all of these identities are used in the proofs. The derivatives \eqref{eq:W-prime}, \eqref{eq:Qnu-prime} and \eqref{eq:bridge-prime} (and with them $\kappa_a$ and the unweighted bridge $B_a$) are recorded because they are the templates of the weighted bridge of Section~\ref{sec:transition}, and \eqref{eq:L-w-lambda} is used only with $\lambda\equiv0$, where it gives \eqref{eq:Q-prime}.

\begin{proof}
    Each identity is a finite computation from \eqref{eq:ode}, the variation equation, and $\varphi'=g\varphi-(\gamma-1)$ (whence $\varphi''=(g\varphi)'$). No other property of $\varphi$ or $g$ is used. All of them are collected in Appendix~\ref{sec:identity-table}. We indicate the two main identities from which the table follows.

    \emph{Wronskian identity:} for any $w$,
    \[
        \big[e^G(wv'-w'v)\big]'=e^G\big[w(v''+gv')-v(w''+gw')\big]=-e^Gv\,Lw
    \]
    since $Lv=0$. Identity~\eqref{eq:Q-prime} is the case $w=\varphi u'$, for which \eqref{eq:L-w-lambda} with $\lambda=0$ gives $Lw=-2\varphi'f(u)$: the coefficient of $u'$ in $Lw$, namely $\varphi''-g\varphi'-g'\varphi=\varphi''-(g\varphi)'$, vanishes identically.

    \emph{Functional identities:} \eqref{eq:T-one-prime} and \eqref{eq:T-two-prime} follow from \eqref{eq:M-prime}, \eqref{eq:Q-prime} and the algebraic identities $2f-g_1\cdot(uf'-f)=0$ and $2\varphi'f-\varphi'g_2(uf'-f)=(p-1)\varphi'u$, together with $(p-1)e^Guv=(p+1)\frac{uu'}{|u|^{p+1}}\theta$.
\end{proof}

\begin{remark}[Reading the table]\label{rem:reading-table}
    Every correction term carries $\varphi''\le0$ or $\varphi'-1\le0$ by ($\Phi$), with a definite sign once the phase of the solution fixes the signs of $u,u',v$ and of $g_1(u),g_2(u),F(u),f(u)$. The point of Section~\ref{sec:first-phase} and of Sections~\ref{sec:transition}--\ref{sec:renewal-below} is to show that these correction signs are favorable at every step of the first phase and of the first ascent, and that they are the only obstruction in the remaining steps.
\end{remark}

\subsection{The first phase}\label{sec:first-phase}
\emph{Conventions.} Let $u$ be a nodal solution or a bound state, so that $\alpha>\alpha^*(0)$ and Propositions~\ref{prop:critical-points}, \ref{prop:classification} and~\ref{prop:pohozaev-positive} apply, with zeros $z_1<z_2<\dots$. On $(0,z_1)$ we have $u>0$ and $u'<0$. Let $b_1<r_1$ be the points in $(0,z_1)$ where $u=\alpha_*$ and $u=1$, and let $\bar r_1$, $\bar b_1$ be the first points after $z_1$ where $u=-1$ and $u=-\alpha_*$, and $c_1$ the first critical point after $z_1$, when they exist. If $E>0$ on $(z_1,c_1]$, then $|u(c_1)|>\alpha_*$ by Proposition~\ref{prop:critical-points}(i), so $z_1<\bar r_1<\bar b_1<c_1$ (and $c_1<z_2$ if $u$ has a second zero). For a ground state no $z_1$ exists and the argument below on $(0,z_1]$ is to be read on $(0,\infty)$, $r_1$ being the unique point with $u(r_1)=1$. In part (iv) we additionally assume that $\bar r_1$ exists, that $E>0$ on $(z_1,\bar r_1]$, and that $\nu\ge1$ on $[z_1,\bar r_1]$. In part (v) we assume that $E>0$ on $(z_1,c_1]$.
The conditions on $E$ hold when $u$ has a second zero or is a bound state. Regarding $\nu\ge1$, see Remark~\ref{rem:nu-condition}.

\begin{theorem}[First phase]\label{thm:first-phase}
    \begin{enumerate}[label=(\roman*),font=\upshape]
        \item \textup{(Tendency.)} $v$ changes sign in $(0,r_1)$. Let $\tau_1$ be its first zero. Then $v>0$ and $v'<0$ on $(0,\tau_1)$, $v'(\tau_1)<0$, and $u'v'(\tau_1)>0$.
        \item \textup{(Resistance.)} $M>0$ and $Q>0$ on $(0,z_1]$; $T_1>0$ on $(0,r_1]$ with $T_1(r_1)=Q(r_1)$; $T_2>0$ on $(0,z_1)$; and $v<0$ on $(\tau_1,z_1]$. In particular $\tau_1$ is the only zero of $v$ in $(0,z_1]$, and $v(z_1)u'(z_1)>0$.
        \item \textup{(Ground state.)} If $u$ is a ground state, then $v$ has exactly one zero $\tau_1\in(0,r_1)$ in $(0,\infty)$, and $v<0$, $M>0$, $Q>0$ on $(\tau_1,\infty)$.
        \item \textup{(First ascent, partial.)} $M>0$ and $Q_1>0$ on $[z_1,\bar r_1]$, and $v<0$ on $(\tau_1,\bar r_1]$.
        \item \textup{(Euclidean ascent.)} If $\varphi'\equiv1$, then moreover $Q_1>0$ on $[z_1,\bar b_1]$, $Q>0$ on $[\bar b_1,c_1]$, $v<0$ on $(\tau_1,c_1]$, and $M,Q,Q_1,Q_2>0$ at $c_1$. (This part is not used later: under (H), which is trivial when $\varphi'\equiv1$, Theorem~\ref{thm:phase} gives the same conclusions for every phase.)
    \end{enumerate}
\end{theorem}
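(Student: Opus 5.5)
The plan is to run everything off the identity table (Proposition~\ref{prop:identity-table}). I will read off signs on $(0,r_1)$, where $u>1$ and $u'<0$, and on $(r_1,z_1)$, where $0<u<1$. Near the origin, $v(0)=1$ and $v'(0)=0$. Since $\alpha>\alpha^*(0)>1$, $v''(0)=-f'(\alpha)/(\gamma+1)<0$, exactly as for $u''(0)$ in Proposition~\ref{prop:strict-extrema}. So $v$ starts decreasing, and $v'<0$ persists as long as $v>0$ and $f'(u)>0$: from $(e^Gv')'=-e^Gf'(u)v<0$ and $e^Gv'\to0$ at $0$. Hence $v>0$, $v'<0$ on $(0,\tau_1)$, and $v'(\tau_1)<0$ because zeros of $v$ are simple, which also gives $u'v'(\tau_1)>0$. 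All the weighted functions vanish at $0^+$, since $e^G\to0$ and $e^G\varphi\to0$ (Lemma~\ref{lem:phi-basic}). While $u,v>0$ and $u>1$, \eqref{eq:M-prime} and \eqref{eq:Q-prime} then give $M,Q$ increasing from $0$, hence positive. For $T_1$, on $(0,r_1)$ we have $g_1(u)>0$, $\varphi''\le0$ by $(\Phi)$ and $u'<0$. So the bracket in \eqref{eq:T-one-prime} is negative, and $T_1'>0$ wherever $M>0$. This gives $T_1>0$ on $(0,r_1]$ and $T_1(r_1)=Q(r_1)$, because $g_1(1)=0$.

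For the tendency (that $\tau_1<r_1$), I would argue by contradiction: assume $v>0$ on $(0,r_1]$. The information at $r_1$ is $Q(r_1)=T_1(r_1)>0$. Since $f(1)=0$, this reads $u'(r_1)\big(\varphi v'+(\gamma-1)v\big)(r_1)>0$, that is, $\varphi v'<-(\gamma-1)v$ at $r_1$. I would combine this with the connection identity \eqref{eq:connection}, $Q-Pv/u=\omega(M-\theta)$, at $r_1$. There $P>0$ (Proposition~\ref{prop:pohozaev-positive}), $\omega>0$, and $\theta=e^Gvh_1(1)/u'<0$ when $v>0$. Using also the monotonicity of $\omega$ and the single crossing of the Pohozaev level $s_1$ (Lemma~\ref{lem:level-structure}(iii)), the aim is to force $M-\theta$ and $Q$ into incompatible signs. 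This is the step I expect to be the main obstacle. If the connection identity does not close it, I would fall back on a Sturm comparison of $v$ with $w=\varphi u'+\lambda u$ for a constant $\lambda$ chosen via \eqref{eq:L-w-lambda}, so that $Lw$ has a fixed sign on $(0,r_1)$; this should produce a zero of $v$ before $w$ vanishes.

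For resistance, after $\tau_1$ we have $v<0$, and I would keep the functions positive as follows. On $(\tau_1,r_1)$, $M'<0$ and $Q'<0$, so $M$ and $Q$ decrease. I would control them through $T_1$ and $T_2=T_1+\varphi'|u|^{1-p}M$ \eqref{eq:T-two-T-one}. Since $T_1>0$ up to $r_1$, $M>0$ follows on $[\tau_1,r_1]$ from $M'$ and the sign of $T_1$ as long as $Q$ stays positive, via $Q=T_1+\varphi'g_1M$. On $(r_1,z_1)$, where $0<u<1$ and $v<0$, we have $f(u)v>0$ and $uv<0$. Hence $Q'>0$ and $M'<0$, so $Q$ stays positive. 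For $M$ I would use \eqref{eq:T-two-prime}: here $\theta>0$, $uu'<0$ and $g_2<0$, so $T_2$ is monotone and $T_2>0$ on $(0,z_1)$. At $z_1$, $M(z_1)=e^Gu'v>0$ exactly when $v(z_1)<0$. Positivity of $v$ on $(\tau_1,z_1]$ then follows because a second zero $\tau'$ of $v$ would give $M(\tau')=-e^Guv'$ and $Q(\tau')=e^G\varphi u'v'$ of opposite signs, contradicting $M,Q>0$ there.

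For part (iii), the same reading on $(r_1,\infty)$ applies: $Q'>0$ keeps $Q>0$, and a zero of $v$ would again force $M$ and $Q$ to opposite signs, so no further zero occurs. For part (iv), on $(z_1,\bar r_1)$ we have $-1<u<0$ and $v<0$. Then $f(u)v<0$ and $uv>0$, so $M'>0$ and $M$ stays positive. I would use $Q_1'$ from \eqref{eq:Qa-prime} with $a=1$, together with $\nu\ge1$ through $Q_\nu$ \eqref{eq:Qnu-prime}, to keep $Q_1>0$; the opposite-sign argument then excludes a zero of $v$. Part (v) is the specialization $\varphi'\equiv1$, where all correction terms vanish and the same computations extend to $c_1$ with $Q_1$ and $Q_2$.
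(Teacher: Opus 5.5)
Your proposal has a genuine gap at the tendency step (i), which you flag yourself, and neither of your two routes closes it.

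\emph{The connection identity.} Under the assumption $v>0$ on $(0,r_1]$, everything you list is consistent. $M$ and $T_1$ increase, and $Q(r_1)=T_1(r_1)>0$ only says $\varphi v'+(\gamma-1)v<0$ at $r_1$. In \eqref{eq:connection} one has $M-\theta>0$, so $Q>Pv/u>0$. No incompatible signs appear.

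\emph{The Sturm fallback.} Since $Lw_\lambda=u\bigl[2\varphi'+u^{p-1}\bigl((p-1)\lambda-2\varphi'\bigr)\bigr]$, the choice $\lambda=\frac2{p-1}$ gives $Lw_\lambda>0$ on $\{u>0\}$; for smaller $\lambda$ the sign is lost near the origin when $\alpha$ is large. The comparison then yields a zero of $v$ before the first zero of $w_\lambda$, that is, before the point where $\omega=\frac2{p-1}$. Nothing places that point before $r_1$. Because $f(1)=0$, one has $P_1(r_1)=e^{G(r_1)}|u'(r_1)|\bigl(\omega(r_1)-(\gamma-1)\bigr)$, so Pohozaev positivity only gives $\omega(r_1)>\gamma-1$. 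This is below $\frac2{p-1}$ whenever $p<\frac{\gamma+1}{\gamma-1}$. You get $\tau_1<z_1$, not $\tau_1<r_1$.

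\emph{What is missing.} Compare $v$ with $f(u)$ itself, which vanishes exactly at $r_1$ and satisfies $L\bigl(f(u)\bigr)=f''(u)u'^2>0$ on $(0,r_1)$. The corresponding Wronskian is $\Sigma=e^G\bigl(f'(u)u'v-f(u)v'\bigr)$. If $v\ge0$ on $[0,r_1]$, then \eqref{eq:Sigma-prime} together with $v(0)=1$ gives $\Sigma(r_1)>\Sigma(0^+)=0$. But $\Sigma(r_1)=(p-1)e^Gu'v(r_1)\le0$, a contradiction.

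The resistance step (ii) is circular exactly where the statement has content. On $(\tau_1,r_1)$ the function $M$ decreases, and $T_1'>0$, hence $Q\ge T_1>0$, holds only while $M>0$. At a first zero $\varrho\le r_1$ of $M$ you have $Q(\varrho)=T_1(\varrho)>0$, which contradicts nothing. Your later argument, $M$ decreasing to $M(z_1)=e^Gu'v(z_1)>0$, presupposes $v<0$ up to $z_1$. You derive that from $Q>0$, which needs $Q(r_1)=T_1(r_1)>0$, which needs $M>0$ up to $r_1$.

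The circle is broken by \eqref{eq:Q-at-M-zero}, which is your connection identity evaluated at $M=0$. At a zero of $M$ one has $u'v=uv'$, hence $Q=\frac vuP_1$. Since $P_1>0$ by Proposition~\ref{prop:pohozaev-positive} and $v(\varrho)<0<u(\varrho)$, this gives $Q(\varrho)<0$, contradicting $Q\ge T_1>0$.

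Once $M>0$ on $(0,r_1]$ is secured this way, your argument on $(r_1,z_1]$ goes through. At a zero of $v$ the functions $M$ and $Q$ have the same sign, not opposite signs, but your contradiction still holds. On $(r_1,z_1)$, $T_2>0$ follows directly from $T_2=Q+\varphi'|g_2(u)|M$. It does not follow from monotonicity of $T_2$, which fails there: $M-\theta$ has no fixed sign and the $\varphi''$ term is $\le0$. The same identity is what gives $M>0$ on $(r_1,\infty)$ in (iii), where no endpoint value is available.

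In (iv), $Q_1'=e^G\bigl[(2\varphi'-1)f(u)v+u'v'\bigr]$ has no sign on $(z_1,\bar r_1)$, so $Q_1>0$ cannot be propagated from its own derivative. You need a bridge such as $Q-2\varphi'(z_1)F(u)e^Gv/u'$. It starts at $Q(z_1)>0$ and increases before a first zero $\tilde q$ of $Q_1$. This uses $\varphi'\le\varphi'(z_1)$, $F<0$ and $Q_\nu\ge Q_1>0$, which is where $\nu\ge1$ enters. At $\tilde q$ the bridge is $\le-2e^GvE/u'<0$, a contradiction.
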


\begin{proof}
    \emph{(i) Tendency.} On $(0,r_1)$ we have $u>1$, so $f(u)>0$, $f'(u)>0$ and $f''(u)>0$. Suppose $v\ge0$ on $[0,r_1]$. As $r\downarrow0$ the bracket in $\Sigma$ is bounded (local regularity, Section~\ref{sec:well-posedness}) and $e^G\to0$, so $\Sigma(0^+)=0$. By \eqref{eq:Sigma-prime}, $\Sigma'=e^Gf''(u)u'^2v\ge0$ on $(0,r_1)$, and $>0$ on a set of positive measure since $v(0)=1$, so $\Sigma(r_1)>0$. On the other hand, at $r_1$ we have $f(u)=0$, hence from the two equations
    \[
        u''v'-u'v''=\big(-gu'-f(u)\big)v'-u'\big(-gv'-f'(u)v\big)=f'(1)u'v\quad\text{at }r=r_1,
    \]
    so $\Sigma(r_1)=(p-1)e^Gu'(r_1)v(r_1)\le0$: a contradiction. Thus $v$ vanishes in $(0,r_1)$. Let $\tau_1$ be its first zero. On $(0,\tau_1)$, $(e^Gv')'=-e^Gf'(u)v<0$ and $e^Gv'\to0$ as $r\downarrow0$, so $v'<0$ on $(0,\tau_1]$. In particular $v'(\tau_1)<0$, as it cannot vanish together with $v$, and $u'v'(\tau_1)>0$.

    \emph{(ii) Resistance.} As $r\downarrow0$ we have $M(0^+)=0$ and $Q(0^+)=0$ (bounded brackets times $e^G\to0$, respectively $e^G\varphi\to0$), and $T_1(0^+)=0$ since $g_1(u)$ is bounded near $u=\alpha$. Consider
    \[
        \varrho\defeq \sup\{r\in(0,z_1]:\ M>0\text{ on }(0,r)\}.
    \]
    On $(0,\tau_1]$ we have $M'=(p-1)e^G|u|^{p-1}uv\ge0$, $>0$ on $(0,\tau_1)$, so $M>0$ on $(0,\tau_1]$. Moreover, $M(\tau_1)=-e^Gu(\tau_1)v'(\tau_1)>0$. Hence $\varrho>\tau_1$.

    \emph{Claim 1: $v<0$ on $(\tau_1,\varrho)$.} A second zero $\tau'\in(\tau_1,\varrho)$ of $v$ would have $v'(\tau')\ge0$, hence $M(\tau')=-e^Gu(\tau')v'(\tau')\le0$ (and $v'(\tau')=0$ is impossible), contradicting $M>0$.

    \emph{Claim 2: $T_1>0$ on $(0,\min(\varrho,r_1)]$, and consequently $Q\ge T_1>0$.} By \eqref{eq:T-one-prime}, $T_1'=-M\big[\varphi''g_1(u)+2\varphi'|u|^{-p}(\sgn u)u'\big]$. On $(0,\min(\varrho,r_1))$ we have $u>1$, so $g_1(u)>0$ and $\varphi''g_1\le0$. Also, $(\sgn u)u'=u'<0$, so the second term in the bracket is negative. Hence $T_1'>0$ where $M>0$, i.e.\ on $(0,\min(\varrho,r_1))$, and with $T_1(0^+)=0$ this gives $T_1>0$. Since $g_1(u)\ge0$ and $M\ge0$ on this range ($M=0$ only at $\varrho$, if $\varrho\le r_1$), $Q=T_1+\varphi'g_1(u)M\ge T_1>0$.

    \emph{Claim 3: $\varrho>r_1$, and $Q(r_1)=T_1(r_1)>0$.} Suppose $\varrho\le r_1$. Then $M(\varrho)=0$. Eliminating $v'$ through $u'v=uv'$ at $\varrho$ gives, directly from the definitions,
    \begin{equation}\label{eq:Q-at-M-zero}
        Q(\varrho)=\frac{v(\varrho)}{u(\varrho)}P_1(\varrho).
    \end{equation}
    Now $\varrho>\tau_1$, so $v(\varrho)<0$ by Claim 1 (if $v(\varrho)=0$ then $M(\varrho)=-e^Guv'(\varrho)=0$ forces $v'(\varrho)=0$, which is impossible).  Also $u(\varrho)>0$, and $P_1(\varrho)>0$ by Proposition~\ref{prop:pohozaev-positive}. So $Q(\varrho)<0$, contradicting Claim 2, which is valid up to $\varrho$ by continuity. Hence $\varrho>r_1$. At $r_1$, $g_1(u(r_1))=g_1(1)=0$ gives $Q(r_1)=T_1(r_1)>0$.

    \emph{Claim 4: $M>0$, $Q>0$, $v<0$ on $(r_1,z_1]$, so $\varrho=z_1$.} On $(r_1,z_1)$ we have $u\in(0,1)$, so $f(u)<0$ and, as long as $v<0$, $Q'=2\varphi'e^Gf(u)v>0$: $Q$ increases from $Q(r_1)>0$. The three properties go together. Let
    \[
        \varrho_1\defeq \sup\{r\le z_1:\ M>0,\ v<0,\ Q>0\text{ on }(r_1,r)\}.
    \]
    At $\varrho_1$, a zero of $v$ is excluded as in Claim 1 (using $M>0$). A zero of $M$ is excluded by \eqref{eq:Q-at-M-zero}, which would give $Q(\varrho_1)<0$ while $Q$ has been increasing from $Q(r_1)>0$. Finally, a zero of $Q$ is excluded since $Q$ is increasing while $v<0$. Hence $\varrho_1=z_1$, and the properties hold up to $z_1$ inclusive: a zero $v(z_1)=0$ would give $Q(z_1)=e^G\varphi u'v'(z_1)<0$ against $Q$ increasing through positive values, so $v(z_1)<0$, hence $M(z_1)=e^Gu'(z_1)v(z_1)>0$, that is $v(z_1)u'(z_1)>0$, and $Q(z_1)>Q(r_1)>0$.

    On $(0,r_1]$, $T_2=T_1+\varphi'|u|^{1-p}M>0$ by \eqref{eq:T-two-T-one}. On $(r_1,z_1)$, $u\in(0,1)$ gives $g_2(u)<0$, so $T_2=Q-\varphi'g_2(u)M=Q+\varphi'|g_2(u)|M>0$. ($T_2$ is not defined at $z_1$: $g_2(u)\to-\infty$ as $u\to0$, so $T_2\to+\infty$ as $r\uparrow z_1$.)

    \emph{(iii) Ground state.} There is no $z_1$, and $r_1$ exists since $u\downarrow0$. Steps (i)--(ii) go through with $z_1$ replaced by $+\infty$: in Claim 4, $Q$ is increasing on $(r_1,\infty)$ while $v<0$, and the propagation of the three properties yields $M,Q>0$ and $v<0$ on the whole ray (here $P_1>0$ on $(0,\infty)$ by Proposition~\ref{prop:pohozaev-positive}).

    \emph{(iv) First ascent.} On $(z_1,\bar r_1]$ we have $u\in[-1,0)$ and $u'<0$, since a critical point in $(z_1,\bar r_1]$ would have $E>0$ and $|u|\le1$, against Proposition~\ref{prop:critical-points}(i). Moreover, as long as $v<0$,
    \[
        uv>0,\qquad u'v>0,\qquad f(u)>0,\qquad F(u)<0,\qquad uf'(u)-f(u)=(p-1)|u|^{p-1}u<0,
    \]
    where $f(u)>0$ because $f<0$ on $(0,1)$ and $f$ is odd. First, $M(z_1)=e^Gu'v(z_1)>0$ and $M'=(p-1)e^G|u|^{p-1}uv>0$, so $M$ increases while $v<0$. Second, $Q_1(z_1)=Q(z_1)+e^Gu'v(z_1)=Q(z_1)+M(z_1)>0$. We propagate $\{Q_1>0,\ v<0\}$ on $(z_1,\bar r_1]$:
    \begin{itemize}
        \item A first zero $\varsigma$ of $v$ would satisfy $v'(\varsigma)>0$ and $Q_1(\varsigma)=e^G\varphi u'v'(\varsigma)<0$, since all the other terms carry a factor $v$: this cannot happen while $Q_1>0$.
        \item For a first zero $\tilde q\in(z_1,\bar r_1]$ of $Q_1$, consider the \emph{anchored bridge}
              \[
                  \widetilde B\defeq Q-2\varphi_1'F(u)\frac{e^Gv}{u'},\qquad\varphi_1'\defeq \varphi'(z_1)\quad\text{(a constant)}.
              \]
              Since $\varphi_1'$ is constant, the computation behind \eqref{eq:bridge-prime} yields (identity~\eqref{eq:id-anchored-bridge})
              \[
                  \widetilde B'=2\big(\varphi'-\varphi_1'\big)e^Gf(u)v-2\varphi_1'F(u)\frac{Q_\nu}{\varphi u'^2}.
              \]
              On $(z_1,\tilde q)$ we have $\varphi'\le\varphi_1'$, as $\varphi'$ is nonincreasing, and $f(u)v<0$, so the first term is $\ge0$. Moreover, $F(u)<0$ and $Q_\nu=Q_1+(\nu-1)e^Gu'v\ge Q_1>0$, since $u'v>0$ and $\nu\ge1$, so the second term is $>0$.
              Hence $\widetilde B$ is strictly increasing on $(z_1,\tilde q)$ from $\widetilde B(z_1)=Q(z_1)>0$, the $F$ term vanishing at $z_1$ since $F(0)=0$. But at $\tilde q$ we have $Q(\tilde q)=-e^Gu'v(\tilde q)$, so
              \[
                  \widetilde B(\tilde q)=-e^G\Big[u'v+2\varphi_1'F(u)\frac{v}{u'}\Big]
                  =-\frac{e^Gv}{u'}\Big[u'^2+2\varphi_1'F(u)\Big]
                  \le-\frac{e^Gv}{u'}\Big[u'^2+2F(u)\Big]
                  =-\frac{2e^Gv}{u'}E<0,
              \]
              using $v(\tilde q)<0$ (if $v(\tilde q)=0$, then $Q_1(\tilde q)=e^G\varphi u'v'(\tilde q)=0$ would force $v'(\tilde q)=0$), so that $v/u'>0$, and $E>0$, and $\varphi_1'\le1$ with $F<0$ (so $2\varphi_1'F\ge2F$). This contradicts $\widetilde B(\tilde q)>Q(z_1)>0$.
    \end{itemize}
    Hence $Q_1>0$ and $v<0$ on $(z_1,\bar r_1]$, and $M>0$ by its monotonicity. The argument uses $|u|\le1$, that is $r\le\bar r_1$, only through $\sgn f(u)=-\sgn u$. It is exactly at $\bar r_1$ that $f(u)v$ changes sign and the first bridge term becomes useless.

    \emph{(v) Euclidean ascent.} If $\varphi'\equiv1$ then $\varphi_1'=1$, $\varphi''\equiv0$, $\nu\equiv\gamma+1>1$, and the first bridge term vanishes identically. Since only $F<0$, that is $|u|<\alpha_*$, was used in the second term, the argument of (iv) runs on all of $(z_1,\bar b_1]$, with the same contradiction at a hypothetical zero of $Q_1$. At $\bar b_1$, $F(u(\bar b_1))=F(-\alpha_*)=0$ gives $Q(\bar b_1)=\widetilde B(\bar b_1)>Q(z_1)>0$. On $[\bar b_1,c_1]$ we have $|u|>\alpha_*>1$, so $f(u)<0$ and $v<0$, hence $Q'=2e^Gf(u)v>0$ and $Q$ increases. A first zero $\varsigma\in(\bar b_1,c_1]$ of $v$ would have $v'(\varsigma)\ge0$ and $Q(\varsigma)=e^G\varphi u'v'(\varsigma)\le0<Q(\bar b_1)\le Q(\varsigma)$, which is excluded. Finally $Q_a(c_1)=Q(c_1)>0$ for every $a$, as $u'(c_1)=0$, and $M(c_1)>0$ by monotonicity.
\end{proof}

\begin{remark}\label{rem:nu-condition}
    The only place in Theorem~\ref{thm:first-phase} where a hypothesis beyond ($\Phi$) is needed is the condition $\nu\ge1$ on $[z_1,\bar r_1]$ in step (iv). It is trivial when $\gamma\ge2$, that is $n\ge3$ for warped products that are $C^2$ at the pole, since $\nu>\gamma-1$ by \eqref{eq:phi-bounds}. For $\gamma\in(1,2)$, as $\nu$ is nonincreasing with $\nu(0^+)=\gamma+1$ (Lemma~\ref{lem:phi-basic}), it holds on some initial interval $(0,r_\nu]$ and asks that the first ascent take place in $(0,r_\nu]$. Part (iv) is used only for the second half of Theorem~\ref{thm:unconditional}(i), and hence in Theorem~\ref{thm:euclidean}, where $\nu\equiv n$. It is not used in the proofs of Theorem~\ref{thm:staircase} and of Theorem~\ref{thm:unconditional}(ii)--(iv): under (H) the ascent is handled by Theorem~\ref{thm:phase} and Corollary~\ref{cor:ascent}, which need only $\varphi'+\gamma-1>0$.
\end{remark}

\begin{remark}\label{rem:Q-at-M-zero}
    Identity~\eqref{eq:Q-at-M-zero} (at any zero of $M$ with $u\neq0$ one has $Q=(v/u)P_1$) is the reason why the Pohozaev estimates of Section~\ref{sec:pohozaev} control the Wronskian identity. It follows from the definitions: if $u'v=uv'$ then $Q=(v/u)e^G\big[\varphi u'^2+\varphi uf(u)+(\gamma-1)uu'\big]=(v/u)P_1$.
\end{remark}

%% file: sections/15-later-phases.tex
\section{Later phases: conventions and the scaling field}\label{sec:later-phases}

After the first phase, the proof becomes an induction over the critical points of $u$. The induction hypothesis, the slope condition of Section~\ref{sec:transition}, says roughly that at a critical point $v$ has the sign of $u$ and is already moving toward zero, but not too fast. The induction step shows that the same holds at the next critical point. This section only sets up the induction. It fixes the notation for a general phase, and it explains, using the scaling field, why one of the three inequalities of the slope condition, the slope bound $T_2>0$, is much harder to carry over than the other two.

The conventions of this preamble are used throughout Sections~\ref{sec:later-phases}--\ref{sec:renewal-below}. The solution $u$ is a nodal solution with zeros $z_1<\dots<z_k$ ($k\ge1$), or a bound state with $k\ge1$ zeros. For a nodal solution $E>0$ on $(0,z_k]$, since $E$ is nonincreasing and $E(z_k)=\frac12u'(z_k)^2>0$. We write $K\defeq k-1$ if $u$ is a nodal solution which is not a bound state and $K\defeq k$ if $u$ is a bound state. Then $E>0$ on $(0,c_K]$. We put $c_0\defeq0$ and
\begin{equation}\label{eq:beta-levels}
    \beta_i\defeq|u(c_i)|\qquad(0\le i\le K),\qquad\beta_0=\alpha,
\end{equation}
so that $\beta_i>\alpha_*$ for $1\le i\le K$ and $\beta_i<\beta_{i-1}$, because $E$ is decreasing, $E(c_i)=F(\beta_i)$ and $F$ is increasing on $[1,\infty)$.

                \emph{Phase} $i$ is the interval $(c_{i-1},c_i]$: the \emph{descent} $(c_{i-1},z_i)$ and the \emph{ascent} $(z_i,c_i)$ are its two halves. In phase $i$ we use the points
\[
    c_{i-1}<b_i<r_i<z_i<\bar r_i<\bar b_i<c_i,\qquad|u(b_i)|=|u(\bar b_i)|=\alpha_*,\qquad
    |u(r_i)|=|u(\bar r_i)|=1,
\]
whose existence and ordering follow from Propositions~\ref{prop:critical-points} and~\ref{prop:classification}, since $|u(c_{i-1})|,|u(c_i)|>\alpha_*$ and $|u|$ is strictly monotone between consecutive critical points. The \emph{river} is the stretch $(b_i,\bar b_i)$ on which $|u|<\alpha_*$ (see Figure~\ref{fig:phase-scheme}). The ascent points $\bar r_i<\bar b_i<c_i$ are used only for $i\le K$: for a nodal solution which is not a bound state the last available phase is the descent $(c_{k-1},z_k]$, since $E$ may become negative after $z_k$ and $\bar r_k$ need not exist. For a bound state the final phase is $(c_k,\infty)$, on which only $b_{k+1}<r_{k+1}$ exist, as $|u|\downarrow0$.

The functions $M,Q,T_1,T_2,\dots$ are invariant under $(u,v)\mapsto(-u,-v)$. We therefore normalize $u>0$ on the descent of the phase under consideration, so that $u'<0$ on the whole phase, $u>0$ on the descent and $u<0$ on the ascent. Statements whose signs depend on this choice are to be read with this normalization.

The subsection below shows which identities are satisfied by the scaling field of the pure-power equation: they explain why the quantity $T_2$ in the slope bound is of lower order at large amplitude.

\subsection{The scaling field and the quantity \texorpdfstring{$T_2$}{T2}}\label{sec:scaling-field}
The function $T_2=Q-\varphi'g_2(u)M$ has a clear meaning in the scaled limit of Section~\ref{sec:scaled-limit}, which explains why the renewal of the slope bound $T_2(c_i)>0$ from one critical point to the next (Section~\ref{sec:renewal-above} and Section~\ref{sec:renewal-below}) is the delicate step of the argument.

\begin{proposition}[The scaling field]\label{prop:scaling-field}
    Let $w$ solve the pure-power Euclidean equation
    \[
        w''+\frac{\gamma}{r}w'+|w|^{p-1}w=0
    \]
    on $(0,\infty)$, $\gamma>1$, and let
    \[
        V\defeq \frac{2}{p-1}\,w+r\,w'=\frac{\partial}{\partial\lambda}\Big|_{\lambda=1}
        \lambda^{\frac{2}{p-1}}\,w(\lambda r)
    \]
    be its scaling field. Then $V''+\frac{\gamma}{r}V'+p|w|^{p-1}V=0$. Write $M[V]$, $Q[V]$, $T_2[V]$ for the functions of Section~\ref{sec:identities} with $(u,v)$ replaced by $(w,V)$. With $e^G=r^\gamma$, $\varphi=r$, $f(w)=|w|^{p-1}w$, $P_1=r^\gamma\big[r(w'^2+wf(w))+(\gamma-1)ww'\big]$ and the value $g_2\equiv\frac{2}{p-1}$,
    \[
        M[V]=P_1,\qquad Q[V]=\frac{2}{p-1}P_1,\qquad T_2[V]=Q[V]-\frac{2}{p-1}M[V]\equiv0.
    \]
\end{proposition}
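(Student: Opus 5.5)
The plan is to verify the three identities by direct substitution, organizing the computation around Wronskians so that the statement about $Q[V]$ follows from the one about $M[V]$ with almost no extra work.

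\emph{Step 1: $V$ solves the linearized equation.} For each $\lambda>0$ the rescaled function $w_\lambda(r):=\lambda^{2/(p-1)}w(\lambda r)$ again solves $w_\lambda''+\frac{\gamma}{r}w_\lambda'+|w_\lambda|^{p-1}w_\lambda=0$. This is the scaling invariance of the pure-power equation: the exponents $2/(p-1)+2$ and $2p/(p-1)$ coincide, and $\frac{\gamma}{r}$ is homogeneous of degree $-1$. Differentiating in $\lambda$ at $\lambda=1$ gives the equation for $V$, since $\frac{d}{dw}(|w|^{p-1}w)=p|w|^{p-1}$. Differentiation under the equation is legitimate because $w$ is $C^2$ on $(0,\infty)$ and $\lambda\mapsto w_\lambda$ is smooth there. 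Alternatively, one can check the equation for $V$ directly using $w'''=-(\gamma/r)w''+(\gamma/r^2)w'-p|w|^{p-1}w'$.

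\emph{Step 2: compute $M[V]$.} Set $a=\frac2{p-1}$. From $V=aw+rw'$ we get $V'=(a+1)w'+rw''$, and therefore
\[
w'V-wV'=rw'^2-ww'-rww''.
\]
Substituting $w''=-\frac{\gamma}{r}w'-f(w)$ turns $-rww''$ into $\gamma ww'+rwf(w)$. The bracket then becomes $r(w'^2+wf(w))+(\gamma-1)ww'$, and multiplying by $e^G=r^\gamma$ gives $M[V]=P_1$.

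\emph{Step 3: compute $Q[V]$ through its Wronskian form.} Section~\ref{sec:identities} notes that $Q=e^G[\tilde wv'-\tilde w'v]$ with $\tilde w=\varphi u'$. I would verify this here with $\varphi=r$. One has $(rw')'=w'+rw''=-(\gamma-1)w'-rf(w)$, so $-(rw')'V=(\gamma-1)w'V+rf(w)V$, which matches the definition of $Q$ term by term.

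The key observation is that $rw'=V-aw$. Hence
\[
Q[V]=r^\gamma\big[(V-aw)V'-(V-aw)'V\big]=a\,r^\gamma\big(w'V-wV'\big)=a\,M[V]=\tfrac2{p-1}P_1,
\]
because the $VV'-V'V$ terms cancel.

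\emph{Step 4: conclude $T_2[V]\equiv0$.} For $f(w)=|w|^{p-1}w$ we have $F=|w|^{p+1}/(p+1)$ and $h_1=wf-2F=\frac{p-1}{p+1}|w|^{p+1}$. This gives $g_2=2F/h_1\equiv\frac2{p-1}$, and $\varphi'\equiv1$. Therefore
\[
T_2[V]=Q[V]-\tfrac2{p-1}M[V]=0.
\]

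The only place that needs care is Step 3. One must check that the definition of $Q$ used in the paper coincides with the Wronskian form when $\varphi'\equiv1$ and $g=\gamma/r$. Once that is confirmed, the decomposition $rw'=V-aw$ does all the work, so I do not expect any real obstacle beyond bookkeeping.
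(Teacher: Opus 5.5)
Your proof is correct. Steps~1, 2 and~4 coincide with the paper's argument. There the scaling family gives the linearized equation, $M[V]$ is expanded with $w''$ eliminated by the equation, and $g_2=2F/h_1\equiv\frac{2}{p-1}$ together with $\varphi'\equiv1$ kills $T_2$.

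The difference is Step~3. The paper obtains $Q[V]=\frac{2}{p-1}P_1$ by the same kind of direct expansion as for $M[V]$: insert $V=\frac2{p-1}w+rw'$ and $V'=\frac{p+1}{p-1}w'+rw''$ into the definition of $Q$ and reduce $w''$. You instead read $Q$ as the weighted Wronskian of $V$ against $rw'=V-\frac2{p-1}w$, so that $Q[V]=\frac2{p-1}M[V]$ follows with no computation.

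Your route is more informative. The same two lines show that $Q[v]=a\,M[v]$ for every $v=au+\varphi u'$ with $a$ constant. This holds for any damping $g$ with its multiplier $\varphi$, any $f$, and any such $v$, whether or not it solves the variation equation. Hence $T_2[v]=\bigl(a-\varphi'g_2(u)\bigr)M[v]$. This isolates the constancy $\varphi'g_2\equiv\frac2{p-1}$ as the only reason for $T_2[V]\equiv0$, which the direct expansion verifies but hides.

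One small correction in Step~1. Differentiating $w_\lambda''(r)=\lambda^{2p/(p-1)}w''(\lambda r)$ in $\lambda$ needs $w\in C^3$, not just $C^2$. Calling $\lambda\mapsto w_\lambda$ ``smooth'' is also too strong when $f$ is only $C^1$. The needed $C^3$ regularity does hold: $f\in C^1$ for $p>1$, so $w''=-\frac{\gamma}{r}w'-f(w)$ is $C^1$.
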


\begin{proof}
    The family $w_\lambda(r)=\lambda^{2/(p-1)}w(\lambda r)$ solves the same equation, so $V=\partial_\lambda w_\lambda|_{\lambda=1}$ solves the linearized one. The three identities are direct computations, reducing $w''$ by the equation (Appendix~\ref{sec:identity-table}, \eqref{eq:id-scaling}). For instance,
    \begin{align*}
        M[V] & =r^\gamma\big(w'V-wV'\big)=r^\gamma\big(rw'^2-ww'-rww''\big) \\
             & =r^\gamma\big(rw'^2+(\gamma-1)ww'+r|w|^{p+1}\big)=P_1.
    \end{align*}
\end{proof}

\begin{remark}\label{rem:scaled-variation}
    Heuristically, the scaled limit identifies the variation with the scaling field. Put $\lambda\defeq\alpha^{\frac{p-1}{2}}$ and $s=\lambda r$. By Lemma~\ref{lem:scaled-convergence}, $u(r,\alpha)=\alpha\big(w(s)+o(1)\big)$ as $\alpha\to\infty$, uniformly for $s$ in compact sets.
    The rescaled variation $\tilde v(s)\defeq v(s/\lambda,\alpha)$ solves (with $'=d/ds$)
    \[
        \tilde v''+\lambda^{-1}g(s/\lambda)\tilde v'+\big(p\,|u/\alpha|^{p-1}-\lambda^{-2}\big)\tilde v=0,\qquad
        \tilde v(0)=1,\qquad \tilde v'(0)=0,
    \]
    and $\tilde v\to y$ uniformly on compact sets. Indeed, the coefficient $p|u/\alpha|^{p-1}-\lambda^{-2}$ is bounded by $pM^{p-1}+1$ with $M$ the bound of Lemma~\ref{lem:scaled-convergence}, so the integral form $e^{G(s/\lambda)}\tilde v'(s)=-\int_0^se^{G(t/\lambda)}\big(p|u/\alpha|^{p-1}-\lambda^{-2}\big)\tilde v\,dt$ and Gronwall's inequality bound $\tilde v$ and $\tilde v'$ on $[0,T]$ uniformly in $\lambda\ge1$ (the energy argument of Lemma~\ref{lem:scaled-convergence} has no analogue for this linear equation), and the passage to the limit is then as there, with $p|u/\alpha|^{p-1}\to p|w|^{p-1}$ uniformly on compact sets. Here
    \[
        y''+\frac{\gamma}{s}y'+p|w|^{p-1}y=0,\qquad y(0)=1,\qquad y'(0)=0.
    \]
    Since $\frac{p-1}{2}V$ solves the same problem, $v(r,\alpha)=\frac{p-1}{2}V(s)+o(1)$ on compact sets of $s$: in the scaled limit the variation is a multiple of the scaling field. (Both statements hold up to an additive error and say nothing about large $s$.)

    Proposition~\ref{prop:scaling-field} then says that, along the first phases of a bound state with many nodes, $Q[v]$ and $M[v]$ are positive with the margin of $P_1>0$, whereas $T_2[v]$ is a lower-order quantity. The sign of $T_2(c_i)$ comes from lower-order effects, namely the term $-u$ in $f$ and, unless $g=\gamma/r$, the deviation of $rg$ from $\gamma$ near the origin. Its verification requires analyzing these terms (it is the reason for the ``residual'' arguments of \cite{Tang2026}).

    The quantity $Q$ itself is not degenerate. In the scaled limit $Q[v]$, as a function of $s$, is asymptotic to a positive multiple (depending on $\alpha$) of $\frac{p-1}{2}Q[V]=P_1[w]$, and $P_1[w]>0$ on $(0,\infty)$. Indeed, the function
    \[
        P[w]\defeq r^{\gamma+1}(w'^2+2F(w))+(\gamma-1)r^\gamma ww'
    \]
    vanishes at $0^+$, and by \eqref{eq:id-pohozaev-limit} of Appendix~\ref{sec:identity-table} with $F(w)=|w|^{p+1}/(p+1)$ we have
    \[
        P[w]'=r^\gamma|w|^{p+1}\Big[\frac{2(\gamma+1)}{p+1}-(\gamma-1)\Big]>0
    \]
    under \eqref{eq:p-range}. Hence $P[w]>0$ and $P_1[w]=P[w]+r^{\gamma+1}h_1(w)\ge P[w]>0$.
\end{remark}

%% file: sections/16-critical-point-transition.tex
\section{The critical-point form of the transition}\label{sec:transition}

This section carries out the induction step. We assume the slope condition at one critical point of $u$ and follow $v$ through the next phase. On the descent $v$ changes sign once, at the zero of $u$ it has the right sign, and it gets through the river with the help of a bridge function. At the next critical point $v$ has the right signs again, but the third inequality, the slope bound, comes back here only in the easier of two cases, when the zero of $v$ lies high on the descent. The harder case is left to Section~\ref{sec:renewal-below}. In the last phase of a bound state the same analysis shows that $v$ grows at infinity with the sign opposite to $u$, which is all that Section~\ref{sec:staircase} needs. The argument follows the Euclidean proof of~\cite{Tang2026}, and on warped products it needs one extra hypothesis on $g$, which holds on Euclidean and hyperbolic spaces.

Precisely, the slope condition is \eqref{eq:slope-condition}, and the extra hypothesis is (H) (Assumption~\ref{ass:H}). Under (H), Theorem~\ref{thm:phase} controls the whole phase from the slope condition at the preceding critical point. The river $(b_i,\bar b_i)$, on which $|u|<\alpha_*$, is crossed by a bridge function that does not see the sign of $Q$ inside it, and Corollary~\ref{cor:ascent} completes the ascent with no smallness condition. When the zero $\tau_i$ of $v$ on the descent lies above the level $\beta_i=|u(c_i)|$, Theorem~\ref{thm:renewal-above} renews the slope bound $T_2(c_i)>0$. The remaining case, in which $\tau_i$ lies below that level, is the hardest step of~\cite{Tang2026}. It is treated in Section~\ref{sec:renewal-below}, where the main theorem is proved in critical-point form (Theorem~\ref{thm:critical-form}). The behavior at infinity is Theorem~\ref{thm:phase}(v), and Theorem~\ref{thm:tail-criterion} turns it into the theorems of Section~\ref{sec:uniqueness}. Throughout this section, we use the conventions of Section~\ref{sec:later-phases}, including the levels $\beta_i$ of \eqref{eq:beta-levels}.

\subsection{The hypothesis (H) and the variable-weight Wronskians}\label{sec:hyp-H}

\begin{assumption}[H]\label{ass:H}
    $e^G\varphi'$ is nondecreasing on $(0,\infty)$, or equivalently,
    \begin{equation}\label{eq:H-forms}
        g\varphi'+\varphi''\;\ge\;0,\qquad\text{or}\qquad 2g\varphi'+g'\varphi\;\ge\;0,\qquad\text{or}\qquad
        \big(e^G(D-1)\big)'\;\ge\;0,
    \end{equation}
    where $D=g\varphi/(\gamma-1)$ is the function of Definition~\ref{def:multiplier}.
\end{assumption}

The three forms are the same statement, since $(e^G\varphi')'=e^G(g\varphi'+\varphi'')$, $\varphi''=(g\varphi)'=g'\varphi+g\varphi'$, and $e^G\varphi'=(\gamma-1)e^G(D-1)$. Under $(\Phi)$ one has $\varphi''\le0$, and (H) is the quantitative bound $|\varphi''|\le g\varphi'$ on the rate at which $\varphi'$ decreases: in the last form, $D-1$ decays no faster than $e^{-G}$. On $\R^n$, $e^G\varphi'=r^{n-1}$ and (H) is trivial.

\begin{lemma}[(H) on the model spaces and a criterion]\label{lem:H-criterion}
    \begin{enumerate}[label=(\roman*)]
        \item Assume \textup{(G1)}, \textup{(G4)}, \textup{(G5)}. Put $\ell:=1/g$ and suppose $\ell'<2$ on $(0,\infty)$ (so that $g'+2g^2=(2-\ell')/\ell^2>0$). If
              \begin{equation}\label{eq:H-criterion}
                  2\ell\ell''\;\ge\;\ell'\,(\ell'-2)\qquad\text{on }(0,\infty),
              \end{equation}
              then (\textup{H}) holds.
        \item (\textup{H}) holds on $\R^n$ and on the hyperbolic spaces $\Hyp^n_\lambda$ of every curvature, for every $n\ge3$. On $\Hyp^3$ one has $g\varphi'+\varphi''=4e^{-4r}/(1-e^{-2r})>0$ explicitly.
    \end{enumerate}
\end{lemma}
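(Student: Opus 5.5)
Part (i) follows the strategy of Lemma~\ref{lem:phi-criterion}: turn (H) into a comparison between $\cJ$ and an explicit function, then show that the difference is monotone with limit $0$ along a sequence. Part (ii) is a direct check of \eqref{eq:H-criterion}.

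\emph{Reduction.} By \eqref{eq:H-forms}, (H) is $2g\varphi'+g'\varphi\ge0$. Using $\varphi'=g\varphi-(\gamma-1)$ from \eqref{eq:phi-ode}, this becomes
\[
    (g'+2g^2)\varphi\ge2(\gamma-1)g .
\]
Since $g'+2g^2=(2-\ell')/\ell^2>0$ and $\varphi=(\gamma-1)e^G\cJ$, this is equivalent to $\cJ\ge\mathcal K$, where
\[
    \mathcal K:=\frac{2\ell\,e^{-G}}{2-\ell'} .
\]

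\emph{Monotonicity of $\cJ-\mathcal K$.} The computation parallels Lemma~\ref{lem:concave-reciprocal}, using $(e^{-G})'=-e^{-G}/\ell$. It gives
\[
    (\cJ-\mathcal K)'=-e^{-G}\,\frac{2\ell\ell''-\ell'(\ell'-2)}{(2-\ell')^2}.
\]
To see this, note that the numerator collapses to $(2-\ell')\ell'+2\ell\ell''$. Under \eqref{eq:H-criterion} the right-hand side is $\le0$, so $\cJ-\mathcal K$ is nonincreasing. It therefore suffices to show $\liminf_{r\to\infty}(\cJ-\mathcal K)\le0$ along some sequence; then $\cJ-\mathcal K\ge0$ everywhere.

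\emph{Limit along a sequence (the main obstacle).} We already have $\cJ\to0$, so the task is to show $\mathcal K\to0$ along a sequence. The difficulty is that $2-\ell'$ is only known to be positive, not bounded away from $0$.

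First, bound the factor $\ell e^{-G}$. From $\ell'<2$ we get $\ell\le\ell(1)+2r$ for $r\ge1$. Since $e^{-G}$ is decreasing and integrable, $r e^{-G(r)}\le2\int_{r/2}^re^{-G}\to0$. Together these give $\ell e^{-G}\to0$.

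Second, control the denominator. I claim $\ell'(r_m)\le2-\varepsilon$ along some sequence $r_m\to\infty$, for a fixed $\varepsilon\in(0,1)$. Suppose instead that $\ell'>2-\varepsilon$ on some $[R,\infty)$. Then $\ell\ge(2-\varepsilon)r-C$, so $g\le1/((2-\varepsilon)r-C)$ for large $r$. This gives $e^{-G}\ge c\,r^{-1/(2-\varepsilon)}$, which is not integrable, contradicting (G5).

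Along the sequence $r_m$ we then have $\mathcal K(r_m)\le2\varepsilon^{-1}\ell e^{-G}(r_m)\to0$, which proves (i).

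\emph{Part (ii).} On $\R^n$ we have $e^G\varphi'=r^{n-1}$ up to a constant factor, which is increasing, so (H) holds.

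On $\Hyp^n_\lambda$ with $n\ge3$, put $t=\tanh(\lambda r)$. Then
\[
    \ell=\frac{t}{(n-1)\lambda},\qquad \ell'=\frac{1-t^2}{n-1}<2,\qquad \ell''=-\frac{2\lambda t(1-t^2)}{n-1}.
\]
After multiplying \eqref{eq:H-criterion} by $(n-1)^2/(1-t^2)>0$, it reads
\[
    -4t^2\ge(1-t^2)-2(n-1),\qquad\text{that is}\qquad 3t^2\le2n-3,
\]
which holds because $t<1$ and $n\ge3$. So (i) applies.

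For $\Hyp^3$, the explicit value follows by differentiating $\varphi=\frac12(1-e^{-2r})$ from Lemma~\ref{lem:space-forms}. One has $\varphi'=e^{-2r}$, $\varphi''=-2e^{-2r}$ and $g=2\coth r$, and substituting gives $g\varphi'+\varphi''=4e^{-4r}/(1-e^{-2r})$.
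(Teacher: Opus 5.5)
Your proof is correct and follows the paper's argument essentially step by step: the same reduction of (H) to $\cJ\ge\mathcal K=2\ell e^{-G}/(2-\ell')$, the same derivative computation showing $\cJ-\mathcal K$ is nonincreasing under \eqref{eq:H-criterion}, and the same device of a sequence on which $\ell'$ stays away from $2$ (the paper takes $\ell'\le\frac32$). Part (ii) is checked in the same way, except that on $\R^n$ you use $e^G\varphi'=r^{n-1}$ directly instead of the criterion, and on $\Hyp^n_\lambda$ you keep general $\lambda$ instead of scaling to $\lambda=1$.

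One small slip: since $\cJ-\mathcal K$ is nonincreasing, what you need is that its limit at infinity is $\ge0$, not that $\liminf\le0$ (the latter holds trivially because $\mathcal K>0$ and $\cJ\to0$). Your argument does prove $\mathcal K(r_m)\to0$, hence $(\cJ-\mathcal K)(r_m)\to0$, which is exactly the needed statement.
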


\begin{proof}
    (i) Since $\varphi=(\gamma-1)e^G\cJ$ (Definition~\ref{def:multiplier}), the second form of \eqref{eq:H-forms} becomes $g'\varphi+2g(g\varphi-(\gamma-1))\ge0$, i.e.
    \begin{equation}\label{eq:upsilon}
        e^G\cJ\;\ge\;\Upsilon:=\frac{2g}{g'+2g^2}=\frac{2\ell}{2-\ell'}.
    \end{equation}
    Now $(\Upsilon e^{-G})'=(\Upsilon'-g\Upsilon)e^{-G}$, and $g\Upsilon-1=\frac{2g^2}{g'+2g^2}-1=-\frac{g'}{g'+2g^2}$, so that $(\Upsilon e^{-G})'\ge-e^{-G}$ if and only if $\Upsilon'\ge-\frac{g'}{g'+2g^2}$. A direct computation gives $\Upsilon'=\frac{2(g'^2-2g^2g'-gg'')}{(g'+2g^2)^2}$, and the condition becomes $3g'^2-2g^2g'-2gg''\ge0$, which in terms of $\ell$ is $\ell'(2-\ell')+2\ell\ell''\ge0$, that is \eqref{eq:H-criterion}. Next, $\ell'\le\frac32$ along a sequence $R_m\to\infty$: otherwise $\ell(r)\ge\ell(R)+\frac32(r-R)$ for $r\ge R$, so $G(r)\le G(R)+\frac23\log\frac{\ell(R)+\frac32(r-R)}{\ell(R)}$ and $e^{-G}$ would not be integrable, contradicting (G5). Along this sequence, $\Upsilon e^{-G}\le4\ell e^{-G}\to0$, because $\ell(r)\le\ell(1)+2(r-1)$ for $r\ge1$ by $\ell'<2$, while $re^{-G(r)}\le2\int_{r/2}^re^{-G}\to0$ by (G5), $e^{-G}$ being decreasing. Integrating $(\Upsilon e^{-G})'\ge-e^{-G}$ from $r$ to $R_m$ and letting $m\to\infty$ gives $\Upsilon(r)e^{-G(r)}\le\cJ(r)$, which is \eqref{eq:upsilon}.

    (ii) On $\R^n$, $\ell=r/\gamma$, $\ell'=1/\gamma\in(0,1)$, $\ell''=0$, and \eqref{eq:H-criterion} becomes $0\ge\frac1\gamma(\frac1\gamma-2)$.
    On $\Hyp^n$ (curvature $-1$, to which the general case reduces by scaling), $\ell=\tanh r/\gamma$, $\ell'=\sech^2r/\gamma$, $\ell''=-2\sech^2r\tanh r/\gamma$, and \eqref{eq:H-criterion} becomes $-4\tanh^2r\ge\sech^2r-2\gamma$, i.e.\ $2\gamma\ge1+3\tanh^2r$, which holds for $\gamma\ge2$. For $\Hyp^3$, $\varphi=\frac12(1-e^{-2r})$, $\varphi'=e^{-2r}$, $\varphi''=-2e^{-2r}$, $g=2\coth r$, and $g\varphi'+\varphi''=2e^{-2r}(\coth r-1)=4e^{-4r}/(1-e^{-2r})$.
\end{proof}

We use the Wronskians of Section~\ref{sec:variation} with variable weights, and a weighted bridge function:
\begin{gather}
    Q_\chi:=Q+\chi(r)\,e^Gu'v\quad(\chi\in C^1),\qquad\text{in particular}\qquad
    Q_{\varphi'},\ Q_{2\varphi'},\ Q_\nu\ \ (\nu=2\varphi'+\gamma-1),\label{eq:Q-chi}\\
    \sB_a:=Q-a\varphi'M-2\varphi'F_a(u)\frac{e^Gv}{u'}=W_a+\varphi'H_a(u)\,\frac{e^Gv}{u'},\quad
    H_a(u):=-2F_a(u)=a\,h_1(u)-2F(u),\label{eq:weighted-bridge}
\end{gather}
for a constant $a$, where $W_a=Q-a\varphi'M$ and $F_a=F-\frac a2h_1$ are as in Section~\ref{sec:variation}. For $a=g_2(\beta)$, $\beta>0$, one has $H_a(u)=u^2\big(1-(|u|/\beta)^{p-1}\big)$, which is nonnegative exactly for $|u|\le\beta$ and vanishes at $|u|=\beta$, and $\kappa_a(u)=(\alpha_*|u|/\beta)^{p-1}-1$ in \eqref{eq:W-prime}, since $1-\frac{p-1}{2}g_2(\beta)=\frac{p+1}{2}\beta^{1-p}=(\alpha_*/\beta)^{p-1}$. On $\R^n$, $\sB_a$ is the bridge $B_a$ of \cite{Tang2026}. In general it differs from the $B_a$ of Proposition~\ref{prop:identity-table} by the weight $\varphi'$ in front of $F_a$, which removes the term $2(\varphi'-1)e^Guv\kappa_a$ of \eqref{eq:bridge-prime} that has no definite sign on the river.

\begin{lemma}[Identities]\label{lem:weighted-identities}
    On every interval on which $u'\ne0$ (and $u\ne0$ where $\omega$ occurs; the identities containing $\theta=e^Gv\,h_1(u)/u'$ hold across the zeros of $u$, since $h_1$ is $C^1$):
    \begin{align}
        Q_\chi'                 & =e^G\big[\chi\,u'v'+(2\varphi'-\chi)f(u)v+\chi'\,u'v\big],\label{eq:Q-chi-prime}                                  \\
        \Big(\frac{v}{u'}\Big)' & =\frac{Q_{\varphi'}}{e^G\varphi u'^2},\qquad
        \Big(\frac{e^Gv}{u'}\Big)'=\frac{Q_\nu}{\varphi u'^2},\qquad
        \Big(\frac{Q}{e^Gu'}\Big)'=\frac{f(u)\,Q_{2\varphi'}}{e^Gu'^2},\label{eq:ratio-primes}                                                      \\
        (M-\theta)'             & =-\frac{h_1(u)\,Q_\nu}{\varphi u'^2},\label{eq:M-theta-prime}                                                     \\
        \sB_a'                  & =-2\varphi'F_a(u)\frac{Q_\nu}{\varphi u'^2}
        -\varphi''\Big(aM+2F_a(u)\frac{e^Gv}{u'}\Big)\label{eq:weighted-bridge-prime}                                                               \\
                                & =\varphi'H_a(u)\frac{Q}{\varphi u'^2}+a|\varphi''|M+c_H\,H_a(u)\frac{e^Gv}{u'},\label{eq:weighted-bridge-prime-H}
    \end{align}
    where $c_H:=\varphi'\nu/\varphi+\varphi''=g\varphi'+\varphi''+\varphi'^2/\varphi$, so that $c_H\ge\varphi'^2/\varphi>0$ under (\textup{H}); the second form \eqref{eq:weighted-bridge-prime-H} is recorded for comparison with~\cite{Tang2026} and is not used below. Moreover, at a zero $\tau$ of $v$ and at a point $s$ with $g_2(u(s))=a$,
    \begin{equation}\label{eq:bridge-values}
        \sB_a(\tau)=W_a(\tau)=M(\tau)\big(\omega(\tau)-a\varphi'(\tau)\big),\quad
        T_2(\tau)=M(\tau)\big(\omega(\tau)-\varphi'(\tau)g_2(u(\tau))\big),\quad
        \sB_a(s)=T_2(s),
    \end{equation}
    and at a critical point $c$ of $u$ with $a=g_2(|u(c)|)$, $\sB_a(r)\to T_2(c)$ as $r\to c$. Finally $Q=\frac vuP_1+\omega M$ wherever $u\ne0$.
\end{lemma}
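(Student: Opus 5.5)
These identities are all direct differentiations from \eqref{eq:ode}, the variation equation \eqref{eq:variation}, the relation $\varphi'=g\varphi-(\gamma-1)$ and its derivative $\varphi''=(g\varphi)'$. No property of $g$ beyond these enters, except (H) for the sign of $c_H$. I would organize the computation so that the later identities reuse \eqref{eq:Q-prime} and \eqref{eq:M-prime} from Proposition~\ref{prop:identity-table}, rather than being recomputed from scratch.

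\emph{Step 1: $Q_\chi'$ and the ratio identities.} Differentiating $e^G\chi u'v$ gives
\[
e^G\big[\chi' u'v+\chi(u''+gu')v+\chi u'v'\big]=e^G\big[\chi'u'v-\chi f(u)v+\chi u'v'\big].
\]
Adding \eqref{eq:Q-prime} yields \eqref{eq:Q-chi-prime}.

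For the ratio identities, I compute $(v/u')'=(u'v'-u''v)/u'^2=(u'v'+gu'v+f(u)v)/u'^2$. Multiplying numerator and denominator by $e^G\varphi$, the numerator becomes $Q+(g\varphi-(\gamma-1))e^Gu'v=Q_{\varphi'}$. The identity for $e^Gv/u'$ follows by adding $g\,e^Gv/u'$, since $\varphi'+g\varphi=\nu$ after using $g\varphi=\varphi'+\gamma-1$. For $(Q/(e^Gu'))'$, I use the quotient rule with $Q'=2\varphi'e^Gf v$ and $(e^Gu')'=-e^Gf$; this gives
\[
\frac{f\,(2\varphi'e^Gu'v+Q)}{e^Gu'^2}=\frac{f\,Q_{2\varphi'}}{e^Gu'^2}.
\]

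\emph{Step 2: $M-\theta$ and the bridge.} I write $\theta=h_1(u)\,e^Gv/u'$. Then
\[
\theta'=h_1'(u)u'\,\frac{e^Gv}{u'}+h_1\,\frac{Q_\nu}{\varphi u'^2}.
\]
Since $h_1'(u)=uf'-f$, the first term equals $M'$ by \eqref{eq:M-prime}, and \eqref{eq:M-theta-prime} follows.

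For $\sB_a=W_a-2\varphi'F_a\,e^Gv/u'$, I differentiate termwise:
\[
W_a'=Q'-a\varphi''M-a\varphi'M',
\]
\[
\Big(2\varphi'F_a\frac{e^Gv}{u'}\Big)'=2\varphi''F_a\frac{e^Gv}{u'}+2\varphi'F_a'(u)e^Gv+2\varphi'F_a\frac{Q_\nu}{\varphi u'^2}.
\]
The $\varphi'$ terms without $Q_\nu$ must cancel. They are
\[
2\varphi'e^Gfv-a\varphi'e^G(uf'-f)v-2\varphi'e^Gv\big(f-\tfrac a2(uf'-f)\big)=0,
\]
which gives \eqref{eq:weighted-bridge-prime}.

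For the second form \eqref{eq:weighted-bridge-prime-H}, I substitute $Q_\nu=Q+\nu e^Gu'v$ and $H_a=-2F_a$, and regroup the $\varphi''$ terms. Here $-\varphi''=|\varphi''|$ under $(\Phi)$. The coefficient of $H_a e^Gv/u'$ is then $\varphi'\nu/\varphi+\varphi''$. Expanding $\nu=2\varphi'+\gamma-1=\varphi'+g\varphi$ gives $c_H=g\varphi'+\varphi''+\varphi'^2/\varphi$, and its lower bound follows from (H) in the form \eqref{eq:H-forms}.

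\emph{Step 3: point values.} At a zero $\tau$ of $v$, every term of $\sB_a$ with a factor $v$ vanishes, so $\sB_a(\tau)=W_a(\tau)$. There $M=-e^Guv'$ and $Q=e^G\varphi u'v'$, hence $Q=\omega M$, and both displayed formulas follow; the $T_2$ formula is the case $a=g_2(u(\tau))$. At a point $s$ with $g_2(u(s))=a$, I would check the algebraic identity $F_a=F-\frac{g_2}{2}h_1=0$ from $g_2=2F/h_1$. Then $\sB_a(s)=Q-\varphi'g_2M=T_2(s)$.

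\emph{Step 4: the limit at a critical point.} This is the one delicate step, because $e^Gv/u'$ blows up as $r\to c$. I would use that $F_a$ vanishes at $|u|=|u(c)|$ and hence $F_a(u(r))=O(|u(r)-u(c)|)=O((r-c)^2)$, while $u'(r)\sim u''(c)(r-c)$ with $u''(c)=-f(u(c))\ne0$ by Proposition~\ref{prop:critical-points}(i). So the quotient $F_a(u)/u'$ tends to $0$, and $\sB_a\to W_a(c)=T_2(c)$.

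The last claim, $Q=\frac vu P_1+\omega M$ for $u\ne0$, is an identity I would verify by expanding $\omega M=-\varphi u'e^G(u'v-uv')/u$ and comparing with the definitions of $Q$ and $P_1$.
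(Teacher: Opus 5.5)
Your proposal is correct and follows the paper's proof essentially verbatim: each identity is a direct computation from the equations for $u$ and $v$ and from $\varphi'=g\varphi-(\gamma-1)$. The point values come from $Q=\omega M$ at zeros of $v$ and from $F_a=0$ where $g_2(u)=a$, and the limit at $c$ from $F_a(u(r))=O((r-c)^2)$ against $u'(r)^{-1}=O(|r-c|^{-1})$.

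There are two cosmetic differences. You check $Q=\frac vuP_1+\omega M$ directly instead of through the connection identity \eqref{eq:connection}. Also, $u''(c)=-f(u(c))\neq0$ holds at every critical point of a nonconstant solution, so you do not need Proposition~\ref{prop:critical-points}(i), which requires $E(c)>0$.
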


\begin{proof}
    All are finite computations from \eqref{eq:ode}, the variation equation and $\varphi'=g\varphi-(\gamma-1)$ (Appendix~\ref{sec:identity-table}, \eqref{eq:id-Q-chi}--\eqref{eq:id-Q-split}). \eqref{eq:Q-chi-prime} is \eqref{eq:Qa-prime} with the extra term from $\chi'$.
    For the first two identities in \eqref{eq:ratio-primes}: $(v/u')'=(u'v'-u''v)/u'^2=[u'v'+gu'v+f(u)v]/u'^2$ by \eqref{eq:ode}, and $e^G\varphi\big(u'v'+f(u)v+gu'v\big)=Q+\varphi'e^Gu'v=Q_{\varphi'}$ by the definition of $Q$ in Section~\ref{sec:variation} and $g\varphi=\varphi'+\gamma-1$. The second follows from the first and $(e^G)'=ge^G$, as $\varphi'+g\varphi=\nu$. The third and \eqref{eq:M-theta-prime} are \eqref{eq:id-Q-ratio}, \eqref{eq:id-M-theta}. The first form of \eqref{eq:weighted-bridge-prime} is \eqref{eq:id-weighted-bridge}, and the second follows by writing $Q_\nu=Q+\nu e^Gu'v$. At a zero of $v$ all terms with a factor $v$ vanish, $Q=e^G\varphi u'v'$, $M=-e^Guv'$, and $\omega=-\varphi u'/u$. At $s$, $F_a(u(s))=\frac12h_1(u(s))(g_2(u(s))-a)=0$. At $c$, $F_a(u(r))=O((r-c)^2)$ and $u'(r)^{-1}=O(|r-c|^{-1})$, so the last term of $\sB_a$ tends to~$0$. The last identity is the connection identity \eqref{eq:connection} with $\omega\theta=-\varphi e^Gvh_1(u)/u$.
\end{proof}

\subsection{The phase from its critical point}\label{sec:phase}
We say that $v$ satisfies the \emph{slope condition} at a critical point $c$ of $u$, with $|u(c)|=\beta>\alpha_*$, if
\begin{equation}\label{eq:slope-condition}
    Q(c)>0,\qquad M(c)>0,\qquad T_2(c)=Q(c)-\varphi'(c)g_2(\beta)M(c)>0.
\end{equation}
The rest of Part~II is an induction over the critical points $c_0,c_1,\dots$ of $u$, and the slope condition is its inductive hypothesis. Since $u'(c)=0$, $Q(c)=e^G\varphi f(u)v(c)$ and $M(c)=-e^Guv'(c)$, so the first two say that $v(c)$ has the sign of $u(c)$ and $v'(c)$ the opposite sign. With the normalization of Section~\ref{sec:later-phases}, $u(c)=\beta>0$ and $v(c)>0$, and
\[
    T_2(c)=e^{G(c)}\big[\varphi(c)f(\beta)\,v(c)+\varphi'(c)g_2(\beta)\beta\,v'(c)\big],
\]
so the third says that the ratio $\kappa\defeq -v'(c)/v(c)>0$ satisfies
\[
    \kappa<\kappa_{\max}\defeq \frac{\varphi(c)f(\beta)}{\varphi'(c)g_2(\beta)\beta},
\]
or equivalently that $Q(c)/M(c)>\varphi'(c)g_2(\beta)$. Thus the slope condition says that $v(c)$ has the sign of $u(c)$ and that the ratio $\kappa$ lies in $(0,\kappa_{\max})$. We call the third inequality, $T_2(c)>0$, the \emph{slope bound}. In \cite[Lemma~5.1]{Tang2026} these are the inductive data, with $g_2$ replaced by $\varphi'g_2$.

\begin{theorem}[Descent, river and ascent from the slope condition at a critical point]\label{thm:phase}
    Assume (\textup{H}). Let $u$ be as above, let $2\le i\le K+1$, and suppose that \eqref{eq:slope-condition} holds at $c=c_{i-1}$. Then, with $\tau_i$ the first zero of $v$ after $c_{i-1}$:
    \begin{enumerate}[label=(\roman*)]
        \item $\tau_i\in(c_{i-1},r_i)$, $v>0$, $v'<0$ on $[c_{i-1},\tau_i)$, $v'(\tau_i)<0$, and $v<0$ on $(\tau_i,c_i]$ if $i\le K$ (on $(\tau_i,z_i]$ if $i=K+1=k$ for a nodal solution, on $(\tau_{k+1},\infty)$ if $i=k+1$ for a bound state): $v$ has exactly one zero in the phase, and $v(z_i)u'(z_i)>0$ whenever $z_i$ exists.
        \item $T_2$ is strictly increasing on $[c_{i-1},b_i]$, and in particular $T_2>0$ on $[c_{i-1},b_i]$.
        \item $Q$, $Q_{\varphi'}$, $Q_{2\varphi'}$ are positive on $[c_{i-1},\max\{b_i,\tau_i\}]$, and $Q_{\varphi'}>0$ on $[\tau_i,\bar b_i]$ if $i\le K$ (on $[\tau_i,z_i]$ if $i=K+1=k$ for a nodal solution, on $[\tau_{k+1},\infty)$ if $i=k+1$ for a bound state). Moreover, if $i\le K$, then $Q(\bar b_i)>Q(b_i)>0$ and $Q$ is increasing and positive on $[\bar b_i,c_i]$.
        \item if $i\le K$: $M>0$ on $[c_{i-1},c_i]$, $Q_{\varphi'}>0$ and $Q_\nu>0$ on $(\tau_i,c_i]$, and $Q(c_i)>0$, $M(c_i)>0$.
        \item if $i=k+1$ and $u$ is a bound state, $v$ is eventually strictly monotone and $|v(r)|\to\infty$: $v$ is dominant at infinity with the sign opposite to $u$, without passing through the final-phase condition \eqref{eq:final-phase}, and Corollary~\ref{cor:divergence} applies with the zero $\tau_{k+1}$ of \textup{(i)}.
    \end{enumerate}
    For $i=1$ the same conclusions hold with $c_0=0$ and no slope condition, the closed left ends $[c_0,\cdot\,]$ in (i)--(iv) being replaced by $(0,\cdot\,]$: Theorem~\ref{thm:first-phase}\textup{(i)--(ii)} replaces Step~0 below, and Steps~1--2 go through unchanged from $r=0$, where $T_2$, $Q$, $Q_{\varphi'}$ and $Q_{2\varphi'}$ tend to $0$. In particular, if $K\ge1$: $\tau_1\in(0,r_1)$, $v<0$ on $(\tau_1,c_1]$, $Q_{\varphi'}>0$ on $[\tau_1,\bar b_1]$, $Q(\bar b_1)>Q(b_1)>0$, $Q$ is increasing on $[\bar b_1,c_1]$, and $M,Q>0$ at $c_1$. Since $Q_1=Q_{\varphi'}+(1-\varphi')e^Gu'v\ge Q_{\varphi'}$ where $u'v>0$, and $Q_a(c_1)=Q(c_1)$ for every $a$, Theorem~\ref{thm:first-phase}\textup{(v)} holds under (\textup{H}) in place of $\varphi'\equiv1$.
\end{theorem}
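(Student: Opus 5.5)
The plan is a continuity (bootstrap) argument along the phase. The tools are the identity table of Proposition~\ref{prop:identity-table} and Lemma~\ref{lem:weighted-identities}, and the strategy is to read off signs of derivatives on each stretch $[c_{i-1},b_i]$, $(b_i,\bar b_i)$, $[\bar b_i,c_i]$, with the normalization $u>0$ on the descent and $u'<0$ on the phase.

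\emph{Step~0 (the zero $\tau_i$).} On $[c_{i-1},r_i)$ we have $u>1$, so $f,f',f''>0$. While $v>0$, $(e^Gv')'=-e^Gf'(u)v<0$, and $v'(c_{i-1})<0$ by $M(c_{i-1})>0$, so $v'<0$ up to the first zero. To force a zero before $r_i$ I would reuse the $\Sigma$ argument of Theorem~\ref{thm:first-phase}(i), now started at $c_{i-1}$ instead of $0$. There $\Sigma(c_{i-1})=e^Gu''v'(c_{i-1})=-e^Gf(\beta_{i-1})v'(c_{i-1})>0$, and $\Sigma'=e^Gf''(u)u'^2v\ge0$ while $v\ge0$. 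But $\Sigma(r_i)=(p-1)e^Gu'v(r_i)\le0$ if $v(r_i)\ge0$, a contradiction. This gives (i) on the descent up to $\tau_i$, and $M'=(p-1)e^G|u|^{p-1}uv>0$ on $[c_{i-1},\tau_i]$ keeps $M>0$ there.

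\emph{Step~1 (the slope bound on $[c_{i-1},b_i]$).} I would use \eqref{eq:T-two-prime}. There $|u|\ge\alpha_*$, so $g_2(u)\ge0$ and $-\varphi''g_2M\ge0$ as long as $M>0$. Also $-uu'>0$, so $T_2'>0$ reduces to $M-\theta>0$. Near $c_{i-1}^+$, $\theta=e^Gvh_1/u'\to-\infty$, so $M-\theta>0$ initially. Its derivative is $-h_1Q_\nu/(\varphi u'^2)$ by \eqref{eq:M-theta-prime}, so it suffices to carry $Q_\nu$ (together with $Q,Q_{\varphi'},Q_{2\varphi'}$, which all vanish or are positive at $c_{i-1}$) along. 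To close this I would propagate the four positivity statements simultaneously. Before $\tau_i$, $Q'=2\varphi'e^Gf(u)v>0$, and the ratio identities \eqref{eq:ratio-primes} control $Q_{\varphi'}$ and $Q_{2\varphi'}$. After $\tau_i$, $v<0$ and $u'<0$ make $\theta>0$, but the connection identity $Q=\frac vuP_1+\omega M$ together with $P_1>0$ (Proposition~\ref{prop:pohozaev-positive}) and the monotonicity of $\omega$ should keep $M-\theta$ positive. This simultaneous bootstrap is the step I expect to be delicate. If it fails, I would fall back on $T_2=T_1+\varphi'|u|^{1-p}M$ from \eqref{eq:T-two-T-one}. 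This gives (ii), (iii) up to $\max\{b_i,\tau_i\}$, and $v<0$ after $\tau_i$ by the argument of Claim~1 of Theorem~\ref{thm:first-phase}.

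\emph{Step~2 (the river and the ascent).} On $(b_i,\bar b_i)$ I would use the weighted bridge $\sB_0=Q-2\varphi'F(u)e^Gv/u'$, which equals $Q$ at $b_i$ and at $\bar b_i$ since $F(\pm\alpha_*)=0$. By \eqref{eq:weighted-bridge-prime} with $a=0$, and after regrouping $Q_\nu=Q+\nu e^Gu'v$ as in \eqref{eq:weighted-bridge-prime-H}, the $\varphi''$ correction is absorbed into $c_H\ge\varphi'^2/\varphi>0$. This is exactly where (H) enters. With $F<0$ and $e^Gv/u'>0$ (because $v<0$ and $u'<0$), this makes $\sB_0$ increasing. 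Hence $Q(\bar b_i)=\sB_0(\bar b_i)>\sB_0(b_i)=Q(b_i)>0$, and positivity of $Q_{\varphi'}$ (equivalently, monotonicity of $v/u'$) rules out a zero of $v$ in the river, as in Theorem~\ref{thm:first-phase}(iv). On $[\bar b_i,c_i]$ we have $f(u)v>0$ in the normalization, so $Q'=2\varphi'e^Gf(u)v>0$, and a first zero $\varsigma$ of $v$ would give $Q(\varsigma)=e^G\varphi u'v'(\varsigma)\le0$, a contradiction. Since $M'>0$ while $uv>0$, $M$ stays positive, which yields (iv) at $c_i$.

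\emph{Step~3 (the final phase).} For $i=k+1$ at a bound state, Steps~0--2 run on $(c_k,\infty)$ and give $v<0$ and $Q_{\varphi'}>0$ after $\tau_{k+1}$. Hence $v/u'$ is increasing, and with the decay of $u$ (Lemma~\ref{lem:decay}), the dichotomy of Lemma~\ref{lem:dichotomy} and nondegeneracy (Proposition~\ref{prop:nondegeneracy}), $v$ cannot be the recessive solution. So it is dominant with the sign opposite to $u$, which is (v) and feeds Corollary~\ref{cor:divergence}. For $i=1$, Theorem~\ref{thm:first-phase}(i)--(ii) replaces Step~0, and the remaining steps start from $0^+$, where all the functions involved vanish.
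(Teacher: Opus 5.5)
Your Step~0 is the paper's argument, and the ingredients you list for Step~1 do close, though not through $Q_\nu$. Positivity of $Q_\nu$ would make $M-\theta$ decrease, and $Q_\nu$ need not be positive before $\tau_i$ anyway. What closes is propagating $v<0$, $M>0$, $T_2'>0$ on $(\tau_i,b_i]$: then $Q\ge T_2>0$; a point with $T_2'=0$ forces $M-\theta\le0$, hence $Q<0$ by $Q-Pv/u=\omega(M-\theta)$; a zero of $M$ gives $Q=(v/u)P_1<0$; and a second zero of $v$ would make $M<0$.

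The genuine gap is the river crossing in Step~2. You use $\sB_0'=-2F(u)\bigl[\varphi'Q/(\varphi u'^2)+c_H\,e^Gv/u'\bigr]$ and conclude that $\sB_0$ increases because $F<0$, $c_H>0$ and $e^Gv/u'>0$. This silently requires $Q>0$ inside the river. That is not available and is false in general: $Q$ has local minima at $r_i$ and $\bar r_i$ and can change sign there (Remark~\ref{rem:R-complement}(c)). Nor can $Q>0$ be propagated by continuity, since at a first zero $\tilde q$ of $Q$ one gets $\sB_0(\tilde q)=-2\varphi'F(u)\,e^Gv/u'>0$, which contradicts nothing.

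The missing idea is to regroup in terms of $Q_{\varphi'}$, as in \eqref{eq:weighted-bridge-renewal}: $\sB_0'=2|F(u)|\bigl[\varphi'Q_{\varphi'}+(g\varphi'+\varphi'')\varphi\,e^Gu'v\bigr]/(\varphi u'^2)$. Here (H) acts on the coefficient $g\varphi'+\varphi''$, not through $c_H$. Then propagate $Q_{\varphi'}>0$ from $\tau_i$. At a first zero $\tilde q$ of $Q_{\varphi'}$ one has $Q=-\varphi'e^Gu'v$, hence $\sB_0(\tilde q)=-2\varphi'(e^Gv/u')E(\tilde q)<0$ because $E>0$. This contradicts $\sB_0(\tilde q)>\sB_0(s_0)=Q(s_0)>0$.

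The bridge must also start at $s_0=\max\{b_i,\tau_i\}$, not at $b_i$. If $\tau_i>b_i$, then $e^Gv/u'<0$ on $(b_i,\tau_i)$, and there you use $Q'=2\varphi'e^Gf(u)v>0$ instead. Your remark that $Q_{\varphi'}>0$ excludes a zero of $v$ is correct, but that positivity is exactly what has to be proved.

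The same gap breaks Step~3. Proposition~\ref{prop:nondegeneracy} requires the final-phase condition \eqref{eq:final-phase}, in particular $Q(r_{k+1})>0$. But $r_{k+1}$ lies in the river, where the sign of $Q$ is not controlled, which is why (v) is stated without that condition. The monotonicity of $v/u'$ does not rule out decay either: a decaying $v$ and $u'$ decay at the same rate, so $v/u'$ would stay bounded and simply converge.

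What works is the bridge again. $\sB_0$ is increasing on $(s_0,\infty)$ with $\sB_0\ge Q(s_0)>0$. If $v$ were bounded it would decay (Lemma~\ref{lem:dichotomy}), giving $Q\to0$ and $F(u)\,e^Gv/u'\to0$, hence $\sB_0\to0$, a contradiction.

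Finally, in (iv) $M$ decreases on $(\tau_i,z_i)$, so its positivity there comes from $M(z_i)=e^Gu'v(z_i)>0$, not from $uv>0$.
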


\begin{proof}
    We write $c:=c_{i-1}$ and $\beta:=\beta_{i-1}$, and follow \cite[\S5.7, Steps~1--5]{Tang2026}, emphasizing the modifications.

    \emph{Step 0: tendency and signs on $(c,\tau_i)$.} By \eqref{eq:slope-condition}, $v(c)>0$ and $v'(c)<0$. Recall
    \[
        \Sigma=e^G(u''v'-u'v'')=e^G\big[f'(u)u'v-f(u)v'\big],
    \]
    so that $\Sigma'=e^Gf''(u)u'^2v$ by \eqref{eq:Sigma-prime}. Suppose that $v>0$ on $(c,r_i)$. Then $\Sigma'>0$ on $(c,r_i)$, as $u>1$ and $f''(u)>0$ there. On the other hand, since $f(1)=0$, $f'(1)=p-1$ and $v(r_i)\ge0$,
    \[
        \Sigma(c)=-e^Gf(\beta)v'(c)>0,\qquad \Sigma(r_i)=(p-1)e^Gu'v(r_i)\le0,
    \]
    a contradiction. Hence $\tau_i\in(c,r_i)$. On $(c,\tau_i)$, $u>1$ gives $f'(u)>0$, so $(e^Gv')'=-e^Gf'(u)v<0$ and $e^Gv'$ decreases from the negative value $e^{G(c)}v'(c)$. Thus $v'<0$ on $(c,\tau_i]$, whence
    \[
        u'v'>0,\qquad uv'<0,\qquad u'v<0,\qquad vv'<0\qquad\text{on }(c,\tau_i),
    \]
    and $u'v'(\tau_i)>0$. Also $M'=(p-1)e^Gu^pv>0$ on $(c,\tau_i)$, so $M>0$ on $[c,\tau_i]$, with $M(\tau_i)=-e^Guv'(\tau_i)>0$.

    \emph{Step 1: $T_2$ increases on $[c,b_i]$, and $v<0$ on $(\tau_i,b_i]$ if $\tau_i<b_i$.} On the descent, $uu'/|u|^{p+1}=u'/u^p<0$, so \eqref{eq:T-two-prime} becomes
    \begin{equation}\label{eq:T-two-descent}
        T_2'=\varphi'(p+1)\frac{|u'|}{u^p}\,(M-\theta)+|\varphi''|\,g_2(u)\,M,\qquad
        \theta=e^G\frac{v\,h_1(u)}{u'}.
    \end{equation}
    On $(c,\min\{\tau_i,b_i\})$: $M>0$, $\theta<0$ (as $v>0>u'$), $g_2(u)\ge0$ (as $u\ge\alpha_*$), so $T_2'>0$. If $\tau_i\ge b_i$ this proves (ii).

    Let $\tau_i<b_i$, and let $\tau^*$ be the supremum of the $s\in(\tau_i,b_i]$ such that $v<0$ on $(\tau_i,s]$ and $T_2'>0$ on $[\tau_i,s]$. Since $\theta(\tau_i)=0$, we have
    \[
        T_2'(\tau_i)=\varphi'(p+1)|u'|u^{-p}M(\tau_i)+|\varphi''|g_2M(\tau_i)>0.
    \]
    As moreover $v<0$ just after $\tau_i$, it follows that $\tau^*>\tau_i$. By continuity $v\le0$ on $(\tau_i,\tau^*]$ and $T_2$ is nondecreasing on $[\tau_i,\tau^*]$, so $T_2(\tau^*)\ge T_2(\tau_i)>0$. If $\tau^*<b_i$, or $\tau^*=b_i$ with $v(b_i)=0$ or $T_2'(b_i)=0$, then $v(\tau^*)=0$ or $T_2'(\tau^*)=0$ (with $v<0$ on $(\tau_i,\tau^*)$ in the second case).

    \emph{Case $T_2'(\tau^*)=0$, $v(\tau^*)<0$.} By \eqref{eq:T-two-descent}, $M-\theta=-\eta M$ at $\tau^*$ with $\eta:=|\varphi''|g_2u^p/(\varphi'(p+1)|u'|)\ge0$. Here $\theta(\tau^*)>0$ (as $v,u'<0$), so $M=\theta/(1+\eta)>0$ and $M-\theta\le0$. The connection identity \eqref{eq:connection} gives $Q(\tau^*)=P\frac vu+\omega(M-\theta)<0$, since $v/u<0$, $P>0$ (Proposition~\ref{prop:pohozaev-positive}) and $\omega>0$ on the descent, but $T_2(\tau^*)>0$ and $T_2=Q-\varphi'g_2(u)M\le Q$ ($g_2(u)\ge0$, $M>0$), a contradiction.

    \emph{Case $v(\tau^*)=0$.}
    Then $v'(\tau^*)>0$, since $v'$ cannot vanish together with $v$. By \eqref{eq:bridge-values},
    \[
        T_2(\tau^*)=M(\tau^*)\big(\omega-\varphi'g_2(u)\big)(\tau^*),\qquad M(\tau^*)=-e^Guv'(\tau^*)<0.
    \]
    Now $T_2(\tau_i)>0$ means $\omega(\tau_i)>\varphi'(\tau_i)g_2(u(\tau_i))$. Along the descent $\omega$ increases (Proposition~\ref{prop:pohozaev-positive}), while $\varphi'g_2(u)\ge0$ decreases, because $\varphi'$ is nonincreasing, $g_2$ is increasing and $u$ is decreasing. Therefore $\omega>\varphi'g_2(u)$ at $\tau^*$ too, and $T_2(\tau^*)<0$. This contradicts $T_2(\tau^*)\ge T_2(\tau_i)>0$.

    Both cases are thus impossible, so $\tau^*=b_i$ with $v(b_i)<0$ and $T_2'(b_i)>0$, and (ii) and the second assertion follow. We observe that the argument of the first case shows more generally that $M>0$ on $(\tau_i,b_i]$. Indeed, at a first zero $m$ of $M$ in $(\tau_i,b_i]$, \eqref{eq:Q-at-M-zero} would give $Q(m)=\frac{v(m)}{u(m)}P_1(m)<0$, while $Q(m)\ge T_2(m)>0$ by continuity, since $Q\ge T_2$ as long as $M\ge0$.

    \emph{Step 2: $Q,Q_{\varphi'},Q_{2\varphi'}>0$ on $[c,\max\{b_i,\tau_i\}]$.} At $c$ the three coincide with $Q(c)>0$. On $(c,\tau_i)$, by \eqref{eq:Q-prime} and \eqref{eq:Q-chi-prime}, $Q'=2\varphi'e^Gf(u)v>0$, $Q_{\varphi'}'=e^G[\varphi'(u'v'+f(u)v)+\varphi''u'v]>0$ and $Q_{2\varphi'}'=2e^G[\varphi'u'v'+\varphi''u'v]>0$, since $u'v'>0$, $f(u)v>0$ and $\varphi''u'v\ge0$ ($\varphi''\le0$, $u'v<0$). If $b_i\le\tau_i$ we are done. If $\tau_i<b_i$: on $(\tau_i,b_i]$, $M>0$ (Step~1), so $Q=T_2+\varphi'g_2(u)M\ge T_2>0$, and $u'v>0$ gives $Q_{2\varphi'}>Q_{\varphi'}>Q>0$.

    \emph{Step 3: $Q_{\varphi'}>0$ on $[\tau_i,\bar b_i]$ and $v<0$ on $(\tau_i,\bar b_i]$ (on $[\tau_i,z_i]$, resp.\ $[\tau_{k+1},\infty)$).} Let $\tilde q$ be the supremum of the $s\in[\tau_i,\bar b_i]$ with $Q_{\varphi'}>0$ on $[\tau_i,s]$. Since $Q_{\varphi'}(\tau_i)=Q(\tau_i)=e^G\varphi u'v'(\tau_i)>0$, we have $\tilde q>\tau_i$, and by Step~2, $\tilde q>\max\{b_i,\tau_i\}$. On $(\tau_i,\tilde q)$, $v\ne0$: at a first zero $\tilde\tau$ of $v$ after $\tau_i$ one has $v'(\tilde\tau)>0$ and $Q_{\varphi'}(\tilde\tau)=e^G\varphi u'v'(\tilde\tau)<0$. Hence $v<0$, $u'v>0$ and $Q_\nu=Q_{\varphi'}+(\varphi'+\gamma-1)e^Gu'v>0$ on $(\tau_i,\tilde q)$. Suppose $Q_{\varphi'}(\tilde q)=0$ and set $s_0:=\max\{b_i,\tau_i\}$. On $(s_0,\tilde q)$ we have $F(u)<0$ except at $z_i$, where $F(u)=0$, and, by \eqref{eq:weighted-bridge-prime} with $a=0$ and $Q_\nu=Q_{\varphi'}+g\varphi e^Gu'v$,
    \[
        \sB_0'=\frac{2|F(u)|}{\varphi u'^2}\Big[\varphi'Q_{\varphi'}+\big(g\varphi'+\varphi''\big)\varphi\,
        e^Gu'v\Big]>0
    \]
    by (H), except at $z_i$, where $\sB_0'=0$. Thus $\sB_0$ is strictly increasing on $[s_0,\tilde q]$.
    At $s_0$, $\sB_0=Q$ (at $\tau_i$ because $v=0$, at $b_i$ because $F=0$), and $Q(s_0)>0$ by Step~2, so $\sB_0(\tilde q)>0$. But $Q_{\varphi'}(\tilde q)=0$ means $Q(\tilde q)=-\varphi'e^Gu'v(\tilde q)$, and $v(\tilde q)<0$ (if $v(\tilde q)=0$, then $Q_{\varphi'}(\tilde q)=e^G\varphi u'v'(\tilde q)=0$ would force $v'(\tilde q)=0$). Hence
    \[
        \sB_0(\tilde q)=-\varphi'e^Gu'v-2\varphi'F(u)\frac{e^Gv}{u'}
        =-\varphi'\frac{e^Gv}{u'}\big(u'^2+2F(u)\big)=-2\varphi'\frac{e^Gv}{u'}\,E(\tilde q)<0,
    \]
    because $v/u'>0$ and $E>0$. This contradiction shows $Q_{\varphi'}>0$ on $[\tau_i,\bar b_i]$, and then $v<0$ on $(\tau_i,\bar b_i]$ by the first part. In the final phase of a bound state the same argument runs on $(\tau_{k+1},\infty)$, where $F(u)<0$ beyond $b_{k+1}$ and $E>0$. For the last descent of a nodal solution it runs on $(\tau_k,z_k]$.

    \emph{Step 4: $Q(\bar b_i)>Q(b_i)>0$.} By Step~3, $\sB_0'>0$ on $(s_0,\bar b_i)\setminus\{z_i\}$, so $\sB_0$ is strictly increasing on $[s_0,\bar b_i]$, and $\sB_0=Q$ at $s_0$ and at $\bar b_i$ (where $F=0$). If $s_0=b_i$ this yields $Q(\bar b_i)>Q(b_i)$. If $s_0=\tau_i>b_i$, then $Q(\bar b_i)>Q(\tau_i)>Q(b_i)$, the last because $Q'=2\varphi'e^Gf(u)v>0$ on $(b_i,\tau_i)$.

    \emph{Step 5: the ascent above $\alpha_*$.} On $[\bar b_i,c_i]$, $u\le-\alpha_*<-1$, so $f(u)<0$, and while $v<0$ one has $Q'=2\varphi'e^Gf(u)v>0$. A first zero $\tilde\tau\in(\bar b_i,c_i]$ of $v$ would have $v'(\tilde\tau)\ge0$ and $Q(\tilde\tau)=e^G\varphi u'v'(\tilde\tau)\le0<Q(\bar b_i)\le Q(\tilde\tau)$. Hence $v<0$ on $(\bar b_i,c_i]$, $Q$ is increasing on $[\bar b_i,c_i]$, $Q\ge Q(\bar b_i)>0$, and $Q(c_i)>0$. This completes (i) and (iii).

    (iv) $M$ increases on $(c,\tau_i)$ from $M(c)>0$, decreases on $(\tau_i,z_i)$ to $M(z_i)=e^Gu'v(z_i)>0$, and increases on $(z_i,c_i)$, where $uv>0$. Hence $M>0$ on $[c,c_i]$ and $M(c_i)>0$. On $(\tau_i,c_i]$, $u'v\ge0$ (with $u'v>0$ on $(\tau_i,c_i)$ and $u'(c_i)=0$) gives $Q_{\varphi'}\ge Q$ on $[\bar b_i,c_i]$ and, with Step~3, $Q_{\varphi'}>0$ on $(\tau_i,c_i]$, hence $Q_\nu=Q_{\varphi'}+(\varphi'+\gamma-1)e^Gu'v>0$.

    (v) By (i), $v<0$ on $(\tau_{k+1},\infty)$. Since $f'(u)<0$ once $|u|<p^{-1/(p-1)}$, we have $(e^Gv')'=-e^Gf'(u)v<0$ eventually. Hence $e^Gv'$ is eventually decreasing and $v'$ has eventually one sign, that is, $v$ is eventually strictly monotone.

    Suppose that $v$ stays bounded. Then it decays (Lemma~\ref{lem:dichotomy}), and $Q\to0$ as in the proof of Proposition~\ref{prop:nondegeneracy}. Moreover $F(u)e^Gv/u'\to0$. Indeed, eventually
    \[
        |F(u)|\le u^2,\qquad |u'|\ge cu,\qquad u\,e^G|v|\le Ce^{-(1-\varepsilon)\int m+G/2-\int\sqrt q}\to0,
    \]
    with $m$ as in Section~\ref{sec:tail}. The second bound follows from $u'/u=-\frac g2-\sqrt q+o(1)$ (as in the proof of Lemma~\ref{lem:tail-sign}), and the third holds because the exponent has rate $\varepsilon\frac g2-(2-\varepsilon)\sqrt q+o(1)<0$ for $\varepsilon$ small. Therefore $\sB_0\to0$. But $\sB_0$ is increasing on $(\max\{b_{k+1},\tau_{k+1}\},\infty)$ by Step~3 and $\sB_0\ge Q(s_0)>0$ at its left end. Hence $v$ is dominant and $|v|\to\infty$, with the sign opposite to $u$. Corollary~\ref{cor:divergence} uses only this and the single zero $\tau_{k+1}$.

    For $i=1$: Theorem~\ref{thm:first-phase}(i)--(ii) give $\tau_1\in(0,r_1)$, $v>0$ and $v'<0$ on $(0,\tau_1)$, $M,Q>0$ on $(0,z_1]$, $T_2>0$ on $(0,z_1)$ and $v<0$ on $(\tau_1,z_1]$. Steps~1--2 then run on $(0,\max\{b_1,\tau_1\}]$ with the left-end values
    \[
        T_2(0^+)=Q(0^+)=Q_{\varphi'}(0^+)=Q_{2\varphi'}(0^+)=0
    \]
    in place of the slope condition \eqref{eq:slope-condition}. The derivatives computed there are positive on $(0,\min\{b_1,\tau_1\})$, so the four functions are positive on $(0,\min\{b_1,\tau_1\}]$, and the rest of Steps~1--2 is unchanged. This is all that Steps~3--5 use at the left end.
\end{proof}

\begin{corollary}[Completing the ascent under (H)]\label{cor:ascent}
    Assume (\textup{H}), and let $1\le i\le K$ with $Q(z_i)>0$ and $M(z_i)>0$ (the latter means $v(z_i)u'(z_i)>0$, while the former is an independent condition). Then $Q_{\varphi'}>0$ and $v\ne0$ on $[z_i,\bar b_i]$, $Q(\bar b_i)>Q(z_i)$, $Q$ is increasing on $[\bar b_i,c_i]$, and $v\ne0$ on $(z_i,c_i]$ with $Q(c_i)>0$, $M(c_i)>0$. In other words, under (\textup{H}) the ascent from a zero is completed with no smallness condition, as on $\R^n$. The corollary is not needed for the main theorems, whose ascents are obtained from Theorem~\ref{thm:phase}(iii)--(iv); it is recorded for comparison with Theorem~\ref{thm:first-phase}(v).
\end{corollary}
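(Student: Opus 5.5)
The plan is to rerun Steps~3--5 of the proof of Theorem~\ref{thm:phase}, now starting at the zero $z_i$ rather than at $s_0=\max\{b_i,\tau_i\}$. Normalize $u>0$ on the descent, so that on the ascent $(z_i,c_i)$ we have $u<0$ and $u'<0$. The hypothesis $M(z_i)=e^Gu'v(z_i)>0$ gives $v(z_i)<0$, so $u'v>0$ at $z_i$. Since $u'(z_i)v(z_i)>0$, we also get
\[
    Q_{\varphi'}(z_i)=Q(z_i)+\varphi'e^Gu'v(z_i)>Q(z_i)>0 .
\]

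\emph{Step 1: propagate $Q_{\varphi'}>0$ on $[z_i,\bar b_i]$.} Let $\tilde q$ be the supremum of those $s\in[z_i,\bar b_i]$ with $Q_{\varphi'}>0$ on $[z_i,s]$. On $[z_i,\tilde q)$, $v$ cannot vanish. At a first zero $\tilde\tau$ we would have $v'(\tilde\tau)>0$, hence $Q_{\varphi'}(\tilde\tau)=e^G\varphi u'v'(\tilde\tau)<0$, which is impossible. So $v<0$ and $u'v>0$ there, and therefore
\[
    Q_\nu=Q_{\varphi'}+(\varphi'+\gamma-1)e^Gu'v>0 .
\]
On $(z_i,\bar b_i)$ we have $F(u)<0$. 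By \eqref{eq:weighted-bridge-prime} with $a=0$, written as in Step~3 of Theorem~\ref{thm:phase},
\[
    \sB_0'=\frac{2|F(u)|}{\varphi u'^2}\Big[\varphi'Q_{\varphi'}+(g\varphi'+\varphi'')\varphi e^Gu'v\Big]>0
\]
by (H). Hence $\sB_0$ increases from $\sB_0(z_i)=Q(z_i)>0$; the $F$-term vanishes at $z_i$ because $F(0)=0$. Suppose $Q_{\varphi'}(\tilde q)=0$ with $\tilde q\le\bar b_i$. Then $v(\tilde q)<0$, and the same computation as before gives
\[
    \sB_0(\tilde q)=-2\varphi'\frac{e^Gv}{u'}E(\tilde q)<0 ,
\]
using $E>0$ on $(0,c_K]$. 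This contradicts the monotonicity of $\sB_0$. So $Q_{\varphi'}>0$ and $v<0$ on $[z_i,\bar b_i]$. At $\bar b_i$ we have $F=0$, so $Q(\bar b_i)=\sB_0(\bar b_i)>\sB_0(z_i)=Q(z_i)$.

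\emph{Step 2: above $\alpha_*$.} On $[\bar b_i,c_i]$ we have $u\le-\alpha_*<-1$, so $f(u)<0$. While $v<0$, \eqref{eq:Q-prime} gives $Q'=2\varphi'e^Gf(u)v>0$. A first zero $\tilde\tau\in(\bar b_i,c_i]$ of $v$ would satisfy $v'(\tilde\tau)\ge0$, so $Q(\tilde\tau)=e^G\varphi u'v'(\tilde\tau)\le0<Q(\bar b_i)$. This contradicts the monotonicity of $Q$. Thus $v<0$ on $(z_i,c_i]$, $Q$ increases on $[\bar b_i,c_i]$, and $Q(c_i)>0$. Finally, $M'=(p-1)e^G|u|^{p-1}uv>0$ on the ascent, because $uv>0$ there, so $M(c_i)>M(z_i)>0$.

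The only delicate point is Step~1: there the correction term must have a sign, and (H) is exactly what makes $g\varphi'+\varphi''\ge0$. The rest is sign reading.
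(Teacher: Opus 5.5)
Your proposal is correct and is the paper's own argument: the paper also reruns Steps~3--5 of Theorem~\ref{thm:phase} from $z_i$, starting from $Q_{\varphi'}(z_i)=Q(z_i)+\varphi'M(z_i)>0$, $\sB_0(z_i)=Q(z_i)$ (as $F(0)=0$) and $F(u)<0$ on $(z_i,\bar b_i)$. The one point you leave implicit, $v(\tilde q)\neq0$ at a zero $\tilde q$ of $Q_{\varphi'}$, follows as in Step~3 there: $v(\tilde q)=0$ would give $Q_{\varphi'}(\tilde q)=e^G\varphi u'v'(\tilde q)=0$, hence $v'(\tilde q)=0$, which is impossible.
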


\begin{proof}
    Run Steps~3--5 of the preceding proof from $z_i$ instead of $\tau_i$: $Q_{\varphi'}(z_i)=Q(z_i)+\varphi'M(z_i)>0$, $\sB_0(z_i)=Q(z_i)>0$ since $F(0)=0$, and $F(u)<0$ on $(z_i,\bar b_i)$.
\end{proof}

\subsection{The renewal above the level \texorpdfstring{$\beta_i$}{beta\_i}}\label{sec:renewal-above}
The slope condition \eqref{eq:slope-condition} at $c_i$ consists of $Q(c_i)>0$, $M(c_i)>0$, which Theorem~\ref{thm:phase}(iv) provides, and of the slope bound $T_2(c_i)>0$, which it does not. On $\R^n$ this is the content of \cite[Lemmas~5.3--5.6]{Tang2026}. We write $u_\tau:=|u(\tau_i)|$. In this subsection we treat the case $u_\tau\ge\beta_i$, in which the zero of $v$ on the descent lies above the level $\beta_i=|u(c_i)|$. The case $u_\tau<\beta_i$ is treated in Section~\ref{sec:renewal-below}. Both use the following form of \eqref{eq:weighted-bridge-prime}: inserting $Q_\nu=Q_{\varphi'}+g\varphi\,e^Gu'v$, with $e^Gu'v=u'^2\,e^Gv/u'$, in its first form yields, for every constant $a$ and wherever $u'\ne0$,
\begin{equation}\label{eq:weighted-bridge-renewal}
    \sB_a'=\varphi'H_a(u)\frac{Q_{\varphi'}}{\varphi u'^2}+a|\varphi''|M
    +\big(g\varphi'+\varphi''\big)H_a(u)\frac{e^Gv}{u'}.
\end{equation}

\begin{theorem}[Renewal above the level $\beta_i$]\label{thm:renewal-above}
    Assume (\textup{H}), let $1\le i\le K$, and suppose that \eqref{eq:slope-condition} holds at $c_{i-1}$ (nothing if $i=1$), so that Theorem~\ref{thm:phase} applies to phase~$i$. Let $t_i\in(c_{i-1},b_i)$ be the point of the descent where $|u|=\beta_i$. If $u_\tau\ge\beta_i$, then $T_2(c_i)\;>\;T_2(t_i)\;>\;T_2(c_{i-1})$ for $i\ge2$, and $T_2(c_1)>T_2(t_1)>0$. Consequently \eqref{eq:slope-condition} holds at $c_i$.
\end{theorem}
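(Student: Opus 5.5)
The plan splits phase $i$ at $t_i$ and proves the two inequalities separately, with $a\defeq g_2(\beta_i)$ fixed throughout.

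\emph{First inequality, $T_2(t_i)>T_2(c_{i-1})$.} Since $u_\tau\ge\beta_i$, the point $\tau_i$ lies in $(c_{i-1},t_i]$, and $t_i<b_i$. By Theorem~\ref{thm:phase}(ii), $T_2$ is strictly increasing on $[c_{i-1},b_i]$. Hence $T_2(t_i)>T_2(c_{i-1})$ for $i\ge2$. For $i=1$ the same step gives $T_2(t_1)>T_2(0^+)=0$.

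\emph{Second inequality, $T_2(c_i)>T_2(t_i)$.} I would use the weighted bridge $\sB_a$ of \eqref{eq:weighted-bridge}. Since $g_2(u(t_i))=a$, \eqref{eq:bridge-values} gives $\sB_a(t_i)=T_2(t_i)$. By Lemma~\ref{lem:weighted-identities}, $\sB_a(r)\to T_2(c_i)$ as $r\uparrow c_i$. It therefore suffices to show that $\sB_a$ is strictly increasing on $(t_i,c_i)$, where $u'<0$. I would read off the sign from \eqref{eq:weighted-bridge-renewal}, term by term.

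\begin{itemize}
\item On $(t_i,c_i)$ we have $|u|<\beta_i$, so $H_a(u)=u^2\big(1-(|u|/\beta_i)^{p-1}\big)>0$. At $z_i$ it vanishes, but only at that isolated point.
\item By Theorem~\ref{thm:phase}(i), $\tau_i\le t_i$ gives $v<0$ on $(t_i,c_i]$. Thus $e^Gv/u'>0$, and the third term is $\ge0$ by (H).
\item $a=g_2(\beta_i)>0$ because $\beta_i>\alpha_*$, and $M>0$ on $[c_{i-1},c_i]$ by Theorem~\ref{thm:phase}(iv). So $a|\varphi''|M\ge0$.
\item For the first term I need $Q_{\varphi'}>0$ on $(t_i,c_i)$. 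Theorem~\ref{thm:phase}(iii)--(iv) gives $Q_{\varphi'}>0$ on $[\tau_i,\bar b_i]$ and on $(\tau_i,c_i]$, which covers $(t_i,c_i)\subset(\tau_i,c_i]$. The degenerate case $\tau_i=t_i$ is harmless, since $Q_{\varphi'}(\tau_i)=Q(\tau_i)>0$.
\end{itemize}

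Hence $\sB_a'>0$ on $(t_i,c_i)$ except at $z_i$, so $\sB_a$ is strictly increasing there, and $T_2(c_i)>T_2(t_i)$.

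The main point needing care is regularity at the ends. At $z_i$, the factor $1/u'^2$ in $\sB_a'$ is harmless because $u'(z_i)\neq0$, and the terms involving $h_1$ are $C^1$ across the zero. At $c_i$, $u'=0$ and the term $H_a(u)\,e^Gv/u'$ is singular. Since $H_a(u(r))=O((r-c_i)^2)$, this term tends to $0$ (as in Lemma~\ref{lem:weighted-identities}), so the limit $\sB_a(c_i^-)=T_2(c_i)$ holds. Strict monotonicity on the open interval then passes to the closed end.

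Combining $T_2(c_i)>0$ with $Q(c_i),M(c_i)>0$ from Theorem~\ref{thm:phase}(iv) yields \eqref{eq:slope-condition} at $c_i$.
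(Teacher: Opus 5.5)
Your proposal is correct and follows the paper's proof step by step. You use the same bridge $\sB_{a}$ with $a=g_2(\beta_i)$ compared between $t_i$ and $c_i$, the same term-by-term sign reading of \eqref{eq:weighted-bridge-renewal} via Theorem~\ref{thm:phase}(i),(iv) and (H), and Theorem~\ref{thm:phase}(ii) for $T_2(t_i)>T_2(c_{i-1})$. The differences are cosmetic: you conclude from strict monotonicity together with the limit of $\sB_a$ at $c_i$ rather than writing $T_2(c_i)-T_2(t_i)$ as a convergent integral, and for $i=1$ you use $T_2(0^+)=0$ with Theorem~\ref{thm:phase}(ii) instead of Theorem~\ref{thm:first-phase}(ii).
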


\begin{proof}
    By Theorem~\ref{thm:phase}(i), $v>0$ on $[c_{i-1},\tau_i)$ and $v<0$ on $(\tau_i,c_i]$. The point $t_i$ exists since $\alpha_*<\beta_i<\beta_{i-1}$, and $t_i<b_i$. Since $u$ decreases on the descent and $u(\tau_i)=u_\tau\ge\beta_i=u(t_i)$, we have $\tau_i\le t_i$. Put $a_i:=g_2(\beta_i)>0$, so that $H_{a_i}(u)=u^2\big(1-(|u|/\beta_i)^{p-1}\big)$.

    On $(t_i,c_i)$ one has $|u|<\beta_i$, so $H_{a_i}(u)\ge0$, with equality only at $z_i$. Moreover, $v/u'>0$ (as $v,u'<0$), and $M>0$, $Q_{\varphi'}>0$ (Theorem~\ref{thm:phase}(iv)). By \eqref{eq:weighted-bridge-renewal} and (H), $\sB_{a_i}'\ge0$ on $(t_i,c_i)$, and $\sB_{a_i}'>0$ wherever $|u|<\beta_i$ and $u\ne0$. Near $c_i$ the integrand is bounded: $H_{a_i}(u)=O((r-c_i)^2)$, while $u'(r)^2\ge c\,(r-c_i)^2$ because $u''(c_i)=-f(u(c_i))\ne0$, so that $H_{a_i}(u)/u'^2=O(1)$ and $H_{a_i}(u)\,e^Gv/u'=O(|r-c_i|)$. Hence, by Lemma~\ref{lem:weighted-identities},
    \[
        T_2(c_i)=\lim_{r\to c_i}\sB_{a_i}(r)=\sB_{a_i}(t_i)+\int_{t_i}^{c_i}\sB_{a_i}'\,dr.
    \]
    Now $\sB_{a_i}(t_i)=T_2(t_i)$ by \eqref{eq:bridge-values}, since $g_2(u(t_i))=g_2(\beta_i)=a_i$, and $T_2(t_i)>T_2(c_{i-1})$ by Theorem~\ref{thm:phase}(ii) (for $i=1$, $T_2(t_1)>0$ by Theorem~\ref{thm:first-phase}(ii)). The integral is positive, since its integrand is nonnegative and positive on $(t_i,c_i)\setminus\{z_i\}$. This gives the strict inequalities. The other two inequalities of the slope condition at $c_i$ are given by Theorem~\ref{thm:phase}(iv).
\end{proof}

\begin{remark}\label{rem:renewal-above}
    (a) The proof uses, besides the identities and (H), only the signs provided by Theorem~\ref{thm:phase}, among them $Q_{\varphi'}>0$ on $(\tau_i,c_i]$, which is the river crossing, and $H_{a_i}\ge0$ below the level $\beta_i$. On $\R^n$ ($\varphi''\equiv0$) it gives \cite[Lemma~5.6]{Tang2026} in the case $u_\tau\ge\beta_i$ without \cite[Lemmas~5.3--5.5]{Tang2026}: neither the comparison of $Q/(r^{n-1}|u'|)$ across the zero, nor the monotonicity of $P/r^n$, nor the critical-point information of $T_2$ on the ascent is needed. In \cite{Tang2026}, the only possible critical point of $T_2$ on $(\bar b_i,c_i)$ is the zero $\zeta_i$ of $M-\theta$ on the ascent: by \eqref{eq:M-theta-prime} and Theorem~\ref{thm:phase}(iv), $M-\theta$ decreases strictly on $[\tau_i,c_i)$ from $M(\tau_i)>0$ to $-\infty$, so it has exactly one zero beyond $z_i$. This zero is not used here: it appears only in the case $u_\tau<\beta_i$ (Section~\ref{sec:renewal-below}).

    (b) In the scaled limit all quantities in \eqref{eq:weighted-bridge-renewal} vanish: along the scaling field $T_2\equiv0$ (Proposition~\ref{prop:scaling-field}), $g_2\equiv\frac{2}{p-1}$, $H_{2/(p-1)}\equiv0$.
\end{remark}

%% file: sections/17-renewal.tex
\section{The renewal below the level \texorpdfstring{$\beta_i$}{beta\_i}}\label{sec:renewal-below}

This section finishes the induction step, and with it the analysis phase by phase. It is the longest and most technical part of the proof, and the only place where the restriction (R) is used. The idea, taken from~\cite{Tang2026}, is to compare the descent and the ascent of a phase at equal heights of $|u|$. Once this is done, the slope condition holds at every critical point, and at every bound state $v$ grows at infinity with the sign opposite to $u$ (Theorem~\ref{thm:critical-form}). The last two sections study the tail of a bound state and turn this sign into the staircase.

Section~\ref{sec:transition} carries the slope condition \eqref{eq:slope-condition} from one critical point of $u$ to the next, except for the slope bound $T_2(c_i)>0$, which Theorem~\ref{thm:renewal-above} provides when the zero $\tau_i$ of $v$ on the descent lies above the level $\beta_i=|u(c_i)|$. In this section we treat the remaining case $u_\tau<\beta_i$, which on $\R^n$ is the delicate step of~\cite{Tang2026} (Lemmas~5.3--5.6). We pair the descent with the ascent at equal levels, as in~\cite[\S5.6]{Tang2026}, but we organize the pairing by integrating $T_2'$ level by level (Theorem~\ref{thm:renewal-below}, Steps 1--4). The Euclidean pairing uses three facts that do not carry over directly to a warped product:
\begin{itemize}
    \item[(i)] the comparison of $\varpi=Q/(e^G|u'|)$ at paired levels across the zero of $u$ \cite[Lemma~5.5]{Tang2026},
    \item[(ii)] the monotonicity of $P/r^n$ \cite[Proposition~2.5(iii)]{Tang2026}, whose warped form is the positivity of $P_2$ (Remark~\ref{rem:P-two}),
    \item[(iii)] $n\ge3$, through the monotonicity of $r^{n-3}=e^G/\varphi^2$. On a warped product, $e^G/\varphi^2$ is nondecreasing if and only if $\varphi'\le\gamma-1$.
\end{itemize}
We prove (i) at every level above $\alpha_*$ (Lemma~\ref{lem:level-comparison}), by combining a weighted energy inequality (Lemma~\ref{lem:paired-energy}) with the monotonicity of $e^G\varphi'v/u'$, which is where (H) is used. We prove (ii) (Lemma~\ref{lem:P-two-positive}), and we show that (iii) is not needed: the weighted energy inequality replaces the monotonicity of $e^G/\varphi^2$ by that of $e^{2G}/\varphi^2$, which always holds.

The case $\tau_i>b_i$, in which the zero of $v$ lies below the level $\alpha_*$, is the residual case of~\cite{Tang2026}, settled for $n=3$, $1<p<2$ by~\cite[Lemma~5.4]{Tang2026}. We prove the warped analogue of that lemma for every $\gamma\ge\frac85$ and every $p>1$ (Lemma~\ref{lem:river-integral}), by a shorter argument than the one in~\cite{Tang2026} (Remark~\ref{rem:river-integral}). The river is then crossed with a bridge whose weight is frozen at a constant (Lemma~\ref{lem:frozen-bridge}). For $1<\gamma<\frac85$ we use instead the warped analogue of~\cite[Lemma~5.3]{Tang2026} (Lemma~\ref{lem:Q-nu-positive}), which requires $p\ge\frac2{\gamma-1}$: this is where the restriction (R) of Assumption~\ref{ass:R} comes from.

The result (Theorem~\ref{thm:renewal-below}) is that $T_2(c_i)>T_2(c_{i-1})$ at every phase, under the standing hypotheses of Section~\ref{sec:transition} together with (R). The main theorem in critical-point form (Theorem~\ref{thm:critical-form}) then follows, and with it the theorems of Section~\ref{sec:uniqueness}. By Lemma~\ref{lem:phi-basic}, $\nu\le\gamma+1$. The second alternative of (R), $p\ge\frac2{\gamma-1}$, is equivalent to $\gamma+1\le(p+1)(\gamma-1)$, so it implies
\begin{equation}\label{eq:nu-bound}
    \nu\le(p+1)(\gamma-1)\qquad\text{on }(0,\infty).
\end{equation}
Only this consequence is used, and only at the points $b_i$ with $b_i<\tau_i$.

\subsection{Conventions and facts used}\label{sec:renewal-setting}

Throughout Lemmas~\ref{lem:paired-energy}--\ref{lem:frozen-bridge} and Theorem~\ref{thm:renewal-below} we assume the following.
\begin{itemize}
    \item Hypothesis (H) holds.
    \item $u$, $K$, the points of phase $i$ and the levels $\beta_i$ are as in Section~\ref{sec:later-phases}.
    \item $1\le i\le K$ is fixed and, if $i\ge2$, the slope condition \eqref{eq:slope-condition} holds at $c_{i-1}$.
\end{itemize}
We write $u_\tau\defeq |u(\tau_i)|$, and recall that $\alpha_*^{p-1}=\frac{p+1}2$.
To keep long products and quotients readable, a vertical bar with a subscript means that all the terms before it are evaluated at that point. For instance,
\[
    \frac{e^Gv}{\varphi u'^2}\Big|_{r}\defeq\frac{e^{G(r)}v(r)}{\varphi(r)\,u'(r)^2}.
\]

\emph{Level points.} For $0<\mu<\beta_{i-1}$, let $r_\mu\in(c_{i-1},z_i)$ be the point with $u(r_\mu)=\mu$. For $0<\mu<\beta_i$, let $\bar r_\mu\in(z_i,c_i)$ be the point with $u(\bar r_\mu)=-\mu$. Both maps are $C^1$, with $dr_\mu/d\mu=1/u'(r_\mu)$ and $d\bar r_\mu/d\mu=-1/u'(\bar r_\mu)$, and $b_i=r_{\alpha_*}$, $\bar b_i=\bar r_{\alpha_*}$.

From Theorem~\ref{thm:phase} (for $i=1$: Theorem~\ref{thm:first-phase}(i)--(ii) and the last paragraph of Theorem~\ref{thm:phase}) we use:
\begin{enumerate}
    \item[(D1)] $\tau_i\in(c_{i-1},r_i)$ is the only zero of $v$ in $(c_{i-1},c_i]$. Moreover $v>0$ on $[c_{i-1},\tau_i)$, $v'<0$ on $(c_{i-1},\tau_i]$, and $v<0$ on $(\tau_i,c_i]$ (for $i=1$, use $(0,\tau_1)$ in place of $[c_0,\tau_1)$), in particular $u>1$ on $(c_{i-1},\tau_i]$, so $u_\tau>1$.
    \item[(D2)] $M>0$ on $(c_{i-1},c_i]$.
    \item[(D3)] $Q>0$ and $Q_{2\varphi'}>0$ on $(c_{i-1},\max\{b_i,\tau_i\}]$. Also $Q_{\varphi'}>0$ and $Q_\nu>0$ on $(\tau_i,c_i)$, $Q$ is positive and increasing on $[\bar b_i,c_i]$, and $Q(\bar b_i)>Q(b_i)$.\footnote{For $i=1$ these are contained in the last paragraph of Theorem~\ref{thm:phase}. The two not written out there follow in one line each.
          \emph{$Q_{2\varphi'}$.} On $(0,\tau_1]$ one has $Q_{2\varphi'}'=2e^G(\varphi'u'v'+\varphi''u'v)>0$ with $Q_{2\varphi'}(0^+)=0$. On $(\tau_1,b_1]$, if nonempty, $u'v>0$ and $Q>0$ (Theorem~\ref{thm:first-phase}(ii)) give $Q_{2\varphi'}>Q>0$.
          \emph{$Q_{\varphi'}$ and $Q_\nu$.} On $(\tau_1,c_1)$ we have $u'v>0$. On $(\tau_1,\bar b_1]$, $Q_{\varphi'}>0$ (stated there) gives $Q_\nu=Q_{\varphi'}+g\varphi e^Gu'v>0$. On $[\bar b_1,c_1)$, $Q>0$ gives $Q_\nu>Q_{\varphi'}>Q>0$.}
    \item[(D4)] $T_2$ is strictly increasing on $[c_{i-1},b_i]$ if $i\ge2$, while $T_2>0$ on $(0,z_1)$ and $T_2(0^+)=0$ if $i=1$ (Theorem~\ref{thm:first-phase}(ii) and its proof).
\end{enumerate}
Of (D3), only the positivity statements and $Q(\bar b_i)>Q(b_i)$ are used below, and of (D4) only $T_2(0^+)=0$.
Moreover, by Proposition~\ref{prop:pohozaev-positive}:
\begin{enumerate}
    \item[(D5)] $P>0$ on $(0,c_K]$, and $\omega$ is strictly increasing on every subinterval of $(0,c_K]$ on which $u\neq0$.
\end{enumerate}
Two further facts on the descent follow from these.
\begin{enumerate}
    \item[(D6)] The Wronskian $\Sigma=e^G(u''v'-u'v'')=e^G\big(f'(u)u'v-f(u)v'\big)$ of Section~\ref{sec:identities} is positive on $(c_{i-1},\tau_i]$. Equivalently, since $f(u)>0$ on $(c_{i-1},\tau_i]$,
          \[
              \frac{|v'|}{v}>\frac{f'(u)\,|u'|}{f(u)}\qquad\text{on }(c_{i-1},\tau_i).
          \]
    \item[(D7)] $\omega(b_i)>\gamma-1$, hence $\omega>\gamma-1$ on $[b_i,z_i)$.
\end{enumerate}
\emph{Proof of (D6)--(D7).} (D6): by \eqref{eq:Sigma-prime}, $\Sigma'=e^Gf''(u)u'^2v>0$ on $(c_{i-1},\tau_i)$, because $u>1$ and $v>0$. Also $\Sigma(0^+)=0$ if $i=1$ (as in the proof of Theorem~\ref{thm:first-phase}(i)), and $\Sigma(c_{i-1})=-e^{G(c_{i-1})}f(\beta_{i-1})v'(c_{i-1})>0$ if $i\ge2$, because $M(c_{i-1})=-e^{G(c_{i-1})}\beta_{i-1}v'(c_{i-1})>0$.

(D7): since $F(\alpha_*)=0$, $P=e^G|u'|\big(\varphi|u'|-(\gamma-1)u\big)=e^Gu|u'|\,(\omega-\gamma+1)$ at $b_i$, and $P(b_i)>0$ by (D5). Then use that $\omega$ is increasing. \qed

\begin{lemma}[Identities]\label{lem:renewal-identities}
    With $A=\varphi e^G$, the following hold.
    \begin{enumerate}
        \item[(i)] $P_2=P-\dfrac{A}{\nu}h_r(u)$ and $\Big(\dfrac PA\Big)'=-\dfrac{\nu}{\varphi}\,\dfrac{P_2}{A}$.
        \item[(ii)] With $k_r(u)\defeq \partial_uh_r(u)=u\big[(4\varphi'-(p-1)(\gamma-1))|u|^{p-1}-4\varphi'\big]$,
              \begin{equation}\label{eq:P-two-prime}
                  P_2'=-\frac{A}{\nu}\,u'\,k_r(u)+\frac{2(\gamma-1)A\,|\varphi''|}{\nu^2}\,uf(u).
              \end{equation}
        \item[(iii)] Wherever $u'\neq0$,
              \begin{equation}\label{eq:H-ratio}
                  \Big(e^G\varphi'\frac{v}{u'}\Big)'=\big(e^G\varphi'\big)'\frac{v}{u'}+\frac{\varphi'Q_{\varphi'}}{\varphi u'^2}.
              \end{equation}
        \item[(iv)] For every $\mu>0$,
              \begin{equation}\label{eq:level-energy}
                  \frac{d}{dr}\Big[e^{2G}\big(E-F(\mu)\big)\Big]=2g\,e^{2G}\big(F(u)-F(\mu)\big).
              \end{equation}
        \item[(v)] $2g\varphi=\gamma-1+\nu$. Wherever $u'\neq0$ and $v\neq0$, the function $X\defeq -e^Gv/u'$ satisfies
              \begin{equation}\label{eq:X-log}
                  \frac{X'}{X}=2g+\frac{v'}{v}+\frac{f(u)}{u'} .
              \end{equation}
        \item[(vi)] \textup{(Frozen bridge.)} For a constant $\Lambda$ put $\mathfrak V_\Lambda\defeq Q-\Lambda(M-\theta)-2\varphi'F(u)\,e^Gv/u'$. Wherever $u'\neq0$,
              \begin{align}
                  \mathfrak V_\Lambda'
                   & =\big(\Lambda h_1(u)-2\varphi'F(u)\big)\frac{Q_\nu}{\varphi u'^2}-2\varphi''F(u)\,\frac{e^Gv}{u'}
                  \label{eq:frozen-bridge-prime}                                                                       \\
                   & =\big(\Lambda h_1(u)-2\varphi'F(u)\big)\frac{Q_{\varphi'}}{\varphi u'^2}
                  +\Big(g\Lambda h_1(u)-2\big(g\varphi'+\varphi''\big)F(u)\Big)\frac{e^Gv}{u'}.
                  \label{eq:frozen-bridge-prime-alt}
              \end{align}
    \end{enumerate}
\end{lemma}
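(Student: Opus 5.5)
All six items are finite computations from \eqref{eq:ode}, the variation equation \eqref{eq:variation}, and the two relations $\varphi'=g\varphi-(\gamma-1)$, $\varphi''=(g\varphi)'$ of \eqref{eq:phi-ode}. We can shorten the work by reusing identities already established: \eqref{eq:pohozaev-identity}, \eqref{eq:Q-prime}, \eqref{eq:M-prime}, \eqref{eq:ratio-primes} and \eqref{eq:M-theta-prime}. The plan is to prove each item separately, in the order (v), (iv), (iii), (i), (ii), (vi), so that the easy identities are available when we reach the longer ones.

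\emph{(v) and (iv).} For (v), the first claim is $2g\varphi=2\varphi'+2(\gamma-1)=\nu+\gamma-1$. For $X=-e^Gv/u'$, logarithmic differentiation gives $X'/X=g+v'/v-u''/u'$. Substituting $u''=-gu'-f(u)$ turns this into \eqref{eq:X-log}. For (iv), combine $E'=-gu'^2$ with $(e^{2G})'=2ge^{2G}$. The two contributions sum to
\[
e^{2G}\big[2g(E-F(\mu))-gu'^2\big]=2ge^{2G}\big(F(u)-F(\mu)\big),
\]
where we used $2E-u'^2=2F(u)$.

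\emph{(iii).} Apply the product rule to $(e^G\varphi')\cdot(v/u')$ and use the first identity of \eqref{eq:ratio-primes}, $(v/u')'=Q_{\varphi'}/(e^G\varphi u'^2)$. The factor $e^G$ cancels and \eqref{eq:H-ratio} follows.

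\emph{(i) and (ii).} Expand $P-\frac{A}{\nu}h_r(u)$ with $h_r=2\nu F-(\gamma-1)uf$. The $2AF$ term of $P$ cancels, leaving exactly $P_2$ as defined in Remark~\ref{rem:P-two}. Next, $(P/A)'=P'/A-PA'/A^2$, with $P'=e^Gh_r$ and $A'=e^G(\varphi'+g\varphi)=e^G\nu$. This gives
\[
(P/A)'=\frac{e^G}{A}\Big(h_r-\frac{\nu}{\varphi}\frac{P}{e^G}\Big)=-\frac{\nu}{\varphi A}\Big(P-\frac{A}{\nu}h_r\Big),
\]
and the bracket is $P_2$. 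For (ii), differentiate $P_2=P-\frac{A}{\nu}h_r(u)$. We have $P'=e^Gh_r$ and $(A/\nu)'=e^G-A\nu'/\nu^2$ with $\nu'=2\varphi''$. The $r$-derivative of $h_r(u(r))$ is $k_r(u)u'+\partial_rh_r$, where $\partial_rh_r=2\nu'F-0=4\varphi''F$, since only $\nu$ depends on $r$ when we write $h_r=2\nu F-(\gamma-1)uf$. Collecting terms:
\[
P_2'=\frac{A\nu'}{\nu^2}h_r-\frac{A}{\nu}\big(k_ru'+2\nu'F\big)=-\frac{A}{\nu}u'k_r+\frac{A\nu'}{\nu^2}\big(h_r-2\nu F\big).
\]
Since $h_r-2\nu F=-(\gamma-1)uf$ and $\nu'=-2|\varphi''|$, this yields \eqref{eq:P-two-prime}. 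The main thing to check here is the sign bookkeeping.

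\emph{(vi).} Write $\mathfrak V_\Lambda=Q-\Lambda(M-\theta)-2\varphi'F(u)\,e^Gv/u'$ and differentiate each piece. From \eqref{eq:M-theta-prime}, $-\Lambda(M-\theta)'=\Lambda h_1Q_\nu/(\varphi u'^2)$. The last term contributes
\[
-2\varphi''F\frac{e^Gv}{u'}-2\varphi'f\,u'\frac{e^Gv}{u'}-2\varphi'F\frac{Q_\nu}{\varphi u'^2},
\]
using the second identity of \eqref{eq:ratio-primes}. Its middle piece is $-2\varphi'e^Gf v$, which cancels $Q'=2\varphi'e^Gfv$ from \eqref{eq:Q-prime}. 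What remains is \eqref{eq:frozen-bridge-prime}. To pass to \eqref{eq:frozen-bridge-prime-alt}, substitute $Q_\nu=Q_{\varphi'}+g\varphi e^Gu'v$ and note that
\[
g\varphi e^Gu'v/(\varphi u'^2)=g\,e^Gv/u'.
\]
The only real obstacle in the whole proof is keeping the signs straight in (ii) and (vi). If a coefficient comes out wrong, the fallback is to check it against the Euclidean reduction, where $\varphi''=0$ and the identities must reduce to those of \cite{Tang2026}.
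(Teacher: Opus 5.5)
Your proposal is correct and follows essentially the same route as the paper: every item is a direct computation from the ODE, the variation equation, $\varphi'=g\varphi-(\gamma-1)$ (hence $A'=\nu e^G$) and the already established identities for $M'$, $Q'$, $(v/u')'$, $(e^Gv/u')'$ and $(M-\theta)'$, with the cancellation of $Q'$ against $-2\varphi'f\,e^Gv$ in (vi) exactly as in the paper. The only cosmetic difference is in (ii), where you differentiate $P-\frac{A}{\nu}h_r(u)$ by the chain rule with $\partial_rh_r=2\nu'F$ rather than the expanded form $P-2AF+\frac{(\gamma-1)A}{\nu}uf$; note that your rewriting $\nu'=-2|\varphi''|$ uses the standing hypothesis $(\Phi)$.
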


\begin{proof}
    (i) The first identity is immediate from the definitions, since $h_r=2\nu F-(\gamma-1)uf$, and the second is \eqref{eq:id-P-over-A}.

    (ii) Differentiate $P_2=P-2AF+\frac{(\gamma-1)A}{\nu}uf$ using $P'=e^Gh_r(u)$, $A'=\nu e^G$ and $\nu'=2\varphi''\le0$.

    (iii) This is the product rule together with the first identity of~\eqref{eq:ratio-primes}.

    (iv) Use $E'=-gu'^2$.

    (v) The first identity is $\nu=2\varphi'+\gamma-1$ with $\varphi'=g\varphi-(\gamma-1)$. For the second, $(\log X)'=g+v'/v-u''/u'$ and $u''=-gu'-f(u)$.

    (vi) Write $Y\defeq e^Gv/u'$, so that $\theta=h_1(u)Y$, $u'Y=e^Gv$ and, by \eqref{eq:ratio-primes}, $Y'=Q_\nu/(\varphi u'^2)$. By \eqref{eq:M-prime}, \eqref{eq:Q-prime} and \eqref{eq:M-theta-prime},
    \[
        \mathfrak V_\Lambda'=2\varphi'f(u)u'Y+\Lambda h_1(u)Y'-2\varphi''F(u)Y
        -2\varphi'f(u)u'Y-2\varphi'F(u)Y',
    \]
    which is \eqref{eq:frozen-bridge-prime}. For \eqref{eq:frozen-bridge-prime-alt}, insert $Q_\nu=Q_{\varphi'}+g\varphi e^Gu'v=Q_{\varphi'}+g\varphi u'^2Y$.
\end{proof}

\subsection{Positivity of \texorpdfstring{$P_2$}{P\_2}}

\begin{lemma}[Positivity of $P_2$]\label{lem:P-two-positive}
    Let $u$ be a nodal solution with $k\ge1$ zeros or a bound state with $k\ge1$ zeros, and let $K$ be as in Section~\ref{sec:later-phases}. Then $P_2>0$ on $(0,c_K]$. Consequently $P/A$ is strictly decreasing on $(0,c_K]$.
\end{lemma}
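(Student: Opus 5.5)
The plan is to split $(0,c_K]$ according to the sign of $h_r(u(r))$ and use the two expressions for $P_2$ in Lemma~\ref{lem:renewal-identities}(i)--(ii). The first piece is immediate. Wherever $h_r(u(r))\le0$, which by Lemma~\ref{lem:level-structure}(i) is the set where $|u(r)|\le\alpha^*(r)$, we have
\[
    P_2=P-\frac{A}{\nu}h_r(u)\ge P>0
\]
by (D5), i.e.\ Proposition~\ref{prop:pohozaev-positive}, which gives $P>0$ on all of $(0,c_K]$. It therefore remains to treat the open set $U:=\{r\in(0,c_K]:\ |u(r)|>\alpha^*(r)\}$.

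On $U$ we read off the sign of $P_2'$ from \eqref{eq:P-two-prime}. The second term $\frac{2(\gamma-1)A|\varphi''|}{\nu^2}uf(u)$ is $\ge0$ because $|u|>\alpha^*(r)\ge\alpha^*(0)>1$, so $uf(u)>0$. For the first term I will compare the zero of $k_r$ with $\alpha^*(r)$. When $4\varphi'>(p-1)(\gamma-1)$, $k_r$ vanishes at $|u|^{p-1}=4\varphi'/(4\varphi'-(p-1)(\gamma-1))$. This is strictly smaller than $\alpha^*(r)^{p-1}=2(p+1)\varphi'/(4\varphi'-(p-1)(\gamma-1))$, since $4<2(p+1)$. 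Hence on $U$ we have $\sgn k_r(u)=\sgn u$, and $-u'k_r(u)$ has the sign of $-uu'$. (If $\alpha^*(r)=\infty$ the point $r$ is not in $U$.) Consequently $P_2$ is strictly increasing where $|u|$ decreases and $|u|>\alpha^*$, and the first term is $\le0$ where $|u|$ increases.

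Next I describe the connected components of $U$, using the phase structure of Proposition~\ref{prop:critical-points}. Since $|u|<\alpha_*<\alpha^*$ near every zero of $u$, every component of $U$ lies in a stretch between consecutive zeros. The level $\alpha^*(r)$ is nondecreasing by Lemma~\ref{lem:level-structure}(ii), and $|u|$ is unimodal on such a stretch. Hence each component is either an interval $(0,s)$ starting at the origin, on which $u$ decreases, or an interval around a critical point $c_j$, $1\le j\le K$; in the latter case it may be cut at $c_K$ when $j=K$.

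On a component $(0,s)$, $P_2$ is increasing with $P_2(0^+)=0$ (the bracket is bounded and $A,e^G\to0$), so $P_2>0$. On a component around $c_j$, evaluate at the critical point itself:
\[
    P_2(c_j)=A\,\frac{\gamma-1}{\nu}\,uf(u)\big|_{c_j}>0.
\]
On the descent part after $c_j$, $P_2$ increases from this positive value. On the ascent part before $c_j$, the first term of \eqref{eq:P-two-prime} is nonpositive, and the second term is not needed: if the argument only uses $P_2'\le0$ there, I will check that the positive $|\varphi''|$-term does not spoil it; otherwise I will instead integrate backward from $c_j$. Note also that at the left endpoint of such a component $h_r=0$, so $P_2=P>0$ there. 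Thus on this part $P_2$ is bounded below by $\min\{P(\text{left end}),P_2(c_j)\}$, using that $P_2$ is continuous and that, when the derivative sign is mixed, the two endpoint values are positive and the remaining term is controlled.

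I expect this ascent part to be the main obstacle, because there the two terms of $P_2'$ have opposite signs. What I will try first is to show that on the ascent $P_2$ cannot have an interior nonpositive minimum. At a first zero $s$ of $P_2$ one has $P=\frac{A}{\nu}h_r(u)$. I would combine this with $(P/A)'=-\frac{\nu}{\varphi}\frac{P_2}{A}=0$ at $s$, and with the fact that $P/A$ is monotone on the neighbouring region where $P_2>0$, to derive a contradiction with $P>0$.

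Finally, once $P_2>0$ on $(0,c_K]$ is established, the identity $(P/A)'=-\frac{\nu}{\varphi}P_2/A$ together with $\nu,\varphi,A>0$ gives that $P/A$ is strictly decreasing.
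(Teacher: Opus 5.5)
Your treatment of $\{h_r(u)\le0\}$, where $P_2\ge P>0$, is correct, and so is your treatment of the descent parts of $U$: the comparison of the zero of $k_r$ with $\alpha^*(r)$ is exactly the paper's. The gap is the ascent parts of $U$, which you leave open, and neither fallback you propose closes them. Knowing $P_2>0$ at the two ends of a stretch on which $P_2'$ has no sign gives no lower bound in between. At a first zero $s$ of $P_2$, the relations $(P/A)'(s)=0$ and $P(s)=\frac{A}{\nu}h_r(u(s))$ only give $h_r(u(s))>0$. That is consistent with $s\in U$ and yields no contradiction.

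Your description of the components is also unjustified on ascents. Lemma~\ref{lem:level-structure}(iii) gives a single crossing only where $|u|$ decreases. An increasing $|u|$ can cross a nondecreasing $\alpha^*$ several times, so $U$ may have components lying entirely inside an ascent. On such a component neither $P_2(c_j)$ nor $P_2(0^+)$ is available as an anchor.

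The missing step needs no derivative at all: read $P_2$ off its definition. On an ascent $uu'\ge0$, and on $U$ one has $|u|>\alpha^*(r)\ge\alpha^*(0)>1$, so $uf(u)>0$. Hence in $P_2=A\big[u'^2+\frac{\gamma-1}{\nu}uf(u)\big]+(\gamma-1)e^Guu'$ every term is nonnegative and the middle one is positive. This is the computation you already did at $c_j$, and it works at every point of $U$ with $uu'\ge0$, so the component structure there becomes irrelevant.

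With this fix your route is complete and differs slightly from the paper's. The paper proves pointwise positivity on the whole ascent $[z_j,c_j]$; for $0<|u|<1$ this needs $E>0$ and an affine estimate in $|u|^{p-1}$. Your split instead uses $P_2\ge P>0$ wherever $|u|\le\alpha^*(r)$, which covers that low part of the ascent. The cost is invoking Proposition~\ref{prop:pohozaev-positive} on ascents as well.
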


\begin{proof}
    First, $P_2(0^+)=0$: the brackets in $P_2$ are bounded as $r\downarrow0$, while $A$ and $e^G$ tend to $0$.

    \emph{(a) Ascents.} Let $r\in[z_j,c_j]$ with $1\le j\le K$, so that $uu'\ge0$ and $E(r)>0$.
    \begin{itemize}
        \item If $u(r)=0$, then $P_2=Au'^2>0$.
        \item If $u'(r)=0$, then $r=c_j$ and $|u|>\alpha_*>1$ by Proposition~\ref{prop:critical-points}, so $P_2=\frac{(\gamma-1)A}{\nu}uf>0$.
        \item If $|u|\ge1$ and $u'\neq0$, all three terms of $P_2$ are nonnegative and $Au'^2>0$.
        \item If $0<|u|<1$, then $E>0$ gives $u'^2>-2F(u)$. Hence
              \[
                  u'^2+\frac{\gamma-1}{\nu}uf>-2F+\frac{\gamma-1}{\nu}uf
                  =u^2\Big[\Big(1-\frac{\gamma-1}{\nu}\Big)
                  +|u|^{p-1}\Big(\frac{\gamma-1}{\nu}-\frac{2}{p+1}\Big)\Big].
              \]
              The right-hand side is affine in $|u|^{p-1}\in[0,1]$. Its values at the endpoints are $1-\frac{\gamma-1}{\nu}>0$ (Lemma~\ref{lem:phi-basic}) and $1-\frac2{p+1}>0$, so it is positive.
              Since also $(\gamma-1)e^Guu'\ge0$, $P_2>0$.
    \end{itemize}

    \emph{(b) Descents.} Let $1\le j\le K$. On $(c_{j-1},z_j]$ (or $(0,z_1]$ if $j=1$) $|u|$ is strictly decreasing. By Lemma~\ref{lem:level-structure}(i),(iii), the set where $h_r(u(r))>0$, i.e.\ where $|u|>\alpha^*(r)$, is a possibly empty initial interval $(c_{j-1},\rho)$.

    On $(c_{j-1},\rho)$, $\alpha^*(r)$ is finite, so $4\varphi'>(p-1)(\gamma-1)$ and
    \[
        \big(4\varphi'-(p-1)(\gamma-1)\big)|u|^{p-1}
        >\big(4\varphi'-(p-1)(\gamma-1)\big)\alpha^*(r)^{p-1}
        =2(p+1)\varphi'>4\varphi'.
    \]
    Thus $k_r(u)/u>0$. Since $uu'<0$, this gives $-u'k_r(u)>0$, and $uf(u)>0$ because $|u|>1$. By \eqref{eq:P-two-prime}, $P_2'>0$ on $(c_{j-1},\rho)$. Moreover $P_2(c_{j-1})>0$ by (a) if $j\ge2$, and $P_2(0^+)=0$ if $j=1$. Hence $P_2>0$ on $(c_{j-1},\rho]$.

    On $[\rho,z_j]$ we have $h_r(u)\le0$, so $P_2=P-\frac A\nu h_r\ge P>0$ by Lemma~\ref{lem:renewal-identities}(i) and Proposition~\ref{prop:pohozaev-positive}.

    Together, (a) and (b) give $P_2>0$ on $(0,c_K]$, and the last assertion follows from Lemma~\ref{lem:renewal-identities}(i).
\end{proof}

\subsection{A weighted energy inequality and the comparison lemma}

\begin{lemma}[Weighted energy at paired levels]\label{lem:paired-energy}
    For every $\mu\in[\alpha_*,\beta_i)$,
    \[
        |u'(\bar r_\mu)|<|u'(r_\mu)|
        \qquad\text{and}\qquad
        e^{G(\bar r_\mu)}|u'(\bar r_\mu)|<e^{G(r_\mu)}|u'(r_\mu)|.
    \]
\end{lemma}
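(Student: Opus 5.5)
Both inequalities come from integrating the weighted level-energy identity \eqref{eq:level-energy} of Lemma~\ref{lem:renewal-identities}(iv) between the two paired points $r_\mu<\bar r_\mu$. Fix $\mu\in[\alpha_*,\beta_i)$ and set $\mathcal E_\mu\defeq E-F(\mu)$. At both $r_\mu$ and $\bar r_\mu$ one has $|u|=\mu$, so $F(u)=F(\mu)$ by evenness of $F$. Hence $\mathcal E_\mu=\frac12u'^2$ at both points.

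\emph{First inequality.} $E$ is strictly decreasing along the nonconstant solution (by \eqref{eq:energy-model}, $E'=-gu'^2$ with $g>0$). Therefore
\[
\tfrac12u'(\bar r_\mu)^2=E(\bar r_\mu)-F(\mu)<E(r_\mu)-F(\mu)=\tfrac12u'(r_\mu)^2 .
\]
This gives $|u'(\bar r_\mu)|<|u'(r_\mu)|$. (Equivalently, this is \eqref{eq:energy-model} integrated over $[r_\mu,\bar r_\mu]$.)

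\emph{Second inequality.} Integrate \eqref{eq:level-energy} over $[r_\mu,\bar r_\mu]$:
\[
e^{2G(\bar r_\mu)}\tfrac12u'(\bar r_\mu)^2-e^{2G(r_\mu)}\tfrac12u'(r_\mu)^2=2\int_{r_\mu}^{\bar r_\mu}g\,e^{2G}\big(F(u)-F(\mu)\big)\,dr .
\]
The right-hand side is negative for the following reason. On $(r_\mu,\bar r_\mu)$, $u$ is strictly decreasing. It runs from $\mu$ on the descent, through $z_i$, to $-\mu$ on the ascent, so $|u|<\mu$ on the open interval. Since $\mu\ge\alpha_*>1$, $F(u)<F(\mu)$ wherever $|u|<\mu$. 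Indeed, $F$ is even and increasing on $[1,\infty)$, while on $[0,1]$ we have $F\le0<F(\mu)$ when $\mu>\alpha_*$. In the borderline case $\mu=\alpha_*$, $F(\mu)=0$ and $F<0$ on $(0,\alpha_*)$, and $F(u)=F(\mu)$ holds only at the isolated point $z_i$, where $F(0)=0=F(\alpha_*)$. Because $g>0$, the integrand is therefore $\le0$ and strictly negative on a set of positive measure. This yields $e^{2G(\bar r_\mu)}u'(\bar r_\mu)^2<e^{2G(r_\mu)}u'(r_\mu)^2$, and taking square roots gives the second claim.

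\emph{Main obstacle.} The only delicate point is the sign bookkeeping of $F(u)-F(\mu)$ across the river and at the endpoint level $\mu=\alpha_*$. It is handled by the evenness and monotonicity of $F$ listed at the start of Section~\ref{sec:basic-properties}, together with the strict monotonicity of $u$ on $(c_{i-1},c_i)$ from Section~\ref{sec:later-phases}, which places $|u|<\mu$ strictly between the paired points.
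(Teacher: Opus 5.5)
Your proof is correct and follows the paper's argument essentially verbatim: the first inequality from the strict decrease of $E$ together with $F(u(r_\mu))=F(u(\bar r_\mu))$, and the second from integrating \eqref{eq:level-energy} over $[r_\mu,\bar r_\mu]$, using $F(u)<F(\mu)$ on $(r_\mu,\bar r_\mu)$ except possibly at $z_i$ when $\mu=\alpha_*$. The paper phrases the second step as the strict monotonicity of $e^{2G}(E-F(\mu))$ on $[r_\mu,\bar r_\mu]$, which is the same computation.
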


\begin{proof}
    The first inequality holds because $E$ is strictly decreasing and $F(u)$ takes the same value at $r_\mu$ and $\bar r_\mu$.

    For the second, on $(r_\mu,\bar r_\mu)$ we have $|u|<\mu$, which gives $F(u)<F(\mu)$:
    \begin{itemize}
        \item For $|u|\le1$, $F(u)\le0\le F(\mu)$, with equality only if $u=0$ and $\mu=\alpha_*$, which happens at most once, at $z_i$.
        \item For $1<|u|<\mu$, $F$ is strictly increasing on $[1,\infty)$.
    \end{itemize}
    As $g>0$ by (G1), \eqref{eq:level-energy} shows that $e^{2G}(E-F(\mu))$ is strictly decreasing on $[r_\mu,\bar r_\mu]$. At both endpoints it equals $\frac12e^{2G}u'^2$.
\end{proof}

\begin{lemma}[Comparison of the levels]\label{lem:level-comparison}
    Let $\varpi\defeq Q/(e^G|u'|)$ on $(c_{i-1},c_i)$. Then
    \[
        \varpi(\bar r_\mu)>\varpi(r_\mu)>0\qquad\text{for every }\mu\in[\alpha_*,\beta_i).
    \]
\end{lemma}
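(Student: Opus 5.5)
The plan is to prove the inequality first at the bottom level $\mu=\alpha_*$, and then propagate it upward in $\mu$ by a first-order comparison in the variable $\mu$. The comparison uses three ingredients: the ratio identity $\big(Q/(e^Gu')\big)'=f(u)Q_{2\varphi'}/(e^Gu'^2)$ of \eqref{eq:ratio-primes}, Lemma~\ref{lem:paired-energy}, and the monotonicity of $Y\defeq e^G\varphi'v/u'$ given by \eqref{eq:H-ratio}, which is where (H) enters.

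\emph{Positivity and base case.} For $\mu\in[\alpha_*,\beta_i)$ we have $r_\mu\le b_i\le\max\{b_i,\tau_i\}$, so $Q(r_\mu)>0$ by (D3), hence $\varpi(r_\mu)>0$. At $\mu=\alpha_*$, (D3) gives $Q(\bar b_i)>Q(b_i)>0$. Lemma~\ref{lem:paired-energy} gives $e^G|u'|(\bar b_i)<e^G|u'|(b_i)$. Dividing, $\varpi(\bar b_i)>\varpi(b_i)$.

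\emph{Derivatives in $\mu$.} Let $\Delta(\mu)\defeq\varpi(\bar r_\mu)-\varpi(r_\mu)$. Since $u'<0$ on the whole phase, $\varpi=-Q/(e^Gu')$. Using $dr_\mu/d\mu=1/u'(r_\mu)$, $d\bar r_\mu/d\mu=-1/u'(\bar r_\mu)$ and $f(-\mu)=-f(\mu)$, both $\mu$-derivatives take the same form $f(\mu)\,Q_{2\varphi'}/(e^G|u'|^3)$, evaluated at $r_\mu$ and at $\bar r_\mu$ respectively.

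I then rewrite $Q_{2\varphi'}=Q+2\varphi'e^Gu'v$ at both points. Since $\varphi'v=-Y|u'|e^{-G}$, this gives $Q_{2\varphi'}/(e^G|u'|)=\varpi+2Y|u'|e^{-G}$, and therefore
\[
\Delta'(\mu)=f(\mu)\Big[\frac{\varpi}{u'^2}\Big|_{\bar r_\mu}-\frac{\varpi}{u'^2}\Big|_{r_\mu}
+\frac{2Y}{e^G|u'|}\Big|_{\bar r_\mu}-\frac{2Y}{e^G|u'|}\Big|_{r_\mu}\Big],
\]
with $f(\mu)>0$ because $\mu\ge\alpha_*>1$.

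\emph{Sign of $Y$.} On $(\tau_i,c_i)$ we have $v/u'>0$ by (D1), $(e^G\varphi')'\ge0$ by (H), and $Q_{\varphi'}>0$ by (D3). Hence \eqref{eq:H-ratio} makes $Y$ strictly increasing and positive there, and in particular $Y(\bar r_\mu)>0$. If $r_\mu\le\tau_i$, then $v\ge0>u'$ gives $Y(r_\mu)\le0<Y(\bar r_\mu)$. Otherwise $\tau_i<r_\mu<\bar r_\mu$ and $Y(\bar r_\mu)>Y(r_\mu)>0$.

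\emph{Propagation.} Suppose $\Delta(\mu)\ge0$. Then $\varpi(\bar r_\mu)\ge\varpi(r_\mu)>0$. Combined with $|u'(\bar r_\mu)|<|u'(r_\mu)|$ from Lemma~\ref{lem:paired-energy}, the first difference in the bracket is positive. The second difference is also positive: the numerator at $\bar r_\mu$ is positive and larger than the one at $r_\mu$, and the denominator $e^G|u'|$ is smaller at $\bar r_\mu$, again by Lemma~\ref{lem:paired-energy}. Hence $\Delta'>0$ wherever $\Delta\ge0$. Starting from $\Delta(\alpha_*)>0$, a first zero of $\Delta$ in $(\alpha_*,\beta_i)$ is impossible, so $\Delta>0$ on the whole interval.

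\emph{Main obstacle.} I expect the main difficulty to be making the $\mu$-derivative computation rigorous and getting its signs right on both branches. This requires the $C^1$ dependence of $r_\mu$ and $\bar r_\mu$ on $\mu$. It also requires checking that the quotient by $u'$ stays regular for $\mu<\beta_i$, where $\bar r_\mu<c_i$ so that $u'(\bar r_\mu)\ne0$; the endpoint $\mu=\beta_i$ itself is excluded from the statement.
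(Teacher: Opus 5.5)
Your proposal is correct and follows the paper's proof essentially step by step. It uses the same base case at $\mu=\alpha_*$ from (D3) and Lemma~\ref{lem:paired-energy}, the same $\mu$-derivative identity obtained from the third identity of \eqref{eq:ratio-primes}, the monotonicity of $e^G\varphi'v/u'$ via \eqref{eq:H-ratio} and (H), and the same first-zero argument. The only cosmetic difference is that you handle $r_\mu\le\tau_i$ and $r_\mu>\tau_i$ together through the sign of $Y(r_\mu)$, whereas the paper treats them as two separate cases.
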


\begin{proof}
    For $\mu\ge\alpha_*$ we have $r_\mu\in(c_{i-1},b_i]$, so $\varpi(r_\mu)>0$ by (D3). Put $\varpi_2\defeq Q_{2\varphi'}/(e^G|u'|^3)$.

    By the third identity of~\eqref{eq:ratio-primes}, $\big(Q/(e^Gu')\big)'=f(u)Q_{2\varphi'}/(e^Gu'^2)$. Using $\varpi=-Q/(e^Gu')$, $d\mu=u'\,dr$ on the descent, $d\mu=-u'\,dr$ on the ascent, and the oddness of $f$, we obtain
    \begin{equation}\label{eq:varpi-levels}
        \frac{d}{d\mu}\varpi(r_\mu)=f(\mu)\,\varpi_2(r_\mu),\qquad
        \frac{d}{d\mu}\varpi(\bar r_\mu)=f(\mu)\,\varpi_2(\bar r_\mu).
    \end{equation}
    Let $\Delta(\mu)\defeq \varpi(\bar r_\mu)-\varpi(r_\mu)$, which is a $C^1$ function on $[\alpha_*,\beta_i)$. At $\mu=\alpha_*$, (D3) gives $Q(\bar b_i)>Q(b_i)>0$, and Lemma~\ref{lem:paired-energy} gives $e^{G(\bar b_i)}|u'(\bar b_i)|<e^{G(b_i)}|u'(b_i)|$. Hence $\Delta(\alpha_*)>0$.

    \emph{Claim: if $\Delta(\mu)\ge0$, then $\varpi_2(\bar r_\mu)>\varpi_2(r_\mu)$.} Write $r=r_\mu$, $\bar r=\bar r_\mu$. At $\bar r$ we have $u'v>0$, so $Q_{2\varphi'}=Q+2\varphi'e^Gu'v$ gives
    \[
        \varpi_2(\bar r)=\frac{\varpi}{u'^2}\Big|_{\bar r}+\frac{2\varphi'|v|}{u'^2}\Big|_{\bar r}
        >\frac{\varpi}{u'^2}\Big|_{\bar r}.
    \]
    Since $\Delta(\mu)\ge0$, $\varpi(r)>0$ and $|u'(\bar r)|<|u'(r)|$ (Lemma~\ref{lem:paired-energy}),
    \[
        \frac{\varpi}{u'^2}\Big|_{\bar r}\ge\frac{\varpi(r)}{u'(\bar r)^2}>\frac{\varpi}{u'^2}\Big|_{r}.
    \]
    \begin{itemize}
        \item \emph{Case $r\le\tau_i$.} Then $u'v\le0$ at $r$, so $Q_{2\varphi'}(r)\le Q(r)$ and $\varpi_2(r)\le\varpi(r)/u'(r)^2$. The claim follows from the two displays above.
        \item \emph{Case $r>\tau_i$.} Then $u'v>0$ at $r$ as well, and $\varpi_2(r)=\varpi(r)/u'(r)^2+2\varphi'(r)|v(r)|/u'(r)^2$. It remains to compare the last terms. The function $\Xi\defeq e^G\varphi'v/u'$ is positive on $(\tau_i,c_i)$, and by \eqref{eq:H-ratio} it is strictly increasing: $(e^G\varphi')'\ge0$ by (H), $v/u'>0$, and $Q_{\varphi'}>0$ by (D3).
              Since $\varphi'|v|/u'^2=\Xi/(e^G|u'|)$, Lemma~\ref{lem:paired-energy} gives that $\varphi'|v|/u'^2$ is larger at $\bar r$ than at $r$.
    \end{itemize}
    This proves the claim.

    Now suppose $\Delta$ vanishes somewhere in $(\alpha_*,\beta_i)$, and let $\mu_0$ be its smallest zero. Then $\Delta>0$ on $[\alpha_*,\mu_0)$, so $\Delta'(\mu_0)\le0$. On the other hand, by \eqref{eq:varpi-levels} and the claim, $\Delta'(\mu_0)=f(\mu_0)\,[\varpi_2(\bar r_{\mu_0})-\varpi_2(r_{\mu_0})]>0$, because $f>0$ on $(1,\infty)$. This contradiction shows $\Delta>0$ on $[\alpha_*,\beta_i)$.
\end{proof}

\subsection{The negative part of \texorpdfstring{$Q_\nu$}{Q\_nu} (used only for \texorpdfstring{$\gamma<\frac85$}{gamma<8/5})}

\begin{lemma}[cf.~{\cite[Lemma~5.3]{Tang2026}}]\label{lem:Q-nu-positive}
    If $\nu(b_i)\le(p+1)(\gamma-1)$, and in particular if $p\ge\frac2{\gamma-1}$, then $Q_\nu>0$ on $[b_i,c_i]$.
\end{lemma}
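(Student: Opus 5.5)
The plan is to reduce to the case $\tau_i>b_i$, establish positivity at the left endpoint $b_i$ using the hypothesis on $\nu(b_i)$, and then propagate it up to $\tau_i$ by a first-zero argument. If $\tau_i\le b_i$, then (D3) already gives $Q_\nu>0$ on $(\tau_i,c_i)\supset[b_i,c_i)$. At $c_i$ we have $Q_\nu(c_i)=Q(c_i)>0$, because $u'(c_i)=0$. So assume $\tau_i>b_i$. By (D3) again it suffices to show $Q_\nu>0$ on $[b_i,\tau_i]$. At $\tau_i$ one has $Q_\nu(\tau_i)=Q(\tau_i)=e^G\varphi u'v'(\tau_i)>0$, so only the interval $[b_i,\tau_i)$ is at stake. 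On that interval we have $1<u<\alpha_*$, $u'<0$, $v>0$, $v'<0$ by (D1). Hence $u'v<0$, and the term $\nu e^Gu'v$ in $Q_\nu=Q+\nu e^Gu'v$ is the negative part to be controlled.

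\emph{Step 1: the value at $b_i$.} At $b_i$ we have $F(u)=0$, so $|u'|^2=2E$ and $P=e^Gu|u'|(\omega-\gamma+1)$, as in the proof of (D7). I will write $Q_\nu(b_i)$ through the connection identity $Q=\frac vuP_1+\omega M$ of Lemma~\ref{lem:weighted-identities}, with $P_1=P+\varphi e^Gh_1(u)$ and $h_1(\alpha_*)=\frac{p-1}{p+1}\alpha_*^{p+1}=\frac{p-1}{2}\alpha_*^2$ (from $\alpha_*^{p-1}=\frac{p+1}2$). I will also use $M=e^G(u'v-uv')$, with $-uv'>0$. This gives a sum of terms whose coefficients are $\omega$, $\omega-\gamma+1$, $\varphi h_1/u^2$ and $-\nu$, multiplying $e^G|u'|v$ and $-e^Guv'$. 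The $-uv'$ terms are positive. In the $e^G|u'|v$ terms I will collect everything with the common factor $v/u$. I expect the coefficient to reduce to a positive multiple of $(p+1)(\gamma-1)-\nu$, plus terms involving $\omega-\gamma+1>0$ from (D7). Here the quantity $\varphi h_1/u^2$ at $u=\alpha_*$ should produce the factor $p-1$, and $\omega\cdot\varphi|u'|/u$ should be combined with the $-\nu$ term using $\omega>\gamma-1$. The hypothesis $\nu(b_i)\le(p+1)(\gamma-1)$ then yields $Q_\nu(b_i)>0$. If the bookkeeping does not close with $\omega>\gamma-1$ alone, the fallback is to also use $P_2(b_i)>0$ from Lemma~\ref{lem:P-two-positive}, which at $b_i$ reads $u'^2+\frac{\gamma-1}{\nu}uf>\frac{(\gamma-1)e^Gu|u'|}{A}$.

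\emph{Step 2: propagation on $(b_i,\tau_i)$.} Let $s$ be a first zero of $Q_\nu$ in $(b_i,\tau_i)$. With $X=-e^Gv/u'>0$, \eqref{eq:ratio-primes} gives $X'=-Q_\nu/(\varphi u'^2)$, and by \eqref{eq:X-log} the condition $Q_\nu(s)=0$ is equivalent to
\[
    \frac{v'}{v}=\frac{f(u)}{|u'|}-2g\qquad\text{at }s.
\]
I will insert this into \eqref{eq:Qnu-prime} to obtain $Q_\nu'(s)=e^Gv\big[(1-\gamma-\nu)f+2g\nu|u'|-2\varphi''|u'|\big]$. A first zero from positive values requires $Q_\nu'(s)\le0$. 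I will contradict this by combining (D6), which gives $|v'|/v>f'(u)|u'|/f(u)$ and hence $2g>\frac{f}{|u'|}+\frac{f'|u'|}{f}$ at $s$, with the identity $2g\varphi=\gamma-1+\nu$. The aim is a lower bound on $2g\nu|u'|$ that dominates $(\gamma-1+\nu)f$, using $|u'|^2>-2F(u)$ on the river and $f'(u)\ge p-1$ for $u\ge1$.

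\emph{Main obstacle.} I expect Step~2 to be the main difficulty: the three terms of $Q_\nu'$ have mixed signs, and the bound from (D6) may not close uniformly for all $u\in(1,\alpha_*)$. If it fails, I would instead monotonize. That is, I would use the frozen bridge $\mathfrak V_\Lambda$ of Lemma~\ref{lem:renewal-identities}(vi) with $\Lambda$ chosen so that $\Lambda h_1-2\varphi'F$ has a fixed sign on $(b_i,\tau_i)$. Running the argument of Step~3 of Theorem~\ref{thm:phase} from $b_i$ would then show that $Q_\nu$ cannot reach $0$, with Step~1 supplying the positive starting value.
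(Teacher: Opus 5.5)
The reduction to $\tau_i>b_i$ and the endpoint checks at $\tau_i$ and $c_i$ are fine (when $\tau_i=b_i$, use $Q_\nu(\tau_i)=Q(\tau_i)>0$). There is, however, a genuine gap: Step~1 cannot close with the tools you list. On $[b_i,\tau_i)$,
\[
Q_\nu=e^Gv\Big[\varphi|u'|\frac{|v'|}{v}+\varphi f(u)-(\gamma-1+\nu)|u'|\Big],
\]
so the sign of $Q_\nu$ is governed by the ratio $|v'|/v$. The connection identity, $P$, $P_1$, $P_2$ and $\omega$ are all built from $u$ alone and carry no information about this ratio. The $h_1(\alpha_*)$ term from which you hope to extract the factor $p-1$ is just $\varphi f(\alpha_*)v$ rewritten. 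It multiplies $e^Guv$, not $e^G|u'|v$, and it cannot absorb $(\gamma-1+\nu)|u'|v$. For instance, in the first phase at large $\alpha$ one has $\varphi(b_1)\le b_1\to0$ while $|u'(b_1)|\to\infty$. The missing ingredient is the Wronskian bound (D6), $|v'|/v>f'(u)|u'|/f(u)$, which you only invoke in Step~2.

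Once (D6) is combined with (D7), the estimate is pointwise and no propagation is needed. With $x=u^{p-1}\in(1,\frac{p+1}{2}]$ one has $uf'(u)/f(u)=p+\frac{p-1}{x-1}\ge p+2$. Hence on all of $[b_i,\tau_i)$
\[
\varphi|u'|\frac{|v'|}{v}>\omega\,\frac{uf'(u)}{f(u)}\,|u'|>(p+2)(\gamma-1)|u'|,
\qquad
Q_\nu>e^Gv\Big[\big((p+1)(\gamma-1)-\nu\big)|u'|+\varphi f(u)\Big]>0,
\]
since $\nu(r)\le\nu(b_i)\le(p+1)(\gamma-1)$ by the monotonicity of $\nu$. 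At $b_i$ this has the form you anticipated, but the factor is $\alpha_*f'(\alpha_*)/f(\alpha_*)=p+2$ coming from (D6), not from $h_1(\alpha_*)$.

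The paper's proof is this computation, phrased as a contradiction at a zero $s\in[b_i,\tau_i)$ of $Q_\nu$. There $Q_\nu(s)=0$ and (D6) give $\omega(s)<(\gamma-1+\nu)f/(uf')$, which is incompatible with (D7) under the bound on $\nu$.

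Your Step~2 instead tries to sign $Q_\nu'(s)$, and it uses neither (D7) nor the hypothesis on $\nu$. Moreover, $|u'|^2>-2F(u)$ gives no lower bound on $|u'|$ as $u\to\alpha_*$. So the ingredients you list do not imply $Q_\nu'(s)>0$, and the fallback through $\mathfrak V_\Lambda$ is not worked out.
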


\begin{proof}
    Two facts cover most of the interval. First, $Q_\nu>0$ on $(\tau_i,c_i)$ by (D3). Second, $Q_\nu=Q$ wherever $u'v=0$, and $Q=e^G\varphi u'v'>0$ at $\tau_i$ and $Q=e^G\varphi f(u)v>0$ at $c_i$ by (D1). If $b_i\ge\tau_i$ there is nothing left to prove.

    Let $b_i<\tau_i$ and suppose the claim fails. Then $\{r\in[b_i,\tau_i]:Q_\nu(r)\le0\}$ is closed, nonempty and does not contain $\tau_i$. Its maximum $s$ satisfies $s\in[b_i,\tau_i)$, $Q_\nu(s)=0$, and $Q_\nu>0$ on $(s,\tau_i]$. At $s$ we have $1<u\le\alpha_*$ (since $b_i\le s<\tau_i<r_i$), $u'<0$, $v>0$ and $v'<0$ by (D1).

    \emph{Step 1: an upper bound for $\omega(s)$.} The equation $Q_\nu(s)=0$ reads
    \[
        \varphi(u'v'+fv)=(\gamma-1+\nu)|u'|v .
    \]
    Since $u'v'>0$ and $fv>0$, this gives $(\gamma-1+\nu)v>\varphi|v'|$. With (D6) and $f'>0$,
    \[
        f|v'|>\frac{f'\varphi|u'||v'|}{\gamma-1+\nu},
        \qquad\text{that is,}\qquad
        \frac{(\gamma-1+\nu)f}{uf'}>\omega\quad\text{at }s.
    \]

    \emph{Step 2: contradiction.} By (D7), $\omega(s)>\gamma-1$, so $(\gamma-1+\nu)f>(\gamma-1)uf'$ at $s$. With $x\defeq u(s)^{p-1}\in\big(1,\frac{p+1}2\big]$, this becomes $(\gamma-1+\nu)(x-1)>(\gamma-1)(px-1)$, i.e.
    \[
        \big[\nu-(p-1)(\gamma-1)\big]\,x>\nu .
    \]
    This forces $\nu>(p-1)(\gamma-1)$ and $\frac{p+1}{2}\ge x>\frac{\nu}{\nu-(p-1)(\gamma-1)}$. The last pair of inequalities implies $\nu>(p+1)(\gamma-1)$, since
    \[
        \frac{p+1}{2}\big(\nu-(p-1)(\gamma-1)\big)-\nu=\frac{p-1}{2}\big(\nu-(p+1)(\gamma-1)\big).
    \]
    But $\nu(s)\le\nu(b_i)\le(p+1)(\gamma-1)$, since $\nu$ is nonincreasing (Lemma~\ref{lem:phi-basic}), a contradiction. The last assertion of the lemma is \eqref{eq:nu-bound}.
\end{proof}

\subsection{The river descent: a warped analogue of {\cite[Lemma~5.4]{Tang2026}}}

In this subsection $\tau_i>b_i$: the zero of $v$ on the descent lies below the level $\alpha_*$. On $[b_i,\tau_i)$ we have $1<u\le\alpha_*$, $u'<0$, $v>0$ and $v'<0$ by (D1), and we put
\[
    X\defeq \frac{e^Gv}{|u'|}=-\frac{e^Gv}{u'}>0\quad\text{on }[b_i,\tau_i),
    \qquad X(\tau_i)=0 .
\]
Put $Y\defeq e^Gv/u'=-X$. By \eqref{eq:ratio-primes}, $Y'=Q_\nu/(\varphi u'^2)$. Also $Y(\tau_i)=0$ and $u'Y=e^Gv$. Hence, for $w\in C^1([1,\alpha_*])$, integration by parts gives
\begin{equation}\label{eq:river-parts}
    \int_{b_i}^{\tau_i}w(u)\,\frac{Q_\nu}{\varphi u'^2}\,dr
    =w(\alpha_*)\,X(b_i)-\int_{b_i}^{\tau_i}w'(u)\,e^Gv\,dr .
\end{equation}
This identity uses only $\tau_i>b_i$, and it is used in Lemmas~\ref{lem:river-integral} and~\ref{lem:frozen-bridge}.

\begin{lemma}[Growth of $X$ on the river descent]\label{lem:X-growth}
    Suppose $\tau_i>b_i$, and put $m_\gamma\defeq \frac{2\gamma}{\gamma-1}$. Then $X\,u^{m_\gamma}/f(u)$ is strictly decreasing on $[b_i,\tau_i)$. Consequently, for $b_i<r<\tau_i$,
    \[
        X(r)\ <\ \Big(\frac{\alpha_*}{u}\Big)^{m_\gamma}\frac{f(u)}{f(\alpha_*)}\Big|_r\,X(b_i)
        \ \le\
        \begin{cases}
            C\,X(b_i),\quad C\defeq \frac{6}{13}\,e^{7/6},        & \text{if }\gamma\ge\frac85, \\[1ex]
            \frac{2\sqrt e}{3}\,X(b_i)\ <\ \frac{11}{10}\,X(b_i), & \text{if }\gamma\ge2.
        \end{cases}
    \]
\end{lemma}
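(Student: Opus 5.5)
The plan is a logarithmic-derivative computation: four terms of $\big(\log(Xu^{m_\gamma}/f(u))\big)'$ pair off, and each pair has a definite sign by (D6), (D7) and the bound $\nu\le\gamma+1$. On $[b_i,\tau_i)$ we have $1<u\le\alpha_*$, $u'<0$, $v>0$ and $v'<0$ by (D1), so $X>0$ and $f(u)>0$, and all the logarithms below are defined. By \eqref{eq:X-log},
\[
\Big(\log\frac{Xu^{m_\gamma}}{f(u)}\Big)'=\Big[\frac{v'}{v}-\frac{f'(u)u'}{f(u)}\Big]+\Big[2g+\frac{f(u)}{u'}+m_\gamma\frac{u'}{u}\Big].
\]
I would prove that each bracket is negative.

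The first bracket equals $-\frac{|v'|}{v}+\frac{f'(u)|u'|}{f(u)}$, which is negative on $(c_{i-1},\tau_i)$ by (D6). For the second bracket, the middle term satisfies $f(u)/u'=-f(u)/|u'|<0$. By Lemma~\ref{lem:renewal-identities}(v) and $\nu\le\gamma+1$ (Lemma~\ref{lem:phi-basic}),
\[
2g=\frac{\gamma-1+\nu}{\varphi}\le\frac{2\gamma}{\varphi}.
\]
The last term is $m_\gamma u'/u=-m_\gamma\,\omega/\varphi$, because $\omega=\varphi|u'|/u$. By (D7), $\omega>\gamma-1$ on $[b_i,z_i)\supset[b_i,\tau_i)$, so
\[
m_\gamma\frac{\omega}{\varphi}>\frac{2\gamma}{\gamma-1}\cdot\frac{\gamma-1}{\varphi}=\frac{2\gamma}{\varphi}.
\]
Hence the second bracket is also negative, and this is exactly what fixes the choice $m_\gamma=2\gamma/(\gamma-1)$. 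The strict monotonicity follows, and comparing the values at $r$ and at $b_i$, where $u=\alpha_*$, gives the first inequality of the lemma.

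For the numerical bounds I would put $x=u^{p-1}\in(1,\alpha_*^{p-1}]$ and use $f(\alpha_*)=\alpha_*\frac{p-1}{2}$. This rewrites the factor as
\[
\Big(\frac{\alpha_*}{u}\Big)^{m_\gamma-1}\frac{2(x-1)}{p-1}=\Big(\frac{p+1}{2x}\Big)^{\frac{m_\gamma-1}{p-1}}\frac{2(x-1)}{p-1}.
\]
I would maximize this in $x$ and then bound the maximum uniformly in $p>1$. Since $m_\gamma-1=\frac{\gamma+1}{\gamma-1}$ is decreasing in $\gamma$, and the base $\frac{p+1}{2x}\ge1$, the expression is decreasing in $\gamma$. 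It therefore suffices to treat $\gamma=\frac85$, where $m_\gamma-1=\frac{13}{3}$, and $\gamma=2$, where $m_\gamma-1=3$. Setting $y=\frac{2(x-1)}{p-1}\in(0,1]$, the quantity becomes
\[
y\,\Big(1+\tfrac{p-1}{2}(1-y)\big/\big(1+\tfrac{p-1}{2}y\big)\Big)^{k/(p-1)},\qquad k=m_\gamma-1.
\]
I would bound this by $y\,e^{k(1-y)/2}$ using $\log(1+t)\le t$. Maximizing over $y$ then gives the constants: for $k=3$ the maximum is at $y=\frac23$ with value $\frac23\sqrt e$, and for $k=\frac{13}{3}$ the maximum is at $y=\frac6{13}$ with value $\frac6{13}e^{7/6}$.

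I expect the main obstacle to be this last numerical step. The sign argument is short and fully determined by (D6), (D7) and $\nu\le\gamma+1$, but the uniform-in-$p$ bound needs care with the exponent $k/(p-1)$ and the denominator $1+\frac{p-1}{2}y$. The first thing I would try is to drop that denominator, which only enlarges the base, so that $(1+\frac{p-1}{2}(1-y))^{k/(p-1)}\le e^{k(1-y)/2}$. The remaining step is an elementary check that $\frac23\sqrt e<\frac{11}{10}$.
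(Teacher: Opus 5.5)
Your proposal is correct and follows the paper's proof essentially step by step. You use the same logarithmic-derivative decomposition, controlled by (D6), (D7) and $\nu\le\gamma+1$, and the same bound $\log(1+t)\le t$, which reduces the scalar factor to $y\,e^{k(1-y)/2}$ (the paper's $2\sigma e^{\xi(1/2-\sigma)}$ with $y=2\sigma$), maximized at $y=2/k$. The only difference is cosmetic: you invoke monotonicity in $\gamma$ before applying the logarithmic bound, while the paper invokes it after.
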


\begin{proof}
    On $(b_i,\tau_i)$ we estimate the three terms of \eqref{eq:X-log}:
    \begin{itemize}
        \item $v'/v<-f'(u)|u'|/f(u)$, by (D6).
        \item $f(u)/u'<0$, since $u>1$ and $u'<0$.
        \item $2g=(\gamma-1+\nu)/\varphi=(\gamma-1+\nu)|u'|/(\omega u)$, by Lemma~\ref{lem:renewal-identities}(v) and $\omega=\varphi|u'|/u$. Here $\nu\le\gamma+1$ (Lemma~\ref{lem:phi-basic}) and $\omega>\gamma-1$ (D7), so $2g<m_\gamma|u'|/u$.
    \end{itemize}
    Hence
    \[
        \frac{X'}{X}<\Big(\frac{m_\gamma}{u}-\frac{f'(u)}{f(u)}\Big)|u'|
        =-\frac{d}{dr}\log\frac{u^{m_\gamma}}{f(u)},
    \]
    which is the first assertion, and the first inequality of the display follows because $u(b_i)=\alpha_*$.

    It remains to bound the scalar factor for $1<u\le\alpha_*$. Put $t\defeq p-1$, $s\defeq u^t-1\in(0,t/2]$ and $\sigma\defeq s/t\in(0,\frac12]$. Since $\alpha_*^t=1+\frac t2$ and $f(u)=u(u^t-1)$,
    \[
        \Big(\frac{\alpha_*}{u}\Big)^{m_\gamma}\frac{f(u)}{f(\alpha_*)}
        =\Big(\frac{\alpha_*}{u}\Big)^{m_\gamma-1}\frac{s}{t/2}
        =\Big(\frac{1+t/2}{1+s}\Big)^{(m_\gamma-1)/t}\,2\sigma.
    \]
    From $\log(1+x)\le x$ with $x=\frac{t/2-s}{1+s}\ge0$,
    \[
        \log\frac{1+t/2}{1+s}\le\frac t2-s,
        \qquad\text{hence}\qquad
        \Big(\frac{\alpha_*}{u}\Big)^{m_\gamma}\frac{f(u)}{f(\alpha_*)}\le2\sigma\,e^{(m_\gamma-1)(\frac12-\sigma)} .
    \]
    As $\frac12-\sigma\ge0$, the right-hand side is nondecreasing in $m_\gamma$. For $\xi>0$ the maximum of $\sigma\mapsto2\sigma e^{\xi(\frac12-\sigma)}$ over $\sigma>0$ is $\frac2\xi e^{\frac\xi2-1}$, attained at $\sigma=\frac1\xi$. If $\gamma\ge\frac85$, then $m_\gamma-1=\frac{\gamma+1}{\gamma-1}\le\frac{13}3$, and the choice $\xi=\frac{13}3$ gives the bound $\frac6{13}e^{7/6}=C$. If $\gamma\ge2$, then $m_\gamma-1\le3$, and $\xi=3$ gives $\frac23e^{1/2}$. Finally $\frac23e^{1/2}<\frac{11}{10}$ because $e<\big(\frac{33}{20}\big)^2=2.7225$.
\end{proof}

\begin{lemma}[A warped analogue of {\cite[Lemma~5.4]{Tang2026}}]\label{lem:river-integral}
    Suppose $\gamma\ge\frac85$ and $\tau_i>b_i$. For $\tilde u\in[\alpha_*,\infty]$ put
    \[
        I(\tilde u)\defeq \int_{b_i}^{\tau_i}u^2\Big(1-\Big(\frac{u}{\tilde u}\Big)^{p-1}\Big)\frac{Q_\nu}{\varphi u'^2}\,dr,
        \qquad\text{with }(u/\infty)^{p-1}\defeq 0 .
    \]
    Then $I(\tilde u)>0$ for every $\tilde u\in[\alpha_*,\infty]$. Moreover, if $\gamma\ge2$,
    \[
        I(\infty)=\int_{b_i}^{\tau_i}\frac{\varphi}{\omega^2}\,Q_\nu\,dr
        \ >\ \frac45\,\frac{e^Gv}{|u'|}\Big|_{b_i} .
    \]
    Only $I(\infty)\ge0$ is used below (in Lemma~\ref{lem:frozen-bridge}); the other conclusions are stated for comparison with \cite[Lemma~5.4]{Tang2026}.
\end{lemma}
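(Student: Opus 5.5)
The plan is to turn $I(\tilde u)$ into a one-dimensional integral in the level variable, using the integration-by-parts identity \eqref{eq:river-parts}. Take $w(u)\defeq u^2\big(1-\lambda u^{p-1}\big)$ with $\lambda\defeq\tilde u^{1-p}\in[0,\alpha_*^{1-p}]$. Then \eqref{eq:river-parts} gives
\[
    I(\tilde u)=w(\alpha_*)X(b_i)-\int_{b_i}^{\tau_i}w'(u)\,e^Gv\,dr .
\]
Since $e^Gv=X|u'|$ and $u$ decreases on $[b_i,\tau_i]$, the substitution $\mu=u(r)$ turns the last integral into $\int_{u_\tau}^{\alpha_*}w'(\mu)X(r_\mu)\,d\mu$. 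The key structural observation is that $I$ is affine in $\lambda$. Hence it suffices to prove positivity at the two endpoints $\lambda=\alpha_*^{1-p}$ (that is, $\tilde u=\alpha_*$) and $\lambda=0$ (that is, $\tilde u=\infty$).

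\emph{Endpoint $\tilde u=\alpha_*$.} Here $w(\alpha_*)=0$. Because $(p+1)\alpha_*^{1-p}=2$, we get $w'(\mu)=2\mu\big(1-\mu^{p-1}\big)<0$ for $\mu>1$, and on the range of integration we have $\mu\ge u_\tau>1$ by (D1). Since $X>0$ on $[b_i,\tau_i)$, it follows that $I(\alpha_*)=-\int_{u_\tau}^{\alpha_*}w'X\,d\mu>0$. This step uses nothing beyond the sign of $X$.

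\emph{Endpoint $\tilde u=\infty$.} Here $w=u^2$ and $w'=2u$, so
\[
    I(\infty)=\alpha_*^2X(b_i)-\int_{u_\tau}^{\alpha_*}2\mu\,X(r_\mu)\,d\mu .
\]
I will bound $X(r_\mu)$ by Lemma~\ref{lem:X-growth}. For $\gamma\ge\frac85$ the crude bound $X<C\,X(b_i)$ together with $u_\tau>1$ gives
\[
    I(\infty)>X(b_i)\big[\alpha_*^2-C(\alpha_*^2-1)\big].
\]
It then remains to check $C<\alpha_*^2/(\alpha_*^2-1)$. The right-hand side decreases in $\alpha_*$, and $\alpha_*=\big(\tfrac{p+1}{2}\big)^{1/(p-1)}<\sqrt e$, because $\log(1+t/2)<t/2$ with $t=p-1$. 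So it suffices to verify $\frac6{13}e^{7/6}\le\frac{e}{e-1}$, which is $\approx1.48\le1.58$. This numerical check is the main obstacle, and it is the reason for the threshold $\frac85$. If the crude bound turned out to be too weak, I would instead keep the full factor $(\alpha_*/\mu)^{m_\gamma}f(\mu)/f(\alpha_*)$ inside the $\mu$-integral. For $\gamma\ge2$ the bound $X<\frac{11}{10}X(b_i)$ gives
\[
    I(\infty)>X(b_i)\big(\tfrac{11}{10}-\tfrac1{10}\alpha_*^2\big)\ge X(b_i)\big(1.1-0.1e\big)>\tfrac45X(b_i).
\]

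Finally, the identity $u^2/(\varphi u'^2)=\varphi/\omega^2$, which follows from $\omega=\varphi|u'|/u$, rewrites $I(\infty)$ in the stated form $\int_{b_i}^{\tau_i}\frac{\varphi}{\omega^2}Q_\nu\,dr$. Affinity in $\lambda$ and positivity at both endpoints then give $I(\tilde u)>0$ for every $\tilde u\in[\alpha_*,\infty]$.
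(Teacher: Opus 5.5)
Your proof is correct and follows the paper's argument essentially step for step. It uses the integration by parts \eqref{eq:river-parts} in the level variable, the endpoint $\tilde u=\alpha_*$ (trivially positive since $w'=-2f(u)$), the endpoint $\tilde u=\infty$ via Lemma~\ref{lem:X-growth} together with $u_\tau>1$ and $\alpha_*^2<e$, and affinity in $\tilde u^{1-p}$. The only thing to tighten is the numerical inequality $\frac6{13}e^{7/6}\le\frac e{e-1}$, which you should verify by hand with explicit bounds instead of the approximations $1.48\le1.58$. For instance, use the equivalent form $6(e-1)e^{1/6}\le13$, as the paper does.
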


\begin{proof}
    The integrand is continuous on $[b_i,\tau_i]$, and \eqref{eq:river-parts} applies.

    \emph{The endpoint $\tilde u=\alpha_*$.} Since $\alpha_*^{1-p}=\frac2{p+1}$, the weight is $w=u^2-\frac2{p+1}u^{p+1}=-2F(u)$. It satisfies $w(\alpha_*)=0$ and $w'=-2f$, so $I(\alpha_*)=2\int_{b_i}^{\tau_i}f(u)e^Gv\,dr>0$.

    \emph{The endpoint $\tilde u=\infty$.} Here $w=u^2$, and the first expression for $I(\infty)$ follows from $u^2/(\varphi u'^2)=\varphi/\omega^2$. Substituting $\mu=u(r)$ into the last integral of \eqref{eq:river-parts}, and using $ue^Gv=uX|u'|$,
    \[
        I(\infty)=\alpha_*^2X(b_i)-2\int_{u_\tau}^{\alpha_*}\mu\,X(r_\mu)\,d\mu .
    \]
    Let $C_\gamma$ be the constant of Lemma~\ref{lem:X-growth}: $C_\gamma=\frac{11}{10}$ if $\gamma\ge2$, and $C_\gamma=C$ if $\frac85\le\gamma<2$. Since $X(b_i)>0$, $u_\tau<\alpha_*$ and $u_\tau>1$ (D1),
    \[
        2\int_{u_\tau}^{\alpha_*}\mu\,X(r_\mu)\,d\mu
        <C_\gamma\big(\alpha_*^2-u_\tau^2\big)X(b_i)
        <C_\gamma\big(\alpha_*^2-1\big)X(b_i).
    \]
    Hence $I(\infty)>\big(C_\gamma-(C_\gamma-1)\alpha_*^2\big)X(b_i)$. Here $\alpha_*^2=(1+x)^{1/x}<e$, with $x=\frac{p-1}2$.
    \begin{itemize}
        \item If $\gamma\ge2$: $I(\infty)>\frac{11-\alpha_*^2}{10}\,X(b_i)>\frac45\,X(b_i)$, since $e<3$.
        \item If $\frac85\le\gamma<2$: since $C>1$ and $\alpha_*^2<e$, $I(\infty)>\big(C-(C-1)e\big)X(b_i)$, and $C-(C-1)e\ge0$ is equivalent to $C\le\frac e{e-1}$, that is, to $6(e-1)e^{1/6}\le13$. This holds because $e-1<1.72$ and $e^{1/6}<1.182$ (as $1.182^6>2.72>e$), so the left side is less than $6\cdot1.72\cdot1.182<12.2$. Also $C>1$, as $e^{7/6}>e>\frac{13}6$. Hence $I(\infty)>0$.
    \end{itemize}

    \emph{Generic $\tilde u$.} Write $y\defeq \tilde u^{1-p}\in[0,\frac2{p+1}]$. Then
    \[
        I(\tilde u)=I(\infty)-y\int_{b_i}^{\tau_i}u^{p+1}\frac{Q_\nu}{\varphi u'^2}\,dr
    \]
    is affine in $y$. It is positive at $y=0$ (that is, $\tilde u=\infty$) and at $y=\frac2{p+1}$ (that is, $\tilde u=\alpha_*$), hence on the whole interval.
\end{proof}

\begin{remark}[On Lemmas~\ref{lem:X-growth}--\ref{lem:river-integral}]\label{rem:river-integral}
    \hfill
    \begin{enumerate}
        \item[(a)] \emph{The Euclidean case.} For $g=2/r$ one has $e^G=r^2$, $\varphi=r$, $\gamma=2$, $\nu=3$ and $Q_\nu=Q_3$, and $I(\tilde u)$ is exactly the integral $I_i$ of~\cite[Lemma~5.4]{Tang2026}. The proof above therefore gives a short proof of that lemma, for every $1<p<5$ (not only $1<p<2$). Besides the scalar bound $\frac23\sqrt e<\frac{11}{10}$, it uses only the following facts, all available in~\cite{Tang2026}:
              \begin{itemize}
                  \item \cite[Lemma~5.2]{Tang2026}: $\tau_i<r_i$, so that $u_\tau>1$, and the signs (5.6) there on $(c_{i-1},\tau_i)$.
                  \item the Wronskian inequality~\cite[(5.11)]{Tang2026}, on all of $(b_i,\tau_i)$. It holds because the Wronskian ($\Sigma$ here, $\rho$ in~\cite{Tang2026}) is nonnegative at $c_{i-1}$ and increasing on $(c_{i-1},\tau_i)$.
                  \item $\omega(b_i)>1$~\cite[(5.12)]{Tang2026} and the monotonicity of $\omega$.
                  \item the identity $(r^2v/u')'=Q_3/(ru'^2)$, which is the case $n=3$ of~\cite[(A.20)]{Tang2026} and the case $g=2/r$ of \eqref{eq:ratio-primes}.
              \end{itemize}
              It replaces the steps S1--S8 there. This is possible because of the affine dependence on $\tilde u^{1-p}$: the endpoint $\tilde u=\alpha_*$ corresponds to the bridge $\sB_0$ (on $\R^n$, $I(\alpha_*)=\sB_0(\tau_i)-\sB_0(b_i)$) and is trivially positive, while at the endpoint $\tilde u=\infty$ the boundary term $\alpha_*^2X(b_i)$ gives a large margin.
        \item[(b)] \emph{Where $\gamma\ge\frac85$ is used.} Only through $m_\gamma-1=\frac{\gamma+1}{\gamma-1}\le\frac{13}3$ in Lemma~\ref{lem:X-growth} and the resulting inequality $C\le\frac e{e-1}$. The value $\frac85$ is not optimal for this argument, but it is chosen so that the numerical verification is elementary. Since $\xi\mapsto\frac2\xi e^{\frac\xi2-1}$ is increasing for $\xi>2$, the same proof works for every $\gamma\ge\gamma_*$, where $\gamma_*\approx1.560$ is defined by $\frac2{\xi_*}e^{\frac{\xi_*}2-1}=\frac e{e-1}$ with $\xi_*=\frac{\gamma_*+1}{\gamma_*-1}\approx4.57$. Below $\gamma_*$ the argument fails for $p$ close to $1$, where the estimates $\log(1+x)\le x$ and $\alpha_*^2<e$ become sharp.
    \end{enumerate}
\end{remark}

\subsection{The zero of \texorpdfstring{$M-\theta$}{M-theta} and the connection bounds}

\begin{lemma}\label{lem:M-theta-zero}
    On $(c_{i-1},c_i)$ let $\theta=e^Gvh_1(u)/u'$ as in Section~\ref{sec:variation}. Then:
    \begin{enumerate}
        \item[(i)] $M-\theta$ is positive on $(c_{i-1},z_i]$, strictly decreasing on $(\tau_i,c_i)$, and tends to $-\infty$ as $r\uparrow c_i$. Its unique zero $\zeta_i$ lies in $(z_i,c_i)$, and $M-\theta>0$ on $(c_{i-1},\zeta_i)$.
        \item[(ii)] $v<uQ/P$ on $(c_{i-1},\tau_i)$, and $|v|>|u|\,Q/P$ on $(z_i,\zeta_i)$.
    \end{enumerate}
\end{lemma}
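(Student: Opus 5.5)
Both parts are sign bookkeeping built on \eqref{eq:M-theta-prime}, $(M-\theta)'=-h_1(u)Q_\nu/(\varphi u'^2)$, and on the connection identity $Q-P\,v/u=\omega(M-\theta)$ of \eqref{eq:connection}.

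For (i), I work interval by interval.

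\emph{On $(c_{i-1},\tau_i]$.} Here $v\ge0$ and $u'<0$, so $\theta\le0$. Since $M>0$ by (D2), this gives $M-\theta>0$.

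\emph{On $(\tau_i,z_i]$.} By (D3), $Q_\nu>0$ on $(\tau_i,c_i)$. Since $h_1\ge0$ vanishes only at $u=0$, $M-\theta$ is strictly decreasing on $(\tau_i,c_i)$ (strictly, because $h_1(u)>0$ except at the single point $z_i$). At $z_i$ we have $\theta(z_i)=0$, because $h_1(0)=0$. Hence $(M-\theta)(z_i)=M(z_i)>0$. Monotonicity then gives positivity on the whole of $(\tau_i,z_i]$.

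\emph{Near $c_i$.} As $r\uparrow c_i$ we have $u'\to0^-$ and $v\to v(c_i)<0$ (by (D1)), while $h_1(u)\to h_1(-\beta_i)>0$. Therefore $e^Gvh_1/u'\to+\infty$, i.e.\ $\theta\to+\infty$, while $M$ stays bounded. So $M-\theta\to-\infty$.

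\emph{Conclusion of (i).} Strict monotonicity on $(\tau_i,c_i)$ together with positivity at $z_i$ gives exactly one zero $\zeta_i\in(z_i,c_i)$. Positivity on $(c_{i-1},\zeta_i)$ follows from the two preceding steps combined with this monotonicity.

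For (ii), I rearrange the connection identity into $Q-Pv/u=\omega(M-\theta)$, valid wherever $u,u'\ne0$.

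\emph{On the descent.} On $(c_{i-1},\tau_i)$ we have $u>0$, and $\omega>0$ because $u'<0<u$. Since $M-\theta>0$ there by (i), we get $Q>Pv/u$. Multiplying by $u/P>0$ (using $P>0$ from (D5)) gives $v<uQ/P$.

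\emph{On the ascent.} On $(z_i,\zeta_i)$ we have $u<0$ and $u'<0$, so $\omega=-\varphi u'/u<0$, while $M-\theta>0$. Hence $Q<Pv/u$. Here $v<0$ and $u<0$, so $v/u=|v|/|u|$, and this yields $|v|>|u|Q/P$. Note that $Q$ need not be positive on the ascent; the inequality holds trivially where $Q\le0$ and is otherwise exactly what we derived.

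I expect the only delicate point to be the strictness of the monotonicity in (i). Of the two factors in the derivative, $Q_\nu>0$ holds on the open interval by (D3), and $h_1(u)>0$ except at $z_i$, so strictness follows. Also, $u'\ne0$ holds throughout $(c_{i-1},c_i)$, so $\theta$ is well defined; at $z_i$, $\theta$ is continuous because $h_1$ is $C^1$.
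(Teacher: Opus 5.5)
Your proposal is correct and follows the paper's own proof essentially step by step. Both arguments use the sign of $\theta$ on $(c_{i-1},\tau_i]$ together with (D2), the strict decrease on $(\tau_i,c_i)$ from \eqref{eq:M-theta-prime} and (D3), the value $(M-\theta)(z_i)=M(z_i)>0$, the blow-up $\theta\to+\infty$ at $c_i$, and then the connection identity with $P>0$ and the signs of $\omega$ for (ii). Your closing remark about the sign of $Q$ on the ascent is unnecessary, since the derivation of $|v|>|u|Q/P$ never uses that sign.
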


\begin{proof}
    (i) On $(c_{i-1},\tau_i]$ we have $v\ge0>u'$, so $\theta\le0<M$ by (D2). By \eqref{eq:M-theta-prime}, $(M-\theta)'=-h_1(u)Q_\nu/(\varphi u'^2)$. This is negative on $(\tau_i,c_i)\setminus\{z_i\}$, because $Q_\nu>0$ by (D3) and $h_1(u)>0$ for $u\neq0$. Also $(M-\theta)(z_i)=M(z_i)>0$. As $r\uparrow c_i$ we have $M\to M(c_i)$, $v\to v(c_i)<0$, $u'\to0^-$ and $h_1(u)\to h_1(\beta_i)>0$, so $\theta\to+\infty$. Hence $M-\theta$, which is strictly decreasing on $[\tau_i,c_i)$ and positive at $\tau_i$ and at $z_i$, is positive on $(c_{i-1},z_i]$ and has a unique zero $\zeta_i\in(z_i,c_i)$, with $M-\theta>0$ on $(c_{i-1},\zeta_i)$.

    (ii) Use the connection identity \eqref{eq:connection}, $Q-Pv/u=\omega(M-\theta)$, with $P>0$ from (D5).
    On $(c_{i-1},\tau_i)$, $u>0$ and $\omega>0$, so $Q>Pv/u$. On $(z_i,\zeta_i)$, $u<0$, $u'<0$ and $v<0$, so $\omega<0$ and $Q<Pv/u=P|v|/|u|$.
\end{proof}

\subsection{The pairing at equal levels}

\begin{proposition}[Pairing]\label{prop:pairing}
    Let $\mu\in(\alpha_*,\beta_i)$ with $r_\mu<\tau_i$ and $\bar r_\mu<\zeta_i$. Then
    \begin{equation}\label{eq:pairing}
        \nu(\bar r_\mu)\,\frac{e^{G}|v|}{\varphi u'^2}\Big|_{\bar r_\mu}
        \ >\ (\gamma-1)\,\frac{e^{G}v}{\varphi u'^2}\Big|_{r_\mu}.
    \end{equation}
\end{proposition}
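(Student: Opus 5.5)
The plan is to reduce \eqref{eq:pairing} to quantities already compared at paired levels, using Lemma~\ref{lem:M-theta-zero}(ii) on both sides. Since $r_\mu<\tau_i$, part (ii) gives $0<v(r_\mu)<\mu\,Q(r_\mu)/P(r_\mu)$. Since $z_i<\bar r_\mu<\zeta_i$, it gives $|v(\bar r_\mu)|>\mu\,Q(\bar r_\mu)/P(\bar r_\mu)$. Here $P>0$ by (D5), $Q>0$ at $r_\mu$ by (D3), and $Q>0$ at $\bar r_\mu$ because $\varpi(\bar r_\mu)>0$ by Lemma~\ref{lem:level-comparison}. The common factor $\mu=|u|$ cancels. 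It therefore suffices to prove
\[
\nu(\bar r_\mu)\,\frac{e^GQ}{\varphi u'^2P}\Big|_{\bar r_\mu}\ \ge\ (\gamma-1)\,\frac{e^GQ}{\varphi u'^2P}\Big|_{r_\mu}.
\]

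Next I would rewrite both sides with $Q=\varpi\,e^G|u'|$ and $A=\varphi e^G$:
\[
\frac{e^GQ}{\varphi u'^2P}=\varpi\cdot\frac{A}{P}\cdot\frac{e^G}{\varphi^2\,e^G|u'|}.
\]
This splits the comparison into four factors, each compared at $\bar r_\mu$ against $r_\mu$:
\begin{itemize}
\item $\varpi(\bar r_\mu)>\varpi(r_\mu)>0$ by Lemma~\ref{lem:level-comparison}, since $\mu\ge\alpha_*$.
\item $A/P$ is larger at $\bar r_\mu$, because $P/A$ is strictly decreasing on $(0,c_K]$ by Lemma~\ref{lem:P-two-positive}.
\item $1/(e^G|u'|)$ is larger at $\bar r_\mu$ by Lemma~\ref{lem:paired-energy}.
\item The remaining factor is $e^G/\varphi^2$.
\end{itemize}

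The main obstacle is this last factor. One computes $(e^G/\varphi^2)'=e^G(\gamma-1-\varphi')/\varphi^3$, which need not be nonnegative when $\varphi'>\gamma-1$. This is exactly the Euclidean fact (iii) that the paper wants to avoid, and the prefactors $\nu(\bar r_\mu)$ versus $\gamma-1$ must compensate for it.

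My first attempt would move one factor $e^G$ into the energy comparison instead. Using $(e^{2G}/\varphi^2)'=2(\gamma-1)e^{2G}/\varphi^3>0$, I would compare $e^{2G}/\varphi^2$, which always increases, and pay with a factor $e^{-G}$ attached to $A/P$, that is, work with $\varphi/P$. Then I would try to control $P(\bar r_\mu)$ against $P(r_\mu)$ directly, by integrating $(P/A)'=-(\nu/\varphi)P_2/A$ rather than only using its sign. This form of the identity produces $\nu$, and it should supply the factor $\nu(\bar r_\mu)/(\gamma-1)$ through the inequality $\nu\ge\gamma-1+2\varphi'$.

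If this bookkeeping does not close, my fallback is to bound the ratio of $e^G/\varphi^2$ over $[r_\mu,\bar r_\mu]$ by $\exp\int\varphi'/\varphi$, in the form $(\varphi(\bar r_\mu)/\varphi(r_\mu))$ raised to a power. I would then absorb that ratio using the slack $\nu(\bar r_\mu)>\gamma-1$ from \eqref{eq:phi-bounds} together with the strict margin in Lemma~\ref{lem:level-comparison}.
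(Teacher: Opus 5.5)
Your reduction through Lemma~\ref{lem:M-theta-zero}(ii) and your four comparison facts are exactly the paper's. The gap is an algebra slip in the factorization, and it creates the obstacle you then try to get around. With $Q=\varpi\,e^G|u'|$ and $A=\varphi e^G$ one has
\[
\frac{e^GQ}{\varphi u'^2P}=\frac{e^{2G}\varpi}{\varphi P|u'|}
=\frac{e^{2G}}{\varphi^2}\cdot\varpi\cdot\frac{A}{P}\cdot\frac{1}{e^G|u'|}.
\]
Your product $\varpi\cdot\frac{A}{P}\cdot\frac{e^G}{\varphi^2\,e^G|u'|}$ simplifies to $\frac{e^{G}\varpi}{\varphi P|u'|}$, so it is short by a factor $e^G$. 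The leftover factor is therefore $e^{2G}/\varphi^2$, not $e^G/\varphi^2$, and you computed yourself that it is strictly increasing: $(e^{2G}/\varphi^2)'=2(\gamma-1)e^{2G}/\varphi^3>0$. Equivalently, $e^G/\varphi=\big((\gamma-1)\cJ\big)^{-1}$ with $\cJ$ decreasing.

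With this correction, all four factors are positive and strictly larger at $\bar r_\mu$ than at $r_\mu$:
\begin{itemize}
\item $\varpi$, by Lemma~\ref{lem:level-comparison};
\item $A/P$, by Lemma~\ref{lem:P-two-positive}, using $r_\mu<\bar r_\mu<c_i\le c_K$;
\item $1/(e^G|u'|)$, by Lemma~\ref{lem:paired-energy};
\item $e^{2G}/\varphi^2$, by the computation above.
\end{itemize}
Then $\nu(\bar r_\mu)>\gamma-1$ from \eqref{eq:phi-bounds} gives your reduced inequality, even strictly, and hence \eqref{eq:pairing}. This is precisely the paper's proof. It is also how the paper avoids the Euclidean fact (iii): no monotonicity of $e^G/\varphi^2$ is needed, and the prefactor $\nu$ only has to satisfy $\nu\ge\gamma-1$.

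As written, your proposal stops at this spurious obstacle and offers two remedies that are not carried out, and neither would work as described. Moving a factor $e^{-G}$ onto $A/P$ would require $P/\varphi$ to decrease. Nothing available gives that: Lemma~\ref{lem:P-two-positive} controls only $P/A$, and $e^G$ is increasing. The fallback tries to absorb a ratio $\big(\varphi(\bar r_\mu)/\varphi(r_\mu)\big)^{c}$, which has no reason to be close to $1$. The slacks you would use for this are the bounded factor $\nu/(\gamma-1)\le(\gamma+1)/(\gamma-1)$ and the strict but unquantified inequality of Lemma~\ref{lem:level-comparison}, which cannot do it. Correcting the bookkeeping makes both detours unnecessary.
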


\begin{proof}
    Write $r=r_\mu$ and $\bar r=\bar r_\mu$. Since $\mu>\alpha_*$, we have $r\in(c_{i-1},b_i)$ and $\bar r\in(\bar b_i,\zeta_i)$, hence $Q(r),Q(\bar r)>0$ by (D3). Write $Q=e^G|u'|\varpi$. Then Lemma~\ref{lem:M-theta-zero}(ii) gives
    \[
        \frac{e^G|v|}{\varphi u'^2}\Big|_{\bar r}>\mu\,\frac{e^{2G}\varpi}{\varphi P|u'|}\Big|_{\bar r},
        \qquad
        \frac{e^Gv}{\varphi u'^2}\Big|_{r}<\mu\,\frac{e^{2G}\varpi}{\varphi P|u'|}\Big|_{r}.
    \]
    Factor $\frac{e^{2G}\varpi}{\varphi P|u'|} =\frac{e^{G}}{\varphi^2}\cdot\varpi\cdot\frac{\varphi e^G}{P}\cdot\frac1{|u'|}$. We use three facts, all involving positive quantities:
    \begin{itemize}
        \item $\varpi(\bar r)>\varpi(r)>0$ (Lemma~\ref{lem:level-comparison}),
        \item $\varphi e^G/P$ is larger at $\bar r$ than at $r$ (Lemma~\ref{lem:P-two-positive}, since $r<\bar r\le c_K$),
        \item $1/|u'(\bar r)|>e^{G(\bar r)}/(e^{G(r)}|u'(r)|)$ (Lemma~\ref{lem:paired-energy}).
    \end{itemize}
    With $X_0\defeq \frac{\varphi(r)\varpi(r)}{P(r)|u'(r)|}>0$ they give
    \[
        \frac{e^{2G}\varpi}{\varphi P|u'|}\Big|_{\bar r}>\frac{e^{2G}}{\varphi^2}\Big|_{\bar r}\,X_0,
        \qquad
        \frac{e^{2G}\varpi}{\varphi P|u'|}\Big|_{r}=\frac{e^{2G}}{\varphi^2}\Big|_{r}\,X_0 .
    \]
    Finally, $e^G/\varphi=\big((\gamma-1)\cJ\big)^{-1}$ with $\cJ=\int_r^\infty e^{-G}$ strictly decreasing, so $e^{2G}/\varphi^2$ is strictly increasing. Together with $\nu>\gamma-1$ (Lemma~\ref{lem:phi-basic}) this gives
    \[
        \nu(\bar r)\,\frac{e^{2G}}{\varphi^2}\Big|_{\bar r}
        >(\gamma-1)\,\frac{e^{2G}}{\varphi^2}\Big|_{\bar r}
        >(\gamma-1)\,\frac{e^{2G}}{\varphi^2}\Big|_{r},
    \]
    and \eqref{eq:pairing} follows.
\end{proof}

\subsection{Crossing the river with a frozen weight}

\begin{lemma}[Frozen bridge]\label{lem:frozen-bridge}
    Suppose $\tau_i>b_i$ and
    \[
        I(\infty)=\int_{b_i}^{\tau_i}u^2\,\frac{Q_\nu}{\varphi u'^2}\,dr\ \ge\ 0 .
    \]
    By Lemma~\ref{lem:river-integral} this holds whenever $\gamma\ge\frac85$. Then for every $\Lambda\in\big[0,\frac2{p-1}\varphi'(\bar b_i)\big)$,
    \[
        Q(\bar b_i)-Q(b_i)+\Lambda\Big[(M-\theta)(b_i)-(M-\theta)(\bar b_i)\Big]>0 .
    \]
\end{lemma}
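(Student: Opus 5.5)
The plan is to read the claimed inequality as the increment of the frozen bridge $\mathfrak V_\Lambda$ of Lemma~\ref{lem:renewal-identities}(vi) across the river. Since $F(\pm\alpha_*)=0$, the last term of $\mathfrak V_\Lambda$ vanishes at $b_i$ and at $\bar b_i$. Hence the left-hand side of the statement equals $\mathfrak V_\Lambda(\bar b_i)-\mathfrak V_\Lambda(b_i)=\int_{b_i}^{\bar b_i}\mathfrak V_\Lambda'\,dr$, and $u'\neq0$ on $[b_i,\bar b_i]$. I would split this integral at $\tau_i\in(b_i,r_i)$ and treat the two pieces with the two forms \eqref{eq:frozen-bridge-prime} and \eqref{eq:frozen-bridge-prime-alt} of the derivative. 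Throughout I write $Y\defeq e^Gv/u'$, so that $Y'=Q_\nu/(\varphi u'^2)$.

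\emph{Piece $(\tau_i,\bar b_i)$.} Here I would use \eqref{eq:frozen-bridge-prime-alt}. By (D1) we have $v<0$ and $u'<0$, so $Y>0$. By (D3), $Q_{\varphi'}>0$. Moreover $h_1(u)\ge0$, $\Lambda\ge0$ and $F(u)\le0$, with equality only at $z_i$, and $g\varphi'+\varphi''\ge0$ by (H). Every term of \eqref{eq:frozen-bridge-prime-alt} is therefore nonnegative, so $\mathfrak V_\Lambda$ is nondecreasing on this piece.

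\emph{Piece $(b_i,\tau_i)$.} Here $Y<0$ and the sign of $Q_\nu$ is unknown, so I would use \eqref{eq:frozen-bridge-prime}. The key algebraic step is to split the coefficient as
\[
\Lambda h_1(u)-2\varphi'F(u)=\lambda(r)\,\big(-2F(u)\big)+\mu\,u^2,\qquad \lambda\defeq\varphi'-\tfrac{(p-1)\Lambda}{2},\quad \mu\defeq\tfrac{(p-1)\Lambda}{2},
\]
which one checks from $h_1=\frac{p-1}{p+1}|u|^{p+1}$ and $-2F=u^2-\frac{2}{p+1}|u|^{p+1}$. The constant $\mu\ge0$ multiplies exactly $I(\infty)\ge0$. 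For the $\lambda$-part I would integrate by parts against $Y'$, as in \eqref{eq:river-parts}. The boundary terms vanish, because $F(u(b_i))=0$ and $Y(\tau_i)=0$. This leaves
\[
\int_{b_i}^{\tau_i}\varphi''\big(-2F(u)\big)Y\,dr+2\int_{b_i}^{\tau_i}\lambda\,f(u)\,e^Gv\,dr .
\]
The first integral should cancel exactly with the remaining term $-2\varphi''F(u)Y$ of \eqref{eq:frozen-bridge-prime}; I would verify this sign bookkeeping carefully. What survives is $2\int\lambda f(u)e^Gv\,dr$. On $(b_i,\tau_i)$ we have $u>1$ and $v>0$, so $f(u)e^Gv>0$. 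Since $\varphi'$ is nonincreasing and $r<\bar b_i$, we get $\lambda(r)\ge\varphi'(\bar b_i)-\frac{(p-1)\Lambda}2>0$, which is exactly the restriction on $\Lambda$. So this piece contributes a strictly positive amount, and adding the two pieces gives the claim.

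The main obstacle is the river descent $(b_i,\tau_i)$, where $Q_\nu$ has no known sign. The point of the plan is that the decomposition into $\lambda(-2F)+\mu u^2$ is what makes it work. The $\mu u^2$ part is constant-weighted, so it reduces to the single monotone quantity $I(\infty)$ of Lemma~\ref{lem:river-integral}. The $\lambda(-2F)$ part vanishes at $b_i$, which makes its integration by parts clean. The variable weight $\lambda$ produces a $\varphi''$-term that is absorbed by the $\varphi''$-correction already present in $\mathfrak V_\Lambda'$.
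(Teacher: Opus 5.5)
Your proof is correct and is essentially the paper's. You use the same bridge $\mathfrak V_\Lambda$, the same split at $\tau_i$, and the same decomposition $\Lambda h_1-2\varphi'F=\eta u^2+(\varphi'-\eta)(-2F)$ with $\eta=\frac{(p-1)\Lambda}{2}$. The only difference is that the paper computes the descent piece directly from the endpoint values, via \eqref{eq:M-prime}--\eqref{eq:Q-prime} and \eqref{eq:river-parts}, so no $\varphi''$-term ever appears.

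One sign in your integration by parts is wrong. The surviving first term is $-\int_{b_i}^{\tau_i}\varphi''(-2F(u))Y\,dr=2\int_{b_i}^{\tau_i}\varphi''F(u)Y\,dr$, not $+\int\varphi''(-2F)Y\,dr$ as you wrote. With the corrected sign it does cancel the term $-2\varphi''F(u)Y$ of \eqref{eq:frozen-bridge-prime}, as you expected.
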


\begin{proof}
    Take $\mathfrak V_\Lambda$ as in Lemma~\ref{lem:renewal-identities}(vi): it is $C^1$ on $[b_i,\bar b_i]$, where $u'\neq0$. Since $F(\pm\alpha_*)=0$, $\mathfrak V_\Lambda=Q-\Lambda(M-\theta)$ at $b_i$ and at $\bar b_i$. So the left side of the claim is $\mathfrak V_\Lambda(\bar b_i)-\mathfrak V_\Lambda(b_i)$, which we split at $\tau_i$.

    \emph{From $\tau_i$ to $\bar b_i$.} On $(\tau_i,\bar b_i)$ every term of \eqref{eq:frozen-bridge-prime-alt} is nonnegative:
    \begin{itemize}
        \item $|u|<\alpha_*$, so $F(u)\le0$ and $\Lambda h_1(u)-2\varphi'F(u)\ge0$.
        \item $Q_{\varphi'}>0$ by (D3).
        \item $g>0$, $h_1\ge0$, $g\varphi'+\varphi''\ge0$ by (H), and $e^Gv/u'>0$ since $v,u'<0$.
    \end{itemize}
    Hence $\mathfrak V_\Lambda(\bar b_i)\ge\mathfrak V_\Lambda(\tau_i)$.

    \emph{From $b_i$ to $\tau_i$.} At $\tau_i$ we have $v=0$, so $\theta(\tau_i)=0$ and $\mathfrak V_\Lambda(\tau_i)=Q(\tau_i)-\Lambda M(\tau_i)$. At $b_i$, $\theta(b_i)=-h_1(\alpha_*)X(b_i)$ with $X$ as above. By \eqref{eq:M-prime}--\eqref{eq:Q-prime},
    \[
        \mathfrak V_\Lambda(\tau_i)-\mathfrak V_\Lambda(b_i)
        =\int_{b_i}^{\tau_i}\big[2\varphi'f(u)-\Lambda(p-1)u^p\big]e^Gv\,dr+\Lambda h_1(\alpha_*)X(b_i).
    \]
    Write $\Lambda=\frac{2\eta}{p-1}$ with $0\le\eta<\varphi'(\bar b_i)$. Then $\Lambda h_1(\alpha_*)=\frac{2\eta}{p+1}\alpha_*^{p+1}=\eta\alpha_*^2$, and $\varphi'f(u)-\eta u^p=(\varphi'-\eta)f(u)-\eta u$. Hence, with \eqref{eq:river-parts} for $w=u^2$,
    \begin{align*}
        \mathfrak V_\Lambda(\tau_i)-\mathfrak V_\Lambda(b_i)
         & =2\int_{b_i}^{\tau_i}(\varphi'-\eta)f(u)e^Gv\,dr
        +\eta\Big[\alpha_*^2X(b_i)-2\int_{b_i}^{\tau_i}u\,e^Gv\,dr\Big]      \\
         & =2\int_{b_i}^{\tau_i}(\varphi'-\eta)f(u)e^Gv\,dr+\eta\,I(\infty).
    \end{align*}
    On $(b_i,\tau_i)$ we have $f(u)>0$ and $v>0$. Also $\varphi'-\eta\ge\varphi'(\bar b_i)-\eta>0$, since $\varphi'$ is nonincreasing. So the first term is positive and the second is nonnegative by hypothesis. Hence $\mathfrak V_\Lambda(\tau_i)>\mathfrak V_\Lambda(b_i)$, and the claim follows.
\end{proof}

\subsection{The renewal}

\begin{theorem}[Renewal of the slope bound]\label{thm:renewal-below}
    In the setting of \S\ref{sec:renewal-setting}, suppose that at least one of the following holds:
    \begin{itemize}
        \item[(a)] $\gamma\ge\frac85$,
        \item[(b)] $\tau_i\le b_i$,
        \item[(c)] $\nu(b_i)\le(p+1)(\gamma-1)$ \textup{(}for instance, $p\ge\frac2{\gamma-1}$\textup{)}.
    \end{itemize}
    Then
    \[
        T_2(c_i)>T_2(c_{i-1}),\qquad\text{where }T_2(c_0)\defeq T_2(0^+)=0 .
    \]
    In particular \eqref{eq:slope-condition} holds at $c_i$.
\end{theorem}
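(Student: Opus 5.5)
The plan is to split according to the position of the zero $\tau_i$ of $v$ relative to the level $\beta_i$. If $u_\tau\ge\beta_i$, Theorem~\ref{thm:renewal-above} already gives $T_2(c_i)>T_2(t_i)>T_2(c_{i-1})$, or $T_2(c_1)>0$ when $i=1$, and there is nothing more to do. So I assume $u_\tau<\beta_i$, i.e.\ $\tau_i>t_i$, and write
\[
T_2(c_i)-T_2(c_{i-1})=\big[T_2(t_i)-T_2(c_{i-1})\big]+\big[T_2(c_i)-T_2(t_i)\big].
\]
The first bracket is positive by (D4), since $t_i<b_i$. For the second bracket I integrate \eqref{eq:T-two-prime} over $(t_i,c_i)$ and change variables to the level $\mu=|u|$, using $r_\mu$ on the descent and $\bar r_\mu$ on the ascent.

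In the level variable, the main terms $\mp\varphi'(p+1)\mu^{-p}(M-\theta)$ of \eqref{eq:T-two-prime} come with opposite orientation on the descent and on the ascent. Their sum over $\mu\in(0,\beta_i)$ becomes
\[
\int (p+1)\mu^{-p}\Big[\varphi'(M-\theta)\big|_{r_\mu}-\varphi'(M-\theta)\big|_{\bar r_\mu}\Big]\,d\mu .
\]
The correction terms $|\varphi''|g_2(u)M$ are nonnegative above $\alpha_*$ by (D2). Beyond $\zeta_i$ the ascent term is itself positive, because $M-\theta<0$ there by Lemma~\ref{lem:M-theta-zero}(i). For the remaining levels I integrate by parts in $\mu$. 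This moves the derivative onto $M-\theta$, and by \eqref{eq:M-theta-prime} that derivative is $h_1(u)Q_\nu/(\varphi u'^2)$ times the Jacobian $1/|u'|$. I then rewrite $Q_\nu=Q_{\varphi'}+g\varphi e^Gu'v$.

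The level-paired weights that appear should then be exactly $\nu\,e^G|v|/(\varphi u'^2)$ on the ascent against $(\gamma-1)e^Gv/(\varphi u'^2)$ on the descent. Proposition~\ref{prop:pairing} compares these for $\mu\in(\alpha_*,\beta_i)$ with $r_\mu<\tau_i$ and $\bar r_\mu<\zeta_i$. Together with the monotonicity of $\varphi'$ and $Q_{\varphi'}>0$ from (D3), this should make the whole contribution of the levels above $\alpha_*$ nonnegative. The inputs behind the pairing are Lemmas~\ref{lem:level-comparison}, \ref{lem:paired-energy} and~\ref{lem:P-two-positive}.

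What remains is the river $[b_i,\bar b_i]$, where $g_2<0$ and the $\varphi''$-correction has the wrong sign. There I do not integrate $T_2'$ pointwise. Since $g_2(\pm\alpha_*)=0$, I use $T_2=Q$ at $b_i$ and at $\bar b_i$, so the river contribution reduces to $Q(\bar b_i)-Q(b_i)$ plus the $(M-\theta)$ boundary terms produced by the integration by parts, with the frozen coefficient $\Lambda=\frac2{p-1}\varphi'$.

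The argument then splits according to the three hypotheses of the theorem.
\begin{itemize}
\item[(b)] If $\tau_i\le b_i$, then $M-\theta$ is decreasing across the river by \eqref{eq:M-theta-prime}, because $Q_\nu>0$ on $(\tau_i,c_i)$ by (D3). Together with $Q(\bar b_i)>Q(b_i)$ this gives the sign.
\item[(c)] Under $\nu(b_i)\le(p+1)(\gamma-1)$, Lemma~\ref{lem:Q-nu-positive} gives $Q_\nu>0$ on $[b_i,c_i]$, and the same monotonicity argument applies.
\item[(a)] If $\gamma\ge\frac85$ and $\tau_i>b_i$, I invoke Lemma~\ref{lem:frozen-bridge} with $I(\infty)\ge0$ from Lemma~\ref{lem:river-integral}. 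This gives
\[
Q(\bar b_i)-Q(b_i)+\Lambda\big[(M-\theta)(b_i)-(M-\theta)(\bar b_i)\big]>0
\]
for all $\Lambda<\frac2{p-1}\varphi'(\bar b_i)$. The value actually needed is reached by a limit argument, and strictness is kept from the other terms.
\end{itemize}

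I expect the main obstacle to be the bookkeeping in the level integration. The frozen constant $\Lambda$ must match the variable $\varphi'$ in front of $M-\theta$, with the mismatch $\varphi'(r_\mu)-\varphi'(\bar r_\mu)\ge0$ absorbed by positivity of $M-\theta$ on the descent. The levels between $\zeta_i$ and $\tau_i$ not covered by Proposition~\ref{prop:pairing} must be shown to contribute with the right sign. Finally, the boundary terms at $\mu=\alpha_*$ produced by the integration by parts must be exactly those handled by the river lemmas.
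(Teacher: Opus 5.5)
Your outline is the paper's proof. You use Theorem~\ref{thm:renewal-above} when $u_\tau\ge\beta_i$. You integrate $T_2'$ level by level on the parts of the descent and ascent where $|u|\ge\alpha_*$, and use $T_2=Q$ at $b_i$ and $\bar b_i$ to cross the river. You apply Proposition~\ref{prop:pairing} to the paired levels. For the middle stretch you use the monotonicity of $M-\theta$ in case (b), Lemma~\ref{lem:Q-nu-positive} in case (c), and Lemmas~\ref{lem:river-integral} and~\ref{lem:frozen-bridge} in case (a).

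Your integration by parts in $\mu$ is Fubini applied to the paper's splitting $\mathcal I(\mu)=\mathcal I_{\mathrm{out}}(\mu)+\mathcal I_{\mathrm{mid}}$, provided you take the antiderivative of the full weight, $\Psi(\mu)=\int_\mu^{\max\{\alpha_*,u_\zeta\}}g_2'(s)\varphi'(\bar r_s)\,ds$. The boundary coefficient $\Psi(\alpha_*)$ is then the paper's $\Lambda$ when $\tau_i>b_i$. Splitting at $b_i$ even when $\tau_i<b_i$ is a legitimate variant; the paper instead splits at $\tau_i$ and crosses to $\bar r_{u_\tau}$ with $\sB_{g_2(u_\tau)}$. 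Your variant also settles the levels you feared were uncovered. For $s\in(\alpha_*,u_\tau)$ one has $r_s\in(\tau_i,b_i)$ and $\bar r_s\in(\bar b_i,c_i)$, where $Q_\nu>0$ by (D3), so these levels contribute positively without any pairing.

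One step fails as written. If you rewrite $Q_\nu=Q_{\varphi'}+g\varphi e^Gu'v$ and use $Q_{\varphi'}>0$, you get the weight $g\varphi=\varphi'+\gamma-1$ at both $r_s$ and $\bar r_s$, not $\nu$ and $\gamma-1$. On the descent, $\varphi'(r_s)+\gamma-1\ge\varphi'(\bar r_s)+\gamma-1$ goes the wrong way, and Proposition~\ref{prop:pairing} does not give that comparison. The weights you quote come from two different decompositions, both justified by (D3):
\begin{itemize}
\item $Q_\nu=Q_{2\varphi'}+(\gamma-1)e^Gu'v$, with $Q_{2\varphi'}>0$ at $r_s<\tau_i$;
\item $Q_\nu=Q+\nu e^Gu'v$, with $Q>0$ at $\bar r_s\in(\bar b_i,\zeta_i)$.
\end{itemize}

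Three smaller points.
\begin{itemize}
\item No limit argument is needed in case (a). Because $\varphi'$ is evaluated at $\bar r_\mu\ge\bar b_i$, after the bound $\varphi'(r_\mu)(M-\theta)(r_\mu)\ge\varphi'(\bar r_\mu)(M-\theta)(r_\mu)$, you get $\Psi(\alpha_*)\le\varphi'(\bar b_i)\,g_2(\max\{\alpha_*,u_\zeta\})<\frac2{p-1}\varphi'(\bar b_i)$. So Lemma~\ref{lem:frozen-bridge} applies directly.
\item Your first level formula cannot run over $\mu\in(0,\beta_i)$. Since $g_2(u)\to-\infty$ and $M(z_i)>0$, $T_2\to+\infty$ at $z_i$, so the level integration must stop at $\alpha_*$, as you in fact do afterwards.
\item The slope condition at $c_i$ also needs $Q(c_i),M(c_i)>0$, which is Theorem~\ref{thm:phase}(iv).
\end{itemize}
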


\begin{proof}
    If $u_\tau\ge\beta_i$, this is Theorem~\ref{thm:renewal-above}. Let $u_\tau<\beta_i$. Put
    \[
        r_{\lmu}\defeq \min\{\tau_i,b_i\},\qquad
        \lmu\defeq u(r_{\lmu})=\max\{u_\tau,\alpha_*\}\in[\alpha_*,\beta_i),\qquad
        \bar r_{\lmu}\in[\bar b_i,c_i)\ \text{with } u(\bar r_{\lmu})=-\lmu,
    \]
    and write $u_\zeta\defeq |u(\zeta_i)|\in(0,\beta_i)$. Recall $g_2'(\mu)=(p+1)\mu^{-p}$ for $\mu>0$ (from \eqref{eq:g-relations}), and $g_2\ge0$ on $[\alpha_*,\infty)$.

    \emph{Step 1: descent.} On $(c_{i-1},r_{\lmu}]$, \eqref{eq:T-two-prime} becomes $T_2'=\varphi'(p+1)\frac{|u'|}{u^p}(M-\theta)+|\varphi''|g_2(u)M$ (this is \eqref{eq:T-two-descent}). Here every term is nonnegative: $M-\theta>0$ (Lemma~\ref{lem:M-theta-zero}(i)), $g_2(u)\ge0$ as $u\ge\lmu\ge\alpha_*$, and $M>0$. The substitution $\mu=u(r)$ is valid on $[c_{i-1}+\varepsilon,r_{\lmu}]$ for every small $\varepsilon>0$. Let $\varepsilon\to0$ (monotone convergence), and for $i=1$ use $T_2(0^+)=0$ from (D4). This gives
    \[
        T_2(r_{\lmu})-T_2(c_{i-1})=\int_{\lmu}^{\beta_{i-1}}\mathcal A(\mu)\,d\mu,
        \qquad
        \mathcal A(\mu)\defeq g_2'(\mu)\,(M-\theta)\varphi'\big|_{r_\mu}+g_2(\mu)\,\frac{|\varphi''|M}{|u'|}\Big|_{r_\mu}.
    \]
    Here $\mathcal A\ge0$, and $\mathcal A>0$ on $(\lmu,\beta_{i-1})$.

    \emph{Step 2: across the river.} Put $\mathcal R\defeq T_2(\bar r_{\lmu})-T_2(r_{\lmu})$.
    \begin{itemize}
        \item If $\tau_i>b_i$, then $r_{\lmu}=b_i$, $\bar r_{\lmu}=\bar b_i$ and $g_2(\alpha_*)=0$. So $T_2(b_i)=Q(b_i)$ and $T_2(\bar b_i)=Q(\bar b_i)$, hence $\mathcal R=Q(\bar b_i)-Q(b_i)>0$ by (D3).
        \item If $\tau_i\le b_i$, then $r_{\lmu}=\tau_i$, $\lmu=u_\tau$. Take the bridge $\sB_a$ of \eqref{eq:weighted-bridge} with $a\defeq g_2(u_\tau)\ge0$. By \eqref{eq:bridge-values}, $\sB_a=T_2$ at $\tau_i$ and at $\bar r_{\lmu}$, where $g_2(|u|)=a$. On $(\tau_i,\bar r_{\lmu})$ we have $|u|<u_\tau$, so $H_a(u)=u^2\big(1-(|u|/u_\tau)^{p-1}\big)\ge0$. By the identity \eqref{eq:weighted-bridge-renewal},
              \[
                  \sB_a'=\varphi'H_a\frac{Q_{\varphi'}}{\varphi u'^2}+a|\varphi''|M+(g\varphi'+\varphi'')H_a\frac{e^Gv}{u'},
              \]
              and on $(\tau_i,\bar r_{\lmu})$ every term is nonnegative, by $H_a\ge0$, $a\ge0$, (D3), (D2), (H) and $v/u'>0$. Hence $\sB_a'\ge0$, and $\mathcal R\ge0$.
    \end{itemize}

    \emph{Step 3: ascent.} On $[\bar r_{\lmu},c_i)$, \eqref{eq:T-two-prime} reads $T_2'=-\varphi'(p+1)\frac{|u'|}{|u|^p}(M-\theta)+|\varphi''|g_2(u)M$. For $\mu\in[\lmu,\beta_i)$ put
    \[
        \mathcal C(\mu)\defeq -g_2'(\mu)\,(M-\theta)\varphi'\big|_{\bar r_\mu}+g_2(\mu)\,\frac{|\varphi''|M}{|u'|}\Big|_{\bar r_\mu}.
    \]
    This is continuous on $[\lmu,\beta_i)$ and positive on $(\max\{\lmu,u_\zeta\},\beta_i)$, where $M-\theta<0$. Substituting $\mu=-u(r)$ on $[\bar r_{\lmu},c_i-\varepsilon]$ and letting $\varepsilon\to0$ (monotone convergence near $\beta_i$) gives
    \[
        T_2(c_i)-T_2(\bar r_{\lmu})=\int_{\lmu}^{\beta_i}\mathcal C(\mu)\,d\mu ,
    \]
    which converges.

    \emph{Step 4: level-by-level pairing.} Adding Steps 1--3,
    \begin{equation}\label{eq:renewal-split}
        T_2(c_i)-T_2(c_{i-1})=\int_{\beta_i}^{\beta_{i-1}}\mathcal A\,d\mu
        +\mathcal R+\int_{\lmu}^{\beta_i}\big(\mathcal A+\mathcal C\big)\,d\mu .
    \end{equation}
    The first integral is positive. Dropping the nonnegative $|\varphi''|$-terms,
    \[
        \mathcal A(\mu)+\mathcal C(\mu)\ \ge\ g_2'(\mu)\Big[(M-\theta)\varphi'\big|_{r_\mu}-(M-\theta)\varphi'\big|_{\bar r_\mu}\Big].
    \]
    \begin{itemize}
        \item If $\mu\ge u_\zeta$, the bracket is positive, since $(M-\theta)(r_\mu)>0\ge(M-\theta)(\bar r_\mu)$.
        \item If $\mu<u_\zeta$, then $(M-\theta)(r_\mu)>0$ and $\varphi'(r_\mu)\ge\varphi'(\bar r_\mu)$, since $\varphi'$ is nonincreasing and $r_\mu<\bar r_\mu$. Hence the bracket is at least (the equality being \eqref{eq:M-theta-prime} integrated across the zero $z_i$ of $u$, where $M-\theta$ is $C^1$ because $h_1$ is)
              \[
                  \varphi'(\bar r_\mu)\big[(M-\theta)(r_\mu)-(M-\theta)(\bar r_\mu)\big]
                  =\varphi'(\bar r_\mu)\,\mathcal I(\mu),
                  \qquad
                  \mathcal I(\mu)\defeq \int_{r_\mu}^{\bar r_\mu}\frac{h_1(u)\,Q_\nu}{\varphi u'^2}\,dr .
              \]
    \end{itemize}
    For $\mu\in(\lmu,u_\zeta)$ (an interval that is empty if $u_\zeta\le\lmu$) split $[r_\mu,\bar r_\mu]=[r_\mu,r_{\lmu}]\cup[r_{\lmu},\bar r_{\lmu}]\cup[\bar r_{\lmu},\bar r_\mu]$, so that $\mathcal I(\mu)=\mathcal I_{\rm out}(\mu)+\mathcal I_{\rm mid}$ with
    \[
        \mathcal I_{\rm out}(\mu)\defeq \Big(\int_{r_\mu}^{r_{\lmu}}+\int_{\bar r_{\lmu}}^{\bar r_\mu}\Big)\frac{h_1(u)\,Q_\nu}{\varphi u'^2}\,dr,
        \qquad
        \mathcal I_{\rm mid}\defeq \int_{r_{\lmu}}^{\bar r_{\lmu}}\frac{h_1(u)\,Q_\nu}{\varphi u'^2}\,dr
        =(M-\theta)(r_{\lmu})-(M-\theta)(\bar r_{\lmu}).
    \]
    We claim $\mathcal I_{\rm out}(\mu)>0$. Substitute $s=u$ on $[r_\mu,r_{\lmu}]$ and $s=-u$ on $[\bar r_{\lmu},\bar r_\mu]$:
    \[
        \mathcal I_{\rm out}(\mu)
        =\int_{\lmu}^\mu h_1(s)\Big[\frac{Q_\nu}{\varphi|u'|^3}\Big|_{r_s}+\frac{Q_\nu}{\varphi|u'|^3}\Big|_{\bar r_s}\Big]\,ds .
    \]
    Let $s\in(\lmu,\mu)$. Then $r_s\in(c_{i-1},r_{\lmu})\subset(c_{i-1},\tau_i)$, so $u'v<0$ and $Q_{2\varphi'}>0$ at $r_s$ (D3). Also $\bar r_s\in(\bar r_{\lmu},\bar r_\mu)\subset(\bar b_i,\zeta_i)$, so $Q>0$ and $u'v>0$ at $\bar r_s$. Hence
    \begin{align*}
        Q_\nu & =Q_{2\varphi'}+(\gamma-1)e^Gu'v>-(\gamma-1)e^G|u'|v &  & \text{at }r_s,      \\
        Q_\nu & =Q+\nu e^Gu'v>\nu e^G|u'||v|                        &  & \text{at }\bar r_s.
    \end{align*}
    The bracket is therefore larger than
    \[
        \nu(\bar r_s)\frac{e^G|v|}{\varphi u'^2}\Big|_{\bar r_s}-(\gamma-1)\frac{e^Gv}{\varphi u'^2}\Big|_{r_s},
    \]
    which is positive by Proposition~\ref{prop:pairing}, applied with $s$ in place of $\mu$. Its hypotheses $s\in(\alpha_*,\beta_i)$, $r_s<\tau_i$ and $\bar r_s<\zeta_i$ hold.

    Put $\Lambda\defeq \int_{\lmu}^{\max\{\lmu,u_\zeta\}}g_2'(\mu)\varphi'(\bar r_\mu)\,d\mu\ge0$. Combining the above,
    \[
        \int_{\lmu}^{\beta_i}\big(\mathcal A+\mathcal C\big)\,d\mu
        \ \ge\ \int_{\lmu}^{\max\{\lmu,u_\zeta\}}g_2'(\mu)\varphi'(\bar r_\mu)\big[\mathcal I_{\rm out}(\mu)+\mathcal I_{\rm mid}\big]\,d\mu
        \ \ge\ \Lambda\,\mathcal I_{\rm mid},
    \]
    and \eqref{eq:renewal-split} gives
    \begin{equation}\label{eq:renewal-middle}
        T_2(c_i)-T_2(c_{i-1})\ >\ \mathcal R+\Lambda\,\mathcal I_{\rm mid}.
    \end{equation}

    \emph{Step 5: the middle stretch.} It remains to show $\mathcal R+\Lambda\mathcal I_{\rm mid}\ge0$.
    Recall $\mathcal R\ge0$ and $\Lambda\ge0$.
    \begin{itemize}
        \item \emph{Case $\tau_i\le b_i$.} On $[r_{\lmu},\bar r_{\lmu}]=[\tau_i,\bar r_{\lmu}]$ we have $Q_\nu\ge0$, by (D3) and $Q_\nu(\tau_i)=Q(\tau_i)>0$. Hence $\mathcal I_{\rm mid}\ge0$.
        \item \emph{Case $\tau_i>b_i$ and $\nu(b_i)\le(p+1)(\gamma-1)$.} Here $[r_{\lmu},\bar r_{\lmu}]=[b_i,\bar b_i]$, and $Q_\nu>0$ on $[b_i,\bar b_i]$ by Lemma~\ref{lem:Q-nu-positive}. Hence $\mathcal I_{\rm mid}\ge0$.
        \item \emph{Case $\tau_i>b_i$ and $\gamma\ge\frac85$.} Here $\lmu=\alpha_*$, $\mathcal R=Q(\bar b_i)-Q(b_i)$ and $\mathcal I_{\rm mid}=(M-\theta)(b_i)-(M-\theta)(\bar b_i)$.
              Moreover $\bar r_\mu\ge\bar b_i$ for $\mu\ge\alpha_*$, and $\varphi'$ is nonincreasing, so
              \[
                  \Lambda\le\varphi'(\bar b_i)\big(g_2(\max\{\alpha_*,u_\zeta\})-g_2(\alpha_*)\big)
                  <\frac2{p-1}\varphi'(\bar b_i),
              \]
              since $g_2<\frac2{p-1}$ and $g_2(\alpha_*)=0$. By Lemmas~\ref{lem:river-integral} and~\ref{lem:frozen-bridge}, $\mathcal R+\Lambda\mathcal I_{\rm mid}>0$.
    \end{itemize}
    If (b) holds we are in the first case. If (a) or (c) holds and $\tau_i>b_i$, we are in the second or third case. Hence in all cases \eqref{eq:renewal-middle} gives $T_2(c_i)>T_2(c_{i-1})\ge0$.

    Finally, $Q(c_i)>0$ and $M(c_i)>0$ by Theorem~\ref{thm:phase}(iv), so \eqref{eq:slope-condition} holds at $c_i$.
\end{proof}

\begin{remark}[On the proof]\label{rem:renewal-below}
    \hfill
    \begin{enumerate}
        \item[(a)] \emph{The case $\tau_i\le b_i$.} Here no hypothesis beyond those of Section~\ref{sec:transition} is used. In the Euclidean case the argument is that of~\cite[Lemma~5.6]{Tang2026}, organized by levels, and Proposition~\ref{prop:pairing} is the counterpart of the pairing \cite[(5.41)--(5.45)]{Tang2026}. Of the three facts listed at the beginning of this section, (i) is Lemma~\ref{lem:level-comparison}, where (H) is used only when $\tau_i<b_i$; (ii) is Lemma~\ref{lem:P-two-positive}; and (iii) is not used.
        \item[(b)] \emph{The case $\tau_i>b_i$ (the residual case of~\cite{Tang2026}).} Here the middle stretch $[b_i,\bar b_i]$ may carry a negative part of $Q_\nu$ on the descent. Step~5 does not require $\mathcal I_{\rm mid}\ge0$. It uses the margin $Q(\bar b_i)-Q(b_i)$ of the river crossing. On $(b_i,\tau_i)$ the combined weight in \eqref{eq:frozen-bridge-prime} is
              \[
                  \Lambda h_1-2\varphi'F=\eta\,u^2+(\varphi'-\eta)(-2F),
                  \qquad \Lambda=\tfrac{2\eta}{p-1},
              \]
              a combination with nonnegative coefficients of the weights of the two endpoints $\tilde u=\infty$ and $\tilde u=\alpha_*$ of Lemma~\ref{lem:river-integral}. The second endpoint is trivially positive, which is why only the endpoint $\tilde u=\infty$ has to be proved. In the Euclidean argument~\cite[Lemma~5.6]{Tang2026} the river is crossed by the bridge at the level $u_\zeta$, which exceeds $\alpha_*$ in the case where that bridge is needed. Its weight on $(b_i,\tau_i)$ is $u^2(1-(u/u_\zeta)^{p-1})$, which corresponds to the intermediate value $\tilde u=u_\zeta$ of Lemma~\ref{lem:river-integral}.
        \item[(c)] \emph{Why the common factor $\varphi'(\bar r_\mu)$ matters.} One can also bridge from the descent point $r_{u_\zeta}$ to $\zeta_i$ with $\sB_{g_2(u_\zeta)}$, as in~\cite[Lemma~5.6]{Tang2026}. This leads to the same pairing, but with the local weight $\varphi'$ inside the integrand, i.e.\ with coefficients $(\gamma-1)\varphi'+\varphi\varphi''$ on the descent and $\varphi'\nu+\varphi\varphi''$ on the ascent. Since $\varphi'(r_\mu)\ge\varphi'(\bar r_\mu)$ goes in the opposite direction for the pairing, that route needs one more monotonicity condition. The level integration of $T_2'$ in Step~4 factors out $\varphi'(\bar r_\mu)$ once, which is why no further condition on the geometry is needed here.
        \item[(d)] \emph{Monotonicity along the staircase.} Together with Theorem~\ref{thm:renewal-above}, the theorem gives $0<T_2(c_1)<T_2(c_2)<\dots<T_2(c_K)$ whenever its hypothesis holds at every phase.
        \item[(e)] \emph{An invariant cone.} For $i\ge2$ the proofs of Theorems~\ref{thm:phase}, \ref{thm:renewal-above} and~\ref{thm:renewal-below} use $v$ only through the variation equation and the slope condition at $c_{i-1}$. Hence, if (a) or (c) holds, every solution of the variation equation that satisfies the slope condition at $c_{i-1}$ satisfies it at $c_i$ as well: the cone of initial values with $v(c_{i-1})>0$ and $0<\kappa<\kappa_{\max}$, in the notation following \eqref{eq:slope-condition}, is invariant under the transition from $c_{i-1}$ to $c_i$. An earlier version of this work used this invariance for a computer-assisted proof. The present proof does not need it.
    \end{enumerate}
\end{remark}

\subsection{The main theorem in critical-point form}\label{sec:critical-form}

\begin{theorem}[Uniqueness in critical-point form]\label{thm:critical-form}
    Let $f$ be as in \eqref{eq:model-f} with $p$ as in \eqref{eq:p-range}, let $g$ satisfy (G1)--(G6), $(\Phi)$ and (H), and assume (R) \textup{(}Assumption~\ref{ass:R}\textup{)}. Then:
    \begin{enumerate}
        \item[(i)] At every $k$-node bound state, $k\ge1$: the slope condition \eqref{eq:slope-condition} holds at $c_1,\dots,c_k$, with $0<T_2(c_1)<\dots<T_2(c_k)$; $v$ has exactly one zero $\tau_i$ in each phase $(c_{i-1},c_i]$, located in $(c_{i-1},r_i)$, and exactly one zero $\tau_{k+1}\in(c_k,r_{k+1})$ on the final phase. Moreover, $u'v'>0$ at every $\tau_i$, $v(z_i)u'(z_i)>0$ for $i\le k$, $Q_{\varphi'}>0$ on $[\tau_i,c_i]$ and $Q>0$ on $[\bar b_i,c_i]$. Finally, $v$ is dominant at infinity with the sign opposite to $u$, i.e.\ $M(\infty)>0$.
        \item[(ii)] At every zero $z_j$ of every nodal solution, $v(z_j)u'(z_j)>0$. Consequently $N$ is nondecreasing on $(\alpha^*(0),\infty)$.
    \end{enumerate}
\end{theorem}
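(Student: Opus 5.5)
The proof of (i) is an induction over the critical points $c_0=0,c_1,\dots,c_k$ of the bound state, with the slope condition \eqref{eq:slope-condition} as inductive hypothesis. Here $K=k$, since $u$ is a bound state.

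\emph{Base, $i=1$.} Theorem~\ref{thm:phase} applies with no slope condition, as in its last paragraph. It gives $\tau_1\in(0,r_1)$, $v<0$ on $(\tau_1,c_1]$, and $M,Q>0$ at $c_1$. For the slope bound $T_2(c_1)>0$ I split on the position of $\tau_1$. If $u_\tau\ge\beta_1$, I use Theorem~\ref{thm:renewal-above}. Otherwise I use Theorem~\ref{thm:renewal-below}, whose hypotheses (a) or (c) hold under (R): (a) if $\gamma\ge\frac85$, and (c) through \eqref{eq:nu-bound} if $p\ge\frac2{\gamma-1}$. This gives $T_2(c_1)>T_2(0^+)=0$.

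\emph{Induction step.} Assume the slope condition at $c_{i-1}$ with $2\le i\le k$. Theorem~\ref{thm:phase} then gives the following in phase $i$:
\begin{itemize}
\item $v$ has a unique zero $\tau_i\in(c_{i-1},r_i)$;
\item $u'v'(\tau_i)>0$ and $v(z_i)u'(z_i)>0$;
\item $Q_{\varphi'}>0$ on $[\tau_i,\bar b_i]$ and on $(\tau_i,c_i]$;
\item $Q>0$ on $[\bar b_i,c_i]$;
\item $Q(c_i)>0$ and $M(c_i)>0$.
\end{itemize}
Theorems~\ref{thm:renewal-above} and~\ref{thm:renewal-below}, with the same case split, renew $T_2(c_i)>T_2(c_{i-1})$. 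This yields the strictly increasing chain $0<T_2(c_1)<\dots<T_2(c_k)$.

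\emph{Final phase.} With the slope condition at $c_k$, Theorem~\ref{thm:phase}(i) and~(v) with $i=k+1$ give a unique zero $\tau_{k+1}\in(c_k,r_{k+1})$, with $v<0$ after it in the normalization. They also show that $v$ is dominant with the sign opposite to $u$. The equivalence with $M(\infty)>0$ comes from Lemma~\ref{lem:tail-sign}, which is forward-referenced: in the tail, $M=e^G(u'v-uv')$ with $|v|\to\infty$ and the opposite sign to $u$ forces $M$ to a positive limit. Finally, $Q_{\varphi'}(\tau_i)=Q(\tau_i)>0$ closes the interval $[\tau_i,c_i]$ in the statement.

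For (ii), take a nodal solution that is not a bound state, so $K=k-1$. The same induction runs up to $c_{k-1}$. The descent of phase $k$ is then handled by Theorem~\ref{thm:phase}(i) with $i=K+1$, which gives $v(z_k)u'(z_k)>0$. The earlier zeros $z_j$ are covered by the phases $j\le K$. If $k=1$, Theorem~\ref{thm:first-phase}(ii) suffices.

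Monotonicity of $N$ then follows from Lemma~\ref{lem:zero-motion}. By the implicit function theorem, $\partial_\alpha z_j=-v(z_j)/u'(z_j)<0$, so zeros move toward the origin as $\alpha$ increases and none is lost to the left. A zero also cannot disappear at infinity: by the classification Proposition~\ref{prop:classification}, $N$ is locally constant at trapped solutions, and at bound states lower semicontinuity of $N$ together with the motion of the zeros prevents a decrease. Hence $N$ is nondecreasing on $(\alpha^*(0),\infty)$, and every nodal solution has $\alpha>\alpha^*(0)$ by Proposition~\ref{prop:pohozaev-positive}.

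I expect the main obstacle to be this last passage from the sign at zeros to global monotonicity of $N$. One has to exclude a zero escaping to infinity as $\alpha$ increases, which needs a careful use of the tail analysis. A smaller point is checking that the hypotheses of Theorem~\ref{thm:renewal-below} really hold at every phase under (R).
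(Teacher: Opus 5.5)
Your proof is correct and follows the paper's argument: induction over $c_1,\dots,c_K$ via Theorem~\ref{thm:phase} and Theorem~\ref{thm:renewal-below}, with (R) supplying hypothesis (a) or (c) at every phase (your split at $u_\tau=\beta_i$ is already built into Theorem~\ref{thm:renewal-below}). The final phase comes from Theorem~\ref{thm:phase}(v) together with Lemma~\ref{lem:tail-sign}, and part (ii) from the descent-only form of Theorem~\ref{thm:phase} plus Lemma~\ref{lem:zero-motion}.

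The obstacle you anticipate is not real. Since $z_j'(\alpha)<0$, each continued zero stays below its initial position $z_j(\alpha_0)$ and cannot escape to infinity. So the continuation argument of Lemma~\ref{lem:zero-motion}(ii) needs no tail analysis. It also needs no appeal to classification and lower semicontinuity; that appeal alone would not give global monotonicity, since isolation of bound states is not yet available at that point.
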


\begin{proof}
    Let $u$ be a bound state ($K=k$) or a nodal solution which is not a bound state ($K=k-1$), with $k\ge1$ zeros. By (R), at every phase $i\le K$ hypothesis (a) or hypothesis (c) of Theorem~\ref{thm:renewal-below} holds: (a) if $\gamma\ge\frac85$, and (c) by \eqref{eq:nu-bound} if $p\ge\frac2{\gamma-1}$. We argue by induction on $i=1,\dots,K$, with the slope condition at $c_{i-1}$ as induction hypothesis:
    \begin{itemize}
        \item The slope condition \eqref{eq:slope-condition} holds at $c_{i-1}$ (nothing is required for $i=1$), so Theorem~\ref{thm:phase} (for $i=1$: Theorem~\ref{thm:first-phase}(i)--(ii) and the last paragraph of Theorem~\ref{thm:phase}) gives the structure of phase~$i$.
        \item Theorem~\ref{thm:renewal-below} gives $T_2(c_i)>T_2(c_{i-1})$, so the slope condition holds at $c_i$, which is the induction step.
    \end{itemize}
    Hence the slope condition holds at $c_1,\dots,c_K$ with $0<T_2(c_1)<\dots<T_2(c_K)$, and Theorem~\ref{thm:phase}(i),(iii),(iv) gives, in each phase $i\le K$, exactly one zero $\tau_i$ of $v$, located in $(c_{i-1},r_i)$, with $u'v'(\tau_i)>0$, $v(z_i)u'(z_i)>0$, $Q_{\varphi'}>0$ on $[\tau_i,c_i]$ and $Q>0$ on $[\bar b_i,c_i]$.
    \begin{itemize}
        \item If $u$ is a bound state, Theorem~\ref{thm:phase} with $i=k+1$ gives the final phase: exactly one zero $\tau_{k+1}\in(c_k,r_{k+1})$, with $u'v'(\tau_{k+1})>0$, and, by Theorem~\ref{thm:phase}(v), $v$ is dominant at infinity with the sign opposite to $u$. By Lemma~\ref{lem:tail-sign}, $M(\infty)>0$. This proves (i).
        \item If $u$ is a nodal solution which is not a bound state, Theorem~\ref{thm:phase} with $i=k$, in its descent-only form, gives $v(z_k)u'(z_k)>0$. With the phases $i\le k-1$ this gives $v(z_j)u'(z_j)>0$ at every zero. At bound states this is contained in (i). Lemma~\ref{lem:zero-motion} then gives the monotonicity of $N$. This proves (ii).
    \end{itemize}
    The conclusions of Theorem~\ref{thm:staircase} follow in Section~\ref{sec:main-proofs}.
\end{proof}

\begin{remark}[The Euclidean case, and the complement of (R)]\label{rem:R-complement}
    \hfill
    \begin{enumerate}
        \item[(a)] On $\R^n$, $n\ge3$ ($e^G=r^{n-1}$, $\varphi=r$, $\nu\equiv n$, $\varphi''\equiv0$, $\gamma=n-1\ge2$), (H) and (R) hold (Theorem~\ref{thm:euclidean}). In the renewal step, Theorem~\ref{thm:renewal-above} and the case $\tau_i\le b_i$ of Theorem~\ref{thm:renewal-below} take the place of~\cite[Lemma~5.6]{Tang2026} (Remarks~\ref{rem:renewal-above}(a) and~\ref{rem:renewal-below}(a)), and Lemmas~\ref{lem:river-integral} and~\ref{lem:frozen-bridge} replace~\cite[Lemmas~5.3 and~5.4]{Tang2026} and the last part of the proof of~\cite[Lemma~5.6]{Tang2026} (Remark~\ref{rem:river-integral}).
        \item[(b)] (R) excludes only $1<\gamma<\frac85$ with $p<\frac2{\gamma-1}$. For these $\gamma$ and $p$, Theorem~\ref{thm:renewal-below} still gives $T_2(c_i)>T_2(c_{i-1})$ at every phase with $u_\tau\ge\beta_i$, with $\tau_i\le b_i$, or with $\nu(b_i)\le(p+1)(\gamma-1)$. At a phase with $\tau_i>b_i$ it suffices, by Step~5 of its proof, that $I(\infty)\ge0$ (Lemma~\ref{lem:frozen-bridge}) or that $(M-\theta)(b_i)\ge(M-\theta)(\bar b_i)$.
        \item[(c)] The conclusions of \cite[Lemma~5.1]{Tang2026} and of its proof in \cite[\S5.7]{Tang2026} are those of Theorems~\ref{thm:phase}, \ref{thm:renewal-above} and~\ref{thm:renewal-below}: one zero of $v$ per phase, $Q_{\varphi'}>0$ on the river, $Q>0$ on $[\bar b_i,c_i]$, and the slope condition at the critical points. They do not include the positivity of $Q$ inside the river $(b_i,\bar b_i)$. Since $Q'=2\varphi'e^Gf(u)v$ and $v<0$ after $\tau_i$, $Q$ increases on $(b_i,\tau_i)$ if $\tau_i>b_i$, decreases on $(\tau_i,r_i)$, increases on $(r_i,z_i)$, decreases on $(z_i,\bar r_i)$ and increases on $(\bar r_i,\bar b_i)$. So $Q$ has two local minima in the river, at $r_i$ and at $\bar r_i$, and therefore $Q>0$ on the river if and only if $Q(r_i)>0$ and $Q(\bar r_i)>0$. In \cite[\S4.3 and \S5.7]{Tang2026} it is observed that inside the river $u$, $f(u)$ and $f'(u)$ all change sign and $Q$ may decrease, so that the positivity of $Q$ is uncertain. In fact $Q$ can change sign inside the river, while $Q_{\varphi'}>0$ on $[\tau_i,\bar b_i]$ still holds, as Theorem~\ref{thm:phase} asserts. The bridge $B_0$ is introduced there to take $Q$ from $Q(b_i)>0$ to $Q(\bar b_i)>Q(b_i)$ without using it.\footnote{In arXiv:2409.06915v2 these remarks appear as footnotes.} Nothing in Sections~\ref{sec:renewal-below}--\ref{sec:staircase} uses the sign of $Q$ inside the river.
    \end{enumerate}
\end{remark}

%% file: sections/18-tail.tex
\section{Tail behavior: decay, dichotomy and nondegeneracy}\label{sec:tail}

We now leave the phases and look at the tail of a bound state, after its last critical point. There $u$ decays exponentially, so $f'(u)\to-1$ and $v$ solves a linear equation close to $v''+gv'-v=0$. Hence $v$ either decays or grows exponentially, and Section~\ref{sec:staircase} needs to know which of the two happens at every bound state, and with which sign. This section proves the dichotomy, and shows that a simple condition at the start of the tail rules out decay and fixes the sign of the growth. It uses only Sections~\ref{sec:multiplier}--\ref{sec:variation}, so it can also be read right after Section~\ref{sec:variation}.

Throughout this section $u$ is a bound state with $k\ge 0$ zeros, normalized so that $u>0$ on the tail, and
\[
    m(r) \defeq  \frac{g(r)+\sqrt{g(r)^2+4}}{2}
\]
denotes the modulus of the decaying root of the linearization at $u=0$, with $g$ treated as a constant: the roots of $\lambda^2+g\lambda-1=0$ are $\lambda_-=-m<0<\lambda_+=1/m$.

\begin{lemma}[Decay]\label{lem:decay}
    Assume (G1), (G2), (G4)--(G6). Then $u'<0$ on the tail and
    \[
        \frac{-u'(r)/u(r)}{m(r)} \longrightarrow 1 \qquad (r\to\infty).
    \]
    Consequently, for every $\varepsilon>0$ there are $C_\varepsilon, R$ with
    \[
        C_\varepsilon^{-1}e^{-(1+\varepsilon)\int_R^r m}
        \;\le\; u(r) \;\le\;
        C_\varepsilon e^{-(1-\varepsilon)\int_R^r m} \qquad (r\ge R),
    \]
    and, if moreover $(\Phi)$ holds, $P(r)\to 0$, with $P$ as in \eqref{eq:pohozaev}.
\end{lemma}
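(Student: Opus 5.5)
We will work on the tail $(c_k,\infty)$ (or $(0,\infty)$ for $k=0$), where Proposition~\ref{prop:critical-points}(iv) gives $u>0$ strictly decreasing, so $u'<0$ there. The plan is a Riccati/Liouville--Green argument. First we remove the damping. Setting $u=e^{-G/2}z$ turns \eqref{eq:ode} into
\[
z''=\Big(q-|u|^{p-1}\Big)z,\qquad q=1+\tfrac{g^2}{4}+\tfrac{g'}{2}.
\]
Since $u\to0$, the perturbation $|u|^{p-1}\to0$. By (G5) and (G6), $q$ is bounded, $q\to$ bounded values with $g'\to0$, and $q\ge1+o(1)$. Next we consider the logarithmic derivative $y\defeq -u'/u>0$. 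It satisfies the Riccati equation
\[
y'=y^2-gy-1+|u|^{p-1},
\]
obtained by dividing \eqref{eq:ode} by $u$ and using $f(u)/u=|u|^{p-1}-1$. The two "frozen" roots of $y^2-gy-1=0$ are $m$ and $-1/m$. Because $g$ is bounded, $m$ is bounded above and below by positive constants. Because $g'\to0$, we also have $m'\to0$.

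The key step is to show $y/m\to1$. We argue first that $y$ is bounded. If $y$ became large, say $y>2\sup m+1$ at some point, then $y'\ge\tfrac12 y^2$ from that point on. This forces blow-up of $y$ in finite $r$, that is, a zero of $u$, contradicting positivity on the tail. Next fix $\varepsilon>0$ and take $r$ large enough that $|u|^{p-1}<\delta$ and $|m'|<\delta$. Consider the set where $y\le(1-\varepsilon)m$. There $y^2-gy-1\le-c_\varepsilon<0$, because the quadratic is negative strictly between its roots. So $y'\le-c_\varepsilon/2$, while $((1-\varepsilon)m)'$ is small. Hence once $y$ drops below $(1-\varepsilon)m$, it keeps decreasing at a definite rate and becomes negative, which contradicts $u'<0$. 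Therefore $y>(1-\varepsilon)m$ eventually. Symmetrically, if $y\ge(1+\varepsilon)m$ at a late point, then $y'\ge c_\varepsilon/2$, and $y$ increases forever and is eventually unbounded, contradicting the bound just proved. (Equivalently, $u'^2$ would dominate and $u$ would reach $0$ in finite time.) Together these give $y/m\to1$.

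The integrated two-sided bound on $u$ then follows by integrating $-u'/u=y\in[(1-\varepsilon)m,(1+\varepsilon)m]$ from $R$ to $r$.

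For $P(r)\to0$ under $(\Phi)$, we bound each term of \eqref{eq:pohozaev}. We have $\varphi\le r$ (Lemma~\ref{lem:phi-basic}), $u'^2\le Cm^2u^2$ and $|F(u)|\le Cu^2$, so
\[
|P|\le C\,r\,e^{G}u^2 .
\]
We also know $e^G\varphi'$-type growth is at most $e^{G}$, with $G(r)\le\int^r g$, and $u^2\le Ce^{-2(1-\varepsilon)\int m}$. Since $2m=g+\sqrt{g^2+4}\ge g+2$, the exponent $G-2(1-\varepsilon)\int m$ is at most $-(1-2\varepsilon)\int g-2(1-\varepsilon)r$ plus a constant. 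This beats the factor $r$, so $P\to0$.

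The main obstacle is making the Riccati comparison rigorous when $g$ (hence $m$) is only asymptotically slowly varying rather than convergent. We plan to handle this using $m'\to0$, which follows from (G6), and the uniform separation of the roots, which follows from the boundedness of $g$.
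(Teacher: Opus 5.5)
Your proposal is correct and follows the paper's route. You use the Riccati equation for the logarithmic derivative of $u$, with forward-invariant regions $\{y\le(1-\varepsilon)m\}$ and $\{y\ge(1+\varepsilon)m\}$ controlled by $u^{p-1}\to0$ and $m'\to0$; your preliminary bound $y\le 2\sup m+1$ only repackages the paper's finite-time blow-up argument. You then integrate, and use $\varphi\le r$ for $P$.

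The one slip is in the exponent for $P\to0$. From $2m\ge g+2$ you only get $G-2(1-\varepsilon)\int_R^r m\le \varepsilon\int_R^r g-2(1-\varepsilon)(r-R)+C$, not $-(1-2\varepsilon)\int g-2(1-\varepsilon)r$, which would need $\sqrt{g^2+4}\ge g+2$. Since $g$ is bounded, the correct bound still gives $e^Gu^2\le Ce^{-cr}$ for small $\varepsilon$, as does the paper's estimate $2(1-\varepsilon)m-g=\sqrt{g^2+4}-2\varepsilon m\ge(1-2\varepsilon)\sqrt{g^2+4}$.
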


\begin{proof}
    Let $y\defeq u'/u$ on $(c_k,\infty)$, where $u>0$ and $u'<0$ (Proposition~\ref{prop:critical-points}(iv)), so $y<0$ and
    \begin{equation}\label{eq:riccati}
        y' = -y^2-gy+1-u^{p-1} = (\lambda_+-y)(y-\lambda_-)-u^{p-1},
    \end{equation}
    with $\lambda_-=-m\le -1$, $\lambda_+=1/m\in(0,1]$ and $\lambda_+-\lambda_-=\sqrt{g^2+4}$. Note that $|m'|\le|g'|$ and, by (G6), for every $\delta>0$ there is $R_\delta$ with $|m'|\le\delta m^2$ and $u^{p-1}\le\delta$ on $[R_\delta,\infty)$, using $m\ge 1$, $g'\to 0$, together with $u\to 0$.

    Fix $\varepsilon\in(0,\frac12)$ and choose $R=R_\delta$ with $\delta\defeq \frac{\varepsilon^2}{100}$. We claim that $-(1+\varepsilon)m(r)<y(r)<-(1-\varepsilon)m(r)$ for all $r\ge R'$, for some $R'\ge R$.

    \emph{The region $\{y\le-(1+\varepsilon)m\}$ is forward invariant after $R$ and leads to a contradiction.} On its boundary $y=-(1+\varepsilon)m$ we have $\lambda_+-y\ge(1+\varepsilon)m$ and $y-\lambda_-=-\varepsilon m$, so by~\eqref{eq:riccati}
    \[
        y' \le -\varepsilon(1+\varepsilon)m^2 < -(1+\varepsilon)\delta m^2
        \le -(1+\varepsilon)|m'| \le \frac{d}{dr}\big[-(1+\varepsilon)m\big],
    \]
    the second inequality because $\delta=\varepsilon^2/100\le\varepsilon/4$. So the flow crosses the boundary only downward. In the interior, while $y\le-(1+\varepsilon)m$, we have $\lambda_+-y\ge|y|$ and $y-\lambda_-\le-\varepsilon m\le-\varepsilon$, hence $y'\le-\varepsilon|y|$. Thus $|y|$ grows at least exponentially, and once $|y|\ge 2m$ we get $y-\lambda_-\le y/2$ and $y'\le-y^2/2$, so $y\to-\infty$ in finite time. Then $u$, with $u'/u\to-\infty$ at a finite point and $u'$ bounded, reaches $0$ at that point: a zero of $u$ beyond $z_k$, a contradiction. Hence $y>-(1+\varepsilon)m$ for all $r\ge R$.

    \emph{The region $\{y\ge-(1-\varepsilon)m\}$ cannot be visited at late times.}
    On its boundary $y=-(1-\varepsilon)m$ we have $\lambda_+-y\ge(1-\varepsilon)m\ge m/2$ and $y-\lambda_-=\varepsilon m$, so
    \[
        y'\ge\frac{\varepsilon}{2}m^2-\delta\ge\frac{\varepsilon}{4}m^2
        >(1-\varepsilon)\delta m^2\ge(1-\varepsilon)|m'|
        \ge\frac{d}{dr}\big[-(1-\varepsilon)m\big],
    \]
    and the flow crosses only upward, that is into that region. In the interior, while $y\in[-(1-\varepsilon)m,0)$, we have $\lambda_+-y\ge\lambda_+=1/m$ and $y-\lambda_-\ge\varepsilon m$, so $y'\ge\varepsilon-\delta\ge\frac{\varepsilon}{2}>0$. Hence, once in the region, $y$ increases at a uniform rate and reaches $0$ in finite time. But $y(r_*)=0$ means $u'(r_*)=0$, contradicting $u'<0$ on the tail. So $y$ never enters the region after some $R'$.

    As $\varepsilon\in(0,\frac12)$ was arbitrary, $y/(-m)\to 1$. The integrated bounds follow from $u(r)=u(R)\exp\int_R^r y$. Finally, for $P$: since $|F(u)|\le Cu^2$ and $|u'|\le 2mu$ on the tail,
    \[
        |P| \le e^G\varphi\big(u'^2+2|F(u)|\big)+(\gamma-1)e^G|uu'|
        \le C\varphi(1+m^2)e^Ge^{-2(1-\varepsilon)\int_R^r m}.
    \]
    Using $2m-g=\sqrt{g^2+4}$ and $m\le\sqrt{g^2+4}$, we get, for $\varepsilon<\frac14$,
    \[
        2(1-\varepsilon)m-g=\sqrt{g^2+4}-2\varepsilon m
        \ge(1-2\varepsilon)\sqrt{g^2+4}\ge c(1+g).
    \]
    Therefore $e^Ge^{-2(1-\varepsilon)\int m}\le Ce^{-cr}$, while $\varphi\le r$ (Lemma~\ref{lem:phi-basic}, which is where $(\Phi)$ is used) and $m$ is bounded on the tail by (G5). Hence $P\to 0$.
\end{proof}

\begin{remark}\label{rem:decay}
    The proof uses the boundedness of $m$ (that is, of $g$) only in the step $|y|\ge2m$ of the first region, which can be avoided (for $y\le-(1+\varepsilon)m$ we have $m\le|y|/(1+\varepsilon)$, so $y-\lambda_-\le\varepsilon y/(1+\varepsilon)$ and $y'\le-\varepsilon y^2/(1+\varepsilon)$), and in the last step ($P\to0$). The choice of $R_\delta$ uses only $g'\to0$ and $m\ge1$, and no lower bound on $g$ is needed. In particular it applies directly to $g=(n-1)/r$, where $\varphi=r$ and $m\to 1$.
\end{remark}

\begin{lemma}[Dichotomy for the variation]\label{lem:dichotomy}
    Assume (G1), (G2), (G4)--(G6). The equation $v''+gv'+f'(u)v=0$ has solutions $V_\pm$ with
    \[
        V_\pm(r) = q^{-1/4}e^{-G/2}\exp\Big(\pm\int_R^r\sqrt{q}\Big)\big(1+o(1)\big),
        \qquad
        \frac{V_\pm'}{V_\pm} = -\frac{g}{2}\pm\sqrt{q}+o(1),
    \]
    and every solution is a linear combination $c_+V_++c_-V_-$. Moreover $V_+(r)\to\infty$ and $V_-\to 0$. Consequently, for the variation $v$ of a bound state exactly one of the following holds:
    \begin{enumerate}[label=(\alph*)]
        \item \emph{($v$ decays)} $c_+=0$ and $v(r)\to 0$.
        \item \emph{($v$ dominant)} $|v(r)|\to\infty$, and $v,v'$ are eventually strictly of one sign, with $\sgn v=\sgn c_+$ and $v'/v=-\frac{g}{2}+\sqrt{q}+o(1)$, which is bounded below by a positive constant for large $r$.
    \end{enumerate}
\end{lemma}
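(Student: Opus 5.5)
The plan is a Liouville--Green reduction followed by an asymptotic Levinson/Olver argument. Substitute $v=e^{-G/2}z$. Then
\[
z''+\big(f'(u)-\tfrac{g^2}{4}-\tfrac{g'}{2}\big)z=0,
\]
and since $f'(u)=pu^{p-1}-1$, this reads
\[
z''=\big(q-pu^{p-1}\big)z,\qquad q=1+\tfrac{g^2}{4}+\tfrac{g'}{2}.
\]
By (G5) and (G6), $q$ is bounded and tends to values at least $1$ (indeed $g'\to0$), so $q\ge\frac12$ for large $r$. By Lemma~\ref{lem:decay}, $u$ decays exponentially, since $u\le Ce^{-(1-\varepsilon)\int m}$ with $m\ge1$, and hence $u^{p-1}\in L^1(R,\infty)$.

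\emph{Existence of $V_\pm$.} We apply the Liouville--Green theorem \cite[Ch.~6]{Olver1974} to $z''=(q+\psi)z$ with $\psi:=-pu^{p-1}$. The error-control function is
\[
\int\Big|q^{-1/4}\big(q^{-1/4}\big)''-\psi q^{-1/2}\Big|\,dr,
\]
and it is finite. The first term is bounded by $C(|q''|+q'^2)$ because $q\ge\frac12$, and this is integrable by (G6). The second term is integrable by the exponential decay of $u$. Olver's theorem then gives solutions $z_\pm=q^{-1/4}\exp(\pm\int_R^r\sqrt q)(1+o(1))$, together with $z_\pm'=\pm q^{1/4}\exp(\pm\int\sqrt q)(1+o(1))$. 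We set $V_\pm:=e^{-G/2}z_\pm$. This gives $V_\pm'/V_\pm=-\frac g2\pm\sqrt q+o(1)$.

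\emph{Independence and span.} The Wronskian of $z_+,z_-$ tends to $-2$ by the asymptotics, so it is nonzero. Hence $V_\pm$ span the two-dimensional solution space.

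\emph{Growth and decay.} The exponent of $V_+$ is $\int(\sqrt q-\frac g2)$. Since
\[
\sqrt{q}-\tfrac{g}{2}=\frac{1+g'/2}{\sqrt q+g/2},
\]
and $g$ is bounded while $g'\to0$, this quantity is bounded below by a positive constant $c$. Therefore $V_+\ge Ce^{cr}\to\infty$. For $V_-$ the exponent is $-\int(\sqrt q+\frac g2)\le-\int\frac12$, so $V_-\to0$.

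\emph{Dichotomy.} Write $v=c_+V_++c_-V_-$. If $c_+=0$, then $v\to0$, which is case (a). If $c_+\ne0$, then $V_-/V_+\to0$ exponentially, and the same holds for $V_-'/V_+$. Hence $v=c_+V_+(1+o(1))$, and $v'/v=-\frac g2+\sqrt q+o(1)\ge c/2$ for large $r$. So $v$ and $v'$ eventually both carry the sign of $c_+$, and $|v|\to\infty$, which is case (b).

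The main obstacle is checking that the hypotheses of the Liouville--Green theorem really hold. Two points need care: the positivity of $q$ near infinity, and the derivative asymptotics $z_\pm'$, which require the bound on $\int|q''|+q'^2$ from (G6). The rest is routine.
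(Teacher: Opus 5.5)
Your route is the paper's: the substitution $v=e^{-G/2}z$ giving $z''=(q-p|u|^{p-1})z$, Olver's Liouville--Green theorem with main term $q$ and perturbation $-p|u|^{p-1}$, finiteness of the variation of the error-control function $\mathcal F$ from (G6) and the exponential decay of $u$, and the identity $\sqrt q-\frac g2=\frac{1+g'/2}{\sqrt q+g/2}$ for the growth. One step, however, does not follow as you state it. Olver's theorem bounds the error of the recessive solution by the variation of $\mathcal F$ over $[r,\infty)$, so $\varepsilon_-$ and $q^{-1/2}\varepsilon_-'$ do tend to $0$. For the dominant solution the bound uses the variation over $[R,r]$. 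Writing $\mathcal V_{R,\infty}(\mathcal F)$ for the variation of $\mathcal F$ over $[R,\infty)$, you therefore only get $|\varepsilon_+|,\ q^{-1/2}|\varepsilon_+'|\le\delta(R):=\exp\big(\tfrac12\mathcal V_{R,\infty}(\mathcal F)\big)-1$. This bound is small for $R$ large but does not tend to $0$ as $r\to\infty$. Hence ``$z_+=q^{-1/4}e^{\int_R^r\sqrt q}(1+o(1))$'' and ``$z_+'/z_+=\sqrt q+o(1)$'' are not yet justified. These are exactly the asymptotics claimed for $V_+$, and, through $v=c_+V_+(1+o(1))$, they give the relation $v'/v=-\frac g2+\sqrt q+o(1)$ in (b).

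The paper closes this with one more step, which you need too. For $R<R'$ the two dominant solutions satisfy $z_+^{(R)}=a\,z_+^{(R')}+b\,z_-$, and $z_-$ is negligible against $q^{-1/4}e^{\int\sqrt q}$. So the oscillation at infinity of $z_+^{(R)}\big/\big(q^{-1/4}e^{\int_R^r\sqrt q}\big)$ is $O(\delta(R'))$ for every $R'$, and the ratio has a limit in $[1-\delta(R),1+\delta(R)]$. In the same way $z_+'/z_+=\sqrt q-\frac{q'}{4q}+o(1)$, after which you renormalize $z_+$. The term $\frac{q'}{4q}$, and the corresponding one for $z_-$, also requires $q'\to0$, which you should state. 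It follows from (G6): $\int|q''|<\infty$ gives $q'$ a limit, and $\int q'^2<\infty$ forces that limit to vanish. Everything else in your plan already works with the non-vanishing bound once $R$ is large: the nonzero Wronskian, $V_+\to\infty$, $V_-\to0$, and the eventual signs of $v,v'$ in (b), since $\sqrt q-\frac g2\ge c>0$ absorbs an $O(\delta(R))$ error. Only the $o(1)$ claims need the extra argument.
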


\begin{proof}
    Substituting $v=e^{-G/2}z$ turns the equation into
    \[
        z''=\big(q-p|u|^{p-1}\big)z,
    \]
    since $1+f'(u)=p|u|^{p-1}$. By (G5)--(G6), $q=1+g^2/4+g'/2$ is of class $C^2$ on $[R,\infty)$, bounded, and bounded below by a positive constant (indeed $q-1-g^2/4=g'/2\to 0$ and $g$ is bounded), although it need not converge. We apply \cite[Ch.~6, Thm.~2.2]{Olver1974} to this equation on $[R,\infty)$, with main term $q$ and perturbation $-p|u|^{p-1}$. Its error-control function
    \[
        \int\Big(q^{-1/4}\big(q^{-1/4}\big)''+p|u|^{p-1}q^{-1/2}\Big)\,dr
    \]
    has finite variation on $[R,\infty)$: the first term is integrable because $q$ is bounded below and $\int_R^\infty(|q''|+q'^2)\,dr<\infty$ by (G6), and the second because $u$ decays exponentially (Lemma~\ref{lem:decay}). The theorem gives solutions $z_\pm=q^{-1/4}e^{\pm\int_R^r\sqrt q}(1+\varepsilon_\pm)$, with explicit bounds on $\varepsilon_\pm$ and $q^{-1/2}\varepsilon_\pm'$ in terms of this variation. For $z_-$ the variation is taken over $[r,\infty)$, so $\varepsilon_-$ and $\varepsilon_-'$ tend to $0$. For $z_+$ it is taken over $[R,r]$, so the bound does not tend to $0$, but it can be as small as we like if $R$ is large enough. Since any two growing solutions differ by a constant factor and a multiple of $z_-$, it follows that $z_+\big/\big(q^{-1/4}e^{\int_R^r\sqrt q}\big)$ has a nonzero limit and that $z_+'/z_+=\sqrt q+o(1)$, and we normalize $z_+$ so that this limit is $1$. As $q'\to0$ by (G6), this yields the basis $V_\pm=e^{-G/2}z_\pm$ and the logarithmic derivatives.

    For the growth of $V_+$, its exponent is
    \[
        \int\Big(\sqrt{q}-\frac{g}{2}\Big)
        = \int\frac{q-g^2/4}{\sqrt{q}+g/2}
        = \int\frac{1+g'/2}{\sqrt{q}+g/2},
    \]
    and since $g'\to 0$ and $\sqrt{q}+g/2$ is bounded by (G5), the integrand is $\ge c>0$ for large $r$, so the exponent diverges linearly. The factor $q^{-1/4}$ is bounded below, hence $V_+\to\infty$. Similarly $V_-\to 0$. The dichotomy for $v$ follows. In case (b) the eventual monotonicity and the signs are read off from $v=c_+V_+(1+o(1))$ and $v'=c_+V_+'(1+o(1))$, with $V_+'/V_+=-\frac{g}{2}+\sqrt{q}+o(1)$ and $-\frac{g}{2}+\sqrt{q}=\frac{1+g'/2}{\sqrt{q}+g/2}\ge c>0$ eventually ($g$ itself need not converge).
\end{proof}

\subsection{Nondegeneracy}\label{sec:nondegeneracy}
Let $u$ be a bound state with $k\ge 0$ zeros, normalized so that $u>0$ on the tail, and let $r_{k+1}>c_k$ be the point with $u(r_{k+1})=1$. We say that the variation $v$ satisfies the \emph{final-phase condition} if
\begin{equation}\label{eq:final-phase}
    v<0 \quad\text{on } [r_{k+1},\infty) \qquad\text{and}\qquad
    Q(r_{k+1})>0.
\end{equation}
Then $Q$ is increasing on $[r_{k+1},\infty)$, by \eqref{eq:Q-prime} and $f(u)<0$, so that $Q\ge Q(r_{k+1})>0$ on the tail. This condition holds unconditionally when $k=0$, by Theorem~\ref{thm:first-phase}(iii). The next proposition shows that the growth of $|v|$ is a consequence of it, so that no separate hypothesis is needed. For $k\ge 1$, Theorem~\ref{thm:phase}(v) proves $|v|\to\infty$ directly, under (H) and the slope condition at the last critical point, without passing through this condition.

\begin{proposition}[Nondegeneracy]\label{prop:nondegeneracy}
    Let $u$ be a bound state at which the variation $v$ satisfies the final-phase condition \eqref{eq:final-phase}. Then $v$ does not decay: alternative (b) of Lemma~\ref{lem:dichotomy} holds, with $v\to-\infty$. In particular, every ground state is nondegenerate.
\end{proposition}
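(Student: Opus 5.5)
We argue by contradiction, using the Wronskian $Q$ and the dichotomy of Lemma~\ref{lem:dichotomy}. Suppose $v$ decays, i.e.\ $c_+=0$ and $v=c_-V_-$. The final-phase condition \eqref{eq:final-phase} gives $v<0$ on $[r_{k+1},\infty)$. On this interval $0<u<1$, so $f(u)<0$ and \eqref{eq:Q-prime} gives $Q'=2\varphi'e^Gf(u)v>0$. Hence $Q\ge Q(r_{k+1})>0$ on the whole tail. The contradiction will come from showing $Q(r)\to0$.

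To show $Q\to0$ we estimate each term of $Q=e^G[\varphi u'v'+\varphi f(u)v+(\gamma-1)u'v]$ using the asymptotics already available. By Lemma~\ref{lem:decay}, $|u'|\le 2mu$, $|f(u)|\le Cu$ and $u\le C_\varepsilon e^{-(1-\varepsilon)\int m}$. By Lemma~\ref{lem:dichotomy}, $|v|,|v'|\le C e^{-G/2}e^{-\int\sqrt q}$, using $V_-'/V_-=-g/2-\sqrt q+o(1)$ and the boundedness of $g$ and $q$. Since $\varphi\le r$ (Lemma~\ref{lem:phi-basic}) and $g,m$ are bounded, we get
\[
|Q|\le C r\,e^{G/2}\exp\Big(-\int_R^r\big((1-\varepsilon)m+\sqrt q\big)\Big).
\]

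\emph{Main step.} The obstacle is to show that the exponent rate $\frac g2-(1-\varepsilon)m-\sqrt q$ is eventually bounded above by a negative constant. We have $m=\frac{g+\sqrt{g^2+4}}2\ge\frac g2+1$, and $\sqrt q\ge c>0$ because $q$ is bounded below by a positive constant (proof of Lemma~\ref{lem:dichotomy}). Therefore
\[
\tfrac g2-(1-\varepsilon)m-\sqrt q\le \varepsilon m-1-c,
\]
which is at most $-\tfrac12$ once $\varepsilon$ is small, since $m$ is bounded. So $|Q|\le Cre^{-r/2}\to0$, contradicting $Q\ge Q(r_{k+1})>0$.

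Hence alternative (b) of Lemma~\ref{lem:dichotomy} holds. Because $v<0$ on the tail, $\operatorname{sgn}c_+=-1$ and $v\to-\infty$. For a ground state, Theorem~\ref{thm:first-phase}(iii) gives $v<0$ and $Q>0$ on $(\tau_1,\infty)\ni r_1$, which is exactly the final-phase condition. So every ground state is nondegenerate.
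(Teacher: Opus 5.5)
Your proof is correct and follows the paper's argument. The final-phase condition makes $Q$ increasing and positive on the tail, and if $v$ decayed, the asymptotics of Lemma~\ref{lem:decay} and Lemma~\ref{lem:dichotomy} would force $Q\to0$. The only difference is in the exponent bookkeeping: you bound $\frac g2-(1-\varepsilon)m-\sqrt q$ directly via $m\ge\frac g2+1$, whereas the paper first uses $\frac g2+\sqrt q\ge(1-\varepsilon)m$ (from $g'\to0$) to fall back on the estimate at the end of the proof of Lemma~\ref{lem:decay}.
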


\begin{proof}
    On $[r_{k+1},\infty)$ we have $u\in(0,1)$, so $f(u)<0$, and $v<0$. Hence $Q'=2\varphi'e^Gf(u)v>0$ and
    \[
        Q(r) \;\ge\; Q(r_{k+1}) \;>\; 0 \qquad (r\ge r_{k+1}).
    \]
    Suppose $v$ decayed. Then $v,v'$ decay like $e^{-G/2-\int\sqrt{q}}$ up to bounded factors (Lemma~\ref{lem:dichotomy}, using $V_-'/V_-=-\frac{g}{2}-\sqrt{q}+o(1)$ and (G5)), while $u,u'$ decay like $e^{-(1-\varepsilon)\int m}$ with $|u'|\le 2mu$ (Lemma~\ref{lem:decay}). Since $m=\frac{g}{2}+\sqrt{1+g^2/4}$ and $\sqrt{q}=\sqrt{1+g^2/4+g'/2}$ with $g'\to 0$, one has $\frac{g}{2}+\sqrt{q}\ge(1-\varepsilon)m$ for large $r$, so
    \[
        |Q| \;\le\; e^G\Big[\varphi|u'v'|+\varphi|f(u)v|+(\gamma-1)|u'v|\Big]
        \;\le\; C\,\varphi\,(1+m^2)\,e^Ge^{-2(1-\varepsilon)\int_R^r m}
        \;\longrightarrow\; 0,
    \]
    exactly as in the last step of the proof of Lemma~\ref{lem:decay}: $2(1-\varepsilon)m-g\ge c(1+g)$ for $\varepsilon<\frac14$, $\varphi\le r$ and $m$ is bounded. This contradicts $Q\ge Q(r_{k+1})>0$. Thus $v=c_+V_+(1+o(1))$ with $c_+\ne 0$, and $c_+<0$ because $v<0$ on the tail.
\end{proof}

\begin{remark}\label{rem:nondegeneracy}
    No condition on $\varphi''$ is used for the monotonicity: $Q$ itself is increasing on $[r_{k+1},\infty)$ as soon as $v$ has the sign opposite to $u$. (The concavity of $\varphi$ is used only through the bound $\varphi\le r$ in the step $|Q|\to0$, and even there it can be avoided: $\varphi e^{-G}=(\gamma-1)\cJ$ is decreasing, so $\varphi\le Ce^G$ on the tail, and the exponent $2g-2(1-\varepsilon)m=-2/m+2\varepsilon m$ is still negative once $\varepsilon<1/\sup m^2$.) A pointwise condition of the type $(p-1)\varphi'|u'|\ge|\varphi''||u|$ on the tail, which would make $T_1$ nondecreasing, is not needed. On $\Hyp^3$ such a condition fails for $p<2\sqrt{2}-1\approx 1.83$, whereas Proposition~\ref{prop:nondegeneracy} covers the whole range $1<p<5$. If $v$ decays with the sign opposite to $u$, $Q$ is still eventually monotone and tends to $0$, but from below, against the sign given by this condition, so the condition rules it out.
\end{remark}

\begin{corollary}[Divergence]\label{cor:divergence}
    Let $u$ be a bound state at which the variation does not decay and has eventually the sign $-\sigma$, $\sigma\defeq \sgn u$ on the tail (in the applications, $v$ has a last zero $\tau_{k+1}\in(c_k,r_{k+1})$ after which it does not vanish; the location of this zero is not used here). Then
    \[
        |v(r)|\to\infty, \qquad
        \sgn v=\sgn v'=-\sigma \quad\text{eventually.}
    \]
\end{corollary}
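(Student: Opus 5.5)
The plan is to read this off Lemma~\ref{lem:dichotomy}. First I rule out alternative~(a) there. By hypothesis the variation does not decay, so alternative~(b) of Lemma~\ref{lem:dichotomy} holds. Then $v=c_+V_+(1+o(1))$ with $c_+\neq0$ and $V_+\to\infty$, hence $|v(r)|\to\infty$. The same alternative says that $v$ and $v'$ are eventually strictly of one sign, with $\sgn v=\sgn c_+$. It also gives
\[
    \frac{v'}{v}=-\frac g2+\sqrt q+o(1),
\]
and this is bounded below by a positive constant for large $r$.

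Next I fix the signs. By assumption $v$ eventually has the sign $-\sigma$, so $\sgn c_+=-\sigma$. Since $v'/v>0$ for large $r$, $v'$ has the same sign as $v$, namely $-\sigma$. Equivalently, $|v|$ is eventually strictly increasing, which is consistent with $|v|\to\infty$.

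The normalization in Lemma~\ref{lem:dichotomy} assumed $u>0$ on the tail, that is $\sigma=1$. If $u<0$ on the tail, I apply the lemma to $(-u,-v)$. This is legitimate because $f$ is odd, so $f'(u)$ is even and the variation equation is invariant under $(u,v)\mapsto(-u,-v)$. The conclusion then transfers with $\sigma=-1$.

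I expect no real obstacle here. The only point that needs care is the remark about the location of $\tau_{k+1}$. It does not enter the argument: all that is used is that $v$ is eventually of one sign and does not decay, and a last zero after which $v$ does not vanish supplies the first of these.
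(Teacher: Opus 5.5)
Your argument is correct and is the paper's proof: non-decay selects alternative (b) of Lemma~\ref{lem:dichotomy}, the eventual sign of $v$ fixes $\sgn c_+=-\sigma$, and the positive lower bound on $v'/v$ gives $\sgn v'=\sgn v$. Your remark on the normalization is harmless but not needed, since the linearized equation depends on $u$ only through the even function $f'(u)$, so the lemma applies verbatim whatever the sign of $u$ on the tail.
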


\begin{proof}
    By hypothesis $v$ has eventually the sign $-\sigma$ and does not decay, so Lemma~\ref{lem:dichotomy} gives alternative (b) with $\sgn c_+=-\sigma$.
\end{proof}

The hypotheses of Corollary~\ref{cor:divergence} are verified, for $k=0$, by Theorem~\ref{thm:first-phase}(iii) and Proposition~\ref{prop:nondegeneracy} (with $\tau_1$) and, for $k\ge1$ under (H) and (R), by Theorem~\ref{thm:phase}(i),(v). The corollary itself uses only Lemma~\ref{lem:dichotomy}.

%% file: sections/19-assembly.tex
\section{Monotonicity, jumps and the staircase}\label{sec:staircase}

This is the last step. For every bound state we now know that $v$ grows at infinity with the sign opposite to $u$: for the ground state by Sections~\ref{sec:variation} and~\ref{sec:tail}, and for nodal bound states, under (H) and (R), by the induction of Sections~\ref{sec:transition}--\ref{sec:renewal-below}. This section turns that sign into the staircase. We first show how the zeros of $u$ move as $\alpha$ grows, and that $N$ jumps by exactly one at a bound state. We then show that the sign of $v$ at infinity, checked only at the bound states, is enough by itself (Theorem~\ref{thm:tail-criterion}), and we prove the theorems stated in Section~\ref{sec:uniqueness}.

\begin{lemma}[Persistence and monotone motion of zeros]\label{lem:zero-motion}
    Assume, for the statements involving $i\ge 2$, that $v(z_i)u'(z_i)>0$ at the $i$-th zero of every solution considered.
    \begin{enumerate}[label=(\roman*)]
        \item Zeros of $r\mapsto u(r,\alpha)$ are simple, and each zero $z_i(\alpha)$ is $C^1$ in $\alpha$ on its interval of existence, with
              \[
                  z_i'(\alpha) = -\frac{v(z_i(\alpha),\alpha)}{u'(z_i(\alpha),\alpha)} < 0,
              \]
              the sign being $v(z_i)u'(z_i)>0$: for $i=1$ by Theorem~\ref{thm:first-phase}(ii), for $i\ge 2$ by hypothesis.
        \item If $u(\cdot,\alpha_0)$ has $N(\alpha_0)\ge k$ zeros, then $N(\alpha)\ge k$ for all $\alpha\ge\alpha_0$. In particular $N$ is nondecreasing on $(\alpha^*(0),\infty)$.
    \end{enumerate}
\end{lemma}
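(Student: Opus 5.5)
For part~(i), I will apply the implicit function theorem to the $C^1$ map $(r,\alpha)\mapsto u(r,\alpha)$, which is $C^1$ with $\partial_\alpha u=v$ by the dependence remark of Section~\ref{sec:well-posedness} (here $f\in C^1$). At a zero $z_i(\alpha_0)$ we have $u'(z_i)\ne0$ by simplicity, since $(u,u')\ne(0,0)$ for a nonconstant solution. So near $(z_i(\alpha_0),\alpha_0)$ the zero set is a $C^1$ graph $r=z_i(\alpha)$, and differentiating $u(z_i(\alpha),\alpha)=0$ gives $u'z_i'+v=0$, which is the formula in the statement. The sign $z_i'<0$ is then exactly the hypothesis $v(z_i)u'(z_i)>0$, or Theorem~\ref{thm:first-phase}(ii) when $i=1$.

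Two points remain for (i). First, the $i$-th zero of $u(\cdot,\alpha)$ near $\alpha_0$ is the branch through $z_i(\alpha_0)$. This holds because simple zeros persist and no new zero can appear in $[0,z_i(\alpha_0)+\delta]$ for $\alpha$ close to $\alpha_0$, by $C^1$ convergence on compacts. Second, the branch extends along the maximal interval on which the $i$-th zero exists. Since $z_i$ decreases as $\alpha$ increases, the zero stays in a bounded region going forward and cannot escape to infinity. It also cannot reach $r=0$, because $u(0)=\alpha>0$. Hence, going upward in $\alpha$, the interval of existence is of the form $[\alpha_0,\infty)$ intersected with the parameter range where the hypothesis holds.

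For part~(ii), fix $j\le k$ and let $\bar\alpha=\sup\{\alpha\ge\alpha_0: u(\cdot,\alpha)\text{ has at least } j \text{ zeros}\}$, and suppose $\bar\alpha<\infty$. On $[\alpha_0,\bar\alpha)$ the $j$-th zero is decreasing by (i), so it stays in $(0,z_j(\alpha_0)]$. Take limits $\alpha\uparrow\bar\alpha$. The zeros $z_1<\dots<z_j$ stay in a compact set, and their consecutive gaps stay bounded below by the lower bound on passage times (Lemma~\ref{lem:passage-times}), or more simply by uniform bounds on $u'$ together with the fact that each gap contains a point where $|u|>\alpha_*$. By continuous dependence they converge to $j$ distinct zeros of $u(\cdot,\bar\alpha)$, and these are simple. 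Simple zeros persist for $\alpha$ slightly above $\bar\alpha$, which contradicts the definition of $\bar\alpha$. A cleaner route is lower semicontinuity: show the set $\{\alpha\ge\alpha_0: N(\alpha)\ge k\}$ is both open (by persistence of simple zeros) and closed in $[\alpha_0,\infty)$ (by the compactness just described), hence equal to the whole half-line. Monotonicity of $N$ on $(\alpha^*(0),\infty)$ then follows, since every nodal solution has $\alpha>\alpha^*(0)$ (Proposition~\ref{prop:pohozaev-positive}) and at every zero the hypothesis holds by Theorem~\ref{thm:critical-form}(ii).

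The main obstacle is the closedness step: ruling out that two zeros collide, or that the $j$-th zero degenerates, as $\alpha\uparrow\bar\alpha$. Collision is impossible because between consecutive zeros $E>0$ and $|u|$ exceeds $\alpha_*$ (Proposition~\ref{prop:critical-points}(iii)), while $|u'|$ is uniformly bounded on compact $\alpha$-sets. So the zeros are uniformly separated, and their limits are distinct simple zeros.
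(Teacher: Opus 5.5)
Your proposal is correct and is essentially the paper's proof. You continue the first $k$ zeros upward from $\alpha_0$, use $z_i'<0$ to bound them above and $u(0,\alpha)=\alpha>0$ to keep them away from the origin, and show that at a putative endpoint the monotone limits are distinct simple zeros of the limiting solution, which then persist beyond it. Your separation argument is a quantitative substitute for the paper's Rolle/double-zero argument and works just as well: consecutive zeros are at least $2\alpha_*/S$ apart, by Proposition~\ref{prop:critical-points}(iii) and $|u'|\le S$.

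Two small repairs are needed. First, $\bar\alpha$ must be the right end of the connected component of $\{\alpha\ge\alpha_0: N(\alpha)\ge j\}$ that contains $\alpha_0$. The literal supremum is $+\infty$, because $N(\alpha)\to\infty$. Likewise your ``closedness'' is only justified along that component, where the monotone continuation supplies the compactness. Second, Lemma~\ref{lem:passage-times} gives upper, not lower, bounds on passage times, so drop that citation and keep your ``more simply'' argument.
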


\begin{proof}
    (i) Simplicity means $(u,u')\ne(0,0)$, so the implicit function theorem applies at each $z_i$, and differentiating $u(z_i(\alpha),\alpha)=0$ gives the formula.

    (ii) Continue the first $k$ zeros in $\alpha$ from $\alpha_0$ upward. Along the continuation $z_i(\alpha)$ remains the $i$-th zero of $u(\cdot,\alpha)$: zeros are simple, so by the implicit function theorem no new zero can appear in the interior of a bounded interval, and none can enter through $r=0$, since $u>0$ near $0$ uniformly for $\alpha$ in compact sets. Hence (i) applies to each branch. On the maximal interval of continuation each $z_i(\alpha)$ is strictly decreasing by (i). The zeros stay distinct, as two of them colliding would produce a double zero (by Rolle's theorem $u'$ would vanish between them, and in the limit $(u,u')=(0,0)$), and are bounded: $z_i(\alpha)\le z_i(\alpha_0)$, and $z_1(\alpha)\ge\underline{z}>0$ because $u(0,\alpha)=\alpha\ge\alpha_0$ and, on compact sets of $\alpha$, $u>0$ on a fixed neighborhood of $0$ by continuous dependence. If the continuation stopped at some $\alpha_\dagger$, the limits $z_i(\alpha_\dagger^-)$ would exist, being monotone and bounded, and would be zeros of $u(\cdot,\alpha_\dagger)$ by continuity, distinct and simple, so the continuation would extend past $\alpha_\dagger$, a contradiction. Hence all $k$ zeros persist for every $\alpha\ge\alpha_0$.
\end{proof}

Under (H) and (R), the hypothesis of Lemma~\ref{lem:zero-motion} for $i\ge2$ is provided by Theorem~\ref{thm:critical-form}(ii). The lemma itself uses only Theorem~\ref{thm:first-phase}(ii).

\begin{lemma}[Jumping at bound states]\label{lem:jump}
    Let $\hat\alpha$ be a bound state with $k\ge 0$ nodes at which the variation is dominant at infinity with the sign opposite to $u$ (Corollary~\ref{cor:divergence}), and assume $v(z_j)u'(z_j)>0$ at the zeros of the nodal solutions with $\alpha<\hat\alpha$ near $\hat\alpha$ (for $k=0$ both hypotheses are provided by Theorem~\ref{thm:first-phase}(ii),(iii), Proposition~\ref{prop:nondegeneracy} and Corollary~\ref{cor:divergence}, under (G1)--(G6) and $(\Phi)$ only).
    Then there is $\varepsilon>0$ such that
    \[
        N(\alpha)=k \quad (\hat\alpha-\varepsilon<\alpha\le\hat\alpha), \qquad
        N(\alpha)=k+1 \quad (\hat\alpha<\alpha<\hat\alpha+\varepsilon),
    \]
    and for $\alpha\in(\hat\alpha,\hat\alpha+\varepsilon)$ the solution is trapped in the sense of Proposition~\ref{prop:classification}(b).
\end{lemma}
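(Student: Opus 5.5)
The proof splits into three parts: the left side $\alpha\le\hat\alpha$, the appearance of the new zero to the right, and trapping beyond it.

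\emph{Left side.} By Lemma~\ref{lem:zero-motion}(ii), $N(\alpha)\ge k$ for $\alpha\ge\hat\alpha$. For $\alpha<\hat\alpha$ near $\hat\alpha$, the $k$ zeros of $u(\cdot,\hat\alpha)$ persist by continuous dependence, so $N(\alpha)\ge k$ there as well.
\begin{itemize}
\item For $k\ge1$ we also get $N(\alpha)\le k$ on the left. A $(k+1)$-th zero $z_{k+1}(\alpha)$ would escape to infinity as $\alpha\uparrow\hat\alpha$. By the hypothesis and Lemma~\ref{lem:zero-motion}(i), every zero of a nodal solution with $\alpha<\hat\alpha$ moves toward the origin as $\alpha$ increases. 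Continuing $z_{k+1}$ upward from some $\alpha_0<\hat\alpha$, the argument of Lemma~\ref{lem:zero-motion}(ii) keeps it bounded by $z_{k+1}(\alpha_0)$ up to $\hat\alpha$. It would therefore give a $(k+1)$-th zero of $u(\cdot,\hat\alpha)$, which does not exist. The continuation stops only at a limit that is a simple zero, and the upper bound prevents escape to infinity.
\item For $k=0$ there are no earlier zeros, and the same argument applies to $z_1$ with Theorem~\ref{thm:first-phase}(ii).
\end{itemize}
Hence $N(\alpha)=k$ on $(\hat\alpha-\varepsilon,\hat\alpha]$.

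\emph{Right side: a new zero appears.} Normalize $u>0$ on the tail. By Corollary~\ref{cor:divergence}, $v(r,\hat\alpha)\to-\infty$. Fix $R_*$ large, beyond $r_{k+1}$, with $v(R_*,\hat\alpha)\le -L$ for $L$ huge compared with $u(R_*,\hat\alpha)$ (which is exponentially small by Lemma~\ref{lem:decay}), and with $v'<0$ there. For $\alpha=\hat\alpha+h$ with $h$ small,
\[
u(R_*,\alpha)=u(R_*,\hat\alpha)+hv(R_*,\hat\alpha)+O(h^2),
\]
so choosing $h\sim 2u(R_*,\hat\alpha)/L$ makes $u(R_*,\alpha)<0$. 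Doing this along a sequence $R_*\to\infty$ shows that for every small $h>0$ a new zero appears at a large radius. I plan to argue directly for all small $h$: $u(\cdot,\alpha)-u(\cdot,\hat\alpha)\approx hv$ on compact sets, and $v$ is large and negative there, so past the radius where $|hv|$ exceeds $u$ the solution must cross zero. Before that radius the linear regime persists, since $u(\cdot,\hat\alpha)$ and $hv$ are both small there. The first $k$ zeros and the sign structure on $[0,R_*]$ are stable, so $N(\alpha)\ge k+1$.

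\emph{Right side: exactly one new zero, then trapping.} This is the main obstacle, and I would handle it with the dissipation mechanism of Proposition~\ref{prop:classification}. At the new zero $r_{k+1}(\alpha)$, which is very large, the energy is exponentially small. It is bounded by the energy of $u(\cdot,\hat\alpha)$ near that radius plus $O(h\cdot\text{growth of }v)$, and both are $\lesssim e^{-c\,r_{k+1}}$ because the crossing happens where $h|v|\approx u$. To complete another oscillation after $r_{k+1}$, the solution would have to perform a band crossing on its way to a critical point beyond $-\alpha_*$. By Lemma~\ref{lem:band-dissipation} this costs at least $2\fc_1\fc_2(\gamma-1)/\sup J$. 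Lemma~\ref{lem:passage-times} with \eqref{eq:ascent-time} bounds $\sup J\le r_{k+1}+T_0(1+\log^+(1/E))$, so the cost is of order $1/r_{k+1}$. An exponentially small energy cannot pay a cost of order $1/r_{k+1}$, so $E$ becomes negative before any critical point with $|u|>\alpha_*$. Then Proposition~\ref{prop:trapping} traps the solution, and it has no further zero. This is the analogue of Remark~\ref{rem:no-gaps-trapped}. The delicate point is making the energy estimate at $r_{k+1}(\alpha)$ uniform in $h$; I would get it from the exponential decay bounds of Lemma~\ref{lem:decay} and the WKB asymptotics of $V_+$ in Lemma~\ref{lem:dichotomy}. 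Together these give $N(\alpha)=k+1$ on $(\hat\alpha,\hat\alpha+\varepsilon)$, with trapping.
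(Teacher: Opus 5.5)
Your left side is the paper's argument: continue a hypothetical $(k+1)$-th zero up to $\hat\alpha$ using Lemma~\ref{lem:zero-motion}. One slip: you cannot invoke that lemma for $\alpha\ge\hat\alpha$, where the hypothesis on $v(z_j)u'(z_j)$ is not given, but simplicity of the zeros already gives $N\ge k$ on both sides.

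The first gap is on the right of $\hat\alpha$. Both the extra zero for \emph{every} small $h>0$ and the uniform smallness of the energy there rest on $u(\cdot,\hat\alpha+h)-\hat u\approx hv$ on $[0,\zeta(\alpha)]$, with $\zeta(\alpha)\to\infty$. Continuous dependence gives this only up to $o(h)$ on compact sets, and for $1<p<2$ your $O(h^2)$ is not justified, since $f$ is not $C^2$ at $0$. Your sequence $R_*\to\infty$ yields only a sequence $h_j\to0$, and you have no monotonicity of $N$ to the right of $\hat\alpha$ to fill in the rest.

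The missing idea is to work with the exact difference $w:=u(\cdot,\alpha)-\hat u$. It solves $w''+gw'+f'(\bar u)w=0$, with $\bar u$ between $u(\cdot,\alpha)$ and $\hat u$. Fix $\bar r$ beyond $c_k$ and the last zero of $v$, with $\hat u(\bar r)<u_\dagger/2$ for $u_\dagger:=p^{-1/(p-1)}$, and with $v(\bar r)<0$, $v'(\bar r)<0$. Then $w(\bar r)<0$ and $w'(\bar r)<0$ for $\alpha>\hat\alpha$ close. While $u(\cdot,\alpha)>0$ and $w<0$ one has $0<\bar u<\hat u<u_\dagger$, hence $f'(\bar u)\in(-1,0)$. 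At a first zero of $w'$ one would then have $w''=-f'(\bar u)w<0$, which is impossible. So $w$ decreases, $u(\cdot,\alpha)\le\hat u+w(\bar r)$ must vanish at some $\zeta(\alpha)\to\infty$, and $|w|\le\hat u(\zeta)$ on $[\bar r,\zeta]$. Now integrate $(e^Gw')'=-e^Gf'(\bar u)w$ using Lemma~\ref{lem:phi-consequences}(iii) and $|w'(\bar r)|\le C_{\bar r}|w(\bar r)|$. This gives $E_\alpha(\zeta)\le C\zeta^2\hat u(\zeta)^2$ uniformly in $h$, with no WKB analysis.

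The second gap is in the dissipation step. The bound \eqref{eq:ascent-time} controls $c'-\zeta$ by $T_0\big(1+\log^+(1/E_\alpha(c'))\big)$, with the energy at the \emph{end} of the ascent. That energy is only known to be nonnegative. Without a lower bound on it, neither $\sup J$ nor the cost $2\fc_1\fc_2(\gamma-1)/\sup J$ is controlled, and the exponential smallness of $E_\alpha(\zeta)$ gives no contradiction.

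The paper supplies this bound through the Pohozaev estimate. Assume a further zero $\zeta'$, or a bound state with $k+1$ nodes. Then $E_\alpha\ge0$ also on the descent after the first critical point $c'>\zeta$. Since $F(u(c'))=E_\alpha(c')\le E_\alpha(\zeta)$ is tiny, $|u|<\alpha^*(0)$ from $c'$ on, so $P$ is nonincreasing there. It is positive at the end, by \eqref{eq:pohozaev-zero}, respectively by Proposition~\ref{prop:pohozaev-positive}. Lemma~\ref{lem:pohozaev-lower}, applied to the band crossing after $c'$, then gives $E_\alpha(c')\ge\fc_3\fc_2/(2c')$. With this, \eqref{eq:ascent-time} yields $c'\le2\zeta$, the band crossing on the ascent forces $E_\alpha(c')<0$, and trapping follows from Proposition~\ref{prop:classification}. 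This route excludes only a further zero or a bound state, which is all that is needed, not your stronger claim that $E$ turns negative before any critical point beyond $\alpha_*$. Alternatively, you can bypass this step by citing Lemma~\ref{lem:no-gaps} and Remark~\ref{rem:no-gaps-trapped}, which apply because (G3) follows from $(\Phi)$.
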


The statement is superseded by Lemma~\ref{lem:tail-jump} below, which drops the hypothesis on the nodal solutions. The proof is given in full because Lemma~\ref{lem:tail-jump} and Theorem~\ref{thm:unconditional}(ii) reuse it from the paragraph \emph{The extra zero above $\hat\alpha$} on.

\begin{proof}
    Write $\hat u:=u(\cdot,\hat\alpha)$, normalized with $\hat u>0$ on the tail, and $v=\partial_\alpha u(\cdot,\hat\alpha)$. Let $g\le M_0$ on $[R_1,\infty)$, and let $T_0,\fc_1,\fc_2,\fc_3$ be the constants of Lemmas~\ref{lem:passage-times}, \ref{lem:band-dissipation}, \ref{lem:pohozaev-lower}, computed for a compact range of $\alpha$ around $\hat\alpha$ (they depend on $\alpha$ only through $S$).

                    \emph{Below and at $\hat\alpha$.} The $k$ zeros of $\hat u$ are simple, so for $\alpha$ near $\hat\alpha$, on either side, $u(\cdot,\alpha)$ has at least $k$ zeros near them: $N(\alpha)\ge k$. For $\alpha\le\hat\alpha$, Lemma~\ref{lem:zero-motion}(ii) gives $N(\alpha)\le N(\hat\alpha)=k$ (this is the only use of the hypothesis on $v(z_j)u'(z_j)$). Hence $N\equiv k$ on some $(\hat\alpha-\varepsilon,\hat\alpha]$.

    \emph{The extra zero above $\hat\alpha$.} By hypothesis (Corollary~\ref{cor:divergence}), $v\to-\infty$ with $v'<0$ eventually. Let $u_\dagger:=p^{-1/(p-1)}$, so that $f'<0$ on $(0,u_\dagger)$, and choose $\bar r>\max(c_k,R_1)$, beyond the last zero of $v$, with
    \[
        0<\hat u(\bar r)<\frac{u_\dagger}{2}, \qquad v(\bar r)<0, \qquad
        v'(\bar r)<0.
    \]
    For $\alpha>\hat\alpha$ close, set $w:=u(\cdot,\alpha)-\hat u$. By $C^1$ dependence on compacts, $w(\bar r)<0$ and $w'(\bar r)<0$, and $u(\cdot,\alpha)$ has exactly $k$ zeros in $[0,\bar r]$, all near those of $\hat u$. Suppose, for a contradiction, that $u(\cdot,\alpha)>0$ on $[\bar r,\infty)$, and let $\varrho_w:=\sup\{r\ge\bar r:\ w<0\text{ on }[\bar r,r)\}$. The difference $w$ solves $w''+gw'+f'(\bar u)w=0$, where $\bar u$ is an intermediate value in $(u(\cdot,\alpha),\hat u)$. On $[\bar r,\varrho_w)$ we have $0<u(\cdot,\alpha)<\hat u<u_\dagger$, so $f'(\bar u)\in(-1,0)$.

    Suppose that $w'$ vanishes at some first $\varsigma\in(\bar r,\varrho_w)$. Then $w''(\varsigma)=-f'(\bar u)w(\varsigma)<0$, so $w'$ would be decreasing through $0$. This is impossible at a first zero of $w'$ reached with $w'<0$ before it. Hence $w'<0$ on $[\bar r,\varrho_w)$, so $w$ is decreasing and $\varrho_w=\infty$. This gives $u(r,\alpha)\le\hat u(r)+w(\bar r)\to w(\bar r)<0$ as $r\to\infty$, which contradicts positivity.

    Hence the supposition fails: there is a first $\zeta=\zeta(\alpha)>\bar r$ with $u(\zeta,\alpha)=0$, distinct from the $k$ persisted zeros, so $N(\alpha)\ge k+1$. Moreover, $\zeta(\alpha)\to\infty$ as $\alpha\downarrow\hat\alpha$, and the same argument shows that $w$ is negative and decreasing on $[\bar r,\zeta]$, with $w(\zeta)=-\hat u(\zeta)$.

    \emph{The energy at the extra zero is exponentially small.} Since $w$ is decreasing and negative on $[\bar r,\zeta]$, $|w|\le|w(\zeta)|=\hat u(\zeta)$.
    Integrating $(e^Gw')'=-e^Gf'(\bar u)w$ from $\bar r$ to $\zeta$, with $|f'(\bar u)|\le 1$ and Lemma~\ref{lem:phi-consequences}(iii),
    \[
        |w'(\zeta)| \;\le\; e^{G(\bar r)-G(\zeta)}|w'(\bar r)|
        +\hat u(\zeta)\,e^{-G(\zeta)}\int_{\bar r}^{\zeta}e^G
        \;\le\; |w'(\bar r)|+(\zeta-\bar r)\,\hat u(\zeta).
    \]
    At the fixed point $\bar r$ we have, as $\alpha\to\hat\alpha$,
    \[
        w(\bar r)=(\alpha-\hat\alpha)\big(v(\bar r)+o(1)\big),
        \qquad
        w'(\bar r)=(\alpha-\hat\alpha)\big(v'(\bar r)+o(1)\big).
    \]
    Since $v(\bar r)\ne 0$, it follows that $|w'(\bar r)|\le C_{\bar r}|w(\bar r)|\le C_{\bar r}\hat u(\zeta)$ for $\alpha$ close to $\hat\alpha$, with $C_{\bar r}:=2|v'(\bar r)/v(\bar r)|$. By Lemma~\ref{lem:decay}, $|\hat u'(\zeta)|\le 2m(\zeta)\hat u(\zeta)\le 2M_1\hat u(\zeta)$, where $M_1:=\sup_{[R_1,\infty)}m<\infty$ by (G5). Together with the bound on $|w'(\zeta)|$ above, this gives
    \[
        |u'(\zeta,\alpha)|\le(2M_1+C_{\bar r}+\zeta)\,\hat u(\zeta).
    \]
    Finally, the upper bound of Lemma~\ref{lem:decay} with $\varepsilon=\frac14$ and $m\ge 1$ gives $\hat u(\zeta)^2\le Ce^{-\frac32(\zeta-R)}$, so that
    \begin{equation}\label{eq:extra-zero-energy}
        E_\alpha(\zeta)=\tfrac12u'(\zeta,\alpha)^2 \;\le\; C\,\zeta^2\,\hat u(\zeta)^2
        \;\le\; C\,\zeta^2e^{-(\zeta-R)} \eqdef \eta(\zeta),
    \end{equation}
    where $E_\alpha$ denotes the energy of $u(\cdot,\alpha)$ and $C,R$ depend only on $\hat u$ and $\bar r$.

    \emph{No second extra zero, and trapping.} Suppose that $u(\cdot,\alpha)$ has a further zero $\zeta'>\zeta$, or that it is a bound state with $k+1$ nodes. In both cases $E_\alpha\ge 0$ on $[\zeta,\infty)$. (In the first case $E_\alpha\ge E_\alpha(\zeta')\ge 0$ up to $\zeta'$, and we only use $[\zeta,\zeta']$.) By Proposition~\ref{prop:critical-points} there is a first critical point $c'>\zeta$, with $|u(c')|>\alpha_*$. After $c'$ the function $|u|$ decreases toward the next zero (respectively toward $0$), performing a band crossing $J'\subset(c',\zeta')$ (respectively $J'\subset(c',\infty)$). Since $F(|u(c')|)=E_\alpha(c')\le E_\alpha(\zeta)\le\eta(\zeta)$, for $\zeta$ large we have $|u(c')|<\alpha^*(0)$. Hence $|u|<\alpha^*(0)$ on $[c',\zeta']$ (respectively on $[c',\infty)$), and therefore $P$ is nonincreasing there. Moreover, $P(\zeta')>0$ by \eqref{eq:pohozaev-zero} (respectively $P>0$ on $(0,\infty)$ by Proposition~\ref{prop:pohozaev-positive}). Lemma~\ref{lem:pohozaev-lower} with $a=c'$ and $J=J'$ gives $E_\alpha(c')\ge \fc_3\fc_2/(2c')$. Then \eqref{eq:ascent-time} on $[\zeta,c']$ yields
    \[
        c'-\zeta \;\le\; T_0\Big(1+\log\frac{2c'}{\fc_3\fc_2}\Big), \qquad\text{hence}\qquad
        c'\le 2\zeta \quad\text{for } \zeta\ge\zeta_1,
    \]
    with $\zeta_1$ depending only on $T_0,\fc_2,\fc_3$. Finally, the ascent from $\zeta$ to $c'$ performs a band crossing $J\subset(\zeta,c')$, and Lemma~\ref{lem:band-dissipation} gives
    \[
        E_\alpha(c') \;\le\; E_\alpha(\zeta)-2\fc_1\fc_2\frac{\gamma-1}{c'}
        \;\le\; \eta(\zeta)-\fc_1\fc_2\frac{\gamma-1}{\zeta} \;<\; 0
        \qquad\text{for } \zeta\ge\zeta_2,
    \]
    since $\eta(\zeta)\zeta\to 0$. This contradicts $E_\alpha(c')\ge 0$. Choose $\varepsilon$ so small that $\zeta(\alpha)\ge\max(\zeta_1,\zeta_2)$ for $\alpha\in(\hat\alpha,\hat\alpha+\varepsilon)$. Then $u(\cdot,\alpha)$ has no zero after $\zeta$ and is not a bound state, so $N(\alpha)=k+1$ and, by Proposition~\ref{prop:classification}, $u(\cdot,\alpha)$ is trapped.
\end{proof}

\begin{remark}\label{rem:jump}
    The argument is that of Proposition~\ref{prop:classification}, with the estimate \eqref{eq:extra-zero-energy} for the energy at the extra zero. Alternatively, the last step follows from Part~I: Lemma~\ref{lem:no-gaps}, which needs only (G3), a consequence of $(\Phi)$ (Lemma~\ref{lem:phi-consequences}(i)), and Remark~\ref{rem:no-gaps-trapped} give $N(\alpha)\le k+1$ and trapping for $\alpha$ near $\hat\alpha$ directly.
\end{remark}

\subsection{The tail criterion: uniqueness from the sign of the variation at
    infinity}\label{sec:tail-criterion}
The phase analysis of Sections~\ref{sec:transition}--\ref{sec:renewal-below} is used in the proof of the main theorems through two of its consequences only. The first is the sign $v(z_j)u'(z_j)>0$ at the zeros of the nodal solutions near a bound state (Lemma~\ref{lem:zero-motion}, hence the monotonicity of $N$). The second is the dominance of $v$ at infinity, with the sign opposite to $u$, at the bound state itself (Corollary~\ref{cor:divergence}). We show here that the second consequence alone, at the bound states alone, suffices for the whole staircase, and that it is the sign of a single number.

For a $C^2$ function $y$ write $M[y]:=e^G(u'y-uy')$, so that $M=M[v]$. With $Lz:=z''+gz'+f'(u)z$ one has
\[
    M[y]'=e^G\big(u''y-uy''+g(u'y-uy')\big)=e^G\big(y(u''+gu')-u(y''+gy')\big),
\]
that is
\begin{equation}\label{eq:M-y-prime}
    M[y]'=e^G\big(y\,Lu-u\,Ly\big)=e^G\big((p-1)|u|^{p-1}u\,y-u\,Ly\big),
\end{equation}
since $Lu=f'(u)u-f(u)=(p-1)|u|^{p-1}u$ for the model nonlinearity.

\begin{lemma}[The tail sign]\label{lem:tail-sign}
    Let $u$ be a bound state with $k\ge 0$ nodes and $v=\partial_\alpha u$. Then the limit
    \begin{equation}\label{eq:M-infinity}
        M(\infty):=\lim_{r\to\infty}M(r)=(p-1)\int_0^\infty e^G|u|^{p-1}u\,v\,dr
    \end{equation}
    exists, the integral converging absolutely, and the following are equivalent:
    \begin{enumerate}[label=(\roman*)]
        \item \emph{(the tail sign)} $v$ is dominant at infinity, with $\sgn v=-\sgn u$ eventually.
        \item $M(\infty)>0$.
        \item $v$ is dominant, and its number $N_v$ of zeros in $(0,\infty)$ satisfies $N_v\equiv k+1\pmod{2}$.
    \end{enumerate}
    Moreover $M(\infty)=0$ if and only if $v$ decays, and $M(\infty)<0$ if and only if $v$ is dominant with $\sgn v=\sgn u$ eventually.
\end{lemma}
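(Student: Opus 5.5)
The plan is to work with the formula \eqref{eq:M-y-prime} applied to $y=v$. Since $Lv=0$, it gives $M'=(p-1)e^G|u|^{p-1}uv$. Because $M(0^+)=0$ (bounded bracket times $e^G\to0$), integrating from $0$ yields $M(r)=(p-1)\int_0^re^G|u|^{p-1}uv$.

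\emph{Absolute convergence.} Lemma~\ref{lem:decay} gives $u\le C_\varepsilon e^{-(1-\varepsilon)\int m}$. Lemma~\ref{lem:dichotomy} gives $|v|\le C|V_+|\le Cq^{-1/4}e^{-G/2+\int\sqrt q}$. The integrand is therefore bounded by
\[
e^{G}\,u^p|v|\le C\exp\Big(\int\big(g/2+\sqrt q-p(1-\varepsilon)m\big)\Big).
\]
Since $\sqrt q=m-g/2+o(1)$ (as $g'\to0$), the exponent rate is $m-p(1-\varepsilon)m+o(1)=-(p-1-p\varepsilon)m+o(1)$. This is negative for small $\varepsilon$ because $m\ge1$. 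Hence the integral converges absolutely and $M(\infty)$ exists.

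\emph{Sign of $M(\infty)$ in each alternative.} First, if $v$ decays, then $v,v'=O(e^{-G/2-\int\sqrt q})$ and $u,u'=O(e^{-(1-\varepsilon)\int m})$, with $|u'|\le2mu$. The quantity $e^G(u'v-uv')$ is then bounded by $e^{\int(g/2-\sqrt q-(1-\varepsilon)m)}$, and the rate $-(2-\varepsilon)m+g+o(1)=-\sqrt{g^2+4}+\varepsilon m+o(1)<0$. So $M(\infty)=0$.

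Second, if $v$ is dominant, I read $M$ off the asymptotics. With $u'=-mu(1+o(1))$ and $v'/v=\sqrt q-g/2+o(1)=1/m+o(1)$ (using $q=(m-g/2)^2+o(1)$ and $m-g=1/m$), we get
\[
M=e^Guv\big(u'/u-v'/v\big)=-e^Guv\,(m+1/m+o(1)).
\]
Thus $\sgn M=-\sgn(uv)$ eventually, while $|M|$ has a limit. It remains to show $M(\infty)\neq0$. Suppose $M(\infty)=0$. Then $|e^Guv|\to0$, and $M(r)=-\int_r^\infty M'$ is bounded by $\int_r^\infty e^Gu^p|v|$. Both sides behave like $e^Guv$ times an exponential rate: $M\sim e^Guv$ while the tail integral is $\lesssim e^Gu^p|v|/(\text{rate})$, which is $o(e^Gu|v|)$ since $u^{p-1}\to0$. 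This contradicts $|M|\ge c\,e^Gu|v|$. More simply, $M'=(p-1)e^Gu^pv$ has the sign of $uv$ eventually, so $M$ is monotone toward $M(\infty)$ while carrying the opposite sign $-\sgn(uv)$. A function moving monotonically in the direction of $\sgn(uv)$ and tending to $0$ would have to carry the sign $-\sgn(uv)$ strictly before the limit, which is consistent. So this monotonicity argument alone fails, and I will use the quantitative ratio argument above. Concretely, $|M(r)|\ge c\,e^Guv$ while $|M(\infty)-M(r)|\le Cu^{p-1}(r)\,e^Guv$, and this forces $M(\infty)\ne0$ with the sign $-\sgn(uv)$. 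Writing $\sgn u=\sigma$ on the tail: $M(\infty)>0$ iff $\sgn v=-\sigma$, and $M(\infty)<0$ iff $\sgn v=\sigma$. Combined with the decay case, this proves (i)$\Leftrightarrow$(ii) and the final trichotomy.

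\emph{Parity, (i)$\Leftrightarrow$(iii).} We have $v(0)=1>0$, and $\sgn u$ on the tail equals $(-1)^k$ since $u(0)>0$ and all zeros are simple. The zeros of $v$ are simple, being zeros of a nontrivial solution of a second-order linear ODE. They are finite in number when $v$ is dominant, since $v$ is eventually of one sign. Hence the eventual sign of $v$ is $(-1)^{N_v}$. Then $\sgn v=-\sgn u$ iff $(-1)^{N_v}=(-1)^{k+1}$.

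The main obstacle is the step showing $M(\infty)\ne0$ in the dominant case: one needs the sharp exponential matching between $M$ and its tail integral, in which the decay of $u^{p-1}$ supplies the needed gap.
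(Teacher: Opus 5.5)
Your proof is correct, but it reaches the key step, $M(\infty)\ne0$ with the right sign in the dominant case, by a different route from the paper.

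\textbf{The paper's route.} It applies Olver's theorem a second time, now to $u$ itself, viewed as a decaying solution of $z''+gz'+(|u|^{p-1}-1)z=0$. This gives the sharp representation $u=D_\infty\tilde V_-$ with $\tilde V_-=q^{-1/4}e^{-G/2-\int\sqrt q}(1+o(1))$. Since $e^G\tilde V_-V_+=q^{-1/2}(1+o(1))$, one gets the closed form $M(\infty)=-2c_+D_\infty$, and the decaying part $c_-V_-$ of $v$ contributes nothing.

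\textbf{Your route.} You use only the ratio asymptotics already available: $u'/u=-m+o(1)$ from Lemma~\ref{lem:decay}, and $v'/v=1/m+o(1)$ from Lemma~\ref{lem:dichotomy}(b). These give two bounds.
\begin{itemize}
\item $|M(r)|\ge(2-o(1))\,e^G|u||v|(r)$, with sign $-\sgn(uv)$.
\item $|M(\infty)-M(r)|\le 2|u(r)|^{p-1}\,e^G|u||v|(r)$.
\end{itemize}
Together they force $\sgn M(\infty)=-\sgn(uv)\ne0$.

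\textbf{What to make explicit.} The second bound is the crux, and your phrase ``$\lesssim$ integrand/rate'' hides it. The logarithmic derivative of the integrand $e^G|u|^p|v|$ is
\[
g+p\,\frac{u'}{u}+\frac{v'}{v}=g-pm+\frac1m+o(1)=-(p-1)m+o(1)\le-\frac{p-1}2,
\]
using the identity $g+1/m=m$. Hence the tail integral is at most $\frac{2}{p-1}$ times its value at $r$. State this computation, and delete the abandoned monotonicity detour, which only obscures the argument.

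\textbf{What each approach buys.} Your argument is more elementary. It needs no second WKB analysis and no constant $D_\infty$, and it never uses that $e^G|u||v|$ converges. In exchange, it yields only the sign of $M(\infty)$, not its value. The paper's sharper asymptotics give the exact limit. The representation $u=D_\infty\tilde V_-$ is also reused in Step~4 of Lemma~\ref{lem:v-zeros}, which is why the paper sets it up here.
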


\begin{proof}
    By \eqref{eq:M-prime}, $M'=(p-1)e^G|u|^{p-1}uv$, and $M(0^+)=0$ (proof of Theorem~\ref{thm:first-phase}(ii)), so $M(r)=(p-1)\int_0^re^G|u|^{p-1}uv$. On the tail, $u$ is itself a decaying solution of the linear equation $z''+gz'+(|u|^{p-1}-1)z=0$, whose coefficient differs from that of $Lz=0$, namely $f'(u)=p|u|^{p-1}-1$, by the exponentially decaying $(p-1)|u|^{p-1}$ (Lemma~\ref{lem:decay}). The substitution $z=e^{-G/2}y$ turns this equation into $y''=\big(q-|u|^{p-1}\big)y$, to which \cite[Ch.~6, Thm.~2.2]{Olver1974} applies as in the proof of Lemma~\ref{lem:dichotomy}, with perturbation $-|u|^{p-1}$. Since $u$ decays, this gives $u=D_\infty\tilde V_-$ with $\tilde V_-=q^{-1/4}e^{-G/2-\int\sqrt{q}}(1+o(1))$, $u'/u=-\frac{g}{2}-\sqrt{q}+o(1)$, where $D_\infty\ne 0$ has the sign of $u$ on the tail. Write $v=c_+V_++c_-V_-$. The decaying part $c_-V_-$ contributes $e^G(u'V_--uV_-')=e^GuV_-(u'/u-V_-'/V_-)\to 0$, while
    \[
        e^G\big(u'V_+-uV_+'\big)=e^G\,u\,V_+\Big(\frac{u'}{u}-\frac{V_+'}{V_+}\Big),
        \qquad e^G\,\tilde V_-V_+=q^{-1/2}(1+o(1)),
        \qquad \frac{u'}{u}-\frac{V_+'}{V_+}=-2\sqrt{q}+o(1),
    \]
    so that $M(\infty)$ exists and $M(\infty)=-2c_+D_\infty$. Thus $M(\infty)$ has the sign of $-c_+D_\infty$, which is $-\sgn v(\infty)\sgn u(\infty)$ when $c_+\ne 0$, and vanishes if and only if $c_+=0$: this is (i)$\Leftrightarrow$(ii) and the last sentence. The integral converges absolutely since $e^G|u|^{p-1}|uv|\le Ce^{-(p-1)(1-\varepsilon)\int m}$ on the tail. For (iii): $u(0)=\alpha>0$ and $u$ has $k$ simple zeros, so $\sgn u=(-1)^k$ eventually; $v(0)=1$ and $v$ has $N_v<\infty$ zeros (all simple, in a bounded set by Lemma~\ref{lem:dichotomy}), so $\sgn v=(-1)^{N_v}$ eventually. Hence (i) is $c_+\ne 0$ together with $(-1)^{N_v}=-(-1)^k$.
\end{proof}

\begin{lemma}[At least one zero of $v$ per monotonicity interval]\label{lem:v-zeros}
    Assume (G1), (G2), (G4), (G5), (G6) and $g'<0$ on $(0,\infty)$ (which holds under $(\Phi)$: $g'\varphi=\varphi''-g\varphi'<0$). Let $u$ be a bound state with $k$ nodes and $0=c_0<c_1<\cdots<c_k$ its critical points. Then every nontrivial solution $y$ of $Ly=0$ has a zero in each of the $k+1$ sets $[c_0,c_1),\ldots,[c_{k-1},c_k),[c_k,\infty)$. In particular $N_v\ge k+1$ for the variation, and $N_v=k+1$ exactly when $v$ has one zero in each of them. These zeros interlace with the critical points of $u$ (weakly: a zero may lie at some $c_i$, $i\ge1$).
\end{lemma}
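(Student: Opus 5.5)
The plan is a Sturm comparison between $y$ and the derivative $u'$, which solves an inhomogeneous version of $Ly=0$. Differentiating \eqref{eq:ode} gives
\[
L u' = (u')''+g(u')'+f'(u)u' = -g'u'.
\]
Set $W:=e^G(u'y'-u''y)$, the weighted Wronskian of $u'$ against $y$. By the Wronskian identity of Proposition~\ref{prop:identity-table},
\[
W'=-e^Gy\,Lu' = e^G g'\,u'y .
\]
On each interval $(c_{i-1},c_i)$ the derivative $u'$ has a fixed sign, and $g'<0$. So if $y$ had no zero there, $W'$ would have the fixed sign $-\sgn(u'y)$.

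For a bounded interval $I_i=[c_{i-1},c_i)$ with $1\le i\le k$, I argue by contradiction. Suppose $y\neq0$ on $[c_{i-1},c_i]$, and normalize $y>0$ and $u'<0$ there. Then $W'>0$ on $(c_{i-1},c_i)$. The endpoints are critical points of $u$, so
\[
W(c)=-e^{G(c)}u''(c)y(c)=e^{G(c)}f(u(c))y(c)
\]
at $c=c_{i-1}$ and $c=c_i$. With $u'<0$ on the interval, $u(c_{i-1})>0>u(c_i)$, and by Proposition~\ref{prop:critical-points}(i) we have $f(u(c_{i-1}))>0>f(u(c_i))$. Hence $W(c_{i-1})>0>W(c_i)$, which contradicts $W$ increasing.

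For $i=1$ the left endpoint is $c_0=0$. There $W(0^+)=0$, since $e^G\to0$ and $u''$ is bounded near $0$. This still contradicts $W(c_1)<0$ together with $W$ increasing.

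The remaining case is $y(c_i)=0$ with $y\ne0$ on $[c_{i-1},c_i)$. Then the zero lies at the endpoint, which is the weak interlacing allowed in the statement, so it is counted in the next set. I need to arrange the bookkeeping so that each set receives a distinct zero. The statement's half-open intervals are designed for exactly this: a zero at $c_i$ belongs to $[c_i,c_{i+1})$, and the contradiction above applies to the closed interval $[c_{i-1},c_i]$.

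On the tail $[c_k,\infty)$, suppose $y\ne0$ there, and normalize $y>0$ and $u'<0$, so that $u>0$ on the tail. Then $W$ is increasing, and $W(c_k)=e^Gf(u(c_k))y(c_k)>0$ (for $k=0$, $W(0^+)=0$ with strict increase). I need a contradiction from the behaviour at infinity, and this is the main obstacle.

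By Lemma~\ref{lem:decay}, $u'/u\to-m$. Using the basis of Lemma~\ref{lem:dichotomy}, write $y=c_+V_++c_-V_-$. If $c_+\neq0$, then $y'/y\to-g/2+\sqrt q>0$ while $u''/u'\to -m$, so
\[
W=e^Gu'y\Big(\frac{y'}{y}-\frac{u''}{u'}\Big).
\]
Here $u'y<0$ and the bracket is positive, so $W<0$ eventually, which contradicts $W>0$ increasing. If $c_+=0$, then $y$ decays like $u$ and $W\to0$, as in the proof of Lemma~\ref{lem:tail-sign}. This contradicts $W\ge W(c_k)>0$ with $W$ increasing.

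The delicate points are the $o(1)$ asymptotics, which require $u''/u'$ to be comparable to $u'/u$, and the growth of the product $e^Gu'V_+$. I expect both to follow from the Olver asymptotics applied to $u$ and $u'$, as already done for $u$ in Lemma~\ref{lem:tail-sign}.

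Finally, $N_v\ge k+1$ follows by applying the result to $y=v$. Equality means exactly one zero in each of the $k+1$ sets.
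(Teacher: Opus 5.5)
Your comparison function $W=e^G(u'y'-u''y)$ (the paper's $-\Omega$), the identity $W'=e^Gg'u'y$ and the endpoint values $W(c)=e^{G(c)}f(u(c))y(c)$ are the right tools. The tail case is also fine, and less delicate than you fear: once $0<u<1$ you have $u''=g|u'|-f(u)>0>u'$, and $y'/y>0$ eventually in the dominant case, so the bracket in $W=e^Gu'y(y'/y-u''/u')$ is positive without any asymptotics.

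There is, however, a genuine gap on the bounded intervals. Your contradiction assumes $y\ne0$ on the closed interval $[c_{i-1},c_i]$, so it only gives a zero in $[c_{i-1},c_i]$. Your treatment of the remaining case does not work. You take $y\ne0$ on $[c_{i-1},c_i)$ with $y(c_i)=0$ and count that zero in the next set. But then $[c_{i-1},c_i)$ has no zero at all. Since consecutive closed intervals share the points $c_i$, one zero could serve two of them. For instance, with $k=1$ a solution vanishing only at $c_1$ satisfies everything you proved, yet has one zero instead of two. So neither the half-open statement nor $N_v\ge k+1$ follows.

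The missing step is that this endpoint case is impossible, and your own function shows it. Suppose $y>0$ on $[c_{i-1},c_i)$ and $y(c_i)=0$. Then $W'>0$ on $(c_{i-1},c_i)$, and $W(c_{i-1})>0$ (or $W(0^+)=0$ when $i=1$), so $W(c_i)>0$. On the other hand $W(c_i)=e^{G(c_i)}\bigl(u'(c_i)y'(c_i)-u''(c_i)y(c_i)\bigr)=0$, because $u'(c_i)=y(c_i)=0$. Hence the supposition can be made on the half-open interval from the start, which is what the paper does.

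A second, smaller gap is your justification of $W(0^+)=0$. The reasons you give ($e^G\to0$ and $u''$ bounded) tacitly assume that $y$ and $y'$ are bounded near $0$. That covers $y=v$, and hence $N_v\ge k+1$. But the lemma is stated for every nontrivial solution, and the second solution is unbounded at the tip, since $e^{-G}$ is not integrable at $0$ when $\gamma>1$. You need an extra argument here, for example reduction of order as in the paper. Write $y=a_1v+a_2\tilde y$ with $\tilde y=v\int_r^{s_0}e^{-G}v^{-2}$ on an interval where $v>0$. Then $e^G|\tilde y|\le Ce^G\cJ=C\varphi/(\gamma-1)\to0$ by Lemma~\ref{lem:phi-basic}, and $e^G\tilde y'$ is bounded. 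So both terms of $W$ still tend to $0$, because $u''$ is bounded and $u'\to0$.
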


\begin{proof}
    \emph{Step 1: the function $\Omega$.} Differentiating the equation gives $L(u')=-g'u'$. Set $\Omega:=e^G(yu''-y'u')$. The identity behind \eqref{eq:M-y-prime}, $\big(e^G(yz'-y'z)\big)'=e^G(y\,Lz-z\,Ly)$, with $z=u'$ then gives
    \[
        \Omega'=-g'e^Gu'y=|g'|e^Gu'y.
    \]
    At a critical point,
    \[
        \Omega(c_j)=e^Gy(c_j)u''(c_j)=-e^{G(c_j)}y(c_j)f(u(c_j)),
    \]
    and $f(u(c_j))$ has the sign of $u(c_j)$ because $|u(c_j)|>\alpha_*>1$ ($E>0$ at a bound state).

    \emph{Step 2: $\Omega(0^+)=0$.} We have $u'(0)=0$, and $u''=-gu'-f(u)$ is bounded near $0$, as $|u'(r)|\le Cr$ and $rg$ is bounded by (G2). Since $e^G\to0$, it follows that $\Omega\to0$ whenever $y$ and $y'$ are bounded near $0$, which covers $y=v$. In general, by reduction of order, $y=a_1v+a_2\tilde y$ with $\tilde y\defeq v\int_r^{s_0}e^{-G}v^{-2}$ on an interval $(0,s_0]$, $s_0<\tau_1$, on which $v>0$. Then $e^G|\tilde y|\le Ce^G\cJ=C\varphi/(\gamma-1)\to0$ by Lemma~\ref{lem:phi-basic}. Moreover, $e^G\tilde y'=e^Gv'\int_r^{s_0}e^{-G}v^{-2}-1/v$ is bounded, so again $\Omega(0^+)=0$.

    \emph{Step 3: the intervals $[c_{i-1},c_i)$.} Let $I=[c_{i-1},c_i)$ with $u'<0$ on its interior, so that $u(c_{i-1})>0>u(c_i)$, and suppose $y>0$ on $I$. Then $\Omega$ is strictly decreasing on $I$ and $\Omega(c_{i-1})\le 0$. On the other hand, $\Omega(c_i)=-e^Gy(c_i)f(u(c_i))\ge 0$ (if $y(c_i)=0$, then $\Omega(c_i)=-e^Gy'u'(c_i)=0$). This is a contradiction. The case $y<0$ on $I$, and the case $u'>0$, are symmetric.

    \emph{Step 4: the interval $[c_k,\infty)$.} Say $u>0$ decreases to $0$ on $[c_k,\infty)$, and suppose $y>0$ there. Then $\Omega$ decreases strictly from $\Omega(c_k)\le 0$, so $\Omega(\infty)<0$. This is impossible. Indeed, if $y$ decays, then $y$ and $u$ both decay like $e^{-G/2-\int\sqrt{q}}$ up to bounded factors, and $\Omega\to 0$. If $y$ is dominant with $y\to+\infty$, set $\hat\lambda_\pm\defeq-\frac{g}{2}\pm\sqrt{q}$ (not the $\lambda_\pm$ of Section~\ref{sec:tail}) and write $u=D_\infty\tilde V_-$ as in the proof of Lemma~\ref{lem:tail-sign}. Then
    \begin{align*}
        \Omega & =e^GuV_+\big(u''/u-(y'/y)(u'/u)\big)(c_++o(1))                           \\
               & =c_+D_\infty\,q^{-1/2}\hat\lambda_-(\hat\lambda_--\hat\lambda_+)(1+o(1)) \\
               & =2c_+D_\infty\Big(\frac{g}{2}+\sqrt{q}\Big)(1+o(1))>0.
    \end{align*}
    The case $y<0$ on $[c_k,\infty)$ is symmetric. Hence $y$ vanishes in each of the $k+1$ disjoint sets.
\end{proof}

The finer interlacing with the zeros of $u$ in Theorem~\ref{thm:staircase}(iv) comes from Theorem~\ref{thm:critical-form}(i), which is not used in Lemma~\ref{lem:v-zeros}.

\begin{lemma}[Jumping from the tail sign]\label{lem:tail-jump}
    Let $\hat\alpha$ be a bound state with $k\ge 0$ nodes at which the tail sign holds, $M(\infty)>0$. Then there is $\varepsilon>0$ such that
    \[
        N(\alpha)=k \quad (\hat\alpha-\varepsilon<\alpha\le\hat\alpha), \qquad
        N(\alpha)=k+1 \quad (\hat\alpha<\alpha<\hat\alpha+\varepsilon),
    \]
    and every $u(\cdot,\alpha)$ with $\alpha\in(\hat\alpha-\varepsilon,\hat\alpha+\varepsilon)\setminus\{\hat\alpha\}$ is trapped. No information on the nodal solutions near $\hat\alpha$, and none on $v$ away from the tail, is used.
\end{lemma}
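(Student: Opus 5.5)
The plan is to split the statement into the part above $\hat\alpha$ and the part below $\hat\alpha$. The side above is already handled: by Lemma~\ref{lem:tail-sign}, $M(\infty)>0$ is equivalent to $v$ being dominant at infinity with sign opposite to $u$, so $v\to-\infty$ with $v'<0$ eventually (Corollary~\ref{cor:divergence}, with $\hat u>0$ on the tail). The part of the proof of Lemma~\ref{lem:jump} from the paragraph \emph{The extra zero above $\hat\alpha$} on used nothing else. It gives $N(\alpha)=k+1$ and trapping for $\alpha\in(\hat\alpha,\hat\alpha+\varepsilon)$. So all the new work is below $\hat\alpha$, where Lemma~\ref{lem:jump} invoked the monotonicity of zeros (Lemma~\ref{lem:zero-motion}), and we must replace that by a tail argument.

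For $\alpha<\hat\alpha$ close, I will run the same comparison with the opposite sign. Simplicity of the $k$ zeros gives $N(\alpha)\ge k$ and exactly $k$ zeros in $[0,\bar r]$, with $\bar r$ chosen as before. There $\hat u(\bar r)<u_\dagger/2$, $v(\bar r)<0$ and $v'(\bar r)<0$. Now $w=u(\cdot,\alpha)-\hat u$ satisfies $w(\bar r)=(\alpha-\hat\alpha)(v(\bar r)+o(1))>0$ and $w'(\bar r)>0$. Whenever $0<\hat u,u(\cdot,\alpha)<u_\dagger$ we have $f'(\bar u)<0$, and $w''+gw'+f'(\bar u)w=0$. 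The first-critical-point argument of Lemma~\ref{lem:jump} then shows that $w$ stays positive and increasing as long as $u(\cdot,\alpha)<u_\dagger$. Hence $u(\cdot,\alpha)>\hat u>0$ there. Then either $u(\cdot,\alpha)$ stays below $u_\dagger$ forever, so it does not vanish after $\bar r$ and $u(r,\alpha)\ge w(\bar r)>0$ in the limit, and it is not a bound state; or it reaches $u_\dagger$ from below at some point while still positive. In both cases there is no zero on the stretch considered. In the second case $u<1$ there, so $F(u)<0$. I expect to show that $E_\alpha$ is negative at the first point where $u(\cdot,\alpha)$ reaches a level $s\in(u_\dagger,1)$. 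This holds because $u'(\cdot,\alpha)$ is of order $|\alpha-\hat\alpha|e^{\int\lambda_+}$, which stays small compared with $|F(s)|$ until $w$ has grown to size $s$.

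The main obstacle is this last energy estimate. The growth of $w$ is exponential, so the ratio $|w'|/w$ is bounded by the positive root rate $-g/2+\sqrt q+o(1)$, as in Lemma~\ref{lem:dichotomy}, as long as the linearization is valid. Since $u'$ is then bounded by $C\,u$ with a constant $C<\sqrt{-2F(u)}/u$ for small levels, I first try the following: take $s$ small, fixed, with $F(s)<0$, and bound $\tfrac12u'^2\le C'w^2+C''\hat u^2<-F(s)$ at the crossing point. This uses $|w'|\le(1/m+o(1))w$ together with $1/m\le1$ and $-F(s)\ge s^2/4$ for $s\le\delta$. If the constants fail, I fall back on a Riccati argument for $y=u'/u$, as in Lemma~\ref{lem:decay}, showing that $y$ is near $\lambda_+=1/m<\sqrt{1/2}$. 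Then $E_\alpha<0$ at that point gives trapping by Proposition~\ref{prop:trapping}, hence $N(\alpha)=k$. Finally, $N(\hat\alpha)=k$ is immediate, and trapping on both sides is stable, which yields the uniform $\varepsilon$.
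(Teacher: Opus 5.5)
Your treatment of $\alpha>\hat\alpha$ matches the paper. So does the comparison step below $\hat\alpha$: $w>0$ and increasing while $u(\cdot,\alpha)<u_\dagger$, hence no zero there. The gap is the step you flag as the main obstacle, and it cannot be closed the way you propose.

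At the first point $r_s$ where $u(\cdot,\alpha)=s$ one has $u'\approx w'\approx\lambda_+s$. So $\tfrac12u'^2$ is of the same order as $-F(s)=\tfrac{s^2}{2}-\tfrac{s^{p+1}}{p+1}$, not small compared with it. Your heuristic that $u'$ ``stays small compared with $|F(s)|$'' fails exactly at the crossing.

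With $-F(s)\ge s^2/4$, your constants close only if $1/m<1/\sqrt2$, that is $g>1/\sqrt2$ on the tail. The claim $\lambda_+=1/m<\sqrt{1/2}$ is false in general. For $g=(n-1)/r$ one has $1/m\to1$, and then even the sharp requirement $(1/m)^2<1-\tfrac{2}{p+1}s^{p-1}$ fails for every fixed $s$. The same happens on $\Hyp^n_\lambda$ when $(n-1)\lambda<1/\sqrt2$. Also, for $s\in(u_\dagger,1)$, as you first write, the comparison giving the rate bound on $w'$ is not available between $u_\dagger$ and $s$.

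No sharper pointwise Riccati bound for $u'/u$ repairs this when $g\to0$. For the conservative limit $u''-u+|u|^{p-1}u=0$, the unstable manifold of $0$ carries $E\equiv0$, i.e.\ $u'^2=-2F(u)$ exactly. Hence the sign of $E_\alpha(r_s)$ is decided only at order $g$: the dissipation $\int g u'^2\sim s^2/r_s$ competes with the exponentially small positive energy left near the turning point.

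The missing idea is the dissipation argument, the same machinery you already reuse above $\hat\alpha$.

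Let $\varrho_1$ be the first point after $\bar r$ where $w=\hat u$. There $E_\alpha(\varrho_1)\le\tfrac12u'(\varrho_1,\alpha)^2\le C\varrho_1^2\hat u(\varrho_1)^2$ is exponentially small, and $\varrho_1\to\infty$ as $\alpha\uparrow\hat\alpha$.

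Instead of proving $E_\alpha<0$ at a fixed level, argue by contradiction. Suppose $u(\cdot,\alpha)$ had a zero after $\bar r$ or were a bound state. Then:
\begin{itemize}
\item $E_\alpha\ge0$ from $\varrho_1$ on.
\item $u$ must climb past $u_\dagger$, because below it $u\ge\hat u+w(\bar r)$. It then reaches a critical point $c'$ with $\alpha_*<u(c')<\alpha^*(0)$, and afterwards descends through a band.
\item Lemma~\ref{lem:pohozaev-lower} gives $E_\alpha(c')\ge\fc_3\fc_2/(2c')$.
\item Lemma~\ref{lem:passage-times} then gives $c'\le2\varrho_1$.
\item Lemma~\ref{lem:band-dissipation}, applied on the ascent and using $g>(\gamma-1)/r$ from $(\Phi)$, gives $E_\alpha(c')\le C\varrho_1^2\hat u(\varrho_1)^2-\fc_1\fc_2(\gamma-1)/\varrho_1<0$, a contradiction.
\end{itemize}
Trapping, and hence $N(\alpha)=k$, then follows from Proposition~\ref{prop:classification}, not from Proposition~\ref{prop:trapping} at an explicit point.
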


\begin{proof}
    Normalize $\hat u:=u(\cdot,\hat\alpha)>0$ on the tail. By Lemma~\ref{lem:tail-sign} and Lemma~\ref{lem:dichotomy}(b), $v\to-\infty$ with $v'<0$ eventually. Choose $\bar r>\max(c_k,R_1)$, so that $\hat u$ is positive and strictly decreasing on $[\bar r,\infty)$, with $0<\hat u(\bar r)<u_\dagger/2$, $v(\bar r)<0$, $v'(\bar r)<0$, where $u_\dagger=p^{-1/(p-1)}$, and $\hat u(\bar r)$ so small that the constants below apply. For $\alpha$ near $\hat\alpha$ set $w:=u(\cdot,\alpha)-\hat u$. By $C^1$ dependence, $u(\cdot,\alpha)$ has exactly $k$ zeros in $[0,\bar r]$, all near those of $\hat u$, and $w(\bar r),w'(\bar r)$ have the sign of $(\alpha-\hat\alpha)v(\bar r)$, i.e.\ of $\hat\alpha-\alpha$.

    \emph{Above $\hat\alpha$.} This is the proof of Lemma~\ref{lem:jump}, which from this point on uses only $w(\bar r)<0$, $w'(\bar r)<0$ and the estimates of Lemmas~\ref{lem:phi-consequences}(iii), \ref{lem:passage-times}, \ref{lem:band-dissipation}, \ref{lem:pohozaev-lower}, \ref{lem:decay} and Proposition~\ref{prop:pohozaev-positive}: an extra zero $\zeta(\alpha)\to\infty$ appears, the energy at $\zeta(\alpha)$ is exponentially small \eqref{eq:extra-zero-energy}, and there is not enough energy left for a further zero. Hence $N(\alpha)=k+1$ and the solution is trapped.

    \emph{Below $\hat\alpha$.} Now $w(\bar r)>0$, $w'(\bar r)>0$. Let $\varrho:=\sup\{r\ge\bar r:\ u(\cdot,\alpha)<u_\dagger\text{ on }[\bar r,r)\}$.
    On $[\bar r,\varrho)$ we have $0<\hat u<u(\cdot,\alpha)<u_\dagger$, so $w''+gw'+f'(\bar u)w=0$ with $f'(\bar u)\in(-1,0)$. At a first zero $\varsigma$ of $w'$ one would have $w''(\varsigma)=-f'(\bar u)w(\varsigma)>0$, impossible for $w'$ decreasing through $0$. Hence $w'>0$ and $w$ is increasing on $[\bar r,\varrho)$. In particular, $u(\cdot,\alpha)\ge\hat u+w(\bar r)>0$ on $[\bar r,\varrho)$: \emph{no zero can occur while $u(\cdot,\alpha)<u_\dagger$}.

    Let $\varrho_1:=\inf\{r\ge\bar r:\ w(r)\ge\hat u(r)\}$. Since $w\ge w(\bar r)>0$ and $\hat u\downarrow 0$, we have $\varrho_1<\infty$, and $\varrho_1<\varrho$ because $u(\varrho_1,\alpha)=2\hat u(\varrho_1)<u_\dagger$. Moreover, $\varrho_1\to\infty$ as $\alpha\uparrow\hat\alpha$, since $w\to0$ uniformly on compact subsets of $[\bar r,\infty)$ while $\hat u$ is positive on $[\bar r,\infty)$. On $[\bar r,\varrho_1]$ we have $0<w\le w(\varrho_1)=\hat u(\varrho_1)$. The computation leading to \eqref{eq:extra-zero-energy}, with $\zeta$ replaced by $\varrho_1$ and $|w|\le\hat u(\varrho_1)$, then gives $|w'(\varrho_1)|\le(C_{\bar r}+\varrho_1)\hat u(\varrho_1)$. Since $F\le 0$ on $(0,\alpha_*)$, it follows that
    \[
        E_\alpha(\varrho_1) \;\le\; \tfrac12u'(\varrho_1,\alpha)^2
        \;\le\; C\varrho_1^2\hat u(\varrho_1)^2
        \;\le\; \eta(\varrho_1):=C\varrho_1^2e^{-(\varrho_1-R)}.
    \]
    Suppose now that $u(\cdot,\alpha)$ has a zero $\zeta'>\bar r$, or is a bound state. In both cases $E_\alpha\ge 0$ on $[\varrho_1,\zeta']$, respectively on $[\varrho_1,\infty)$. A zero requires $u(\cdot,\alpha)$ to exceed $u_\dagger$ first, and a bound state requires $u(\cdot,\alpha)\to 0$, which is impossible while $u(\cdot,\alpha)\ge\hat u+w(\bar r)$. Thus in both cases $u(\cdot,\alpha)$ reaches $u_\dagger$. Since $E_\alpha\ge 0$ excludes a local minimum below $\alpha_*$, it increases from $\varrho_1$ to a first critical point $c'>\varrho_1$ with $u(c',\alpha)>\alpha_*$. After $c'$ it descends to $\zeta'$, respectively to $0$, performing a band crossing $J'$.

    From here the proof of Lemma~\ref{lem:jump} applies verbatim with $\zeta$ replaced by $\varrho_1$. In detail, $F(u(c'))=E_\alpha(c')\le\eta(\varrho_1)$ forces $u(c')<\alpha^*(0)$, and Lemma~\ref{lem:pohozaev-lower} gives $E_\alpha(c')\ge \fc_3\fc_2/(2c')$. On $[\varrho_1,c']$ the solution increases from $2\hat u(\varrho_1)\le\delta$ through the band to $u(c')$, so Lemma~\ref{lem:passage-times}(i)--(iii) gives
    \[
        c'-\varrho_1 \;\le\; T_0\Big(1+\log\frac{2c'}{\fc_3\fc_2}\Big), \qquad\text{hence}\qquad
        c'\le 2\varrho_1.
    \]
    Finally, by Lemma~\ref{lem:band-dissipation}, the band crossing on the ascent costs
    \[
        E_\alpha(c') \;\le\; E_\alpha(\varrho_1)-2\fc_1\fc_2\frac{\gamma-1}{c'}
        \;\le\; \eta(\varrho_1)-\fc_1\fc_2\frac{\gamma-1}{\varrho_1} \;<\; 0
    \]
    for $\varrho_1$ large, contradicting $E_\alpha(c')\ge 0$. Hence for $\alpha<\hat\alpha$ close to $\hat\alpha$, $u(\cdot,\alpha)$ has no zero after $\bar r$ and is not a bound state: $N(\alpha)=k$, and the solution is trapped by Proposition~\ref{prop:classification}.
\end{proof}

\begin{theorem}[Tail criterion]\label{thm:tail-criterion}
    Suppose that the tail sign holds at every bound state:
    \begin{equation}\label{eq:tail-sign}
        M(\infty)=(p-1)\int_0^\infty e^G|u|^{p-1}u\,v\,dr \;>\; 0
        \qquad\text{at every bound state } u,\ v=\partial_\alpha u.
    \end{equation}
    Then conclusions (i)--(iii) of Theorem~\ref{thm:staircase} hold, and at each $\alpha_k$ the variation is dominant, $|v(r)|\to\infty$, with $N_v\ge k+1$ zeros, $N_v\equiv k+1\pmod{2}$. Lemma~\ref{lem:zero-motion} is not used.
\end{theorem}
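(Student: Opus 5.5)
The plan is to build the staircase from the two local facts already available: pausing at non-bound states (Proposition~\ref{prop:classification}(b)) and the jump at bound states (Lemma~\ref{lem:tail-jump}). Together they determine $N$ completely, with no use of Lemma~\ref{lem:zero-motion}.

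First I will show that bound states form a discrete set in $(0,\infty)$. By Proposition~\ref{prop:classification} every $\alpha\ne1$ is either a bound state or trapped. Near a trapped $\alpha$ the function $N$ is locally constant, and every nearby solution is trapped, since $E(r_0)<0$ persists under small changes of $\alpha$. Near a bound state $\hat\alpha$, the hypothesis \eqref{eq:tail-sign} and Lemma~\ref{lem:tail-jump} give a punctured neighborhood consisting of trapped solutions, on which $N=k$ on the left and $N=k+1$ on the right. Hence bound states are isolated. The values $\alpha\le\alpha^*(0)$ (including $\alpha=1$ and $(0,\beta]$) contain no bound state and have $N=0$, by Proposition~\ref{prop:pohozaev-positive}(i) and Lemma~\ref{lem:small-alpha}. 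The sets $A_k$ of Section~\ref{sec:existence-proof} are bounded, because $N(\alpha)\to\infty$ by the scaled limit, so bound states cannot accumulate at infinity either.

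Next I will show that $N$ is nondecreasing with unit jumps. Between consecutive bound states $N$ is locally constant on a connected interval, hence constant. Crossing a bound state with $k$ nodes, Lemma~\ref{lem:tail-jump} shows that $N$ equals $k$ just to the left and at the point, and $k+1$ just to the right. I will enumerate the bound states increasingly as $\alpha_0<\alpha_1<\cdots$, starting from $N=0$ near $0$. Induction then gives that the $j$-th bound state has exactly $j$ nodes, since its node count equals the constant value of $N$ to its left, which is $j$ after $j$ jumps. It also gives $N=j$ on $(\alpha_{j-1},\alpha_j]$.

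The sequence is infinite and unbounded, because $N$ is unbounded (Lemmas~\ref{lem:scaled-convergence} and~\ref{lem:limit-zeros}) and $N$ changes only at bound states. Theorem~\ref{thm:existence} confirms existence independently. Uniqueness is now immediate: a bound state with $k$ nodes must be the $k$-th element of the list. I also get $\alpha_0>\alpha^*(0)$.

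The remaining claims follow directly. The statements about $v$ (dominance, $|v|\to\infty$, and the parity $N_v\equiv k+1$) come from Lemma~\ref{lem:tail-sign}(iii), and the bound $N_v\ge k+1$ comes from Lemma~\ref{lem:v-zeros}.

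The main obstacle I expect is the isolation argument at points where $N$ could a priori be infinite or the dichotomy could degenerate. The case $\alpha=1$ is constant, which I will handle separately using $N=0$ below $\alpha^*(0)$. I also need that the left-neighborhood value in Lemma~\ref{lem:tail-jump} is exactly the node count of the bound state, not merely that $N$ is bounded by it. That lemma provides this, so the induction goes through cleanly.
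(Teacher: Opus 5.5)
Your proposal is correct and follows the paper's proof essentially step for step: bound states are isolated by Lemma~\ref{lem:tail-jump}, trapped parameters have trapped neighbourhoods by Proposition~\ref{prop:classification}, so the set of bound states is locally finite, and it is infinite since $N$ is unbounded and changes only there; the induction over consecutive bound states, together with Lemmas~\ref{lem:tail-sign} and~\ref{lem:v-zeros}, then gives everything else. The only slip is the phrase that bound states ``cannot accumulate at infinity''---they do. What the boundedness of the $A_k$ (i.e.\ the unboundedness of $N$) buys is that there are infinitely many bound states, while local finiteness in $(0,\infty)$ already follows from your two neighbourhood statements and the absence of bound states in $(0,\alpha^*(0)]$.
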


\begin{proof}
    Let $B$ be the set of bound states. By Proposition~\ref{prop:classification}, every $\alpha$ not in $B$ is trapped, $N$ is constant near such $\alpha$, and nearby solutions are trapped as well. By Lemma~\ref{lem:tail-jump}, every point of $B$ is isolated in $B$. A limit point $\alpha_\infty$ of $B$ cannot be trapped (nearby solutions would be trapped) and cannot be a bound state (it would be isolated). Hence $B$ is closed and discrete in $(0,\infty)$, i.e.\ locally finite. Moreover, $B\subset(\alpha_*,\infty)$, since $N\equiv 0$ and no bound state exists on $(0,\alpha_*]$ (Proposition~\ref{prop:trapping} and the remark after \eqref{eq:energy-model}). As $N(\alpha)\to\infty$ (Lemmas~\ref{lem:scaled-convergence}, \ref{lem:limit-zeros} and the argument at the beginning of Section~\ref{sec:existence-proof}) and $N$ changes only at points of $B$, the set $B$ is infinite: $B=\{\hat\alpha_1<\hat\alpha_2<\ldots\}$, $\hat\alpha_j\to\infty$.

    On $(0,\hat\alpha_1)$ all solutions are trapped and $N$ is locally constant, hence $N\equiv 0$. By Lemma~\ref{lem:tail-jump} at $\hat\alpha_1$, $N(\hat\alpha_1)=0$, so $\hat\alpha_1$ is a ground state and $N=1$ just above it. Next, $N$ is constant on $(\hat\alpha_1,\hat\alpha_2)$, so $N\equiv 1$, $N(\hat\alpha_2)=1$ and $\hat\alpha_2$ is the bound state with one node. Inductively, $N\equiv j$ on $(\hat\alpha_j,\hat\alpha_{j+1}]$ and $\hat\alpha_{j+1}$ is a bound state with exactly $j$ nodes.

    Thus $\alpha_k:=\hat\alpha_{k+1}$ is the unique $k$-node bound state, $N=k$ on $(\alpha_{k-1},\alpha_k]$ with trapped interior solutions, $N$ is nondecreasing with unit jumps exactly at the $\alpha_k$, and $\alpha_k\to\infty$: this is (i)--(iii), and $\alpha_0>\alpha^*(0)$ by Proposition~\ref{prop:pohozaev-positive}(i). The statements on $v$ are Lemmas~\ref{lem:tail-sign} and~\ref{lem:v-zeros}.
\end{proof}

\begin{remark}\label{rem:tail-criterion}
    Theorem~\ref{thm:tail-criterion} reduces what has to be proved. The phase analysis of Sections~\ref{sec:transition}--\ref{sec:renewal-below} is a family of local, phase-by-phase statements, while \eqref{eq:tail-sign} is one global sign per bound state. Its left-hand side is
    \[
        (p-1)\lim_{R\to\infty}\partial_\alpha\int_0^R \frac{e^G|u|^{p+1}}{p+1},
    \]
    where the derivative is taken at fixed $R$ (for $\alpha$ near a bound state the solutions are trapped and the integral over $(0,\infty)$ diverges). By Lemma~\ref{lem:tail-sign}(iii), \eqref{eq:tail-sign} is also a parity statement: \emph{at every $k$-node bound state the variation is dominant at infinity and its number of zeros satisfies $N_v\equiv k+1\pmod{2}$}. Lemma~\ref{lem:v-zeros} gives the lower bound $N_v\ge k+1$. The missing statement is nondegeneracy together with the upper bound $N_v\le k+1$, or merely with the parity, or merely the sign of $M(\infty)$.
    (In spectral terms, $N_v$ is the radial Morse index of $u$, but we do not use this.)
    Sections~\ref{sec:transition}--\ref{sec:renewal-below} prove the sign phase by phase, under (H) and (R) (Theorem~\ref{thm:critical-form}), while for the ground state it holds unconditionally (Theorem~\ref{thm:unconditional}(ii)).
\end{remark}

\subsection{Proofs of the theorems of Section~\ref{sec:uniqueness}}\label{sec:main-proofs}

\begin{proof}[Proof of Theorem~\ref{thm:staircase}]
    Let $u$ be a nodal bound state. By Theorem~\ref{thm:critical-form}(i), $v$ is dominant at infinity with the sign opposite to $u$, i.e.\ $M(\infty)>0$ by Lemma~\ref{lem:tail-sign}. At the ground state the same holds unconditionally: Theorem~\ref{thm:first-phase}(iii) gives the final-phase condition, Proposition~\ref{prop:nondegeneracy} nondegeneracy and Corollary~\ref{cor:divergence} the sign. Thus \eqref{eq:tail-sign} holds at every bound state, and Theorem~\ref{thm:tail-criterion} gives (i)--(iii). For (iv): at $\alpha_k$ with $k\ge 1$, Theorem~\ref{thm:critical-form}(i) gives exactly $k+1$ zeros $\tau_i\in(c_{i-1},r_i)\subset(z_{i-1},z_i)$ of $v$ ($z_0:=0$, $z_{k+1}:=\infty$), $u'v'(\tau_i)>0$ at each of them, and $|v|\to\infty$ by Theorem~\ref{thm:phase}(v). For $k=0$ this is Theorem~\ref{thm:unconditional}(i)--(ii).
\end{proof}

\begin{proof}[Proof of Theorem~\ref{thm:unconditional}]
    Parts (i) and (iii) are Theorem~\ref{thm:first-phase} and Proposition~\ref{prop:pohozaev-positive}.
    Part (iv) is Proposition~\ref{prop:classification} together with Lemma~\ref{lem:tail-jump}, applied at every bound state at which the tail sign holds. At every ground state the tail sign holds by (ii), and under (H) and (R) it holds at every nodal bound state by Theorem~\ref{thm:critical-form}(i) and Lemma~\ref{lem:tail-sign}. For (ii), let $\hat\alpha<\tilde\alpha$ be two ground states. Theorem~\ref{thm:first-phase}(iii) provides the final-phase condition \eqref{eq:final-phase} for $\hat u$ unconditionally, so Proposition~\ref{prop:nondegeneracy} and Corollary~\ref{cor:divergence} apply to $\hat u$: $v$ is dominant with the sign opposite to $\hat u$. The paragraph \emph{The extra zero above $\hat\alpha$} of the proof of Lemma~\ref{lem:jump}, which uses only this dominance and no information on nodal solutions, gives a first zero $z_1(\alpha)$ for every $\alpha$ slightly above $\hat\alpha$. By Theorem~\ref{thm:first-phase}(ii), which is unconditional, $v(z_1)u'(z_1)>0$ for \emph{every} solution with a first zero, so Lemma~\ref{lem:zero-motion}(ii) applies with $k=1$ and gives $N(\alpha)\ge 1$ for all $\alpha>\hat\alpha$. In particular $N(\tilde\alpha)\ge 1$, contradicting the fact that $\tilde u$ is a ground state.
    The last assertion of (ii) is Corollary~\ref{cor:divergence}.
\end{proof}

\begin{proof}[Proof of Theorem~\ref{thm:euclidean}]
    Let $g=(n-1)/r$ with $n\ge 3$. We first check the hypotheses of Theorem~\ref{thm:unconditional}.
    \begin{itemize}
        \item (G1)--(G4) are immediate, with $\gamma=n-1>1$.
        \item (G5) holds since $\int^\infty s^{1-n}\,ds<\infty$ and $g$ is bounded on $[1,\infty)$.
        \item For (G6), note that
              \[
                  q=1+\frac{(n-1)^2}{4r^2}-\frac{n-1}{2r^2}=1+\frac{(n-1)(n-3)}{4r^2},
              \]
              so $g'\to 0$ and $q',q''=O(r^{-3}),O(r^{-4})$ are integrable.
        \item $(\Phi)$ holds with $D\equiv\frac{n-1}{n-2}$ by Lemma~\ref{lem:space-forms}, and $\nu\equiv n$.
    \end{itemize}
    Hence Theorem~\ref{thm:unconditional} applies, including the second part of (i), as $\nu\ge1$. On $\R^n$ the hypotheses (H) and (R) of Theorem~\ref{thm:staircase} hold (Lemma~\ref{lem:H-criterion}(ii), and $\gamma=n-1\ge 2$), so conclusions (i)--(iv) of Theorem~\ref{thm:staircase} hold. Sections~\ref{sec:renewal-above} and~\ref{sec:renewal-below} take the place of the renewal lemmas of~\cite{Tang2026} (Remark~\ref{rem:R-complement}(a)), and nondegeneracy is Proposition~\ref{prop:nondegeneracy} for $k=0$ and Theorem~\ref{thm:phase}(v) for $k\ge1$. With $\varphi\equiv r$, $e^G=r^{n-1}$ and $\nu\equiv n$ the functions of Sections~\ref{sec:pohozaev}--\ref{sec:variation} are those of~\cite{Tang2026}.
\end{proof}

Corollary~\ref{cor:warped} is Theorem~\ref{thm:manifold-existence} combined with Theorem~\ref{thm:staircase}: (G1) and (G2), with $\gamma=n-1$, hold as in the proof of Theorem~\ref{thm:manifold-existence}, (G3) follows from $(\Phi)$ by Lemma~\ref{lem:phi-consequences}(i), and (R) holds (Section~\ref{sec:main-results}). Corollary~\ref{cor:ground-state} is, in the same way, Theorem~\ref{thm:manifold-existence} combined with Theorem~\ref{thm:unconditional}(ii). On $\Hyp^n_\lambda$ the hypotheses hold by the hyperbolic example of Section~\ref{sec:manifolds} for (G1)--(G3), Remark~\ref{rem:extra-hyp} and the proof of Lemma~\ref{lem:space-forms}(ii) for (G4)--(G6), Lemma~\ref{lem:space-forms}(ii) for $(\Phi)$ and Lemma~\ref{lem:H-criterion}(ii) for (H).

%% file: sections/20-restriction-on-f.tex
\section{Why uniqueness requires a restriction on \texorpdfstring{$f$}{f}}\label{sec:restriction-f}

Theorem~\ref{thm:existence} holds for the whole class (f1)--(f4), whereas Theorem~\ref{thm:staircase} is stated only for the model nonlinearity $f(u) = |u|^{p-1}u - u$. We first record which properties of $f$ the proof in Part~II actually uses, and then observe that some restriction of this kind is unavoidable: uniqueness fails within the class (f1)--(f4), already for the ground state and already in Euclidean space.

\subsection{What the proof uses about \texorpdfstring{$f$}{f}}\label{sec:f-conditions}
With a uniqueness theorem for more general nonlinearities in mind, the following table lists, for each part of the proof in Part~II, the structural conditions on an odd $f$ that it uses. Here $f$ is normalized so that its unique positive zero is at $u = 1$, and $\alpha_*$ denotes the unique positive zero of $F$.

\begin{table}[ht]
  \centering
  \begin{tabular}{p{0.275\textwidth}p{0.625\textwidth}}
    \toprule
    Part of the proof                                           & Conditions on $f$ used                                                                                                                                                                                                                                                                                                                                                                                                                       \\
    \midrule
    Basic properties (§\ref{sec:basic-properties})              & $f$ odd and locally Lipschitz, the sign pattern of $f$ across $u = 1$ with $f$ nondecreasing on $[1,\infty)$ (Lemma~\ref{lem:passage-times}(iii)), the positive zero $\alpha_*$ of $F$, $F \ge F(1) > -\infty$ and $f'(0) < 0$.                                                                                                                                                                                                                                                                                                      \\
    Pohozaev estimates (§\ref{sec:pohozaev})                    & the $h_r$ structure: for each $t \in (0,1]$, playing the role of $\varphi'$, the function $h_t(u) := 2(2t+\gamma-1)F(u)-(\gamma-1)uf(u)$ is negative for $0<|u|<\alpha^*(t)$ and positive for $|u|>\alpha^*(t)$, for a threshold $\alpha^*(t)\in(\alpha_*,+\infty]$ nonincreasing in $t$; and $h_1 = uf - 2F > 0$ for $u\ne0$. The strict sign is needed in Lemma~\ref{lem:pohozaev-lower}, which uses the bound $-h_t\ge(\gamma-1)h_1$ on $|u|\le\alpha_*$, from $h_t=4tF-(\gamma-1)h_1$, uniformly in $t$. (For the model $f$: Lemma~\ref{lem:level-structure}.)                                                                                                                                            \\
    First phase (§\ref{sec:variation})                          & in addition: $f'' > 0$ on $(1,\infty)$ (tendency), $uf' - f > 0$ for $u > 0$, $g_1$ increasing with $g_1(1) = 0$ (resistance), and $g_2 \le 0$ on $(0,\alpha_*]$.                                                                                                                                                                                                                                                                            \\
    Transitions (§\ref{sec:later-phases}--\ref{sec:transition}) & in addition: $g_2$ increasing in $|u|$, i.e.\ $h_3 := uf^2 - fF - uf'F$ has the sign of $u$ (indeed $\frac{d}{du}g_2 = 2h_3/h_1^2$), and $g_2(\alpha_*) = 0$ (no power structure is needed for \eqref{eq:T-two-prime}, which for a general $f$ reads $T_2' = -\varphi'\,\frac{dg_2}{du}(u)\,u'(M-\theta) - \varphi'' g_2(u)M$); the sign structure of $H_a = ah_1 - 2F$ (bridges of Sections~\ref{sec:transition}--\ref{sec:renewal-below}). \\
    Renewal (§\ref{sec:renewal-below})                          & in addition, quantitatively: the explicit forms of $F$, $h_1$ and $h_r$ (Lemmas~\ref{lem:P-two-positive}, \ref{lem:Q-nu-positive}, \ref{lem:river-integral} and~\ref{lem:frozen-bridge}), the bound $\alpha_*^2<e$, and the bound $(\alpha_*/u)^{m_\gamma}f(u)/f(\alpha_*)\le C$ for $1<u\le\alpha_*$ (Lemma~\ref{lem:X-growth}).                                                                                                            \\
    Tail (§\ref{sec:tail})                                      & $f'(0) < 0$ and $f'(u) - f'(0) = O(|u|^{\varepsilon})$ at $0$ for some $\varepsilon > 0$ (decay and dichotomy, where for $f'(0) \ne -1$ the functions $m$ and $q$ are defined with $-f'(0)$ in place of $1$), and the sign $f < 0$ on $(0,1)$ (monotonicity of $Q$ on the tail, Proposition~\ref{prop:nondegeneracy}).                                                                                                                       \\
    Main theorems (§\ref{sec:staircase})                        & the basic properties, the Pohozaev estimates (Proposition~\ref{prop:pohozaev-positive} and Lemma~\ref{lem:pohozaev-lower}, used in Lemmas~\ref{lem:jump} and~\ref{lem:tail-jump}) and the tail lemmas of Section~\ref{sec:tail}.                                                                                                                                                                                                             \\
    \bottomrule
  \end{tabular}
\end{table}

In summary, everything up to the Pohozaev estimates survives for odd nonlinearities satisfying the structural conditions listed in the first two rows, and so does the deduction of the main theorems from the tail sign (Theorem~\ref{thm:tail-criterion}), given the tail conditions of the row for Section~\ref{sec:tail}. The Wronskian identities additionally need the monotonicity of $g_1$ and the convexity of $f$ beyond its zero, and the computations with $T_2$ need only the monotonicity of $g_2$. The renewal of Section~\ref{sec:renewal-below} uses the power structure quantitatively. Some conditions of this kind are \emph{necessary}: as the next subsection shows, (f1)--(f4) alone are compatible with nonuniqueness.

\subsection{Nonuniqueness within the class (f1)--(f4)}\label{sec:nonuniqueness}

\begin{proposition}\label{prop:nonuniqueness}
  There exist odd nonlinearities $f$ satisfying \textup{(f1)--(f4)} (for instance the odd extensions of $f(u) = -u + u^p + \lambda u^q$ with $1 < q < 3$, $\lambda$ large, and then $p < 5$ sufficiently close to $5$, depending on $\lambda$) for which \eqref{eq:ode} with $g = 2/r$ possesses at least three distinct ground states. Hence no uniqueness theorem can hold under \textup{(f1)--(f4)} alone, for any class of damping functions $g$ containing the three-dimensional Euclidean case $g = 2/r$.
\end{proposition}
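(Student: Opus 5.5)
The plan is to import the non-uniqueness construction of D\'avila, del Pino and Guerra~\cite{DavilaDelPinoGuerra2013} and then verify, by elementary checks, that their nonlinearity belongs to the class (f1)--(f4). Their theorem concerns positive solutions $u\in H^1(\R^3)$ of $\Delta u-u+u^p+\lambda u^q=0$ with $1<q<3$ and $\lambda$ large. It states that for $p<5$ sufficiently close to $5$ (depending on $\lambda$) there are at least three positive radial decaying solutions. Their mechanism is a bifurcation near the critical exponent: one solution of small amplitude, governed by the subcritical term $\lambda u^q$, and two concentrating bubble-type solutions built from the Aubin--Talenti profile. By~\cite{GidasNiNirenberg1981}, positive decaying solutions are radial. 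Hence each is $u(\cdot,\alpha)$ for some $\alpha>0$, solving \eqref{eq:ode}--\eqref{eq:bc} with $g=2/r$. Distinct solutions have distinct $\alpha$ by uniqueness for the Cauchy problem (Section~\ref{sec:local}), so we get three distinct ground states in the sense of Section~\ref{sec:terminology}.

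Next we take the odd extension $f(u)=-u+|u|^{p-1}u+\lambda|u|^{q-1}u$ and check the hypotheses one at a time.
\begin{itemize}
\item[(f1)] holds since $p,q>1$.
\item[(f2)] holds because $uf(u)=-u^2+|u|^{p+1}+\lambda|u|^{q+1}<0$ for small $u\ne0$, the last two terms being $o(u^2)$.
\item[(f3)] For $u>0$, the function $f(u)/u=-1+u^{p-1}+\lambda u^{q-1}$ is strictly increasing from $-1$ to $\infty$. So $f$ has a unique positive zero $u_0$, with $f<0$ on $(0,u_0)$ and $f>0$ on $(u_0,\infty)$. Likewise $F(u)/u^2=-\tfrac12+u^{p-1}/(p+1)+\lambda u^{q-1}/(q+1)$ is strictly increasing, so $F$ has a unique positive zero $\beta_F>u_0$ and $F<0$ on $(0,\beta_F)$. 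Taking $\beta=\beta_F$ gives $F<0$ on $(0,\beta)$ and $f>0$ on $[\beta,\infty)$. Oddness gives the negative side with $\beta'=-\beta$.
\item[(f4)] holds with $k_\pm=1$, because $q<p$ makes $\lambda|u|^{q-1}u=o(|u|^p)$. Note that $p<5=\frac{\gamma+3}{\gamma-1}$ for $\gamma=2$, so $p$ lies in the range \eqref{eq:p-range}.
\end{itemize}

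The final assertion is then immediate. Any uniqueness theorem valid under (f1)--(f4) for a class of damping functions containing $2/r$ would apply to this $f$ and $g=2/r$, and would contradict the existence of three ground states.

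The only substantive step is the external existence result. The main obstacle is to make sure the cited theorem is applied in exactly its stated regime: the order of the parameter choices ($\lambda$ first, then $p$ close to $5$), and the range $1<q<3$. One must also check that its solutions are genuinely distinct positive classical solutions decaying at infinity, rather than, say, a multiplicity count. I would quote the precise statement of~\cite{DavilaDelPinoGuerra2013} and keep everything else to the elementary verifications above.
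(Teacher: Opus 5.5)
Your proposal is correct and follows essentially the same route as the paper. The paper also cites D\'avila--del Pino--Guerra for three positive radial decaying solutions, reads them as three ground states of \eqref{eq:ode} with $g=2/r$ at distinct values of $\alpha$, and checks (f1)--(f4) for the odd extension through the monotonicity of $f(u)/u$. Your appeal to Gidas--Ni--Nirenberg and your separate monotonicity argument for $F(u)/u^2$ are harmless additions, since the cited solutions are already radial.
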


\begin{proof}
  D\'avila, del Pino and Guerra \cite{DavilaDelPinoGuerra2013} prove that for $n = 3$ and $1 < q < 3$, if $\lambda$ is fixed sufficiently large and then $p < 5$ is taken sufficiently close to $5$ (depending on $\lambda$), the equation $\Delta u - u + u^p + \lambda u^q = 0$ in $\R^3$ admits at least three positive radial solutions decaying at infinity. Each of them is a ground state of \eqref{eq:ode} with $g = 2/r$, and they correspond to three distinct values of the shooting parameter $\alpha$.

  The odd extension $f(u) = -u + |u|^{p-1}u + \lambda|u|^{q-1}u$ satisfies (f1)--(f4):
  \begin{itemize}
    \item (f1) is clear, as $q > 1$,
    \item (f2) holds since $f(u)/u \to -1$ as $u \to 0$,
    \item for (f3), the quotient $f(u)/u = -1 + |u|^{p-1} + \lambda|u|^{q-1}$ is strictly increasing in $|u|$, so $f$ has a unique positive zero $u_0$ and $f > 0$ on $(u_0, \infty)$, while $F < 0$ on $(0, \beta)$ for the unique positive zero $\beta > u_0$ of $F$ (and $f > 0$ on $[\beta, \infty)$),
    \item (f4) holds with $k_\pm = 1$.
  \end{itemize}
\end{proof}

\FloatBarrier

%% file: sections/21-singular-tips-uniqueness.tex
\section{Uniqueness on manifolds with singular tips}\label{sec:tip-uniqueness}

Corollaries~\ref{cor:warped} and~\ref{cor:ground-state} are stated for manifolds that are $C^2$ at the pole, where $\gamma=n-1$. As in Section~\ref{sec:tips}, the manifold enters Part~II only through the damping $g_\psi=(n-1)\psi'/\psi$, and the theorems of Section~\ref{sec:uniqueness} allow any real $\gamma>1$. In this section $M$ is the warped product \eqref{eq:tip-metric} over a closed Riemannian manifold $\cN$ of dimension $n-1\ge1$, and we state the uniqueness theorem on $M$. At the tip Part~II needs only the tip index, as Part~I does. The structural hypothesis $(\Phi)$, however, sees more of the geometry than the index, and Section~\ref{sec:excluded-tips} describes what it excludes. Elliptic equations at conical singularities have been studied with other methods: semilinear Dirichlet problems for cone-degenerate operators in~\cite{ChenLiuWei2012}, the Yamabe equation on spaces with conical and edge singularities in~\cite{AkutagawaCarronMazzeo2014}, and the prescribed curvature equation on surfaces with conical points in~\cite{Troyanov1991}.

\subsection{What the uniqueness proof uses at the origin}\label{sec:tip-origin}
Part~II uses the origin only through the following facts:
\begin{itemize}
    \item the limits of Lemma~\ref{lem:phi-basic}, which require (G2) with $\gamma>1$,
    \item the limits $e^G\to0$ and $e^G\varphi\to0$, which give the vanishing at $0^+$ of $P$, $P_2$, $M$, $Q$, $Q_\chi$, $T_1$, $T_2$, $\Sigma$ and $\Omega$ (Proposition~\ref{prop:pohozaev-positive}, Theorems~\ref{thm:first-phase} and~\ref{thm:phase}, Lemmas~\ref{lem:P-two-positive} and~\ref{lem:v-zeros}),
    \item through Part~I, the local theory of Section~\ref{sec:well-posedness}, the value $u''(0)=-f(\alpha)/(\gamma+1)$ and the scaled limit of Lemma~\ref{lem:scaled-convergence}, which require (G1) and (G2) only (proof of Lemma~\ref{lem:origin}).
\end{itemize}
No rate is needed in the limit $rg(r)\to\gamma$, and $\gamma$ need not be an integer.

The Euclidean proof of~\cite{Tang2026} also uses $n\ge3$, through the monotonicity of $r^{n-3}=e^G/\varphi^2$. On a warped product, $(e^G/\varphi^2)'=e^G(\gamma-1-\varphi')/\varphi^3$, which is negative near a tip with $1<\gamma<2$, and Section~\ref{sec:renewal-below} does not use it.

The next lemma identifies the solutions on $M$ to which the shooting method applies.

\begin{lemma}[Bounded solutions near the tip]\label{lem:bounded-tip}
    Let $g$ satisfy \textup{(G1)} and \textup{(G2)} with $\gamma>1$, let $f$ be locally Lipschitz, and let $u\in C^2((0,r_0))$ solve \eqref{eq:ode} on $(0,r_0)$. If $u$ is bounded near $0$, then $e^Gu'\to0$ as $r\to0$, the limit $\alpha\defeq\lim_{r\to0}u(r)$ exists, and $u$ is the solution of \eqref{eq:ode} with $u(0)=\alpha$ and $u'(0)=0$.
\end{lemma}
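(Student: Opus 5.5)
The plan is to work with the divergence form $(e^Gu')'=-e^Gf(u)$ on $(0,r_0)$ and show that the flux $e^Gu'$ has limit zero at the tip. Since $u$ is bounded near $0$, say $|u|\le K$ on $(0,r_1]$, and $f$ is locally Lipschitz, we have $|f(u)|\le M_K$ there. By (G2), $e^G\to0$ as $r\downarrow0$ (Remark~\ref{rem:hyp-g}), so $e^G$ is integrable near $0$. Integrating the equation between $s$ and $r_1$ gives
\[
    e^{G(s)}u'(s)=e^{G(r_1)}u'(r_1)+\int_s^{r_1}e^Gf(u),
\]
and the integral converges as $s\downarrow0$. Hence the limit $\ell\defeq\lim_{s\to0}e^{G(s)}u'(s)$ exists and is finite.

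\emph{Step 1: $\ell=0$.} Suppose $\ell\neq0$. Then $|u'(s)|\ge\frac{|\ell|}2e^{-G(s)}$ for small $s$, with $u'$ of one sign. Fix $\gamma'\in(1,\gamma)$. By (G2), $sg(s)\ge\gamma'$ on some $(0,s_0]$, so, as in the proof of Lemma~\ref{lem:phi-basic}, $e^{-G(s)}\ge c\,s^{-\gamma'}$ there. Since $\gamma'>1$, $\int_0 e^{-G}=\infty$. Integrating $u'$ then makes $|u(s)|\to\infty$ as $s\to0$, which contradicts boundedness. This is exactly where $\gamma>1$ enters: it is the zero-capacity case of Remark~\ref{rem:tip-capacity}. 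Hence $\ell=0$, and
\[
    e^{G(r)}u'(r)=-\int_0^re^Gf(u).
\]

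\emph{Step 2: $u'(r)\to0$ and $\alpha$ exists.} From the formula above, $|u'(r)|\le M_Ke^{-G(r)}\int_0^re^G\le M_Kr$, since $G$ is increasing. So $u'$ is bounded and tends to $0$, $u$ is Lipschitz near $0$, and $\alpha\defeq\lim u(r)$ exists. Extending $u(0)=\alpha$, we get $u\in C^1([0,r_0))$ with $u'(0)=0$.

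\emph{Step 3: identification.} The function $u$ is now a $C^1([0,R])\cap C^2((0,R])$ solution with $u(0)=\alpha$ and $u'(0)=0$. The converse uniqueness statement of Section~\ref{sec:local} (valid under (G1)--(G2), as in the proof of Lemma~\ref{lem:origin}) shows that $u=T(u)$ and that $u$ coincides with the shooting solution $u(\cdot,\alpha)$ near $0$. Cauchy--Lipschitz then extends the agreement to all of $(0,r_0)$.

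The main obstacle is Step 1, which requires the divergence of $\int_0e^{-G}$. A lower bound on $sg(s)$ strictly above $1$ handles it cleanly. The rest is routine.
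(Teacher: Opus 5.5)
Your proof is correct and follows the paper's argument essentially step by step. The flux $e^Gu'$ has a limit because its derivative $-e^Gf(u)$ is bounded near $0$, and a nonzero limit is ruled out by the non-integrability of $e^{-G}$ at $0$ (from $e^{-G(s)}\ge cs^{-\gamma'}$ with $\gamma'>1$); the resulting formula then gives $|u'(r)|\le Cr$, and the converse part of Section~\ref{sec:local} identifies $u$ with the shooting solution.
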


\begin{proof}
    Since $e^G$ is bounded near $0$ and $u$ is bounded, $(e^Gu')'=-e^Gf(u)$ is bounded near $0$, so $e^Gu'$ has a limit $\ell$ as $r\to0$. By (G2), $e^{-G(s)}\ge cs^{-\gamma'}$ near $0$ for some $\gamma'>1$ (proof of Lemma~\ref{lem:phi-basic}), so $e^{-G}$ is not integrable at $0$. If $\ell\ne0$, then $|u'|\ge\frac{|\ell|}2e^{-G}$ near $0$ and $u$ would be unbounded. Hence $\ell=0$ and $u'(r)=-e^{-G(r)}\int_0^re^Gf(u)$, so that $|u'(r)|\le Cr$, as $G$ is increasing. Therefore $u$ has a limit $\alpha$ at $0$ and $u\in C^1([0,r_0))$ with $u'(0)=0$, and the converse part of \S\ref{sec:local} gives the last claim.
\end{proof}

For a warped product $e^Gu'$ is a constant multiple of the flux $\psi^{n-1}u'$, and Lemma~\ref{lem:bounded-tip} is the statement of Remark~\ref{rem:tip-capacity} that a tip with $\gamma>1$ has zero capacity: no boundary condition is needed at the tip, only boundedness. Boundedness cannot be replaced by the weak formulation \eqref{eq:weak-tip} alone. For the pure power $f(w)=|w|^{p-1}w$ and $g=\gamma/r$, that is, for the scaled limit \eqref{eq:limit-ode}, and for $p>\frac{\gamma+1}{\gamma-1}$, the function $w=Cr^{-\beta}$ with $\beta=\frac2{p-1}$ and $C^{p-1}=\beta(\gamma-1-\beta)$ is a solution whose flux $r^\gamma w'$ tends to $0$, and it satisfies \eqref{eq:weak-tip} for test functions supported near the tip.

\begin{theorem}[Uniqueness with a singular tip]\label{thm:tip-uniqueness}
    Let $n\ge2$, let $\cN$ be a closed Riemannian manifold of dimension $n-1$, and let $M$ be as in \eqref{eq:tip-metric}, where $\psi\in C^3((0,\infty))$ satisfies \textup{($\Psi$1)} and \textup{($\Psi$2)} with tip index $a$ such that $\gamma\defeq(n-1)a>1$. Suppose that $g_\psi$ satisfies \textup{(G5)}, \textup{(G6)} and $(\Phi)$, and let $1<p<\frac{\gamma+3}{\gamma-1}$.
    Consider the radial solutions $u=u(r)$ of
    \begin{equation}\label{eq:tip-model}
        \Delta_{g_M}u-u+|u|^{p-1}u=0\quad\text{on }M,\qquad u(r)\to0\quad(r\to\infty),
    \end{equation}
    that are bounded near the tip.
    \begin{enumerate}[label=(\roman*)]
        \item Exactly one of them is positive.
        \item If $g_\psi$ also satisfies \textup{(H)} and $(\gamma,p)$ satisfies \textup{(R)}, then for every $k\ge0$ exactly one of them, up to the reflection $u\mapsto-u$, has $k$ nodes.
    \end{enumerate}
    These solutions are of the form $u(\cdot,\alpha)$. They are $C^1$ up to the tip with $u'(0)=0$, they are weak solutions across the tip in the sense of \eqref{eq:weak-tip}, and their variation satisfies $|v(r)|\to\infty$ as $r\to\infty$.
\end{theorem}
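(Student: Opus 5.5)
The plan is to reduce Theorem~\ref{thm:tip-uniqueness} to the results of Section~\ref{sec:uniqueness} applied to the damping $g=g_\psi=(n-1)\psi'/\psi$, exactly as Corollaries~\ref{cor:warped} and~\ref{cor:ground-state} were obtained for $C^2$ poles.

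\emph{Step 1: the radial problem is the shooting problem.} A radial solution $u(r)$ of \eqref{eq:tip-model} solves \eqref{eq:tip-ode}, that is \eqref{eq:ode} with $g=g_\psi$. By Lemma~\ref{lem:bounded-tip}, applied with (G1) and (G2) (checked in Step~2), every such solution bounded near the tip is $u(\cdot,\alpha)$ for $\alpha=\lim_{r\to0}u(r)$. It is $C^1$ up to $r=0$ with $u'(0)=0$. Conversely, each $u(\cdot,\alpha)$ is bounded near the tip and, by the argument in the proof of Theorem~\ref{thm:tip-existence}, solves \eqref{eq:weak-tip}, since $\psi^{n-1}u'\to0$. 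So counting the radial solutions bounded near the tip with $k$ nodes is the same as counting the bound states $u(\cdot,\alpha)$ with $k$ nodes. Solutions with $u(0)<0$ are reflections of solutions with $u(0)>0$, by the oddness of $f$, and $\alpha=0$ gives $u\equiv0$.

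\emph{Step 2: verifying the hypotheses.} ($\Psi$1) with $\psi\in C^3$ gives $g_\psi>0$, locally Lipschitz and $C^2$ on $(0,\infty)$, which is (G1) and the regularity in (G4). ($\Psi$2) gives $rg_\psi(r)=(n-1)\varphi_\psi(r)\to(n-1)a=\gamma$, so (G2) holds and (G4) holds because $\gamma>1$. (G3) follows from $(\Phi)$ by Lemma~\ref{lem:phi-consequences}(i), and (G5), (G6), $(\Phi)$ are assumed. The exponent range $1<p<\frac{\gamma+3}{\gamma-1}$ is \eqref{eq:p-range}. No rate in $rg\to\gamma$ and no integrality of $\gamma$ is needed, as recorded in Section~\ref{sec:tip-origin}.

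\emph{Step 3: conclusions.} For (i), Theorem~\ref{thm:existence}, or Theorem~\ref{thm:tip-existence}, gives existence of a ground state, and Theorem~\ref{thm:unconditional}(ii) gives uniqueness among $\alpha>0$. A positive solution has $\alpha>0$, so exactly one solution is positive. For (ii), Theorem~\ref{thm:staircase} applies under (H) and (R) and gives a unique $\alpha_k>0$ for each $k$. Together with the reflection this is uniqueness up to sign. The divergence $|v|\to\infty$ comes from Theorem~\ref{thm:unconditional}(ii) and Proposition~\ref{prop:nondegeneracy} for the ground state, and from Theorem~\ref{thm:staircase}(iv) for $k\ge1$.

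The only step that is not pure bookkeeping is Step~1: excluding unbounded or non-shooting radial solutions near the tip. Lemma~\ref{lem:bounded-tip} handles this through the nonintegrability of $e^{-G}$ at $0$ when $\gamma>1$, which is exactly why boundedness near the tip is part of the statement.
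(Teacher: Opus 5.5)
Your proposal is correct and follows the paper's proof essentially step by step. The paper likewise obtains (G1), (G2), (G4) from $\psi\in C^3$ and ($\Psi$1)--($\Psi$2), gets (G3) from $(\Phi)$ via Lemma~\ref{lem:phi-consequences}(i), and reduces to shooting solutions through Lemma~\ref{lem:bounded-tip} and the oddness of $f$. It then cites Theorem~\ref{thm:tip-existence}, Theorem~\ref{thm:unconditional}(ii), Theorem~\ref{thm:staircase}(i),(iv), Proposition~\ref{prop:nondegeneracy}, and the weak-formulation argument of Theorem~\ref{thm:tip-existence}, exactly as you do.
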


\begin{proof}
    Since $\psi\in C^3$ and $\psi,\psi'>0$, $g_\psi$ is positive and of class $C^2$ on $(0,\infty)$. With ($\Psi$2) this gives (G1), (G2) and (G4) with $\gamma=(n-1)a$, and (G3) follows from $(\Phi)$ by Lemma~\ref{lem:phi-consequences}(i). Let $u$ be a nontrivial radial solution of \eqref{eq:tip-model} that is bounded near the tip. It solves \eqref{eq:tip-ode}, so by Lemma~\ref{lem:bounded-tip} it is $u(\cdot,\alpha)$ with $\alpha=u(0)\ne0$. Since $f$ is odd, $u(\cdot,-\alpha)=-u(\cdot,\alpha)$, so we may take $\alpha>0$, and $u$ is then a bound state in the sense of Section~\ref{sec:terminology}. Existence for every $k$ is Theorem~\ref{thm:tip-existence}. Uniqueness is Theorem~\ref{thm:unconditional}(ii) in case (i) and Theorem~\ref{thm:staircase}(i) in case (ii). The growth of $|v|$ is Proposition~\ref{prop:nondegeneracy} for $k=0$ and Theorem~\ref{thm:staircase}(iv) for $k\ge1$, and the weak formulation is proved as in Theorem~\ref{thm:tip-existence}.
\end{proof}

Examples of tips to which Theorem~\ref{thm:tip-uniqueness} applies are given in Appendix~\ref{sec:unique-tip-examples}.

\subsection{What is excluded}\label{sec:excluded-tips}
Theorem~\ref{thm:tip-uniqueness} covers fewer tips than Theorem~\ref{thm:tip-existence}. The requirement $\gamma>1$ excludes the necks and wormholes of Example~\ref{ex:necks}, the exterior problem of Example~\ref{ex:exterior}, the sub-power tips of Example~\ref{ex:subpoly-tips} and two-dimensional cones: for these $\gamma\le1$, and the multiplier field $\varphi=(\gamma-1)e^G\cJ$ degenerates. For two-dimensional cones the radial equation is the Euclidean one with $n=2$ (Example~\ref{ex:cones}), so the $k$-node bound state is unique there by~\cite[Theorem~1.1]{CortazarGarciaHuidobroYarur2011}, whose hypothesis $(F/f)'\ge0$ on $(\alpha_*,\infty)$ holds for the model nonlinearity by \eqref{eq:F-over-f}. The other restrictions come from $(\Phi)$, which is a global condition. Its first form, $D>1$ and $D$ nonincreasing, does not involve $\gamma$, and it already determines the behavior of $g$ at the tip.

\begin{proposition}[$(\Phi)$ at the tip]\label{prop:phi-tip}
    Let $g$ satisfy \textup{(G1)} and $\int_1^\infty e^{-G}<\infty$, and let $D$ be nonincreasing on $(0,\infty)$.
    \begin{enumerate}[label=(\roman*)]
        \item If $\limsup_{r\to0}rg(r)>1$, then $\gamma=\lim_{r\to0}rg(r)$ exists, $\gamma>1$, and $D(0^+)=\frac{\gamma}{\gamma-1}$.
        \item If moreover $rg(r)\to m>1$ as $r\to\infty$, then $\gamma\le m$, and $\gamma=m$ only if $g=\gamma/r$ on $(0,\infty)$.
    \end{enumerate}
\end{proposition}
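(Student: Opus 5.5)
The plan is to work with $D(r)=g(r)e^{G(r)}\cJ(r)$ and its Riccati-type relation. Differentiating gives $D'=(g'/g)D+g(D-1)$. Equivalently, $\psi\defeq e^G\cJ$ satisfies $\psi'=g\psi-1$, so $D=g\psi$ and $\psi'=D-1$. Since $D$ is nonincreasing, $\psi'$ is nonincreasing, so $\psi$ is concave on $(0,\infty)$.

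For (i), we first show $\psi(0^+)=0$. Concavity gives that $\psi'(0^+)=D(0^+)-1\in(-\infty,+\infty]$ exists. Note that $D(0^+)$ exists in $(0,\infty]$ by monotonicity, and $\psi>0$. We then use $\limsup rg>1$ to get $\cJ(r)\to\infty$ along $r\to0$: if $\cJ(0^+)<\infty$, then $e^{-G}$ is integrable at $0$, and we compare with $\psi$. Concretely, $\psi$ concave and positive implies $\psi(r)\ge c\,r$ near $0$ and $\psi$ bounded near $0$, hence $\psi(0^+)=L\ge0$. If $L>0$, then $D=g\psi\ge gL/2$ near $0$, and $D$ is bounded near $0$ only if $g$ is. We split into two cases. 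If $D(0^+)=\infty$, then $\psi'\to\infty$ near $0$, which contradicts $\psi$ being bounded below near $0$: we would get $\psi(r)\le\psi(r_0)-\int_r^{r_0}\psi'\to-\infty$. So $D(0^+)=:d<\infty$, $\psi'(0^+)=d-1$, and $\psi(0^+)=L$ exists. If $L>0$, then $g=D/\psi\to d/L$ is bounded, contradicting $\limsup rg>1$, since that forces $g\to\infty$ along a sequence. Hence $L=0$, and by l'H\^opital $\psi(r)/r\to d-1$. Then $rg=rD/\psi\to d/(d-1)$, provided $d>1$. If $d\le1$, then $\psi'\le0$ everywhere, so $\psi$ is nonincreasing with $\psi(0^+)=0$, hence $\psi\equiv0$, which is impossible. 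So $d>1$, and $\gamma=d/(d-1)>1$ exists. Solving, $D(0^+)=d=\gamma/(\gamma-1)$.

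For (ii), the same l'H\^opital argument at infinity applies. Since $rg\to m>1$, we have $e^{-G}\sim r^{-m}$ up to slowly varying factors, and $\cJ(r)/(re^{-G})$ has limit $1/(m-1)$ by l'H\^opital, using $(re^{-G})'=(1-rg)e^{-G}$. Hence $D(\infty)=m/(m-1)$. Monotonicity gives $\gamma/(\gamma-1)=D(0^+)\ge D(\infty)=m/(m-1)$. Since $x\mapsto x/(x-1)$ is decreasing on $(1,\infty)$, this yields $\gamma\le m$. If $\gamma=m$, then $D$ is constant, equal to $d=\gamma/(\gamma-1)$, so $\psi'\equiv d-1$ and $\psi=(d-1)r$ because $\psi(0^+)=0$. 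Then $g=D/\psi=\gamma/r$.

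The main obstacle is step (i): excluding $\psi(0^+)>0$ and the degenerate cases $D(0^+)\in\{\infty\}\cup(0,1]$ using only concavity and the $\limsup$ hypothesis. I would handle these first, as above, before applying l'H\^opital.
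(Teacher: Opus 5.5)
\paragraph{A gap in part (i): the case $D(0^+)=+\infty$.} Your route is the paper's: concavity of your $\psi=e^G\cJ$ (from $\psi'=D-1$), a case analysis on $L=\psi(0^+)$ and $d=D(0^+)$, l'H\^opital at $0$ and at infinity, and the monotonicity of $x\mapsto x/(x-1)$. Part (ii), and the subcases of part (i) with $d<\infty$, are correct. The step that disposes of $d=+\infty$ is wrong, however.

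You claim that $\psi'\to+\infty$ at $0$ forces $\psi(r)\le\psi(r_0)-\int_r^{r_0}\psi'\to-\infty$. That $\psi'$ is unbounded near $0$ does not make it non-integrable there: take $\psi(r)=\sqrt r$. In fact, since $\psi>0$, one has $\int_r^{r_0}\psi'=\psi(r_0)-\psi(r)<\psi(r_0)$, so the divergence you assert never occurs.

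Worse, the case really occurs under the standing hypotheses. Set $\psi(r)=\sqrt r$ for $r\le\frac94$ and $\psi(r)=\frac r3+\frac34$ for $r\ge\frac94$, and put $g\defeq(1+\psi')/\psi$. Then $\psi$ is concave, positive and $C^{1,1}_{\mathrm{loc}}$, so (G1) holds. Near infinity $g=4/(r+\frac94)$, so $\int^\infty e^{-G}<\infty$. One has $e^G\cJ=\psi$, because $(e^{-G}\psi)'=-e^{-G}$ and $e^{-G}\psi\to0$. Moreover $D=1+\psi'$ is nonincreasing with $D(0^+)=\infty$, while $rg=\sqrt r+\frac12\to\frac12$. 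So concavity and positivity cannot exclude $d=\infty$. This is exactly where $\limsup_{r\to0}rg>1$ must enter, and your argument never uses it there.

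\paragraph{The fix.} As in the paper, work with $rg=r(1+\psi')/\psi$ in the two subcases.
\begin{itemize}
\item If $L=0$, then $\psi(r)=\int_0^r\psi'\ge r\psi'(r)$ because $\psi'$ is nonincreasing. Hence $rg\le1+1/\psi'(r)\to1$.
\item If $L>0$, then $\frac r2\psi'(r)\le\int_{r/2}^r\psi'=\psi(r)-\psi(r/2)\to0$. Hence $rg\to0$.
\end{itemize}
Either conclusion contradicts $\limsup rg>1$. With this in place, the rest of your argument goes through as written: $L>0$ with $d<\infty$ is excluded, $d\le1$ is impossible, l'H\^opital gives $\gamma=d/(d-1)$, and part (ii) follows. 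The passage about $\cJ(0^+)$ and $\psi(r)\ge cr$ is never used and can be dropped.
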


\begin{proof}
    Put $\tilde\varphi\defeq e^G\cJ>0$, so that $\tilde\varphi'=g\tilde\varphi-1=D-1$ and $\tilde\varphi''=D'\le0$. As $\tilde\varphi$ is concave and positive, the limits $\tilde\varphi(0^+)\in[0,\infty)$ and $\kappa\defeq\tilde\varphi'(0^+)\in(-\infty,\infty]$ exist, and $rg=r(\tilde\varphi'+1)/\tilde\varphi$.

    (i) If $\tilde\varphi(0^+)>0$, then $r\tilde\varphi'(r)\to0$, since $\tilde\varphi'$ is bounded near $0$ when $\kappa<\infty$, and $\frac r2\tilde\varphi'(r)\le\tilde\varphi(r)-\tilde\varphi(r/2)\to0$ when $\kappa=\infty$. Hence $rg\to0$. If $\tilde\varphi(0^+)=0$ and $\kappa=\infty$, then $\tilde\varphi(r)=\int_0^r\tilde\varphi'\ge r\tilde\varphi'(r)$, so $rg\le1+1/\tilde\varphi'(r)\to1$. Both cases contradict $\limsup rg>1$, so $\tilde\varphi(0^+)=0$ and $\kappa<\infty$.
    Moreover $\kappa>0$, for otherwise $\tilde\varphi'\le0$ and $\tilde\varphi\le\tilde\varphi(0^+)=0$. Then $\tilde\varphi(r)/r\to\kappa$, so $rg=(\tilde\varphi'+1)/(\tilde\varphi/r)\to1+\frac1\kappa\eqdef\gamma>1$, and $D(0^+)=1+\kappa=\frac\gamma{\gamma-1}$.

    (ii) As $r\to\infty$, $\cJ$ and $re^{-G}$ tend to $0$, and l'H\^opital's rule gives $\tilde\varphi(r)/r=\cJ(r)/(re^{-G(r)})\to\frac1{m-1}$, as in the proof of Lemma~\ref{lem:phi-basic}, so $D=rg\,\tilde\varphi/r\to\frac m{m-1}$. Since $D$ is nonincreasing and $x\mapsto\frac x{x-1}$ is decreasing on $(1,\infty)$, $\gamma\le m$. If $\gamma=m$, then $D$ is constant, so $\varphi'=(\gamma-1)(D-1)\equiv1$, $\varphi=r$ and $g=(\varphi'+\gamma-1)/\varphi=\gamma/r$.
\end{proof}

\begin{remark}[Tips excluded by $(\Phi)$]\label{rem:excluded-tips}
    (a) By Proposition~\ref{prop:phi-tip}(i), no tip of Section~\ref{sec:no-tangent-cone} with $(n-1)a_+>1$ satisfies $(\Phi)$, whatever the warping function away from the tip, and those with $(n-1)a_+\le1$ violate (G4): for the log-periodic tips of Example~\ref{ex:log-periodic}, $rg_\psi$ has no limit while $\limsup rg_\psi=(n-1)(1+\varepsilon)>1$. (The restriction $\limsup rg>1$ in Proposition~\ref{prop:phi-tip}(i) cannot be dropped: there are warping functions with $D>1$ nonincreasing for which $rg_\psi$ oscillates below $1$.) So Theorem~\ref{thm:blowups} has no uniqueness counterpart within the present method.

    (b) By Proposition~\ref{prop:phi-tip}(ii), the effective dimension $\gamma+1$ of the tip is at most that of a conical end, with equality only for exact cones and horns. This excludes the global horns $r^a(1+r)^{1-a}$ of Example~\ref{ex:horns} with $a>1$, the metric of Example~\ref{ex:cones} whose cone angle changes from $1$ to $c\ne1$, and the mildly oscillating metric of Section~\ref{sec:manifolds}. Ends on which $g$ has a positive limit, such as hyperbolic ends, have $D\to1$ and impose no such condition.

    (c) Unlike Theorem~\ref{thm:tip-existence}, the uniqueness theorem sees the curvature at the tip. For the rough tip $\psi(r)=r+r^3\sin(1/r)$ of Example~\ref{ex:rough-tips}, $r(rg_\psi)'=-(n-1)\sin(1/r)+O(r)$. If $(\Phi)$ held, then $\varphi/r\to1$ and $|r(\varphi/r)'|=|\varphi'-\varphi/r|\le1-\varphi'\to0$ by \eqref{eq:phi-bounds}, so $r\varphi''=r\big((rg)\,\varphi/r\big)'=-(n-1)\sin(1/r)+o(1)$, which is positive along a sequence $r\to0$. For the tip $\psi(r)=r|\log r|^b$ of Example~\ref{ex:rough-tips} with $b>0$ one has $e^{G(r)-G(s)}=(r/s)^\gamma\big(\log r/\log s\big)^{\gamma b}$ for $r<s$ near $0$, and with $(1-y)^{-\gamma b}\ge1+\gamma by$ this gives $\varphi(r)/r\ge1+\frac{\gamma b}{(\gamma-1)|\log r|}\big(1+o(1)\big)>1$ for small $r$, against \eqref{eq:phi-bounds}. For $b<0$, tips of this kind do satisfy $(\Phi)$ (Example~\ref{ex:log-pinched}).
\end{remark}

%% file: sections/A-singular-tips.tex
\section{Singular tips: examples and non-unique blow-ups}\label{sec:tip-appendix}

This appendix collects the examples of Sections~\ref{sec:tips} and~\ref{sec:tip-uniqueness} and the proof of Theorem~\ref{thm:blowups}. Throughout, $M$ is the warped product \eqref{eq:tip-metric} over a closed Riemannian manifold $\cN$ of dimension $n-1\ge1$.

\subsection{Examples for Section~\ref{sec:tips}}\label{sec:tip-examples}
In each example $\psi$ is prescribed near the tip and may be completed arbitrarily on $[r_0,\infty)$ subject to ($\Psi$1) and ($\Psi$3). When a global formula is given, ($\Psi$3) is checked explicitly. Curvatures are computed from the warped-product formulas $\sec(\partial_r,X)=-\psi''/\psi$ and $\sec(X,Y)=(\sec_{\cN}(X,Y)-\psi'^2)/\psi^2$ for $X,Y$ tangent to $\cN$~\cite{Petersen2016}.

\begin{example}[Cones over arbitrary links]\label{ex:cones}
    $\psi(r)=cr$ near $0$, $c>0$: $a=1$, $\gamma=n-1$, Euclidean range $p<p_c(n-1)$, i.e.\ $p<\frac{n+2}{n-2}$ for $n\ge3$ and every $p>1$ for $n=2$. Equation~\eqref{eq:tip-ode} is literally the Euclidean radial equation, so for $\cN=\mathbb{S}^{n-1}/\Gamma$ (orbifold points) and for two-dimensional cones of angle $2\pi c$ the solutions are the Euclidean ones. What is new is that $\cN$ is arbitrary. Examples include cones over products $C(\mathbb{S}^{d_1}\times\mathbb{S}^{d_2})$, cones over Einstein links such as the conifold $C(T^{1,1})$ with $n=6$ and $p<2$~\cite{CandelasDeLaOssa1990}, and asymptotically conical spaces whose cone angle changes along the way, such as
    \[
        \psi(r)=r\sqrt{\frac{1+c^2r^2}{1+r^2}},\qquad
        \frac{\psi'}{\psi}=\frac1r+\frac{(c^2-1)\,r}{(1+c^2r^2)(1+r^2)}>0,
    \]
    with cone angle $1$ at the tip and $c$ at infinity. Here $g_\psi\sim(n-1)/r$ at infinity, so $r^2g_\psi\to\infty$ and ($\Psi$3) holds.

    Conical singularities also appear in physics, for instance around topological defects such as global monopoles~\cite{BarriolaVilenkin1989}, and equations of this type are studied on such backgrounds~\cite{Hiscock1996}.
\end{example}

\begin{example}[Horns: fractional effective dimension]\label{ex:horns}
    $\psi(r)=r^a$ near $0$, or globally $\psi(r)=r^a(1+r)^{1-a}$, for which $\psi'/\psi=(a+r)/(r(1+r))>0$ and $g_\psi\sim(n-1)/r$ at infinity, so that ($\Psi$3) holds. Then $\gamma=(n-1)a$ and $N_{\mathrm{eff}}=1+(n-1)a$: the radial Laplacian in non-integer dimension, realized geometrically. For $a>1$ the model is a metric horn~\cite{Cheeger1980}: its radial curvature $-a(a-1)/r^2$ has the conical rate, while its tangential curvature $(\sec_{\cN}-a^2r^{2a-2})/r^{2a}$ blows up like $r^{-2a}$, strictly faster than $r^{-2}$, wherever $\sec_{\cN}\ne0$. This plays no role in the proof.
\end{example}

\begin{example}[Sub-polynomial tips]\label{ex:subpoly-tips}
    $\psi(r)=1/|\log r|$ or $\psi(r)=\exp(-|\log r|^\theta)$ with $0<\theta<1$ near $0$: $\psi(0)=0$, the tip is at finite distance, but $\varphi_\psi(r)=1/|\log r|$, resp.\ $\theta|\log r|^{\theta-1}$, tends to $0$, so $a=0$, $N_{\mathrm{eff}}=1$, and every $p>1$ is admissible. For $\theta>1$ instead $\varphi_\psi\to\infty$, and likewise for $\psi=e^{-1/r}$: this is the case this method cannot handle. The range $p<p_c(\gamma)$ is empty as $\gamma\to\infty$, and indeed the scaled family $v(\cdot,\lambda)$ of Lemma~\ref{lem:scaled-convergence} converges to the constant $1$, and the damping ``freezes'' the solution near the tip. The threshold $\theta=1$ is $\psi=r$.
\end{example}

\begin{example}[Rough tips with the Euclidean range]\label{ex:rough-tips}
    $\psi(r)=r+r^3\sin(1/r)$: $\psi'=1+3r^2\sin(1/r)-r\cos(1/r)\to1$ and $\psi'>0$ near $0$, but $\psi''=6r\sin(1/r)-4\cos(1/r)-r^{-1}\sin(1/r)$, so the radial curvature $-\psi''/\psi=r^{-2}\sin(1/r)+4r^{-1}\cos(1/r)+O(1)$ is unbounded with oscillating sign. Still $a=1$ and the range is the Euclidean one. Similarly $\psi(r)=r|\log r|^b$ has $\varphi_\psi=1+b/\log r\to1$ although $\psi'(0)$ is $+\infty$ for $b>0$ and $0$ for $b<0$. The tip condition sees only the index, not $\psi'(0)$, $\psi''$ or the curvature.
\end{example}

\begin{example}[Necks and wormholes]\label{ex:necks}
    If $\psi(0)=\psi_0>0$ and $r\psi'(r)\to0$ as $r\to0^+$ (for instance if $\psi'$ is bounded near $0$), then $\varphi_\psi\to0$, $a=0$, and every $p>1$ is admissible in any dimension. The ``tip'' is now a copy of $\cN$, the condition $u'(0)=0$ is a Neumann condition, and by even reflection in $r$ one obtains solutions on the two-ended space $(\R\times \cN,\ dr^2+\psi(|r|)^2g_{\cN})$ depending only on the signed distance to the neck $\{r=0\}$ and having exactly $2k$ nodal hypersurfaces $\{r=\pm r_i\}$ parallel to it. When $\psi'(0)\ne0$ the doubled metric is only Lipschitz across the neck and $u$ is a weak solution across the neck in the sense of \eqref{eq:weak-tip}. Since ($\Psi$1) forbids $\psi'<0$, placing the neck at $r=0$ and reflecting is the only way Theorem~\ref{thm:existence} applies to a neck. Three instances, all with $\psi'(0)=0$:
    \begin{itemize}
        \item The spatial slice of the Ellis--Bronnikov wormhole $dr^2+(r^2+b^2)g_{\mathbb{S}^2}$~\cite{Ellis1973,Bronnikov1973}: $g_\psi=2r/(r^2+b^2)$ and $r^2g_\psi\sim2r\to\infty$. The Ellis spacetime $-dt^2+dr^2+(r^2+b^2)g_{\mathbb{S}^2}$ is ultrastatic, so standing waves $e^{i\omega t}u$ of $\Box u-m^2u+|u|^{p-1}u=0$ satisfy exactly \eqref{eq:tip-ode} with $f(u)=-(m^2-\omega^2)u+|u|^{p-1}u$, which satisfies (f1)--(f4) for $\omega^2<m^2$.
              Nonlinear waves on this spacetime are studied in~\cite{BizonKahl2015}, where each topological degree of equivariant wave maps has a unique static solution.
        \item The $t=\mathrm{const}$ slice of Schwarzschild (the Einstein--Rosen bridge~\cite{EinsteinRosen1935}), parametrized by proper distance $r$ from the throat: $\psi'=\sqrt{1-2M/\psi}$, $\psi(0)=2M$, $\psi(r)=2M+r^2/8M+O(r^4)$, $\psi\sim r$ at infinity, and ($\Psi$1), ($\Psi$3) hold. This concerns the Riemannian slice. Standing waves on the Lorentzian Schwarzschild spacetime involve the lapse and are not of the form \eqref{eq:tip-ode} with autonomous $f$.
        \item Fuchsian hyperbolic manifolds $\R\times \cN$ with $dr^2+\cosh^2r\,g_{\cN}$, $\cN$ closed hyperbolic: the curvatures are $-\cosh''/\cosh=-1$ and $(-1-\sinh^2r)/\cosh^2r=-1$, so these are complete manifolds of constant curvature $-1$ with a totally geodesic copy of $\cN$. Here $g_\psi=(n-1)\tanh r$ and $r^2g_\psi\to\infty$. For every $p>1$, including exponents supercritical for the dimension $n$, and every $k$, there is a solution depending only on the signed distance to $\cN$ with $2k$ nodal hypersurfaces.
    \end{itemize}
\end{example}

\begin{example}[Exterior Neumann problem]\label{ex:exterior}
    $\psi(r)=r+R_0$ with $\cN=\mathbb{S}^{n-1}$ is $\R^n\setminus B_{R_0}$ in the variable $r=|x|-R_0$: $g_\psi=(n-1)/(r+R_0)$, $a=0$ and $r^2g_\psi\to\infty$. Hence for every $p>1$ and $k\ge0$ there is a radial solution of $\Delta u+f(u)=0$ in $\R^n\setminus B_{R_0}$ with $\partial_\nu u=0$ on $\partial B_{R_0}$, $u\to0$ at infinity and exactly $k$ nodal spheres. As noted in Remark~\ref{rem:tip-capacity}, the Dirichlet analogue is not covered.
\end{example}

\begin{remark}[On the end]\label{rem:end}
    The examples above single out the tip, while the end is governed by ($\Psi$3) alone. Ends that approach a cylinder fast fail ($\Psi$3), e.g.\ the cigar $\psi=\tanh r$, for which $r^2g_\psi\to0$. Some condition at infinity is needed for the shooting argument. On such ends $g_\psi$ is bounded and integrable at infinity, so the energy $E$ need not tend to $0$. A solution whose energy has a positive limit has infinitely many zeros. Otherwise, after its last zero it would be eventually monotone by Lemma~\ref{lem:positive-critical}, with $u\to\ell$, $u'\to0$ and $F(\ell)>0$, and then $u''=-g_\psi u'-f(u)\to-f(\ell)\ne0$, which is impossible. So $N(\alpha)$ can jump from a finite value to $\infty$, which Lemma~\ref{lem:no-gaps} excludes when ($\Psi$3) holds. We do not know whether bound states with $k\ge2$ nodes exist on such ends. However, an end that approaches a cylinder slowly enough passes, e.g.\ $\psi=2-r^{-1/2}$ near infinity, for which $r^2g_\psi\sim\frac{n-1}{4}r^{1/2}\to\infty$. Cusps ($\psi=e^{-r}$) are excluded by $\psi'>0$, while ends opening only logarithmically, $\psi=(\log r)^b$, pass, since $r^2g_\psi=(n-1)b\,r/\log r\to\infty$.
\end{remark}

\subsection{Non-unique blow-ups}\label{sec:blowups}
Here we prove Theorem~\ref{thm:blowups}. The proof replaces Lemma~\ref{lem:limit-zeros} by an oscillation lemma for a general damping. Let $h:(0,\infty)\to[0,\infty)$ be locally Lipschitz with $h(r)\le\gamma_+/r$ for some $\gamma_+\ge0$, let $H$ be a primitive of $h$, and put
\begin{equation}\label{eq:V-h}
    V_h(r):=e^{-H(r)}\int_0^r e^{H(s)}\,ds,\qquad
    \hrho_h:=\sup_{r>0}\Big(h(r)V_h(r)-\tfrac12\Big).
\end{equation}
Since $H$ is nondecreasing, $V_h$ is well defined, $V_h'=1-hV_h$, and $V_h(r)\le r$. Moreover, $H(r)-H(s)\le\gamma_+\log(r/s)$ for $s<r$, so $V_h(r)\ge r/(\gamma_++1)$. Hence $\hrho_h\le\gamma_+-\frac12$. For $h=\gamma/r$ one has $V_h(r)=r/(\gamma+1)$ and $\hrho_h=(\gamma-1)/(2(\gamma+1))$, so that $(1-\hrho_h)/\hrho_h=p_c(\gamma)$ when $\gamma>1$, while $\hrho_h\le0$ when $\gamma\le1$.

\begin{lemma}[Oscillation for a general damping]\label{lem:oscillation}
    Let $h$ be as above, let $k_\pm>0$, and let $1<p<p_c(\gamma_+)$, with in addition $p<(1-\hrho_h)/\hrho_h$ if $\hrho_h>0$. Then the solution of
    \begin{equation}\label{eq:damped-limit}
        w''+h(r)w'+k_\pm|w|^{p-1}w=0,\qquad w(0)=1,\quad w'(0)=0,
    \end{equation}
    has infinitely many zeros, all of them simple.
\end{lemma}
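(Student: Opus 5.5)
The plan is to copy the proof of Lemma~\ref{lem:limit-zeros}. The explicit weight $r^{\gamma}$ is replaced by $e^{H}$ and the multiplier $r/(\gamma+1)$ by $V_h$. A constant coefficient in front of $ww'$ then absorbs the fact that $hV_h-\frac12$ is no longer constant; this constant is where $\hrho_h$ enters.

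Simplicity of the zeros and their non-accumulation follow from uniqueness, as before. With $F_k(w)=k_{\sgn w}|w|^{p+1}/(p+1)$ and $f_k=F_k'$, I would set
\[
  P\defeq e^{H}\Big[V_h\big(w'^2+2F_k(w)\big)+\sigma\,ww'\Big],\qquad \sigma\defeq\max\{2\hrho_h,0\}\ge0 .
\]
Using $V_h'=1-hV_h$ and the equation, a direct computation should give
\[
  P'=e^{H}\Big[\big(1-2hV_h+\sigma\big)w'^2+2F_k(w)-\sigma\,wf_k(w)\Big].
\]
The first coefficient is $\ge0$ because $\sigma\ge 2hV_h-1$ by the definition of $\hrho_h$. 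The remaining terms equal $\big(\frac{2}{p+1}-\sigma\big)k_\pm|w|^{p+1}$. This is positive when $\hrho_h\le0$, since then $\sigma=0$. When $\hrho_h>0$ it is positive exactly when $p<(1-\hrho_h)/\hrho_h$. Since $P(0^+)=0$ (because $e^HV_h\to0$ and $e^H$ is bounded), $P$ is strictly increasing and $P>0$ on $(0,\infty)$.

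Next, suppose there is a last zero $R$, and normalize $w>0$ on $(R,\infty)$. Steps~2--4 of Lemma~\ref{lem:limit-zeros} carry over with $r^\gamma$ replaced by $e^{H}$. Boundedness follows from the energy, which is nonincreasing because $h\ge0$. There is a critical point $R'\ge R$ after which $w'<0$, since $(e^Hw')'=-ke^Hw^p<0$. If $w$ stayed monotone increasing to a positive limit, $w''\le -kw^p$ would drive $w'$ to $-\infty$. The limit of $w$ must be $0$: otherwise $e^H|w'|\ge c\int_{R'}^re^H$, so $|w'|\ge cV_h(r)-o(1)\ge c'r$, using $V_h(r)\ge r/(\gamma_++1)$. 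For the decay rate, $w$ is decreasing, so
\[
  e^{H(r)}|w'(r)|\ge kw(r)^p\int_{R'}^re^H,
\]
and hence $|w'|\ge c\,r\,w^p$ for large $r$. Integrating gives $w(r)\le Cr^{-2/(p-1)}$, exactly as in \eqref{eq:limit-decay}.

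To conclude, on $(R',\infty)$ we have $ww'<0$ and $\sigma\ge0$, so
\[
  0<P(R'')\le P(r)\le e^{H}V_h\big(w'^2+2F_k(w)\big)\le Cr^{\gamma_++1}\big(w'^2+w^{p+1}\big)\qquad(r\ge R''),
\]
using $e^{H(r)}\le Cr^{\gamma_+}$ for $r\ge1$ and $V_h\le r$. Since $P$ is increasing, it suffices to make the right side small along a sequence $r_j\to\infty$. This matters, because without a lower bound on $h$ I do not see how to bound $|w'|$ pointwise; I expect this to be the main obstacle, and the sequence trick is how I would avoid it. By the mean value theorem on $[r/2,r]$ there is $s\in[r/2,r]$ with
\[
  |w'(s)|\le \frac{2w(r/2)}{r}\le Cr^{-1-\frac{2}{p-1}} .
\]
At such points both $s^{\gamma_++1}w'(s)^2$ and $s^{\gamma_++1}w(s)^{p+1}$ are $O\big(s^{\gamma_++1-2(p+1)/(p-1)}\big)$ (the first is even smaller, of order $s^{\gamma_+-1-4/(p-1)}$). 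This exponent is negative precisely when $p<p_c(\gamma_+)$, and automatically when $\gamma_+\le1$. Hence $\liminf P\le0$, which contradicts $P\ge P(R'')>0$. So $w$ has infinitely many zeros. Note that this single argument covers the cases $\gamma_+\le1$ and $\gamma_+>1$ together, without the logarithmic argument of Step~5.
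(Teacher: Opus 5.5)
Your proof is correct and runs parallel to the paper's. The Pohozaev function is the same (your $\sigma$ is the paper's $\mu=2\max\{\hrho_h,0\}$), and so are the formula for $P'$ and its sign analysis. Steps~2--4 of Lemma~\ref{lem:limit-zeros} are transferred in the same way, with $r^\gamma$ replaced by $e^H$, and the decay $w\le Cr^{-2/(p-1)}$ is the same.

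The one real difference is how you obtain $\liminf P=0$. The paper multiplies $(e^Hw')'=-ke^Hw^p$ by $w$ and integrates over $[R',r]$, which gives $\int_{R'}^re^Hw'^2\le k\int_{R'}^re^Hw^{p+1}$. Since $e^H\le Cr^{\gamma_+}$, the decay makes $e^Hw^{p+1}$ integrable exactly when $p<p_c(\gamma_+)$. Hence $\int^\infty e^H(w'^2+2F_k(w))<\infty$, and so $\liminf_{r\to\infty}re^{H}(w'^2+2F_k(w))=0$. You instead apply the mean value theorem on $[r/2,r]$ to find points where $|w'|\le 2w(r/2)/r$.

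Both routes avoid a pointwise bound on $w'$, which is indeed unavailable without a lower bound on $h$. Both need the same exponent condition, and both treat $\gamma_+\le1$ and $\gamma_+>1$ together; the paper's proof has no analogue of Step~5 either. Your route is slightly more elementary, since it uses only monotonicity and the decay rate. The paper's route yields the stronger integrated statement.

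There are two harmless slips:
\begin{itemize}
\item Since $\frac{2(p+1)}{p-1}=2+\frac4{p-1}$, the two terms at $s$ have the same order $s^{\gamma_+-1-4/(p-1)}$; neither is smaller than the other.
\item The quantity $e^{-H(r)}\int_0^{R'}e^H$ is only bounded, not $o(1)$; for instance $h\equiv0$ is allowed. This is still enough, because $V_h(r)\ge r/(\gamma_++1)$.
\end{itemize}
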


\begin{proof}
    Existence and uniqueness of $w$ follow from \S\ref{sec:local} as in the proof of Lemma~\ref{lem:origin}, since $h\ge0$. Zeros are simple by uniqueness. With $f_k(w):=k_{\sgn w}|w|^{p-1}w$ and $F_k(w):=k_{\sgn w}|w|^{p+1}/(p+1)$ as in the proof of Lemma~\ref{lem:limit-zeros}, put $\mu:=2\max\{\hrho_h,0\}$ and
    \[
        P:=e^H\Big[V_h\big(w'^2+2F_k(w)\big)+\mu\,ww'\Big].
    \]
    Using \eqref{eq:damped-limit} and $V_h'=1-hV_h$, a direct computation gives
    \[
        P'=e^H\Big[\big(1+\mu-2hV_h\big)w'^2+\Big(\frac{2}{p+1}-\mu\Big)k_{\sgn w}|w|^{p+1}\Big].
    \]
    By the definition of $\hrho_h$, $1+\mu-2hV_h\ge1+2\hrho_h-2hV_h\ge0$ if $\hrho_h\ge0$, and $1-2hV_h\ge-2\hrho_h>0$ if $\hrho_h<0$. Moreover, $\mu<\frac2{p+1}$, which for $\hrho_h>0$ is the condition $p<(1-\hrho_h)/\hrho_h$. Hence $P'\ge0$, with $P'>0$ wherever $w\ne0$. As $r\downarrow0$, $e^HV_h=\int_0^re^H\to0$ and $e^Hww'\to0$, because $e^H$ is bounded near $0$ and $w'(0)=0$. Thus $P(0^+)=0$, and $P(r)>0$ for every $r>0$.

    Suppose that $w$ has finitely many zeros. Steps~2 and~3 of the proof of Lemma~\ref{lem:limit-zeros} hold verbatim with $\gamma/r$ replaced by $h\ge0$ and $r^\gamma$ by $e^H$: normalizing $w>0$ after the last zero and writing $k=k_+$, this means that there is a critical point $R'$ such that $w>0$, $w'<0$ and $(e^Hw')'=-ke^Hw^p$ on $(R',\infty)$. For $r\ge2R'$,
    \[
        e^{-H(r)}\int_{R'}^re^{H(s)}\,ds\ge\int_{R'}^r\Big(\frac sr\Big)^{\gamma_+}ds\ge\frac{r}{2(\gamma_++1)} .
    \]
    Hence, as in Step~4 there, $w$ decreases to $0$ (a positive limit $w_\infty$ would give $w'\le-kw_\infty^p\,r/(2(\gamma_++1))$ for large $r$), and since $w$ is decreasing,
    \[
        w'(r)\le-kw(r)^pe^{-H(r)}\int_{R'}^re^H\le-\frac{k}{2(\gamma_++1)}\,r\,w(r)^p,
    \]
    so that $w(r)\le Cr^{-2/(p-1)}$ for large $r$. As $e^{H(r)}\le e^{H(1)}r^{\gamma_+}$ for $r\ge1$,
    \[
        \int^\infty e^Hw^{p+1}\,dr\le C\int^\infty r^{\gamma_+-\frac{2(p+1)}{p-1}}\,dr<\infty,
    \]
    because $\gamma_+-\frac{2(p+1)}{p-1}<-1$ is equivalent to $p<p_c(\gamma_+)$. Multiplying $(e^Hw')'=-ke^Hw^p$ by $w$ and integrating over $[R',r]$, where $e^Hww'\le0$ at $r$ and $w'(R')=0$, gives
    \[
        \int_{R'}^re^Hw'^2\le k\int_{R'}^re^Hw^{p+1},
    \]
    hence $\int^\infty e^H\big(w'^2+2F_k(w)\big)<\infty$, and therefore
    \[
        \liminf_{r\to\infty}re^{H(r)}\big(w'^2+2F_k(w)\big)(r)=0.
    \]
    But on $(R',\infty)$ we have $\mu ww'\le0$ and $V_h(r)\le r$, so $P(r)\le re^{H(r)}\big(w'^2+2F_k(w)\big)(r)$, and $\liminf_{r\to\infty}P(r)=0$ contradicts $P(r)\ge P(R'+1)>0$ for $r\ge R'+1$.
\end{proof}

For $h=\gamma/r$ with $\gamma\ge1$ one has $\mu=(\gamma-1)/(\gamma+1)$, and $P$ is $1/(\gamma+1)$ times the Pohozaev function of Lemma~\ref{lem:limit-zeros}. Note that $\hrho_h$ is a supremum over the whole real half-line: the proof uses the monotonicity of $P$ from $P(0^+)=0$ on, so a condition on $h$ at infinity alone does not suffice for this argument.

Now let $\psi$ satisfy \textup{($\Psi$1)} and \textup{($\Psi$2$'$)}, with $a_+$ and $\gamma_+$ as in Section~\ref{sec:no-tangent-cone}. Since
\begin{equation}\label{eq:blowup-g}
    g_\lambda(r)=(n-1)\frac{\varphi_\psi(r/\lambda)}{r},\qquad
    g_\lambda'(r)=(n-1)\frac{s\varphi_\psi'(s)-\varphi_\psi(s)}{r^2}\bigg|_{s=r/\lambda},\qquad
    s\varphi_\psi'(s)=\varphi_\psi+s^2\frac{\psi''}{\psi}-\varphi_\psi^2,
\end{equation}
hypothesis ($\Psi$2$'$) says that the blow-ups $\{g_\lambda\}_{\lambda\ge\lambda_0}$ are bounded and equi-Lipschitz on compact subsets of $(0,\infty)$. By Arzel\`a--Ascoli the family is relatively compact in $C^0_{\mathrm{loc}}((0,\infty))$, and we define the \emph{blow-up set}
\begin{equation}\label{eq:blowup-set}
    \mathcal{L}(\psi):=\Big\{h\in C^0((0,\infty)):\ g_{\lambda_j}\to h\text{ in }
    C^0_{\mathrm{loc}}((0,\infty))\text{ for some }\lambda_j\to\infty\Big\}.
\end{equation}
Every $h\in\mathcal{L}(\psi)$ is locally Lipschitz and satisfies $0\le h(r)\le\gamma_+/r$. The function $V_{g_\psi}$ of \eqref{eq:rho-star} is $V_h$ of \eqref{eq:V-h} with $h=g_\psi$ and $H=G$. Moreover $\hrho_*\ge(\gamma_+-1)/(2(\gamma_++1))$: given $\varepsilon>0$, for $s$ small one has $tg_\psi(t)\le\gamma_++\varepsilon$ on $(0,s]$, hence $e^{G(t)-G(s)}\ge(t/s)^{\gamma_++\varepsilon}$ and $V_{g_\psi}(s)\ge s/(\gamma_++\varepsilon+1)$, and along a sequence $s_j\downarrow0$ with $s_jg_\psi(s_j)\to\gamma_+$ this gives $\limsup g_\psi V_{g_\psi}\ge\gamma_+/(\gamma_++\varepsilon+1)$. Under ($\Psi$2), $\mathcal{L}(\psi)=\{\gamma/r\}$ and, since $sg_\psi(s)\to\gamma$ and $V_{g_\psi}(s)=\frac{s}{\gamma+1}(1+o(1))$ as $s\downarrow0$ (proof of Lemma~\ref{lem:origin}), $\hrho_*=(\gamma-1)/(2(\gamma+1))$.

\begin{proof}[Proof of Theorem~\ref{thm:blowups}]
    \emph{The limit problems.} We first check that every $h\in\mathcal{L}(\psi)$ satisfies the hypotheses of Lemma~\ref{lem:oscillation}. It is locally Lipschitz, with $0\le h(r)\le\gamma_+/r$. The family of blow-ups is scale-invariant in the following sense: from $V_{g_\lambda}(t)=\int_0^te^{G(s/\lambda)-G(t/\lambda)}\,ds =\lambda V_{g_\psi}(t/\lambda)$ we get
    \[
        \big(g_\lambda V_{g_\lambda}\big)(t)=\big(g_\psi V_{g_\psi}\big)(t/\lambda).
    \]
    If $g_{\lambda_j}\to h$ in $C^0_{\mathrm{loc}}$, then $V_{g_{\lambda_j}}(t)=\int_0^t\exp\big(-\int_s^tg_{\lambda_j}\big)\,ds\to V_h(t)$ for every $t>0$ by dominated convergence, the integrand being bounded by $1$. Hence
    \[
        (hV_h)(t)=\lim_j(g_\psi V_{g_\psi})(t/\lambda_j)\le\limsup_{s\to0^+}(g_\psi V_{g_\psi})(s),
    \]
    that is $\hrho_h\le\hrho_*$. (Conversely, if $s_j\to0$ realizes the $\limsup$ in \eqref{eq:rho-star}, a limit $h$ of a subsequence of $g_{1/s_j}$ has $(hV_h)(1)=\hrho_*+\frac12$, so $\hrho_*=\max_{h\in\mathcal L(\psi)}\hrho_h$.) Since $(1-\rho)/\rho$ is decreasing in $\rho>0$, \eqref{eq:blowup-p-range} gives $p<(1-\hrho_h)/\hrho_h$ whenever $\hrho_h>0$. Thus, by Lemma~\ref{lem:oscillation}, the solution $w_h$ of \eqref{eq:damped-limit} has infinitely many zeros, all simple.

    \emph{Scaled limit.} Lemma~\ref{lem:scaled-convergence} is modified as follows. Given $\lambda_j\to\infty$, pass to a subsequence along which $g_{\lambda_j}\to h\in\mathcal{L}(\psi)$ in $C^0_{\mathrm{loc}}$. The bounds on $v,v'$ are unchanged, so along a further subsequence $v(\cdot,\lambda_j)\to w$ locally uniformly. In \eqref{eq:scaled-integral} we have
    \[
        e^{G(s/\lambda_j)-G(r/\lambda_j)}=\exp\Big(-\int_s^rg_{\lambda_j}\Big)\to e^{H(s)-H(r)},\qquad 0<s\le r,
    \]
    with the left side bounded by $1$. So dominated convergence identifies $w$ as the solution of the integral form of \eqref{eq:damped-limit} with this $h$, i.e.\ $w=w_h$, and $v(\cdot,\lambda_j)\to w_h$ in $C^1_{\mathrm{loc}}$ as before. The limit now depends on the subsequence, which is harmless for the only use made of Lemma~\ref{lem:scaled-convergence}: if $A_k$ were unbounded, pick $\alpha_j\in A_k$ with $\alpha_j\to\infty$, set $\lambda_j:=\alpha_j^{(p-1)/2}$, and extract as above. Then $w_h$ has at least $k+1$ simple zeros in some $[0,T]$, so $v(\cdot,\lambda_j)$, and hence $u(\cdot,\alpha_j)$, has at least $k+1$ zeros for $j$ large, contradicting $\alpha_j\in A_k$.

    \emph{The origin.} The remainder of the proof of Theorem~\ref{thm:existence} involves the origin only through Proposition~\ref{prop:strict-extrema} at $r_0=0$. Its proof used $u''(0)=-f(\alpha)/(\gamma+1)$, but under ($\Psi$2$'$), $u''(0)$ need not exist. If $f(\alpha)\ne0$, the integral form $u'(r)=-e^{-G(r)}\int_0^re^{G}f(u)$ shows that $u'$ has the sign of $-f(\alpha)$ on some interval $(0,\delta)$. So $0$ is a strict local maximum if $f(\alpha)>0$ and a strict local minimum if $f(\alpha)<0$, and this is all that the proof of Proposition~\ref{prop:strict-extrema} uses. The passage from the ODE to $M$ is as in Theorem~\ref{thm:tip-existence}.
\end{proof}

Explicit tips without a unique tangent cone are not easy to write down. The following simple example has a family of distinct log-self-similar blow-ups parametrized by a circle.

\begin{example}[Log-periodic tips]\label{ex:log-periodic}
    Let $0<\varepsilon<1$ and $\psi(r)=r\,e^{\varepsilon\sin\log r}$ near $0$. Then $\varphi_\psi(r)=1+\varepsilon\cos\log r\in[1-\varepsilon,1+\varepsilon]$, so $\psi'>0$, and $r\varphi_\psi'(r)=-\varepsilon\sin\log r$ is bounded, so by \eqref{eq:blowup-g} ($\Psi$2$'$) holds while ($\Psi$2) fails. The unnormalized pointed blow-ups $\lambda\psi(s/\lambda)=s\,e^{\varepsilon\sin(\log s-\log\lambda)}$ converge, along $\log\lambda\to\theta\pmod{2\pi}$, to $\Psi_\theta(s)=s\,e^{\varepsilon\sin(\log s-\theta)}$: a family, parametrized by a circle, of distinct, non-conical, log-self-similar limit spaces. The tangent cone at the tip is therefore not unique, and none of the limits is a metric cone (compare with the uniqueness results of~\cite{ColdingMinicozzi2014}, whose hypotheses do not hold here). Correspondingly
    \[
        \mathcal{L}(\psi)=\Big\{h_\theta(r)=\frac{\gamma+\varepsilon'\cos(\log r-\theta)}{r}:\
        \theta\in\R/2\pi\mathbb{Z}\Big\},\qquad \gamma=n-1,\ \varepsilon'=(n-1)\varepsilon,
    \]
    with $H_\theta(r)=\gamma\log r+\varepsilon'\sin(\log r-\theta)$. Substituting $s=rt$ in \eqref{eq:V-h} gives the exact identity $h_\theta(r)V_{h_\theta}(r)=\Pi(\log r-\theta)$ with the $2\pi$-periodic function
    \begin{equation}\label{eq:log-periodic-Phi}
        \Pi(\sigma)=\big(\gamma+\varepsilon'\cos\sigma\big)\,e^{-\varepsilon'\sin\sigma}
        \int_0^1t^{\gamma}\,e^{\varepsilon'\sin(\sigma+\log t)}\,dt,
    \end{equation}
    so $\hrho_{h_\theta}=\max\Pi-\frac12$ is independent of $\theta$ and equals $\hrho_*$ (near $0$, $g_\psi=h_0$, so $g_\psi V_{g_\psi}=\Pi(\log s)$ for small $s$). This value reduces to $(\gamma-1)/(2(\gamma+1))$ for $\varepsilon=0$, and Theorem~\ref{thm:blowups} applies with $p<\min\{p_c(\gamma_+),(1-\hrho_*)/\hrho_*\}$, $\gamma_+=(n-1)(1+\varepsilon)$.
\end{example}

\subsection{Examples for Section~\ref{sec:tip-uniqueness}}\label{sec:unique-tip-examples}
In the examples $\cN$ is any closed manifold of dimension $n-1$. When only $g$ is given, the warping function is $\psi=\exp\big(G/(n-1)\big)$ up to a constant factor.

\begin{example}[Cones and hyperbolic cones]\label{ex:hyperbolic-cones}
    For $\psi(r)=cr$ and $\psi(r)=c\lambda^{-1}\sinh(\lambda r)$, with $c,\lambda>0$ and $n\ge3$, the damping $g_\psi$ is $\frac{n-1}{r}$, respectively $(n-1)\lambda\coth(\lambda r)$, that is, the damping of $\R^n$, respectively of $\Hyp^n_\lambda$. All the hypotheses of Theorem~\ref{thm:tip-uniqueness} hold (Theorem~\ref{thm:euclidean}, Lemmas~\ref{lem:space-forms} and~\ref{lem:H-criterion}(ii), and $\gamma=n-1\ge2$), so for every $k\ge0$ and $1<p<\frac{n+2}{n-2}$ the radial $k$-node bound state is unique. This applies to the cones of Example~\ref{ex:cones}: orbifold points $\R^n/\Gamma$, cones over products of spheres and the conifold $C(T^{1,1})$, with $n=6$ and $1<p<2$. It also applies to their hyperbolic analogues, which have a conical singularity at the tip unless $c=1$ and $\cN$ is the round sphere. For the time-dependent cubic Schr\"odinger equation on metric cones over arbitrary links, see~\cite{ZhangZheng2017}.
\end{example}

\begin{example}[Horns: real dimension]\label{ex:real-horns}
    For $\psi(r)=r^a$ with $\gamma=(n-1)a>1$ one has $g_\psi=\gamma/r$, $\varphi(r)=r$, $D\equiv\frac{\gamma}{\gamma-1}$, $\nu\equiv\gamma+1$ and $e^G\varphi'=r^\gamma$, so (G4)--(G6), $(\Phi)$ and (H) hold, as in the proof of Theorem~\ref{thm:euclidean} with $n-1$ replaced by $\gamma$.
    Theorem~\ref{thm:tip-uniqueness} gives uniqueness of the ground state for $1<p<\frac{\gamma+3}{\gamma-1}$, and of the $k$-node bound state for every $k$ when moreover $\gamma\ge\frac85$ or $p\ge\frac2{\gamma-1}$. This is the radial equation in the real dimension $N_{\mathrm{eff}}=\gamma+1$ of Remark~\ref{rem:effective-dim}. Horns with $N_{\mathrm{eff}}<\frac{13}5$ are geometric examples in which (R) restricts $p$. For $1<\gamma<\frac85$, part of the range excluded by (R) is covered by~\cite{CortazarGarciaHuidobroHerreros2019}. Their weight is $e^G=r^\gamma$, for which they verify all their hypotheses on the weight when $\gamma\le3$, with $\lim\varphi'=1$ and $\sup_{r>0}\big(\frac g\varphi\int_0^r\varphi-\frac12\big)=\frac{\gamma-1}2$. By Remark~\ref{rem:literature}(b) and \eqref{eq:F-over-f}, \cite[Theorem~3(ii)]{CortazarGarciaHuidobroHerreros2019} then gives uniqueness of the bound state with one node for $1<p\le\frac{3-\gamma}{\gamma-1}$, and \cite[Theorem~4]{CortazarGarciaHuidobroHerreros2019} gives uniqueness of the $k$-node bound state for every $k$ when $2\le p\le\frac{3-\gamma}{\gamma-1}$, or when $1<p<2$ and $8-(2-p)^2\ge4(\gamma-1)(p+1)$. Neither these results nor Theorem~\ref{thm:tip-uniqueness} cover the nodal bound states for $\frac{3-\gamma}{\gamma-1}<p<\frac2{\gamma-1}$. Dimension two is included: on a surface with $\psi(r)=r^a$ and $a>1$ one has $\gamma=a$, and for instance for $a=3$ the $k$-node bound state is unique for every $k$ and every $1<p<3$.
\end{example}

\begin{example}[Lipschitz poles and exponential ends]\label{ex:lipschitz-poles}
    Let $\gamma>1$, $\mu>0$ and $g(r)=\frac{\gamma}{r}+\mu$, that is $\psi(r)=r^ae^{\mu r/(n-1)}$ with $a=\gamma/(n-1)$. Then $\ell=1/g=r/(\gamma+\mu r)$ is concave, $\ell'\le\frac1\gamma$, and
    \[
        (\gamma+\mu r)^4\big(2\ell\ell''-\ell'(\ell'-2)\big)
        =\gamma\big(2\mu^2r^2+4\mu(\gamma-1)r+\gamma(2\gamma-1)\big)>0,
    \]
    so $(\Phi)$ and (H) hold by Lemmas~\ref{lem:concave-reciprocal}(ii) and~\ref{lem:H-criterion}(i), for every $\gamma>1$, and (G4)--(G6) are immediate. For $a=1$ and $\mu=1$ this is the damping $\frac{n-1}{r}+1$ of Remark~\ref{rem:extra-hyp}, whose metric is only Lipschitz at the pole. The same holds for $\psi(r)=e^r-1$ of Section~\ref{sec:manifolds}, for which $g_\psi=\frac{n-1}{1-e^{-r}}$, $\ell=\frac{1-e^{-r}}{n-1}$ is concave, and $(n-1)^2e^r\big(2\ell\ell''-\ell'(\ell'-2)\big)=2(n-2)+e^{-r}$. Theorem~\ref{thm:tip-uniqueness} applies to both when $n\ge3$, and to $\psi(r)=r^ae^{\mu r/(n-1)}$ in every dimension $n\ge2$ with $(n-1)a>1$, with (R) when $\gamma<\frac85$. In particular, for $1<\gamma<2$ the exponential growth of the end does not prevent (H). Compare Example~\ref{ex:hyperbolic-horns}.
\end{example}

\begin{example}[Thin tips]\label{ex:thin-tips}
    Let $n\ge3$, $\frac1{n-1}<a<1$ and $\psi(r)=r^a(1+r)^{1-a}$, the global horn of Example~\ref{ex:horns} with $a<1$. Then $g_\psi=(n-1)\frac{a+r}{r(1+r)}$, $\gamma=(n-1)a\in(1,n-1)$ and $rg_\psi\to n-1$ at infinity. Here $(n-1)\ell=\frac{r(1+r)}{r+a}$, so $\ell''=-\frac{2a(1-a)}{(n-1)(r+a)^3}<0$, and $(n-1)^2(r+a)^4\big(2\ell\ell''-\ell'(\ell'-2)\big)$ is a polynomial in $r$ with the positive coefficients $\frac{2\gamma}a-1$, $8\gamma-4a$, $10a\gamma-6a+2\gamma$, $4a(a\gamma+\gamma-1)$ and $a^2(2\gamma-1)$. Hence $(\Phi)$ and (H) hold, and Theorem~\ref{thm:tip-uniqueness} applies, with (R) when $\gamma<\frac85$. The tip has effective dimension $\gamma+1<n$, and the admissible range $p<\frac{\gamma+3}{\gamma-1}$ is wider than the range $p<\frac{n+2}{n-2}$ of the natural dimension.
\end{example}

\begin{example}[Logarithmically pinched tips]\label{ex:log-pinched}
    Let $\gamma>1$, $c>0$ and
    \[
        g(r)=\frac1r\Big(\gamma+\frac{c}{L(r)}\Big),\qquad L(r)\defeq\log\big(e+\tfrac1r\big).
    \]
    Then $rg\to\gamma$ as $r\to0$ and $rg\to\gamma+c$ as $r\to\infty$, but $g-\gamma/r$ behaves like $c/(r\log\frac1r)$ near $0$ and is not integrable at $0$. The warping function satisfies $\psi(r)\sim C\,r^a|\log r|^{-c/(n-1)}$ as $r\to0$, with $a=\gamma/(n-1)$, so that $\psi(r)/r^a\to0$: the link is pinched by a logarithmic factor. (G4)--(G6) hold, and $\ell=1/g$ satisfies
    \[
        \ell''=-\frac{c\,(\gamma L+2\gamma+c)}{r(1+er)^2(\gamma L+c)^3}<0.
    \]
    With $x=er$, so that $L-1=\log(1+\frac1x)$, a direct computation shows that $(1+x)^2(\gamma L+c)^4\big(2\ell\ell''-\ell'(\ell'-2)\big)$ equals $2c\big((1+x)\log(1+\frac1x)-1\big)$ plus a polynomial in $x$, $L-1$, $\gamma-1$ and $c$ with positive coefficients. Since $\log(1+\frac1x)\ge\frac1{1+x}$, it is positive, so $(\Phi)$ and (H) hold by Lemmas~\ref{lem:concave-reciprocal}(ii) and~\ref{lem:H-criterion}(i), and Theorem~\ref{thm:tip-uniqueness} applies, with (R) when $\gamma<\frac85$. Such tips are covered because (G4) asks only for the limit $\gamma$, and not for the integrability of $g-\gamma/r$ near $0$.
\end{example}

\begin{example}[Hyperbolic horns]\label{ex:hyperbolic-horns}
    For $\psi(r)=\big(\lambda^{-1}\sinh(\lambda r)\big)^a$ with $\gamma=(n-1)a>1$ one has $g_\psi=\gamma\lambda\coth(\lambda r)$, and (G4)--(G6) hold as for $\Hyp^n_\lambda$. The proof of Lemma~\ref{lem:space-forms}(ii) uses $n-1$ only as a real number larger than $1$, so $(\Phi)$ holds for every $\gamma>1$, and Theorem~\ref{thm:tip-uniqueness}(i) gives a unique positive radial solution. The computation in the proof of Lemma~\ref{lem:H-criterion}(ii) gives (H) when $\gamma\ge2$, and then Theorem~\ref{thm:tip-uniqueness}(ii) applies. For $1<\gamma<2$, (H) fails: the identity $D-1=ge^G\int_r^\infty\ell'e^{-G}$ of the proof of Lemma~\ref{lem:concave-reciprocal} gives $e^G\varphi'=(\gamma-1)e^G(D-1)\sim C e^{(\gamma-2)\lambda r}\to0$ as $r\to\infty$, while $e^G\varphi'\to0$ also as $r\to0$. Uniqueness of the nodal bound states is then open for these tips. By Example~\ref{ex:lipschitz-poles}, the obstruction comes from this particular damping and not from the exponential growth of the end.
\end{example}

%% file: sections/B-identity-table.tex
\section{Table of identities}\label{sec:identity-table}
\allowdisplaybreaks

All the identities of Lemma~\ref{lem:pohozaev-identities}, Proposition~\ref{prop:identity-table}, Proposition~\ref{prop:scaling-field}, Lemma~\ref{lem:weighted-identities} and Lemma~\ref{lem:renewal-identities}, as well as the Pohozaev identity of Lemma~\ref{lem:limit-zeros}, follow by direct computation, reducing derivatives with the three rules
\[
    u'' = -gu' - f(u), \qquad v'' = -gv' - f'(u)v, \qquad \varphi' = g\varphi + (1-\gamma),
\]
together with $G' = g$, for \emph{unspecified} sufficiently smooth functions $g, \varphi, u, v$ ($C^2$, and $C^3$ for \eqref{eq:id-Sigma}, which also needs $f\in C^2$ along $u$) and an unspecified potential $F$ with $f = F'$ (general case), and additionally for $f(u) = u^p - u$ on the branch $u > 0$ (power case). The functions are those of Sections~\ref{sec:pohozaev}--\ref{sec:renewal-below}. Recall that $h_1 = uf - 2F$, $h_3 = uf^2 - fF - uf'F$, $\theta = e^G vh_1(u)/u'$, $\nu = 2\varphi' + \gamma - 1$ and $A = \varphi e^G$.

\noindent\textbf{General $f$, general $g$.}
\begin{align}
     & E' = -gu'^2, \tag{I1}\label{eq:id-energy}                                                            \\
     & \big(e^{2G}E\big)' = 2ge^{2G}F(u), \tag{I2}\label{eq:id-weighted-energy}                                      \\
     & M' = e^G\big(uf'(u) - f(u)\big)v, \tag{I3}\label{eq:id-M}                                       \\
     & Q' = 2\varphi' e^G f(u)v, \tag{I4}\label{eq:id-Q}                                               \\
     & P' = e^G\big[2\nu F(u) - (\gamma-1)uf(u)\big], \tag{I5}\label{eq:id-P}                          \\
     & \omega' = P_1/(e^G u^2), \tag{I6}\label{eq:id-omega}                                                \\
     & \Sigma' = e^G f''(u)u'^2 v, \tag{I7}\label{eq:id-Sigma}                                             \\
     & Q - P\tfrac{v}{u} = \omega(M - \theta), \tag{I8}\label{eq:id-connection}                                 \\
     & T_1' = -M\big(\varphi' g_1(u)\big)', \tag{I9}\label{eq:id-T-one}                                    \\
     & T_2' = 2\varphi' e^G v\tfrac{h_3}{h_1} - M\big(\varphi' g_2(u)\big)', \tag{I10}\label{eq:id-T-two} \\
     & W_a' = \varphi' e^G v\big[2f - a(uf' - f)\big] - a\varphi'' M, \tag{I11}\label{eq:id-W}        \\
     & Q_a' = e^G\big[(2\varphi' - a)fv + au'v'\big], \tag{I12}\label{eq:id-Qa}                        \\
     & Q_\nu' = e^G\big[(1-\gamma)fv + \nu u'v' + 2\varphi'' u'v\big]. \tag{I13}\label{eq:id-Qnu}
\end{align}
\begin{equation}\label{eq:id-P-over-A}\tag{I14}
    \Big(\frac{P}{A}\Big)' = -\frac{\nu}{\varphi}\cdot\frac{P_2}{A}, \qquad P_2 = A\Big[u'^2 + \frac{\gamma-1}{\nu}uf(u)\Big] + (\gamma-1)e^G uu',
\end{equation}
\begin{gather}
    B_a' = (\varphi' - 1)e^G v\big[2f(u) - a\big(uf'(u) - f(u)\big)\big] - a\varphi'' M - 2F_a(u)\frac{Q_\nu}{\varphi u'^2}, \tag{I15}\label{eq:id-bridge}\\
    L(\varphi u' + \lambda u) = -(2\varphi' + \lambda)f(u) + \lambda f'(u)u + 2\lambda' u' + (\lambda'' + g\lambda')u. \tag{I16}\label{eq:id-L}
\end{gather}

\noindent\textbf{Power case $f = |u|^{p-1}u - u$ (on $u > 0$).}
\begin{align}
     & h_1 = \tfrac{p-1}{p+1}u^{p+1}, \tag{S1}\label{eq:id-h-one}                                                    \\
     & h_3 = \tfrac{p-1}{2}uh_1, \tag{S2}\label{eq:id-h-three}                                                         \\
     & 2f - a(uf' - f) = 2u\kappa_a(u), \tag{S3}\label{eq:id-kappa}                                                  \\
     & g_2 - g_1 = -u^{1-p}, \tag{S4}\label{eq:id-g-difference}                                                             \\
     & T_2 = T_1 + \varphi' u^{1-p}M, \tag{S5}\label{eq:id-T-difference}                                                    \\
     & \tfrac{d}{du}g_2 = (p+1)u^{-p}, \quad \tfrac{d}{du}g_1 = 2u^{-p}, \tag{S6}\label{eq:id-g-prime}                                         \\
     & h_r(u) = \frac{4\varphi' - (p-1)(\gamma-1)}{p+1}u^{p+1} - 2\varphi' u^2, \tag{S7}\label{eq:id-h-r}          \\
     & W_a' = 2\varphi' e^G uv\,\kappa_a(u) - a\varphi'' M, \tag{S8}\label{eq:id-power-W}                              \\
     & T_2' = -\varphi'(p+1)\frac{uu'}{u^{p+1}}\big(M - \theta\big) - \varphi'' g_2(u)M, \tag{S9}\label{eq:id-power-T-two} \\
     & (p-1)e^G uv = (p+1)\frac{uu'}{u^{p+1}}\,\theta. \tag{S10}\label{eq:id-theta}
\end{align}
The anchored bridge used in the proof of Theorem~\ref{thm:first-phase}(iv) is the special case of the same computation with the \emph{constant} multiplier $\varphi_1'=\varphi'(z_1)$ in place of $\varphi'$:
\begin{equation}\label{eq:id-anchored-bridge}\tag{I15$'$}
    \Big[Q - 2\varphi_1' F(u)\frac{e^G v}{u'}\Big]' = 2(\varphi' - \varphi_1')e^G f(u)v - 2\varphi_1' F(u)\frac{Q_\nu}{\varphi u'^2} \qquad (\varphi_1' \in \R\text{ constant}).
\end{equation}
The Pohozaev identity of Lemma~\ref{lem:limit-zeros}, with $g = \gamma/r$, $\varphi = r$, $\nu = \gamma + 1$, is the special case of \eqref{eq:id-P}:
\begin{multline}\tag{I5$'$}\label{eq:id-pohozaev-limit}
    \Big[r^{\gamma+1}\big(w'^2 + 2F(w)\big) + (\gamma-1)r^\gamma ww'\Big]' = r^\gamma\big[2(\gamma+1)F(w) - (\gamma-1)wf(w)\big]\\
    \text{for } w'' + \tfrac{\gamma}{r}w' + f(w) = 0.
\end{multline}

\noindent\textbf{The scaling field (Section~\ref{sec:later-phases}).} For the pure-power Euclidean equation $w'' + \frac{\gamma}{r}w' + |w|^{p-1}w = 0$ with $V = \frac{2}{p-1}w + rw'$, $e^G = r^\gamma$, $\varphi = r$, $f(w) = |w|^{p-1}w$:
\begin{equation}
    V'' + \tfrac{\gamma}{r}V' + p|w|^{p-1}V = 0, \qquad M[V] = P_1, \qquad Q[V] = \tfrac{2}{p-1}P_1, \qquad Q[V] - \tfrac{2}{p-1}M[V] = 0. \tag{N}\label{eq:id-scaling}
\end{equation}

\noindent\textbf{The critical-point form (Section~\ref{sec:transition}).} With $Q_\chi = Q + \chi(r)e^G u'v$ for a $C^1$ function $\chi$, $\sB_a = Q - a\varphi' M - 2\varphi' F_a e^G v/u'$, $H_a = -2F_a$, $c_H = \varphi'\nu/\varphi + \varphi''$, in the power case (the second line of \eqref{eq:id-weighted-bridge} and \eqref{eq:id-renewal-bridge} use $|\varphi''|=-\varphi''$, i.e.\ $\varphi''\le0$):
\begin{align}
     & Q_\chi' = e^G\big[\chi\,u'v' + (2\varphi' - \chi)f(u)v + \chi'u'v\big], \tag{K1}\label{eq:id-Q-chi}                                               \\
     & (v/u')' = Q_{\varphi'}/(e^G\varphi u'^2), \tag{K2}\label{eq:id-v-ratio}                                                                             \\
     & (e^G v/u')' = Q_\nu/(\varphi u'^2), \tag{K3}\label{eq:id-weighted-v-ratio}                                                                                   \\
     & \big(Q/(e^G u')\big)' = f(u)Q_{2\varphi'}/(e^G u'^2), \tag{K4}\label{eq:id-Q-ratio}                                                                 \\
     & (M - \theta)' = -h_1(u)Q_\nu/(\varphi u'^2), \tag{K5}\label{eq:id-M-theta}                                                                          \\
     & \sB_a' = -2\varphi' F_a Q_\nu/(\varphi u'^2) - \varphi''\big(aM + 2F_a e^G v/u'\big) \tag{K6}\label{eq:id-weighted-bridge}                                  \\
     & \phantom{\sB_a'} = \varphi' H_a Q/(\varphi u'^2) + a|\varphi''|M + c_H H_a e^G v/u', \notag                                                 \\
     & T_2' = -(p+1)\varphi'\tfrac{uu'}{|u|^{p+1}}(M - \theta) - \varphi'' g_2(u)M, \tag{K7}\label{eq:id-crit-T-two}                                          \\
     & W_a' = 2\varphi' e^G uv\,\kappa_a(u) - a\varphi'' M, \qquad 1 - \tfrac{p-1}{2}g_2(\beta) = \tfrac{p+1}{2}\beta^{1-p}, \tag{K8}\label{eq:id-crit-W} \\
     & W_a|_{v=0} = M(\omega - a\varphi'), \qquad T_2|_{v=0} = M(\omega - \varphi' g_2(u)), \tag{K9}\label{eq:id-v-zero}                                  \\
     & \sB_a = T_2 \text{ where } g_2(u) = a, \notag                                                                                               \\
     & (e^G\varphi')' = e^G(g\varphi' + \varphi''), \tag{K10}\label{eq:id-H-weight}                                                                        \\
     & Q = \tfrac{v}{u}P_1 + \omega M. \tag{K11}\label{eq:id-Q-split}
\end{align}

\noindent\textbf{The renewal (Section~\ref{sec:renewal-below}).} With $A = \varphi e^G$, $k_r = \partial_u h_r$, $X = -e^G v/u'$, $\mathfrak{V}_\Lambda = Q - \Lambda(M - \theta) - 2\varphi' F(u)e^G v/u'$ for a constant $\Lambda$, and the other notation as above, in the power case:
\begin{align}
     & P_2 = P - \tfrac{A}{\nu}h_r(u), \qquad k_r(u) = u\big[(4\varphi' - (p-1)(\gamma-1))u^{p-1} - 4\varphi'\big], \tag{V1}\label{eq:id-P-two-form}                                                  \\
     & P_2' = -\tfrac{A}{\nu}u'k_r(u) - \tfrac{2(\gamma-1)A\varphi''}{\nu^2}uf(u), \tag{V2}\label{eq:id-P-two-prime}                                                                                   \\
     & \big(e^G\varphi' v/u'\big)' = (e^G\varphi')'v/u' + \varphi' Q_{\varphi'}/(\varphi u'^2), \tag{V3}\label{eq:id-H-ratio}                                                                      \\
     & \big[e^{2G}\big(E - F(\mu)\big)\big]' = 2ge^{2G}\big(F(u) - F(\mu)\big) \quad (\mu > 0 \text{ constant}), \tag{V4}\label{eq:id-level-energy}                                                     \\
     & 2g\varphi = \gamma - 1 + \nu, \qquad X'/X = 2g + v'/v + f(u)/u', \tag{V5}\label{eq:id-X-log}                                                                                              \\
     & \mathfrak{V}_\Lambda' = \big(\Lambda h_1 - 2\varphi' F\big)\frac{Q_\nu}{\varphi u'^2} - 2\varphi'' F\frac{e^G v}{u'} \tag{V6}\label{eq:id-frozen-bridge}                                          \\
     & \phantom{\mathfrak{V}_\Lambda'} = \big(\Lambda h_1 - 2\varphi' F\big)\frac{Q_{\varphi'}}{\varphi u'^2} + \big(g\Lambda h_1 - 2(g\varphi' + \varphi'')F\big)\frac{e^G v}{u'}, \notag \\
     & \big(u^2 e^G v/u'\big)' = 2ue^G v + u^2 Q_\nu/(\varphi u'^2), \tag{V7}\label{eq:id-river-parts}                                                                                                 \\
     & \big(-2F e^G v/u'\big)' = -2fe^G v - 2F Q_\nu/(\varphi u'^2), \notag                                                                                                                \\
     & u^2/(\varphi u'^2) = \varphi/\omega^2, \qquad -2F = u^2\big(1 - \alpha_*^{1-p}u^{p-1}\big), \qquad \tfrac{2}{p-1}\,h_1(\alpha_*) = \alpha_*^2, \tag{V8}\label{eq:id-river-algebra}                \\
     & \sB_a' = \varphi' H_a\frac{Q_{\varphi'}}{\varphi u'^2} + a|\varphi''|M + (g\varphi' + \varphi'')H_a\frac{e^G v}{u'}, \tag{V9}\label{eq:id-renewal-bridge}                                          \\
     & Q_\nu = Q_{2\varphi'} + (\gamma-1)e^G u'v. \notag
\end{align}

%% file: sections/C-notation.tex
\section{Notation for Part II}\label{sec:notation}

The following table collects the symbols of Part~II, with the place where each is defined.

\begin{center}
    \small
    \renewcommand{\arraystretch}{1.12}
    \begin{tabular}{>{\raggedright\arraybackslash}p{0.2\textwidth}>{\raggedright\arraybackslash}p{0.5\textwidth}>{\raggedright\arraybackslash}p{0.2\textwidth}}
        \toprule
        Symbol                                 & Meaning                                                                                                      & Defined in                                                 \\
        \midrule
        \multicolumn{3}{l}{\emph{The damping}}                                                                                                                                                                             \\
        $G$, $\gamma$                          & primitive of $g$ with $G(1)=0$, $\gamma=\lim_{s\to0}sg(s)$                                                   & \S\ref{sec:setting}                                              \\
        $\cJ$                                  & $\cJ(r)=\int_r^\infty e^{-G}$                                                                                & Def.~\ref{def:multiplier}                                        \\
        $\varphi$, $D$, $\nu$                  & multiplier $\varphi=(\gamma-1)e^G\cJ$, $D=g\varphi/(\gamma-1)$, effective dimension $\nu=2\varphi'+\gamma-1$ & Def.~\ref{def:multiplier}                                        \\
        $\ell$, $\Upsilon$                     & $\ell=1/g$, $\Upsilon=2g/(g'+2g^2)$                                                                          & Lem.~\ref{lem:concave-reciprocal}, \ref{lem:H-criterion}                        \\
        $q$, $m$                               & $q=1+\frac{g^2}4+\frac{g'}2$, $m=\frac12\big(g+\sqrt{g^2+4}\big)$                                            & (G6), \S\ref{sec:tail}                                       \\
        \multicolumn{3}{l}{\emph{The nonlinearity}}                                                                                                                                                                        \\
        $f$, $F$                               & $f(u)=|u|^{p-1}u-u$, $F'=f$, $F(0)=0$                                                                        & \eqref{eq:model-f}                                            \\
        $\alpha_*$, $\alpha^*(r)$              & positive zero of $F$, moving Pohozaev level                                                                  & \eqref{eq:levels}, Def.~\ref{def:pohozaev-level}                       \\
        $h_1$, $h_r$                           & $h_1=uf-2F$, $h_r=2\nu F-(\gamma-1)uf$                                                                       & \eqref{eq:pohozaev}, \eqref{eq:pohozaev-identity}                           \\
        $g_1$, $g_2$, $\kappa_a$, $F_a$, $H_a$ & auxiliary functions in the Wronskian identities, with $F_a=F-\frac a2h_1$, $H_a=-2F_a$                            & \S\ref{sec:identities}, \eqref{eq:weighted-bridge}                          \\
        \multicolumn{3}{l}{\emph{Solutions and their points}}                                                                                                                                                              \\
        $u=u(\cdot,\alpha)$, $v$               & solution with $u(0)=\alpha$, variation $v=\partial_\alpha u$                                                 & \eqref{eq:ivp}, \S\ref{sec:terminology}                           \\
        $E$, $N(\alpha)$                       & energy, number of zeros of $u(\cdot,\alpha)$                                                                 & Def.~\ref{def:energy}, \S\ref{sec:no-gaps}                       \\
        $z_i$, $c_i$                           & zeros and critical points of $u$ ($c_0=0$)                                                                   & Prop.~\ref{prop:critical-points}                                      \\
        $b_i,r_i$ / $\bar r_i,\bar b_i$         & points of the descent / ascent of phase $i$ with $|u|=\alpha_*,1$ / $1,\alpha_*$                             & \S\ref{sec:later-phases}                                             \\
        $K$                                    & index of the last phase with $E>0$ on $(0,c_K]$                                                              & \S\ref{sec:later-phases}                                             \\
        $\beta_i$, $t_i$                       & $\beta_i=|u(c_i)|$, point of the descent with $|u|=\beta_i$                                                  & \eqref{eq:beta-levels}, Thm.~\ref{thm:renewal-above}                       \\
        $\tau_i$, $u_\tau$, $\zeta_i$          & zero of $v$ in phase $i$, $u_\tau=|u(\tau_i)|$, zero of $M-\theta$                                           & Thm.~\ref{thm:first-phase}(i), \ref{thm:phase}, \S\ref{sec:renewal-above}, Lem.~\ref{lem:M-theta-zero} \\
        $r_\mu$, $\bar r_\mu$                  & points of the descent / ascent with $|u|=\mu$                                                                & \S\ref{sec:renewal-setting}                                           \\
        $\fc_1,\fc_2,\fc_3$, $T_0$             & constants of the dissipation estimates                                                                       & Lem.~\ref{lem:passage-times}--\ref{lem:band-dissipation}, \ref{lem:pohozaev-lower}        \\
        \multicolumn{3}{l}{\emph{Auxiliary functions}}                                                                                                                                                                     \\
        $P$, $P_1$, $P_2$, $\omega$            & Pohozaev functions, $\omega=-\varphi u'/u$                                                                   & \eqref{eq:pohozaev}, Rem.~\ref{rem:P-two}, \eqref{eq:omega}      \\
        $M$, $Q$, $\Sigma$, $\theta$           & $M=e^G(u'v-uv')$, $Q=e^G(wv'-w'v)$ with $w=\varphi u'$, $\Sigma=e^G(u''v'-u'v'')$, $\theta=e^Gvh_1(u)/u'$                                 & \S\ref{sec:identities}                                           \\
        $Q_a$, $Q_\nu$, $Q_\chi$               & $Q+a\,e^Gu'v$, with $a$ constant, $a=\nu$, or $a=\chi(r)$                                                    & \S\ref{sec:identities}, \eqref{eq:Q-chi}                          \\
        $T_1$, $T_2$, $W_a$                    & $Q-\varphi'g_1(u)M$, $Q-\varphi'g_2(u)M$, $Q-a\varphi'M$                                                     & \S\ref{sec:identities}                                           \\
        $B_a$, $\sB_a$                         & bridges of Proposition~\ref{prop:identity-table} and their $\varphi'$-weighted form                                    & \S\ref{sec:identities}, \eqref{eq:weighted-bridge}                          \\
        $\varpi$, $\varpi_2$, $X$              & $Q/(e^G|u'|)$, $Q_{2\varphi'}/(e^G|u'|^3)$, $-e^Gv/u'$                                                      & Lem.~\ref{lem:level-comparison}, \ref{lem:renewal-identities}(v)                        \\
        $\mathfrak V_\Lambda$                  & bridge with frozen weight                                                                                    & Lem.~\ref{lem:renewal-identities}(vi)                                    \\
        $V_\pm$, $D_\infty$                    & solutions of the linearized equation at infinity, $u=D_\infty\tilde V_-$                                     & Lem.~\ref{lem:dichotomy}, \ref{lem:tail-sign}                        \\
        \bottomrule
    \end{tabular}
\end{center}